\documentclass[11pt,reqno]{amsart}
\usepackage{simplewick}
\usepackage{times}
\usepackage{amsmath,amssymb,amsthm}
\usepackage{color}
\usepackage{hep}
\usepackage{mathrsfs}
\usepackage{leftidx}
\usepackage{graphicx}
\usepackage{float}
\usepackage{enumerate}
\usepackage{titletoc}
\usepackage{epstopdf}
\usepackage{ulem}
\usepackage{appendix}

\pagestyle{myheadings}
 \textwidth=16truecm 
 \textheight=23truecm 
 \oddsidemargin=0mm
 \evensidemargin=0mm
 \headheight=10mm
 \headsep=3mm
 \footskip=4mm
 \topmargin=0mm
 \topmargin=0mm
\allowdisplaybreaks
 \def \no{\nonumber}

\def\R {\mathbb R}

\def\p{\partial}
\def\ve{\varepsilon}
\def\f{\frac}
\def\na{\nabla}
\def\la{\lambda}

\def\al{\alpha}
\def\t{\tilde}

\def\vp{\varphi}
\def\O{\Omega}
\def\th{\theta}
\def\g{\gamma}
\def\G{\Gamma}
\def\si{\sigma}
\def\dl{\delta}
\def\p{\partial}
\def\ve{\varepsilon}
\def\f{\frac}
\def\k{\kappa}

\def\na{\nabla}
\def\la{\lambda}
\def\al{\alpha}

\def\t{\tilde}
\def\o{\omega}
\def\O{\Omega}
\def\vp{\varphi}
\def\th{\theta}

\def\g{\gamma}
\def\G{\Gamma}

\def\si{\sigma}
\def\dl{\delta}

\def\ds{\displaystyle}
 \allowdisplaybreaks[4]

 \begin{document}
 \footskip=0pt
 \footnotesep=2pt
\let\oldsection\section
\renewcommand\section{\setcounter{equation}{0}\oldsection}
\renewcommand\thesection{\arabic{section}}
\renewcommand\theequation{\thesection.\arabic{equation}}
\newtheorem{claim}{\noindent Claim}[section]
\newtheorem{theorem}{\noindent Theorem}[section]
\newtheorem{lemma}{\noindent Lemma}[section]
\newtheorem{proposition}{\noindent Proposition}[section]
\newtheorem{definition}{\noindent Definition}[section]
\newtheorem{remark}{\noindent Remark}[section]
\newtheorem{corollary}{\noindent Corollary}[section]
\newtheorem{example}{\noindent Example}[section]

\title{Global smooth solutions to 4D quasilinear wave equations with short pulse initial data}

\author[B.B. Ding]{Ding Bingbing} \email{bbding@njnu.edu.cn}
\address{School of Mathematical Sciences and Mathematical Institute, Nanjing Normal University, Nanjing, 210023, China.}

\author[Z.P. Xin]{Xin Zhouping} \email{zpxin@ims.cuhk.edu.hk}
\address{Institute of Mathematical Sciences, The Chinese University of Hong Kong, Shatin, NT, Hong Kong.}

\author[H.C. Yin]{Yin Huicheng} \email{huicheng@nju.edu.cn, 05407@njnu.edu.cn}
\address{School of Mathematical Sciences and Mathematical Institute, Nanjing Normal University, Nanjing, 210023, China.}
\footnote{Ding Bingbing (13851929236@163.com, bbding@njnu.edu.cn) and Yin Huicheng (huicheng@nju.edu.cn, 05407@njnu.edu.cn) were
supported by the NSFC (No.12331007, No.12071223, No.11971237). Xin Zhouping(zpxin@ims.cuhk.edu.hk )
is partially supported by the Zheng Ge Ru Foundation, Hong Kong RGC Earmarked Research Grants
CUHK-14301421, CUHK-14300819, CUHK-14302819, CUHK-14300917, Basic and Applied Basic Research Foundations of Guangdong
Province 2020131515310002, and the Key Project of National Nature Science Foundation of China (No.12131010).}
\date{}
\maketitle.
\maketitle

\begin{abstract}
In this paper, we establish the global existence of
smooth solutions to general 4D quasilinear wave equations
satisfying the first null condition with the short pulse initial data.
Although the global existence of small data solutions to 4D quasilinear wave equations holds true
without any requirement of null conditions, yet for short pulse data, in general,
it is sufficient and necessary to require the fulfillment of the first null
condition to have global smooth solutions. It is noted that short pulse data are extensions of a class of
spherically symmetric data, for
which the smallness restrictions are imposed on angular directions and along the outgoing directional  derivative $\p_t+\p_r$,
but the largeness is kept for the incoming directional derivative
$\p_t-\p_r$. We expect that here methods can be applied to study  the global smooth  solution or blowup  problem
with short pulse initial data for the general 2D and 3D quasilinear wave equations when the corresponding
null conditions hold or not. On the other hand, as some direct applications of our main results, one can show that for the
short pulse initial data, the smooth solutions to the 4D irrotational compressible Euler equations for Chaplygin gases,
4D nonlinear membrane equations and 4D relativistic membrane equations exist globally since their nonlinearities
satisfy the first null condition; while the smooth solutions to the 4D
irrotational compressible Euler equations for polytropic gases generally blow up in finite time since the
corresponding first null condition does not hold.
\end{abstract}

	
\begin{quote}{\bf Keywords:} Quasilinear wave equations, short pulse initial data, the first null condition,

\qquad \qquad inverse foliation density, Goursat problem, global existence
\end{quote}
\vskip 0.2 true cm
\begin{quote}
	{\bf Mathematical Subject Classification:} 35L05, 35L72
\end{quote}

\vskip 0.4 true cm
\tableofcontents

\section{Introduction}\label{in}

\subsection{Formulation of the problem and main results}\label{2}

Consider general $n-$D quasilinear wave equations of the form:
\begin{equation}\label{quasi-0}
\sum_{\al,\beta=0}^ng^{\al\beta}(\phi,\p\phi)\p_{\al\beta}^2\phi=0,
\end{equation}
where $n\ge 2$, $(x^0, x)=(t, x^1, \cdots, x^n)\in [1,\infty)\times\R^n$,
$\p=(\p_0,\p_1,\cdots,\p_n)=(\p_{x^0}, \p_{x^1}, \cdots, \p_{x^n})$,
$g^{\al\beta}(\phi,\p\phi) =g^{\beta\al}(\phi,\p\phi)$ are smooth
functions of their arguments.
In addition, without loss of generality, it is assumed that for small $(\phi, \p\phi)$,
\begin{equation}\label{g}
g^{\al\beta}(\phi, \p\phi)=m^{\al\beta}+g^{\al\beta,0}\phi+\ds\sum_{\gamma=0}^n\tilde g^{\al\beta,
\gamma}\p_\gamma\phi+h^{\al\beta}(\phi,\p\phi)
\end{equation}
with $m^{00}=-1$, $m^{ii}=1$ for $1\le i\le n$, $m^{\al\beta}=0$ for $\al\neq\beta$, $g^{\al\beta,0}$
and $\tilde g^{\al\beta,\gamma}$ being constants, and $h^{\al\beta}(0,0)=\na_{(\phi,\p\phi)} h^{\al\beta}(0,0)=0$.

If \eqref{quasi-0} is equipped with the small initial data
\begin{equation}\label{Y1-0}
\phi(1,x)=\dl \vp_0(x), \p_t\phi(1,x)=\dl \vp_1(x),
\end{equation}
where $\dl>0$ is sufficiently small,  $(\vp_0, \vp_1)(x)\in C_0^{\infty}(\mathbb R^n)$,
$\text{supp$\vp_0$, supp$\vp_1$}\subset B(0, M)$ and $M>0$ is a constant,
then there exist extensive literatures on the global existence of smooth solution to \eqref{quasi-0}-\eqref{Y1-0}.
Indeed, for $n\ge 5$, or  $n=4$ but $g^{\al\beta,0}=0$ for all $0\le\al,\beta\le 4$,
the global existence of smooth solution $\phi$ to  \eqref{quasi-0}-\eqref{Y1-0} is established
and meanwhile $|\p\phi|\le C\dl (1+t)^{-\f{n-1}{2}}$ is obtained in \cite{K-P} (also see Chapter 6 of \cite{H} or \cite{Li-Chen}).
The key points of the analyses are based on
the standard energy estimates for wave equations, the smallness of $\|\phi(t,x)\|_{H_x^{2[\f{n}{2}]+2}(\mathbb R^n)}$
and the following Klainerman-Sobolev inequality:
\begin{equation*}
|\p \phi(t,x)|\le \f{C}{(1+|t-r|)^{\f12}(1+t)^{\f{n-1}{2}}}\ds\sum_{|I|\le [\f{n}{2}]+2}\|Z^I\p \phi(t,x)\|_{L_x^2(\mathbb R^n)},
\end{equation*}
where $Z\in\{\p, x^i\p_j-x^j\p_i, x^i\p_t+t\p_i, 1\le i,j\le n, \ds\sum_{k=1}^nx^k\p_k\}$ and $r=
|x|=\sqrt{(x^1)^2+\cdot\cdot\cdot+(x^n)^2}$.

For $n=4$ and $g^{\al\beta,0}\not=0$ for some $(\al,\beta)$, the global existence of the solution $\phi$ with
$|\phi|\le C\dl (1+t)^{-\f{1}{2}}$ and $|\p\phi|\le C\dl (1+t)^{-\f{3}{2}}$ is  shown in \cite{Lin0},
where the smallness of $\|\phi(t,x)\|_{H_x^6(\mathbb R^4)}$
and the following Hardy-type inequality for $\text{supp$_x\phi(t,x)$}\subset\{x: |x|\le M+t-1\}$
\begin{equation}\label{HC-2}
\bigl\|\f{\phi(t,x)}{1+|t-r|}\bigr\|_{L_x^2(\mathbb R^4)}\le C\|\p \phi(t,x)\|_{L_x^2(\mathbb R^4)}
\end{equation}
are crucial.

When $n=3$,  it follows from
\cite{C1,K1,Lin1,A3,Lin2,Ding1}
that there exists a global smooth solution $\phi$
 with $|\phi|\le C\dl(1+t)^{-1}(1+|r-t|)$ and $|\p\phi|\le C\dl(1+t)^{-1}$ if the first null condition holds (namely, $\ds\sum_{\al,\beta,\gamma=0}^3\tilde g^{\al\beta,\gamma}\xi_\al\xi_\beta\xi_\gamma\equiv0$ for $\xi_0=-1$
and $(\xi_1,\xi_2,\xi_3)\in\mathbb S^2$); otherwise, if the first null condition fails, then there
exists a solution $\phi$ to \eqref{quasi-0}-\eqref{Y1-0}  which blows up in finite time,
one can see \cite{A2,DY,Ding2,J1,J2}.

When $n=2$ and the coefficients $g^{\al\beta}$
are independent of $\phi$ (i.e., $\ds g^{\al\beta}(\phi,\p\phi)=g^{\al\beta}(\p\phi)$),
and one writes that for small $\p\phi$,
\begin{equation}\label{H-01}
\ds g^{\al\beta}(\p\phi)=m^{\al\beta}+\sum_{\gamma=0}^{2}\tilde g^{\al\beta,\gamma}\p_\gamma\phi+\sum_{\gamma,\nu=0}^2h^{\al\beta,\gamma\nu}\p_\gamma\phi\p_\nu\phi+O(|\p\phi|^3),
\end{equation}
where $h^{\al\beta,\gamma\nu}$ are  constants, then it is shown in \cite{A} that the problem \eqref{quasi-0}
with \eqref{Y1-0}
has a global smooth solution $\phi$ satisfying $|\p\phi|\le C\dl(1+t)^{-1/2}$ if both the first and second null conditions hold (that is,
 $\ds\sum_{\al,\beta,\gamma=0}^2\tilde g^{\al\beta,\gamma}\xi_\al\xi_\beta\xi_\gamma\equiv0$ and $\ds\sum_{\al,\beta,\gamma,\nu=0}^2 h^{\al\beta,\gamma\nu}\xi_\al\xi_\beta$ $\xi_\gamma\xi_\nu\equiv0$ for $\xi_0=-1$ and $(\xi_1,\xi_2)\in\mathbb S^1$).
 Furthermore, if either the first or second null condition fails, there exists a
solution $\phi$ to  \eqref{quasi-0}
and \eqref{Y1-0} which blows up in finite time, one can see \cite{A1} and \cite{A2}.

Generally speaking, for the global existence or blowup of smooth
 small solutions to nonlinear wave equations, it is more difficult to study the 2D and 3D cases compared with
the 4D or higher multidimensional cases
 since the solutions to the corresponding linear wave equations admit slower time-decay rates in 2D and 3D cases.
 We now briefly review the proof procedures for the  $n-$D  ($n=2,3$) cases of \eqref{quasi-0} in the references mentioned above.
 Under the null conditions, the proofs of global small data solution $\phi$
are based on the smallness of $\p^{\al}\phi$ ($|\al|\le N_0$ with $N_0\in\mathbb N$ and $N_0$ being appropriately large),
suitably weighted energy estimates for wave equations,  Klainerman-Sobolev inequality,  Hardy-type inequality \eqref{HC-2}, and
the following important estimates:
\begin{equation*}
\begin{array}{l}
\ds |{\tilde g}^{\al\beta,\g}\p^2_{\al\beta}u\p_\g v|\le|Y\p u||\p v|+|\p^2u||Yv|,\\
\ds |h^{\al\beta,\mu\nu}\p^2_{\al\beta}u\p_\mu v\p_\nu w|\le|Y\p u||\p v||\p w|
+|\p^2u||Yv||\p w|+|\p^2u||\p v||Yw|,\\
\end{array}
\end{equation*}
where $Y$ stands for the  one of ``good derivatives"
$\{Y_i=\p_i+\omega_i\p_t: \omega_i=\f{x^i}{r}, i=1, ..., n\}$ since $Y\phi$ admits a better time-decay rate
than that of $\p\phi$. When the null conditions fail, in order to establish the blowup
of $\phi$ (on the blowup time, $\phi$ and $\p\phi$ still remain to be small
but $\p^2\phi$ blows up), near the light conic surface $\{t=r-M+1: t\ge 1\}$, in the domain
$P_{\dl_0}=\{(t,r): t+M-1-\dl_0\le r\le t+M-1, t\ge 1\}$ ($\dl_0>0$ is a suitably fixed constant) one sets
\begin{equation}\label{HC-3}
\begin{array}{l}
\phi(t,x)=\ds\f{\dl}
{r^{\f{n-1}{2}}}G(\si, \o, \tau),
\end{array}
\end{equation}
where $\si=r-t$, $\o=\f{x}{r}\in\mathbb S^{n-1}$, the slow time variable $\tau=\dl\sqrt{t}$ for $n=2$ and $\tau=\dl \ln t$ for $n=3$.
Substituting \eqref{HC-3} into \eqref{quasi-0}
yields such a generalized  $n-$D ($n=2,3$) Burgers equation for $\p_{\si}G$
\begin{equation}\label{HC-4}
\begin{array}{l}
2^{n-2}\p^2_{\si\tau}G+\big(\ds\sum_{\al,\beta=0}^n\tilde g^{\al\beta,0}\o_\al\o_\beta G+\ds\sum_{\al,\beta,\gamma=0}^n\tilde g^{\al\beta,\gamma}\o_\al\o_\beta\o_\gamma\p_{\si}G\big)\p^2_{\si}G\\
=\text{higher order error terms including $\na_{\si,\tau,\o}G, \p_\o^2G, \p_\tau^2G, \p_{\tau\o}^2G, \p_{\si\o}^2G, \p_{\si\tau}^2G$
of $\dl$}.
\end{array}
\end{equation}
Then the blowup of $\nabla^2_{\si,\tau}G$ with the lifespan $T_{\dl}$ can be established by the geometric blowup method
together with the Nash-Moser-H\"ormander iteration which is utilized to treat the free boundary $t=T_{\dl}$. Meanwhile,
in the interior domain
$Q_{\dl_0}=\{(t,r): r\le t+M-1-\dl_0, t\ge 1\}$ of the light cone $\{r\le t+M-1, t\ge 1\}$,
it is shown that $\phi$ exists smoothly before $T_{\dl}+1$.

On the other hand, if \eqref{quasi-0} is equipped with the general smooth initial data of the form
\begin{equation}\label{HC-0}
\phi(1,x)=\vp_0(x), \p_t\phi(1,x)=\vp_1(x),
\end{equation}
and the strict hyperbolicity of \eqref{quasi-0} is assumed
(in this case, \eqref{quasi-0} is demanded to have some special nonlinear structures), the smooth
large solution $\phi$ generally blows
up in finite time (see \cite{Sid}).

In general, in order to guarantee the strict hyperbolicity of \eqref{quasi-0},
the smallness of $(\phi,\p\phi)$ seems to be needed, otherwise, the local well-posedness of the problem \eqref{quasi-0}
with \eqref{HC-0}  is  not expected. This indicates that it is plausible to look for global smooth
solutions to \eqref{quasi-0} with initial data \eqref{HC-0} which are large in the sense that
$(\phi_0,\phi_1)(x)$ are small but $\p^2\phi(1,x)$ may be large. This motivates one to study
the global existence of smooth solutions to \eqref{quasi-0} with the
the following short pulse initial data introduced first by D.Christodoulou \cite{C3}:
\begin{align}\label{HC-00}
\phi|_{t=1}=\delta^{2-\varepsilon}\phi_0(\f{r-1}{\delta},\omega),\
\p_t\phi|_{t=1}=\delta^{1-\varepsilon}\phi_1(\f{r-1}{\delta},\omega),
\end{align}
where $\delta>0$ is a small constant, $\ve$ is some fixed constant satisfying $0<\ve<1$,
$(\phi_0,\phi_1)(s,\o)$ are smooth functions of their arguments. By the way,
only the case of $\varepsilon=\f12$ in \eqref{HC-00} is considered in \cite{C3}.

This leads to two natural questions:

{\bf Problem (A)} Can one show the  global existence of smooth solutions to
\eqref{quasi-0} with \eqref{HC-00} under the corresponding null conditions?

{\bf Problem (B)} Are those null conditions in (A) the sufficient and necessary conditions
for the global existence?

To answer Problems (A) and (B), especially for the more difficult cases of $n=2$ and $n=3$
in \eqref{quasi-0}, as the first step, we now investigate the
higher dimensional case of \eqref{quasi-0} with $n=4$ since
a solution $v$ to the 4D free wave equation $\Box v=0$ with
$(v, \p_tv)(0,x)=(v_0, v_1)(x)$ $\in C_0^{\infty}(\mathbb R^4)$ has a higher
time-decay rate of $|\p v|\le C(1+t)^{-\f{3}{2}}$, which may make it relatively easier to analyze the solutions.
Meanwhile, it is expected that
the ideas developed here may share lights on the general 2D and 3D cases of \eqref{quasi-0} with
\eqref{HC-00} when the corresponding null conditions hold.

Thus, in this paper, we study the 4D quasilinear wave equation
\begin{equation}\label{quasi}
\sum_{\al,\beta=0}^4g^{\al\beta}(\phi,\p\phi)\p_{\al\beta}^2\phi=0
\end{equation}
with the short pulse initial data
\begin{align}\label{Y1-1}
\phi|_{t=1}=\delta^{2-\varepsilon_0/3}\phi_0(\f{r-1}{\delta},\omega),\
\p_t\phi|_{t=1}=\delta^{1-\varepsilon_0/3}\phi_1(\f{r-1}{\delta},\omega),
\end{align}
where $0<\ve_0\leq\f{7}{144}$ is a fixed constant, $(\phi_0,\phi_1)(s,\o)$ are smooth functions defined in $\mathbb R\times \mathbb S^3$
with compact support in $(-1,0)$ for the variable $s$. In addition, the following outgoing constraints are imposed:

\begin{equation}\label{Y-00}
\ds (\p_t+\p_r)^k\O^l\p^q\phi|_{t=1}= O(\delta^{2-\varepsilon_0-|q|}),\ 0\leq k\leq 2,
\end{equation}
where $\O\in\{x^i\p_j-x^j\p_i:1\leq i<j\leq 4\}$ stands for one of the rotational derivatives on $\mathbb S^3$.
It will be shown that \eqref{Y-00} together with \eqref{Y1-1} can guarantee  the smallness of $(\phi, \p\phi)$ for all $t\ge 1$.

It follows from \eqref{Y1-1}-\eqref{Y-00} that
the short pulse data can be regarded as some suitable extensions of a class of ``large" symmetric data, for
which the smallness restrictions are imposed initially on the variations along angular directions and along the ``good" direction tangent
to outgoing light conic surface $\{t=r\}$, but the largeness is kept at least for the second order ``bad" directional derivatives
of $\p_t-\p_r$. Such short pulse data actually provide a powerful framework to study effectively the blowup or the global existence of
smooth solutions to the multi-dimensional hyperbolic systems or the second order quasilinear wave equations
by virtue of the corresponding knowledge from the 1D cases. We illustrate that the problems of global existence
of solutions,  \eqref{quasi-0} and \eqref{Y1-0}, \eqref{quasi} and \eqref{Y1-1}-\eqref{Y-00}, correspond  essentially
to the case of outgoing waves. Indeed, it is interesting to note that for \eqref{quasi-0} and \eqref{Y1-0}, the
small solution admits naturally a better time decay property along $\p_t+\p_r$ than along $\p_t-\p_r$ near the light conic surface $\{t=r-M+1: t\ge 1\}$
due to $(\p_t+\p_r)^k\phi=O(t^{-\f{n-1}{2}(1+k)})$
and $(\p_t-\p_r)^k\phi=O(t^{-\f{n-1}{2}})$  ($k\ge 1, n\ge 2$).
For the large data solution to \eqref{quasi} with \eqref{Y1-1}-\eqref{Y-00}, besides the higher order time-decay
rate  along $\p_t+\p_r$ near the light conic surface as in \eqref{quasi-0} with \eqref{Y1-0}, it can be
derived that $(\p_t+\p_r)^k\phi$ ($0\leq k\leq 2$) admits
a higher order smallness $O(\dl^{2-\ve_0})$, which may be propagated from the outgoing constraint condition \eqref{Y-00}.

The short pulse data chosen in \eqref{Y1-1} originate from the monumental work \cite{C3} which reveals the evolutionary formation
of trapped surfaces in the Einstein vacuum space-times, and subsequently was generalized
in \cite{K-R} by enlarging the admissible set of initial conditions.
For the Cauchy problem of the 3D quasilinear wave equation
$-(1+3G''(0)(\p_t u)^2)\p_t^2 u+\Delta u=0$ with the short pulse initial data
$(u,\p_t u)(-2,x)=\big(\dl^{\f32}u_0(\frac{r-2}{\dl},\o),\dl^{\f12}u_1(\frac{r-2}{\dl},\o)\big)$, where $G''(0)\not=0$ is a constant, $t\ge -2$,
$(u_0,u_1)(s,\o)\in C_0^{\infty}\left((0,1]\times\mathbb{S}^2\right)$,
it is shown in \cite{MY} that the smooth solution $u$
will blow up before $t=-1$ and further the shock is formed due to the compression of the incoming waves under some assumptions on $(u_0,u_1)$.
It should be noted that there exist some global existence results with the short pulse initial data for semilinear wave equations.
For examples, the authors in \cite{MPY} study the 3-D semilinear wave equation systems
$\Box\vp^I=\ds\sum_{\tiny\begin{array}{c}0\le\alpha,\beta\le 3\\ 1\le J,K\le N\end{array}}A_{JK}^{\alpha\beta,I}\p_{\alpha}\vp^J
\p_{\beta}\vp^K$ ($I=1, ..., N$)
with
$(\vp^I, \p_t\vp^I)|_{t=1}=(\dl^{\f12}\vp_0^I(\f{r-1}{\dl},\o), \dl^{-\f12}\vp_1^I(\f{r-1}{\dl},\o))$,
where $A_{JK}^{\alpha\beta,I}$ are constants, $(\vp_0^I,
\vp_1^I)(s,\o)\in C_0^{\infty}((-1,0)\times\mathbb{S}^2)$,
the quadratic nonlinear forms satisfy the corresponding first null condition.
Moreover,  it is further assumed that
\begin{equation}\label{HC-03}
(\p_t+\p_r)^k\slashed\nabla^p\p^q\vp^I|_{t=1}=O(\delta^{1/2-|q|}),\quad k\leq N_0,
\end{equation}
where $\slashed\nabla$ stands for the one of the derivatives on $\mathbb S^2$,
and $N_0\ge 40$ is a sufficiently large integer.
Also recently, by some similar ideas of \cite{MPY}, the authors in \cite{Wang} proved the global existence of the relativistic membrane equation
\begin{equation}\label{F6-0}
\p_t(\ds\frac{\p_t\vp}{\sqrt{1-(\p_t\vp)^{2}+|\nabla\vp|^2}})
-\ds\sum_{i=1}^n\partial_i(\frac{\p_i \vp}{\sqrt{1-(\p_t\vp)^2+|\nabla\vp|^2}})
=0\quad  (n=2,3)
\end{equation}
with the short pulse initial data satisfying
\begin{equation}\label{F7-0}
(\p_t+\p_r)^k\slashed\nabla^l\p^m\varphi|_{t=1}=O(\delta^{1-m}),\quad 1\le k\le N_0.
\end{equation}
Here it should be pointed out that the special structure of \eqref{F6-0} (including the divergence form and cubic nonlinearity),
the largeness of $N_0$ and the higher order smallness of $\delta$ in \eqref{F7-0} play a crucial role on
the direct energy method in \cite{Wang} (in this case, the optical function $t-r$ for the linear wave equation can be
utilized, which leads to some  suitably modified Lorentzian vector fields). However, this approach fails to apply to the general
case here since our optical function
satisfies the nonlinear eikonal equation of \eqref{quasi}, for which the geometry of corresponding characteristic
surface is crucial.

As explained in \cite{Ding6} and \cite{Lu1},
the constraint condition \eqref{Y-00} is generally over-determined for $(\phi_0,\phi_1)$.
Indeed, if the short pulse initial data \eqref{Y1-1} is given, then all the derivatives $\p^\al\phi|_{t=1}=O(\delta^{2-\varepsilon_0/3-|\al|})$ are
derived. This means  $(\p_t+\p_r)^k\O^l\p^q\phi|_{t=1}=O(\delta^{2-\varepsilon_0/3-k-|q|})$ for $0\leq k\leq 2$.
However, the orders of $(\p_t+\p_r)^k\O^l\p^q\phi|_{t=1}$ in \eqref{Y-00} are $O(\delta^{2-\varepsilon_0-|q|})$ rather than
$O(\delta^{2-\varepsilon_0/3-k-|q|})$.
Thus, the choices of $(\phi_0,\phi_1)$ are not arbitrary in general.
Thanks to the null condition in the quasilinear equation \eqref{quasi},
\eqref{Y-00} can be fulfilled by the suitable choice of $(\phi_0,\phi_1)$ through solving the local short pulse data
problem of \eqref{quasi}.

The first null condition for the equation \eqref{quasi} is
\begin{equation}\label{null}
\sum_{\al,\beta,\gamma=0}^4\tilde g^{\al\beta,\gamma}\xi_\al\xi_\beta\xi_\gamma\equiv 0\ \text{for $\xi_0=-1$ and
$(\xi_1, \xi_2, \xi_3, \xi_4)\in \mathbb S^3$}.
\end{equation}

In addition, without loss of generality and for simplicity, one can assume that
\begin{equation}\label{g00}
g^{00}(\phi,\p\phi)=-1.
\end{equation}

Our main result in the paper is stated as
\begin{theorem}\label{main}
Consider the Cauchy problem for the 4D quasilinear wave equation \eqref{quasi}
with the short pulse initial data \eqref{Y1-1} satisfying  \eqref{Y-00}. Assume that
\eqref{g00} holds and the first null condition \eqref{null} is satisfied. Then for $\dl>0$ suitably
small, then there exists a global smooth solution $\phi$ to the problem \eqref{quasi}-\eqref{Y1-1} such that
\begin{equation}\label{HC-F3}
\phi\in C^\infty([1,+\infty)\times\mathbb R^4)\quad\text{and} \  |\phi|\le C\delta^{(4-5\ve_0)/3}t^{-1/2},
\  |\p\phi|\le C\delta^{1-\ve_0}t^{-3/2}
\end{equation}
for all time $t\ge 1$, where $C>0$ is a uniform constant independent of $\dl$ and $\ve_0$.
\end{theorem}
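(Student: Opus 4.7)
The plan is to adapt the Christodoulou-style short pulse framework to the 4D quasilinear setting: introduce a nonlinear optical function $u$ from the eikonal equation of \eqref{quasi}, construct the adapted null frame and the inverse foliation density $\mu$, close a bootstrap for weighted energies in a thin spacetime slab along the outgoing light cone, and then recover the interior region via a Goursat problem. I would first obtain a classical solution on a short time interval $[1,1+\tau_0]$ and split the future into an exterior shell $\mathcal D^{\mathrm{ext}}=\{(t,x):t-\dl_0\le t-1+r\le t,\ t\ge 1\}$, where the data actually lives by finite propagation, and an interior region $\{r\le t-\dl_0\}$ in which the initial data vanishes and the solution will be reconstructed later from an inner Goursat problem.

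In $\mathcal D^{\mathrm{ext}}$ I would solve $g^{\al\b}(\phi,\p\phi)\p_\al u\,\p_\b u=0$ with $u|_{t=1}=1-r$, so that the level sets $C_u$ foliate $\mathcal D^{\mathrm{ext}}$ by outgoing null cones. Define $\mu^{-1}=-g^{\al\b}\p_\al t\,\p_\b u$, the null generator $L^\al=-\mu g^{\al\b}\p_\b u$, the conjugate incoming null vector $\un L$ with $g(L,\un L)=-2\mu$, and an orthonormal frame $\{e_A\}_{A=1}^{3}$ tangent to the spheres $S_{t,u}=C_u\cap\{t=\text{const}\}$. In this frame, \eqref{quasi} takes the schematic form $-\f{1}{2\mu}L\un L\phi+\text{angular part}=\text{cubic error}$, and the single most dangerous quadratic interaction $\tilde g^{\al\b,\g}\p_\g\phi\,\p^2_{\al\b}\phi$ acquires, in its leading $\un L\un L\un L$-piece, the factor $\sum\tilde g^{\al\b,\g}L_\al L_\b L_\g$, which vanishes identically under \eqref{null}. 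This structural cancellation is exactly what reconciles global smoothness with the $O(1)$ size of $\un L\phi$ implicit in \eqref{Y1-1}.

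I would then run a bootstrap in which the admissible commuting fields $Z\in\{L,\un L,\O,S\}$ (with $\O$ rotations on $\mathbb S^3$ and $S$ the scaling) act with a $\dl$-hierarchy tuned to \eqref{Y-00}: each $L$ and each $\O$ applied to $\phi$ should gain a factor $\dl^{1-c\ve_0}$ of smallness, while $\un L$ gains nothing. Schematically, the ansatz reads
\begin{equation*}
|Z^I\phi|\le C\,\dl^{\,2-\ve_0-c\ve_0|I|}\, t^{-1/2},\qquad |\p Z^I\phi|\le C\,\dl^{\,1-\ve_0-c\ve_0|I|}\, t^{-3/2},
\end{equation*}
with strictly better $\dl$-powers when $Z^I$ contains factors of $L$ or $\O$; these are compatible with \eqref{Y-00} at $t=1$ and with \eqref{HC-F3} for $Z^I=\mathrm{Id}$. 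They would be propagated through weighted $L^2$ energy identities on truncated outgoing cones $C_u\cap\{t'\le t\}$, using a multiplier of the form $f(\mu)L+g(\mu)\un L$ adapted to the $\mu$-foliation. The null structure \eqref{null}, combined with the $\dl$-smallness stored in each $L$- and $\O$-derivative and with the Hardy inequality \eqref{HC-2}, renders the cubic error terms integrable in both $t$ and $\dl$; pointwise bounds then follow from a Klainerman-type Sobolev embedding on $S_{t,u}$.

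The hard part, in my view, is the simultaneous control of the geometry itself: the transport equations for $\mu$, for the null second fundamental form $\chi$ of $S_{t,u}$, and for the torsion $\zeta$ must obey the same $\dl$-hierarchy so that $\mu$ stays uniformly bounded between two positive constants on $[1,\infty)$, i.e.\ no shock forms. Here \eqref{null} is needed a second time, now inside the source of the $\mu$-transport equation, to prevent the large incoming derivative $\un L\phi$ from driving $\mu\to 0$. Once the exterior estimates \eqref{HC-F3} are closed, restriction to a suitable ingoing null cone $\un C\subset\mathcal D^{\mathrm{ext}}$ furnishes smooth Goursat data of size $O(\dl^{2-\ve_0})$ for \eqref{quasi} posed inward, and the small-data global result of \cite{Lin0} for general 4D quasilinear wave equations — which requires no null condition at all — solves the interior Goursat problem globally; matching across $\un C$ yields \eqref{HC-F3} for all $t\ge 1$.
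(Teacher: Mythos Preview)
Your exterior strategy is essentially the paper's: introduce the optical function $u$, the inverse foliation density $\mu$, and an adapted null frame; exploit \eqref{null} to show that the dangerous coefficient $\tilde g^{\al\beta,\gamma}\mathring L_\al\mathring L_\beta\mathring L_\gamma$ vanishes to leading order (this is precisely how the paper controls the source term in the transport equation for $\mu$, see \eqref{GT}--\eqref{gLLL}); and close a $\dl$-weighted bootstrap for the commuted energies. The details differ (the paper commutes with $\{\varrho\mathring L,\,T,\,R_{ij}\}$ and weights by the number $l$ of $T$'s rather than your $\{L,\un L,\O,S\}$-scheme, and it does not invoke the Hardy inequality \eqref{HC-2} in the exterior at all), but structurally you have the right picture.

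The genuine gap is in the interior. You propose to take Goursat data on an ``ingoing'' cone $\un C$ and then invoke \cite{Lin0}. This fails for two reasons. First, \cite{Lin0} is a \emph{Cauchy} result, not a characteristic/Goursat result; it gives no statement for data prescribed on a null hypersurface. Second---and this is the point the paper stresses---the interior problem in $B_{2\dl}=\{t-r\ge 2\dl\}$ is \emph{not} a small-data problem in any standard Sobolev sense. Its past boundary consists of the \emph{outgoing} cone $\tilde C_{2\dl}$ (not an ingoing one) together with the spacelike piece $\Sigma_{t_0}\cap\{r\le 1\}$, and on the latter one has only $|\p^q\O^\kappa\phi(t_0,x)|\lesssim\dl^{5-|q|-\ve_0}$ for $|q|>3$ (see \eqref{local3-2}), so $\|\phi(t_0,\cdot)\|_{H^s}\to\infty$ as $\dl\to 0$ once $s>5$. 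No single ingoing cone in the thin exterior slab can replace this spacelike data and still cover all of $B_{2\dl}$ for $t\to\infty$.

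What the paper actually does in $B_{2\dl}$ is a bespoke bootstrap with a \emph{hierarchy} of energies $E_{k,l}(t)=\|\p\tilde\Gamma^k\O^l\phi\|_{L^2}^2$, $\tilde\Gamma\in\{\p,S,H_i\}$, each carrying its own (possibly negative) $\dl$-power; see \eqref{EA}. The crucial new ingredient, absent from your proposal, is a modified Klainerman--Sobolev inequality (Lemma~\ref{KS}) in which the inner estimate \eqref{leq} uses a $\dl$-dependent rescaling $x=t\dl^\nu y$ to interpolate across this hierarchy and extract the correct $L^\infty$ decay despite some $E_{k,l}$ being large. Together with the boundary estimates on $\tilde C_{2\dl}$ from Proposition~\ref{YHC-2} (itself a consequence of the exterior analysis), this closes. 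Your proposal would need to replace the appeal to \cite{Lin0} by an argument of this type.
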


\begin{remark}
Note that $\p^2\phi|_{t=1}=O(\dl^{-\f{\ve_0}{3}})$ is large for \eqref{Y1-1}. Then the global well-posedness
theory of \eqref{quasi} with \eqref{Y1-1}
is totally different from the usual small data solution problem of \eqref{quasi} with \eqref{Y1-0},  where  the smallness of $\|\phi(1,x)\|_{H_x^6(\mathbb R^4)}+\|\p_t\phi(1,x)\|_{H_x^5(\mathbb R^4)}$ is required for the latter (see Chapter 6-7 of \cite{H}).
\end{remark}

\begin{remark}
It is pointed out that the scope of $0<\ve_0\leq\f{7}{144}$ in Theorem \ref{main} is not optimal.
For the optimal scope of $\ve_0$ for the general second order quasilinear wave equations with the related null conditions
and short pulse initial data, one is referred to \cite{Ding6} and \cite{Lu1}.
\end{remark}

\begin{remark}\label{HC-003}
It follows from
the expression of the solution $v$ to the 4D linear wave equation $\Box v=0$ with $(v,\p_tv)(0,x)=(v_0, v_1)(x)$ that both $v$ and $\p v$ are generally large
when $\p_x^2v_0$
or $\p_xv_1$ are large. Note that $(\p^2\phi|_{t=1}, \p(\p_t\phi)|_{t=1})$
are large for the initial data \eqref{Y1-1} with sufficiently small $\dl>0$. Therefore,  the strict hyperbolicity of \eqref{quasi} is generally lost
since the smallness of $(\phi,\p\phi)$ may be violated. Thus,
it is necessary to impose some constraint conditions like \eqref{Y-00} so that the  smallness of $(\phi,\p\phi)$ can be kept
with the development of time (see \eqref{HC-F3} of Theorem \ref{main}).
\end{remark}

\begin{remark} For the 4D quasilinear wave equation \eqref{quasi}
with $(\phi(1,x), \p_t\phi(1,x))\in (H^s(\mathbb R^4),$ $H^{s-1}$ $(\mathbb R^4))$ and $s>\f{7}{2}$, the authors in \cite{Smith}
have established the well-posedness of local solution $\phi\in C([1, T],$ $H^s(\mathbb R^4))\cap C^1([1, T], H^{s-1}(\mathbb R^4))$
with some $T>1$
(see the equation (1.1) with (1.2) and Theorem 1.2 in \cite{Smith}).
If $s>4$, the local well-posedness of  \eqref{quasi} is well known (see \cite{H} or \cite{J2}).
But for the data $(\phi(1,x),$ $\p_t\phi(1,x))$
in \eqref{Y1-1}, $\|\phi(1,x)\|_{H^s}=O(\dl^{\f52-\f{\ve_0}{4}-s})\to\infty$ holds true
when $\dl\to 0+$ and $\f{7}2<s\le 4$.
Therefore, the local well-posedness  of smooth solution $\phi$ to \eqref{quasi} in any fixed interval $[1, T]$
with $T>1$ independent of $\dl$
does not follow \cite{Smith}. Thanks to the special structures
of the short pulse initial data \eqref{Y1-1} with \eqref{Y-00}, based on the first null condition \eqref{null},
both the local and global existence of $\phi$ will be established.
\end{remark}

\begin{remark}	
For second order multidimensional quasilinear wave equations
or multidimensional quasilinear  hyperbolic systems with the genuinely nonlinear structures, the formation of shocks has been studied (see \cite{CM}, \cite{0-Speck}, \cite{LS}, \cite{MY}, and \cite{J}-\cite{S2})
by the remarkable ideas in \cite{C2}. In the breaking-through work \cite{C2},
with the help of differential geometry, D. Christodoulou constructed the ``\textit{inverse foliation density}'' $\mu$ which
measures the compression of the outgoing characteristic surfaces, and showed the formation of shocks
for 3-D relativistic Euler equations
under the smallness assumptions on the initial data. In this process,
D. Christodoulou proved that $\mu$ is positive away from the shock
but approaches $0$ near the blowup curve ($\mu\rightarrow 0+$ corresponds to the intersection of
outgoing characteristic surfaces).
Motivated by the crucial differential geometric methods in \cite{C2,C3,MY,J}, in the present paper, on the contrary, we will show $\mu\ge C>0$ for all time $t\ge 1$
and derive the global energy estimates near the outermost outgoing conic surface $\{t=r\}$ for problem  \eqref{quasi}-\eqref{Y1-1}
with \eqref{Y-00}.
This leads to that  near $\{t=r\}$, the characteristic conic surfaces never intersect
and the solution of \eqref{quasi}-\eqref{Y1-1} exists globally.
\end{remark}

\begin{remark}\label{HC-04}
It is emphasized that if the  null condition \eqref{null} does not hold,
then the solution $\phi$ of \eqref{quasi} may blow up in finite time whether or not  \eqref{Y-00} is fulfilled.
In fact,
for the 4D quasilinear wave equation $(1+\p_t\phi)\p_t^2\phi-\Delta \phi=0$
with the initial data $(\phi|_{t=1}, \p_t\phi|_{t=1})=(\dl^{2-\varepsilon_0/3}\phi_0(\f{r-1}{\dl},\o), \dl^{1-\varepsilon_0/3}\phi_1(\f{r-1}{\dl},\o))$ and $0<\ve_0<3$
(obviously, the corresponding null condition is not fulfilled), by the completely analogous arguments as in \cite{MY},
\cite{Ding6} and \cite{Lu1},
one can prove that the solution $\phi$ blows up before the time $1+O(\dl^{1-\varepsilon_0/3})$ and further the shock is formed under
the assumption of $\p_s\phi_1(s_0,\o_0)\ge 2$ at some point $(s_0,\o_0)\in (0,1]\times\mathbb S^3$.
This indicates that although the global small data solution of \eqref{quasi} exists for generic smooth nonlinearities, yet
for the short pulse initial data \eqref{Y1-1}, the first null condition is not only a sufficient but generally a necessary condition
for the global existence.
\end{remark}

\begin{remark}
For the case of $n\ge 5$ in \eqref{quasi-0}, when the analogous short pulse initial data \eqref{Y1-1} are given
and the constraint condition \eqref{Y-00} is replaced by
\begin{align}\label{HC-F2}
(\p_t+\p_r)^k\O^\kappa\p^q\phi|_{t=1}= O(\delta^{2-\varepsilon_0-|q|})\quad \text{for $0\leq k\leq N_0=[\f{n}{2}]+1$}
\end{align}
with $\O\in\{x^i\p_j-x^j\p_i:1\leq i<j\leq n\}$, then under the first null condition and
suitable scope of $\ve_0$,  \eqref{quasi-0} will have a global smooth solution $\phi$ as in Theorem \ref{main}.
However, with the increasing of $n$ and $N_0$, it is more difficult to find $(\phi_0,\phi_1)$ such that \eqref{HC-F2}
is fulfilled since \eqref{HC-F2} is over-determined for the general form of equation \eqref{quasi-0}.
The choice of $N_0=[\f{n}{2}]+1$ comes essentially from the Sobolev imbedding theorem on the sphere $S^{n-1}$
(see Lemma 11.1 of Section 11 below).
\end{remark}

\begin{remark}
In \cite{MPY} or \cite{Wang}, due to the structure
of the semilinear wave equation systems or the divergence structure of the specific relativistic
membrane equation,  suitable $(\vp_0, \vp_1)$ can be chosen such that \eqref{HC-03} holds for sufficiently large $N_0$.
Note that the largeness of $N_0$ plays an important role in the analysis of \cite{MPY} and \cite{Wang}, which leads actually
to  solve the global small data solution problem  rather than the
large value solution problem in the interior of the light cone. On the other hand, near the light conic surface,
the global smooth solution $\vp$ is obtained through the energy method and by the special
structure of related nonlinear equations (see \cite{MPY} and \cite{Wang}). However, it seems that the
method in \cite{MPY, Wang} fails to the case
for the general form of \eqref{quasi} here and the restricted number $k$ in \eqref{Y-00}.
\end{remark}

\begin{remark}
	It should be noted that there have been a few studies on the local or global existence of solutions
to some special semilinear wave equations and hyperbolic systems in multi-dimensions with general short pulse initial
data by matched asymptotic analysis methods, see \cite{Alt-1, Alt-2, C-R, Hunter, Majda}.
For examples, for the semilinear symmetric hyperbolic systems $\p_tu+\ds\sum_{j=1}^nA_j(t,x)\p_{x_j}u=G(t,x,u)$
with the smooth pulse initial data
$u_0(0,x)=u_0(x,\f{\vp(x)}{\dl})$ ($\p_x\vp(x)\not=0$)
or the semilinear wave equations $\square w+|\p_tw|^{p-1}\p_tw=0$ with the smooth and spherically symmetric  pulse initial data
$(w,\p_tw)(0,x)=(\dl^{\nu+1}w_0(r,\f{r-r_0}{\dl}), \dl^{\nu}w_1(r,\f{r-r_0}{\dl}))$ ($p>1, \nu>0$),
the authors investigated the local existence of $u$ in \cite{Alt-1,Alt-2} and
the local/global existence of $w$ in \cite{C-R} under various assumptions
of $p$ and $\nu$.
In addition, for the quasilinear multidimensional hyperbolic systems, the  formal asymptotic correctors have been discussed
in \cite{Hunter} and \cite{Majda}.
\end{remark}

Next we give some direct applications for Theorem \ref{main} and Remark \ref{HC-04}. First, consider
the 4D compressible isentropic Euler equations
\begin{equation}\label{Euler}
\left\{
\begin{aligned}
&\p_t\rho+div (\rho v)=0,\\
&\p_t(\rho v)+div (\rho v \otimes v)+\nabla p=0,\\
\end{aligned}
\right.
\end{equation}
which express the conservations of the mass and momentum, respectively. Where $(t,x)\in [1,+\infty)\times\mathbb{R}^4$, $\nabla=(\p_1,\p_2, \p_3,\p_4)=(\p_{x^1},\p_{x^2}, \p_{x^3},\p_{x^4})$, $v=(v_1,v_2,v_3,v_4)$, $\rho$, $p$ stand for the velocity, density, pressure
respectively. For the Chaplygin gases, the equation of state (see \cite{CF}) is
\begin{equation}\label{pr}
p(\rho)=P_0-\frac{B}{\rho}
\end{equation}
with $P_0$ and $B$ being positive constants.

Suppose that \eqref{Euler} admits the irrotational smooth initial data
$\big(\rho,v)(1,x)=(\bar\rho+\rho_0(x),v_1^0(x),v_2^0(x),$ $v_3^0(x),v_4^0(x)\big)$,
where $\bar\rho+\rho_0(x)>0$, and $\bar\rho$ is
a positive constant which can be normalized so that the sound speed $c(\bar\rho)=\sqrt{p'(\bar\rho)}=1$. Then by the irrotationality
of \eqref{Euler}, one can
introduce a potential function $\phi$ such that $v=\nabla\phi$, and the Bernoulli's law, $\p_t\phi+\frac{1}{2}|\nabla\phi|^2+h(\rho)=0$,
holds with the enthalpy $h(\rho)$ satisfying $h'(\rho)=\f{c^2(\rho)}\rho$ and $h(\bar\rho)=0$. Therefore,
for smooth irrotational flows, \eqref{Euler} is equivalent to that for the  Chaplygin  gases,
\begin{equation}\label{1.9-01}
\begin{split}
\p_t^2\phi-(1+2\p_t\phi)\triangle\phi+2\sum_{k=1}^4\p_k\phi\p_{tk}\phi
+\sum_{i,j=1}^4\p_i\phi\p_j\phi\p_{ij}^2\phi-|\nabla\phi|^2\triangle\phi=0,
\end{split}
\end{equation}
which
satisfies the first null condition in \eqref{null}.
Therefore, for the short pulse initial data \eqref{Y1-1} with \eqref{Y-00}, Theorem \ref{main} applies, that is,
\begin{corollary}
	For the 4D quasilinear equation \eqref{1.9-01} equipped with \eqref{Y1-1} and \eqref{Y-00},
when $\dl>0$ is small and $0<\ve_0\leq\f{7}{144}$, it has a global smooth solution $\phi$ for $t\geq 1$.
\end{corollary}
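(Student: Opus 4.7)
The plan is routine: read off the coefficient tensor $g^{\alpha\beta}(\phi,\partial\phi)$ of the potential equation \eqref{1.9-01}, verify the first null condition \eqref{null}, and invoke Theorem \ref{main} directly. No further PDE work is needed, since all the hard analysis (propagation of the inverse foliation density $\mu\ge C>0$, Goursat reductions, and the weighted energy estimates near $\{t=r\}$) is already packaged inside the main theorem.

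First I would multiply \eqref{1.9-01} by $-1$ so that the coefficient of $\partial_t^2\phi$ matches the normalization \eqref{g00}. Reading off the remaining coefficients in $\sum_{\alpha,\beta}g^{\alpha\beta}\partial^2_{\alpha\beta}\phi=0$, and using the symmetrization $g^{\alpha\beta}=g^{\beta\alpha}$ when splitting cross-terms, I obtain $g^{00}=-1$, $g^{0k}=g^{k0}=-\partial_k\phi$ for $1\le k\le 4$, $g^{kk}=1+2\partial_t\phi-(\partial_k\phi)^2+|\nabla\phi|^2$, and $g^{ij}=-\partial_i\phi\,\partial_j\phi$ for $i\ne j$ with $i,j\ge 1$. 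All $\phi$-linear pieces $g^{\alpha\beta,0}$ vanish, while the only non-zero $\tilde g^{\alpha\beta,\gamma}$ are $\tilde g^{0k,k}=\tilde g^{k0,k}=-1$ and $\tilde g^{kk,0}=2$ for $k=1,\dots,4$; the remainders $h^{\alpha\beta}(\phi,\partial\phi)$ are smooth and at least quadratic in $(\phi,\partial\phi)$, matching the form required in \eqref{g}.

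Next I would evaluate the null symbol at $\xi=(-1,\omega)$ with $\omega\in\mathbb S^3$. The only triples $(\alpha,\beta,\gamma)$ giving a nonzero contribution are the permutations involving one $0$-index and two equal spatial indices, all of which share the common factor $\xi_0\xi_k^2$, so
$$\sum_{\alpha,\beta,\gamma=0}^4\tilde g^{\alpha\beta,\gamma}\xi_\alpha\xi_\beta\xi_\gamma=\sum_{k=1}^4\bigl(\tilde g^{0k,k}+\tilde g^{k0,k}+\tilde g^{kk,0}\bigr)\xi_0\xi_k^2=\sum_{k=1}^4(-1-1+2)\xi_0\xi_k^2=0,$$
and the first null condition \eqref{null} is verified (in fact it holds identically in $\omega\in\mathbb R^4$, not merely on $\mathbb S^3$). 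With \eqref{g00} and \eqref{null} confirmed and the smoothness of $g^{\alpha\beta}$ inherited from the equation of state \eqref{pr} and the Bernoulli relation, Theorem \ref{main} applies and yields a global smooth solution $\phi$ for all $t\ge 1$ whenever $\delta>0$ is small and $0<\varepsilon_0<\tfrac{1}{12}$, which is exactly the corollary.

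The only place attention is needed is the algebraic bookkeeping in the first step, in particular the factor of two that appears when reading $g^{0k}$ and $g^{ij}$ ($i\ne j$) from the symmetric sum. Once that is handled, the null cancellation is immediate and there is no further obstacle; the proof reduces to a one-line invocation of Theorem \ref{main}.
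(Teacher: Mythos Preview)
Your proposal is correct and follows exactly the paper's approach: the paper simply asserts that \eqref{1.9-01} satisfies the first null condition \eqref{null} and then states that Theorem \ref{main} applies. Your explicit extraction of the coefficients $\tilde g^{\alpha\beta,\gamma}$ and the verification of the null symbol are accurate and make explicit what the paper leaves to the reader, but there is no difference in method.
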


In addition, the 4D  relativistic membrane equation and 4D nonlinear membrane equation are, respectively,
\begin{equation}\label{HCCFF-1}
\p_t\Big(\ds\f{\p_t\phi}{\sqrt{1-(\p_t\phi)^2+|\nabla\phi|^2}}\Big)
-\ds\sum_{i=1}^4\p_i\Big(\ds\f{\p_i\phi}{\sqrt{1-(\p_t\phi)^2+|\nabla\phi|^2}}\Big)=0
\end{equation}
and
\begin{equation}\label{HCCFF-2}
\p_t^2\phi
-\ds\sum_{i=1}^4\p_i\Big(\ds\f{\p_i\phi}{\sqrt{1+|\nabla\phi|^2}}\Big)=0.
\end{equation}
Obviously, both \eqref{HCCFF-1} and \eqref{HCCFF-2} fulfill the first null condition \eqref{null} since they don't
contain the quadratic nonlinearity. Hence, Theorem \ref{main} implies
\begin{corollary}
	For the 4D equations \eqref{HCCFF-1} and \eqref{HCCFF-2} equipped with \eqref{Y1-1} and \eqref{Y-00}, when $\dl>0$ is small and $0<\ve_0\leq\f{7}{144}$, they have global smooth solutions for $t\geq 1$.
\end{corollary}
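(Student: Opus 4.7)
The plan is to reduce each of \eqref{HCCFF-1} and \eqref{HCCFF-2} to a direct invocation of Theorem \ref{main}. Since the short pulse initial data \eqref{Y1-1} and the outgoing constraint \eqref{Y-00} are hypothesized, the only task is to put each equation in the canonical form \eqref{quasi} and verify the normalization \eqref{g00} together with the first null condition \eqref{null}.

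First I would carry out the differentiations in \eqref{HCCFF-1} and \eqref{HCCFF-2} by the product and chain rules to obtain equations of the form $\sum_{\al,\b=0}^{4}G^{\al\b}(\p\phi)\p^2_{\al\b}\phi=0$. For the relativistic membrane case, writing $W(\p\phi)=\sqrt{1-(\p_t\phi)^2+|\na\phi|^2}$, the standard computation yields $G^{\al\b}(\p\phi)=m^{\al\b}/W-m^{\al\g}m^{\b\dl}\p_\g\phi\,\p_\dl\phi/W^{3}$, and analogously for \eqref{HCCFF-2} with $W=\sqrt{1+|\na\phi|^2}$ and the corresponding spatial structure. In both cases the coefficients depend only on $\p\phi$, not on $\phi$ itself, so in the notation of \eqref{g} the constants $g^{\al\b,0}$ automatically vanish.

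The key observation is then that the nonlinearity in both equations is built entirely from the analytic function $(1+Q(\p\phi))^{-1/2}$, where $Q$ is a quadratic form in $\p\phi$; hence its Taylor expansion around $\p\phi=0$ contains only even powers of $\p\phi$. Consequently, each $G^{\al\b}(\p\phi)$ expands as $m^{\al\b}$ plus terms quadratic and higher in $\p\phi$, with no linear term. Thus all constants $\tilde g^{\al\b,\g}$ in \eqref{g} vanish identically, and the first null condition \eqref{null} is trivially satisfied, as already noted in the statement.

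Finally, one normalizes to enforce \eqref{g00}: since $G^{00}(0)=-1$, the factor $-G^{00}(\p\phi)$ is smooth and bounded away from zero in a neighborhood of the origin, so dividing the equation by $-G^{00}(\p\phi)$ yields an equivalent equation of the form \eqref{quasi} with $g^{00}\equiv-1$. The expansion of $1/(-G^{00})$ again contains only even powers of $\p\phi$, so the vanishing of $\tilde g^{\al\b,\g}$ is preserved under this normalization. The hypotheses of Theorem \ref{main} are therefore met, and the conclusion \eqref{HC-F3} applies to each equation. The only mild subtlety, which I would double-check but expect to be routine, is the invariance of the null-condition vanishing under the $-G^{00}$ rescaling; everything else is a direct application of the main theorem.
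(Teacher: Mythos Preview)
Your proposal is correct and follows essentially the same approach as the paper: both argue that the nonlinearities in \eqref{HCCFF-1} and \eqref{HCCFF-2} are at least quadratic in $\p\phi$ (hence all $\tilde g^{\al\beta,\gamma}$ vanish and \eqref{null} holds trivially), then invoke Theorem~\ref{main}. You supply more detail than the paper, which dispatches the corollary in a single sentence, by explicitly writing out the coefficient functions, justifying the even-power Taylor structure, and verifying that the normalization \eqref{g00} can be enforced without disturbing the vanishing of the linear coefficients; these checks are routine but worth recording, and your treatment of them is sound.
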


\subsection{Sketch for the proof of Theorem \ref{main}}
Since the proof of Theorem \ref{main} is rather lengthy, for the convenience of readers, we will outline the main
steps and ideas of the analysis in this subsection. Set $C_0=\{(t,x):t\geq 1+2\delta,t=r\}$,
the outermost outgoing conic surface; $A_{2\delta}=\{(t,x):t\geq 1+2\delta,0\leq t-r\leq 2\delta\}$, a domain
containing $C_0$; and $B_{2\delta}=\{(t,x):t\geq 1+\delta,t-r\geq 2\delta\}$, a conic domain inside the light cone
$\{(t,x):t\geq 1+2\delta, r\le t\}$ with the
lateral boundary given by $\tilde C_{2\delta}=\{(t,x):t\geq 1+2\delta,t-r=2\delta\}$. The proof of Theorem \ref{main}
consists of three parts: the local existence of solution $\phi$ for $t\in[1,1+2\delta]$, the global existence of
the solution in $A_{2\delta}$, and the global existence in $B_{2\delta}$.

\subsubsection{Local existence of the solution $\phi$ for $t\in [1, 1+2\delta]$}

By the energy method and the first null condition \eqref{null}, we can obtain the local existence of smooth solution
$\phi$ to \eqref{quasi}-\eqref{Y-00} satisfying $|\p^q\O^l\phi(t,x)|\lesssim
\delta^{2-|q|-\varepsilon_0}$ for $t\in [1, 1+2\delta]$. More importantly, the following basic
estimates for $\phi(1+2\delta,x)$ can be derived:
\begin{align}
&|L^a\p^q\O^\kappa\phi(1+2\delta,x)|\lesssim\delta^{2-|q|-\varepsilon_0}\quad \text{for $r\in[1-2\delta, 1+2\delta]$},\label{local1}\\
&|\underline L^a\p^q\O^\kappa\phi(1+2\delta,x)|\lesssim\delta^{3-|q|-\varepsilon_0}\quad\text{for $r\in [1-3\delta, 1+\delta]$},\label{local2}
\end{align}
where $a\in \mathbb N_0$ can be chosen as large as needed,
$L=\p_t+\p_r$ and $\underline L=\p_t-\p_r$. Note that \eqref{local1} and \eqref{local2}
present the better smallness property of $\phi$ along certain directional derivatives in different space domains
than that on the whole hypersurface $\{(t,x): t=1+2\delta\}$
(i.e., $|L^a\p^q\O^\kappa\phi(1+2\delta,x)|\lesssim\delta^{2-a-|q|-\varepsilon_0}$ and $|\underline L^a\p^q\O^\kappa\phi(1+2\delta,x)|\lesssim\delta^{3-a-|q|-\varepsilon_0}$).
Based on the crucial estimates \eqref{local1}-\eqref{local2} together with the first null condition and a series of
involved analyses,
one can estimate $|L^k\p^q\phi|\lesssim\delta^{2-|q|-\varepsilon_0}t^{-3/2-k}$ on $\tilde C_{2\delta}=\{(t,x):t\geq 1+2\delta,t-r=2\delta\}$ eventually, which is one of the key points
for proving the global existence of smooth solution $\phi$ in $B_{2\dl}$ in the future.

\subsubsection{Global existence of the solution $\phi$ in $A_{2\dl}$}\label{6-2}

Motivated strongly by the strategy of D. Christodoulou in \cite{C2},
we start to construct the solution $\phi$ of \eqref{quasi} in $A_{2\dl}$ by taking somewhat more general computations
from the Lorentzian geometric framework which is suitable for treating the 4D general quasilinear wave equation \eqref{quasi}.
Although the main result in \cite{C2} deals with the finite time blowup of smooth solutions
to the 3-D compressible Euler equations, which seems to be different from our goal to establish the global smooth solution to
\eqref{quasi} with \eqref{Y1-1} and \eqref{Y-00}, yet
we can still make full use of the idea in \cite{C2} to study whether the outgoing characteristic surfaces collapse
since the intersection of characteristic surfaces leads to the finite time formation of shock waves (also see \cite{CM,MY,J}),
while the expansion of characteristic surfaces generally implies the global existence of smooth solutions to
the hyperbolic systems as indicated in \cite{CF,Xin,Xu-1,Xu-4}.
In this process, the key ingredients here are to derive both the precise order of smallness for $\dl$
and the suitable time-decay rates for
the solution $\phi$ and its derivatives in $A_{2\dl}$.

Following \cite{C2} or \cite{J}, for a given smooth solution $\phi$ to \eqref{quasi},
one can define an {\it ``optical function"} $u$ for \eqref{quasi} as
\begin{equation}\label{H0-3}
\left\{
\begin{aligned}
&\ds\sum_{\al,\beta=0}^4g^{\al\beta}(\phi,\p
\phi)\p_\al{u}\p_\beta{u}=0,\\
&u(1+2\delta,x)=1+2\delta-r
\end{aligned}
\right.
\end{equation}
and subsequently the inverse foliation density
\begin{equation}\label{H0-4}
\mu=
-\ds(\sum_{\al=0}^4g^{\al0}\p_\al u)^{-1},
\end{equation}
where $g^{\al0}=g^{\al0}(\phi,\p\phi)$. Set the metric $g=\ds\sum_{\al,\beta=0}^4g_{\al\beta}dx^\al dx^\beta$
with $(g_{\al\beta})$ being the inverse matrix of $(g^{\al\beta})$ and let
\begin{equation}\label{H0-5}
\mathring L=-\mu \ds\sum_{\al,\beta=0}^4g^{\al\beta}\p_\al u\p_\beta.
\end{equation}


Extend the local coordinates $(\th^1,\th^2,\th^3)$ on the standard sphere $\mathbb S^3$ by
\begin{equation}\label{H0-6}
\left\{
\begin{aligned}
&\mathring L\vartheta^A=0,\\
&\vartheta^A|_{t=1+2\dl}=\th^A,
\end{aligned}
\right.
\end{equation}
where and below $A=1,2,3$. As in \cite{J} or \cite{MY}, one can perform the change of coordinates:
$(t, x^1, x^2, x^3,$ $x^4)\longrightarrow (s, u, \vartheta^1, \vartheta^2, \vartheta^3)$ in $A_{2\dl}$ with
\begin{align}\label{H0-7}
&s=t,\quad
u=u(t,x),\quad
\vartheta^A=\vartheta^A(t,x).
\end{align}
Then $X_A:=\f{\p}{\p\vartheta^A}$ are the tangent vectors on the curved
sphere $S_{s,u}:=\{(s',u',\vartheta^1,\vartheta^2,\vartheta^3):s'=s,u'=u\}$.

Under the bootstrap assumptions on the suitable smallness and time-decay rates of $(\phi,\p\phi)$ (more details
see $(\star)$ in Section \ref{BA}) and with the help of the first null condition \eqref{null},
$\mu$ satisfies $\mathring L\mu=O(\delta^{1-\varepsilon_0}s^{-5/2}\mu+\delta^{1-2\varepsilon_0}s^{-3/2})$.
By $\mu(1+2\delta,x)\sim 1$,  $\mathring L\sim\p_t+\p_r$  and through the integration along integral
curves of $\mathring L$, $\mu\sim 1$ can
be derived for small $\dl>0$. The positivity of $\mu$ indicates that the outgoing characteristic conic surfaces never
intersect as long as the smooth solution $\phi$
exists. Set $\varphi=(\varphi_0, \varphi_1, \varphi_2, \varphi_3, \varphi_4):=(\p_t\phi, \p_1\phi, \p_2\phi, \p_3\phi, \p_4\phi)$.
Then a quasilinear wave system for $(\phi,\varphi)$ is derived from \eqref{quasi}
\begin{equation}\label{Y-1}
\begin{split}
&\mu\Box_g\phi=G(\phi,\varphi,\p\varphi),\\
&\mu\Box_{g}\varphi_\gamma=F_\gamma(\phi,\varphi, \p\varphi),\quad\gamma=0,1,2,3,4,
\end{split}
\end{equation}
where $\Box_g=\f{1}{\sqrt{|\det g|}}\p_\al(\sqrt{|\det g|}g^{\al\beta}\p_\beta)$, $G$ and $F_\gamma$ are smooth functions
of their arguments.
To study the nonlinear system \eqref{Y-1}, as usual, we first carry out the energy estimate
for its linearized equation
\begin{equation}\label{Y-2}
\mu\Box_g\Psi=\Phi.
\end{equation}
If $\Psi=\Psi_k=Z^k\varphi$ is chosen in \eqref{Y-2},
where $Z$ is one of  some suitable first order vector fields, then it follows from
\eqref{Y-1} and direct computations of the commutator $[\mu\Box_{g}, Z^k]$
that the term $\Phi=\Phi_k$ in \eqref{Y-2} will contain the $(k+2)$-th order derivatives of $\varphi$.
Especially, $\slashed\nabla Z^{k-1}\chi$ and $\slashed\nabla^2Z^{k-1}\mu$ appear in $\Phi_k$,
where $\slashed\nabla$ represents the covariant derivative on $S_{s,u}$,
and $\chi$ stands for the second fundamental form of $S_{s,u}$ which is defined as
$\chi_{AB}=g(\mathscr D_{X_A} \mathring L, X_B)$
with $\mathscr D$ being the Levi-Civita connection of $g$.
Utilizing  the energy estimate of $\Psi_k$ derived from \eqref{Y-2},
one then can bound the $L^2$ norms of $(k+1)$ order derivatives of $\varphi$ by $L^2$ norm of
$\Phi_k$ and further by the $L^2$ norms of $(k+2)$ order derivatives of $\varphi$.
However, this obviously leads to the loss of derivatives for $\varphi$, and hence,
the estimate of $(k+1)$ order derivatives of $\varphi$
can not be obtained directly from the energy estimate for the equation $\mu\Box_g\Psi_k=\Phi_k$.
To overcome the difficulty, motivated by some ideas in \cite{C2}, one can use the following strategies:

${\bf \bullet}$ First, one can derive the transport equation for $\textrm{tr}\chi$ along the vector field $\mathring L$,
where $\textrm{tr}\chi$ is the trace of $\chi$ on $S_{s,u}$. Note that due to the dependence of $g^{\al\beta}$ on $\phi$ and $\vp$,
it is required to express $\mathring L\textrm{tr}\chi$ by making use of the equation of $\phi$ in \eqref{Y-1}, which leads to the
requirement on the estimate of $\phi$. Second, by
utilizing the Ricci identity, one can find the relation between $\slashed\nabla\chi$ and $\slashed\nabla\textrm{tr}\chi$
from \eqref{Y-1}. By these properties and some elaborate analysis,
the $L^2$ estimates for the highest order derivatives of $\slashed\nabla\chi$ are obtained by taking $L^2$
integration along integral curves of $\mathring L$. In this process, we have to
establish some new estimates for the Riemann curvature on $S_{s,u}$ of $\mathbb R^4$, which is achieved with the aid of the Christoffel symbols
on $S_{s,u}$ (see Lemma \ref{Gc}).

${\bf \bullet}$ According to the elliptic estimate on $\slashed\nabla^2 f$ from $\slashed\triangle f$ (see details in \eqref{ee}),
where $\slashed\triangle$ is the Laplacian operator on $S_{s,u}$, the $L^2$ norm of the highest order derivative
of $\slashed\nabla\mu$ can be controlled by the quantities containing the derivatives of $\slashed\triangle\mu$ and
$\slashed\nabla\mu$. Thanks to the transport equation for $\slashed\triangle\mu$,  the
estimates on the derivatives of $\slashed\triangle\mu$ and
$\slashed\nabla\mu$ may be derived as for $\slashed d{\textrm{tr}}\chi$.

${\bf \bullet}$ Based on the above estimates for the highest order derivatives of $\chi$ and $\mu$, all the terms in $\Phi_{k}$
can be treated by careful classifications. Together with the derived energy estimates on the solution $\Psi$ of \eqref{Y-2}
and some  Sobolev-type
inequalities on the sphere, both the better time-decay rate $s^{-5/2}$ in the right hand side of the
equation  $\mathring L\varphi=O(\delta^{1-\varepsilon_0} s^{-5/2})$ and the  better $L^\infty$ bound of $\varphi$ with order $\delta^{1-\varepsilon_0} s^{-3/2}$
are obtained. Then the basic bootstrap assumptions of $(\phi,\varphi)$ can be closed.

On the other hand, we also need to pay close attention to the treatments of the solution $\phi$ itself and $L\phi$
since the coefficients in \eqref{quasi} includes $\phi$ while it is extremely difficult to derive an effective governing equation for $L\phi$.
More importantly, it should be noted that both the
precise time-decay rate and smallness of $L\phi$ on $\t C_{2\dl}$, such as $|L^k\phi(t,x)|\lesssim\delta^{2-\varepsilon_0}t^{-3/2-k}$
for $(t,x)\in \t C_{2\dl}$, will be crucial in order to show the global solvability of \eqref{quasi} in
$B_{2\dl}$ since the related boundary integrals on $\t C_{2\dl}$ in the energy estimates require the
suitable smallness. We will use the following procedure to derive  the desired time-decay rates and smallness of $(\phi, L\phi)$:

${\bf \bullet}$  We first carry out the energy estimate on $\varphi_\gamma$ by the equation $\mu\Box_{g}\varphi_\gamma=F_\gamma(\phi,\varphi, \p\varphi)$. Then to avoid reconstructing the analogous and complex energy inequalities
for $\phi$ from the equation $\mu\Box_g\phi=G(\phi,\varphi,\p\varphi)$, we utilize the direct relationship
$\varphi_\gamma=\p_{x^\gamma}\phi$ to get the $L^2$ norm of $\phi$ by the energies of $\varphi_\gamma$.
Based on this and Sobolev's embedding theorem, the $L^\infty$ estimates of $\phi$ and its derivatives can be obtained.

${\bf \bullet}$ Note that all the estimates of $\phi$ in $A_{2\dl}$ are made in the modified frame $\{\mathring L, T, R_{ij}\}$,
where $\mathring L$, $T$, $R_{ij}$ are some suitable vectors approximating $L$, $-\p_r=\f{\underline L-L}{2}$,
$\O_{ij}=x^i\p_j-x^j\p_i$ respectively on the time $t=1+2\delta$. To get the $L^\infty$ estimates of $Z^\al\phi$
with $Z\in\{L,\underline L, \O_{ij}\}$
and further obtain the required time-decay rate of $L\phi$, it is required to express the vector fields $\{L,\underline L, \O_{ij}\}$
in terms of $\{\mathring L, T, R_{ij}\}$. A typical such expression is $L=c_{L}\mathring L+c_TT+c_{\O}^{ij}\O_{ij}$,
where $c_L$, $c_T$ and $c^{ij}_{\O}$ are
smooth functions.  One expects that $L\phi$ and its derivatives can admit the desired smallness of higher orders with respect to $\dl$
and faster time-decay rate. However, $T\phi$ and its derivatives have not good enough properties as needed,
it is hoped that the coefficient $c_T$ can make up for these deficiencies. Although the general form of \eqref{quasi}
makes it difficult to analyze the time-decay rates of the related coefficients $c_T$, $c^{ij}_\O$ and so on,
thanks to the first null condition \eqref{null}, one can find their transport equations along the directional derivative $\mathring L$.
Based on this,  the demanded estimates of  $c_L$, $c_T$ and $c^{ij}_{\O}$, then the estimate of $(\phi, L\phi)$
can be obtained.

When the desired time-decay rates and the smallness estimates of $(\phi,\vp)$ are established, the
global existence of the solution to  \eqref{quasi} with \eqref{Y1-1}-\eqref{Y-00}
in $A_{2\dl}$ is shown.

\subsubsection{Global existence of solution $\phi$ inside $B_{2\dl}$}

It follows from the local estimates of the solution $\phi$ to \eqref{quasi} with \eqref{Y1-1}-\eqref{Y-00} in
Theorem \ref{Th2.1} of Section 3 that for $r\in [1-3\delta, 1+\delta]$,
$\p^q\phi(1+2\delta,x)$ admit the high order smallness $O(\delta^{2-\varepsilon_0})$ (compared with $\delta^{2-|q|-\varepsilon_0}$).
In addition, it has been known  that the outgoing characteristic conic surfaces starting from the
domain $\{t=1+2\delta, 1-4\delta\le r\le 1+\delta\}$ in $A_{4\dl}$
are almost straight, and then the resulting domain contains the boundary $\tilde C_{2\delta}$
of $B_{2\dl}$. Noting \eqref{local1}-\eqref{local2} and by the estimates in $A_{4\dl}$ we have established
that on $\tilde C_{2\delta}$  the solution $\phi$ and its derivatives
satisfy $|L^k\phi|\lesssim\delta^{2-\varepsilon_0}t^{-3/2-k}$,
whose right hand side admits the higher order smallness $O(\delta^{2-\varepsilon_0})$ (see \eqref{LLOp} of Section 11).
Based on such ``good" smallness and time-decay rate $t^{-3/2}$ of $\phi$ on $\tilde C_{2\delta}$,
one can solve the global Goursat problem of \eqref{quasi} in $B_{2\dl}$.
To this end, we will carry out the global weighted energy estimates for the solution $\phi$
in $B_{2\dl}$ and make use of the Klainerman-Sobolev inequality to get the time-decay rates of $\p^{\al}\phi$.
However, since the classical Klainerman-Sobolev inequality holds generally on the whole space (see Proposition 6.5.1 in \cite{H}),
we need a modified and refined Klainerman-Sobolev inequality in $B_{2\dl}$.
Using this together with the bootstrap energy assumptions,
the global Goursat problem of \eqref{quasi} in $B_{2\delta}$ can be solved by direct energy
estimates although this is not the usual small data solution problem.

\subsection{Organization of the paper}

Our paper is organized as follows. In Section \ref{i}, the short pulse initial data \eqref{Y1-1}
satisfying  \eqref{Y-00} are shown to exist under the null condition \eqref{null}. In Section \ref{LE}, the local
existence of the solution $\phi$
for $1\leq t\leq 1+2\delta$ is established by the energy method. In Section \ref{p}, we give some preliminaries on
the related differential geometry.
In addition, the equation of $\mu$ is derived, and some basic calculations
are given for the covariant derivatives
of the null frame and the deformation tensors. The crucial bootstrap assumptions
$(\star)$ in $A_{2\dl}$ are given in the beginning of Section \ref{BA}, meanwhile,
under assumptions $(\star)$ we obtain some  estimates of several important quantities which
will be extensively used in subsequent sections. In Section \ref{ho}, under the assumptions $(\star)$,
the higher order smallness for the $L^{\infty}$ estimates of $(\phi, \varphi)$ in $A_{2\dl}$ are established.
In Section \ref{EE}, we carry out the energy estimates for the linearized equation $\mu\Box_g\Psi=\Phi$
and define some suitable higher order weighted energies and fluxes as in \cite{J}. In Section \ref{hoe},
under the assumptions $(\star)$,
the $L^2$ estimates for the higher order derivatives of several basic quantities are derived by
$\sqrt{E_{1,\leq k+2}(t, u)}$ and
$\sqrt{E_{2,\leq k+2}(t, u)}$,
where the energies $E_{1,\leq k+2}(t, u)$ and $E_{2,\leq k+2}(t, u)$ contain the highest order derivatives of
$\vp$. In Section \ref{L2chimu}, the
$L^2$ estimates for the highest order derivatives of tr{$\chi$} and $\slashed\triangle\mu$ are established. In Section \ref{ert},
we deal with the error terms appeared in the energy inequality of Section \ref{EE}.
Based on all the estimates in Section \ref{BA}-\ref{ert},  in Section \ref{YY} we complete the bootstrap argument by choosing the
assumptions $(\star)$ and further establish the global existence of solution $\phi$ to equation \eqref{quasi}
in $A_{2\dl}$. In addition, in the end of Section \ref{YY},
the delicate estimates of $\phi$ on $\t C_{2\delta}$ are derived. Finally,
we prove the global existence of solution $\phi$ inside $B_{2\dl}$ and complete the proof of Theorem \ref{main}
in Section \ref{inside}.

\subsection{Notations}

Through the whole paper, unless stated otherwise, Greek indices $\{\al, \beta, \gamma, \cdots\}$ correspond to the
time-space coordinates in $\{0, 1, 2, 3, 4\}$, Latin indices $\{i, j, a, b, \cdots\}$ correspond to the
spatial coordinates in $\{1, 2, 3, 4\}$, and the Einstein summation convention is used to sum over repeated upper
and lower indices. In addition, $f_1\lesssim f_2$ means that there exists a generic positive constant
$C$ independent of the parameter $\dl>0$ in \eqref{Y1-1} and the variables $(t,x)$ such that $f_1\leq Cf_2$.

Also, the coefficients $g^{\al\beta}(\phi, \p\phi)$ are denoted as $g^{\al\beta}$ for convenience,
and for the Lorentz metric $(g_{\al\beta})$, $(g^{\al\beta})$ represents its inverse spacetime metric.
Finally, the following notations will be used:
\begin{align*}
&\o^i:=\o_i=\f{x^i}{r},\quad i=1,2,3,4,\\
&\o=(\o_1, \o_2, \o_3, \o_4),\\
&t_0:=1+2\delta,\\
&\p_x:=(\p_1, \p_2, \p_3, \p_4),\\
&L:=\p_t+\p_r,\\
&\underline L:=\p_t-\p_r,\\
&\O_{ij}:=x^i\p_j-x^j\p_i,\\
&S:=t\p_t+r\p_r=\f{t-r}{2}\underline L+\f{t+r}{2}L,\\
&H_i:=t\p_i+x^i\p_t=\o_i\big(\f{r-t}{2}\underline L+\f{t+r}{2}L\big)-\f{t}{r}\o^j\O_{ij},\\
&\Sigma_{t}:=\{(s,x): s=t, x\in\mathbb R^4\}.\\
\end{align*}

\section{On the short pulse initial data}\label{i}

In this section, under the first null condition \eqref{null}, we show the existence of the short pulse initial data
\eqref{Y1-1} with  property \eqref{Y-00}.
Set $\tilde\chi(s)\in C^{\infty}(\mathbb R)$ with
\begin{equation*}
\tilde\chi(s)=\left\{
\begin{aligned}
&0\quad \text{if $s\leq 0$},\\
&1\quad\text{if $s\geq 1$},
\end{aligned}
\right.
\end{equation*}
and the re-scaled function $\chi_{\dl}(t,x)=\tilde\chi(\f{|x|-t+\dl}{\dl})$.

Assume that $\psi(t,x)$ satisfies
\begin{equation}\label{Aeq}
\left\{
\begin{aligned}
&\bar g^{\al\beta}(\psi,\p\psi)\p_{\al\beta}^2\psi=0,\\
&\psi(1-\f12\delta,x)=\delta^{2-\varepsilon_0/3}\psi_0(\f{r-1}{\delta},\o),
\p_0\psi(1-\f12\delta,x)=\delta^{1-\varepsilon_0/3}\psi_1(\f{r-1}{\delta},\o),
\end{aligned}
\right.
\end{equation}
where
\[
\bar g^{\al\beta}(\psi,\p\psi)=m^{\al\beta}+\chi_{\dl}\big(g^{\al\beta,0}\psi+\tilde g^{\al\beta,\gamma}\p_\gamma\psi\big)+\chi_{\dl}^2h^{\al\beta}(\psi,\p\psi),
\]
$\psi_0(s,\o)$ and $\psi_1(s,\o)$ are smooth functions with compact supports being in $\{(s,\o):-1<s<-\f12, \o\in\mathbb S^3\}$,
$\varsigma>0$ is a suitable constant which will be determined later, with $m^{\al\beta}, g^{\al\beta,0},
\tilde g^{\al\beta,\gamma}$ and $h^{\al\beta}$ given in \eqref{g}.
We next prove that \eqref{Aeq} has a smooth solution for $t\in[1-\f12\delta, 1]$.

Suppose that for $1-\f12\delta\leq t\leq 1$
and $N_0\in\mathbb N_0$ with $N_0\ge 6$,
\begin{equation}\label{Ba}
|\p^q\O^{\kappa}\psi(t,x)|\leq\delta^{3/2-|q|}\quad (|q|+|\kappa|\leq N_0).
\end{equation}
Define the following energy for \eqref{Aeq} and for $n\in\mathbb N_0$,
$$
M_n(t):=\sum_{|q|+|\kappa|\leq n}\delta^{2|q|}\|\p\p^q\O^\kappa\psi(t,\cdot)\|_{L^2(\mathbb R^4)}^2.
$$
Set $w=\delta^{|q|}\p^q\O^\kappa\psi$ with $|q|+|\kappa|\leq 2N_0-2$. Then it follows from \eqref{Aeq}
and direct computations that
\begin{align}\label{int}
&\int_{1-\f12\delta}^{t}\int_{\Sigma_\tau}(\bar g^{\beta\gamma}\p_{\beta\gamma}^2w \p_0 w)(\tau,x) dxd\tau\no\\
=&\int_{\Sigma_{t}}\big(\bar g^{0\beta}\p_\beta w\p_0 w-\f12\bar g^{\beta\gamma}\p_\beta w\p_\gamma w\big)dx
-\int_{\Sigma_{1-\f12\delta}}\big(\bar g^{0\beta}\p_\beta w\p_0 w-\f12\bar g^{\beta\gamma}\p_\beta w\p_\gamma w\big)dx\no\\
&-\int_{1-\f12\delta}^{t}\int_{\Sigma_\tau}\big((\p_\gamma \bar g^{\gamma\beta})\p_\beta w\p_0 w-\f12(\p_\tau \bar g^{\beta\gamma})\p_\beta w\p_\gamma w\big)(\tau,x) dxd\tau
\end{align}
with
\begin{equation}\label{bgw}
\begin{split}
\bar g^{\beta\gamma}\p_{\beta\gamma}^2w=&\delta^{|q|}[\bar g^{\beta\gamma}\p_{\beta\gamma}^2, \p^q\O^\kappa]\psi\\
=&\delta^{|q|}\sum_{\tiny\begin{array}{c}
\ds|\kappa_1|+|\kappa_2|\leq|\kappa|,\\|q_1|+|q_2|\leq|q|+2, |q_2|\geq 2,\\
|q_1|+|\kappa_1|\geq 1\end{array}}(\p^{q_1}\O^{\kappa_1}\bar g^{\beta\gamma})(\p^{q_2}\O^{\kappa_2}\psi),
\end{split}
\end{equation}
where the unnecessary constant coefficients in the expression of \eqref{bgw} are neglected.

It follows from the bootstrap assumption \eqref{Ba} and \eqref{int} that
\begin{equation}\label{ew}
\begin{split}
&\int_{\Sigma_{t}}\big((\p_tw)^2+|\p_x w|^2\big)(t,x)dx\\
\lesssim &\int_{\Sigma_{1-\f12\delta}}\big((\p_tw)^2
+|\p_x w|^2\big)(1-\f12\delta,x)dx+\int_{1-\f12\delta}^{t}\int_{\Sigma_\tau}\delta^{-1/2}\big((\p_tw)^2+|\p_x w|^2\big)(\tau,x)dxd\tau\\
&+\int_{1-\f12\delta}^{t}\int_{\Sigma_\tau}|\bar g^{\beta\gamma}\p_{\beta\gamma}^2w\p_tw|(\tau,x) dxd\tau.
\end{split}
\end{equation}
Utilizing \eqref{Ba} and \eqref{bgw} yields
\begin{equation}\label{bargw}
	\begin{split}
		|\bar g^{\beta\gamma}\p_{\beta\gamma}^2w|\lesssim&\sum_{\tiny\begin{array}{c}2\leq|q_2|\leq|q|+2,\ |\kappa_2|\leq|\kappa|,\\|q_2|+|\kappa_2|\leq|q|+|\kappa|+1\end{array}}\delta^{|q_2|-\f32}|\p^{q_2}\O^{\kappa_2}\psi|\\
		+&\sum_{|q_1|\leq|q|,\  |\kappa_1|\leq|\kappa|}\delta^{|q_1|-\f12}\big(|\p^{q_1}\O^{\kappa_1}\psi|+|\p\p^{q_1}\O^{\kappa_1}\psi|\big).
	\end{split}
\end{equation}
Due to $\tau\in[1-\f12\delta, 1]$, it then holds that
\begin{equation}\label{po}
	\|\p^{q_1}\O^{\kappa_1}\psi(\tau, \cdot)\|_{L^2(\mathbb R^4)}\lesssim\delta\|\p\p^{q_1}\O^{\kappa_1}\psi(\tau, \cdot)\|_{L^2(\mathbb R^4)}.
\end{equation}

Substituting \eqref{bargw} and \eqref{po} into \eqref{ew}, one gets from the Gronwall's inequality that for $1-\f12\delta\leq t\leq 1$,
\begin{equation}\label{YHC-1}
\begin{split}
M_{2N_0-2}(t)\lesssim M_{2N_0-2}(1-\f12\delta)\lesssim\delta^{3-2\varepsilon_0/3}.
\end{split}
\end{equation}

We next close the bootstrap assumption \eqref{Ba}. By the Sobolev's imbedding theorem on the sphere $\mathbb S^3_r$
with center at the origin and radius $r$, one has
$$
|w(t,x)|\lesssim \f{1}{r^{3/2}}\|\O^{\leq 2}w\|_{L^2(\mathbb S^3_r)}.
$$
This, together with $r\sim 1$ for $t\in [1-\f12\delta,1]$, $(t, x)\in \textrm{supp}\ w$
and \eqref{YHC-1}, yields
\begin{equation}\label{Le}
|\p^q\O^\kappa\psi(t,x)|\lesssim \|\O^{\leq 2}\p^q\O^\kappa\psi\|_{L^2(\mathbb S^3_r)}\lesssim\delta^{1/2}\|\p\O^{\leq 2}\p^q\O^\kappa \psi\|_{L^2(\Sigma_{t})}\lesssim\delta^{2-|q|-\varepsilon_0/3},\quad
\end{equation}
where $|q|+|\kappa|\leq 2N_0-4$. Thus, for small $\dl>0$ and $0<\ve_0<1$, \eqref{Ba} is proved.

Next we improve the $L^\infty$ estimate of $\psi(1,x)$ on some special space domains. To this end,
rewrite the equation in \eqref{Aeq} as
\begin{equation}\label{LLpsi}
\begin{split}
\underline LL\psi=&\f{3}{2r}L\psi-\f{3}{2r}\underline L\psi+\f{1}{r^2}\sum_i\o^j\o^a\O_{ij}\O_{ia}\psi\\
&+{\chi}_\dl\big(g^{\al\beta,0}\psi+\tilde g^{\al\beta,\gamma}\p_\gamma\psi\big)\p_{\al\beta}^2\psi+{\chi_\dl}^2h^{\al\beta}(\psi, \p\psi)\p_{\al\beta}^2\psi
\end{split}
\end{equation}
with $|\tilde g^{\al\beta,\gamma}\p_\gamma\psi\p_{\al\beta}^2\psi|\lesssim(|L\psi|+|\O\psi|)|\p^2\psi|+|\p\psi|(|L\p\psi|+|\O\p\psi|)$
due to the first null condition \eqref{null}.
It follows from \eqref{Le} that the worst term in the expression of $\underline LL\psi$
is ${\chi_\dl}\tilde g^{\al\beta,\gamma}\p_\gamma\psi\p_{\al\beta}^2\psi$. Then this yields $|\underline L L\psi|\lesssim\delta^{1-2\varepsilon_0/3}$.
Together with
the vanishing property of $\psi$ on $\{t=r\}$,
integrating $\underline LL\psi$ along integral curves of $\underline L$ gives that for $(t,r)\in D$
(see Figure 2 below),
\begin{equation}\label{Lpsi}
|L\psi(t,x)|\lesssim\delta^{2-2\varepsilon_0/3}.
\end{equation}
Analogously, one has that  for $(t,r)\in D$,
\begin{equation}\label{Lppsi}
|L\p^q\O^\kappa\psi(t,x)|\lesssim\delta^{2-|q|-2\varepsilon_0/3}\quad \text{for $|q|+|\kappa|\leq 2N_0-6$}.
\end{equation}

Substituting \eqref{Lppsi} into the expression of $\underline LL\psi$ again, and noting that the first null condition \eqref{null}
gives
$|\tilde g^{\al\beta,\gamma}\p_\gamma\psi\p_{\al\beta}^2\psi|\lesssim\delta^{2-\varepsilon_0}$,
then one can see that the worst term
in the expression of $\underline LL\psi$ becomes $-\f{3}{2r}\underline L\psi$.
This shows $|\underline LL\psi|\lesssim\delta^{1-\varepsilon_0/3}$
for $(t,r)\in D$ by \eqref{Le} and \eqref{Lppsi}. Hence the following improved estimate is obtained
\[
|L\psi(t,x)|\lesssim\delta^{2-\varepsilon_0/3},\quad (t,r)\in D.
\]
Similar arguments lead to
\begin{equation*}
|L\p^q\O^\kappa\psi(1,x)|\lesssim\delta^{2-|q|-\varepsilon_0/3}\quad \text{for $|q|+|\kappa|\leq 2N_0-7$
	and $1-\delta\le r\le 1$}.
\end{equation*}
Analogously, for $r\in [1-\delta,1]$, the induction argument yields
\begin{equation}\label{Lp}
|L^k\p^q\O^\kappa\psi(1,x)|\lesssim\delta^{2-|q|-\varepsilon_0/3},\quad 3k+|q|+|\kappa|\leq 2N_0-4.
\end{equation}

\vskip 0.3 true cm
 \centerline{\includegraphics[scale=0.5]{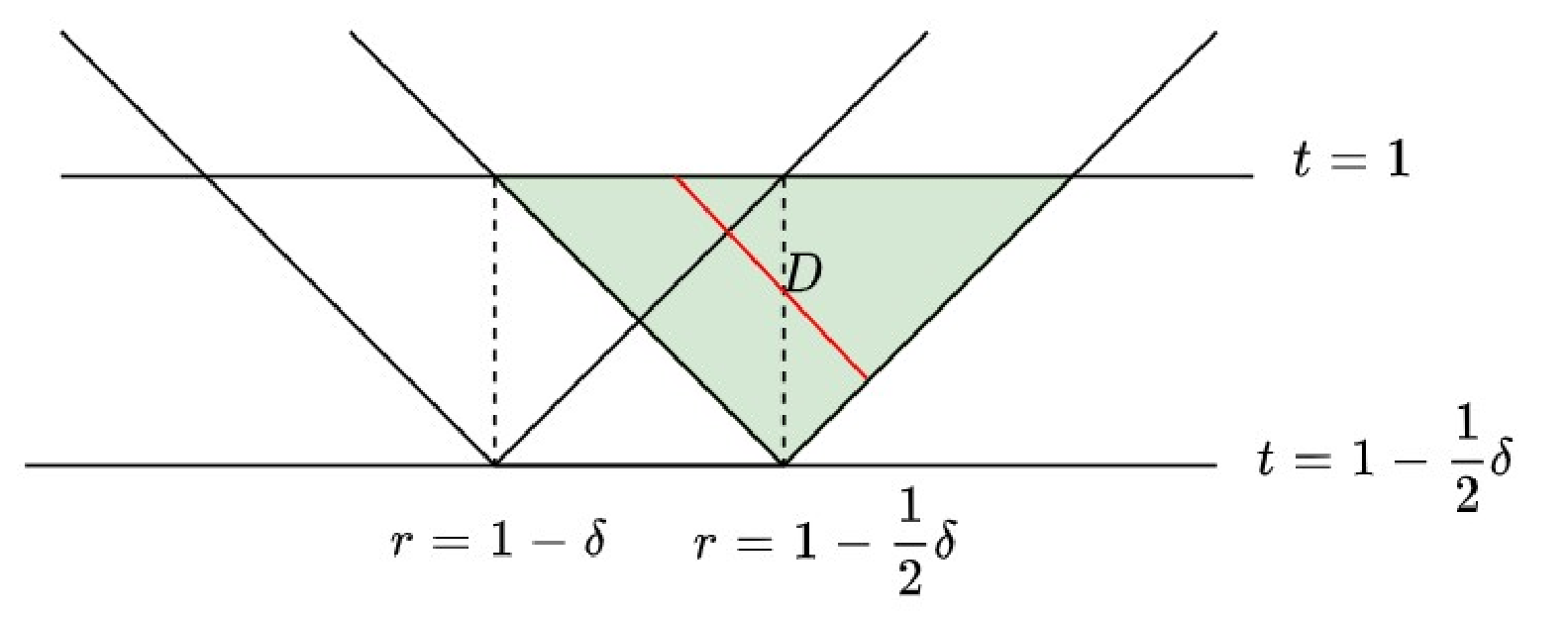}}

\centerline{\bf Figure 2. The domain $D$}
\vskip 0.3 true cm

Choose the initial data for \eqref{quasi} as
\begin{equation}\label{iphi}
\left\{
\begin{aligned}
\phi(1,x)&=(\chi_\dl\psi)(1,x),\\
\p_t\phi(1,x)&=(\p_t(\chi_\dl\psi))(1,x).
\end{aligned}
\right.
\end{equation}
Obviously, the support of $\phi(1,x)$ lies in $\{x:1-\delta<r< 1\}$ (see Figure 3 below).
\vskip 0.3 true cm
\centerline{\includegraphics[scale=0.6]{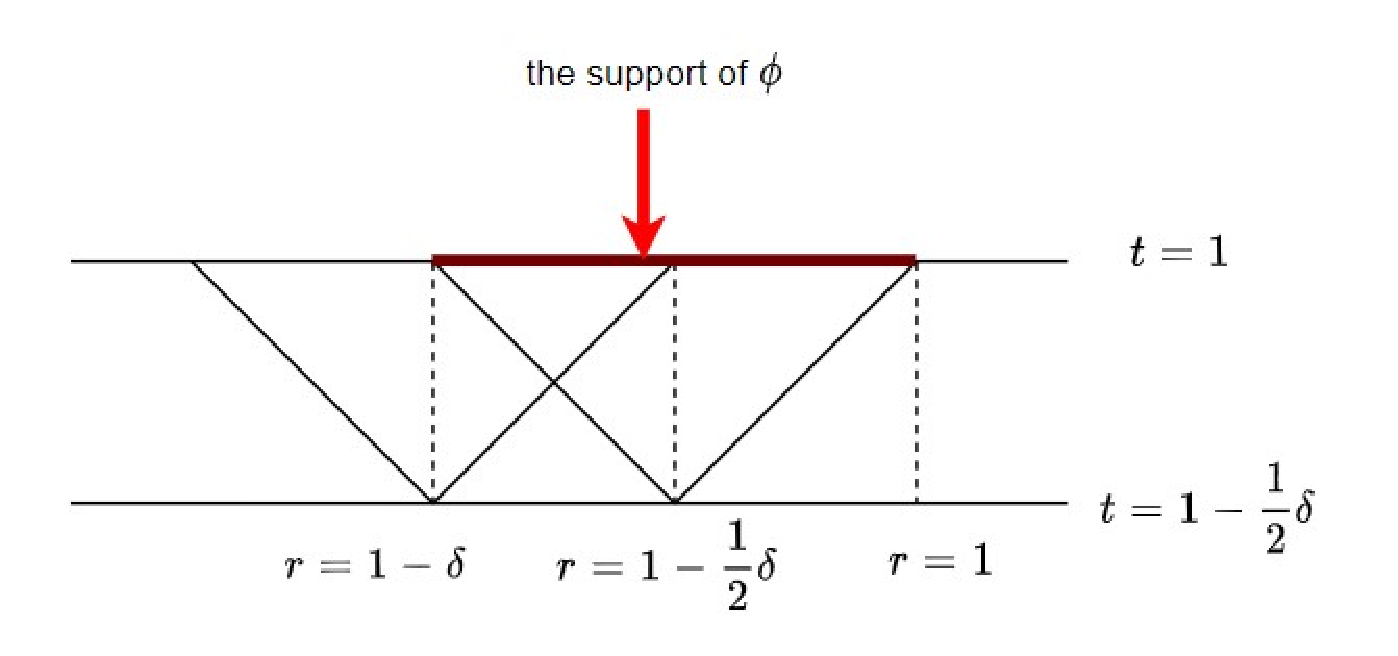}}

\centerline{\bf Figure 3. Support of $\phi(1,x)$}
\vskip 0.3 true cm

Next we check that the initial data \eqref{iphi} satisfy \eqref{Y-00}.
In fact, it follows \eqref{quasi} and \eqref{Aeq} that
\begin{equation}\label{phipsi}
\left\{
\begin{aligned}
g^{\al\beta}(\phi,\p\phi)\p_{\al\beta}^2(\phi-\chi_\dl\psi)=&{\chi_\dl}^2(g^{\al\beta,0}\psi+\tilde{g}^{\al\beta,\gamma}\p_{\gamma}\psi)\p_{\al\beta}^2\psi+{\chi_\dl}^3h^{\al\beta}(\psi,\p\psi)\p_{\al\beta}^2\psi\\
&-(g^{\al\beta,0}\phi+\tilde{g}^{\al\beta,\gamma}\p_{\gamma}\phi)\p_{\al\beta}^2(\chi_\dl\psi)-h^{\al\beta}(\psi,\p\psi)\p_{\al\beta}^2(\chi_\dl\psi)\\
&-\dl^{-1}\tilde{\chi}'(\f{|x|-t+\dl}\dl)\big(\f3r\psi+2L\psi\big),\\
(\phi-\chi_\dl\psi)(1,x)=&0,\\
\big(\p_t(\phi-\chi_\dl\psi)\big)(1,x)=&0.
\end{aligned}
\right.
\end{equation}
It is obvious that
\begin{align}
\big(L(\phi-\chi_\dl\psi)\big)(1,x)=0,\label{Lphipsi}\\
\big(\p_r^2(\phi-\chi_\dl\psi)\big)(1,x)=0,\label{prphipsi}\\
\big(\p_t\p_r(\phi-\chi_\dl\psi)\big)(1,x)=0.\label{ptrphipsi}
\end{align}
In addition, it follows from \eqref{LLpsi} and \eqref{Lp} that
$|\underline L(\f3r\psi+2L\psi)|_D\lesssim\dl^{2-\ve_0}$,
which implies
\[
|\f3r\psi+2L\psi|_D\lesssim\dl^{3-\ve_0}.
\]
This, together with \eqref{phipsi} and \eqref{Lp}, deduces
\begin{equation}\label{ptt}
|\p_t^2(\phi-\chi_\dl\psi)(1,x)|\lesssim\dl^{2-\ve_0}.
\end{equation}
Therefore, by \eqref{prphipsi}, \eqref{ptrphipsi} and \eqref{ptt}, one has
\begin{equation}\label{L2phipsi}
|L^2(\phi-\chi_\dl\psi)(1,x)|\lesssim\dl^{2-\ve_0}.
\end{equation}
Combining \eqref{Lp}, \eqref{phipsi}, \eqref{Lphipsi} and \eqref{L2phipsi} yields
\begin{align*}
&|\phi(1,x)|+|L\phi(1,x)|\lesssim\delta^{2-\varepsilon_0/3},\ |L^2\phi(1,x)|\lesssim\delta^{2-\varepsilon_0}.
\end{align*}
Note that $L^k\p^q\O^{\kappa}\phi$ $(0\leq k\leq 2)$ can be estimated similarly as above after taking suitable derivatives on \eqref{phipsi}
to get the upper bounds of $|L^k\p^q\O^{\kappa}(\phi-\chi_\dl\psi)(1,x)|$ $(0\leq k\leq 2)$ and applying \eqref{Lp} directly. Finally,
we arrive at
 \begin{align*}
 &|\p^q\O^{\kappa}\phi(1,x)|+|L\p^q\O^{\kappa}\phi(1,x)|\lesssim\delta^{2-|q|-\varepsilon_0/3},\\
 &|L^2\p^q\O^{\kappa}\phi(1,x)|\lesssim\delta^{2-|q|-\varepsilon_0}.
 \end{align*}
This means that the initial data \eqref{iphi} satisfy \eqref{Y-00}.

\begin{remark}
Rewriting the initial data \eqref{iphi} as
\begin{equation}\label{F-1}
\left\{
\begin{aligned}
\phi(1,x)&=\psi(1,\f{r-1}{\dl}\cdot\dl\o+\o)\tilde{\chi}(\f{r-1}{\dl}+1):=\delta^{2-\varepsilon_0/3}\phi_0^{\dl}(\f{r-1}{\delta},\omega),\\
\p_t\phi(1,x)&=\p_t\psi(1,\f{r-1}{\dl}\cdot\dl\o+\o)\tilde{\chi}(\f{r-1}{\dl}+1)-\delta^{-1}\psi(1,\f{r-1}{\dl}\cdot\dl\o
+\o)\tilde{\chi}'(\f{r-1}{\dl}+1)\\
&:=\delta^{1-\varepsilon_0/3}\phi_1^{\dl}(\f{r-1}{\delta},\omega).
\end{aligned}
\right.
\end{equation}
Then $(\phi_0^{\dl},\phi_1^{\dl})(s,\omega)$ fulfill the property \eqref{Y-00}. For brevity and without confusions, we
drop the notation $\dl$ in $(\phi_0^{\dl},\phi_1^{\dl})$ as $(\phi_0,\phi_1)$.

\end{remark}

\section{Local existence}\label{LE}

In this section, we apply the energy method and the  distinctive structure of the initial data \eqref{Y1-1}
to establish the local existence of the smooth solution $\phi$ to
equation \eqref{quasi} for $1\leq t\leq t_0$, meanwhile, several
key estimates of $\phi(t_0, x)$ in some special space domains are derived.

\begin{theorem}\label{Th2.1}
Under the assumption \eqref{Y-00} on $(\phi_0,\phi_1)$, when $\delta>0$ is small, equation \eqref{quasi} with \eqref{Y1-1}
admits a local smooth solution $\phi\in C^\infty([1, t_0]\times\mathbb R^4)$. Moreover, for $m\in\mathbb N_0$,
$l\in \mathbb N_0$, $q\in\mathbb N_0^5$ and $\kappa\in \mathbb N_0^6$, it holds that
\begin{enumerate}[(i)]
	\item
	\begin{align}
	&|L^m\p^q\O^\kappa\phi(t_0,x)|\lesssim\delta^{2-|q|-\varepsilon_0},\quad r \in[1-2\delta, 1+2\delta],\label{local1-2}\\
	&|\underline L^l\p^q\O^\kappa\phi(t_0,x)|\lesssim\delta^{3-|q|-\varepsilon_0},\quad r\in [1-3\delta, 1+\delta].\label{local2-2}
	\end{align}
	\item
	\begin{equation}\label{local3-2}
	|\p^q\O^\kappa\phi(t_0,x)|\lesssim
	\left\{
	\begin{aligned}
	\delta^{3-\varepsilon_0},\quad&\text{for}\quad|q|\leq 1,\\
	\delta^{4-|q|-\varepsilon_0},\quad&\text{for}\quad|q|> 1,
	\end{aligned}
	\right.
	\quad r\in [1-3\delta, 1+\delta].
	\end{equation}
	\item
	\begin{equation}\label{local3-3}
	|\underline L^lL^m\O^\kappa\phi(t_0,x)|\lesssim\delta^{2-\varepsilon_0},\quad r\in[1-2\delta,1+\delta].
	\end{equation}
\end{enumerate}

\end{theorem}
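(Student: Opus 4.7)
My plan proceeds in three stages: local existence together with a baseline pointwise bound, a null-form improvement for the $L^m$ directional derivatives, and region-refined consequences giving (ii) and (iii).

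First, I establish local existence by repeating the energy argument of Section~\ref{i} for $\phi$ in place of $\psi$. With the bootstrap $|\partial^q\Omega^\kappa\phi(t,x)|\le \delta^{3/2-|q|}$ for $|q|+|\kappa|\le N_0$ ($N_0$ large) and the weighted energies
\[
\tilde M_n(t):=\sum_{|q|+|\kappa|\le n}\delta^{2|q|}\|\partial\partial^q\Omega^\kappa\phi(t,\cdot)\|_{L^2(\mathbb R^4)}^2,
\]
I commute $\delta^{|q|}\partial^q\Omega^\kappa$ through \eqref{quasi} and pair with $\partial_t$; the first null condition \eqref{null} forces the worst quadratic term $\tilde g^{\alpha\beta,\gamma}\partial_\gamma\phi\,\partial^2_{\alpha\beta}\phi$ to carry at least one ``good'' $L$- or $\Omega$-factor, so Gronwall on the length-$2\delta$ interval $[1,t_0]$, combined with the data bound $\tilde M_{2N_0-2}(1)\lesssim \delta^{3-\varepsilon_0/2}$ from \eqref{Y1-1}--\eqref{Y-00}, closes the bootstrap. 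Sobolev embedding on the spheres $\mathbb S^3_r$ (with $r\sim 1$ on the support of $\phi$) then produces the baseline $L^\infty$ bound $|\partial^q\Omega^\kappa\phi(t,x)|\lesssim \delta^{2-|q|-\varepsilon_0/4}$.

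Next, to prove the $L^m$-derivative part of (i), I rewrite \eqref{quasi} in null-frame form analogous to \eqref{LLpsi},
\begin{equation*}
\underline L L\phi=\tfrac{3}{2r}L\phi-\tfrac{3}{2r}\underline L\phi+\tfrac{1}{r^2}\sum_i\omega^j\omega^a\Omega_{ij}\Omega_{ia}\phi+\mathrm{N}(\phi,\partial\phi,\partial^2\phi),
\end{equation*}
where \eqref{null} forces $\mathrm{N}$ to obey the schematic bound $(|L\phi|+|\Omega\phi|)|\partial^2\phi|+|\partial\phi|(|L\partial\phi|+|\Omega\partial\phi|)$. Integrating along backward ingoing $\underline L$-characteristics from $(t_0,r\omega)$ to the footpoint $(1,(r+2\delta)\omega)$: for $r\in[1-2\delta,1+2\delta]$ this footpoint lies in $[1,1+4\delta]$, which is either outside the initial support $(1-\delta,1)$ (where $L\phi$ vanishes) or on its boundary where \eqref{Y-00} supplies $|L\phi(1,\cdot)|\lesssim \delta^{2-\varepsilon_0}$. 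Combining with the baseline control of $\mathrm{N}$ and Gronwall over length $2\delta$ yields $|L\phi(t_0,x)|\lesssim \delta^{2-\varepsilon_0}$ on this region. The estimate for $L^m\partial^q\Omega^\kappa\phi$ with $m\le 3$ follows by induction on $m$: commuting $L\partial^q\Omega^\kappa$ through the null-form equation produces terms of the same schematic type, and \eqref{Y-00} supplies the matching data bound for each $m\le 3$. For the $\underline L^l$-bound on $r\in[1-3\delta,1+\delta]$, the backward outgoing $L$-characteristic from $(t_0,r\omega)$ ends at $r-2\delta\in[1-5\delta,1-\delta]$, which lies outside the initial support, so $\underline L^l\partial^q\Omega^\kappa\phi$ vanishes at that footpoint; writing $L\underline L^l = \underline L L\underline L^{l-1}+[L,\underline L]\underline L^{l-1}$ and invoking the just-proved null-form bound applied to $\underline L^{l-1}\partial^q\Omega^\kappa\phi$, a forward $L$-integration gives (i)$_2$. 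Finally, (ii) and (iii) come from decomposing $\partial_t=\tfrac12(L+\underline L)$ and $\partial_i=\omega_i\partial_r+r^{-1}\omega^j\Omega_{ij}$ and applying (i) term by term; the transition at $|q|=3$ in (ii) mirrors the $k\le 3$ cutoff in \eqref{Y-00}.

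The main obstacle I anticipate is the combinatorial bookkeeping in the $L^m$-induction: commuting $L^m\partial^q\Omega^\kappa$ through the null nonlinearity generates a proliferation of error terms, and one must verify systematically that in every such term at least one factor carries a good $L$ or $\Omega$ derivative (so that the $\delta^{2-\varepsilon_0}$ smallness is gained there), while the remaining factors are absorbed by the baseline bound, producing the final exponent $\delta^{2-|q|-\varepsilon_0}$. A related but secondary concern is that the finite-propagation picture used throughout refers to the \emph{true} null cones of \eqref{quasi}; this is legitimate because $g^{\alpha\beta}$ differs from Minkowski by $O(\delta^{2-\varepsilon_0/4})$, so the characteristics are $O(\delta)$ perturbations of the flat ones and the region classifications driving Stages~2 and~3 survive unchanged.
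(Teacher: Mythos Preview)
Your approach to local existence and part (i) is essentially the paper's: a weighted energy argument on $[1,t_0]$ followed by integration along $\underline L$- and $L$-characteristics from regions where $\phi$ vanishes. One difference worth noting is that the paper additionally commutes the energy with the Lorentz fields $Z_g\in\{S,H_i\}$ up to three times (matched to the $k\le 3$ in \eqref{Y-00}), obtaining the global bound \eqref{Lle} on $L^k\partial^q\Omega^\kappa\phi$ for $k\le 3$ throughout $[1,t_0]$; this streamlines what follows but is not strictly essential, since your characteristic integrations from the support boundary serve the same purpose.

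There is, however, a real gap in your Stage~3. The estimate \eqref{local3-3} on the mixed derivatives $\underline L^l L^m\Omega^\kappa\phi$ does \emph{not} follow from (i) by decomposing $\partial_t,\partial_i$ into $L,\underline L,\Omega$: inserting $\underline L^l$ into the $\partial^q$-slot of \eqref{local1-2} yields only $\delta^{2-l-\varepsilon_0}$, and inserting $L^m$ into the $\partial^q$-slot of \eqref{local2-2} yields only $\delta^{2-m-\varepsilon_0}$; neither recovers $\delta^{2-\varepsilon_0}$. The paper instead runs an induction on $m$ (see \eqref{ia}--\eqref{F3}): one writes $L^{m+1}\underline L^l=L^m\underline L^{l-1}(L\underline L)$ (using $[L,\underline L]=0$), substitutes the null-frame identity \eqref{me} for $L\underline L\phi$, and observes that every resulting term either has one fewer $\underline L$ or is already controlled by the induction hypothesis; iterating down to $l=0$ reduces to $L^{m+1}\bar\partial^{\le|q|}\Omega^\kappa\phi$, which \eqref{local1-2} handles. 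The same equation-driven reduction is what produces (ii) in the range $|q|\le 3$, since decomposing $\partial^q$ into $L^a\underline L^b\Omega^c$ generates precisely the mixed combinations bounded in (iii). Your plan needs this induction; ``applying (i) term by term'' is not enough.
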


\begin{proof} Denote by $Z_g$ any fixed vector filed in $\{S, H_i, i=1,2,3, 4\}$, and construct the energies
$$
\tilde M_n(t):=\sum_{|q|+|\kappa|+k\leq n}\delta^{2|q|}\|\p\p^q\O^\kappa Z_g^k\phi(t,\cdot)\|_{L^2(\mathbb R^4)}^2
$$
for $n\leq \tilde N_0$, $k\leq 2$, $\tilde N_0\in \mathbb N$ and $\tilde N_0\ge 4$.
\vskip 0.3 true cm
\centerline{\includegraphics[scale=0.2]{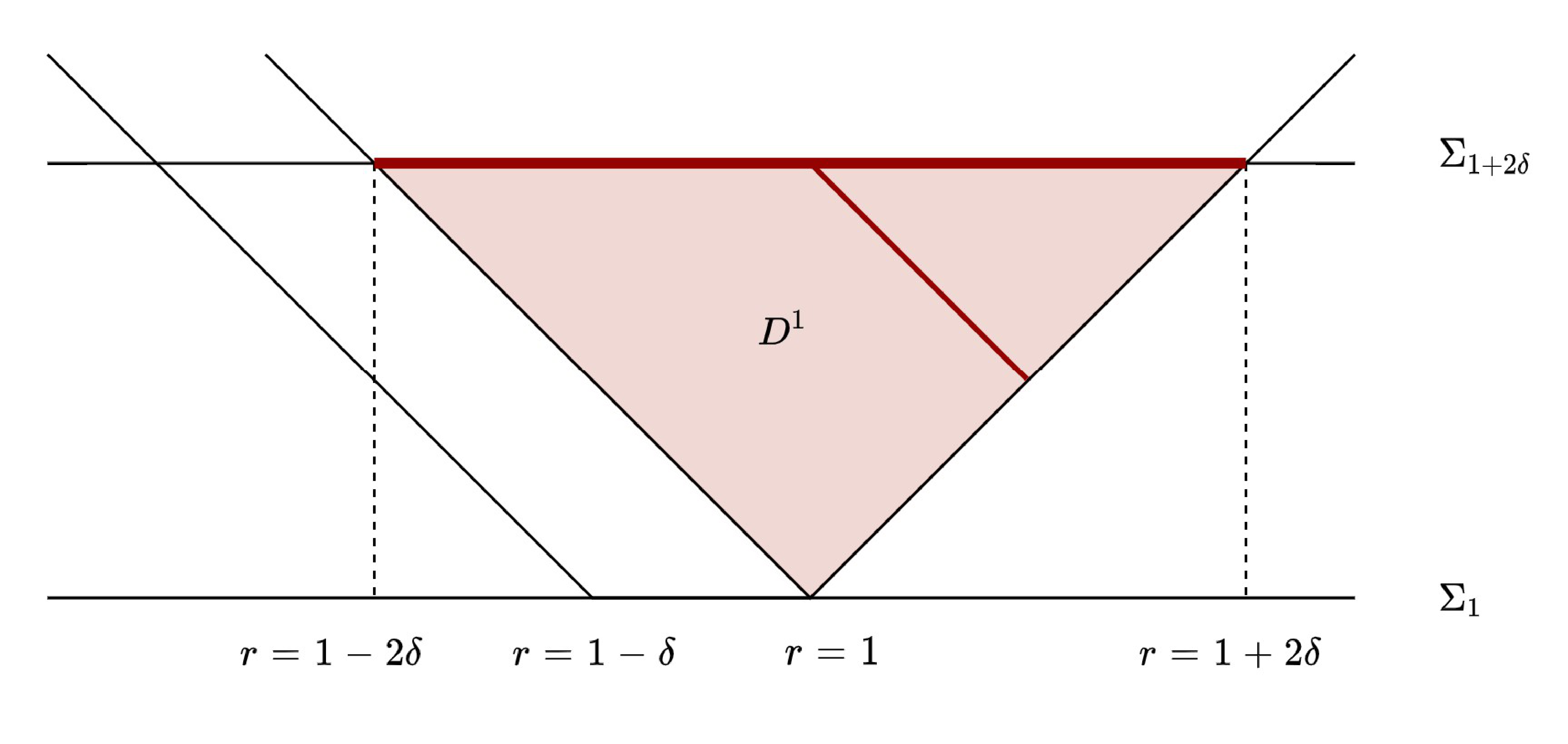}}

\centerline{\bf Figure 4. Space-time domain $D^1=\{(t,r): 1\le t\le t_0, 2-t\le r\le t\}$}
\vskip 0.3 true cm

For suitably small $\dl>0$, by the same arguments as in the proofs of the local existence of the solution $\psi$ to \eqref{Aeq} and the inequality \eqref{Le} in
Section \ref{i} ($\bar g$ and $\psi$ are replaced by $g$ and $Z_g^k\phi$ respectively in \eqref{int}-\eqref{Le}, menawhile
the time interval
$[1-\f12\dl,1]$ is replace by $[1,1+2\dl]$ from \eqref{Ba} to \eqref{Le}),
when $t\in[1,t_0]$ and by \eqref{Y-00}, one can prove that \eqref{quasi} has a smooth solution $\phi$ satisfying
\begin{equation}\label{Lle}
|L^k\p^q\O^\kappa\phi(t, x)|\lesssim\delta^{2-|q|-\varepsilon_0},
\end{equation}
where $|q|+|\kappa|+k\leq 2\tilde N_0-4$ and $k\leq 2$.
Moreover, since $|\p^q\O^\kappa\phi(t,x)|\lesssim\delta^{2-\varepsilon_0-|q|}$ holds in $D^1$ when $|q|+|\kappa|\leq 2\tilde N_0-4$,
after repeating the procedures from \eqref{LLpsi}-\eqref{Lp}, we obtain that for $r\in[1-2\delta,1+2\delta]$,
 \begin{equation}\label{oi}
 |L^m\p^q\O^\kappa\phi(t_0,x)|\lesssim\delta^{2-|q|-\varepsilon_0},\quad 3m+|q|+|\kappa|\leq 2\tilde N_0-4.
 \end{equation}

 \centerline{\includegraphics[scale=0.2]{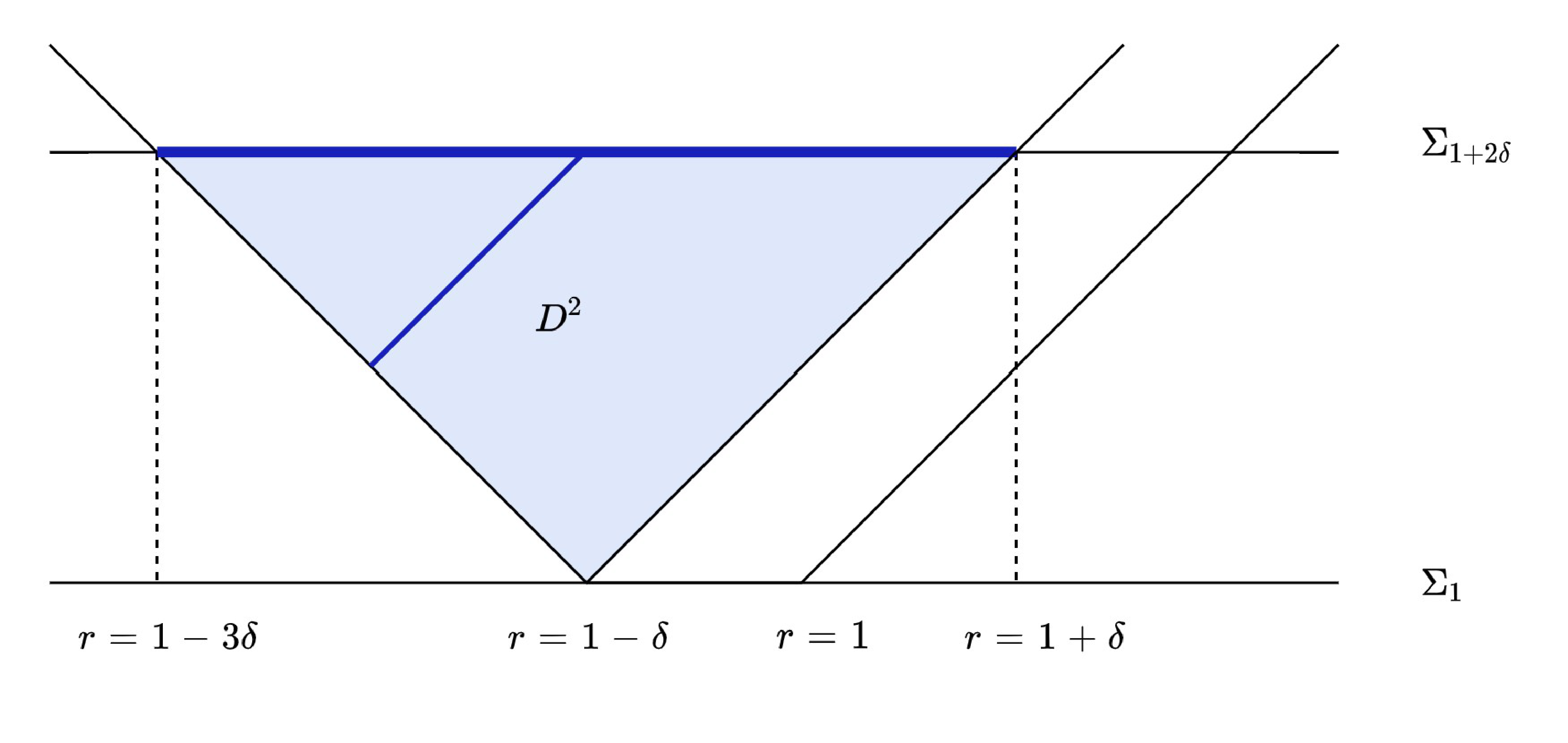}}

\centerline{\bf Figure 5. Space domain $D^2=\{t,r): 1\le t\le t_0, 2-\delta-t\le r\le t-\delta\}$}
\vskip 0.3 true cm

Let $\bar\p\in\{\p_t, \p_r\}$. For any indices $q$ and $\kappa$ with $|q|+|\kappa|\leq 2\tilde N_0-6$, by virtue of
\begin{equation}\label{LUL}
L\bar\p^q\O^\kappa\underline L\phi=\bar\p^q\O^\kappa(L\underline L\phi),
\end{equation}
\begin{equation}\label{me}
L\underline L\phi=\f{3}{2r}L\phi-\f{3}{2r}\underline L\phi+\f{1}{r^2}\sum_i\o^j\o^k\O_{ij}\O_{ik}\phi+\Big(g^{\al\beta,0}\phi
+\tilde g^{\al\beta,\gamma}\p_\gamma\phi+h^{\al\beta}(\phi, \p\phi)\Big)\p_{\al\beta}^2\phi
\end{equation}
and \eqref{Lle}, one has $|L\bar\p^q\O^\kappa\underline L\phi|\lesssim\delta^{1-\varepsilon_0-|q|}$.
This yields $|\underline L\bar\p^q\O^\kappa\phi|\lesssim\delta^{2-\varepsilon_0-|q|}$ in $D^2$
by the integration along integral curves of $L$ when $|q|+|\kappa|\leq2\tilde N_0-6$ (see Figure 5). Thus,
it follows from \eqref{LUL}, \eqref{me} and \eqref{Lle} that in $D^2$,
\begin{equation*}
\begin{split}
&|\underline L\bar\p^q\O^\kappa\phi|\lesssim\delta^{2-\varepsilon_0-|q|},\quad |q|+|\kappa|\leq2\tilde N_0-6,\\
&|L\underline L\bar\p^q\O^\kappa\phi|\lesssim\delta^{2-\varepsilon_0-|q|},\quad |q|+|\kappa|\leq2\tilde N_0-7.
\end{split}
\end{equation*}
By an induction argument, it holds that in $D^2$,
\begin{equation}\label{ii}
\begin{split}
&|\underline L^l\bar\p^q\O^\kappa\phi|\lesssim\delta^{2-\varepsilon_0-|q|},\quad 2l+|q|+|\kappa|\leq2\tilde N_0-4,\\
&|L\underline L^l\bar\p^q\O^\kappa\phi|\lesssim\delta^{2-\varepsilon_0-|q|},\quad 2l+|q|+|\kappa|\leq2\tilde N_0-5.
\end{split}
\end{equation}
This, together with $\p_t=\f12(L+\underline L)$, $\p_i=\f{\o^i}{2}(L-\underline L)-\f1r\o^j\O_{ij}$,  \eqref{me} and \eqref{Lle}, yields that for $2|q|+|\kappa|\leq 2\tilde N_0-4$
and $(t,x)\in D^2$,
 \begin{equation}\label{iid}
 |\p^q\O^\kappa\phi(t,x)|\lesssim
 \left\{
 \begin{aligned}
 \delta^{2-\varepsilon_0},\quad&\text{for}\quad|q|\leq 2,\\
 \delta^{4-\varepsilon_0-|q|},\quad&\text{for}\quad|q|> 2.
 \end{aligned}
 \right.
 \end{equation}
 Next, we use \eqref{me} to improve the estimates in \eqref{ii} and \eqref{iid}. In fact, acting \eqref{me}
 by $\underline L^{l-1}\bar\p^q\O^\kappa$ $(l\geq 1)$, it follows from \eqref{ii} and $\p_i={\o^i}\p_r-\f1r\o^j\O_{ij}$
 that when $2l+|q|+|\kappa|\lesssim 2\tilde N_0-5$,
 	\begin{equation}\label{Lu}
 	|L\underline L^l\bar\p^q\O^\kappa\phi(t,x)|\lesssim\dl^{2-\ve_0-|q|},\ (t,x)\in D^2.
 	\end{equation}
Then integrating \eqref{Lu} along integral curves of $L$ in $D^2$ yields
 	\begin{equation*}
 	|\underline L^l\bar\p^q\O^\kappa\phi(t,x)|\lesssim\dl^{3-\ve_0-|q|},\ (t,x)\in D^2,\ l\geq 1\ \text{and}\ 2l+|q|+|\kappa|\lesssim
 2\tilde N_0-5.
 	\end{equation*}
Analogous treatments from  \eqref{Lle} deduce that in $D^2$,
 	\begin{equation*}
 	\begin{split}
 	&|\bar\p^q\O^\kappa\phi(t,x)|\lesssim\dl^{3-\ve_0-|q|},\ |q|+|\kappa|\leq 2\tilde N_0-4,\\
 	&|L\bar\p^q\O^\kappa\phi(t,x)|\lesssim\dl^{3-\ve_0-|q|},\ |q|+|\kappa|\leq 2\tilde N_0-5.
 	\end{split}
 	\end{equation*}
Therefore, \eqref{iid} can be improved into the more refined  estimate \eqref{local3-2}.
It remains to show \eqref{local3-3}. Note that on $\Sigma_{t_0}$, when $r\in[1-2\delta,1+\delta]$
 and $2l+|q|+|\kappa|\leq 2\tilde N_0-5$, $|L\underline L^l\p^q\O^\kappa\phi|\lesssim\delta^{2-\varepsilon_0-|q|}$ holds by \eqref{ii}.
Assume inductively that the following estimate holds for $r\in[1-2\delta,1+\delta]$,
\begin{equation}\label{ia}
|L^m\underline L^l\p^q\O^\kappa\phi(t_0,x)|\lesssim\delta^{2-\varepsilon_0-|q|},\quad\text{where}\left\{
\begin{aligned}
1\leq m\leq m_0,\quad l\geq 1,\\
3m-3+|q|+|\kappa|\leq 2N_0-4,\\
m+2l+|q|+|\kappa|\leq2N_0-4.
\end{aligned}
\right.
\end{equation}
We next show that \eqref{ia} remains true for $m=m_0+1$.
When $3m_0-1+|q|+|\kappa|\leq 2\tilde N_0-4$ and $m_0+1+2l+|q|+|\kappa|\leq2\tilde N_0-4$,
it follows from \eqref{ia}, \eqref{me} and \eqref{oi} that
\begin{equation}\label{La+1}
\begin{split}
|L^{m_0+1}\underline L^l\bar\p^q\O^\kappa\phi|&\lesssim|L^{m_0+1}\underline L^{\leq l-1}\bar\p^{\leq|q|}\O^\kappa\phi|+\delta^{2-2\varepsilon_0-|q|}\\
  	&\lesssim\cdots\lesssim|L^{m_0+1}\bar\p^{\leq|q|}\O^\kappa\phi|+\delta^{2-2\varepsilon_0-|q|}\\
  	&\lesssim\delta^{2-2\varepsilon_0-|q|}.
  	\end{split}
  	\end{equation}

When $m_0+1+2l+|q|+|\kappa|\leq 2\tilde N_0-4$ and $3m_0+|q|+|\kappa|\leq2\tilde N_0-4$,
  	by \eqref{La+1} and \eqref{me}, one has
  	\begin{equation}\label{F3}
  	|L^{m_0+1}\underline L^l\bar\p^q\O^\kappa\phi|\lesssim|L^{m_0+1}\underline L^{\leq l-1}\bar\p^{\leq|q|}\O^\kappa\phi|+\delta^{2-\varepsilon_0-|q|}\lesssim\delta^{2-\varepsilon_0-|q|}.
  	\end{equation}
Combining \eqref{La+1} and \eqref{F3} yields \eqref{ia}. Then \eqref{local3-3}
 is obtained after setting $|q|=0$ in \eqref{ia}.

Therefore, the proof of Theorem \ref{Th2.1} is finished.
\end{proof}

\section{Some preliminaries}\label{p}

\subsection{The related geometry and definitions}\label{p-0}

In this subsection, we give some preliminaries on the
related geometry and definitions, which will be
utilized as the basic tools to establish the global existence of the
smooth solution $\phi$ to \eqref{quasi} in $A_{2\dl}$.

The definitions of the optical function $u$ and the inverse foliation density $\mu$ have been
given in \eqref{H0-3} and \eqref{H0-4} respectively.
Note that
$$
\tilde{L}=-g^{\al\beta}\p_\al u\p_\beta
$$
is a tangent vector field of the outgoing light cone $\{ u=C\}$. In addition, $\tilde {L}$ is geodesic and $\tilde{L}t=\mu^{-1}$.
Thus, one can rescale $\tilde{L}$ as
$$
\mathring{L}=\mu\tilde{L},
$$
which approximates $L=\p_t+\p_r$ (this is obvious for $t=t_0$).

Set $\tilde T=-g^{\nu 0}\p_\nu-\mathring L$, which is close to $-\p_r$ for $t=t_0$. Then
by the definition of the null frame, let
\begin{align}\label{FC-1}
T=\mu\tilde T,\quad\mathring{\underline L}=\mu\mathring L+2T,
\end{align}
where $\mathring{L}$ and $\mathring{\underline L}$ are two vector fields in the null frame,
moreover $\mathring{\underline L}$ approximates $\underline L=\p_t-\p_r$.

With respect to the other vector fields
$\{X_1, X_2, X_3\}$ in the null frame, they have been chosen in \eqref{H0-6} with $X_A=\f{\p}{\p\vartheta^A}$ $(A=1,2,3)$.
Rewrite $X_A=X_A^\al\p_\al$. Then $X_A^0=0$ due to $\f{\p t}{\p\vartheta^A}=0$.

\begin{lemma}\label{nullframe}
$\{\mathring L, \mathring{\underline L}, X_1, X_2, X_3\}$ constitutes a null frame with respect to the
metric $g$, and admits the following identities:
\begin{align}
&g(\mathring L,\mathring L)=g(\mathring{\underline L}, \mathring{\underline L})=g(\mathring L, X_A)=g(\mathring{\underline L}, X_A)=0,\\
&g(\mathring L, \mathring{\underline L})=-2\mu.
\end{align}
In addition,
\begin{align}
&\mathring L t=1,\quad\mathring L u=0,\\
&\mathring{\underline L}t=\mu,\quad \mathring{\underline L} u=2.
\end{align}
And
\begin{align}\label{LTTT}
g(\mathring L, T)=-\mu,\quad g(T,T)=\mu^2,
\end{align}
\begin{align}\label{T}
Tt=0,\quad T u=1.
\end{align}
\end{lemma}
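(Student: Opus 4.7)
The plan is to verify all six groups of identities by direct unfolding of the definitions, using (a) the eikonal equation $g^{\alpha\beta}\p_\alpha u\,\p_\beta u=0$, (b) the defining relation $\mu^{-1}=-g^{\alpha 0}\p_\alpha u$, (c) the normalization $g^{00}=-1$ from \eqref{g00}, and (d) the fact that $X_A=\p/\p\vartheta^A$ are coordinate vector fields in the chart $(s,u,\vartheta^1,\vartheta^2,\vartheta^3)$ with $s=t$, so that $X_A t=X_A u=0$ and in particular $X_A^{0}=0$.

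The key preliminary step I would carry out is to lower the index on $\mathring L$. Since $\mathring L^{\beta}=-\mu g^{\alpha\beta}\p_\alpha u$, we get $\mathring L_\nu=g_{\nu\beta}\mathring L^\beta=-\mu\p_\nu u$. From this form, $g(\mathring L,\mathring L)=\mu^{2}g^{\alpha\beta}\p_\alpha u\,\p_\beta u=0$ and $\mathring L u=-\mu g^{\alpha\beta}\p_\alpha u\,\p_\beta u=0$ are both immediate consequences of the eikonal equation, while $\mathring L t=\mu\tilde L t=\mu(-g^{\alpha 0}\p_\alpha u)=\mu\cdot\mu^{-1}=1$ follows from the definition of $\mu$. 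The orthogonality to the sphere is then automatic: $g(\mathring L,X_A)=\mathring L_\nu X_A^\nu=-\mu X_A u=0$ by item (d).

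Next I would evaluate the scalar actions of $T=\mu(-g^{\nu 0}\p_\nu-\mathring L)$. The normalization $g^{00}=-1$ gives $Tt=\mu(-g^{00}-\mathring L t)=\mu(1-1)=0$, and the definition of $\mu$ gives $Tu=\mu(-g^{\nu 0}\p_\nu u-\mathring L u)=\mu\cdot\mu^{-1}=1$. For the inner products, pairing $\mathring L$ with each piece of $\tilde T$ yields $g(\mathring L,T)=-\mu g^{\nu 0}\mathring L_\nu-\mu g(\mathring L,\mathring L)=\mu^{2}g^{\nu 0}\p_\nu u=-\mu$. Expanding $g(T,T)=\mu^{2}g(\tilde T,\tilde T)$ produces the double contraction $g^{\nu 0}g^{\rho 0}g_{\nu\rho}=g^{\nu 0}\delta_\nu^{0}=g^{00}=-1$, a cross term $2g^{\nu 0}g(\p_\nu,\mathring L)=-2\mu g^{\nu 0}\p_\nu u=2$, and $g(\mathring L,\mathring L)=0$; summing gives $g(T,T)=\mu^{2}(-1+2)=\mu^{2}$.

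Finally, the six $\mathring{\underline L}$-identities are inherited by linearity from $\mathring{\underline L}=\mu\mathring L+2T$: the scalars give $\mathring{\underline L}t=\mu\cdot 1+2\cdot 0=\mu$ and $\mathring{\underline L}u=\mu\cdot 0+2\cdot 1=2$; the inner products give $g(\mathring L,\mathring{\underline L})=2g(\mathring L,T)=-2\mu$, $g(\mathring{\underline L},X_A)=0$, and $g(\mathring{\underline L},\mathring{\underline L})=\mu^{2}\cdot 0+4\mu\cdot(-\mu)+4\mu^{2}=0$. There is no substantive obstacle in this lemma; each identity is a short algebraic manipulation once the lowered form $\mathring L_\nu=-\mu\p_\nu u$ is in hand. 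The only points that demand care are keeping the index conventions consistent and the systematic use of $g^{00}=-1$, which is precisely where both $Tt=0$ and the value $g(T,T)=\mu^{2}$ come from.
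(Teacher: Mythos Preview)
Your proof is correct and proceeds exactly as one would expect: the paper states this lemma without proof, treating the identities as immediate consequences of the definitions, and your direct verification via the lowered form $\mathring L_\nu=-\mu\,\p_\nu u$, the eikonal equation, and $g^{00}=-1$ is precisely the intended argument. The only small omission is that your claim $g(\mathring{\underline L},X_A)=0$ tacitly uses $g(T,X_A)=0$, which you did not state; but this follows at once from $T_\nu=-\mu\,\delta_\nu^0+\mu^2\p_\nu u$ together with $X_A^0=X_A u=0$, so there is no real gap.
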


In the new coordinate $(s, u, \vartheta^1, \vartheta^2, \vartheta^3)$ defined by \eqref{H0-7},
we will focus on  the following subsets (see Figure 6):

\begin{definition}
	\begin{equation}\label{error}
	\begin{split}
	&\Sigma_s^{u}:=\{(s',u',\vartheta): s'=s,0\leq u'\leq  u\},\quad u\in [0, 4\delta],\\
	&C_{u}:=\{(s',u',\vartheta): s'\geq t_0, u'=u\},\\
	&C_{u}^s:=\{(s',u',\vartheta): t_0\leq s'\leq s, u'=u\},\\
	&S_{s, u}:=\Sigma_s\cap C_{u},\\
	&D^{s, u}:=\{(s', u',\vartheta): t_0\leq s'<s, 0\leq u'\leq u\}.
	\end{split}
	\end{equation}
\end{definition}

\vskip 0.2 true cm
\centerline{\includegraphics[scale=0.38]{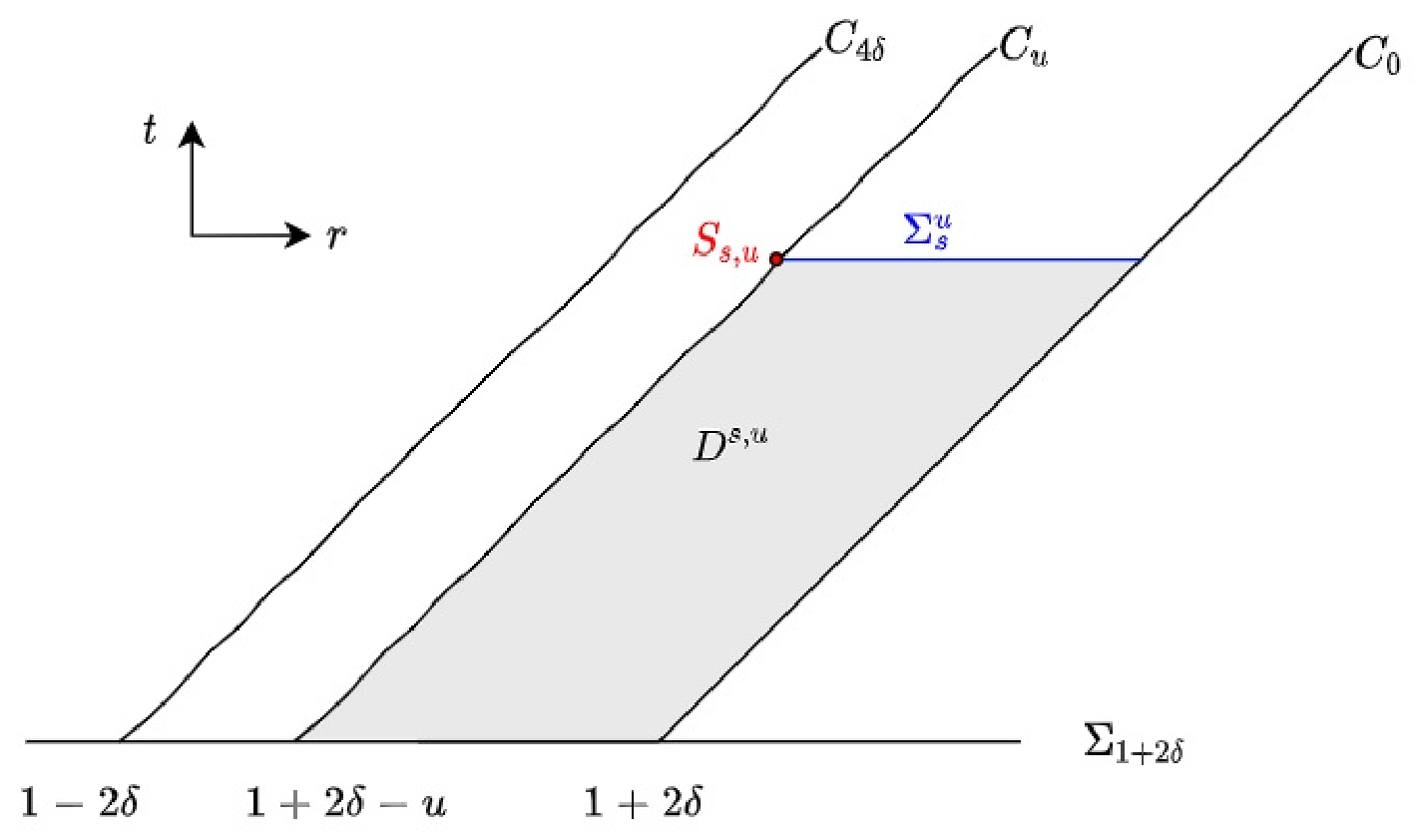}}

\centerline{\bf Figure 6. The indications of some domains}
\vskip 0.3 true cm

Next the following geometric notations will be used.
\begin{definition} For the metric $g$ on the spacetime,
	\begin{itemize}
		\item $\underline g=(g_{ij})$ is defined as the induced metric of $g$ on $\Sigma_t$, i.e., $\underline g(U,V)=g(U,V)$ for any tangent vectors $U$ and $V$ of $\Sigma_t$;
		\item $\slashed \Pi_\al^\beta:=\delta_\al^\beta-\delta_\al^0\mathring L^\beta+\mathring L_\al\tilde{T}^\beta$ is the
projection tensor field on $S_{s,u}$ of type $(1,1)$, where $\delta_\al^\beta$ is Kronecker delta;
		\item define $\slashed\xi:=\slashed\Pi\xi$ as the tensor field on $S_{s,u}$ for any $(m,n)$-type spacetime tensor field
$\xi$, whose components are $$\slashed\xi^{\al_1\cdots\al_m}_{\beta_1\cdots\beta_n}:=(\slashed\Pi\xi)^{\al_1\cdots\al_m}_{\beta_1\cdots\beta_n}
=\slashed\Pi_{\beta_1}^{\beta_1'}\cdots\slashed\Pi_{\beta_n}^{\beta_n'}\slashed\Pi_{\al_1'}^{\al_1}\cdots
\slashed\Pi_{\al_m'}^{\al_m}\xi^{\al_1'\cdots\al_m'}_{\beta_1'\cdots\beta_n'},$$
		in particular, $\slashed g=(\slashed g_{\al\beta})$ is the induced metric of $g$ on $S_{s,u}$;
		\item $(\slashed g^{AB})$ is defined as the inverse matrix of $(\slashed g_{AB})$ with $\slashed g_{AB}=g(X_A, X_B)$;
		\item $\mathscr D$ and $\slashed\nabla$ denote the Levi-Civita connection of $g$ and $\slashed g$, respectively;
		\item $\Box_g:=g^{\al\beta}\mathscr{D}^2_{\al\beta}$, $\slashed\triangle:=\slashed g^{AB}\slashed\nabla^2_{AB}$;
		\item define $\mathcal L_V\xi$ as the Lie derivative of $\xi$ with respect to $V$ and $\slashed{\mathcal L}_V\xi:=\slashed\Pi(\mathcal L_V\xi)$ for any tensor field $\xi$ and vector $V$;
		\item when $\xi$ is a $(m,n)$-type spacetime tensor field, the square of its norm is defined as
		$$
		|\xi|^2:=g_{\al_1\al_1'}\cdots g_{\al_m\al_m'}g^{\beta_1\beta_1'}\cdots g^{\beta_n\beta_n'}\xi_{\beta_1\cdots\beta_n}^{\al_1\cdots\al_m}\xi_{\beta_1'\cdots\beta_n'}^{\al_1'\cdots\al_m'}.
		$$
	\end{itemize}
\end{definition}

In the null frame $\{\mathring L, \mathring{\underline L}, X_1, X_2, X_3\}$,
define {\it{the second fundamental forms}} $\chi$ and $\sigma$  as
\begin{equation}\label{chith}
\chi_{AB}:=g(\mathscr D_A \mathring L, X_B),\quad\sigma_{AB}:=g(\mathscr D_A\tilde T,X_B),
\end{equation}
where $\mathscr D_A$ stands for $\mathscr D_{X_A}$.
Meanwhile, define the {\it one-form tensors} $\zeta$ and $\eta$ as
\begin{equation}\label{zetaeta}
\zeta_A:=g(\mathscr D_A \mathring L,\tilde T),\quad\eta_A:=-g(\mathscr D_A T, \mathring L).
\end{equation}
Then $\mu\zeta_A=-X_A\mu+\eta_A$.
For any vector field $V$, denote its associated {\it{deformation tensor}} by
\begin{equation}\label{dt}
\leftidx{^{(V)}}\pi_{\al\beta}:=g(\mathscr{D}_{\al}V,\p_{\beta})+g(\mathscr{D}_{\beta}V,\p_{\al}).
\end{equation}

On the initial hypersurface $\Sigma_{t_0}^{4\delta}$, it holds that $\tilde T^i=\ds-\f{x^i}{r}+O(\delta^{1-\varepsilon_0})$,
$\mathring L^0=1$, $\mathring L^i=\ds\f{x^i}{r}+O(\delta^{1-\varepsilon_0})$ and
$\chi_{AB}=\ds\f1r\slashed g_{AB}+O(\delta^{1-\varepsilon_0})$.
Note that on $\Sigma_{t_0}$, $r$ is $t_0-u$. For $t\ge t_0$, one can define the ``{\it{error vectors}}" as
\begin{equation}\label{errorv}
\begin{split}
&\check{L}^0:=0,\\
&\check{L}^i:=\mathring L^i-\f{x^i}{\varrho},\\
&\check{T}^i:=\tilde T^i+\f{x^i}{\varrho},\\
&\check{\chi}_{AB}:=\chi_{AB}-\f{1}{\varrho}\slashed g_{AB},
\end{split}
\end{equation}
here and below $\varrho$ is defined as $s
-u$.

It is noted that $\vartheta=(\vartheta^1, \vartheta^2, \vartheta^3)$ are the coordinates on the sphere $S_{s, u}$.
Then in the new coordinate system $(s, u, \vartheta^1, \vartheta^2, \vartheta^3)$,
one has $\mathring L=\f{\p}{\p s}$. In addition, it follows from
\eqref{T} that $T=\f{\p}{\p u}-\Xi$ with $\Xi=\Xi^AX_A$
for some smooth $\Xi^A$, $A=1, 2, 3$. Lemma 3.66 in \cite{J} yields

\begin{lemma}
In domain $D^{s, u}$, the Jacobian determinant of the map $(s, u, \vartheta^1, \vartheta^2, \vartheta^3)\rightarrow
(x^0, x^1, x^2, x^3, x^4)$ is
\begin{equation}\label{Jacobian}
\det\f{\p(x^0, x^1, x^2, x^3, x^4)}{\p(s, u, \vartheta^1, \vartheta^2, \vartheta^3)}
=\mu(\det\underline g)^{-1/2}\sqrt{\det\slashed g},
\end{equation}
where $\det\slashed g=\det (\slashed g_{AB})$.
\end{lemma}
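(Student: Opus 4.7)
The plan is to compute this Jacobian directly, exploiting the fact that the $t$-coordinate decouples. In the new frame, $\frac{\partial}{\partial s}=\mathring L$, $\frac{\partial}{\partial u}=T+\Xi^A X_A$, and $\frac{\partial}{\partial\vartheta^A}=X_A$. Lemma \ref{nullframe} and \eqref{T} give $\mathring L t=1$ while $Tt=0$ and $X_A t=0$, so the first row of the $5\times 5$ Jacobian matrix (ordering $x^0=t$ first) is $(1,0,0,0,0)$. First I would expand along this row, and use multilinearity to discard the $\Xi^A X_A^i$ contribution in the ``$u$-column'' (it is already a linear combination of the remaining columns). The problem then reduces to the $4\times 4$ determinant
\begin{equation*}
\det P,\qquad P=\bigl(\,T^i\ \ X_1^i\ \ X_2^i\ \ X_3^i\,\bigr),
\end{equation*}
whose columns are the spatial components of a basis of $T_p\Sigma_s$.

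Next I would relate $\det P$ to the induced metrics by the standard linear-algebra identity. If $G_{ij}:=\underline g(V_i,V_j)$ with $V_1=T$ and $V_{A+1}=X_A$, then $G=P^{T}\,\underline g\, P$, so $\det G=(\det P)^2\det\underline g$. The one computation that is not pure bookkeeping is showing that $G$ is block diagonal. From \eqref{LTTT}, $g(T,T)=\mu^2$. For the mixed block, the definition $\tilde T=-g^{\nu 0}\partial_\nu-\mathring L$ combined with $g(\mathring L,X_A)=0$ from Lemma \ref{nullframe} yields
\begin{equation*}
g(\tilde T,X_A)=-g^{\nu 0}g_{\nu\beta}X_A^\beta-g(\mathring L,X_A)=-\delta^0_{\beta}X_A^\beta=-X_A^0=0,
\end{equation*}
and hence $g(T,X_A)=\mu\,g(\tilde T,X_A)=0$. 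The bottom-right block is $\slashed g=(\slashed g_{AB})$ by definition, so $\det G=\mu^2\det\slashed g$.

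Solving the identity gives $(\det P)^2=\mu^2(\det\underline g)^{-1}\det\slashed g$, and taking the positive square root yields the claimed formula. The sign is fixed by continuity: at $s=t_0$ the change of coordinates is essentially from Cartesian to spherical-type coordinates on an annulus, where $\det P>0$, and the sign is preserved as long as the frame remains non-degenerate, i.e.\ as long as $\mu>0$. Essentially, the only non-routine input is the vanishing of $g(T,X_A)$; the rest is the block-determinant of a $5\times 5$ matrix with a trivial first row and the Gram determinant formula.
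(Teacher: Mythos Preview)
Your argument is correct. The paper does not give a proof of this lemma at all; it simply quotes the result from Lemma~3.66 of Speck's monograph \cite{J}. Your self-contained computation---reducing to the $4\times4$ spatial block via the trivial first row, then identifying the Gram matrix $P^{T}\underline g\,P$ as block-diagonal with blocks $\mu^2$ and $\slashed g$---is exactly the standard way this is done, and the verification that $g(T,X_A)=0$ via $\tilde T=-g^{\nu 0}\partial_\nu-\mathring L$ is clean. The only place one might ask for a bit more care is the sign determination: you appeal to continuity from $s=t_0$, which is fine in spirit, but one could spell out that at $t_0$ the frame $(T,X_1,X_2,X_3)$ agrees (up to positive scalars and the $\Xi$-shear, which does not affect orientation) with the standard $(-\partial_r,\partial_{\theta^1},\partial_{\theta^2},\partial_{\theta^3})$ frame, whose orientation relative to $(\partial_1,\dots,\partial_4)$ matches the positive sign in \eqref{Jacobian}. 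This is a cosmetic point; the proof stands.
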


\begin{remark}\label{3.1}
It follows from \eqref{Jacobian} that if the metrics $\underline g$ and $\slashed g$ are regular, i.e., $\det\underline g>0$
and $\det\slashed g>0$, then the transformation of coordinates between $(s, u, \vartheta^1, \vartheta^2, \vartheta^3)$
and $(x^0, x^1, x^2, x^3, x^4)$ makes sense
as long as $\mu>0$.
\end{remark}

On $\mathbb{S}^3$, the standard rotation vector fields $\Omega_{ij}=x^i\p_j-x^j\p_i$ are used
as tangent derivatives.
In order to project $\Omega_{ij}$ on $S_{s, u}$, as in (3.39b) of \cite{J}, one defines
\begin{equation*}
R_{ij}:=\slashed\Pi\Omega_{ij},\quad \slashed d:=\slashed\Pi d
\end{equation*}
as the rotation vectorfields and the differential on $S_{s, u}$, respectively. Note that the explicit expression of $R_{ij}$ is
\begin{equation}\label{R}
R_{ij}=(\delta_l^k-g_{al}\tilde T^a\tilde T^k){\Omega_{ij}}^l\p_k
=\Omega_{ij}-g_{al}\tilde{T}^a{\Omega_{ij}}^l\tilde{T}.
\end{equation}
For the sake of brevity, set
\begin{equation}\label{omega}
\upsilon_{ij}:=g_{al}\tilde{T}^a{\Omega_{ij}}^l=g_{kj}\check{T}^kx^i-g_{ki}\check T^kx^j-(g_{kj}-m_{kj})\f{x^kx^i}\varrho+(g_{ki}-m_{ki})\f{x^kx^j}\varrho.
\end{equation}
Then
$$R_{ij}=\Omega_{ij}-\upsilon_{ij}\tilde{T}.$$

For the domains with $\mu>0$, the following integrations and norms
will be utilized repeatedly in subsequent sections.

\begin{definition}[]
For any continuous function $f$ and tensor field $\xi$, define
\begin{align*}
&\int_{S_{s, u}}f:=\int_{S_{s, u}}fd\nu_{\slashed g}=\int_{\mathbb{S}^3}f(s, u,\vartheta)
\sqrt{\det\slashed g(s, u,\vartheta)}d\vartheta,\\
&\int_{ C^s_{ u}}f:=\int_{t_0}^s\int_{S_{\tau, u}}f(\tau, u,\vartheta) d\nu_{\slashed g}d\tau,\\
&\int_{\Sigma_s^{ u}}f:=\int_0^{ u}\int_{S_{s, u'}}f(s, u',\vartheta) d\nu_{\slashed g}d u',\\
&\int_{D^{s, u}}f:=\int_{t_0}^s\int_0^{ u}\int_{S_{\tau, u'}}f(\tau, u',\vartheta)d\nu_{\slashed g}d u'd\tau,\\
&\|\xi\|_{L^2(S_{s, u})}^2:=\int_{S_{s, u}}|\xi|^2,\quad\|\xi\|_{L^2( C^s_{ u})}^2:=\int_{ C^s_{ u}}|\xi|^2,\\
&\|\xi\|_{L^2(\Sigma_s^{u})}^2:=\int_{\Sigma_s^{ u}}|\xi|^2,\quad\|\xi\|_{L^2(D^{s, u})}^2:=\int_{D^{s, u}}|\xi|^2.
\end{align*}
\end{definition}

Finally, the following contraction notations will be used.
\begin{definition}[]\label{def}
If $\xi$ is a $(0,2)$-type spacetime tensor, $\Lambda$ is a 1-form, $U$ and $V$ are vector fields, then the contraction
of $\xi$ with respect to $U$ and $V$ is defined as
$$
\xi_{UV}:=\xi_{\al\beta}U^{\al}V^{\beta},
$$
and the contraction of $\Lambda$ with respect to $U$ is
\[
\Lambda_U:=\Lambda_{\al}U^{\al}.
\]
\end{definition}

\begin{definition}[]
	If $\xi$ is a $(0,2)$-type tensor on $S_{s,u}$, the trace of $\xi$ can be defined as
	\[
	\text{tr}\xi:=\slashed g^{AB}\xi_{AB}.
	\]
\end{definition}

\subsection{Basic equalities in the null frames}\label{p-1}

In this subsection, we will derive some basic equalities in the frame $\{\mathring L, \mathring{\underline L}, X_1, X_2, X_3\}$
or the frame $\{T, \mathring{\underline L}, X_1, X_2, X_3\}$.

Let
\begin{equation}\label{FG}
\begin{split}
&\ds F_{\al\beta}:=\p_{\phi} g_{\al\beta}\qquad\text{and}\qquad  G_{\al\beta}^\gamma:=\p_{\varphi_\gamma} g_{\al\beta}.
\end{split}
\end{equation}
For any vector fields $\ds U=U^\al\p_{\al}$ and $\ds V=V^\al\p_{\al}$, set $F_{UV}=F_{\al\beta}U^\al V^\beta$ and $G_{UV}^\gamma=G_{\al\beta}^\gamma U^\al V^\beta$ as in Definition \ref{def}.
In addition, define
\begin{equation}\label{FGX}
	(\mathcal{FG})_{UV}(W):=F_{UV}W\phi+G_{UV}^\gamma W\varphi_{\gamma},
\end{equation}
where $U$, $V$, $W$ are any vector fields.

We now derive the transport equation for $\mu$ along $\mathring L$.

\begin{lemma}\label{mu}
$\mu$ satisfies
 \begin{equation}\label{lmu}
 \begin{split}
 \mathring L\mu=-\f12\mu\big\{(\mathcal {FG})_{\mathring L\mathring L}(\mathring L)+2(\mathcal {FG})_{\tilde T\mathring L}({\mathring L})\big\}+\f12F_{\mathring L\mathring L}T\phi+\f12G_{\mathring L\mathring L}^\gamma T\varphi_\gamma.
 \end{split}
 \end{equation}
\end{lemma}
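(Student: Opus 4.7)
The plan is to derive \eqref{lmu} by differentiating the defining relation $\mu^{-1}=-g^{\alpha 0}\partial_\alpha u$ along $\mathring L=\mu\tilde L$, using the eikonal equation $g^{\alpha\beta}\partial_\alpha u\partial_\beta u=0$ to eliminate the second derivatives of $u$ that arise, and then translating everything into the null-frame notation supplied by $F_{\alpha\beta}$, $G^\gamma_{\alpha\beta}$ and the vector fields $\mathring L$, $\tilde T$, $T$.

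First, from $\mathring L\mu=-\mu^2\mathring L(\mu^{-1})$ and $\mathring L=\mu\tilde L$, the product rule gives
\[
\mathring L(\mu^{-1})=-\mu\tilde L(g^{\alpha 0})\partial_\alpha u-\mu g^{\alpha 0}\tilde L^\beta\partial_\beta\partial_\alpha u.
\]
The dangerous $\partial^2u$-term is eliminated by the eikonal identity: applying $\partial_\alpha$ to $g^{\gamma\beta}\partial_\gamma u\partial_\beta u=0$ and using $g^{\gamma\beta}\partial_\gamma u=-\tilde L^\beta$ yields $g^{\gamma\beta}\partial_\alpha\partial_\gamma u\,\partial_\beta u=-\tfrac12(\partial_\alpha g^{\gamma\beta})\partial_\gamma u\partial_\beta u$, whence $g^{\alpha 0}\tilde L^\beta\partial_\beta\partial_\alpha u=\tfrac12 g^{\alpha 0}(\partial_\alpha g^{\gamma\beta})\partial_\gamma u\partial_\beta u$. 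Multiplying through by $-\mu^2$ one arrives at
\[
\mathring L\mu=\mu^3\tilde L(g^{\alpha 0})\partial_\alpha u+\tfrac12\mu^3 g^{\alpha 0}(\partial_\alpha g^{\gamma\beta})\partial_\gamma u\partial_\beta u.
\]
The factor $1/2$ appearing here is exactly what will account for the coefficient $-\mu/2$ in \eqref{lmu}.

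Next I would rewrite both right-hand-side terms in the frame $\{\mathring L,\tilde T,X_A\}$. For the first, the identities $\partial_\alpha u=-g_{\alpha\beta}\tilde L^\beta$ and $g^{\alpha 0}\mathring L(g_{\alpha\gamma})=-\mathring L(g^{\alpha 0})g_{\alpha\gamma}$ (from $g^{\alpha 0}g_{\alpha\gamma}=\delta^0_\gamma$), together with the decomposition $g^{\alpha 0}=-\tilde T^\alpha-\mathring L^\alpha$ coming from $\tilde T=-g^{\nu 0}\partial_\nu-\mathring L$, reduce the first term to
\[
\mu^3\tilde L(g^{\alpha 0})\partial_\alpha u=-\mu\bigl\{(\mathcal{FG})_{\tilde T\mathring L}(\mathring L)+(\mathcal{FG})_{\mathring L\mathring L}(\mathring L)\bigr\}
\]
after expanding $\mathring L(g_{\alpha\gamma})=F_{\alpha\gamma}\mathring L\phi+G^\delta_{\alpha\gamma}\mathring L\varphi_\delta$ via the chain rule. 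For the second, $\partial_\alpha g^{\mu\beta}=-g^{\mu\sigma}g^{\beta\tau}\partial_\alpha g_{\sigma\tau}$ together with $g^{\mu\sigma}\partial_\mu u=-\tilde L^\sigma$ reduces $(\partial_\alpha g^{\mu\beta})\partial_\mu u\partial_\beta u$ to $-\tilde L^\sigma\tilde L^\tau\partial_\alpha g_{\sigma\tau}$; then expanding $\partial_\alpha g_{\sigma\tau}$ through the chain rule and using $g^{\alpha 0}\partial_\alpha=-\mathring L-T/\mu$ (which is just $\tilde T=-g^{\nu 0}\partial_\nu-\mathring L$ combined with $T=\mu\tilde T$) yields
\[
\tfrac12\mu^3 g^{\alpha 0}(\partial_\alpha g^{\gamma\beta})\partial_\gamma u\partial_\beta u=\tfrac{\mu}{2}(\mathcal{FG})_{\mathring L\mathring L}(\mathring L)+\tfrac12 F_{\mathring L\mathring L}T\phi+\tfrac12 G^\delta_{\mathring L\mathring L}T\varphi_\delta.
\]

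Adding the two contributions, the $(\mathcal{FG})_{\mathring L\mathring L}(\mathring L)$ pieces combine as $-\mu+\mu/2=-\mu/2$ and one recovers exactly \eqref{lmu}. The only genuinely delicate point is the eikonal-derivative reduction that produces the crucial factor $1/2$; everything else is routine bookkeeping of null-frame decompositions and chain-rule computations for the coefficients $F_{\alpha\beta}$, $G^\gamma_{\alpha\beta}$.
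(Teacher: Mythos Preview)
Your argument is correct. The route differs from the paper's, though the underlying content is the same. The paper observes that $\tilde L$ is geodesic, $\mathscr D_{\tilde L}\tilde L=0$, and extracts the $x^0$-component of this equation to obtain directly $\mathring L\mu=\mu\mathring L^{\al}\mathring L^{\g}\Gamma_{\al\g}^{0}$; expanding the Christoffel symbol with $g^{0\kappa}=-\mathring L^\kappa-\tilde T^\kappa$ then yields \eqref{lmu} in one step. You instead differentiate the defining relation $\mu^{-1}=-g^{\al 0}\p_\al u$ and invoke the differentiated eikonal equation to remove $\p^2u$, which is precisely the scalar identity equivalent to the geodesic property of $\tilde L$. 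Your two terms correspond exactly to the symmetric and antisymmetric parts of the Christoffel symbol in the paper's computation. The paper's packaging is a bit more compact and geometric; yours is more elementary in that it never names $\Gamma_{\al\beta}^{\g}$ or the geodesic equation explicitly, working purely with the eikonal relation and the chain rule. Either way, the key cancellation producing the coefficient $-\mu/2$ in front of $(\mathcal{FG})_{\mathring L\mathring L}(\mathring L)$ and isolating $\f12 G_{\mathring L\mathring L}^{\g}T\vp_\g$ is the same.
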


\begin{remark}
As will be seen later, special attention is required to treat the terms containing $T\varphi_\gamma$
since  $T\varphi_\gamma$ has the worse smallness or slower time-decay rate than the
ones of $Z\phi$ and $\bar Z\varphi_\g$,
where $Z\in\{\mathring L, T, X_1, X_2, X_3\}$ and $\bar Z\in\{\mathring L, X_1, X_2, X_3\}$.
\end{remark}

\begin{proof}
 Note that ${\tilde L}=-g^{\al\beta}\p_\al u\p_\beta$ is geodesic, i.e., $\mathscr D_{{\tilde L}}{\tilde L}=0$. Then it holds that
 \begin{equation}\label{ch}
 {\tilde L}^{\al}(\p_\al{\tilde L}^\beta)\p_{\beta}+{\tilde L}^\al{\tilde L}^\beta\Gamma_{\al\beta}^\gamma\p_\gamma=0,
 \end{equation}
 where $\Gamma_{\al\beta}^\gamma$ are Christoffel symbols with
 \begin{equation*}
 \begin{split}
 \Gamma_{\al\beta}^\gamma=\f12g^{\gamma\kappa}\big(F_{\kappa\beta}\p_\al\phi+G_{\kappa\beta}^\nu\p_\al\varphi_\nu
 +F_{\al\kappa}\p_\beta\phi+G_{\al\kappa}^\nu\p_\beta\varphi_\nu-F_{\al\beta}\p_\kappa\phi-G_{\al\beta}^\nu\p_\kappa\varphi_\nu\big).
 \end{split}
 \end{equation*}
By $\tilde L^0=\mu^{-1}$ and $\mathring L=\mu\tilde L$, taking the component relative to $x^0$ in \eqref{ch} yields
 \begin{equation}\label{Y-6}
 \begin{split}
 \mathring L\mu=&\mu\mathring L^\al\mathring L^\gamma\Gamma_{\al\gamma}^0\\
 =&-\f12\mu\big\{F_{\mathring L\mathring L}\mathring L\phi+G_{\mathring L\mathring L}^\gamma\mathring L\varphi_\gamma+2F_{\tilde T\mathring L}\mathring L\phi+2G_{\tilde{T}\mathring L}^\gamma\mathring L\varphi_\gamma\big\}\\
 &+\f12F_{\mathring L\mathring L}T\phi+\f12G_{\mathring L\mathring L}^\gamma T\varphi_\gamma,
 \end{split} \end{equation}
 here the fact $g^{0\kappa}=-\mathring L^\kappa-\tilde T^\kappa$ has been used.
\end{proof}

\begin{remark}\label{R5.2}
The important role played by \eqref{lmu} should be emphasized here. Since the first null condition holds in \eqref{quasi},
the term $\f12G_{\mathring L\mathring L}^\gamma T\varphi_\gamma$ in \eqref{lmu} will admit the better smallness and
faster time-decay rate
than the ones of the single factor $T\varphi_\gamma$. This will imply
$|\mathring L\mu|\lesssim M\delta^{1-\varepsilon_0}s^{-5/2}\mu+M^2\delta^{1-2\varepsilon_0}s^{-3/2}$
and further yield $\mu\geq C$ for some positive constant $C$ (see \eqref{phimu}
in Section \ref{BA}). Therefore, this is in sharp contrast to the case in \cite{C2}, \cite{MY} and \cite{J},
where the expressions of $\mathring L\mu$
contain the pure ``bad" factor $T\vp_\al$ which leads to $\mu\rightarrow 0+$ in finite time (e.g. (2.36) in \cite{MY}).
\end{remark}

Since the quantity ``deformation tensor" defined in \eqref{dt}
 will occur in the subsequent energy estimates, it is necessary  to compute the components
 of $\leftidx{^{(V)}}\pi$ in the null frame $\{\mathring L,\underline{\mathring L},X_1,X_2,X_3\}$.

Set $\leftidx{^{(V)}}{\slashed\pi}_{UA}:=\leftidx{^{(V)}}{\pi}_{UX_A}$ for $U\in\{\mathring L,\underline{\mathring L},X_1,X_2,X_3\}$.
By referring to the components in Proposition 7.7 of \cite{J}, one has

\begin{enumerate}[(1)]
	\item when $V=T$,
	\begin{equation}\label{Lpi}
	\begin{split}
	&\leftidx{^{(T)}}\pi_{\mathring L\mathring L}=0,\quad \leftidx{^{(T)}}\pi_{T\tilde T}=2T\mu,\quad \leftidx{^{(T)}}\pi_{\mathring LT}=-T\mu,\quad
	\leftidx{^{(T)}}{\slashed\pi}_{TA}=0,\\
	&\leftidx{^{(T)}}{\slashed\pi}_{\mathring LA}=-2\mu\zeta_A-\slashed d_A\mu,\quad \leftidx{^{(T)}}{\slashed\pi}_{AB}=2\mu\sigma_{AB};
	\end{split}
	\end{equation}
	
	\item when $V=\mathring L$,
	\begin{equation}\label{uLpi}
	\begin{split}
	&\leftidx{^{(\mathring L)}}\pi_{\mathring L\mathring L}=0,\quad \leftidx{^{(\mathring L)}}\pi_{T\tilde T}=2\mathring L\mu,\quad \leftidx{^{(\mathring L)}}\pi_{\mathring LT}=-\mathring L\mu,\quad \leftidx{^{(\mathring L)}}{\slashed\pi}_{\mathring LA}=0,\\
	&\leftidx{^{(\mathring L)}}{\slashed\pi}_{TA}=2\mu\zeta_A+\slashed d_A\mu,\quad \leftidx{^{(\mathring L)}}{\slashed\pi}_{AB}=2\chi_{AB};
	\end{split}
	\end{equation}
	
	\item when $V=R_{ij}$,
	\begin{equation}\label{Rpi}
	\begin{split}
	&\leftidx{^{(R_{ij})}}\pi_{\mathring L\mathring L}=0,\quad \leftidx{^{(R_{ij})}}\pi_{T\tilde T}=2R_{ij}\mu,\quad \leftidx{^{(R_{ij})}}\pi_{\mathring LT}=-R_{ij}\mu,\\
	&\leftidx{^{(R_{ij})}}{\slashed\pi}_{\mathring LA}=-{R_{ij}}^B\check{\chi}_{AB}+\f12{R_{ij}}^B\{(\mathcal{FG})_{AB}({\mathring L})-(\mathcal{FG})_{\mathring LB}({X_A})\}\\
	&\qquad\qquad\quad-\upsilon_{ij}\{(\mathcal {FG})_{\mathring L\tilde T}({X_A})+\f12(\mathcal{FG})_{\tilde T\tilde T}({X_A})-(\mathcal{FG})_{A\tilde T}({\mathring L})\}\\
	&\qquad\qquad\quad+\f12(\mathcal{FG})_{\mathring LA}({R_{ij}})+g_{ja}\check L^i\slashed d_Ax^a-g_{ia}\check L^j\slashed d_Ax^a,\\
	&\leftidx{^{(R_{ij})}}{\slashed\pi}_{TA}=\mu{R_{ij}}^B\check\chi_{AB}+\upsilon_{ij}\slashed d_A\mu+\f12\mu{R_{ij}}^B\{(\mathcal{FG})_{B\mathring L}({X_A})-(\mathcal{FG})_{AB}({\mathring L})\}\\
	&\qquad\qquad\quad+\f12\mu(\mathcal{FG})_{A\mathring L}({R_{ij}})+\mu(\mathcal{FG})_{A\tilde T}({R_{ij}})+\upsilon_{ij}\{(\mathcal{FG})_{A\tilde T}(T)\\
	&\qquad\qquad\quad-\f12\mu(\mathcal{FG})_{\tilde T\tilde T}(X_A)\}+\mu g_{ja}\check T^i\slashed d_Ax^a
-\mu g_{ia}\check T^j\slashed d_Ax^a,\\
	&\leftidx{^{(R_{ij})}}{\slashed\pi}_{AB}=2\upsilon_{ij}\chi_{AB}+\upsilon_{ij}\{(\mathcal{FG})_{B\mathring L}({X_A})
+(\mathcal{FG})_{B\tilde T}({X_A})+(\mathcal{FG})_{A\mathring L}({X_B})\\
	&\qquad\qquad\quad+(\mathcal{FG})_{A\tilde T}({X_B})-(\mathcal{FG})_{AB}({\mathring L})\}+(\mathcal{FG})_{AB}({R_{ij}})
+\check g_{ja}(\slashed d_Ax^i)\slashed d_Bx^a\\
	&\qquad\qquad\quad-\check g_{ia}(\slashed d_Ax^j)\slashed d_Bx^a+\check g_{ja}(\slashed d_Bx^i)\slashed d_Ax^a
-\check g_{ia}(\slashed d_Bx^j)\slashed d_Ax^a,
	\end{split}
	\end{equation}
where $\check g_{ia}:=g_{ia}-m_{ia}$,  the definitions of $\t T$ and $\check T^i$, $\check L^i$ are given in \eqref{FC-1}
and \eqref{errorv} respectively.	
\end{enumerate}

It follows from the equation \eqref{lmu} and  \eqref{Lpi}-\eqref{Rpi} that
the vector fields $\mathring L$ and $T$ appear frequently. In view of $\tilde T^i=-g^{0i}-\mathring L^i$
and $T=\mu \tilde T$,
it is important to derive the equations for $\mathring L^i$ and $\check L^i$ in the null frame $\{T,\mathring L, X_1, X_2, X_3\}$.

\begin{lemma}\label{4.2}
	It holds that
	\begin{align}
	&\mathring L\mathring L^i=\f12(\mathcal{FG})_{\mathring L\mathring L}({\mathring L})\tilde T^i
-\big\{(\mathcal{FG})_{A\mathring L}({\mathring L})-\f12(\mathcal{FG})_{\mathring L\mathring L}({X_A})\big\}\slashed d^Ax^i,\label{LL}\\
	&\mathring L\big(\varrho\check L^i\big)=\varrho\mathring L\mathring L^i,\label{LeL}\\
	&\slashed d_A\mathring L^i=\chi_{AB}{\slashed d^B}x^i-\{(\mathcal{FG})_{\mathring L\tilde T}({X_A})
+\f12(\mathcal{FG})_{\tilde T\tilde T}({X_A})\}\tilde T^i+\Lambda_{AB}\slashed d^Bx^i,\label{dL}\\
	&\slashed d_A\check{L}^i=\check\chi_{AB}{\slashed d^B}x^i-\{(\mathcal{FG})_{\mathring L\tilde T}({X_A})
+\f12(\mathcal{FG})_{\tilde T\tilde T}({X_A})\}\tilde T^i+\Lambda_{AB}\slashed d^Bx^i,\label{deL}
	\end{align}
	where
	\begin{equation}\label{Lambda}
	\Lambda_{AB}:=-\f12(\mathcal{FG})_{B\mathring L}({X_A})-\f12(\mathcal{FG})_{AB}({\mathring L})+\f12(\mathcal{FG})_{A\mathring L}({X_B}).
	\end{equation}
\end{lemma}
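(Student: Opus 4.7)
The four identities \eqref{LL}--\eqref{deL} all rest on three ingredients: the geodesic equation $\mathscr D_{\tilde L}\tilde L=0$ for the optical vector field $\tilde L=-g^{\alpha\beta}\partial_\alpha u\,\partial_\beta$, the covariant decomposition of $\mathscr D_A\mathring L$ in the null frame, and the inverse-metric decomposition
\[
g^{\alpha\beta}=-\mathring L^\alpha\mathring L^\beta-\mathring L^\alpha\tilde T^\beta-\tilde T^\alpha\mathring L^\beta+\slashed g^{BC}X_B^\alpha X_C^\beta,
\]
which follows from Lemma \ref{nullframe} together with $\mathring{\underline L}=\mu\mathring L+2T$. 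All derivatives of $g_{\alpha\beta}$ are rewritten via $\partial_\alpha g_{\beta\gamma}=F_{\beta\gamma}\partial_\alpha\phi+G^\nu_{\beta\gamma}\partial_\alpha\varphi_\nu$ and gathered into the $(\mathcal{FG})$-notation of \eqref{FGX}. For \eqref{LL}, I take the $i$-th spatial component of $\mathscr D_{\tilde L}\tilde L=0$ to get $\tilde L\tilde L^i=-\tilde L^\alpha\tilde L^\beta\Gamma^i_{\alpha\beta}$ and rescale by $\mu$ via $\mathring L\mathring L^i=(\mathring L\mu)\tilde L^i+\mu^2\tilde L\tilde L^i$ with $\mu^2\tilde L^\alpha\tilde L^\beta=\mathring L^\alpha\mathring L^\beta$. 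The Christoffel contraction becomes $\mathring L^\alpha\mathring L^\beta\Gamma^i_{\alpha\beta}=g^{i\kappa}\{(\mathcal{FG})_{\kappa\mathring L}(\mathring L)-\tfrac12(\mathcal{FG})_{\mathring L\mathring L}(\partial_\kappa)\}$; expanding $g^{i\kappa}$ via the frame decomposition above and substituting \eqref{lmu} for $\mathring L\mu$, rewritten as $-\tfrac12\mu\{(\mathcal{FG})_{\mathring L\mathring L}(\mathring L)+2(\mathcal{FG})_{\tilde T\mathring L}(\mathring L)-(\mathcal{FG})_{\mathring L\mathring L}(\tilde T)\}$ (using $T=\mu\tilde T$), the $\mathring L^i$-coefficient cancels identically and one is left with precisely the $\tilde T^i$ and $\slashed d^Ax^i$ terms asserted in \eqref{LL}.

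Identity \eqref{LeL} is then immediate: in the new coordinates $\mathring L=\partial/\partial s$ and $\mathring L u=0$ yield $\mathring L\varrho=1$, whence $\mathring L(\varrho\check L^i)=\check L^i+\varrho\mathring L\mathring L^i-\mathring L^i+x^i/\varrho=\varrho\mathring L\mathring L^i$. For \eqref{dL}, I decompose $\mathscr D_A\mathring L$ in the null frame: the nullness $g(\mathring L,\mathring L)=0$ kills the $\mathring{\underline L}$-component, the definition $\chi_{AB}=g(\mathscr D_A\mathring L,X_B)$ supplies the tangential part $\chi_A{}^CX_C$, and pairing with $\mathring{\underline L}=\mu\mathring L+2T$ together with $\zeta_A=g(\mathscr D_A\mathring L,\tilde T)$ identifies the $\mathring L$-coefficient as $-\zeta_A$. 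Writing
\[
\slashed d_A\mathring L^i=X_A\mathring L^i=(\mathscr D_A\mathring L)^i-X_A^\gamma\mathring L^\delta\Gamma^i_{\gamma\delta}=-\zeta_A\mathring L^i+\chi_{AB}\slashed d^Bx^i-X_A^\gamma\mathring L^\delta\Gamma^i_{\gamma\delta},
\]
and expanding the Christoffel piece exactly as in \eqref{LL}, the $\slashed d^Cx^i$-coefficient assembles into $\Lambda_{AC}$ via the symmetries of $F$ and $G$, the $\tilde T^i$-coefficient becomes $-\{(\mathcal{FG})_{\mathring L\tilde T}(X_A)+\tfrac12(\mathcal{FG})_{\tilde T\tilde T}(X_A)\}$, and the $\mathring L^i$-coefficient cancels $-\zeta_A$ by virtue of the relation $\mu\zeta_A=-X_A\mu+\eta_A$ combined with the geodesic-derived expression for $\zeta_A$. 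Finally, \eqref{deL} is a direct corollary of \eqref{dL}: since $X_A s=X_A u=0$, we get $\slashed d_A\varrho=0$ and $\slashed d_A(x^i/\varrho)=\slashed d_Ax^i/\varrho$; using $\chi_{AB}=\check\chi_{AB}+\slashed g_{AB}/\varrho$ and $\slashed g_{AB}\slashed d^Bx^i=\slashed d_Ax^i$ converts $\chi_{AB}\slashed d^Bx^i$ in \eqref{dL} into $\check\chi_{AB}\slashed d^Bx^i+\slashed d_Ax^i/\varrho$, and the last piece exactly cancels $\slashed d_A(x^i/\varrho)$.

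The main obstacle is the bookkeeping of the $\mathring L^i$-coefficient cancellations in \eqref{LL} and \eqref{dL}. These cancellations are not algebraically obvious and depend delicately on the precise coefficients appearing in \eqref{lmu} for $\mathring L\mu$ and, for \eqref{dL}, on the geodesic-derived form of $\zeta_A$ together with $\mu\zeta_A=-X_A\mu+\eta_A$. Once the frame expansion of $g^{i\kappa}$ is applied systematically and the symmetries $F_{\alpha\beta}=F_{\beta\alpha}$ and $G^\nu_{\alpha\beta}=G^\nu_{\beta\alpha}$ are exploited, these cancellations occur automatically and the four identities assume their stated clean form.
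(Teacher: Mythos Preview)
Your proof is correct and follows essentially the same approach as the paper, which simply defers to Proposition~4.7 in \cite{J}; you have spelled out the computation that reference carries out, namely extracting the spatial components of the geodesic equation $\mathscr D_{\tilde L}\tilde L=0$ for \eqref{LL}, expanding $\mathscr D_A\mathring L$ in the null frame for \eqref{dL}, and reading off \eqref{LeL} and \eqref{deL} as immediate corollaries via $\mathring L\varrho=1$ and $\slashed d_A\varrho=0$. The cancellation of the $\mathring L^i$-coefficients that you flag as the main obstacle is indeed where the work lies, and your identification of \eqref{lmu} (equivalently $\mu^{-1}\mathring L\mu=-\tfrac12(\mathcal{FG})_{\mathring L\mathring L}(\mathring L)-(\mathcal{FG})_{\tilde T\mathring L}(\mathring L)+\tfrac12(\mathcal{FG})_{\mathring L\mathring L}(\tilde T)$) as the mechanism for \eqref{LL} is exactly right.
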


\begin{proof}
\eqref{LL}-\eqref{deL} follow directly from Proposition 4.7 in \cite{J}and \eqref{FG}.
\end{proof}

With the help of Lemma \ref{4.2}, following the proof of Lemma 5.1 in \cite{J}, one can
get the explicit expressions of $\zeta_A$ and $\sigma_{AB}$ as
\begin{align}
&\zeta_A=-\mu^{-1}\slashed d_A\mu+\mu^{-1}\eta_A\no\\
&\quad=-\f12\Big\{(\mathcal{FG})_{\tilde T\mathring L}({X_A})+(\mathcal{FG})_{\tilde T\tilde T}({X_A})-(\mathcal{FG})_{A\tilde T}({\mathring L})+(\mathcal {FG})_{A\mathring L}({\tilde T})\Big\},\label{zeta}\\
&\sigma_{AB}=-\f12\Big\{(\mathcal {FG})_{B\mathring L}({X_A})+(\mathcal {FG})_{B\tilde T}({X_A})+(\mathcal {FG})_{A\mathring L}({X_B})
+(\mathcal {FG})_{A\tilde T}({X_B})\no\\
&\qquad\qquad-(\mathcal{FG})_{AB}({\mathring L})-(\mathcal{FG})_{AB}({\tilde T})\Big\}-\chi_{AB}.\label{theta}
\end{align}
Then it is deduced from \eqref{zeta} that
\begin{equation}\label{TL}
\begin{split}
&T\mathring{L}^i=\{\slashed d_A\mu-\f12\mu(\mathcal{FG})_{\tilde T\tilde T}({X_A})
-(\mathcal{FG})_{A\mathring L}(T)\}\slashed d^Ax^i+\big\{\f12\mu(\mathcal{FG})_{\mathring L\mathring L}({\mathring L})\\
&\qquad\quad+\mu(\mathcal{FG})_{\tilde T\mathring L}({\mathring L})+\f12\mu(\mathcal{FG})_{\tilde T\tilde T}({\mathring L})\big\}\mathring L^i+\f12(\mathcal{FG})_{\mathring L\mathring L}(T)\tilde T^i.
\end{split}
\end{equation}

For later use, one also needs to give the connection coefficients of the related frames.
This can be derived from Lemma 5.1 and Lemma 5.3 of \cite{J}.

\begin{lemma}\label{cd}
The covariant derivatives in the frame $\{T, \mathring L, X_1, X_2, X_3\}$ are
\begin{equation}\label{cdf}
\begin{split}
&\mathscr D_{\mathring L}\mathring L=(\mu^{-1}\mathring L\mu)\mathring L,\quad\mathscr D_{T}\mathring L=-\mathring L\mu\mathring L+\eta^AX_A,\quad\mathscr D_A\mathring L=-\zeta_A\mathring L+{\chi_A}^BX_B,\\
&\mathscr D_{\mathring L}T=-\mathring L\mu\mathring L-\mu\zeta^AX_A,\quad\mathscr D_TT=\mu\mathring L\mu\mathring L
+(\mu^{-1}T\mu+\mathring L\mu)T-\mu(\slashed d^A\mu) X_A,\\
&\mathscr D_AT=\mu\zeta_A\mathring L+\mu^{-1}\eta_AT+\mu{\sigma_A}^BX_B,\\
&\mathscr D_AX_B=\slashed{\nabla}_AX_B+(\sigma_{AB}+\chi_{AB})\mathring L+\mu^{-1}\chi_{AB}T.
\end{split}
\end{equation}
\end{lemma}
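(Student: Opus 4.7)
The plan is to decompose each covariant derivative in the null frame $\{\mathring L,T,X_1,X_2,X_3\}$ as $V=a\mathring L+bT+c^AX_A$ and determine the coefficients by pairing with $\mathring L$, $T$, and $X_A$, using the orthogonality relations of Lemma~\ref{nullframe} together with the definitions \eqref{chith} of $\chi,\sigma$ and \eqref{zetaeta} of $\zeta,\eta$. Three structural inputs drive everything: (i) $\tilde L=\mu^{-1}\mathring L$ is geodesic, so $\mathscr D_{\tilde L}\tilde L=0$; (ii) in the coordinates $(s,u,\vartheta^A)$ one has $\mathring L=\p/\p s$, $X_A=\p/\p\vartheta^A$ and $T=\p/\p u-\Xi^BX_B$, which gives $[\mathring L,X_A]=0$ and $[T,X_A]\in TS_{s,u}$; (iii) the torsion-free identity $\mathscr D_UV-\mathscr D_VU=[U,V]$.

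First I would handle $\mathscr D_{\mathring L}\mathring L$ by expanding $\mathscr D_{\mu\tilde L}(\mu\tilde L)=\mu(\tilde L\mu)\tilde L+\mu^2\mathscr D_{\tilde L}\tilde L=(\mu^{-1}\mathring L\mu)\mathring L$. Next, for $\mathscr D_A\mathring L$, differentiating $g(\mathring L,\mathring L)=0$ along $X_A$ shows that the $T$-coefficient vanishes, while the $X_B$- and $\mathring L$-coefficients are read off directly from $\chi_{AB}=g(\mathscr D_A\mathring L,X_B)$ and $\mu\zeta_A=g(\mathscr D_A\mathring L,T)$. The formula for $\mathscr D_AT$ is obtained analogously from $-\eta_A=g(\mathscr D_AT,\mathring L)$, $\mu\sigma_{AB}=g(\mathscr D_AT,X_B)$, and $g(\mathscr D_AT,T)=\tfrac12X_Ag(T,T)=\mu\slashed d_A\mu$, together with the relation $\eta_A=\mu\zeta_A+\slashed d_A\mu$ (a consequence of the identity $\mu\zeta_A=-X_A\mu+\eta_A$ recorded after \eqref{zetaeta}) that simplifies the $\mathring L$-coefficient to $\mu\zeta_A$.

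For $\mathscr D_{\mathring L}T$ and $\mathscr D_T\mathring L$ I would apply the same pairing procedure, but recover the tangential components through torsion-freeness: since $[\mathring L,X_A]=0$, one has $g(\mathscr D_{\mathring L}T,X_A)=-g(T,\mathscr D_{\mathring L}X_A)=-g(T,\mathscr D_A\mathring L)=-\mu\zeta_A$; since $[T,X_A]$ is tangent to $S_{s,u}$ and hence $g$-orthogonal to $\mathring L$, one has $g(\mathscr D_T\mathring L,X_A)=-g(\mathring L,\mathscr D_TX_A)=-g(\mathring L,\mathscr D_AT)=\eta_A$. The remaining $\mathring L$- and $T$-coefficients in each case are pinned down by differentiating $g(\mathring L,T)=-\mu$ and $g(T,T)=\mu^2$ along the relevant vector and substituting the already-known $\mathscr D_{\mathring L}\mathring L$. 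The formula for $\mathscr D_TT$ then follows from the same calculus, with its $X_A$-component coming from $g(\mathscr D_TT,X_A)=-g(T,\mathscr D_TX_A)$ combined with $\mathscr D_TX_A=\mathscr D_AT+[T,X_A]$ and the formula for $\mathscr D_AT$ just derived.

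The last identity is the Gauss formula: the tangential part of $\mathscr D_AX_B$ is by definition the induced connection $\slashed\nabla_AX_B$, and the normal part lies in $\mathrm{span}\{\mathring L,T\}$, whose coefficients follow from $g(\mathscr D_AX_B,\mathring L)=-\chi_{AB}$ and $g(\mathscr D_AX_B,T)=-\mu\sigma_{AB}$, obtained by differentiating $g(X_B,\mathring L)=g(X_B,T)=0$ along $X_A$. Inverting the resulting $2\times 2$ linear system (with determinant $-\mu^2$) yields the advertised coefficients $\sigma_{AB}+\chi_{AB}$ and $\mu^{-1}\chi_{AB}$. The only step requiring a bit of care is the torsion-free bookkeeping that converts pairings of $\mathscr D_UX_A$ into pairings of $\mathscr D_{X_A}U$ when $U\in\{\mathring L,T\}$; once the commutator relations above are in place the remainder is purely algebraic manipulation in the null frame.
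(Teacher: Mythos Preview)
Your argument is correct: pairing each $\mathscr D_UV$ against $\mathring L$, $T$, $X_B$ using the inner-product relations $g(\mathring L,\mathring L)=0$, $g(\mathring L,T)=-\mu$, $g(T,T)=\mu^2$, $g(\mathring L,X_A)=g(T,X_A)=0$, together with the geodesic property $\mathscr D_{\tilde L}\tilde L=0$, the torsion-free identity, and the coordinate facts $[\mathring L,X_A]=0$, $[T,X_A]\in TS_{s,u}$, recovers every coefficient exactly as stated. The paper itself does not argue Lemma~\ref{cd} directly but simply invokes Lemmas~5.1 and~5.3 of \cite{J}; your write-up is a self-contained realization of the same frame computation those lemmas carry out.
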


\begin{lemma}
The covariant derivatives in the frame $\{\mathring{\underline L}, \mathring L, X_1, X_2, X_3\}$ are
\begin{equation}\label{LuL}
\begin{split}
&\mathscr D_{\mathring {\underline L}}{\mathring L}=-\mathring L\mu\mathring L+2\eta^AX_A,\quad\mathscr D_{\mathring L}\mathring{\underline L}=-2\mu\zeta^AX_A,\\
&\mathscr D_{\mathring{\underline L}}\mathring{\underline L}=(\mu^{-1}\mathring{\underline L}\mu+\mathring
 L\mu)\mathring{\underline L}-(2\mu \slashed d^A\mu) X_A.
\end{split}
\end{equation}
\end{lemma}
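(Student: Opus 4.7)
The plan is to derive all three identities by expressing $\mathring{\underline L}$ via the relation $\mathring{\underline L}=\mu\mathring L+2T$ from \eqref{FC-1} and then reducing everything to the covariant derivatives in the frame $\{T,\mathring L, X_1,X_2,X_3\}$ that are already tabulated in Lemma \ref{cd}. The Levi--Civita connection $\mathscr D$ is $C^\infty$-linear in the lower slot and satisfies the Leibniz rule on the upper slot, so each computation reduces to purely algebraic manipulation.

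For $\mathscr D_{\mathring{\underline L}}\mathring L$, I would simply write $\mathscr D_{\mathring{\underline L}}\mathring L=\mu\,\mathscr D_{\mathring L}\mathring L+2\,\mathscr D_T\mathring L$ and substitute $\mathscr D_{\mathring L}\mathring L=(\mu^{-1}\mathring L\mu)\mathring L$ and $\mathscr D_T\mathring L=-(\mathring L\mu)\mathring L+\eta^AX_A$ from \eqref{cdf}; the $\mathring L$-components collapse to $-(\mathring L\mu)\mathring L$, yielding the first identity. For $\mathscr D_{\mathring L}\mathring{\underline L}$, the Leibniz rule gives $\mathscr D_{\mathring L}\mathring{\underline L}=(\mathring L\mu)\mathring L+\mu\mathscr D_{\mathring L}\mathring L+2\mathscr D_{\mathring L}T$, and plugging in $\mathscr D_{\mathring L}T=-(\mathring L\mu)\mathring L-\mu\zeta^AX_A$ from \eqref{cdf} causes the three $\mathring L$-components to cancel, leaving only $-2\mu\zeta^AX_A$.

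For the diagonal identity $\mathscr D_{\mathring{\underline L}}\mathring{\underline L}$, I would expand
\[
\mathscr D_{\mathring{\underline L}}\mathring{\underline L}=(\mathring{\underline L}\mu)\mathring L+\mu\,\mathscr D_{\mathring{\underline L}}\mathring L+2\,\mathscr D_{\mathring{\underline L}}T,
\]
use the first identity just established for $\mathscr D_{\mathring{\underline L}}\mathring L$, and then expand $\mathscr D_{\mathring{\underline L}}T=\mu\,\mathscr D_{\mathring L}T+2\,\mathscr D_T T$ with the formulas from \eqref{cdf}. The result is a sum of terms proportional to $\mathring L$, to $T$, and to $X_A$. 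The $T$-component assembles to $4(\mathring L\mu+\mu^{-1}T\mu)T$, which is precisely $2(\mu^{-1}\mathring{\underline L}\mu+\mathring L\mu)T$ after using $\mathring{\underline L}\mu=\mu\mathring L\mu+2T\mu$; similarly the $\mathring L$-component assembles into $(\mathring{\underline L}\mu+\mu\mathring L\mu)\mathring L$. Together these exactly reconstitute $(\mu^{-1}\mathring{\underline L}\mu+\mathring L\mu)\mathring{\underline L}$ once $\mathring{\underline L}=\mu\mathring L+2T$ is substituted back.

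The one step that requires care, and which I regard as the main bookkeeping obstacle, is the tangential component of $\mathscr D_{\mathring{\underline L}}\mathring{\underline L}$. The expansion produces a combination $2\mu\eta^AX_A-2\mu^2\zeta^AX_A-4\mu\,\slashed d^A\mu\,X_A$, and only after applying the identity $\mu\zeta_A=-\slashed d_A\mu+\eta_A$ recorded just before \eqref{TL} (equivalently, formula \eqref{zeta}), which gives $-2\mu^2\zeta^AX_A=2\mu\,\slashed d^A\mu\,X_A-2\mu\eta^AX_A$, do the $\eta^A$ terms cancel and leave the stated $-2\mu\,\slashed d^A\mu\,X_A$. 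With this simplification in place all three formulas in \eqref{LuL} follow.
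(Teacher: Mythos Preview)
Your proposal is correct and is precisely the natural derivation: the paper does not spell out a proof of this lemma at all, merely citing \cite{J}, and your reduction via $\mathring{\underline L}=\mu\mathring L+2T$ together with the entries of \eqref{cdf} and the identity $\mu\zeta_A=-\slashed d_A\mu+\eta_A$ is exactly how one obtains \eqref{LuL} from Lemma~\ref{cd}. All three computations check out as written.
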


In order to estimate $\varphi_\gamma$ and $\phi$, by the energy method, we need to
derive the equations for $\phi$ and $\varphi_\gamma$ under the action of the covariant wave operator $\Box_g=g^{\al\beta}\mathscr{D}^2_{\al\beta}=g^{\al\beta}\p_{\al\beta}^2-g^{\al\beta}\Gamma_{\al\beta}^\lambda\p_\lambda$.
\begin{enumerate}
\item For any $\gamma \in \{0,1,2,3,4\}$,
taking the derivative $\p_{x^\gamma}$ on two sides of \eqref{quasi} yields
\begin{equation}\label{gphi}
\begin{split}
g^{\al\beta}\p_{\al\beta}^2\varphi_\gamma=-\big\{(\p_\phi g^{\al\beta})\p_\gamma\phi+(\p_{\varphi_\lambda}g^{\al\beta})\p_\lambda\varphi_{\gamma}\big\}\p_{\al\beta}^2\phi.
\end{split}
\end{equation}

Note that
\begin{equation}\label{ggg}
g^{\al\beta}g^{\lambda\kappa}(\p_\phi g_{\kappa\beta})=-\p_\phi g^{\al\lambda},\quad g^{\al\beta}g^{\lambda\kappa}(\p_{\varphi_\gamma} g_{\kappa\beta})=-\p_{\varphi_\gamma} g^{\al\lambda}.
\end{equation}
Then
\begin{equation}\label{box}
\begin{split}
-g^{\al\beta}\Gamma_{\al\beta}^\lambda\p_\lambda\varphi_\gamma=&(\p_\phi g^{\al\beta})\p_\al\phi\p_\gamma\varphi_\beta+(\p_{\varphi_\kappa}g^{\al\beta})\p_\kappa\varphi_\al\p_\beta\varphi_\gamma\\
&+\f12(g^{\al\beta}\p_\phi g_{\al\beta})g^{\lambda\kappa}\p_\kappa\phi\p_\lambda\varphi_\gamma+\f12(g^{\al\beta}\p_{\varphi_\nu}
g_{\al\beta})g^{\lambda\kappa}\p_\kappa\varphi_\nu\p_\lambda\varphi_\gamma.
\end{split}
\end{equation}
Thus,
\begin{equation}\label{boxg}
\begin{split}
\Box_g\varphi_\gamma=&\p_\phi g^{\al\beta}\big(-\p_\gamma\phi\p_\al\varphi_\beta+\p_\al\phi\p_\gamma\varphi_\beta\big)
+\p_{\varphi_\nu}g^{\al\beta}\big(-\p_\nu\varphi_\gamma\p_\beta\varphi_\al+\p_\nu\varphi_\al\p_\beta\varphi_\gamma\big)\\
&+\f12(g^{\al\beta}\p_\phi g_{\al\beta})g^{\lambda\kappa}\p_\kappa\phi\p_\lambda\varphi_\gamma
+\f12(g^{\al\beta}\p_{\varphi_\nu}g_{\al\beta})g^{\lambda\kappa}\p_\kappa\varphi_\nu\p_\lambda\varphi_\gamma.
\end{split}
\end{equation}
In addition, it holds that
\begin{align}
&g^{\al\beta}=-\mathring L^\al\mathring L^\beta-\tilde T^\al\mathring L^\beta-\mathring L^\al\tilde T^\beta+\slashed g^{AB}(\slashed d_Ax^\al)(\slashed d_Bx^\beta),\label{gab}\\
&\p_\al=\delta_\al^0\mathring L-\mu^{-1}\mathring L_\al T+g_{\al i}(\slashed d^Ax^i)X_A.\label{pal}
\end{align}
Then in the frame $\{T, \mathring L, X_1, X_2, X_3\}$, \eqref{boxg} can be rewritten as
\begin{equation}\label{ge}
\begin{split}
\mu\Box_g \varphi_\gamma=&\f12(g^{\al\beta}\p_\phi g_{\al\beta})(\mu\slashed d^A\phi)\slashed d_A\varphi_\gamma+\f12(g^{\al\beta}\p_{\varphi_\nu}g_{\al\beta})(\mu\slashed d^A\varphi_\nu)\slashed d_A\varphi_\gamma\\
&+f(\phi, \varphi, \mathring L^i)\left(
\begin{array}{ccc}
\mathring L\phi\\
(\slashed d_Ax^j)\slashed d^A\phi\\
\mathring L\varphi\\
(\slashed d_Ax^j)\slashed d^A\varphi\\
\end{array}
\right)
\left(
\begin{array}{ccc}
T\phi\\
T\varphi\\
\mu\mathring L\varphi\\
\mu (\slashed d_Bx^k)\slashed d^B\varphi\\
\end{array}
\right),
\end{split}
\end{equation}
where and below $f(\phi, \varphi, \mathring L^i)$ represents the generic smooth function with respect to
$\phi$, $\vp$, $\mathring L^1$, $\mathring L^2$, $\mathring L^3$ and $\mathring L^4$,
$f(\phi, \varphi, \mathring L^i)\left(
\begin{array}{ccc}
A_1\\
\vdots\\
A_n\\
\end{array}
\right)
\left(
\begin{array}{ccc}
B_1\\
\vdots\\
B_m\\
\end{array}
\right)$
stands for $\sum_{1\leq k\leq n,1\leq l\leq m}f_{kl}(\phi, \varphi, \mathring L^i)A_kB_l$.

An important observation here is that on the right hand side of \eqref{ge}, the worst factor $T\vp$ is always accompanied by
the ``good" multipliers such as  $\mathring L\phi$, $\mathring L\varphi$, $\slashed d\phi$ and $\slashed d\varphi$
which admit higher time-decay rates or orders of smallness.

\item
For the solution $\phi$ to \eqref{quasi}, it holds that
\begin{equation}\label{bphi}
\Box_g\phi=(\p_\al g^{\al\lambda})\varphi_\lambda+\f12 \big(g^{\al\beta}F_{\al\beta}\big)\big(g^{\kappa\lambda}\varphi_\kappa\varphi_\lambda\big)+\f12 \big(g^{\al\beta}G_{\al\beta}^\gamma\big)\big(g^{\kappa\lambda}\varphi_\lambda\p_\kappa\varphi_\gamma\big).
\end{equation}
Next one can treat each term on the right hand side of \eqref{bphi} as follows.

(a)	Note that
\begin{equation}\label{pgv}
	\begin{split}
	(\p_\al g^{\al\lambda})\varphi_\lambda=&(\mathring Lg^{0\gamma})\varphi_\gamma
-\mu^{-1}\mathring L_\al(Tg^{\al\lambda})\varphi_\lambda+g_{\al i}\slashed d^Ax^i(\slashed d_Ag^{\al\lambda})\varphi_\lambda\\
	=&(\mathring Lg^{0\gamma})\varphi_\gamma-\mu^{-1}\mathring L_\al(Tg^{\al\lambda})\varphi_\lambda
-g^{\al\lambda}\slashed d^Ax^i(\slashed d_Ag_{\al i})\varphi_\lambda
	\end{split}
	\end{equation}
Due to \eqref{pal}. In addition, it follows from \eqref{ggg} and \eqref{gab} that
	\begin{equation}\label{Lgv}
	\begin{split}
	&(\mathring Lg^{0\gamma})\varphi_\gamma=\big\{-(g^{0\beta}g^{\g\kappa}F_{\kappa\beta})\mathring L\phi-(g^{0\beta}g^{\g\kappa}G_{\kappa\beta}^\nu)\mathring L\varphi_\nu\big\}\varphi_{\gamma}\\
	=&-\{(\mathcal{FG})_{\mathring L\mathring L}({\mathring L})+2(\mathcal{FG})_{\mathring L\tilde T}({\mathring L})
+(\mathcal{FG})_{\tilde T\tilde T}({\mathring L})\}\mathring L\phi-\{(\mathcal{FG})_{\mathring L\mathring L}({\mathring L})\\
	&+(\mathcal{FG})_{\mathring L\tilde T}({\mathring L})\}\tilde T\phi+\{(\mathcal{FG})_{A\mathring L}({\mathring L})
+(\mathcal{FG})_{A\tilde T}({\mathring L})\}\slashed d^A\phi,
	\end{split}
	\end{equation}
	\begin{equation}
	\begin{split}
	&\mathring L_\al(Tg^{\al\lambda})\varphi_\lambda=-g^{\al\lambda}\mathring L^\beta(Tg_{\al\beta})\varphi_\lambda\\
	=&(\mathcal{FG})_{\mathring L\mathring L}(T)(\mathring L\phi+\tilde T\phi)+(\mathcal{FG})_{\tilde T\mathring L}(T)\mathring L\phi-(\mathcal{FG})_{A\mathring L}(T)\slashed d^A\phi\qquad\quad
	\end{split}
	\end{equation}
and
	\begin{equation}\label{gddv}
	\begin{split}
	g^{\al\lambda}\slashed d^Ax^i(\slashed d_Ag_{\al i})\varphi_\lambda
	=&-\slashed g^{AB}(\mathcal{FG})_{\mathring LA}({X_B})(\mathring L\phi+\tilde T\phi)\\
	&-\slashed g^{AB}(\mathcal{FG})_{\tilde TA}({X_B})\mathring L\phi+\slashed g^{AC }(\mathcal{FG})_{AB}({X_C})\slashed d^B\phi.
	\end{split}
	\end{equation}
Substituting \eqref{Lgv}-\eqref{gddv} into \eqref{pgv} yields
	\begin{equation}\label{pgva}
	\begin{split}
	&(\p_\al g^{\al\lambda})\varphi_\lambda\\
	=&-G_{\mathring L\mathring L}^\gamma(\tilde T\varphi_\gamma)\mathring L\phi
-G_{\tilde T\mathring L}^\gamma(\tilde T\varphi_\gamma)\mathring L\phi+G^\g_{A\mathring L}(\tilde T\varphi_\gamma)\slashed d^A\phi\\
	&-F_{\mathring L\mathring L}(\tilde T\phi)^2-G_{\mathring L\mathring L}^\gamma(\tilde T\varphi_\gamma)\tilde T\phi
+f(\phi, \varphi, \slashed dx^i,\mathring L^i)\left(
	\begin{array}{ccc}
	\mathring L\phi\\
	\slashed d\phi\\
	\tilde T\phi
	\end{array}
	\right)
	\left(
	\begin{array}{ccc}
	\mathring L\phi\\
	\slashed d\phi\\
	\mathring L\varphi\\
	\slashed d\varphi\\
	\end{array}
	\right).
	\end{split}
	\end{equation}

(b) Similarly, applying the equality \eqref{gab} to $g^{\al\beta}$ and $g^{\kappa\lambda}$ yields
	\begin{equation}\label{gFgv}
	\begin{split}
	&\f12 \big(g^{\al\beta}F_{\al\beta}\big)\big(g^{\kappa\lambda}\varphi_\kappa\varphi_\lambda\big)+\f12 \big(g^{\al\beta}G_{\al\beta}^\gamma\big)\big(g^{\kappa\lambda}\varphi_\lambda\p_\kappa\varphi_\gamma\big)\\
	=&\f12G_{\mathring L\mathring L}^\gamma(\tilde T\varphi_\gamma)\mathring L\phi
+G_{\mathring L\tilde T}^\gamma(\tilde T\varphi_\gamma)\mathring L\phi
-\f12\slashed g^{AB}G_{AB}^\gamma(\tilde T\varphi_\gamma)\mathring L\phi\\
	&+f(\phi, \varphi, \slashed dx^i,\mathring L^i)\left(
	\begin{array}{ccc}
	\mathring L\phi\\
	\slashed d\phi\\
	\tilde T\phi
	\end{array}
	\right)
	\left(
	\begin{array}{ccc}
	\mathring L\phi\\
	\slashed d\phi\\
	\mathring L\varphi\\
	\slashed d\varphi\\
	\end{array}
	\right).
	\end{split}
	\end{equation}
Thus, by putting \eqref{pgva}-\eqref{gFgv} into \eqref{bphi}, one can get
	\begin{equation}\label{muphi}
	\begin{split}
	&\mu\Box_g\phi\\
	=&-\f12G_{\mathring L\mathring L}^\gamma(T\varphi_\gamma)\mathring L\phi+G_{\mathring LA}^\gamma(T\varphi_\gamma)\slashed d^A\phi
-\f12\slashed g^{AB}G_{AB}^\gamma(T\varphi_\gamma)\mathring L\phi-F_{\mathring L\mathring L}(T\phi)\tilde T\phi\\
	&-G_{\mathring L\mathring L}^\gamma(T\varphi_\gamma)\tilde T\phi+f(\phi, \varphi, \slashed dx^i,\mathring L^i)\left(
	\begin{array}{ccc}
	\mu\mathring L\phi\\
	\mu\slashed d\phi\\
	T\phi
	\end{array}
	\right)
	\left(
	\begin{array}{ccc}
	\mathring L\phi\\
	\slashed d\phi\\
	\mathring L\varphi\\
	\slashed d\varphi\\
	\end{array}
	\right).
	\end{split}
	\end{equation}
\end{enumerate}

As explained in Remark \ref{R5.2}, $G_{\mathring L\mathring L}^\gamma(T\varphi_\gamma)$
will admit higher order smallness and faster time-decay rate than the ones for $T\varphi_\gamma$
because of the null condition \eqref{null}. Thus, the term $-G_{\mathring L\mathring L}^\gamma(T\varphi_\gamma)\tilde T\phi$
in the right hand side of \eqref{muphi} does not give rise to new  difficulties for the proof of global existence
in $A_{2\dl}$.

The following two Lemmas are needed to deal with commutators, which can be found from Lemmas 4.10, 8.9 and 8.11 in \cite{J}.

\begin{lemma}\label{com}
	In the frame $\{\mathring L,T,R_{ij}\}$, the following identities for the commutators hold:
	\begin{equation}\label{c}
	\begin{split}
	[\mathring L, R_{ij}]&={\leftidx{^{(R_{ij})}}{\slashed\pi}_{\mathring L}}^AX_A,\\
	[\mathring L, T]&={\leftidx{^{(T)}}{\slashed\pi}_{\mathring L}}^AX_A,\\
	[T,R_{ij}]&={\leftidx{^{(R_{ij})}}{\slashed\pi}_{T}}^AX_A.
	\end{split}
	\end{equation}
\end{lemma}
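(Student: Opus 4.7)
The plan is to give a unified proof for all three commutator identities, establishing $[V,W]=\leftidx{^{(W)}}{\slashed\pi}_{V}{}^{A}X_{A}$ for each admissible pair $(V,W)$ with $V\in\{\mathring L, T\}$ and $W\in\{T,R_{ij}\}$. First I would show that $[V,W]$ is tangent to the sphere $S_{s,u}$. This follows by evaluating the commutator on the functions $t$ and $u$, using the relations in Lemma \ref{nullframe} together with $R_{ij}t=R_{ij}u=0$ (since $R_{ij}\in TS_{s,u}$ by its definition \eqref{R}): for instance $[\mathring L,R_{ij}]t=\mathring L(R_{ij}t)-R_{ij}(\mathring L t)=-R_{ij}(1)=0$, and the other five checks are identical. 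Since any vector decomposed as $a\mathring L+bT+c^{A}X_{A}$ acts on $t$ by $a$ and on $u$ by $b$, the vanishing of these derivatives forces $a=b=0$, so $[V,W]=c^{A}X_{A}$ for some functions $c^{A}$.

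Next I would pin down the components $c^{A}$ by contracting with $X_{B}$. Writing $g([V,W],X_{B})=g(\mathscr D_{V}W-\mathscr D_{W}V,X_{B})$ and comparing with the definition \eqref{dt},
\[
\leftidx{^{(W)}}\pi(V,X_{B})=g(\mathscr D_{V}W,X_{B})+g(\mathscr D_{X_{B}}W,V),
\]
the desired identity $[V,W]=\leftidx{^{(W)}}{\slashed\pi}_{V}{}^{A}X_{A}$ reduces to verifying
\[
g(\mathscr D_{W}V,X_{B})+g(\mathscr D_{X_{B}}W,V)=0. \qquad(\ast)
\]
Applying metric compatibility to the identity $g(V,X_{B})=0$ (which holds for $V\in\{\mathring L,T\}$ by Lemma \ref{nullframe} and \eqref{LTTT}) yields $g(\mathscr D_{W}V,X_{B})=-g(V,\mathscr D_{W}X_{B})$, and the torsion-free relation $\mathscr D_{W}X_{B}=\mathscr D_{X_{B}}W+[W,X_{B}]$ recasts $(\ast)$ as the single claim $g(V,[W,X_{B}])=0$.

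The remaining step, which I expect to be the main (and only) subtlety, is to verify that $[W,X_{B}]$ is tangent to $S_{s,u}$. For $W=R_{ij}$ this is simply the involutivity of $TS_{s,u}$, since both $R_{ij}$ and $X_{B}$ are sphere-tangent. For $W=T$, one must use the explicit coordinate expression $T=\partial/\partial u-\Xi^{C}X_{C}$ established just before \eqref{Jacobian}; together with $[\partial/\partial u,X_{B}]=0$, this gives $[T,X_{B}]=X_{B}(\Xi^{C})X_{C}$, which is manifestly tangent to $S_{s,u}$. Combined with $V\perp S_{s,u}$ (so that $g(V,[W,X_{B}])=0$), this closes $(\ast)$ and yields the three commutator identities simultaneously. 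All remaining steps are routine manipulations with the Levi-Civita connection and the definition of the deformation tensor, so no further obstacle is anticipated beyond this tangency verification.
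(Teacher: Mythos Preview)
Your argument is correct. The paper does not supply its own proof of this lemma; it simply cites Lemmas 4.10, 8.9 and 8.11 of \cite{J}. Your direct, self-contained verification---first showing $[V,W]$ is $S_{s,u}$-tangent via evaluation on $t$ and $u$, then identifying its sphere components with $\leftidx{^{(W)}}{\slashed\pi}_{V}{}^{A}$ by reducing $(\ast)$ to the tangency of $[W,X_B]$---is the natural way to prove the statement from scratch, and all steps go through as you describe. The only minor remark is that $g(T,X_A)=0$ is not stated verbatim in \eqref{LTTT}; it follows from Lemma~\ref{nullframe} via $0=g(\mathring{\underline L},X_A)=\mu\,g(\mathring L,X_A)+2g(T,X_A)$, which is trivial to insert.
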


\begin{lemma}\label{commute}
For any vector field $Z\in\{\mathring L,T,R_{ij}\}$,

(a) if $f$ is a smooth function, then
 \begin{align*}
 \big([\slashed\nabla^2,\slashed{\mathcal L}_Z]f\big)_{AB}&
 =\big(\check{\slashed\nabla}_A\leftidx{^{(Z)}}{\slashed\pi_B}^C\big)\slashed d_Cf,\\
 [\slashed\triangle, Z]f&=\leftidx{^{(Z)}}{\slashed\pi}^{AB}\slashed\nabla_{AB}^2f
 +(\check{\slashed\nabla}_A\leftidx{^{(Z)}}{\slashed\pi}^{AB})\slashed d_Bf;
 \end{align*}

(b) if $\xi$ is a one-form on $S_{s, u}$, then
\[
([\slashed\nabla_A,\slashed{\mathcal L}_Z]\xi)_B=(\check{\slashed\nabla}_A\leftidx{^{(Z)}}{\slashed\pi_B}^C)\xi_C;
\]

(c) if  $\xi$ is a $(0,2)$-type symmetric tensor on $S_{s, u}$, then
\begin{align*}
([\slashed\nabla_A,\slashed{\mathcal L}_Z]\xi)_{BC}&=(\check{\slashed\nabla}_A\leftidx{^{(Z)}}{\slashed\pi_B}^D)\xi_{CD}
+(\check{\slashed\nabla}_A\leftidx{^{(Z)}}{\slashed\pi_C}^D)\xi_{BD};
\end{align*}

(d) if $\xi$ is a $(0,3)$-type tensor on $S_{s,u}$, then
\[
([\slashed\nabla_A, \slashed{\mathcal L}_Z]\xi)_{BCD}=(\check{\slashed\nabla}_A\leftidx{^{(Z)}}{\slashed\pi_B}^E)\xi_{ECD}
+(\check{\slashed\nabla}_A\leftidx{^{(Z)}}{\slashed\pi_C}^E)\xi_{BED}+(\check{\slashed\nabla}_A\leftidx{^{(Z)}}{\slashed\pi_D}^E)\xi_{BCE},
\]
where
\[
\check{\slashed\nabla}_A\leftidx{^{(Z)}}{\slashed\pi_{BC}}:=\f12\slashed\nabla_A\leftidx{^{(Z)}}{\slashed\pi_{BC}}
+\f12\slashed\nabla_B\leftidx{^{(Z)}}{\slashed\pi_{AC}}-\f12\slashed\nabla_C\leftidx{^{(Z)}}{\slashed\pi_{AB}}.
\]
\end{lemma}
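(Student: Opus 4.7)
The strategy is to reduce all four identities to a single variational formula for the induced Christoffel symbols $\slashed\Gamma^C_{AB}$ of the metric $\slashed g$ on $S_{s,u}$ under the projected Lie derivative $\slashed{\mathcal L}_Z$. The starting point is the identity $\slashed{\mathcal L}_Z\slashed g_{AB}=\leftidx{^{(Z)}}{\slashed\pi}_{AB}$, which, given the tangential components of the deformation tensor already tabulated in \eqref{Lpi}--\eqref{Rpi}, follows from a direct computation using the connection formulas in Lemma \ref{cd} together with $\mathring L t=1$, $Tt=0$, $R_{ij}t=0$, so that $[Z,X_A]$ has no $\p_t$-component and is genuinely tangent to $S_{s,u}$ (this is already the content of Lemma \ref{com}). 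Differentiating the identity for $\slashed{\mathcal L}_Z\slashed g$ and mimicking the standard derivation of the Koszul formula inside $S_{s,u}$ yields
\begin{equation*}
\slashed{\mathcal L}_Z\slashed\Gamma^C_{AB}=\slashed g^{CD}\check{\slashed\nabla}_A\leftidx{^{(Z)}}{\slashed\pi}_{BD}=\check{\slashed\nabla}_A\leftidx{^{(Z)}}{\slashed\pi_B}^C,
\end{equation*}
which is automatically symmetric in $(A,B)$ because $\leftidx{^{(Z)}}{\slashed\pi}_{AB}$ is.

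Part (a) is then essentially immediate. Expanding $(\slashed\nabla^2 f)_{AB}=X_A X_B f-\slashed\Gamma^C_{AB}\slashed d_C f$ and acting by $\slashed{\mathcal L}_Z$, then subtracting the analogous expansion of $\slashed\nabla^2(Zf)$, the $X_AX_B f$ piece cancels against $[Z,X_A]X_B f+X_A[Z,X_B]f$ (using Lemma \ref{com} to control these commutators and the fact that $Zf$ is a scalar), while the Christoffel piece contributes $-(\slashed{\mathcal L}_Z\slashed\Gamma^C_{AB})\slashed d_C f$, which is precisely $-(\check{\slashed\nabla}_A\leftidx{^{(Z)}}{\slashed\pi_B}^C)\slashed d_C f$ up to the sign convention in the bracket. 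Contracting with $\slashed g^{AB}$ and using $\slashed{\mathcal L}_Z\slashed g^{AB}=-\leftidx{^{(Z)}}{\slashed\pi}^{AB}$ together with the Leibniz rule for $\slashed{\mathcal L}_Z$ acting on the full trace then produces the stated Laplacian commutator with the two terms $\leftidx{^{(Z)}}{\slashed\pi}^{AB}\slashed\nabla^2_{AB}f$ and $(\check{\slashed\nabla}_A\leftidx{^{(Z)}}{\slashed\pi}^{AB})\slashed d_Bf$.

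For the tensorial identities (b), (c), (d), I would repeat the same bookkeeping after expanding $\slashed\nabla_A\xi$ in the frame $\{X_1,X_2,X_3\}$: for instance in case (c), $(\slashed\nabla_A\xi)_{BC}=X_A\xi_{BC}-\slashed\Gamma^D_{AB}\xi_{DC}-\slashed\Gamma^D_{AC}\xi_{BD}$, so that the variation of each Christoffel factor produces exactly one of the two terms $(\check{\slashed\nabla}_A\leftidx{^{(Z)}}{\slashed\pi_B}^D)\xi_{CD}$ and $(\check{\slashed\nabla}_A\leftidx{^{(Z)}}{\slashed\pi_C}^D)\xi_{BD}$; the frame-derivative term $X_A\xi_{BC}$ cancels against $\slashed\nabla_A(\slashed{\mathcal L}_Z\xi)_{BC}$ after accounting for the tangential commutators supplied by Lemma \ref{com}. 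Cases (b) and (d) follow by the same argument applied to one or three lower indices respectively. The main subtlety, rather than any deep difficulty, is to make sure that the transversal parts of the spacetime Lie derivative $\mathcal L_Z\xi$ are systematically projected away by $\slashed\Pi$ and that the identity $\slashed{\mathcal L}_Z\slashed g_{AB}=\leftidx{^{(Z)}}{\slashed\pi}_{AB}$ really does hold for the specific non-tangential choices $Z\in\{\mathring L,T,R_{ij}\}$; once this bookkeeping is settled using Lemma \ref{com} and the explicit formulas \eqref{Lpi}--\eqref{Rpi}, the rest of the proof is a purely algebraic unwinding of the definition of $\check{\slashed\nabla}$.
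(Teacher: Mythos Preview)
Your argument is correct and is the standard route: identify $\slashed{\mathcal L}_Z\slashed\Gamma^C_{AB}$ with $\check{\slashed\nabla}_A\leftidx{^{(Z)}}{\slashed\pi_B}^C$ via the first-variation formula for the Christoffel symbols of $\slashed g$, then read off the commutator on each tensor type from the fact that Lie derivatives commute with coordinate partials so that only the $\slashed\Gamma$-terms survive. The paper does not actually supply its own proof of this lemma; the sentence immediately preceding Lemmas \ref{com} and \ref{commute} defers both results to Lemmas 4.10, 8.9 and 8.11 of Speck's monograph \cite{J}, where exactly this variational computation is carried out. So your proposal fills in what the paper cites out, along the same lines as the cited reference.
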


\section{Bootstrap assumptions on $(\phi,\p\phi)$  near $C_0$ and some related estimates }\label{BA}

To show the global existence of the solution $\phi$ to \eqref{quasi} near $C_0$,
we will a bootstrap argument. To this end, the following bootstrap assumptions
are made in $D^{s,u}$:
\begin{equation*}
\begin{split}
&\delta^l\|\mathring LZ^\al\varphi_\gamma\|_{L^\infty(\Sigma_s^{u})}\leq  M\delta^{1-\varepsilon_0} s^{-5/2},\\
&\delta^l\|\slashed dZ^\al\varphi_\gamma\|_{L^\infty(\Sigma_s^{u})}\leq  M\delta^{1-\varepsilon_0} s^{-5/2},\\
&\delta^l\|\mathring{\underline L}Z^\al\varphi_\gamma\|_{L^\infty(\Sigma_s^{u})}\leq  M\delta^{-\varepsilon_0} s^{-3/2},\\
&\|\slashed\nabla^2\varphi_\gamma\|_{L^\infty(\Sigma_s^{u})}\leq  M\delta^{1-\varepsilon_0} s^{-7/2},\\
&\|\varphi_\gamma\|_{L^\infty(\Sigma_s^{u})}\leq  M\delta^{1-\varepsilon_0} s^{-3/2},\\
&\delta^l\|Z^\beta\phi\|_{L^\infty(\Sigma_s^{u})}\leq M\delta^{2-\varepsilon_0}s^{-3/2},\\
&\|\slashed d\phi\|_{L^\infty(\Sigma_s^{u})}\leq M\delta^{2-\varepsilon_0}s^{-5/2},\\
&\|\slashed\nabla^2\phi\|_{L^\infty(\Sigma_s^{u})}\leq M\delta^{2-\varepsilon_0}s^{-7/2},
\end{split}\tag{$\star$}
\end{equation*}
where $|\al|\leq N$, $|\beta|\leq N+1$,  $N$ is a fixed large positive integer, $M$ is some positive number to
be suitably chosen
(at least double bounds of the corresponding quantities on time $t_0$),
$Z\in\{\varrho\mathring L,T,R_{ij}\}$, and $l$ is the number of $T$ in $Z^\al$ or $Z^\beta$.

We now give a rough estimate of $\mu$ under the assumptions ($\star$). Note that
$1=g_{ij}\tilde T^i\tilde T^j=\big(1+O(M\delta^{1-\varepsilon_0} s^{-3/2})\big)\ds\sum_{i=1}^4|\tilde T^i|^2$
by \eqref{LTTT}.  This means
\begin{equation}\label{L}
|\tilde T^i|, |\mathring L^i|\leq 1+O( M\delta^{1-\varepsilon_0} s^{-3/2})
\end{equation}
due to $\mathring L^i=-g^{0i}-\tilde T^i$.
Moreover, by $|\slashed dx^i|^2=\slashed g^{ab}\slashed d_ax^i\slashed d_bx^i=g^{ii}+(g^{0i})^2-(\tilde T^i)^2$, $(\star)$ and \eqref{L},
it holds that
\begin{equation}\label{dx}
|\slashed dx^i|\lesssim 1.
\end{equation}
Then $|\mathring L(\varrho\check L^i)|\lesssim M\delta^{1-\varepsilon_0}s^{-3/2}$ holds by \eqref{LeL} and $(\star)$, which further gives
 \begin{equation}\label{chL}
 |\check L^i|+|\check T^i|\lesssim M\delta^{1-\varepsilon_0}s^{-1}
 \end{equation}
through integrating $\mathring L(\varrho\check L^i)$ along integral curves of $\mathring L$ and
using $\check T^i=-g^{0i}-\check L^i$.
In addition, by $\ds g_{ij}(\check T^i-\f{x^i}{\varrho})(\check T^j-\f{x^j}{\varrho})=1$, one has
 \[
 (g_{ij}\o^i\o^j)\f{r^2}{\varrho^2}-(2g_{ij}\check T^i\o^j)\f r\varrho+g_{ij}\check T^i\check T^j-1=0.
 \]
 Thus
 \begin{equation}\label{rrho}
 \check\varrho:=\f{r}{\varrho}-1=\f{1-g_{ij}\o^i\o^j-g_{ij}\check T^i\check T^j+2g_{ij}\check T^i\o^j}{\sqrt{g_{ij}\o^i\o^j-(g_{ij}\o^i\o^j)(g_{ab}\check T^a\check T^b)+(g_{ij}\check T^i\o^j)^2}
 +g_{ij}\o^i\o^j-g_{ij}\check T^i\o^j}.
 \end{equation}
It follows from \eqref{chL}  and \eqref{rrho} that
 \begin{equation}\label{cr}
 |\check\varrho|\lesssim M\delta^{1-\varepsilon_0}s^{-1},
 \end{equation}
 and hence  by \eqref{omega}, one has
 \begin{equation}\label{Ri}
 |\upsilon_{ij}|\lesssim\delta^{1-\varepsilon_0}M.
 \end{equation}

In order to estimate $\mu$, we pay special attention to the last term in \eqref{lmu} since it contains the factor $T\varphi_\gamma$
that seems to have the worse order of smallness and the slower time-decay rate due to $(\star)$.
By \eqref{pal} and \eqref{ggg}, one can arrive at
\begin{equation}\label{GT}
\begin{split}
G_{\mathring L\mathring L}^\gamma T\varphi_\gamma=&G_{\mathring L\mathring L}^0T^i(\mathring L\varphi_i)
-(\p_{\varphi_\gamma}g)_{\mathring L\mathring L}\mathring L_\gamma\tilde T^i T\varphi_i+g_{\gamma i}G_{\mathring L\mathring L}^\gamma (\slashed d^Ax^i)T^j\slashed d_A\varphi_j\\
=&G_{\mathring L\mathring L}^0T^i(\mathring L\varphi_i)
+\underline{(\p_{\varphi_\gamma}g^{\al\beta})\mathring L_\al\mathring L_\beta\mathring L_\gamma}\tilde T^i T\varphi_i
+g_{\gamma i}G_{\mathring L\mathring L}^\gamma (\slashed d^Ax^i)T^j\slashed d_A\varphi_j.
\end{split}
\end{equation}
Due to the first null condition \eqref{null},
the underline factor in \eqref{GT} can be written as
\begin{equation}\label{gLLL}
\begin{split}
&(\p_{\varphi_\gamma}g^{\al\beta})\mathring L_\al\mathring L_\beta\mathring L_\gamma\\
=&\{(\p_{\varphi_\gamma}g^{\al\beta})-\tilde g^{\al\beta,\gamma}\}\mathring L_\al\mathring L_\beta\mathring L_\gamma+\tilde g^{\al\beta,\gamma}(g_{\kappa\al}g_{\lambda\beta}g_{\nu\gamma}-m_{\kappa\al}m_{\lambda\beta}m_{\nu\gamma})\mathring L^\kappa\mathring L^\lambda\mathring L^\nu\\
&+\tilde g^{\al\beta,\gamma}m_{\kappa\al}m_{\lambda\beta}m_{\nu\gamma}\big\{(\check L^\kappa
+\f{\tilde x^\kappa}{\varrho})(\check L^\lambda+\f{\tilde x^\lambda}{\varrho})(\check L^\nu+\f{\tilde x^\nu}{\varrho})-\f{\tilde x^\kappa}{\varrho}\f{\tilde x^\lambda}{\varrho}\f{\tilde x^\nu}{\varrho}\big\}\\
&+\tilde g^{\al\beta,\gamma}\big\{(\o_\al+m_{\al i}\f{\check\varrho x^i}{(\check\varrho+1)\varrho})(\o_\beta+m_{\beta j}\f{\check\varrho x^j}{(\check\varrho+1)\varrho})(\o_\gamma+m_{\gamma k}\f{\check\varrho x^k}{(\check\varrho+1)\varrho})-\o_\al\o_\beta\o_\gamma\big\},
\end{split}
\end{equation}
where $(\tilde x^0, \tilde x^1, \tilde x^2, \tilde x^3, \tilde x^4)=(1, x^1, x^2, x^3, x^4)$
and $(\o_0, \o_1, \o_2, \o_3, \o_4)=(-1, \o^1, \o^2, \o^3, \o^4)$. Therefore,
$|(\p_{\varphi_\gamma}g^{\al\beta})\mathring L_\al\mathring L_\beta\mathring L_\gamma|\lesssim M\delta^{1-\varepsilon_0}s^{-1}$
holds by \eqref{L}-\eqref{chL}, \eqref{cr} and $(\star)$.
Subsequently, it follows from \eqref{GT} that
\begin{equation}\label{GTe}
|G_{\mathring L\mathring L}^\gamma T\varphi_\gamma|\lesssim \mu M\delta^{1-\varepsilon_0}s^{-5/2} +M^2\delta^{1-2\varepsilon_0}s^{-5/2}.
\end{equation}
This, together with $(\star)$ and \eqref{lmu},  implies that
 $|\mathring L\mu|\lesssim M\delta^{1-\varepsilon_0}s^{-5/2}\mu+M^2\delta^{1-2\varepsilon_0}s^{-3/2}$. When $\delta>0$ is small, by
integrating $\mathring L\mu$ along integral curves of $\mathring L$ and noting $\ds\mu=\f{1}{\sqrt{(g^{0i}\o_i)^2+g^{ij}\o_i\o_j}}
=1+O(\delta^{1-\varepsilon_0})$ on $\Sigma_{t_0}$,
one can get
\begin{equation}\label{phimu}
\mu=1+O( M^2\delta^{1-2\varepsilon_0}).
\end{equation}

To improve the assumptions ($\star$) and close the bootstrap arguments, we rewrite the equation \eqref{ge} in the new frame
$\{\mathring{\underline L}, \mathring L, X_1, X_2, X_3\}$ as
\begin{equation}\label{fequation}
\mathring L\mathring{\underline L}\varphi_\gamma+\f{3}{2\varrho}\mathring{\underline L}\varphi_\gamma
=\mu\slashed\triangle\varphi_\gamma+H_\gamma,
\end{equation}
here one has used the fact $\mu\Box_g\varphi_\g=-\mathscr D_{\mathring L\mathring {\underline L}}^2\varphi_\g
+\mu\slashed g^{AB}\mathscr D_{AB}^2\varphi_\g=-\mathring L\mathring{\underline L}\varphi_\g-2\mu\zeta^A(\slashed d_A\varphi_\g)+\mu\slashed\triangle\varphi_\g-\mu(\textrm{tr}\sigma+\textrm{tr}\chi)\mathring L\varphi_\g
-\textrm{tr}\chi T\varphi_\g$
due to \eqref{cdf} and \eqref{LuL}. In addition, by \eqref{zeta}-\eqref{theta},
\begin{equation}\label{H}
\begin{split}
H_\gamma=&-\f12\textrm{tr}\check\chi(\mathring {\underline L}\varphi_\gamma-\mu\mathring L\varphi_\gamma)+\f{3}{2\varrho}\mu\mathring L\varphi_\gamma\\
&+f_1(\phi, \varphi, \mathring L^i)\left(
\begin{array}{ccc}
\mu\mathring L\phi\\
\mu\mathring L\varphi\\
\mu(\slashed d_Ax^j)\slashed d^A\varphi\\
\mathring{\underline L}\phi\\
\mathring{\underline L}\varphi
\end{array}
\right)
\left(
\begin{array}{ccc}
\mathring L\phi\\
\mathring L\varphi\\
(\slashed d_Bx^k)\slashed d^B\phi\\
(\slashed d_Bx^k)\slashed d^B\varphi\\
\end{array}
\right)
+f_2(\phi, \varphi, \mathring L^i)\slashed d^A\varphi\left(\begin{array}{ccc}
\mu\slashed d_A\phi\\
\mu\slashed d_A\varphi
\end{array}\right).
\end{split}
\end{equation}
It should be observed here that in $H_\g$, wherever there is a term containing a factor of the form $\mathring{\underline L}\varphi$ (or $\mathring{\underline L}\phi$) which admits the ``bad" order of smallness or slow time decay rate, then the term must contain another factor with the ``good" order of smallness and fast time-decay rate, e.g.
$\textrm{tr}\check\chi, \mathring L\phi, \mathring L\varphi$ and so on. Consequently, $H_\g$ may
be expected to have some desired ``good" properties.

Similarly, the left side of \eqref{muphi} can be replaced by $\mu\Box_g\phi=
-\mathring L\mathring{\underline L}\phi-2\mu\zeta^A(\slashed d_A\phi)+\mu\slashed\triangle\phi-\mu(\textrm{tr}\sigma
+\textrm{tr}\chi)\mathring L\phi-\textrm{tr}\chi T\phi$, and hence,
\begin{equation}\label{fe}
\mathring L\mathring{\underline L}\phi+\f{3}{2\varrho}\mathring{\underline L}\phi=\mathring H+\mu\slashed\triangle\phi
\end{equation}
with
\begin{equation}\label{mH}
\begin{split}
\mathring H=&\f3{2\varrho}\mu\mathring L\phi-\f12\text{tr}\check\chi(\underline{\mathring L}\phi-\mu\mathring L\phi)
+\f12G_{\mathring L\mathring L}^\gamma(T\varphi_\gamma)\mathring L\phi+F_{\mathring L\mathring L}(\tilde T\phi)T\phi\\
&+G_{\mathring L\mathring L}^\gamma(T\varphi_\gamma)\tilde T\phi+f(\phi, \varphi, \slashed dx^i,\mathring L^i)\left(
\begin{array}{ccc}
\mu\mathring L\phi\\
\mu\slashed d\phi\\
T\phi
\end{array}
\right)
\left(
\begin{array}{ccc}
\mathring L\phi\\
\slashed d\phi\\
\mathring L\varphi\\
\slashed d\varphi\\
\end{array}
\right).
\end{split}
\end{equation}

Unless stated otherwise, in what follows until Section \ref{ert}, the pointwise estimates for the related quantities
are all made inside domain $D^{s,u}$.

It follows from \eqref{fequation} that $\mathring{\underline L}\varphi_\gamma$ can be estimated by integrating  \eqref{fequation}
along integral curves of $\mathring L$. To this end, one should estimate $\check{\chi}$ and other terms in
$H_{\g}$.

To treat $\check\chi$, we now derive a structure equation for $\chi$ by utilizing the Riemann curvature of
metric $g$.
As the Definition 11.1 in \cite{J}, the Riemann curvature tensor $\mathscr R$ of $g$ can be defined as follows:
\begin{equation}\label{cur}
\mathscr R_{WXYZ}:=-g(\mathscr D_{WX}^2Y-\mathscr D_{XW}^2Y,Z),
\end{equation}
where $W$, $X$, $Y$ and $Z$ are vector fields and $\mathscr D_{WX}^2Y:=W^\al X^\beta\mathscr D_\al\mathscr D_\beta Y$.
\begin{lemma}\label{curvature}
	In the frame $\{\mathring L, T, X_1, X_2, X_3\}$,
$\mathscr R_{TA\mathring LB}$ and $\mathscr R_{\mathring LA\mathring LB}$ are given as follows:
\begin{equation}\label{curT}
\begin{split}
\mathscr R_{TA\mathring LB}=&\f12\big\{\mu F_{B\tilde T}\slashed d_A\mathring L\phi
+\mu G_{B\tilde T}^\gamma\slashed d_A\mathring L\varphi_\gamma-\mu F_{\mathring L\tilde T}\slashed\nabla_{AB}^2\phi
-\mu G_{\mathring L\tilde T}^\gamma\slashed\nabla_{AB}^2\varphi_\gamma\\
&\quad-F_{AB}\mathring LT\phi-G_{AB}^\gamma\mathring LT\varphi_\gamma+F_{A\mathring L}\slashed d_BT\phi+G_{A\mathring L}^\gamma\slashed d_BT\varphi_\gamma\big\}\\
&+\f14\mu^{-1}\big\{(\mathcal{FG})_{A\mathring L}(T)\cdot(\mathcal{FG})_{B\mathring L}(T)
-2(\mathcal{FG})_{A\mathring L}(T)(\slashed d_B\mu)\big\}\\
&+\f12\big\{\mu (\mathcal{FG})_{A\mathring L}(X_C){\chi_{B}}^C-\mu (\mathcal {FG})_{B\tilde T}(X_C){\chi_{A}}^C+(\mathcal{FG})_{\mathring L\tilde T}(T)\chi_{AB}\big\}\\
&+f(\phi, \varphi, \slashed dx^i, \mathring L^i)\left(
\begin{array}{ccc}
\mu\mathring L\phi\\
\mu\slashed d\phi\\
T\phi\\
\mu \mathring L\varphi\\
\mu \slashed d\varphi\\
T{\varphi}\\
\end{array}
\right)
\left(
\begin{array}{ccc}
\mathring L\phi\\
\slashed d\phi\\
\mathring L\varphi\\
\slashed d\varphi\\
\end{array}
\right),
\end{split}
\end{equation}
and
\begin{equation}\label{curL}
\begin{split}
\mathscr R_{\mathring LA\mathring LB}=&\f12\big\{F_{B\mathring L}\slashed d_A\mathring L\phi
+G_{B\mathring L}^\gamma(\slashed d_A\mathring L\varphi_\gamma)-F_{\mathring L\mathring L}\slashed\nabla^2_{AB}\phi
-G_{\mathring L\mathring L}^\gamma\slashed\nabla^2_{AB}\varphi_\gamma-F_{AB}\mathring L^2\phi\\
&-G_{AB}^\gamma\mathring L^2\varphi_\gamma+F_{A\mathring L}\slashed d_B\mathring L\phi
+G_{A\mathring L}^\gamma\slashed d_B\mathring L\varphi_\gamma\}-\f12(\mathcal{FG})_{B\mathring L}(X_C){\chi_{A}}^C\\
&-\f12(\mathcal{FG})_{A\mathring L}(X_C){\chi_{B}}^C+\f12\mu^{-1}(\mathcal{FG})_{\mathring L\mathring L}(T)\chi_{AB}\\
&+f(\phi, \varphi, \slashed dx^i,\mathring L^i)\left(
\begin{array}{ccc}
\mathring L\phi\\
\slashed d\phi\\
\mathring L\varphi\\
\slashed d\varphi\\
\end{array}
\right)
\left(
\begin{array}{ccc}
\mathring L\phi\\
\slashed d\phi\\
\mathring L\varphi\\
\slashed d\varphi\\
\end{array}
\right),
\end{split}
\end{equation}
where $f(\phi, \varphi, \slashed dx^i,\mathring L^i)$ represents a generic smooth function of its arguments and similar convention as \eqref{ge} has been used.
\end{lemma}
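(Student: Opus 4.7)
My plan is to compute the two curvature components directly from the definition \eqref{cur}, which, once the identity $\mathscr D^2_{WX}Y-\mathscr D^2_{XW}Y=\mathscr D_W\mathscr D_X Y-\mathscr D_X\mathscr D_W Y-\mathscr D_{[W,X]}Y$ is used, reduces the task to three covariant-derivative evaluations plus one commutator. Thus I would expand
\[
\mathscr R_{TA\mathring L B}=-g\bigl(\mathscr D_T\mathscr D_A\mathring L-\mathscr D_A\mathscr D_T\mathring L-\mathscr D_{[T,X_A]}\mathring L,\,X_B\bigr),
\]
and analogously for $\mathscr R_{\mathring L A\mathring L B}$ with $T$ replaced by $\mathring L$. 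The inputs are the connection identities of Lemma \ref{cd} applied to $\mathscr D_A\mathring L=-\zeta_A\mathring L+\chi_A{}^C X_C$, $\mathscr D_T\mathring L=-\mathring L\mu\,\mathring L+\eta^C X_C$, $\mathscr D_{\mathring L}\mathring L=\mu^{-1}(\mathring L\mu)\mathring L$, $\mathscr D_A T=\mu\zeta_A\mathring L+\mu^{-1}\eta_A T+\mu\sigma_A{}^B X_B$, and $\mathscr D_AX_B=\slashed\nabla_A X_B+(\sigma_{AB}+\chi_{AB})\mathring L+\mu^{-1}\chi_{AB}T$, together with Lemma \ref{com} for $[\mathring L,X_A]$ and $[T,X_A]$.

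The first step is the straightforward differentiation: apply $\mathscr D_T$ (respectively $\mathscr D_A$) to the right-hand side of the formula for $\mathscr D_A\mathring L$ (respectively $\mathscr D_T\mathring L$), use the Leibniz rule, and resubstitute the first-order connection formulas. After taking the $g(\cdot,X_B)$ pairing and invoking $g(\mathring L,X_B)=0$, $g(T,X_B)=0$, and $g(X_C,X_B)=\slashed g_{CB}$, one is left with a linear combination of: (i) derivatives $T\zeta_A$, $T\chi_A{}^C$, $\slashed d_A(\mathring L\mu)$, $\slashed d_A\eta^C$ (and the $\mathring L$-analogues), (ii) quadratic connection terms $\chi\cdot\sigma$, $\chi\cdot\chi$, $\zeta\cdot\eta$, and (iii) the commutator contribution from $\mathscr D_{[T,X_A]}\mathring L$. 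The second step, which is where the structural form of the stated result emerges, is to insert the explicit expressions \eqref{zeta}--\eqref{theta} for $\zeta$ and $\sigma$ in terms of $(\mathcal{FG})$ and $\chi$, and to use Lemma \ref{4.2} for $\slashed d_A\mathring L^i$ and the formula \eqref{TL} for $T\mathring L^i$, so that all derivatives of $\mathring L$ get re-expressed through $\chi$, $F$, $G$ contracted against $X_A$, $X_B$, $\mathring L$, $T$.

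The key algebraic identity that produces the two ``leading'' $(\slashed\nabla^2\phi,\slashed\nabla^2\varphi)$ terms in \eqref{curT} and \eqref{curL} is that $T\chi_A{}^C$ and $\mathring L\chi_A{}^C$, after commuting and using the structure equations $\chi_{AB}=g(\mathscr D_A\mathring L,X_B)$, generate $\slashed\nabla^2_{AB}\phi$ and $\slashed\nabla^2_{AB}\varphi_\gamma$ coefficients given precisely by $-\tfrac12\mu F_{\mathring L\tilde T}$, $-\tfrac12\mu G_{\mathring L\tilde T}^\gamma$ (respectively $-\tfrac12 F_{\mathring L\mathring L}$, $-\tfrac12 G_{\mathring L\mathring L}^\gamma$); meanwhile the $\slashed d_A\mathring L\phi$, $\slashed d_B T\phi$ and $\mathring L T\phi$ (resp.\ $\mathring L^2\phi$) coefficients come out as stated once $F$ and $G$ are re-grouped. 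The $\mu^{-1}(\mathcal{FG})(T)\cdot(\mathcal{FG})(T)$ and $\mu^{-1}(\mathcal{FG})(T)\slashed d\mu$ contributions in \eqref{curT} arise solely from the $\mu^{-1}\eta_A T$ piece of $\mathscr D_A T$ combined with $\eta_A=X_A\mu+\mu\zeta_A$ and the expression \eqref{zeta} for $\zeta_A$.

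The main obstacle, as I see it, is bookkeeping rather than ingenuity: one must carefully classify each term as either one of the ``named'' structural terms that survive in \eqref{curT}--\eqref{curL} or as an error of the generic bilinear form $f(\phi,\varphi,\slashed dx^i,\mathring L^i)(\cdots)(\cdots)$. The rule I would use is that any term which, after a single application of Lemma \ref{4.2} or \eqref{zeta}--\eqref{theta}, splits as a product of two factors each drawn from $\{\mathring L\phi,\slashed d\phi,\mathring L\varphi,\slashed d\varphi\}$ (possibly with one factor upgraded to $\mu\mathring L\phi$, $T\phi$, $\mu\slashed d\phi$, $\mu\mathring L\varphi$, $\mu\slashed d\varphi$, or $T\varphi$) is absorbed into the generic $f$-term; everything else must be matched term by term against the displayed formulas. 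A secondary difficulty is handling the commutator $[T,X_A]$ (and $[\mathring L,X_A]$): by Lemma \ref{com} these are tangential to $S_{s,u}$ with coefficients $\leftidx{^{(T)}}\slashed\pi_{\mathring L}{}^B$ (resp.\ $\leftidx{^{(\mathring L)}}\slashed\pi{}$), which via \eqref{Lpi}--\eqref{uLpi} only contribute $\zeta$-, $\chi$-, and $\slashed d\mu$-dependent terms that either merge with the stated $\chi$-quadratic terms or fall into the generic error. Once this matching is completed for both curvature components, \eqref{curT} and \eqref{curL} follow.
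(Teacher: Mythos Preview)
Your frame-based approach is valid in principle but genuinely different from the paper's, and it reverses the logical order of the argument. The paper does \emph{not} work in the null frame: it first writes the coordinate components
\[
\mathscr R_{\kappa\lambda\al\beta}=\p_\lambda\Gamma_{\al\beta\kappa}-\p_\kappa\Gamma_{\lambda\beta\al}-g^{\nu\gamma}\Gamma_{\al\gamma\kappa}\Gamma_{\lambda\nu\beta}+g^{\nu\gamma}\Gamma_{\lambda\gamma\al}\Gamma_{\kappa\nu\beta}
\]
and expands the Christoffel symbols via $F$, $G$ to obtain $\mathscr R_{\kappa\lambda\al\beta}$ as a combination of covariant Hessians $\mathscr D^2_{\cdot\,\cdot}\phi$, $\mathscr D^2_{\cdot\,\cdot}\varphi_\nu$ and quadratic first-order terms. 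It then contracts with $T^\kappa(\slashed d_Ax^\lambda)\mathring L^\al(\slashed d_Bx^\beta)$ and finally converts each $\mathscr D^2_{A\mathring L}\psi$, $\mathscr D^2_{BT}\psi$, $\mathscr D^2_{\mathring LT}\psi$, $\mu\mathscr D^2_{AB}\psi$ into frame derivatives plus $\chi$-, $\slashed d\mu$-corrections (formulas \eqref{AL}--\eqref{AB}). This route produces \eqref{curT}--\eqref{curL} directly, without ever needing to know $\mathring L\chi$ or $T\chi$; indeed the paper then \emph{uses} \eqref{curL}, \eqref{curT} in the next lemma to derive the structure equations \eqref{Lchi}, \eqref{Tchi} for $\chi$.

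Your route, by contrast, yields first the identity $\mathscr R_{\mathring LA\mathring LB}=\mu^{-1}(\mathring L\mu)\chi_{AB}-\mathring L\chi_{AB}+\chi_A{}^C\chi_{BC}$ (this is exactly \eqref{curL2}, which the paper records only afterward), and its $T$-analogue containing $T\chi_{AB}$. To arrive at \eqref{curL}--\eqref{curT} you must then compute $\mathring L\chi_{AB}$ and $T\chi_{AB}$ \emph{independently}, e.g.\ by differentiating \eqref{chil} along $\mathring L$, $T$ using \eqref{LL} and \eqref{TL}. This is doable and the $\slashed\nabla^2\phi$, $\slashed\nabla^2\varphi$ terms do emerge that way, but it is an extra layer your proposal only gestures at. The paper's coordinate-first approach is more economical precisely because it bypasses that intermediate step. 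One minor correction: Lemma~\ref{com} does not give $[T,X_A]$ or $[\mathring L,X_A]$; rather $[\mathring L,X_A]=0$ since both are coordinate vector fields in $(s,u,\vartheta)$, and $[T,X_A]=(X_A\Xi^B)X_B$ follows from $T=\p_u-\Xi^BX_B$.
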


\begin{proof}
Note that $\mathscr R$ is a $(0,4)$-type tensor field. Taking $W=\p_\kappa$, $X=\p_\lambda$, $Y=\p_\al$ and $Z=\p_\beta$ in \eqref{cur}, and
 applying $\mathscr D_\al\p_\beta=\Gamma^\nu_{\al\gamma\beta}\p_\nu$, one can get the components of $\mathscr R$ as
	 \[
	 \mathscr R_{\kappa\lambda\al\beta}=\p_\lambda\Gamma_{\al\beta\kappa}-\p_\kappa\Gamma_{\lambda\beta\al}
-g^{\nu\gamma}\Gamma_{\al\gamma\kappa}\Gamma_{\lambda\nu\beta}+g^{\nu\gamma}\Gamma_{\lambda\gamma\al}\Gamma_{\kappa\nu\beta}
	 \]
	 with the Christoffel symbol $\G_{\al\beta\kappa}:=g_{\beta\gamma}\G_{\al\kappa}^\gamma$.
	Therefore,
	 \begin{equation*}
	 \begin{split}
	 \mathscr R_{\kappa\lambda\al\beta}=&\f12\big\{F_{\beta\kappa}\mathscr D_{\lambda\al}^2\phi+G_{\beta\kappa}^\nu\mathscr D_{\lambda\al}^2\varphi_\nu-F_{\al\kappa}\mathscr D_{\lambda\beta}^2\phi-G_{\al\kappa}^\nu\mathscr D_{\lambda\beta}^2\varphi_\nu\\
	 &\quad-F_{\beta\lambda}\mathscr D_{\kappa\al}^2\phi-G_{\beta\lambda}^\nu\mathscr D_{\kappa\al}^2\varphi_\nu+F_{\al\lambda}\mathscr D_{\kappa\beta}^2\phi+G_{\al\lambda}^\nu\mathscr D_{\kappa\beta}^2\varphi_\nu\big\}\\
	 &-\f14g^{\tau\gamma}(F_{\lambda\al}\p_\gamma\phi+G_{\lambda\al}^\sigma\p_\gamma\varphi_\sigma)(F_{\kappa\beta}\p_\tau\phi+G_{\beta\kappa}^\nu
\p_\tau\varphi_\nu)\\
	 &+\f14g^{\tau\gamma}(F_{\lambda\beta}\p_\gamma\phi+G_{\lambda\beta}^\sigma\p_\gamma\varphi_\sigma)(F_{\al\kappa}\p_\tau\phi
+G_{\al\kappa}^\nu\p_\tau\varphi_\nu)\\
	 &-\f14g^{\tau\gamma}\big(F_{\gamma(\kappa}\p_{\al)}\phi+G_{\gamma(\kappa}^\sigma\p_{\al)}\varphi_\sigma\big)\big(F_{\tau(\lambda}\p_{\beta)}\phi
+G_{\tau(\lambda}^\nu\p_{\beta)}\varphi_\nu\big)\\
	 &+\f14g^{\tau\gamma}\big(F_{\gamma(\lambda}\p_{\al)}\phi+G_{\gamma(\lambda}^\sigma\p_{\al)}\varphi_\sigma\big)\big(F_{\tau(\kappa}\p_{\beta)}\phi
+G_{\tau(\kappa}^\nu\p_{\beta)}\varphi_\nu\big)\\
&+\f12\big\{(\p_\phi F_{\beta\kappa})(\p_\lambda\phi)(\p_\al\phi)+(\p_{\varphi_{\sigma}}F_{\beta\kappa})(\p_\lambda\varphi_\sigma)(\p_\al\phi)+(\p_\phi G_{\beta\kappa}^\sigma)(\p_\lambda\phi)(\p_\al\varphi_\sigma)\\
&
+(\p_{\varphi_{\sigma}}G_{\beta\kappa}^\gamma)(\p_\lambda\varphi_\sigma)(\p_\al\varphi_\gamma)-(\p_\phi F_{\al\kappa})(\p_\lambda\phi)(\p_\beta\phi)-(\p_{\varphi_\sigma}F_{\al\kappa})(\p_\lambda\varphi_\sigma)(\p_\beta\phi)\\
&-(\p_\phi G_{\al\kappa}^\gamma)(\p_\lambda\phi)(\p_\beta\varphi_\gamma)-(\p_{\varphi_\sigma}G_{\al\kappa}^\gamma)(\p_\lambda \varphi_\sigma)(\p_\beta\varphi_\gamma)-(\p_\phi F_{\lambda\beta})(\p_\kappa\phi)(\p_\al\phi)\\
&
-(\p_{\varphi_\sigma}F_{\lambda\beta})(\p_\kappa\varphi_\sigma)(\p_\al\phi)-(\p_\phi G_{\lambda\beta}^\gamma)(\p_\kappa\phi)(\p_\al\varphi_\gamma)
-(\p_{\varphi_\sigma}G_{\lambda\beta}^\gamma)(\p_\kappa\varphi_\sigma)(\p_\al\varphi_\gamma)\\
&+(\p_\phi F_{\lambda\al})(\p_\kappa\phi)(\p_\beta\phi)
+(\p_{\varphi_\sigma}F_{\lambda\al})(\p_\kappa\varphi_\sigma)(\p_\beta\phi)+(\p_\phi G_{\lambda\al}^\gamma)(\p_\kappa\phi)(\p_\beta\varphi_\gamma)\\
&+(\p_{\varphi_\sigma}G_{\lambda\al}^\gamma)(\p_\kappa\varphi_\sigma)(\p_\beta\varphi_\gamma)\big\},
\end{split}
\end{equation*}
where $F_{\kappa(\lambda}\p_{\al)}\phi:=F_{\kappa\lambda}\p_{\al}\phi+F_{\kappa\al}\p_{\lambda}\phi$ and $G_{\kappa(\lambda}^\nu\p_{\al)}\varphi_\nu:=G_{\kappa\lambda}^\nu\p_{\al}\varphi_\nu+G_{\kappa\al}^\nu\p_{\lambda}\varphi_\nu$.
In addition, by \eqref{gab}, one has that for any generic smooth functions $\psi_i$ $(i=1,2)$,
 \[
 g^{\tau\gamma}(\p_{\tau}\psi_1)(\p_{\gamma}\psi_2)=\slashed g^{AB}(\slashed d_A\psi_1)(\slashed d_B\psi_2)-(\mathring L\psi_1)(\mathring L\psi_2)-\mu^{-1}(T\psi_1)(\mathring L\psi_2)-\mu^{-1}(T\psi_2)(\mathring L\psi_1).
 \]
Then contracting $\mathscr R_{\kappa\lambda\al\beta}$ with $T^{\kappa}(\slashed d_Ax^\lambda)\mathring L^\al(\slashed d_Bx^\beta)$ yields
 \begin{equation*}\label{RT}
 \begin{split}
 \mathscr R_{TA\mathring LB}=&\f12\big(\mu F_{B\tilde T}\mathscr D_{A\mathring L}^2\phi+\mu G_{B\tilde T}^\nu\mathscr D_{A\mathring L}^2\varphi_\nu-\mu F_{\mathring L\tilde T}\mathscr D_{AB}^2\phi-\mu G_{\mathring L\tilde T}^\nu\mathscr D_{AB}^2\varphi_\nu\\
 &\quad-F_{AB}\mathscr D^2_{\mathring LT}\phi-G_{AB}^\nu\mathscr D^2_{\mathring LT}\varphi_\nu+F_{A\mathring L}\mathscr D_{BT}^2\phi+G_{A\mathring L}^\nu\mathscr D_{BT}^2\varphi_\nu\big)\\
 \end{split}
 \end{equation*}

 \begin{equation}\label{RT}
 \begin{split}
 &+f(\phi, \varphi, \slashed dx^i, \mathring L^i)\left(
 \begin{array}{ccc}
 \mu\mathring L\phi\\
 \mu\slashed d\phi\\
 T\phi\\
 \mu \mathring L\varphi\\
 \mu \slashed d\varphi\\
 T{\varphi}\\
 \end{array}
 \right)
 \left(
 \begin{array}{ccc}
 \mathring L\phi\\
 \slashed d\phi\\
 \mathring L\varphi\\
 \slashed d\varphi\\
 \end{array}
 \right).
 \end{split}
 \end{equation}
 In addition, it follows from \eqref{theta}, \eqref{zeta} and Lemma \ref{cd} that for any smooth function $\psi$,
 \begin{equation}\label{AL}
 \begin{split}
 \mathscr D_{A\mathring L}^2\psi=&\slashed d_A\mathring L\psi-(\mathscr D_A\mathring L)\psi\\
 =&\slashed d_A\mathring L\psi-\chi_{AB}\slashed d^B\psi+f(\phi, \varphi, \slashed dx^i, \mathring L^i)\left(
 \begin{array}{ccc}
 \mathring L\phi\\
 \slashed d\phi\\
 \tilde T\phi\\
 \mathring L\varphi\\
 \slashed d\varphi\\
 \tilde T{\varphi}\\
 \end{array}
 \right)
 \mathring L\psi.
 \end{split}
 \end{equation}
 Similarly,
 \begin{align}
 &\mathscr D_{BT}^2\psi=\slashed d_BT\psi-(\mu^{-1}T\psi)\slashed d_B\mu+(\mu\slashed d^D\psi)\chi_{BD}+\f12F_{B\mathring L}\tilde T\phi(T\psi)+\f12G_{B\mathring L}^\gamma\tilde T\varphi_\gamma(T\psi)\no\\
 &\qquad\qquad+f_1(\phi, \varphi, \slashed dx^i, \mathring L^i)\left(
 \begin{array}{ccc}
 \mu\mathring L\phi\\
 \mu\slashed d\phi\\
 T\phi\\
 \mu\mathring L\varphi\\
 \mu\slashed d\varphi\\
 T{\varphi}\\
 \end{array}
 \right)
 \left(
 \begin{array}{ccc}
 \mathring L\psi\\
 \slashed d\psi\\
 \end{array}
 \right)
 +f_2(\phi, \varphi, \slashed dx^i, \mathring L^i)\left(
 \begin{array}{ccc}
 \mathring L\phi\\
 \slashed d\phi\\
 \mathring L\varphi\\
 \slashed d\varphi\\
 \end{array}
 \right)T\psi,\label{BT}\\
 &\mathscr D_{\mathring L T}^2\psi=\mathring LT\psi
 +f(\phi, \varphi, \slashed dx^i, \mathring L^i)\left(
 \begin{array}{ccc}
 \mu\mathring L\phi\\
 \mu\slashed d\phi\\
 T\phi\\
 \mu\mathring L\varphi\\
 \mu\slashed d\varphi\\
 T{\varphi}\\
 \end{array}
 \right)
 \left(
 \begin{array}{ccc}
 \mathring L\psi\\
 \slashed d\psi\\
 \end{array}
 \right),\label{LT}\\
 &\mu\mathscr D_{AB}^2\psi=\mu\slashed\nabla_{AB}^2\psi-\chi_{AB}(T\psi)
 +f(\phi, \varphi, \slashed dx^i, \mathring L^i)\left(
 \begin{array}{ccc}
 \mu\mathring L\phi\\
 \mu\slashed d\phi\\
 T\phi\\
 \mu\mathring L\varphi\\
 \mu\slashed d\varphi\\
 T{\varphi}\\
 \end{array}
 \right)
 \mathring L\psi.\label{AB}
 \end{align}
 Taking $\psi=\phi$ or $\varphi_\gamma$ in \eqref{AL}-\eqref{AB}, and then substituting these resulting equations into \eqref{RT}, one gets \eqref{curT} immediately.

By an analogous argument for \eqref{curT}, one can show \eqref{curL}. Here details are omitted.
\end{proof}

In Lemma \ref{curvature}, the expressions of $\mathscr R_{TA\mathring LB}$ and $\mathscr R_{\mathring LA\mathring LB}$
are obtained through contracting $\mathscr R_{\kappa\lambda\al\beta}$ with respect to the corresponding vectorfields.
If  the definition \eqref{cur} is used directly, e.g. $\mathscr R_{\mathring LA\mathring LB}
=g\big(\mathscr D_A(\mathscr D_{\mathring L}\mathring L)-\mathscr D_{\mathring L}(\mathscr D_A\mathring L),X_B\big)$,
then one can find  the following structure equations for $\chi_{AB}$ and ${\check\chi}_{AB}$ with
the help of Lemmas \ref{curvature}.

\begin{lemma}\label{LTchi}
	For the second fundamental form $\chi$ and its ``error" form $\check\chi$ defined in \eqref{chith} and \eqref{errorv}
respectively, the following structure equations hold:
\begin{equation}\label{Lchi}
\begin{split}
\mathring L\chi_{AB}=&-\f12\big\{F_{B\mathring L}\slashed d_A\mathring L\phi+G_{B\mathring L}^\gamma\slashed d_A\mathring L\varphi_\gamma-F_{\mathring L\mathring L}\slashed\nabla_{AB}^2\phi-G_{\mathring L\mathring L}^\gamma\slashed\nabla_{AB}^2\varphi_\gamma-F_{AB}\mathring L^2\phi\\
&\qquad-G_{AB}^\gamma\mathring L^2\varphi_{\gamma}+F_{A\mathring L}\slashed d_B\mathring L\phi+G_{A\mathring L}^\gamma\slashed d_B\mathring L\varphi_\gamma+(\mathcal{FG})_{\mathring L\mathring L}(\mathring L)\chi_{AB}+2(\mathcal{FG})_{\tilde T\mathring L}(\mathring L)\chi_{AB}\\
&\qquad-(\mathcal{FG})_{B\mathring L}(X_C){\chi_A}^C-(\mathcal{FG})_{A\mathring L}(X_C){\chi_B}^C\big\}\\
&\quad+{\chi_A}^C\chi_{BC}+f(\phi, \varphi, \slashed dx^i,\mathring L^i)\left(
\begin{array}{ccc}
\mathring L\phi\\
\slashed d\phi\\
\mathring L\varphi\\
\slashed d\varphi\\
\end{array}
\right)
\left(
\begin{array}{ccc}
\mathring L\phi\\
\slashed d\phi\\
\mathring L\varphi\\
\slashed d\varphi\\
\end{array}
\right),
\end{split}
\end{equation}

\begin{equation*}\label{Tchi}
\begin{split}
\slashed{\mathcal L}_T\chi_{AB}=&\slashed\nabla_{AB}^2\mu-\f12\big\{\mu F_{\tilde T\tilde T}\slashed\nabla_{AB}^2\phi+\mu G_{\tilde T\tilde T}^\gamma\slashed\nabla_{AB}^2\varphi_\gamma+F_{B\mathring L}\slashed d_AT\phi+G_{B\mathring L}^\gamma\slashed d_AT\varphi_\gamma\\
&+F_{A\mathring L}\slashed d _BT\phi+G_{A\mathring L}^\gamma\slashed d_BT\varphi_\gamma-F_{AB}\mathring LT\phi-G_{AB}^\gamma\mathring LT\varphi_\gamma\big\}-\f12\Theta_B\slashed d_A\mu-\f12\Theta_A\slashed d_B\mu\\
&+\f12\big\{\mu(\mathcal{FG})_{\mathring L\mathring L}(\mathring L)+2\mu(\mathcal{FG})_{\tilde T\mathring L}(\mathring L)
-(\mathcal{FG})_{\mathring L\mathring L}(T)-(\mathcal{FG})_{\mathring L\tilde T}(T)\big\}\chi_{AB}\\
&-\mu{\chi_A}^C\chi_{BC}-\f14{\chi_B}^C\Upsilon_{CA}-\f14{\chi_A}^C\Upsilon_{CB}\\
\end{split}
\end{equation*}

\begin{equation}\label{Tchi}
\begin{split}
&+f_1(\phi, \varphi, \slashed dx^i, \mathring L^i)
\left(
\begin{array}{ccc}
\slashed d\mathring L^j\\
\mathring L\phi\\
\slashed d\phi\\
\mathring L\varphi\\
\slashed d\varphi\\
\end{array}
\right)\left(
\begin{array}{ccc}
\mu\mathring L\phi\\
\mu\slashed d\phi\\
T\phi\\
\mu\mathring L\varphi\\
\mu\slashed d\varphi\\
T{\varphi}\\
\end{array}
\right)
+f_2(\phi, \varphi, \mathring L^i)\slashed\nabla_{AB}^2x^j\left(
\begin{array}{ccc}
\mu\mathring L\phi\\
T\phi\\
\mu\mathring L\varphi\\
T{\varphi}\\
\end{array}
\right).
\end{split}
\end{equation}

And hence,
\begin{equation}\label{Lchi'}
\begin{split}
\mathring L\check\chi_{AB}=&-\f12\big\{F_{B\mathring L}\slashed d_A\mathring L\phi+G_{B\mathring L}^\gamma\slashed d_A\mathring L\varphi_\gamma-F_{\mathring L\mathring L}\slashed\nabla_{AB}^2\phi-G_{\mathring L\mathring L}^\gamma\slashed\nabla_{AB}^2\varphi_\gamma-F_{AB}\mathring L^2\phi\\
&-G_{AB}^\gamma\mathring L^2\varphi_{\gamma}+F_{A\mathring L}\slashed d_B\mathring L\phi+G_{A\mathring L}^\gamma\slashed d_B\mathring L\varphi_\gamma+(\mathcal{FG})_{\mathring L\mathring L}(\mathring L)\check\chi_{AB}+2(\mathcal{FG})_{\tilde T\mathring L}(\mathring L)\check\chi_{AB}\\
&-(\mathcal{FG})_{B\mathring L}(X_C){\check\chi_{A}}^C-(\mathcal{FG})_{A\mathring L}(X_C){\check\chi_{B}^C}+\f{1}{\varrho}(\mathcal{FG})_{\mathring L\mathring L}(\mathring L)\slashed g_{AB}+\f{2}\varrho (\mathcal{FG})_{\tilde T\mathring L}(\mathring L)\slashed g_{AB}\\
&-\f1\varrho (\mathcal{FG})_{B\mathring L}(X_A)-\f1\varrho (\mathcal{FG})_{A\mathring L}(X_B)\big\}+{\check\chi_A}^C\check\chi_{BC}\\
&+f(\phi, \varphi, \slashed dx^i,\mathring L^i)\left(
\begin{array}{ccc}
\mathring L\phi\\
\slashed d\phi\\
\mathring L\varphi\\
\slashed d\varphi\\
\end{array}
\right)
\left(
\begin{array}{ccc}
\mathring L\phi\\
\slashed d\phi\\
\mathring L\varphi\\
\slashed d\varphi\\
\end{array}
\right),
\end{split}
\end{equation}

\begin{equation}\label{Tchi'}
\begin{split}
\slashed{\mathcal L}_T\check{\chi}_{AB}=&\slashed\nabla_{AB}^2\mu-\f12\big\{\mu F_{\tilde T\tilde T}\slashed\nabla_{AB}^2\phi+\mu G_{\tilde T\tilde T}^\gamma\slashed\nabla_{AB}^2\varphi_\gamma+F_{B\mathring L}\slashed d_AT\phi+G_{B\mathring L}^\gamma\slashed d_AT\varphi_\gamma\\
&+F_{A\mathring L}\slashed d _BT\phi+G_{A\mathring L}^\gamma\slashed d_BT\varphi_\gamma-F_{AB}\mathring LT\phi-G_{AB}^\gamma\mathring LT\varphi_\gamma\big\}\\
&-\f12\Theta_B\slashed d_A\mu-\f12\Theta_A\slashed d_B\mu-\f14{\check\chi_B}^C\Upsilon_{CA}-\f14{\check\chi_A}^C\Upsilon_{CB}-\mu{\check\chi_A}^C\check\chi_{BC}+\f1{\varrho^2}(\mu-1)\slashed g_{AB}\\
&+\f12\big\{\mu(\mathcal{FG})_{\mathring L\mathring L}(\mathring L)+2\mu (\mathcal{FG})_{\tilde T\mathring L}(\mathring L)-(\mathcal{FG})_{\mathring L\mathring L}(T)-(\mathcal{FG})_{\mathring L\tilde T}(T)\big\}\big(\check\chi_{AB}+\f1\varrho\slashed g_{AB}\big)\\
&+\f{1}{4\varrho}\big\{\mu (\mathcal{FG})_{B\mathring L}(X_A)+\mu(\mathcal{FG})_{A\mathring L}(X_B)-2\mu (\mathcal{FG})_{AB}(\mathring L)+3\mu(\mathcal{FG})_{B\tilde T}(X_A)\\
&+3\mu (\mathcal{FG})_{A\tilde T}(X_B)-2(\mathcal{FG})_{AB}(T)\big\}\\
&+f_1(\phi, \varphi, \slashed dx^i, \mathring L^i)
\left(
\begin{array}{ccc}
\slashed d\mathring L^j\\
\mathring L\phi\\
\slashed d\phi\\
\mathring L\varphi\\
\slashed d\varphi\\
\end{array}
\right)\left(
\begin{array}{ccc}
\mu\mathring L\phi\\
\mu\slashed d\phi\\
T\phi\\
\mu\mathring L\varphi\\
\mu\slashed d\varphi\\
T{\varphi}\\
\end{array}
\right)
+f_2(\phi, \varphi, \mathring L^i)\slashed\nabla_{AB}^2x^j\left(
\begin{array}{ccc}
\mu\mathring L\phi\\
T\phi\\
\mu\mathring L\varphi\\
T{\varphi}\\
\end{array}
\right).
\end{split}
\end{equation}
Moreover,
\begin{equation}\label{dchi}
\begin{split}
\slashed\nabla_C\check\chi_{AB}=&\slashed\nabla_A\check\chi_{CB}+F_{ij}(\slashed d_C\phi)\slashed d_A\mathring L^i(\slashed d_Bx^j)+G_{ij}^\gamma(\slashed d_C\varphi_\gamma)\slashed d_A\mathring L^i(\slashed d_Bx^j)\\
&-F_{ij}(\slashed d_A\phi)\slashed d_C\mathring L^i(\slashed d_Bx^j)-G_{ij}^\gamma(\slashed d_A\varphi_\gamma)\slashed d_C\mathring L^i(\slashed d_Bx^j)-g_{ij}(\slashed d_C\check L^i)\slashed\nabla_{AB}^2x^j\\
&+g_{ij}(\slashed d_A\check L^i)\slashed\nabla_{CB}^2x^j+\varrho^{-1}(\mathcal{FG})_{BC}(X_A)-\varrho^{-1}(\mathcal{FG})_{AB}(X_C)\\
&+\slashed\nabla_A\Lambda_{CB}-\slashed\nabla_C\Lambda_{AB},
\end{split}
\end{equation}
where $\Lambda_{AB}$ is defined in \eqref{Lambda},
\begin{align}
\Theta_A=&(\mathcal{FG})_{\mathring L\tilde T}(X_A)+(\mathcal{FG})_{\tilde T\tilde T}(X_A)-(\mathcal{FG})_{A\tilde T}(\mathring L),\label{Theta}\\
\Upsilon_{AB}=&\mu \big\{(\mathcal{FG})_{A\mathring L}(X_B)+(\mathcal{FG})_{A\tilde T}(X_B)- (\mathcal{FG})_{AB}(\mathring L)+2(\mathcal{FG})_{B\mathring L}(X_A)\big\}-(\mathcal{FG})_{AB}(T),\label{Upsion}
\end{align}
and $f, f_1, f_2$ are generic smooth functions of their arguments.
\end{lemma}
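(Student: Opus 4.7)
The plan is to derive each identity by differentiating the defining formula $\chi_{AB} = g(\mathscr D_{X_A}\mathring L, X_B)$ and then substituting in the curvature expressions from Lemma \ref{curvature} together with the covariant-derivative table of Lemma \ref{cd}. In the coordinate system $(s, u, \vartheta^1, \vartheta^2, \vartheta^3)$ one has $\mathring L = \partial_s$ and $X_A = \partial_{\vartheta^A}$, so $[\mathring L, X_A] = 0$, which removes spurious commutator contributions in step~1.

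First I would compute $\mathring L\chi_{AB}$. Using metric compatibility and $[\mathring L, X_A] = 0$, one gets
\[
\mathring L\chi_{AB} = g(\mathscr D_{\mathring L}\mathscr D_{X_A}\mathring L, X_B) + g(\mathscr D_{X_A}\mathring L, \mathscr D_{X_B}\mathring L).
\]
Commuting the covariant derivatives gives $\mathscr D_{\mathring L}\mathscr D_{X_A}\mathring L = \mathscr D_{X_A}\mathscr D_{\mathring L}\mathring L + R(\mathring L, X_A)\mathring L$, and $\mathscr D_{\mathring L}\mathring L = (\mu^{-1}\mathring L\mu)\mathring L$ by Lemma \ref{cd}. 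With $g(\mathring L, X_B) = 0$ the first piece yields $(\mu^{-1}\mathring L\mu)\chi_{AB}$, the second piece yields $-\mathscr R_{\mathring L A\mathring L B}$ by \eqref{cur}, and the last term in the display expands to $\chi_A{}^C\chi_{BC}$ via $\mathscr D_{X_A}\mathring L = -\zeta_A\mathring L + \chi_A{}^C X_C$. Substituting Lemma \ref{mu} for $\mathring L\mu$ and the expression \eqref{curL} for $\mathscr R_{\mathring L A\mathring L B}$ produces \eqref{Lchi}.

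For $\slashed{\mathcal L}_T \chi_{AB}$, I would repeat the same differentiation scheme with $\mathring L$ replaced by $T$. Now $\mathscr D_T \mathring L = -\mathring L\mu\,\mathring L + \eta^A X_A$ and the Ricci identity introduces $\mathscr R_{TA\mathring L B}$, which by Lemma \ref{curvature} carries a $\slashed\nabla^2_{AB}\mu$ piece (coming from the $\slashed d_B T\phi$/$T\varphi$ terms once one invokes \eqref{muphi}, \eqref{ge} and the relation $\eta_A = -X_A\mu + \mu\zeta_A$ with \eqref{zeta}). The commutator $[T, X_A]$ is tangential, producing the Lie-derivative correction and the $\sigma_{AB}$-type terms appearing in $\Upsilon_{AB}$ and $\Theta_A$; these algebraic combinations are direct consequences of the formulas \eqref{zeta}-\eqref{theta}. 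The equations for $\check\chi$ follow by inserting $\chi_{AB} = \check\chi_{AB} + \varrho^{-1}\slashed g_{AB}$, using $\mathring L\varrho = 1$, $T\varrho = -1$ together with $\mathring L\slashed g_{AB} = 2\chi_{AB}$ and $\slashed{\mathcal L}_T\slashed g_{AB} = 2\mu\sigma_{AB}$ (read off from \eqref{Lpi}, \eqref{uLpi}). The cross term $\chi_A{}^C\chi_{BC}$ expands as $\check\chi_A{}^C\check\chi_{BC} + 2\varrho^{-1}\check\chi_{AB} + \varrho^{-2}\slashed g_{AB}$, and the subtraction of $\mathring L(\varrho^{-1}\slashed g_{AB})$ exactly cancels the $\varrho^{-2}$ and $2\varrho^{-1}\check\chi$ terms, yielding \eqref{Lchi'} and \eqref{Tchi'} after careful rearrangement.

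Finally, \eqref{dchi} is a Codazzi-type identity: starting from $\chi_{AB} = g(\mathscr D_{X_A}\mathring L, X_B) = g_{ij}(\slashed d_A\mathring L^i)\slashed d_B x^j + (\text{terms along }\mathring L\text{ killed in } X_B)$, one computes $\slashed\nabla_C\chi_{AB} - \slashed\nabla_A\chi_{CB}$ and invokes Lemma \ref{4.2} together with the torsion-freeness $\slashed\nabla_{AB}^2 x^j = \slashed\nabla_{BA}^2 x^j$, after which the $\varrho^{-1}\slashed g$ subtraction and the $\Lambda_{AB}$-containing parts of $\slashed d_A\mathring L^i$ reassemble into the stated right-hand side. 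The main obstacle will not be any single calculation but the bookkeeping: one has to carefully decide which contributions carry geometric significance (the explicit $F_{\ast\ast}$, $G_{\ast\ast}^\gamma$ and $\chi$-quadratic terms) and which are absorbed into the generic schematic $f(\phi,\varphi,\slashed dx^i,\mathring L^i)(\cdot)(\cdot)$ nonlinear remainders, while preserving the symmetry in $A\leftrightarrow B$ and ensuring that every factor of $T\varphi$ is paired with a $\mathring L$-good or $\slashed d$-good factor as required by the null condition analysis in Remark \ref{R5.2}.
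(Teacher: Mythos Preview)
Your plan is essentially the paper's own proof: derive \eqref{Lchi} from the identity $\mathscr R_{\mathring L A\mathring L B}=\mu^{-1}(\mathring L\mu)\chi_{AB}-\mathring L\chi_{AB}+\chi_A{}^C\chi_{BC}$ (obtained exactly as you describe, using $[\mathring L,X_A]=0$ and Lemma~\ref{cd}), substitute \eqref{curL} and \eqref{lmu}, then pass to $\check\chi$ via $\mathring L\slashed g_{AB}=2\chi_{AB}$; similarly for the $T$-equation, and \eqref{dchi} via the coordinate formula $\chi_{AB}=g_{ij}(\slashed d_A\mathring L^i)\slashed d_Bx^j-\Lambda_{AB}$ from \eqref{dL}.

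One correction on the $T$-case: the $\slashed\nabla^2_{AB}\mu$ term in \eqref{Tchi} does \emph{not} come from $\mathscr R_{TA\mathring L B}$ as you suggest (inspect \eqref{curT}: there is no Hessian of $\mu$ there). It arises instead from the term $g(\mathscr D_{X_A}\mathscr D_T\mathring L,X_B)$ after inserting $\mathscr D_T\mathring L=-\mathring L\mu\,\mathring L+\eta^C X_C$ and using $\eta_A=\slashed d_A\mu+\mu\zeta_A$ (your sign is flipped; the paper's convention is $\mu\zeta_A=-X_A\mu+\eta_A$). Differentiating $\eta_B$ in the $A$-direction produces $\slashed\nabla^2_{AB}\mu+\slashed\nabla_A(\mu\zeta_B)$. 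This is a bookkeeping issue you would catch during the computation, not a structural gap.
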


\begin{proof}
 \begin{enumerate}
  \item
By \eqref{cur}, one can write $\mathscr R_{\mathring LA\mathring LB}$ as
 \begin{equation}\label{Z-0}
 \begin{split}
 \mathscr R_{\mathring LA\mathring LB}&=g\big(\mathscr D_A(\mathscr D_{\mathring L}\mathring L)
 -\mathscr D_{\mathring L}(\mathscr D_A\mathring L),X_B\big).
 \end{split}
 \end{equation}
Substituting the expression of $\mathscr D_{\mathring L}\mathring L$ and $\mathscr D_A\mathring L$ in Lemma \ref{cd} into
\eqref{Z-0} yields
 \begin{equation}\label{curL2}
 \mathscr R_{\mathring LA\mathring LB}=\mu^{-1}(\mathring L\mu){\chi}_{AB}-\mathring{L}{\chi}_{AB}+{{\chi}_A}^C{\chi}_{BC}.
 \end{equation}
Inserting \eqref{curL} and \eqref{lmu} into \eqref{curL2} yields \eqref{Lchi}.
On the other hand, \eqref{Lchi'} follows directly from ${\check{\chi}}_{AB}={\chi}_{AB}-\f{1}{\varrho}\slashed g_{AB}$
and $\mathring L\slashed g_{AB}=2{\chi}_{AB}$.

 \item
 It follows from the definition for Lie derivative that
 \begin{equation}\label{LTc}
 \slashed{\mathcal{L}}_T{\chi}_{AB}=\mathcal{L}_T{\chi}_{AB}=g(\mathscr D_A\mathscr D_T\mathring L,X_B)
 +g(\mathscr D_A\mathring L,\mathscr D_TX_B)-g(X_A,\mathscr D_{[T,X_B]}\mathring L)-\mathscr R_{TA\mathring L B},
 \end{equation}
where
 \begin{equation}\label{AT}
 \begin{split}
 &g(\mathscr D_A\mathscr D_T\mathring L,X_B)=g\big(\mathscr D_A(-\mathring L\mu\mathring L+{\eta}^CX_C),X_B\big)\\
 =&\slashed\nabla^2_{AB}\mu+\slashed\nabla_A(\mu{\zeta}_B)-(\mathring L\mu){\chi}_{AB}
 \end{split}
 \end{equation}
 and
 \begin{equation}\label{ALBT}
 \begin{split}
 g(\mathscr D_A\mathring L,\mathscr D_TX_B)=&-\zeta_Ag(\mathring L,\mathscr D_TX_B)+{\chi_A}^Dg(X_D,\mathscr D_TX_B)\\
 &=\zeta_A\eta_B+g(\mathscr D_{\slashed\Pi\mathscr D_TX_B}\mathring L,X_A)\\
 &=\zeta_A\eta_B+g(\mathscr D_{[T,X_B]}\mathring L,X_A)+g(\mathscr D_{\slashed\Pi\mathscr D_BT}\mathring L,X_A)\\
 &=\zeta_A\eta_B+g(\mathscr D_{[T,X_B]}\mathring L,X_A)+\mu{\sigma_B}^Cg(\mathscr D_C\mathring L,X_A),
 \end{split}
 \end{equation}
 where \eqref{cdf} has been used. Due to $\slashed{\mathcal{L}}_T{\chi}_{AB}=\slashed{\mathcal{L}}_T{\chi}_{BA}$,
 substituting \eqref{AT} and \eqref{ALBT} into \eqref{LTc} yields
 \begin{equation*}
 \begin{split}
 \slashed{\mathcal L}_T\chi_{AB}=&\slashed\nabla_{AB}^2\mu+\f12\slashed\nabla_A(\mu\zeta_B)+\f12\slashed\nabla_B(\mu\zeta_A)-(\mathring L\mu)\chi_{AB}+\f12\zeta_A\eta_B+\f12\zeta_B\eta_A\\
 &+\f12\mu{\sigma_B}^Cg(\mathscr D_C\mathring L,X_A)+\f12\mu{\sigma_A}^Cg(\mathscr D_C\mathring L,X_B)-\f12\mathscr R_{TA\mathring L B}-\f12\mathscr R_{TB\mathring L A}.
  \end{split}
 \end{equation*}
This gives \eqref{Tchi} with the help of \eqref{zeta}, \eqref{lmu}, \eqref{theta}, \eqref{curT} and \eqref{chith}.
 Analogously to the proof of \eqref{Lchi'}, one can apply ${\check{\chi}}_{AB}={\chi}_{AB}-\f{1}{\varrho}\slashed g_{AB}$
 and $\slashed{\mathcal L}_T\slashed g_{AB}=2\mu\sigma_{AB}$ to obtain \eqref{Tchi'}.

 \item
 \eqref{dL} implies
 \begin{equation}\label{chil}
 {\chi}_{AB}=g_{ij}\slashed d_Bx^j(\slashed g^{CD}{\chi}_{AD}\slashed d_Cx^i)=g_{ij}\slashed d_Bx^j(\slashed d_A\mathring L^i)-\Lambda_{AB}.
 \end{equation}
Taking $\slashed\nabla^B$ on both hand sides of \eqref{chil} yields \eqref{dchi}.
 \end{enumerate}
\end{proof}

It follows from \eqref{Lchi'} that $\check\chi$ can be estimated by integrating along integral curves of $\mathring L$ as soon as the right hand side of
\eqref{Lchi'} is estimated.

\begin{proposition}\label{chi'}
	Under the assumptions $(\star)$, when $\delta>0$ is small, it holds that
	\begin{equation}\label{echi'}
	|\check{\chi}|\lesssim M\delta^{1-\varepsilon_0} s^{-2}.
	\end{equation}
	Meanwhile,
	\begin{equation}\label{chi}
	|\chi|^2=\f{3}{\varrho^2}+O(M\delta^{1-\varepsilon_0} s^{-3}).
	\end{equation}
\end{proposition}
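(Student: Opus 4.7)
The plan is to integrate the structure equation \eqref{Lchi'} for $\check\chi_{AB}$ along the integral curves of $\mathring L$ starting from the initial hypersurface $t=t_0$, and to deduce \eqref{chi} as an algebraic consequence of \eqref{echi'} via the decomposition $\chi_{AB}=\check\chi_{AB}+\varrho^{-1}\slashed g_{AB}$. On $\Sigma_{t_0}^{4\delta}$, the local existence estimates of Theorem \ref{Th2.1} furnish (via \eqref{errorv}) the bound $|\check\chi(t_0,\cdot)|\lesssim\delta^{1-\varepsilon_0}$, which since $t_0\sim 1$ is consistent with the target $M\delta^{1-\varepsilon_0}s^{-2}$.

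The heart of the argument is the estimation of every term on the right-hand side of \eqref{Lchi'} under the bootstrap assumptions $(\star)$. The key structural feature is that no singular factor $T\varphi_\gamma$ or $T\phi$ appears in \eqref{Lchi'}: every scalar factor is of the form $\mathring L\phi,\slashed d\phi,\slashed\nabla^2\phi$, $\mathring L\varphi_\gamma,\slashed d\varphi_\gamma,\slashed\nabla^2\varphi_\gamma$, $\mathring L^2\phi, \mathring L^2\varphi_\gamma,\slashed d\mathring L\phi,\slashed d\mathring L\varphi_\gamma$, or a $(\mathcal{FG})$ coupling with $\check\chi$. Using $(\star)$ together with the identities $[\mathring L,X_A]=0$ and $\mathring L\varrho=1$ to convert the $(\varrho\mathring L)^k$-bounds from $(\star)$ into $\mathring L^k$-bounds with one extra power of $\varrho^{-1}$, each second-order source term is bounded in tensor norm by $M\delta^{1-\varepsilon_0}s^{-7/2}$; the coupling $(\mathcal{FG})(\mathring L)\check\chi$ contributes of order $M\delta^{1-\varepsilon_0}s^{-5/2}|\check\chi|$; the quadratic self-interaction $\check\chi_A{}^C\check\chi_{BC}$ is $O(|\check\chi|^2)$; and the $\frac{1}{\varrho}$-source terms combine via the null-condition structure of the coefficients $\tilde g^{\alpha\beta,\gamma}$ (analogously to the cancellation exploited in the estimate of $\mathring L\mu$ leading to \eqref{phimu}) to give a tensor of order $M\delta^{1-\varepsilon_0}s^{-7/2}$. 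This yields
\begin{equation*}
|\mathring L\check\chi|\lesssim M\delta^{1-\varepsilon_0}s^{-7/2}+M\delta^{1-\varepsilon_0}s^{-5/2}|\check\chi|+|\check\chi|^2.
\end{equation*}

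To extract the sharp $s^{-2}$ decay, I will exploit the favourable contribution from $\mathring L\slashed g^{AB}=-2\chi^{AB}$. Differentiating $|\check\chi|^2=\slashed g^{AC}\slashed g^{BD}\check\chi_{AB}\check\chi_{CD}$ along $\mathring L$ and substituting $\chi=\check\chi+\varrho^{-1}\slashed g$ produces
\begin{equation*}
\mathring L|\check\chi|^2+\frac{4}{\varrho}|\check\chi|^2=-4\check\chi^{AC}\slashed g^{BD}\check\chi_{AB}\check\chi_{CD}+2\check\chi^{CD}\mathring L\check\chi_{CD}.
\end{equation*}
Since $\mathring L\varrho=1$, the coefficient $4/\varrho$ provides the integrating factor $\varrho^4$, and the rescaled quantity $G:=\varrho^2|\check\chi|$ satisfies $|\mathring LG|\lesssim G^2/\varrho^2+\varrho^2|\mathring L\check\chi|$. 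Integrating from $t_0$, using $G(t_0,\cdot)\lesssim\delta^{1-\varepsilon_0}$ and the differential inequality above, a Gronwall-type argument with $\delta$ small enough to absorb the quadratic feedback $G^2/\varrho^2$ closes the bootstrap and gives $G\lesssim M\delta^{1-\varepsilon_0}$, which is \eqref{echi'}.

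Finally, the algebraic identity $|\chi|^2=|\check\chi|^2+\frac{2}{\varrho}\mathrm{tr}\check\chi+\frac{3}{\varrho^2}$ (using $\slashed g^{AB}\slashed g_{AB}=3$ on the three-dimensional $S_{s,u}$), combined with $|\mathrm{tr}\check\chi|\leq\sqrt{3}|\check\chi|$ and \eqref{echi'}, immediately yields \eqref{chi}: the dominant error $\frac{2}{\varrho}\mathrm{tr}\check\chi$ is of size $M\delta^{1-\varepsilon_0}s^{-3}$, and the contribution $|\check\chi|^2\lesssim M^2\delta^{2-2\varepsilon_0}s^{-4}$ is absorbed. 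I expect the main technical obstacle to be the detailed verification that every source term of \eqref{Lchi'} indeed achieves the $s^{-7/2}$ decay required for the rescaled Gronwall argument to close; this depends crucially on the null-condition cancellations in the $\frac{1}{\varrho}(\mathcal{FG})(\mathring L)$ combinations and on the $s^{-5/2}$-rather than $s^{-3/2}$-decay of $\mathring L\phi$ extracted from the $Z=\varrho\mathring L$ entry of $(\star)$.
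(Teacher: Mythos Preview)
Your approach is essentially the same as the paper's: both differentiate $\varrho^4|\check\chi|^2$ (equivalently, your $G^2$) along $\mathring L$, substitute \eqref{Lchi'}, estimate each source, and close by Gronwall with $\delta$ small; the paper's inequality $|\mathring L(\varrho^4|\check\chi|^2)|\lesssim\varrho^4\{|\check\chi|^3+M\delta^{1-\varepsilon_0}s^{-7/2}|\check\chi|+M\delta^{1-\varepsilon_0}s^{-5/2}|\check\chi|^2\}$ is exactly your $|\mathring LG|\lesssim G^2/\varrho^2+M\delta^{1-\varepsilon_0}s^{-3/2}+M\delta^{1-\varepsilon_0}s^{-5/2}G$ after the substitution $|\check\chi|=G/\varrho^2$. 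Your explicit algebraic derivation of \eqref{chi} from $\chi=\check\chi+\varrho^{-1}\slashed g$ is a useful addition that the paper leaves implicit.

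One small correction in your reasoning: you attribute the $s^{-7/2}$ bound on the $\varrho^{-1}(\mathcal{FG})_{\cdot\mathring L}(\mathring L)$ and $\varrho^{-1}(\mathcal{FG})_{\cdot\mathring L}(X_A)$ terms to a null-condition cancellation analogous to the $G_{\mathring L\mathring L}^\gamma T\varphi_\gamma$ mechanism in \eqref{GTe}. No such cancellation is needed here. By definition $(\mathcal{FG})_{UV}(W)=F_{UV}W\phi+G_{UV}^\gamma W\varphi_\gamma$, so when $W=\mathring L$ or $W=X_A$ these quantities involve only $\mathring L\phi,\mathring L\varphi_\gamma,\slashed d\phi,\slashed d\varphi_\gamma$, each of which is already $O(M\delta^{1-\varepsilon_0}s^{-5/2})$ directly from $(\star)$; multiplying by $\varrho^{-1}$ gives $s^{-7/2}$. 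The null condition is crucial for $\mathring L\mu$ precisely because a $T\varphi$ factor appears there, but \eqref{Lchi'} contains no $T$-derivative of $\phi$ or $\varphi$ at all, which is the structural reason the $\check\chi$ estimate closes so cleanly.
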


\begin{proof}
	Note that
	\[
	|\check{\chi}|^2=\slashed g^{AB}\slashed g^{CD}\check{\chi}_{AC}\check{\chi}_{BD}.
	\]
	Then
	\begin{equation}\label{Lrchi}
	\mathring L\big(\varrho^4|\check{\chi}|^2\big)=-4\varrho^4\check\chi^{AB}\slashed g^{CD}\check\chi_{AC}\check\chi_{BD}
+2\varrho^4(\mathring L\check\chi_{AB})\check\chi^{AB}.
	\end{equation}
Substituting \eqref{Lchi'} into \eqref{Lrchi}, and using $(\star)$, \eqref{L} and \eqref{dx} to estimate
the right hand side of \eqref{Lrchi} except $\check\chi$ itself, one arrives at
	\[
	|\mathring L\big(\varrho^4|\check{\chi}|^2\big)|\lesssim\varrho^4\big\{|\check{\chi}|^3+\delta^{1-\varepsilon_0} s^{-7/2} M|\check{\chi}|+\delta^{1-\varepsilon_0} s^{-5/2} M|\check{\chi}|^2\big\}.
	\]
	This shows that for small $\delta>0$,
	\[
	|\check{\chi}|\lesssim M\delta^{1-\varepsilon_0} s^{-2}.
	\]
\end{proof}

At the end of this section, we point out that the operator $R_{ij}$ is equivalent to the scaling operator $r\slashed\nabla$ under
the assumptions $(\star)$ and the estimate \eqref{echi'} in the following sense.

\begin{corollary}\label{12form}
 Under the assumptions $(\star)$, when $\delta>0$ is small,
\begin{enumerate}
	\item
  if $\xi$ is a 1-form on $S_{s, u}$, then
  \begin{align}
  &\sum_{i,j=1}^4(\xi_a R_{ij}^a)^2\sim r^2|\xi|^2\label{1-f},\\
  &\sum_{i,j=1}^4|\slashed{\mathcal L}_{R_{ij}}\xi|^2\sim r^2|\slashed\nabla\xi|^2+|\xi|^2;\label{1f}
  \end{align}
\item
  if $\xi$ is a 2-form on $S_{s, u}$, then
  \begin{equation}\label{2-f}
  \begin{split}
  \sum_{i,j=1}^4|\slashed{\mathcal L}_{R_{ij}}\xi|^2+(\textrm{tr}\xi)^2\sim r^2|\slashed\nabla\xi|^2+|\xi|^2
    \end{split}
  \end{equation}
  and
  \begin{equation}\label{2f}
  |\slashed\nabla^2\xi|\lesssim \varrho^{-2}|\slashed{\mathcal L}_R^{\leq 2}\xi|.
  \end{equation}
\end{enumerate}
\end{corollary}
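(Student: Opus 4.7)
The strategy is to view each equivalence $\sim$ as a perturbation of the corresponding identity on a flat Euclidean sphere of radius $r$ in $\mathbb R^4$. By \eqref{R} we decompose $R_{ij}=\Omega_{ij}-\upsilon_{ij}\tilde T$, and the smallness bounds \eqref{L}, \eqref{chL}, \eqref{cr}, \eqref{Ri}, \eqref{echi'} guarantee that $R_{ij}$, the induced metric $\slashed g$, and $\slashed\nabla R_{ij}$ differ from their Euclidean counterparts (namely $\Omega_{ij}$, the round metric on $S^3_r$, and $\slashed\nabla^{\text{Eucl}}\Omega_{ij}$) by a relative error of order $M\delta^{1-\varepsilon_0}s^{-\alpha}$ for some $\alpha\ge 0$, which is negligible for small $\delta$.

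For \eqref{1-f}, since $\xi$ is tangent to $S_{s,u}$ and $\tilde T$ is, modulo the error $\check T^i$, proportional to $-\p_r$, one has $\xi_a\tilde T^a=0$ exactly, so $\xi_aR_{ij}^a=\xi_a\Omega_{ij}^a$. The elementary Euclidean identity $\sum_{i<j}\Omega_{ij}^k\Omega_{ij}^l=r^2\delta^{kl}-x^kx^l$, after contracting with $\xi_k\xi_l$ and using that $\xi_a x^a$ vanishes modulo the small errors in $\check T^i$ and $\check\varrho$, together with $|\slashed g-\slashed g_{\text{round}}|=O(M\delta^{1-\varepsilon_0}s^{-1})$, yields \eqref{1-f} for small $\delta$.

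For \eqref{1f} and \eqref{2-f}, I use the intrinsic Lie derivative formula $(\slashed{\mathcal L}_{R_{ij}}\xi)_{\cdots}=R_{ij}^C\slashed\nabla_C\xi_{\cdots}+(\slashed\nabla R_{ij})\ast\xi$, square, and sum over $(i,j)$. The first term is controlled by $r^2|\slashed\nabla\xi|^2$ after applying (the componentwise extension of) \eqref{1-f} to the tensor $\slashed\nabla\xi$. For the second term, $\slashed\nabla_A R_{ij}^B$ is computed via \eqref{dL} and \eqref{TL}, whose leading part $\slashed\nabla_A\Omega_{ij}^B$ is an antisymmetric Killing-type tensor on the Euclidean sphere; the corresponding Euclidean identity then gives $\sum_{i,j}|(\slashed\nabla R_{ij})\ast\xi|^2\sim|\xi|^2$. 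Cross terms are absorbed by Cauchy--Schwarz combined with the smallness of $\upsilon_{ij}$. In the 2-form case, the trace component $\textrm{tr}\xi$ is annihilated by contraction against the rotation-invariant piece of $\slashed g$, so it lies in the kernel of the leading map $\xi\mapsto\{\slashed{\mathcal L}_{R_{ij}}\xi\}$ up to lower order and must be added on the left of \eqref{2-f} as an independent quantity.

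Finally, for \eqref{2f}, I iterate: applying the natural extension of \eqref{2-f} to the $(0,3)$-tensor $\slashed\nabla\xi$ gives $\varrho|\slashed\nabla^2\xi|\lesssim|\slashed{\mathcal L}_R\slashed\nabla\xi|+|\slashed\nabla\xi|+|\textrm{tr}\slashed\nabla\xi|$; then I commute $\slashed\nabla$ with $\slashed{\mathcal L}_{R_{ij}}$ via Lemma \ref{commute}(c), whose correction $\check{\slashed\nabla}\leftidx{^{(R_{ij})}}{\slashed\pi}$ is of acceptable size by \eqref{Rpi} and the bootstrap bounds; a second application of \eqref{2-f} to the 2-form $\slashed{\mathcal L}_R\xi$ closes the estimate. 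The main obstacle throughout is the bookkeeping of error terms: each replacement of $\Omega_{ij}$, $m^{ab}$, or flat $\p_a$ by $R_{ij}$, $\slashed g^{AB}$, or $\slashed\nabla$ costs a factor of $\upsilon_{ij}$, $\check T^i$, $\check L^i$, $\check\varrho$, $\check\chi$, or $\leftidx{^{(R_{ij})}}{\slashed\pi}$, and one must verify that all such errors are genuinely subleading to the Euclidean-model contributions when $\delta$ is small, so that the two-sided bounds $\sim$ survive.
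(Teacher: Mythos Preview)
The paper does not provide its own proof of this corollary; it simply records that the result is ``similar to Lemma 12.22 in \cite{J} or in Sect.~3.3.3 of \cite{MY}.'' Your proposal is precisely the standard argument underlying those references: reduce each statement to the corresponding exact identity on the round Euclidean sphere $S^3_r\subset\mathbb R^4$, and use the bootstrap bounds \eqref{L}--\eqref{Ri} and \eqref{echi'} to show that the discrepancies between $(R_{ij},\slashed g,\slashed\nabla)$ and $(\Omega_{ij},\slashed g_{\text{round}},\slashed\nabla^{\text{Eucl}})$ are of relative size $O(M\delta^{1-\varepsilon_0})$, hence absorbable for small $\delta$. Your observation that $\xi_a\tilde T^a=0$ exactly for a 1-form on $S_{s,u}$ (since $\slashed\Pi\xi=\xi$ forces $\xi(\tilde T)=0$) is correct and makes \eqref{1-f} clean.

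Two small points. First, your explanation of why $(\textrm{tr}\xi)^2$ must be added in \eqref{2-f} is slightly garbled: the precise reason is that $\slashed{\mathcal L}_{\Omega_{ij}}\slashed g_{\text{round}}=0$ (the $\Omega_{ij}$ are Killing on the round sphere), so a pure-trace 2-form $c\,\slashed g$ is annihilated at leading order by every $\slashed{\mathcal L}_{R_{ij}}$ and must be tracked separately. Second, for \eqref{2f} you invoke the analogue of \eqref{2-f} for the $(0,3)$-tensor $\slashed\nabla\xi$; this extension to higher rank is routine by the same perturbation argument, but is not literally stated in the corollary. Neither point is a genuine gap, and your plan is sound and matches the intended approach.
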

This is similar to Lemma 12.22 in \cite{J} or
in Sect.3.3.3 of \cite{MY}.

\section{$L^\infty$ estimates on the higher order derivatives of $(\phi,\p\phi)$ and some other quantities near $C_0$}\label{ho}

With the structure equations for $\vp_\g$ and $\phi$ in the frame $\{\mathring L,\mathring {\underline L},R_{ij}\}$ already derived in Section \ref{BA} (see \eqref{fequation} and \eqref{fe} respectively), we are going to close the bootstrap assumption $(\star)$ by improving up to the $(N-1)^{th}$ order derivatives of $\vp_\g$ and $\phi$.
To this end, we start with some preliminary results which deal only with the rotational vector fields on $S_{s, u}$.

\begin{lemma}\label{Rh}
 Under the assumptions $(\star)$, when $\delta>0$ is small, it holds that for $k\leq N-1$,
 \begin{equation}\label{he}
 \begin{split}
 &|\slashed{\mathcal{L}}_{R}^{k+1}\slashed dx^j|\lesssim 1,\quad|\slashed{\mathcal{L}}_{R}^k\check{\chi}|\lesssim
  M\delta^{1-\varepsilon_0} s^{-2},\quad|R^{k+1} \mathring L^j|\lesssim 1,\\
 &|R^{k+1}\check{L}^j|\lesssim  M\delta^{1-\varepsilon_0} s^{-1},\quad|\slashed{\mathcal{L}}_{R}^k\leftidx{^{(R)}}{\slashed\pi}|\lesssim
  M\delta^{1-\varepsilon_0} s^{-1},\quad|\slashed{\mathcal{L}}_{R}^k\leftidx{^{(R)}}{\slashed\pi}_{\mathring L}|\lesssim M\delta^{1-\varepsilon_0} s^{-1},\\
 &|R^{k+1}\check\varrho|\lesssim M\delta^{1-\varepsilon_0}s^{-1},\quad|R^{k+1}\upsilon_{ij}|\lesssim
  M\delta^{1-\varepsilon_0},\quad|\slashed{\mathcal{L}}_{R}^{k+1}R|\lesssim s,
 \end{split}
 \end{equation}
 where $R\in\{R_{ij}: i,j=1,2,3,4\}$.
\end{lemma}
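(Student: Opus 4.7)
The plan is to establish all nine estimates simultaneously by induction on $k$, with a careful ordering of the quantities at each level so that no circular dependency remains. The base case is supplied by the zeroth-order bounds already obtained: \eqref{L}, \eqref{dx}, \eqref{chL}, \eqref{cr}, \eqref{Ri}, the estimate \eqref{echi'} for $\check\chi$, and the explicit formulas \eqref{Rpi} evaluated at the corresponding initial data orders via Theorem \ref{Th2.1}. The commutator identity $[R_{ij},R_{i'j'}]$ reduces the level-$0$ instance of $|\slashed{\mathcal L}_R R|\lesssim s$ to bounds on $\upsilon_{ij}$ and $\tilde T$ through the definition \eqref{R}.

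At inductive level $k$ I would first estimate $R^{k+1}\mathring L^j$ and $R^{k+1}\check L^j$ by applying $R^k$ to the transport equations \eqref{LL} and \eqref{LeL} in Lemma \ref{4.2}. Commuting $R^k$ past $\mathring L$ through Lemma \ref{com} generates a source built from lower-order Lie derivatives of $\leftidx{^{(R)}}{\slashed\pi}_{\mathring L}$ (available by induction) acting on lower $R$-derivatives of $\mathring L^j$, $\check L^j$; integrating along the integral curves of $\mathring L$ starting from $\Sigma_{t_0}$ (where Theorem \ref{Th2.1} gives the initial control) and exploiting the explicit $\varrho$-weight on the left of \eqref{LeL}, one obtains the claimed orders $O(1)$ and $O(M\delta^{1-\varepsilon_0}s^{-1})$. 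Next, $R^{k+1}\check\varrho$ and $R^{k+1}\upsilon_{ij}$ follow from the algebraic formulas \eqref{rrho} and \eqref{omega} by the chain rule, using the Step 1 bound on $\check L^j=-g^{0j}-\mathring L^j$ together with the $(\star)$-bound on $R^{\leq k+1}g_{ij}$ through $\phi,\varphi$. The estimate on $\slashed{\mathcal L}_R^{k+1}\slashed dx^j$ is then derived from the relation $\slashed dx^j=dx^j+\tilde T_a\tilde T^j\,dx^a$ combined with Steps 1--2 and the bootstrap.

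With these quantities in hand, I apply $\slashed{\mathcal L}_R^k$ to the structure equation \eqref{Lchi'} for $\check\chi$. Commuting $\slashed{\mathcal L}_R^k$ with $\mathring L$ via Lemma \ref{commute} produces, in addition to $\slashed{\mathcal L}_R^k(\mathring L\check\chi)$, a remainder expressed through lower-order $\slashed\nabla$ and $\slashed{\mathcal L}_R$ derivatives of $\leftidx{^{(R)}}{\slashed\pi}$ and $\leftidx{^{(R)}}{\slashed\pi}_{\mathring L}$ acting on $\slashed{\mathcal L}_R^{\leq k-1}\check\chi$; each factor is inductively bounded. The right-hand side of the differentiated \eqref{Lchi'} is then a sum of products of $(\star)$-controlled wave quantities and the Step 1--2 bounds, so integration along $\mathring L$ yields the advertised $M\delta^{1-\varepsilon_0}s^{-2}$. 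The deformation-tensor estimates $|\slashed{\mathcal L}_R^k\leftidx{^{(R)}}{\slashed\pi}|,|\slashed{\mathcal L}_R^k\leftidx{^{(R)}}{\slashed\pi}_{\mathring L}|\lesssim M\delta^{1-\varepsilon_0}s^{-1}$ follow by applying $\slashed{\mathcal L}_R^k$ to the explicit formulas \eqref{Rpi}, term by term, using every preceding estimate and $(\star)$; finally $|\slashed{\mathcal L}_R^{k+1}R|\lesssim s$ is obtained by iterating the commutator $[R,R']=\leftidx{^{(R)}}{\slashed\pi}_{R'}{}^A X_A$ and combining with $|R^{k+1}(x^i)|\lesssim s$ from Steps 1--3.

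The main obstacle is the tight interlocking of the induction at each fixed level: the bound on $\slashed{\mathcal L}_R^k\check\chi$ needs $\slashed{\mathcal L}_R^{\leq k-1}\leftidx{^{(R)}}{\slashed\pi}$, whose control by \eqref{Rpi} in turn invokes $R^{\leq k}\mathring L^j$ and $\slashed{\mathcal L}_R^{\leq k-1}\check\chi$; one must verify that the ordering above is strictly acyclic and that no factor of $s$ is lost when the extra $\slashed\nabla^2x^j$ terms in \eqref{Tchi'} or the $(0,2)$-tensor nature of $\check\chi$ (which introduces $\slashed\nabla\leftidx{^{(R)}}{\slashed\pi}$ rather than $\leftidx{^{(R)}}{\slashed\pi}$ itself in the commutator) are encountered. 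The assumption $k\leq N-1$ is used precisely to keep every $\phi,\varphi$-factor appearing after $R^k$-differentiation within the range covered by the bootstrap $(\star)$.
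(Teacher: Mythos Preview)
Your ordering of the induction is circular, and the paper resolves this by a different route that you have not identified. Concretely: to obtain $R^{k+1}\check L^j$ by transport you must apply $R^{k+1}$ (not $R^k$) to \eqref{LeL}; commuting $R^{k+1}$ past $\mathring L$ via Lemma~\ref{com} produces terms of the form $\slashed{\mathcal L}_{R}^{p}\leftidx{^{(R)}}{\slashed\pi}_{\mathring L}$ with $p\leq k$, in particular $p=k$. By \eqref{Rpi} the leading part of $\leftidx{^{(R)}}{\slashed\pi}_{\mathring LA}$ is $-{R_{ij}}^B\check\chi_{AB}$, so $\slashed{\mathcal L}_R^{k}\leftidx{^{(R)}}{\slashed\pi}_{\mathring L}$ contains $\slashed{\mathcal L}_R^{k}\check\chi$ at top order. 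Thus your Step~1 already requires the bound on $\slashed{\mathcal L}_R^{k}\check\chi$ that you postpone to Step~4, and the claim that the commutator involves only ``lower-order Lie derivatives of $\leftidx{^{(R)}}{\slashed\pi}_{\mathring L}$ (available by induction)'' is false.

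The paper avoids this by reversing the dependency: it closes $\slashed{\mathcal L}_R^{k}\check\chi$ \emph{first}, because the right-hand side of \eqref{Lchi'} after $\slashed{\mathcal L}_R^{k}$ involves only $R^{k}\mathring L^j$, $\slashed{\mathcal L}_R^{k}\slashed dx^j$ (both belonging to the level $k-1$ hypothesis), the bootstrap $(\star)$, and $\slashed{\mathcal L}_R^{k}\check\chi$ itself with a small coefficient, which is absorbed by Gronwall along $\mathring L$ (see \eqref{RLchi}--\eqref{LrRchi}). Only after $\slashed{\mathcal L}_R^{k}\check\chi$ is controlled does the paper estimate $R^{k+1}\check L^j$, and not by transport but \emph{algebraically} from \eqref{deL}: writing $R^{k}R_{ab}\check L^j=R^{k}\bigl({R_{ab}}^{A}\check\chi_{AB}\slashed d^{B}x^{j}+\cdots\bigr)$ converts the problem into the just-established $\check\chi$-bound plus lower order data (see \eqref{RiL}). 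With $\check\chi$ and $\check L^j$ in hand, $\upsilon_{ij}$, $\check\varrho$, $\leftidx{^{(R)}}{\slashed\pi}$, $\slashed{\mathcal L}_R^{k+1}R$ and finally $\slashed{\mathcal L}_R^{k+1}\slashed dx^j$ follow by the explicit formulas. If you insist on transport for $\check L^j$, you would need to run a \emph{coupled} Gronwall for $\slashed{\mathcal L}_R^{k}\check\chi$ and $R^{k+1}\check L^j$ simultaneously; the sequential order you wrote does not close.
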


\begin{proof}
 This will be proved by induction with respect to $k$.
 \begin{enumerate}
  \item When $k=0$, the estimates for $\check{\chi}$, $R \mathring L^j$,
 $R\check{L}^j$, $\leftidx{^{(R)}}{\slashed\pi}$, $\leftidx{^{(R)}}{\slashed\pi}_{\mathring L}$, $R\check\varrho$
 and $R\upsilon_{ij}$ can be obtained easily from \eqref{echi'}, \eqref{dx}, \eqref{1-f}, \eqref{deL},
 \eqref{omega}, \eqref{rrho}
 and \eqref{Rpi}, respectively.

For $RR_{ij}x^a$, one has
$$RR_{ij}x^a=R({\Omega_{ij}}^a-\upsilon_{ij}\tilde{T}^a),$$
which means $|RR_{ij}x^a|\lesssim s$ by \eqref{Ri}, and hence
\begin{equation}\label{RRx}
|\slashed{\mathcal L}_{R}\slashed dx^a|=|\slashed dRx^a|\lesssim r^{-1}\sum_{i,j=1}^4|R_{ij}Rx^a|\lesssim 1
\end{equation}
if $\xi=\slashed d Rx^a$ in \eqref{1-f} is chosen.

Next we estimate $\slashed{\mathcal{L}}_{R_{ij}}R_{ab}=[R_{ij},R_{ab}]$.
Due to
  \begin{equation*}
  \begin{split}
  R_{ij}{R_{ab}}^c=&R_{ij}({\Omega_{ab}}^c-\upsilon_{ab}\tilde T^c)\\
  =&\delta_b^c({\Omega_{ij}}^a-\upsilon_{ij}\tilde T^a)-\delta_a^c({\Omega_{ij}}^b-\upsilon_{ij}\tilde T^b)-(R_{ij}\upsilon_{ab})\tilde T^c-\upsilon_{ab}R_{ij}\tilde T^c,
  \end{split}
  \end{equation*}
  then $\slashed\Pi[R_{ij},R_{ab}]=[R_{ij},R_{ab}]$ implies
  \begin{equation}\label{RiRj}
  \begin{split}
  \slashed{\mathcal{L}}_{R_{ij}}R_{ab}=&[R_{ij},R_{ab}]=\slashed\Pi_c^d[R_{ij},R_{ab}]^c\p_d=\slashed\Pi_c^d(R_{ij}{R_{ab}}^c
  -R_{ab}{R_{ij}}^c)\p_d\\
  =&\delta_j^aR_{ib}-\delta_i^aR_{jb}-\delta_j^bR_{ia}+\delta_i^bR_{ja}-\upsilon_{ij}\tilde T^ag_{bc}(\slashed d^Ax^c)X_A
  +\upsilon_{ij}\tilde T^bg_{ac}(\slashed d^Ax^c)X_A\\
  &+\upsilon_{ab}\tilde T^ig_{jc}(\slashed d^Ax^c)X_A-\upsilon_{ab}\tilde T^jg_{ic}(\slashed d^Ax^c)X_A-\upsilon_{ab}(R_{ij}\tilde T^c)g_{cd}(\slashed d^Ax^d)X_A\\
  &+\upsilon_{ij}(R_{ab}\tilde T^c)g_{cd}(\slashed d^Ax^d)X_A
  \end{split}
  \end{equation}
  with the help of $\slashed\Pi_c^d\p_d=g_{cd}(\slashed d^Ax^d)X_A$.
  Thus, $|\slashed{\mathcal{L}}_{R_{ij}}R_{ab}|\lesssim s$ holds.
  \item Assume that \eqref{he} holds for the orders less than or equal to $k-1$ $(1\leq k\leq N-1)$. We now
  prove that \eqref{he} holds true for $k$.
  \begin{enumerate}[(A)]
   \item {\bf Estimates of $\slashed{\mathcal{L}}_{R}^k\check{\chi}$ and
   $\slashed{\mathcal{L}}_{R}^k\leftidx{^{(R)}}{\slashed\pi}_{\mathring L}$}

   It follows from the expression of $\slashed{\mathcal{L}}_{R}^k\leftidx{^{(R)}}{\slashed\pi}_{\mathring LA}$ in \eqref{Rpi} and
   the induction assumptions that
   \begin{equation}\label{Y-35}
   |\slashed{\mathcal{L}}_{R}^k\leftidx{^{(R)}}{\slashed\pi}_{\mathring L}|\lesssim s|\slashed{\mathcal{L}}_{R}^k\check{\chi}|+ M\delta^{1-\varepsilon_0} s^{-1}.
   \end{equation}
   Then $\slashed{\mathcal{L}}_{R}^k\leftidx{^{(R)}}{\slashed\pi}_{\mathring L}$ can be bounded as soon as
   the $L^{\infty}$ norm of  $\slashed{\mathcal{L}}_{R}^k\check{\chi}$ is derived. Then it remains to estimate $\slashed{\mathcal{L}}_{R}^k\check{\chi}$.
  By $(\star)$, it follows from \eqref{Lchi'} and the induction assumptions up to the orders $k-1$  that
   \begin{equation}\label{RLchi}
   |\slashed{\mathcal{L}}_{R}^k\mathring L\check{\chi}|\lesssim  M\delta^{1-\varepsilon_0} s^{-2} |\slashed{\mathcal{L}}_{R}^k\check{\chi}|+ M\delta^{1-\varepsilon_0} s^{-7/2}.
   \end{equation}
   In addition, using the identity of  commutator $[\mathring L,R_{ij}]$ in \eqref{c} yields
   \begin{equation}\label{LRichi}
   \begin{split}
   &\slashed{\mathcal{L}}_{\mathring L}\slashed{\mathcal{L}}_{R}^k\check{\chi}_{AB}=\slashed{\mathcal{L}}_{R}^k(\mathring L\check{\chi}_{AB})+[\slashed{\mathcal{L}}_{\mathring L},\slashed{\mathcal{L}}_{R}^k]\check{\chi}_{AB}\\
   =&\slashed{\mathcal{L}}_{R}^k(\mathring L\check{\chi}_{AB})
   +\sum_{k_1+k_2=k-1}\slashed{\mathcal{L}}_{R}^{k_1}\slashed{\mathcal L}_{[\mathring L,R]}\slashed{\mathcal{L}}_{R}^{k_2}\check{ \chi}_{AB}\\
   =&\slashed{\mathcal{L}}_{R}^k(\mathring L\check{\chi}_{AB})+\sum_{k_1+k_2=k-1}\slashed{\mathcal{L}}_{R}^{k_1}\Big\{{\leftidx{^{(R)}}{\slashed\pi}_{\mathring L}}^C(\slashed\nabla_C\slashed{\mathcal{L}}_{R}^{k_2}\check{\chi}_{AB})\\
   &+\slashed{\mathcal{L}}_{R}^{k_2}\check{\chi}_{BC}(\slashed\nabla_A{\leftidx{^{(R)}}{\slashed \pi}_{\mathring L}}^C)+\slashed{\mathcal{L}}_{R}^{k_2}\check{\chi}_{AC}(\slashed\nabla_B{\leftidx{^{(R)}}{\slashed\pi}_{\mathring L}}^C)\Big\},
   \end{split}
   \end{equation}
   here the constant coefficients are neglected on the right hand of the second equality. Applying \eqref{RLchi} and
   the induction assumptions to estimate the right hand side of \eqref{LRichi},  one has
   \begin{equation}\label{eLRchi}
   |\slashed{\mathcal{L}}_{\mathring L}\slashed{\mathcal{L}}_{R}^k\check{\chi}|\lesssim  M\delta^{1-\varepsilon_0} s^{-2}|\slashed{\mathcal{L}}_{R}^k\check{\chi}|+ M\delta^{1-\varepsilon_0} s^{-7/2}.
   \end{equation}
Due to
   \begin{equation*}
   \mathring L\big(\varrho^4|\slashed{\mathcal{L}}_{R}^k\check{\chi}|^2\big)=\varrho^4
   \big\{-4\check{\chi}^{AB}(\slashed{\mathcal{L}}_{R}^k\check{\chi}_{AC})({\slashed{\mathcal{L}}_{R}^k\check{\chi}_B}^C)
   +2(\slashed{\mathcal{L}}_{\mathring L}\slashed{\mathcal{L}}_{R}^k\check{\chi}_{AB})(\slashed{\mathcal{L}}_{R}^k\check{\chi}^{AB})\big\},
   \end{equation*}
 then \eqref{chi'} and \eqref{eLRchi} give the estimate
 \begin{equation}\label{LrRchi}
 |\mathring L(\varrho^2|\slashed{\mathcal{L}}_{R}^k\check{\chi}|)|\lesssim  M\delta^{1-\varepsilon_0}s^{-2}(\varrho^2|\slashed{\mathcal{L}}_{R}^k\check{\chi}|)+ M\delta^{1-\varepsilon_0}s^{-3/2}.
 \end{equation}
 Integrating the inequality \eqref{LrRchi} along integral curves of $\mathring L$ and applying \eqref{Y-35} yield that for small $\delta>0$,
  $$|\slashed{\mathcal{L}}_{R}^k\check{\chi}|+s^{-1}|\slashed{\mathcal{L}}_{R}^k\leftidx{^{(R)}}{\slashed\pi}_{\mathring L}|\lesssim M\delta^{1-\varepsilon_0} s^{-2}.$$

   \item {\bf Estimates of $R^{k+1}\check{L}^j$, $R^{k+1}{\mathring L}^j$, $R^{k+1}\upsilon_{ij}$, $R^{k+1}\check\varrho$ and $\slashed{\mathcal{L}}_{R}^k\leftidx{^{(R)}}{\slashed\pi}$}

Since
   \begin{equation}\label{RiL}
   \begin{split}
   R^{k}R_{ab}\check{L}^j=R^k ({R_{ab}}^A\slashed d_A\check{L}^j)
   \overset{\eqref{deL}}=&R^k\big\{{R_{ab}}^A\check\chi_{AB}(\slashed d^Bx^j)-(\mathcal{FG})_{\mathring L\tilde T}(R_{ab})\tilde T^j\\
   &-\f12(\mathcal{FG})_{\tilde T\tilde T}(R_{ab})\tilde T^j+{R_{ab}}^A\Lambda_{AB}(\slashed d^Bx^i)\big\},
   	\end{split}
   	\end{equation}
   this implies $|R^{k+1}\check{L}^j|\lesssim  M\delta^{1-\varepsilon_0} s^{-1}$ by the induction assumptions, $(\star)$ and the estimate of $\slashed{\mathcal L}_{R}^k\check\chi$ in Part $(1)$. Hence $|R^{k+1}{\mathring L}^j|\lesssim 1$
   holds for small $\dl>0$.

   The estimates for $R^{k+1}\upsilon_{ij}$ and $R^{k+1}\check\varrho$ can be obtained by \eqref{omega} and \eqref{rrho} directly, and $|\slashed{\mathcal{L}}_{R}^k\leftidx{^{(R)}}{\slashed\pi}|\lesssim  M\delta^{1-\varepsilon_0}s^{-1}$ holds by \eqref{Rpi}.

   \item {\bf Estimate of $\slashed{\mathcal{L}}_{R}^{k+1}R$}

   Taking the Lie derivatives $R^k$ on both hand sides of \eqref{RiRj}, using the results in Part $(1)-(2)$, the induction assumptions for
$k-1$ and $(\star)$, one can estimate $\slashed{\mathcal{L}}_{R}^{k+1}R$ in \eqref{Rh}
directly.

   \item {\bf{Estimate of $R^{k+2}x^j$}}

Note that
   \begin{equation*}
   \begin{split}
   R^{k+1}R_{ab}x^j&=R^{k+1}(\Omega_{ab}-\upsilon_{ab}\tilde{T})x^j=R^{k+1}(x^a\delta_b^j-x^b\delta_a^j-\upsilon_{ab}\tilde T^j).
   \end{split}
   \end{equation*}
 Then by induction assumptions up to the orders of $k-1$, the estimate of $R^{k+1}\upsilon_{ab}$ and $R^{k+1}{\tilde T}^j=(-R^{k+1}g^{0j}-R^{k+1}\mathring L^j)$ in Part $(2)$, $|R^{k+2}x^j|\lesssim s$ is obtained, and hence $|\slashed{\mathcal{L}}_{R}^{k+1}\slashed dx^j|\lesssim 1$ holds as for \eqref{RRx}.
  \end{enumerate}
 \end{enumerate}
\end{proof}

With the help of Lemma \ref{Rh}, the estimates for the higher order derivatives of $\mu$ with the rotational
vector fields can be derived as follows.

\begin{proposition}\label{Rmuh}
 Under the same assumptions in Lemma \ref{Rh}, it holds that
 \begin{equation}\label{muh}
 |R^{k+1}\mu|\lesssim  M^2\delta^{1-2\varepsilon_0},\quad k\leq N-1.
 \end{equation}
\end{proposition}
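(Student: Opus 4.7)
The plan is to argue by induction on $k$, with the base case $|R\mu|\lesssim M^2\delta^{1-2\varepsilon_0}$ handled in the same way as the inductive step and with the zeroth-order statement $\mu=1+O(M^2\delta^{1-2\varepsilon_0})$ already furnished by \eqref{phimu}. For fixed $k$ I would apply $R^{k+1}$ to the transport equation \eqref{lmu} and commute the rotations past $\mathring L$ using Lemma \ref{com}, obtaining
\begin{equation*}
\mathring L(R^{k+1}\mu)=R^{k+1}(\mathring L\mu)+[R^{k+1},\mathring L]\mu,
\end{equation*}
where $[R^{k+1},\mathring L]\mu$ is a finite sum of terms schematically of the form $\bigl(\slashed{\mathcal L}_R^{j}\leftidx{^{(R)}}{\slashed\pi}_{\mathring L}\bigr)\cdot\slashed d R^{k-j}\mu$ with $0\leq j\leq k$. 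Lemma \ref{Rh} gives $|\slashed{\mathcal L}_R^{j}\leftidx{^{(R)}}{\slashed\pi}_{\mathring L}|\lesssim M\delta^{1-\varepsilon_0}s^{-1}$, while the inductive hypothesis combined with Corollary \ref{12form} bounds $|\slashed d R^{k-j}\mu|\lesssim r^{-1}|R^{k-j+1}\mu|\lesssim M^2\delta^{1-2\varepsilon_0}s^{-1}$, so the commutator contributes errors that are integrable in $s$.

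Next, I would distribute $R^{k+1}$ across the right hand side of \eqref{lmu} by Leibniz. All terms that can be controlled directly by $(\star)$ together with Lemma \ref{Rh} yield acceptable bounds; in particular $R^{k+1}(F_{\mathring L\mathring L}T\phi)$ is dominated by $M\delta^{1-\varepsilon_0}s^{-3/2}$ using the bootstrap weight $\|T\phi\|_{L^\infty}\lesssim M\delta^{1-\varepsilon_0}s^{-3/2}$ and the bounds on derivatives of $\mathring L^i$ implicit in $F_{\mathring L\mathring L}$. The delicate contribution is $R^{k+1}\bigl(G_{\mathring L\mathring L}^{\gamma}T\varphi_{\gamma}\bigr)$, for which a crude bound on $T\varphi_{\gamma}$ would be much too large. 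Here I would repeat the decomposition \eqref{GT}--\eqref{gLLL}: the first null condition \eqref{null} rewrites $\tilde g^{\alpha\beta,\gamma}\mathring L_{\alpha}\mathring L_{\beta}\mathring L_{\gamma}$ as a sum of products of the error quantities $\check L^{i}$, $\check T^{i}$, $\check\varrho$ and $g-m$, each of which is $O(M\delta^{1-\varepsilon_0}s^{-1})$ by \eqref{chL}, \eqref{cr} and $(\star)$. Distributing $R^{k+1}$ over this product and applying Lemma \ref{Rh} to the Lie derivatives of $\check L^{i}$, $\check\varrho$ and $\upsilon_{ij}$, together with the identity $T=\tfrac12(\mathring{\underline L}-\mu\mathring L)$ and the bootstrap bounds on $R^{\leq k+1}\mathring{\underline L}\varphi_{\gamma}$ and $R^{\leq k+1}\mathring L\varphi_{\gamma}$ from $(\star)$, one obtains an overall bound of order $M^2\delta^{1-2\varepsilon_0}s^{-5/2}$ for this contribution. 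Collecting all terms yields the Gronwall-type inequality
\begin{equation*}
|\mathring L(R^{k+1}\mu)|\lesssim M\delta^{1-\varepsilon_0}s^{-5/2}\,|R^{k+1}\mu|+M^2\delta^{1-2\varepsilon_0}s^{-3/2}.
\end{equation*}
Since the initial value $|R^{k+1}\mu|_{t=t_0}\lesssim\delta^{1-\varepsilon_0}$ follows from Theorem \ref{Th2.1} applied to the explicit formula $\mu=1/\sqrt{(g^{0i}\omega_{i})^2+g^{ij}\omega_{i}\omega_{j}}$ on $\Sigma_{t_0}$ (noting that $\mu=1$ when $\phi=0$, so $R^{k+1}\mu|_{t_0}$ is linear to leading order in $R^{\leq k+1}(\phi,\partial\phi)$), integration along integral curves of $\mathring L$ combined with the convergence of $\int_{t_0}^{\infty}\tau^{-3/2}d\tau$ closes the induction and delivers \eqref{muh}.

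The hard part will be verifying that the null-condition cancellation survives under $R^{k+1}$. Specifically, one must confirm that when a rotation falls on any factor $\mathring L^{i}$ inside the null symbol $\tilde g^{\alpha\beta,\gamma}\mathring L_{\alpha}\mathring L_{\beta}\mathring L_{\gamma}$, the decomposition $\mathring L^{i}=\check L^{i}+x^{i}/\varrho$ either routes the derivative onto $\check L^{i}$ (whose rotations remain small by Lemma \ref{Rh}) or onto $x^{i}/\varrho$, in which case the gain persists via the identity \eqref{gLLL}; analogously for the $\tilde g^{\alpha\beta,\gamma}$ dependence on $(\phi,\partial\phi)$, which is handled by $(\star)$. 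Equally delicate is $R^{k+1}(T\varphi_{\gamma})$: because exactly one $T$ ever lands on $\varphi_{\gamma}$, the total $\delta$-weight from the bootstrap is precisely $\delta^{-\varepsilon_0}$, which combined with the $\delta^{1-\varepsilon_0}$ from the null gain produces the sharp final order $M^2\delta^{1-2\varepsilon_0}$ and no worse.
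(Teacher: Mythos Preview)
Your approach is essentially the same as the paper's: induction on $k$, commute $R^{k+1}$ with $\mathring L$ via Lemma \ref{com}, distribute over \eqref{lmu}, exploit \eqref{GT}--\eqref{gLLL} for the $G_{\mathring L\mathring L}^{\gamma}T\varphi_{\gamma}$ term, and close by Gronwall along integral curves of $\mathring L$. One small slip: in the commutator sum, the case $j=0$ produces $\leftidx{^{(R)}}{\slashed\pi}_{\mathring L}\cdot\slashed d R^{k}\mu$, and $|\slashed d R^{k}\mu|\lesssim r^{-1}|R^{k+1}\mu|$ involves the very quantity you are estimating, so it cannot be bounded by the inductive hypothesis as you claim; it must instead be absorbed into the Gronwall coefficient, which then becomes $M\delta^{1-\varepsilon_0}s^{-2}$ rather than $s^{-5/2}$ (this is exactly what the paper does in \eqref{LRmu}), and the argument closes unchanged.
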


\begin{proof}
  This can be proved by induction.

Note that
 \begin{equation}\label{Y-7}
 \mathring L\big(\varrho^2|\slashed d\mu|^2\big)=\varrho^2\big\{-2\check{\chi}^{AB}(\slashed d_A\mu)(\slashed d_B\mu)
 +2\slashed g^{AB}(\slashed d_A\mathring L\mu)(\slashed d_B\mu)\big\}.
 \end{equation}
 Substituting \eqref{lmu} into \eqref{Y-7}, and applying $(\star)$, \eqref{chi'} and \eqref{GT}, one gets
 \[
 |\mathring L\big(\varrho|\slashed d\mu|\big)|\lesssim M\delta^{1-\varepsilon_0} s^{-2}\big(\varrho|\slashed d\mu|\big)
 +M^2\delta^{1-2\varepsilon_0} s^{-3/2}.
 \]
 Hence,
 \begin{equation}\label{dmu}
 |\slashed d\mu|\lesssim M^2\delta^{1-2\varepsilon_0} s^{-1},
 \end{equation}
 which yields \eqref{muh} for $k=0$.

Assume that \eqref{muh} holds for the orders up to $k-1$ $(1\leq k\leq N-1)$. By \eqref {c}, one has
 \begin{equation}\label{LRimu}
 \begin{split}
 \mathring LR^{k+1}\mu&=[\mathring L,R^{k+1}]\mu+R^{k+1}\mathring L\mu\\
 &=\sum_{k_1+k_2=k}\slashed {\mathcal L}_{R}^{k_1}\big(\leftidx{^{(R)}}{{\slashed\pi_{\mathring L}}^A}\slashed d_AR^{k_2}\mu\big)
 +R^{k+1}\mathring L\mu.
 \end{split}
 \end{equation}
Using \eqref{lmu} and \eqref{GT}, the induction assumptions, Lemma \ref{Rh} and  $(\star)$ to
estimate the right hand side of \eqref{LRimu},
one can obtain
 \begin{equation}\label{LRmu}
 |\mathring LR^{k+1}\mu|\lesssim  M\delta^{1-\varepsilon_0} s^{-2}|R^{k+1}\mu|+ M^2\delta^{1-2\varepsilon_0}s^{-3/2}.
 \end{equation}
Integrating \eqref{LRmu} along integral curves of $\mathring L$ yields that  for small $\delta>0$,
$$|R^{k+1}\mu|\lesssim  M^2\delta^{1-2\varepsilon_0}.$$
\end{proof}

As a consequence of Lemma \ref{Rh} and Proposition \ref{Rmuh}, it follows from the expressions of $\leftidx{^{(R)}}{\slashed\pi}_{TA}$, $\leftidx{^{(T)}}{\slashed\pi}_{AB}$ and $\leftidx{^{(T)}}{\slashed\pi}_{\mathring LA}$
in \eqref{Rpi} and \eqref{Lpi} that for $k\leq N-1$,
\begin{equation}\label{piL}
|\slashed{\mathcal{L}}_{R}^k\leftidx{^{(R)}}{\slashed\pi}_{T}|\lesssim  M^2\delta^{1-2\varepsilon_0} s^{-1},\quad|\slashed{\mathcal{L}}_{R}^k\leftidx{^{(T)}}{\slashed\pi}|\lesssim M\delta^{-\varepsilon_0}s^{-1},\quad|\slashed{\mathcal{L}}_{R}^k\leftidx{^{(T)}}{\slashed\pi}_{\mathring L}|
\lesssim M\delta^{-\varepsilon_0}s^{-1}.
\end{equation}

For derivatives involving $T$, one can get the following estimates by similar analysis for Lemma \ref{Rh} and Proposition \ref{Rmuh}.

\begin{proposition}\label{TRh}
 Under the assumptions $(\star)$, for any operator $\bar Z\in \{T, R_{ij}\}$, and suitably small $\delta>0$,
it holds that for $k\leq N-1$,
 \begin{equation}\label{The}
 \begin{split}
 &|\slashed{\mathcal{L}}_{\bar Z}^{k+1;l}\slashed dx^j|\lesssim \delta^{1-l}s^{-1},
 \quad|\slashed{\mathcal{L}}_{\bar Z}^{k;l}\check{\chi}|\lesssim  M\delta^{1-l-\varepsilon_0}s^{-2},
 \quad |\slashed{\mathcal{L}}_{\bar Z}^{k+1;l}R|\lesssim  M^2\delta^{2-l-2\varepsilon_0}s^{-1},\\
 &|\slashed{\mathcal{L}}_{\bar Z}^{k;l}\leftidx{^{(R)}}{\slashed\pi}|\lesssim
  M\delta^{1-l-\varepsilon_0}s^{-1},
  \quad|\slashed{\mathcal{L}}_{\bar Z}^{k;l}\leftidx{^{(R)}}{\slashed\pi}_{\mathring L}|\lesssim  M\delta^{1-l-\varepsilon_0}s^{-1},\quad|\bar Z^{k+1;l}\check\varrho|\lesssim M\delta^{1-l-\varepsilon_0}s^{-1},\\
 &|\bar Z^{k+1;l}\check{L}^j|\lesssim M\delta^{1-l-\varepsilon_0}s^{-1},
 \quad|\bar Z^{k+1;l}\upsilon_{ij}|\lesssim  M\delta^{1-l-\varepsilon_0},\quad|\bar Z^{k+1;l}\mu|\lesssim M^2\delta^{1-l-2\varepsilon_0},\\
 &|\slashed{\mathcal{L}}_{\bar Z}^{k;l}\leftidx{^{(R)}}{\slashed\pi}_{T}|\lesssim  M^2\delta^{1-l-2\varepsilon_0}s^{-1},
 \quad |\slashed{\mathcal{L}}_{\bar Z}^{k;l}\leftidx{^{(T)}}{\slashed\pi}|\lesssim  M\delta^{-l-\varepsilon_0}s^{-1},  \quad |\slashed{\mathcal{L}}_{\bar Z}^{k;l}\leftidx{^{(T)}}{\slashed\pi}_{\mathring L}|\lesssim  M\delta^{-l-\varepsilon_0}s^{-1},
 \end{split}
 \end{equation}
 where the array $(k;l)$ means that the number of $\bar Z$ is $k$ and the number of $T$ is $l$ ($l\geq 1$).
\end{proposition}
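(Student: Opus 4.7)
The proof proceeds by double induction, with the outer parameter being the number $l$ of $T$-factors and the inner parameter being the total order $k$. The base case $l=0$ is exactly Lemma \ref{Rh} and Proposition \ref{Rmuh}, which supply all the bounds for pure rotational derivatives. For the inductive step, assume \eqref{The} holds for every pair $(k',l')$ with $l'<l$, or with $l'=l$ and $k'<k$; the goal is then to verify each bound at order $(k,l)$. Throughout, the argument rests on the commutator identities in Lemma \ref{com}, namely $[\mathring L,T]={\leftidx{^{(T)}}{\slashed\pi}_{\mathring L}}^{A}X_{A}$, $[\mathring L,R_{ij}]={\leftidx{^{(R_{ij})}}{\slashed\pi}_{\mathring L}}^{A}X_{A}$ and $[T,R_{ij}]={\leftidx{^{(R_{ij})}}{\slashed\pi}_{T}}^{A}X_{A}$, combined with the transport equations \eqref{Lchi'} for $\check\chi$, \eqref{LeL} for $\check L^{i}$, \eqref{lmu} for $\mu$, and the algebraic relations \eqref{R}, \eqref{omega}, \eqref{rrho}, \eqref{deL}, \eqref{Lpi}--\eqref{Rpi}.

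The natural order inside each induction layer is as follows. First, $\bar Z^{k;l}\mu$ is estimated by commuting $\bar Z^{k;l}$ past $\mathring L$ in \eqref{lmu}, which yields a transport equation of the form
\begin{equation*}
\mathring L(\bar Z^{k;l}\mu)=\bar Z^{k;l}(\text{r.h.s.\ of }\eqref{lmu})+\sum_{(k_{i},l_{i})<(k,l)}\slashed{\mathcal L}_{\bar Z}^{k_{1};l_{1}}\bigl({\leftidx{^{(\bar Z)}}{\slashed\pi}_{\mathring L}}{}^{A}\,\slashed d_{A}\bar Z^{k_{2};l_{2}}\mu\bigr);
\end{equation*}
the null-condition cancellation uncovered in \eqref{GT}--\eqref{gLLL} survives the extra $\bar Z^{k;l}$-differentiation and preserves the decay $s^{-5/2}$ on the $T\varphi_{\gamma}$ contributions, while the initial data on $\Sigma_{t_{0}}$ is controlled by Theorem \ref{Th2.1} — since $T\sim-\p_{r}=\tfrac{1}{2}(L-\underline L)$ modulo perturbations of size $\delta^{1-\varepsilon_{0}}$, each $T$-factor there is absorbed by \eqref{local2-2}, which furnishes exactly one $\delta^{-1}$ per $T$. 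A Grönwall argument then gives $|\bar Z^{k;l}\mu|\lesssim M^{2}\delta^{1-l-2\varepsilon_{0}}$. Second, the deformation-tensor bounds for $\slashed{\mathcal L}_{\bar Z}^{k;l}\leftidx{^{(T)}}{\slashed\pi}$, $\slashed{\mathcal L}_{\bar Z}^{k;l}\leftidx{^{(R)}}{\slashed\pi}$, $\slashed{\mathcal L}_{\bar Z}^{k;l}\leftidx{^{(T)}}{\slashed\pi}_{\mathring L}$ and $\slashed{\mathcal L}_{\bar Z}^{k;l}\leftidx{^{(R)}}{\slashed\pi}_{T}$ are read off algebraically from \eqref{Lpi}--\eqref{Rpi} with the just-obtained estimate of $\bar Z^{k;l}\mu$ and the induction hypothesis. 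Third, $\bar Z^{k;l}\check\chi$ is controlled by commuting $\bar Z^{k;l}$ past $\mathring L$ in the $\mathring L$-transport equation \eqref{Lchi'} — not in the $T$-equation \eqref{Tchi'} — and integrating along integral curves of $\mathring L$ with sources now bounded by the previous two steps. Finally, $\bar Z^{k+1;l}\check L^{i}$ is obtained from \eqref{deL}, $\bar Z^{k+1;l}\upsilon_{ij}$ from \eqref{omega}, $\bar Z^{k+1;l}\check\varrho$ from \eqref{rrho}, $\slashed{\mathcal L}_{\bar Z}^{k+1;l}R$ from the analogue of \eqref{RiRj}, and $\slashed{\mathcal L}_{\bar Z}^{k+1;l}\slashed dx^{j}$ from $[\bar Z,R_{ij}]x^{j}$ and the formula for $R_{ij}$ in \eqref{R}.

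The main obstacle is the circular coupling between $T$-derivatives of $\mu$ and of $\check\chi$: the $T$-transport equation \eqref{Tchi'} contains $\slashed\nabla^{2}\mu$, while \eqref{lmu} hit with $T$ brings in the factor $T\varphi_{\gamma}$, which by $(\star)$ is only controlled at order $M\delta^{-\varepsilon_{0}}s^{-3/2}$ rather than the better rate enjoyed by $\mathring L\varphi$ or $\slashed d\varphi$. The resolution, embedded in the ordering above, is twofold: one always uses the $\mathring L$-transport equation rather than the $T$-transport equation for $\check\chi$, trading the problematic $\slashed\nabla^{2}\mu$ term for commutator contributions built from $\leftidx{^{(T)}}{\slashed\pi}_{\mathring L}$ that are controlled by the induction hypothesis; and one extracts the null-condition cancellation in Step 1 before any $T$-derivative falls on $\varphi_{\gamma}$, so that the worst contribution to $T\mu$ is a tame $M^{2}\delta^{1-2\varepsilon_{0}}$. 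The $\delta$-accounting is then sharp: each additional $T$ costs precisely one power of $\delta^{-1}$, traceable either to $T\mu\lesssim M^{2}\delta^{1-2\varepsilon_{0}}$ or to the initial-data loss from \eqref{local2-2}, which matches exactly the exponents $\delta^{1-l-\varepsilon_{0}}$, $\delta^{-l-\varepsilon_{0}}$ and $\delta^{1-l-2\varepsilon_{0}}$ appearing in \eqref{The}.
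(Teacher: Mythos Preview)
Your induction on $l$ is the right scaffolding and matches the paper's organization, with the base case $l=0$ correctly identified as Lemma \ref{Rh} and Proposition \ref{Rmuh}. However, your treatment of $\check\chi$ and the ordering within each level differ from the paper's, and your stated reason for avoiding \eqref{Tchi'} reflects a misunderstanding rather than a genuine obstacle.

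You assert that \eqref{Tchi'} is problematic because the $\slashed\nabla^{2}\mu$ term creates a circular coupling with $T$-derivatives of $\mu$. In fact the paper uses \eqref{Tchi'} directly, and there is no circularity: at level $l$ one commutes $\slashed{\mathcal L}_{\bar Z}^{k;l}\check\chi$ to the form $\slashed{\mathcal L}_{R}^{p}\slashed{\mathcal L}_{T}\check\chi$ (plus lower-order commutator terms controlled via \eqref{piL} and induction), and then \eqref{Tchi'} expresses $\slashed{\mathcal L}_{T}\check\chi$ \emph{algebraically} through quantities carrying at most $l-1$ copies of $T$. In particular the $\slashed\nabla^{2}\mu$ appearing there is at level $l-1$, already bounded by the outer induction hypothesis (for $l=1$, by Proposition \ref{Rmuh} together with Corollary \ref{12form}). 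This yields the $\check\chi$ bound immediately, with no transport integration and no initial-data computation. Your alternative --- commuting $\bar Z^{k;l}$ past $\mathring L$ in \eqref{Lchi'} and integrating along $\mathring L$ --- can be made to work, but it is more laborious (one must supply $\slashed{\mathcal L}_{\bar Z}^{k;l}\check\chi|_{t_{0}}$ separately) and buys nothing.

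Correspondingly, the paper's ordering at each level is the reverse of yours: first $\check\chi$ via \eqref{Tchi'}; then $\slashed dx^{j}$, $\check L^{j}$, $\check\varrho$, $\upsilon_{ij}$, $\slashed{\mathcal L}_{\bar Z}R$ via the explicit identities $\slashed{\mathcal L}_{T}\slashed dx^{j}=(\slashed d\mu)\tilde T^{j}+\mu\slashed d\tilde T^{j}$, $T\check L^{j}=T\mathring L^{j}-\tfrac{\mu-1}{\varrho}\tilde T^{j}-\tfrac{1}{\varrho}\check T^{j}$, $\slashed{\mathcal L}_{T}R=\leftidx{^{(R)}}{\slashed\pi}_{T}{}^{A}X_{A}$, together with \eqref{TL}, \eqref{rrho}, \eqref{omega}; and only then $\mu$ via the $\mathring L$-transport argument you describe. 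Two minor corrections to your sketch: the $T$-derivative of $\check L^{j}$ requires \eqref{TL}, not \eqref{deL} (which handles only angular derivatives); and the initial data for $\mu$ and its $T$-derivatives at $t_{0}$ are read off directly from the explicit formula \eqref{m}, not from \eqref{local2-2}.
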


\begin{proof}
We start with the special case of $l=1$.
 \begin{enumerate}
  \item
 Taking Lie derivative $R^{p}$ on both hand sides of \eqref{Tchi'},
 and applying Lemma \ref{Rh}, Proposition \ref{Rmuh} and $(\star)$, one has that for $p\leq N-2$,
 \begin{equation}\label{RTchi}
 |\slashed{\mathcal{L}}_{R}^{p}\slashed{\mathcal{L}}_T\check{\chi}|\lesssim  M\delta^{-\varepsilon_0} s^{-2}.
 \end{equation}
By $[T,R]=\leftidx{^{(R)}}{{\slashed\pi_T}^A}X_A$ in \eqref{c}
  and
 \[
 \slashed{\mathcal{L}}_{\bar Z}^{k;1}\check{\chi}
 =\slashed{\mathcal{L}}_{R}^{p_1}\slashed{\mathcal{L}}_T\slashed{\mathcal{L}}_{R}^{p_2}\check{\chi}
=\slashed{\mathcal{L}}_{R}^{p_1}\slashed{\mathcal{L}}_{R}^{p_2}\slashed{\mathcal{L}}_T\check{\chi}
+\sum_{q_1+q_2=p_2-1}\slashed{\mathcal{L}}_{R}^{p_1}
\slashed{\mathcal{L}}_{R}^{q_1}\slashed{\mathcal{L}}_{[T,R]}\slashed{\mathcal{L}}_{R}^{q_2}\check{\chi},
 \]
then $|\slashed{\mathcal{L}}_{\bar Z}^{k;1}\check{\chi}|\lesssim  M\delta^{-\varepsilon_0} s^{-2}$ is shown for $k\leq N-1$
and $l=1$ by making use of \eqref{RTchi}, \eqref{piL} and the estimate of $\check\chi$ in Lemma \ref{Rh}.
 \item Note that
 \begin{equation*}
 \begin{split}
 &\slashed{\mathcal L}_T\slashed dx^j=(\slashed d\mu)\tilde{T}^j+\mu\slashed d\tilde{T},\\
 &T\check L^j=T\mathring L^j-\f{\mu-1}{\varrho}\tilde{T}^j-\f{\check T^j}{\varrho},\\
 &\slashed{\mathcal L}_TR=[T,R]=\leftidx{^{(R)}}{\slashed\pi_T}^AX_A.
 \end{split}
 \end{equation*}
 Following similar arguments in the part (1), one can get the estimates in \eqref{The}
 for $\slashed{\mathcal{L}}_{\bar Z}^{k+1;1}\slashed dx^j$, $\slashed{\mathcal{L}}_{\bar Z}^{k+1;1}R$, $\bar Z^{k+1;1}\check{L}^j$, $\bar Z^{k+1;1}\check\varrho$ and $\bar Z^{k+1;1}\upsilon_{ij}$ by virtue of the equalities \eqref{TL}, \eqref{rrho} and \eqref{omega}.

\item
Now we turn to the estimate of $\mu$ by induction.

By \eqref{c}, one has
\begin{equation*}
\begin{split}
\mathring LT\mu&=[\mathring L,T]\mu+T\mathring L\mu=(-\slashed d^A\mu-2\mu\zeta^A)\slashed d_A\mu+T\mathring L\mu.
\end{split}
\end{equation*}
Then it follows from the equation \eqref{lmu}, $(\star)$, \eqref{GT} and \eqref{muh} that
\begin{equation}\label{LTmu}
|\mathring LT\mu|\lesssim  M^2\delta^{-2\varepsilon_0}s^{-3/2}+M\delta^{1-\varepsilon_0}s^{-5/2}|T\mu|.
\end{equation}
Integrating \eqref{LTmu} along integral curves of $\mathring L$ and applying Gronwall's inequality yield
\begin{equation}\label{Y-9}
|T\mu|\lesssim  M^2\delta^{-2\varepsilon_0}.
\end{equation}
Inductively, one can assume that for $q\leq p-1$ and $p\leq N-1$,
\begin{equation}\label{Y-10}
|R^q T\mu|\lesssim  M^2\delta^{-2\varepsilon_0}.
\end{equation}
Note that
\begin{equation*}
\begin{split}
\mathring LR^p T\mu=&\sum_{p_1+p_2=p-1} R^{p_1}\big(\leftidx{^{(R)}}{{\slashed\pi_{\mathring L}}^A}\slashed d_AR^{p_2}T\mu\big)
+R^p\big\{(-\slashed d^A\mu-2\mu\zeta^A)\slashed d_A\mu+T\mathring L\mu\big\}.
\end{split}
\end{equation*}
It follows from this, \eqref{Y-10}, Lemma \ref{Rh} and Proposition \ref{Rmuh} that
\[
 |\mathring LR^p T\mu|\lesssim  M\delta^{1-\varepsilon_0} s^{-2}|R^p T\mu|
 + M^2\delta^{-2\varepsilon_0}s^{-3/2},
\]
which implies $|R^{p}T\mu|\lesssim M^2\delta^{-2\varepsilon_0}$, and
hence $|\bar Z^{k+1;1}\mu|\lesssim M^2\delta^{-2\varepsilon_0}$. The estimates of $\slashed{\mathcal{L}}_{\bar Z}^{k;1}\leftidx{^{(R)}}{\slashed\pi}$, $\slashed{\mathcal{L}}_{\bar Z}^{k;1}\leftidx{^{(T)}}{\slashed\pi}$,
$\slashed{\mathcal{L}}_{\bar Z}^{k;1}\leftidx{^{(T)}}{\slashed\pi}_{\mathring L}$ and $\slashed{\mathcal{L}}_{\bar Z}^{k;1}\leftidx{^{(R)}}{\slashed\pi}_{\mathring L}$ can be obtained
directly by \eqref{Lpi} and \eqref{Rpi}.
Meanwhile, the estimate for $\slashed{\mathcal{L}}_{\bar Z}^{k;1}\leftidx{^{(R)}}{\slashed\pi}_{T}$
follows from \eqref{Rpi}.

\end{enumerate}

Analogously, if the number of the derivatives with respect to $T$ in \eqref{The} is $l$ $(l \geq 2)$, one can utilize an induction
argument on $l$ to get the estimates in \eqref{The}.
\end{proof}

We now improve $L^\infty$ estimates of some derivatives of $\varphi_\g$ and $\phi$
with respect to the vector field $\bar Z$ ($\bar Z\in \{T, R_{ij}\}$).

\begin{corollary}\label{barZ}
	Under the assumptions $(\star)$, for any operator $\bar Z\in\{T, R_{ij}\}$ and suitably small $\delta>0$, it holds that for $k\leq N-1$,
	\begin{align}
	&|\bar Z^{k;l}\mathring{\underline L}\varphi_\gamma(s, u, \vartheta)|+|\bar Z^{k;l}T\varphi_\gamma(s, u, \vartheta)|\lesssim\delta^{-l-\varepsilon_0}s^{-3/2},\label{barzt}
	\end{align}
	here $l$ is the number of $T$ in $\bar Z^k$.
\end{corollary}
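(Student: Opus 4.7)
The essential idea is to commute $\bar Z^{k;l}$ past $\mathring{\underline L}$ (respectively $T$) and invoke the bootstrap assumption $(\star)$ on the main term, then to show that the commutator is higher-order small via Proposition \ref{TRh} and Lemma \ref{com}. The argument is an induction on $k$.

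For $k=0$ (so $l=0$), the bound on $\mathring{\underline L}\vp_\g$ is immediate from the $|\al|=0$ case of $(\star)$. For $T\vp_\g$, I use the defining relation $\mathring{\underline L} = \mu\mathring L + 2T$ from \eqref{FC-1} to write $T\vp_\g = \f12\mathring{\underline L}\vp_\g - \f12\mu\mathring L\vp_\g$; both right-hand terms are bounded by $M\dl^{-\ve_0}s^{-3/2}$ via $(\star)$ together with $\mu\lesssim 1$ from \eqref{phimu}.

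For the inductive step, I would commute the outer fields through $\bar Z^{k;l}$:
\begin{align*}
\bar Z^{k;l}\mathring{\underline L}\vp_\g &= \mathring{\underline L}\bigl(\bar Z^{k;l}\vp_\g\bigr) + [\bar Z^{k;l},\mathring{\underline L}]\vp_\g,\\
\bar Z^{k;l}T\vp_\g &= T\bigl(\bar Z^{k;l}\vp_\g\bigr) + [\bar Z^{k;l},T]\vp_\g.
\end{align*}
The main term $\mathring{\underline L}\bar Z^{k;l}\vp_\g$ is controlled directly by the $|\al|=k\le N-1$ case of $(\star)$, giving the desired $M\dl^{-l-\ve_0}s^{-3/2}$. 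For $T\bar Z^{k;l}\vp_\g$, I apply $T = \f12(\mathring{\underline L} - \mu\mathring L)$ once more, so that $|T\bar Z^{k;l}\vp_\g| \leq \f12|\mathring{\underline L}\bar Z^{k;l}\vp_\g| + \f12\mu|\mathring L\bar Z^{k;l}\vp_\g|$ which is bounded by $M\dl^{-l-\ve_0}s^{-3/2}+M\dl^{1-l-\ve_0}s^{-5/2}$ via $(\star)$.

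The commutator terms are handled by iterated application of Lemma \ref{com}, together with $\mathring{\underline L}=\mu\mathring L+2T$. Each basic commutator $[\mathring L,R_{ij}]$, $[\mathring L,T]$, $[T,R_{ij}]$ produces a deformation-tensor factor $\leftidx{^{(Z)}}{\slashed\pi}$ contracted with an $X_A$-derivative (equivalently a $\slashed d$-derivative on $S_{s,u}$). Pushing one $\bar Z$ at a time across $\mathring{\underline L}$ or $T$ and expanding recursively, $[\bar Z^{k;l},\mathring{\underline L}]\vp_\g$ and $[\bar Z^{k;l},T]\vp_\g$ are written as sums of products of the form
\[
\bigl(\bar Z^{k_1;l_1}(\leftidx{^{(Z)}}{\slashed\pi}\text{ or }\mu)\bigr)\cdot\bigl(\bar Z^{k_2;l_2}(X_A\vp_\g\text{ or }\mathring L\vp_\g)\bigr),
\]
with $k_1+k_2\le k-1$ and $l_1+l_2\le l$. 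The tensorial factors are controlled by Proposition \ref{TRh}, yielding bounds of order $M\dl^{1-l_1-\ve_0}s^{-1}$ for the $R$-type deformation tensors and $M^2\dl^{1-l_1-2\ve_0}$ for $\mu$-derivatives or $T$-type tensors; the $\vp_\g$-side factors are controlled by $(\star)$, yielding $M\dl^{1-l_2-\ve_0}s^{-5/2}$. Multiplying produces a gain of at least $\dl^{2-2\ve_0}s^{-1}$ over the target $\dl^{-l-\ve_0}s^{-3/2}$, so the commutator contributions are strictly subleading for small $\dl$.

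The main obstacle will be the careful bookkeeping when the commutator is iterated $k$ times: one must track how many $T$'s remain attached to $\vp_\g$ versus how many migrate onto the deformation tensors and $\mu$, and verify that in every term the total count still weights exactly $l$ factors of $\dl^{-1}$. The crucial structural fact, already built into the $\dl^l$ weight of $(\star)$ and the $\dl^{-l}$ losses of Proposition \ref{TRh}, is that $T$-counts on both sides of each product balance the exponent in the target; together with the universal small-$\dl$ gain from the deformation-tensor factors, this closes the induction for all $k\le N-1$.
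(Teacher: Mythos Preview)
Your argument is circular in a way that misses the entire purpose of the corollary. The point of Corollary~\ref{barZ} is to produce a bound \emph{independent of the bootstrap constant} $M$; this is exactly what the paper means by ``improved results'' (see the closing paragraph of Section~\ref{ho}). Your main step, however, is to invoke $(\star)$ directly on $\mathring{\underline L}\bar Z^{k;l}\vp_\g$ and $\mathring L\bar Z^{k;l}\vp_\g$, which by design only gives $M\dl^{-l-\ve_0}s^{-3/2}$---you even write the $M$ yourself. The commutator terms do indeed gain extra smallness, but that is irrelevant: the leading term already carries $M$, so your final bound is just a restatement of $(\star)$ and cannot close the bootstrap.

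The paper's proof works by a completely different mechanism. It does not commute $\bar Z^{k;l}$ past $\mathring{\underline L}$; instead it applies $R^pT^l$ to the transport equation \eqref{fequation}, obtaining an ODE of the schematic form
\[
\mathring L\bigl(\varrho^{3/2}R^pT^l\mathring{\underline L}\vp_\g\bigr)=O\bigl(M^2\dl^{1-l-2\ve_0}s^{-2}\bigr),
\]
and then integrates along integral curves of $\mathring L$ back to the initial slice $\Sigma_{t_0}$. The crucial observation is that the \emph{initial value} $R^pT^l\mathring{\underline L}\vp_\g(t_0,\cdot)$ is bounded by $C\dl^{-l-\ve_0}$ with $C$ a constant coming from the data, hence independent of $M$. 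The integrated error $M^2\dl^{1-l-2\ve_0}$ is then absorbed by taking $\dl$ small (since $1-\ve_0>0$), yielding the $M$-free bound. This integration-from-data mechanism is the missing idea in your proposal; commutation and $(\star)$ alone cannot supply it.
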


\begin{proof}
	For any $p$ with $p+l\leq N-1$, by Lemma \ref{com}, one has
	\begin{equation}\label{RTphi}
	\begin{split}
	&R^{p}T^l(H_\g+\mu\slashed\triangle\varphi_\gamma)=R^{p}T^l\big(\mathring L\mathring{\underline L}\varphi_\gamma+\f{3}{2\varrho}\mathring{\underline L}\varphi_\gamma\big)\\
	=&\mathring L\big(R^p T^l\mathring{\underline L}\varphi_\gamma\big)+\f{3}{2\varrho}R^p T^l\mathring{\underline L}\varphi_\gamma-\sum_{p_1+p_2=p-1}R^{p_1}\big(\leftidx{^{(R)}}{\slashed\pi}_{\mathring L}^A\slashed d_AR^{p_2}T^{l}\mathring{\underline L}\varphi_\gamma\big)\\
	&-\sum_{l_1+l_2=l-1}R^{p}T^{l_1}\big(\leftidx{^{(T)}}{\slashed\pi}_{\mathring L}^A\slashed d_AT^{l_2}\mathring{\underline L}\varphi_\gamma\big)+\sum_{l_1+l_2=l,l_1\geq 1}T^{l_1}(\f{3}{2\varrho})
	\cdot R^{p}T^{l_2}\mathring{\underline L}\varphi_\gamma.
	\end{split}
	\end{equation}
Note that the terms in the last three summations on the right hand side of \eqref{RTphi} can be estimated by Lemma \ref{Rh} and Proposition \ref{TRh}.
In addition, one can estimate $R^{p}T^lH_\g$
	by the expression of $H_\gamma$ in \eqref{H}. Then it is derived from \eqref{RTphi} that
\begin{equation}\label{H7-1}
\begin{split}
|\mathring L\big(\varrho^{3/2} R^p T^l\mathring{\underline L}\varphi_\gamma\big)|\lesssim M^2\delta^{1-l-2\varepsilon_0}s^{-2}.
\end{split}
\end{equation}	
Subsequently, integrating \eqref{H7-1} along the integral
curves of $\mathring L$ yields
	\[
	|\varrho^{3/2} R^p T^l\mathring{\underline L}\varphi_\gamma(s, u, \vartheta)-{\varrho_0}^{3/2} R^p T^l\mathring{\underline L}\varphi_\gamma(t_0, u, \vartheta)|\lesssim M^2\delta^{1-l-2\varepsilon_0}.
	\]
	This implies that for small $\delta>0$,
	\begin{equation}\label{RTvphi}
	|R^p T^l\mathring{\underline L}\varphi_\gamma(s, u, \vartheta)|\lesssim\delta^{-l-\varepsilon_0}s^{-3/2}.
	\end{equation}
Thus, \eqref{barzt} follows from \eqref{RTvphi} and $\mathring{\underline L}=2T+\mu\mathring L$.
\end{proof}

To further improve the estimate of $\varphi_\gamma$, based on \eqref{barzt} and
 $T=\f{\p}{\p u}-\Xi^AX_A$, it is necessary to treat the components of $\Xi$.

\begin{lemma}\label{eXi}
	Under the assumptions $(\star)$, for suitably small $\delta>0$, it then holds
	\begin{equation}\label{Xi}
	|\Xi|\lesssim  M\delta^{-\varepsilon_0}s.
	\end{equation}
\end{lemma}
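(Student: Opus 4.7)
The plan is to derive a transport equation for the components $\Xi^A$ along $\mathring L$ via a commutator computation, and then to integrate it using the bounds on $\leftidx{^{(T)}}{\slashed\pi}_{\mathring L}$, $\check\chi$, and $\check T$ already established above. In the coordinates $(s,u,\vartheta^1,\vartheta^2,\vartheta^3)$ one has $\mathring L=\p/\p s$ and, by the very definition of $\Xi$, $T=\p/\p u-\Xi^A X_A$. A direct calculation therefore gives $[\mathring L,T]=-(\mathring L\Xi^A)X_A$, which, compared with the commutator identity $[\mathring L,T]={\leftidx{^{(T)}}{\slashed\pi}_{\mathring L}}^A X_A$ from Lemma \ref{com}, yields the coordinate transport equation
\begin{equation*}
\mathring L\Xi^A=-{\leftidx{^{(T)}}{\slashed\pi}_{\mathring L}}^A.
\end{equation*}

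Applying $\mathring L$ to $|\Xi|^2=\slashed g_{AB}\Xi^A\Xi^B$ and using $\mathring L\slashed g_{AB}=2\chi_{AB}$ (from \eqref{cdf}) together with the decomposition $\chi_{AB}=\varrho^{-1}\slashed g_{AB}+\check\chi_{AB}$, one obtains
\begin{equation*}
\mathring L|\Xi|^2=\f{2}{\varrho}|\Xi|^2+2\check\chi_{AB}\Xi^A\Xi^B-2\leftidx{^{(T)}}{\slashed\pi}_{\mathring LA}\Xi^A.
\end{equation*}
The estimates $|\check\chi|\lesssim M\delta^{1-\varepsilon_0}s^{-2}$ from Proposition \ref{chi'} and $|\leftidx{^{(T)}}{\slashed\pi}_{\mathring L}|\lesssim M\delta^{-\varepsilon_0}s^{-1}$ from \eqref{piL} with $k=0$, combined with $\varrho\sim s$, imply that setting $h:=\varrho^{-1}|\Xi|$ (which precisely absorbs the principal growth term $\f{2}{\varrho}|\Xi|^2$) leads to the scalar inequality
\begin{equation*}
\mathring Lh\lesssim M\delta^{1-\varepsilon_0}s^{-2}\,h+M\delta^{-\varepsilon_0}s^{-2}.
\end{equation*}

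For the initial value of $\Xi$ on $\Sigma_{t_0}$, the relation $u|_{t=t_0}=t_0-r$ together with $T=\mu\tilde T=-\mu\omega^i\p_i+\mu\check T^i\p_i$ and the constraint $Tu=1$ force $\omega_i\check T^i|_{t_0}=(\mu-1)/\mu$, so that the radial components of $T$ and $\p_u=-\p_r$ cancel exactly, leaving $T-\p_u=\mu\,\check T^{i,\mathrm{tan}}\p_i$ on $\Sigma_{t_0}$, where $\check T^{i,\mathrm{tan}}$ denotes the component of $\check T$ tangential to $S_{t_0,u}$. With \eqref{chL} this gives $|\Xi|_{t_0}=|T-\p_u|_{t_0}\lesssim M\delta^{1-\varepsilon_0}$, hence $h|_{t_0}\lesssim M\delta^{1-\varepsilon_0}$. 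Since $\int_{t_0}^\infty\tau^{-2}\,d\tau\lesssim 1$, Gronwall's inequality applied to the transport inequality above yields $h\lesssim M\delta^{-\varepsilon_0}$ for $\delta>0$ small, and therefore $|\Xi|=\varrho\,h\lesssim M\delta^{-\varepsilon_0}s$, which is the claim.

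The main subtlety lies in the initial-data step on $\Sigma_{t_0}$: one must observe that the radial components of $T$ and $\p_u$ cancel exactly, so that $\Xi|_{t_0}$ inherits the sharper $M\delta^{1-\varepsilon_0}$ smallness of the tangential part of $\check T$ rather than the weaker $M^2\delta^{1-2\varepsilon_0}$ bound on $|\mu-1|$. This precision is what ensures the target bound $M\delta^{-\varepsilon_0}s$ survives the integration along integral curves of $\mathring L$; the remainder of the argument is a clean Gronwall estimate whose closure hinges on the $\varrho^{-1}$ renormalization.
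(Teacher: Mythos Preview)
Your proof is correct and follows essentially the same route as the paper: both derive the transport equation $\mathring L\Xi^A=\slashed d^A\mu+2\mu\zeta^A$ (which is exactly $-\leftidx{^{(T)}}{\slashed\pi}_{\mathring L}{}^A$ by \eqref{Lpi}), compute $\mathring L(\varrho^{-1}|\Xi|)$ using $\mathring L\slashed g_{AB}=2\chi_{AB}$ to absorb the $\varrho^{-1}$ growth, and close by Gronwall with the bounds $|\check\chi|\lesssim M\delta^{1-\varepsilon_0}s^{-2}$ and $|\leftidx{^{(T)}}{\slashed\pi}_{\mathring L}|\lesssim M\delta^{-\varepsilon_0}s^{-1}$. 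One small remark: your careful initial-data analysis (the radial cancellation yielding $|\Xi|_{t_0}\lesssim M\delta^{1-\varepsilon_0}$) is correct but not actually needed at the precision you claim---even the cruder bound $|\Xi|_{t_0}\lesssim M^2\delta^{1-2\varepsilon_0}$ (which is $\lesssim M\delta^{-\varepsilon_0}$ for small $\delta$) already suffices for the target estimate, which is why the paper does not dwell on it.
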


\begin{proof}
	Note that $\Xi$ is a vector field on $S_{s, u}$ and one has
	\begin{equation}\label{LXi}
	\mathring L\Xi^A=[T,\mathring L]^A=\big(\mathscr{D}_T\mathring L-\mathscr{D}_{\mathring L}T\big)^A=\slashed d^A\mu+2\mu\zeta^A.
	\end{equation}
	Then
	\[
	\mathring L(\varrho^{-2}|\Xi|^2)=\varrho^{-2}\big\{2\check\chi_{AB}\Xi^A\Xi^B+2\slashed g_{AB}(\slashed d^A\mu
	+2\mu\zeta^A)\Xi^B\big\}.
	\]
	This, together with the estimates \eqref{dmu}, \eqref{echi'}, $(\star)$ and \eqref{zeta}, yields
	\begin{equation}\label{Y1}
	|\mathring L(\varrho^{-1}|\Xi|)|\lesssim \varrho^{-1}\big\{ M\delta^{1-\varepsilon_0} s^{-2}|\Xi|
	+ M\delta^{-\varepsilon_0} s^{-1}\big\}.
	\end{equation}
	Thus \eqref{Xi} follows immediately from integrating \eqref{Y1} along integral curves of $\mathring L$.
\end{proof}

\begin{corollary}\label{Phi}
	Under the assumptions $(\star)$ with suitably small $\delta>0$, it holds that for $k\leq N-1$,
	\begin{align}
&|\bar Z^{k;l}\varphi_\gamma(s, u, \vartheta)|\lesssim \delta^{1-l-\varepsilon_0}s^{-3/2},\label{barz}\\
&|\bar Z^{k;l}\phi(s, u, \vartheta)|\lesssim \delta^{2-l-\varepsilon_0}s^{-3/2}.\label{barzp}
	\end{align}
\end{corollary}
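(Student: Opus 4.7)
The plan is to integrate in the $u$-direction, exploiting that both $\varphi_\gamma$ and $\phi$ vanish on the outermost cone $C_0=\{u=0\}$ by finite propagation speed from the initial support established in Theorem \ref{Th2.1}, together with the relation $\partial/\partial u = T + \Xi^A X_A$ that follows from $T=\partial/\partial u-\Xi^A X_A$. Since Corollary \ref{barZ} already controls $\bar Z^{k;l}T\varphi_\gamma$ with smallness $\delta^{-l-\varepsilon_0}s^{-3/2}$, this strategy gains one extra factor of $\delta$ from $u\in[0,2\delta]$ and so delivers the improved bound $\delta^{1-l-\varepsilon_0}s^{-3/2}$ for $\varphi_\gamma$, closing the first half of the bootstrap $(\star)$.

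For the base case $k=l=0$, I would write
\begin{equation*}
\varphi_\gamma(s,u,\vartheta)=\int_0^u\bigl(T\varphi_\gamma+\Xi^A X_A\varphi_\gamma\bigr)(s,u',\vartheta)\,du',
\end{equation*}
then bound $|T\varphi_\gamma|\lesssim\delta^{-\varepsilon_0}s^{-3/2}$ by Corollary \ref{barZ}, $|\Xi|\lesssim M\delta^{-\varepsilon_0}s$ by Lemma \ref{eXi}, and $|\Xi^A X_A\varphi_\gamma|\leq|\Xi|\,|\slashed d\varphi_\gamma|\lesssim M^2\delta^{1-2\varepsilon_0}s^{-3/2}$ via $(\star)$; integrating over $u\leq 2\delta$ and using smallness of $\delta$ to absorb the $M^2$ yields the claim. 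For higher orders, I would induct on $k$ and split into two subcases. If $l\geq 1$, reorder via commutators to produce $\bar Z^{k;l}\varphi_\gamma=\bar Z^{k-1;l-1}T\varphi_\gamma+(\text{commutators})$; the leading term is handled by Corollary \ref{barZ} with the correct $\delta$-weight, while the commutators generated iteratively by $[T,R_{ij}]=\leftidx{^{(R_{ij})}}{\slashed\pi}_T^A X_A$ in Lemma \ref{com} are estimated through Proposition \ref{TRh} and the inductive hypothesis. If $l=0$, integrate
\begin{equation*}
R^k\varphi_\gamma(s,u,\vartheta)=\int_0^u\bigl\{R^k T\varphi_\gamma+[T,R^k]\varphi_\gamma+\Xi^A X_A R^k\varphi_\gamma\bigr\}(s,u',\vartheta)\,du',
\end{equation*}
where $R^k\varphi_\gamma|_{u=0}=0$ because $R$ is tangent to $C_0$; then control $R^k T\varphi_\gamma$ by Corollary \ref{barZ}, expand $[T,R^k]$ by Lemma \ref{commute} and bound using Proposition \ref{TRh}, and handle the tangential transport term inductively with Lemma \ref{eXi}.

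The estimate for $\phi$ proceeds similarly, exploiting the crucial observation that $Tt=0$ and $X_A^0=0$ give $T\phi=T^i\varphi_i$ and $X_A\phi=X_A^i\varphi_i$, so for $l=0$,
\begin{equation*}
R^k\phi(s,u,\vartheta)=\int_0^u R^k\bigl(T^i\varphi_i+\Xi^A X_A^i\varphi_i\bigr)(s,u',\vartheta)\,du',
\end{equation*}
while for $l\geq 1$ one pulls a $T$ to the right, uses $T\phi=T^i\varphi_i$, and expands via Leibniz. In either case the coefficients $T^i$, $X_A^i=\slashed d_A x^i$, $\Xi^A$ are bounded by Lemma \ref{Rh}, Propositions \ref{Rmuh}, \ref{TRh} and Lemma \ref{eXi}, whereas the derivatives of $\varphi_i$ are controlled by \eqref{barz} just established; since $|\varphi|\lesssim\delta^{1-\varepsilon_0}s^{-3/2}$, the integrand is of size $\delta^{1-l-\varepsilon_0}s^{-3/2}$, and the extra factor $\delta$ from the $u$-integration (or equivalently, the smallness carried by $T^i\varphi_i$ relative to $\varphi_i$) supplies the required improvement to $\delta^{2-l-\varepsilon_0}s^{-3/2}$. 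The main obstacle I expect is the combinatorial bookkeeping of $\delta$- and $s$-weights in the commutator expansions: when $[T,R^k]$ acts on $\varphi_\gamma$ or on $T^i\varphi_i$ it generates a cascade of terms involving iterated Lie derivatives of $\leftidx{^{(R)}}{\slashed\pi}_T$ together with lower-order derivatives of $\varphi_\gamma$, and one must verify that Proposition \ref{TRh}'s bound $|\slashed{\mathcal L}_{\bar Z}^{k';l'}\leftidx{^{(R)}}{\slashed\pi}_T|\lesssim M^2\delta^{1-l'-2\varepsilon_0}s^{-1}$, combined with the inductive hypothesis, still yields the advertised weights once the $u$-integration absorbs the extra powers of $M$ and $\delta^{-\varepsilon_0}$ via smallness of $\delta$.
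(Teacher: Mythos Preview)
Your approach is essentially the paper's: integrate $\bar Z^{k;l}\varphi_\gamma$ and $\bar Z^{k;l}\phi$ in $u$ from $C_0$ using $\partial_u=T+\Xi^A X_A$, bound the $T$-part via Corollary \ref{barZ} (and, for $\phi$, the relation $T\phi=\mu\tilde T^j\varphi_j$), and the tangential part via Lemma \ref{eXi} together with $(\star)$; the paper does this uniformly in $l$ without your case split, which is unnecessary but harmless. One small slip: in your displayed formula for $R^k\phi$ the integrand should be $TR^k\phi+[T,R^k]\phi+\Xi^A\slashed d_A R^k\phi$, exactly as you correctly wrote for $R^k\varphi_\gamma$, rather than $R^k(T^i\varphi_i+\Xi^A X_A^i\varphi_i)$, since $\partial_u$ does not commute with $R^k$.
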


\begin{proof}
		Notice that \eqref{barzt} implies
		\begin{equation}\label{Tvarphi}
		|T\bar Z^{k;l}\varphi_\gamma(s, u, \vartheta)|\lesssim\delta^{-l-\varepsilon_0}s^{-3/2}.
		\end{equation}
	Substituting $T=\f{\p}{\p u}-\Xi^AX_A$ into \eqref{Tvarphi}, and using $(\star)$ and \eqref{Xi}, one has
		\begin{align}\label{Z-1}
		|\f{\p}{\p u}\bar Z^{k;l}\varphi_\gamma(s, u,\vartheta)|\lesssim\delta^{-\varepsilon_0-l} s^{-3/2}.
		\end{align}
		Integrating \eqref{Z-1} from $0$ to $u$ leads to the estimate of $\bar Z^{k;l}\varphi_\gamma$ in \eqref{barz} since $\bar Z^{k;l}\varphi_\g$ vanishes on $C_0^s$.
		
		Next we derive the estimate \eqref{barzp}. Note that
		\begin{equation*}
		|\bar Z^{k;l}T\phi|=|\bar Z^{k;l}\big(\mu\tilde T^j\varphi_j\big)|\lesssim\delta^{1-l-\varepsilon_0}s^{-3/2}.
		\end{equation*}
		Then by the bootstrap assumption $(\star)$ for $\phi$, one has $
		|T\bar Z^{k;l}\phi|\lesssim\delta^{1-l-\varepsilon_0}s^{-3/2},$
		which implies
		\begin{equation}\label{Tphi}
		|\f{\p}{\p u}\bar Z^{k;l}\phi|\lesssim\delta^{1-l-\varepsilon_0}s^{-3/2}.
		\end{equation}
		Thus \eqref{barzp} holds after integrating \eqref{Tphi} from $0$ to $u$.
\end{proof}

It is noticed that the results in Proposition \ref{TRh} only involve only the vectorfields $\{T, R_{ij}\}$.
We next  deal with the corresponding quantities involving the remaining scaling vectorfield $\varrho\mathring L$ in the frame
$\{\varrho\mathring L, T, R_{ij}\}$. In fact, there are
explicit expressions of $\check\chi$ and $\varrho\check L^i$ under the action of $\mathring L$ (see \eqref{Lchi'} and \eqref{LeL}),
then by taking the Lie derivative of $\varrho\mathring L\check\chi$ with respect to $\bar Z$, one obtains the estimate
of $\slashed{\mathcal L}_{\bar Z}^k\slashed{\mathcal L}_{\varrho\mathring L}\check\chi$ as in \eqref{RTchi}.
On the other hand, the terms containing the derivatives of $\varrho\mathring L$ can be estimated similarly as in the proof of Proposition \ref{TRh}, we omit the tedious analysis and
give the conclusions directly in the following proposition:

\begin{proposition}\label{LTRh}
	Under the assumptions $(\star)$ with suitably small $\dl>0$, for any operate $Z\in\{\varrho\mathring L, T, R_{ij}\}$,
it holds that for $k\leq N-1$,
	\begin{equation}\label{z}
	\begin{split}
	&|\slashed{\mathcal{L}}_{Z}^{k+1;l,m}\slashed dx^j|\lesssim \delta^{-l},\quad|\slashed{\mathcal{L}}_{ Z}^{k;l,m}\check{\chi}|\lesssim M\delta^{1-l-\varepsilon_0}s^{-2},\quad |\slashed{\mathcal{L}}_{Z}^{k+1;l,m}R|\lesssim M\delta^{1-l-\varepsilon_0},\\
	&|\slashed{\mathcal{L}}_{Z}^{k;l,m}\leftidx{^{(R)}}{\slashed\pi}|\lesssim M\delta^{1-l-\varepsilon_0}s^{-1},\quad|\slashed{\mathcal{L}}_{Z}^{k;l,m}\leftidx{^{(R)}}{\slashed\pi}_{\mathring L}|\lesssim M\delta^{1-l-\varepsilon_0}s^{-1},\quad|Z^{k+1;l,m}\check\varrho|\lesssim M\delta^{1-l-\varepsilon_0}s^{-1},\\
	&|Z^{k+1;l,m}\check{L}^j|\lesssim M\delta^{1-l-\varepsilon_0}s^{-1},\quad| Z^{k+1;l,m}\upsilon_{ij}|\lesssim M\delta^{1-l-\varepsilon_0},\quad|Z^{k+1;l,m}\mu|\lesssim M^2\delta^{1-l-2\varepsilon_0}s^{-1},\\ &|\slashed{\mathcal{L}}_{Z}^{k;l,m}\leftidx{^{(R)}}{\slashed\pi}_{T}|\lesssim M^2\delta^{1-l-2\varepsilon_0}s^{-1},\quad |\slashed{\mathcal{L}}_{Z}^{k;l,m}\leftidx{^{(T)}}{\slashed\pi}|\lesssim M\delta^{-l-\varepsilon_0}s^{-1},\quad |\slashed{\mathcal{L}}_{Z}^{k;l,m}\leftidx{^{(T)}}{\slashed\pi}_{\mathring L}|\lesssim M\delta^{-l-\varepsilon_0}s^{-1},
	\end{split}
	\end{equation}
	 where the array $(k;l,m)$ means that the number of $Z$ is $k$, the number of $T$ is $l$ , and the number
of $\varrho \mathring L$ is $m$ ($m\geq 1$).
\end{proposition}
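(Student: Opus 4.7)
The plan is to prove Proposition \ref{LTRh} by induction on $m$, the number of $\varrho\mathring L$ derivatives, reducing each case $m\geq 1$ to the case $m=0$ already handled in Proposition \ref{TRh}. The guiding observation, flagged in the paragraph preceding the statement, is that Section \ref{p-1} supplies explicit transport equations along $\mathring L$ for essentially all quantities in the list: \eqref{Lchi'} for $\check\chi$, \eqref{LeL} together with \eqref{LL} for $\varrho\check L^i$ and $\mathring L^i$, \eqref{lmu} for $\mu$, while \eqref{dL}, \eqref{rrho}, and \eqref{omega} express $\slashed dx^j$, $\check\varrho$, and $\upsilon_{ij}$ algebraically in terms of these. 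Every $\varrho\mathring L$ derivative can therefore be traded, via the corresponding transport equation, for a bounded combination of quantities controlled by the bootstrap $(\star)$ and earlier Propositions \ref{chi'}--\ref{TRh}.

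For the base case $m=1$, fix a quantity $Q$ from the list and rewrite $\varrho\mathring L Q$ using its transport or algebraic identity. The right-hand side involves only $\phi$, $\vp_\gamma$, $\mathring L^i$, $\check L^i$, $\check\chi$, $\mu$ and their Lie derivatives along $\bar Z\in\{T,R_{ij}\}$, all estimated by Propositions \ref{Rh}, \ref{Rmuh}, \ref{TRh}. Applying $\slashed{\mathcal L}_{\bar Z}^{k;l}$ and using Lemma \ref{commute} to distribute through covariant derivatives, together with the commutator identity ${\leftidx{^{(\bar Z)}}{\slashed\pi}_{\mathring L}}^A X_A=[\mathring L,\bar Z]$ from Lemma \ref{com}, yields the desired pointwise bounds. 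In particular, for $\slashed{\mathcal L}_{\bar Z}^{k}\slashed{\mathcal L}_{\varrho\mathring L}\check\chi$ the argument exactly mirrors \eqref{RTchi} with $\slashed{\mathcal L}_T$ replaced by $\slashed{\mathcal L}_{\varrho\mathring L}$, the Riccati term $\check\chi_A{}^C\check\chi_{BC}$ in \eqref{Lchi'} being absorbed by Gronwall's inequality along integral curves of $\mathring L$ exactly as in Proposition \ref{chi'}.

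For the inductive step $m\geq 2$, I would use the commutator identities
\[
[\varrho\mathring L,T]=\mathring L+\varrho\,{\leftidx{^{(T)}}{\slashed\pi}_{\mathring L}}^AX_A,\qquad [\varrho\mathring L,R_{ij}]=\varrho\,{\leftidx{^{(R_{ij})}}{\slashed\pi}_{\mathring L}}^AX_A,
\]
which follow from $\mathring L\varrho=1$, $T\varrho=-1$, $R_{ij}\varrho=0$ together with Lemma \ref{com}. These permit one to push a single $\varrho\mathring L$ factor to the innermost position in any expression $Z^{k+1;l,m}Q$, at the cost of terms with strictly fewer $\varrho\mathring L$ derivatives (controlled by the inductive hypothesis). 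The innermost $\varrho\mathring L$ is then removed via the corresponding transport equation, reducing $m$ by one. The bare $\mathring L$ arising from $[\varrho\mathring L,T]$ is rewritten as $\varrho^{-1}(\varrho\mathring L)$, so that the extra factor $\varrho^{-1}\sim s^{-1}$ supplies the improved decay recorded in \eqref{z}.

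The main obstacle is precisely the bookkeeping the authors label as tedious and omit: one must verify at every step the correct powers of $\delta$ and $s$, preserving the null-condition cancellations \eqref{GTe}--\eqref{gLLL} that were decisive for the $\mu$-estimates in Proposition \ref{Rmuh}, and absorbing the Riccati nonlinearities in the transport equation for $\check\chi$ via Gronwall rather than naive integration. No genuinely new analytic ingredient beyond those of Propositions \ref{chi'}--\ref{TRh} is needed; Proposition \ref{LTRh} is the systematic completion, to the full frame $\{\varrho\mathring L,T,R_{ij}\}$, of the $L^\infty$ analysis already carried out for the restricted frame $\{T,R_{ij}\}$.
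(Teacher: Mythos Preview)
Your proposal is correct and follows essentially the paper's own (sketched) approach: use the explicit $\mathring L$-transport identities \eqref{Lchi'}, \eqref{LeL}, \eqref{LL}, \eqref{lmu} to convert each $\varrho\mathring L$ into an algebraic expression in already-controlled quantities, then apply $\bar Z\in\{T,R_{ij}\}$ derivatives and invoke Propositions \ref{Rh}--\ref{TRh}, with commutators handled via Lemma \ref{com}. One small correction: for $\slashed{\mathcal L}_{\bar Z}^{k}\slashed{\mathcal L}_{\varrho\mathring L}\check\chi$ no Gronwall integration along $\mathring L$ is needed---the Riccati term $\varrho\,\check\chi_A{}^C\check\chi_{BC}$, after applying $\slashed{\mathcal L}_{\bar Z}^{k}$, is bounded \emph{algebraically} by $M^2\delta^{2-2l-2\varepsilon_0}s^{-3}$ via Proposition \ref{TRh}, which is already better than the target bound; the Gronwall step appeared in Proposition \ref{chi'} and Lemma \ref{Rh} only to establish the base $\check\chi$ estimate, which here is input rather than output.
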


\eqref{barzt} implies that $|R^{p}T^l\mathring{\underline L}\varphi_\gamma(s, u, \vartheta)|\lesssim\delta^{-l-\varepsilon_0}s^{-3/2}$
for $p+l\leq N-1$. Assume that the following induction assumptions hold:
\begin{equation}\label{Y-13}
|R^p T^l(\varrho\mathring L)^{m-1}\mathring{\underline L}\varphi_\gamma(s, u, \vartheta)|\lesssim\delta^{-l-\varepsilon_0}s^{-3/2}
\quad\text{for $p+l+m-1\leq N-1$ and $m\geq 1$}.
\end{equation}
It follows from \eqref{fequation} that
\begin{equation}\label{Y-14}
R^p T^l(\varrho\mathring L)^m\mathring{\underline L}\varphi_\gamma=R^p T^l(\varrho\mathring L)^{m-1}\big(-\f32\mathring{\underline L}\varphi_\gamma+\varrho H_\gamma+\varrho\mu\slashed\triangle\varphi_\gamma\big).
\end{equation}
According to the explicit expression \eqref{H} of $H_\g$, the second term in the bracket of the right hand side of \eqref{Y-14}
can be estimated easily by \eqref{he}, \eqref{TRh} or \eqref{z}. This, together with
the assumption \eqref{Y-13}, implies that when $p+l+m\leq N$,
\begin{equation}\label{RTLT}
|R^p T^l(\varrho\mathring L)^m\mathring{\underline L}\varphi_\gamma|\lesssim\delta^{-l-\varepsilon_0}s^{-3/2}.
\end{equation}
This yields $|\f{\p}{\p u}R^p T^l(\varrho\mathring L)^m\varphi_\g|\lesssim\delta^{-l-\varepsilon_0}s^{-3/2}$ by the
same argument as for \eqref{Z-1}.
Thus, for $p+l+m\leq N$ and $m\geq 1$,
\begin{equation}\label{RTL}
|R^p T^l(\varrho\mathring L)^m\varphi_\gamma|\lesssim\delta^{1-l-\varepsilon_0}s^{-3/2}.
\end{equation}

Using Lemma \ref{com} and rearranging the orders of derivatives in \eqref{RTLT} and \eqref{RTL}, one can get
\begin{corollary}\label{Z}
	Under the assumptions $(\star)$ with suitably small $\dl>0$, for $Z\in\{\varrho\mathring L, T, R_{ij}\}$ and $k\leq N-1$, it then holds that
	\begin{align}
	&|Z^{k+2;l+1,m+1}\varphi_\gamma(s, u, \vartheta)|\lesssim\delta^{-l-\varepsilon_0}s^{-3/2},\label{ztl}\\
	&|Z^{k+1;l,m+1}\varphi_\gamma(s, u, \vartheta)|\lesssim\delta^{1-l-\varepsilon_0}s^{-3/2},\label{zl}\\
	&|Z^{k+1;l,m+1}\phi(s, u, \vartheta)|\lesssim\delta^{2-l-\varepsilon_0}s^{-3/2}.\label{zlphi}
	\end{align}
\end{corollary}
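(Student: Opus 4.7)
The plan is to deduce Corollary \ref{Z} from the canonical-ordering estimates \eqref{RTLT}--\eqref{RTL} by commuting derivatives so that the operator $R^p T^l (\varrho\mathring L)^m$ appears, and controlling the commutator errors via the deformation-tensor bounds in Propositions \ref{TRh}--\ref{LTRh}. First I would record the three commutators between the generators $\{\varrho\mathring L,T,R_{ij}\}$: from Lemma \ref{com} together with $T\varrho=-1$ (from $Tt=0$, $Tu=1$) and $R_{ij}\varrho=0$ (since $R_{ij}$ is tangent to $S_{s,u}$ and $\varrho=s-u$ is constant along $S_{s,u}$), one obtains
\[
[\varrho\mathring L,T]=\varrho\,\leftidx{^{(T)}}{\slashed\pi}_{\mathring L}^{\,A}X_A+\mathring L,\quad [\varrho\mathring L,R_{ij}]=\varrho\,\leftidx{^{(R_{ij})}}{\slashed\pi}_{\mathring L}^{\,A}X_A,\quad [T,R_{ij}]=\leftidx{^{(R_{ij})}}{\slashed\pi}_T^{\,A}X_A.
\]
Higher derivatives of the three coefficients above are controlled by Propositions \ref{TRh} and \ref{LTRh}, so each swap of two adjacent $Z$-operators produces a commutator term behaving, up to benign factors of $\delta$ and $s^{-1}$, no worse than the identity.

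For \eqref{zl} I would argue by induction on $k$. The base case $k=0$ reduces to $\varrho\mathring L\varphi_\gamma$, which is bounded by $(\star)$ after multiplying by $\varrho\sim s$. In the induction step, an arbitrary product of $k+1$ factors from $\{\varrho\mathring L,T,R_{ij}\}$ (with $l$ copies of $T$ and $m+1$ copies of $\varrho\mathring L$) is brought into the canonical form $R^{k-l-m}T^l(\varrho\mathring L)^{m+1}$ by iterated adjacent swaps; the reordered piece is bounded by \eqref{RTL}, while each commutator term is of strictly lower $Z$-order and is estimated by the induction hypothesis combined with the deformation-tensor bounds. The crucial point is the bookkeeping: each factor $\leftidx{^{(\cdot)}}{\slashed\pi}$ that is differentiated $l'$ times by $T$ contributes $\delta^{-l'-\varepsilon_0}$ (or better), and this is precisely compensated by the loss of $l'$ interior $T$-derivatives from the remaining $Z$-product acting on $\varphi_\gamma$, so the exponent $1-l-\varepsilon_0$ and the decay $s^{-3/2}$ are both preserved.

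Estimate \eqref{ztl} is obtained analogously: start from \eqref{RTLT} with $(p,l,m)$ replaced by $(k-l-m,l,m+1)$, which gives $|R^{k-l-m}T^l(\varrho\mathring L)^{m+1}\mathring{\underline L}\varphi_\gamma|\lesssim\delta^{-l-\varepsilon_0}s^{-3/2}$ (admissible since $k\le N-1$). Splitting $\mathring{\underline L}=2T+\mu\mathring L$ separates a $T$-contribution of the desired canonical form $R^{k-l-m}T^{l+1}(\varrho\mathring L)^{m+1}\varphi_\gamma$ from a $\mu\mathring L=\mu\varrho^{-1}(\varrho\mathring L)$-contribution that is strictly better and controlled by \eqref{zl}. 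Arbitrary orderings are then reached by the same commutator argument as above.

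For the $\phi$-estimate \eqref{zlphi}, since at least one $\varrho\mathring L$ appears in $Z^{k+1;l,m+1}$, I would commute it to act directly on $\phi$ (the commutator errors being of lower $Z$-order and handled inductively), and then use $\varrho\mathring L\phi=\varrho\mathring L^\alpha\varphi_\alpha=x^\alpha\varphi_\alpha+\varrho\check L^\alpha\varphi_\alpha$ (with $\mathring L^0=1$). After Leibniz expansion, each factor $Z^{\alpha_1}(x^\alpha)$ or $Z^{\alpha_1}(\varrho\check L^\alpha)$ is bounded by Proposition \ref{LTRh}, while the remaining $Z^{\alpha_2}\varphi_\alpha$ is bounded by $(\star)$ or by \eqref{zl} (already established). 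The resulting count, using $\varrho\sim s$, yields the desired $\delta^{2-l-\varepsilon_0}s^{-3/2}$. The main technical obstacle is the bookkeeping itself---verifying at every commutation and every Leibniz split that the combined ``$T$-count'' in the coefficient and in the remaining product is conserved (modulo strictly better contributions), which guarantees conservation of the $\delta$-exponent. This is tedious but mechanical once the three commutator identities above are in hand.
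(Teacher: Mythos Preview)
Your arguments for \eqref{ztl} and \eqref{zl} are correct and match the paper's approach: the paper just invokes Lemma~\ref{com} to rearrange the order of derivatives starting from \eqref{RTLT} and \eqref{RTL}, and you have filled in the commutator bookkeeping in the right way.

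There is, however, a genuine gap in your treatment of \eqref{zlphi}. Writing $\varrho\mathring L\phi=\varrho\varphi_0+x^i\varphi_i+\varrho\check L^i\varphi_i$ and expanding by Leibniz destroys the ``good-direction'' cancellation hidden in $\mathring L\phi$. The dominant term $\varrho\cdot Z^{k;l,m}\varphi_0$ is bounded, via \eqref{barz} or \eqref{zl}, only by
\[
s\cdot\delta^{1-l-\varepsilon_0}s^{-3/2}=\delta^{1-l-\varepsilon_0}s^{-1/2},
\]
which is off from the target $\delta^{2-l-\varepsilon_0}s^{-3/2}$ by a full factor of $\delta^{-1}s$. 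The point is that each individual $\varphi_\alpha$ carries the bad $\mathring{\underline L}$-direction; the better smallness of $\mathring L\phi$ recorded in $(\star)$ reflects a cancellation among the $\varphi_\alpha$'s that your expansion cannot see.

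The fix is to repeat the route of Corollary~\ref{Phi} rather than expand $\mathring L\phi$: use $T\phi=\mu\tilde T^j\varphi_j$. Then $|Z^{k+1;l,m+1}T\phi|=|Z^{k+1;l,m+1}(\mu\tilde T^j\varphi_j)|\lesssim\delta^{1-l-\varepsilon_0}s^{-3/2}$ by Leibniz, with the $\varphi_j$-factor controlled by the already established \eqref{zl} (or \eqref{barz}) and the coefficients $\mu,\tilde T^j$ being $O(1)$. Commuting $T$ to the outside via your commutator argument and writing $T=\p_u-\Xi^AX_A$ (Lemma~\ref{eXi} together with $(\star)$ handles the $\Xi$-term) gives $|\p_uZ^{k+1;l,m+1}\phi|\lesssim\delta^{1-l-\varepsilon_0}s^{-3/2}$. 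Integrating in $u$ over an interval of length $O(\delta)$ and using the vanishing on $C_0$ then supplies the missing factor of $\delta$ and yields \eqref{zlphi}.
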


In the end of this section, it should be emphasized that under the assumptions $(\star)$,
one has obtained the estimates in Lemma \ref{Rh}, Proposition \ref{Rmuh}-\ref{LTRh} with
the corresponding bounds dependent on $M$.
Furthermore, improved results are obtained in Corollary \ref{barZ}-\ref{Z} with the bounds independent of $M$.
Therefore, if starting with Corollary \ref{barZ}-\ref{Z} and repeating
 the related procedure as in Section \ref{ho},  we can improve the conclusions in Lemma \ref{Rh} and
 Propositions \ref{Rmuh}-\ref{LTRh}
 such that all the related constants are  independent of $M$ when $k\leq N-3$.
From now on, when these propositions are applied, the special constant $M$ will be removed in the related $L^\infty$
 norms since $N$ could be chosen large enough.

\section{Energy estimates for the linearized equation}\label{EE}

It remains to close the bootstrap assumption $(\star)$ for the $N-th$ and $(N+1)-th$ order derivatives, for which the approach in Section \ref{ho} fails to apply.
To this end, we will construct some suitable energies for the higher order derivatives of $\varphi_\g$
to improve the related $L^\infty$ norms. Note that $\varphi_\gamma$ and $\phi$ satisfy the nonlinear equations \eqref{ge} and \eqref{muphi} respectively, and each
derivative of $\varphi_\g$ or $\phi$ also fulfills analogous equations with the same metric. Therefore, in this section, we focus on
the energy estimates for the smooth
solution $\Psi$ to the following linear equation
\begin{equation}\label{gel}
\mu\Box_g\Psi=\Phi,
\end{equation}
where $\Psi$ and its derivatives vanish on $C_0^s$.
We will choose a suitable multiplier $V\Psi$
($V$ is some vector field) and integrate $\mu (\Box_g\Psi)(V\Psi)$ in domain $D^{s, u}$
such that some appropriate energies can be found. To this end, set the {\it{energy-momentum tensor field}} of $\Psi$ in \eqref{gel} as
\begin{equation*}
Q_{\al\beta}=Q_{\al\beta}[\Psi]:=(\p_\al\Psi)(\p_\beta\Psi)-\f12 g_{\al\beta}g^{\nu\lambda}(\p_\nu\Psi)(\p_\lambda\Psi).
\end{equation*}
Then \eqref{gab} yields
$Q_{\al\beta}=(\p_\al\Psi)(\p_\beta\Psi)-\f12 g_{\al\beta}\big\{|\slashed d\Psi|^2-\mu^{-1}(\mathring{\underline L}\Psi)(\mathring L\Psi)\big\}$. Thus,  the components of $Q_{\al\beta}$ relative to $\{\mathring L, \mathring{\underline L}, X_1, X_2, X_3\}$ are
\begin{equation*}
\begin{split}
&Q_{\mathring L\mathring L}=(\mathring L\Psi)^2,\quad Q_{\mathring{\underline L}\mathring{\underline L}}=(\mathring{\underline L}\Psi)^2,\quad Q_{\mathring L\mathring{\underline L}}=\mu|\slashed d\Psi|^2,\\
&Q_{\mathring LA}=(\mathring L\Psi)(\slashed d_A\Psi),\quad Q_{\mathring{\underline L}A}=(\mathring{\underline L}\Psi)(\slashed d_A\Psi),\\
&Q_{AB}=(\slashed d_A\Psi)(\slashed d_B\Psi)-\f12\slashed g_{AB}\{|\slashed d\Psi|^2-\mu^{-1}(\mathring{\underline L}\Psi)(\mathring L\Psi)\big\}.
\end{split}
\end{equation*}
Let $V_1:=\varrho^{2\epsilon}\mathring L$ and $V_2:=\mathring{\underline L}$ with $0<\epsilon<\f12$
being any fixed constant. The energies $E_i[\Psi](s, u)$ and fluxes $F_i[\Psi](s, u)$ $(i=1,2)$
are defined as
\begin{align}
&E_1[\Psi](s, u):=\f12\ds\int_{\Sigma_s^{u}}\mu\varrho^{2\epsilon}\{(\mathring L\Psi)^2+|\slashed d\Psi|^2\},\label{E1}\\
&E_2[\Psi](s, u):=\f12\ds\int_{\Sigma_s^{u}}\{(\mathring{\underline L}\Psi)^2+\mu^2|\slashed d\Psi|^2\},\label{E2}\\
&F_1[\Psi](s, u):=\ds\int_{C_{u}^s}\varrho^{2\epsilon}(\mathring L\Psi)^2,\label{F1}\\
&F_2[\Psi](s,u):=\ds\int_{C_{u}^s}\mu |\slashed d\Psi|^2.\label{F2}
\end{align}
It follows from integration  by parts in $D^{s, u}$ for $(\mu\Box_g\Psi)(V_i\Psi)$ that
\begin{equation}\label{EI}
\begin{split}
&E_i[\Psi](s, u)-E_i[\Psi](t_0, u)+F_i[\Psi](s, u)\\
&=-\ds\int_{D^{s, u}}\Phi\cdot V_i\Psi-\int_{D^{s, u}}\f12\mu Q^{\al\beta}[\Psi]\leftidx{^{(V_i)}}\pi_{\al\beta},\qquad i=1,2,
\end{split}
\end{equation}
where $\leftidx{^{(V_i)}}\pi_{\al\beta}$ is the deformation tensor with respect to the vector field $V_i$.

To compute $Q^{\al\beta}\leftidx{^{(V_i)}}\pi_{\al\beta}$ in \eqref{EI}, one can use the
components of the metric in the frame $\{\mathring L,\underline{\mathring L}, X_1, X_2, X_3\}$, given in \eqref{gab}, to derive directly that
\begin{equation}\label{Y-36}
\begin{split}
&-\f12 \mu Q^{\al\beta}[\Psi]\leftidx{^{(V_1)}}\pi_{\al\beta}\\
=&-\f14\mu^{-1}\leftidx{^{(V_1)}}\pi_{\mathring L\mathring{\underline L}} Q_{\mathring L\mathring{\underline L}}-\f18\mu^{-1}\leftidx{^{(V_1)}}\pi_{\mathring{\underline L}\mathring{\underline L}}Q_{\mathring L\mathring L}+\f12\leftidx{^{(V_1)}}\pi_{\mathring{\underline L}}^AQ_{\mathring LA}-\f12\mu\leftidx{^{(V_1)}}{\slashed{\pi}}^{AB}Q_{AB}\\
=&(\f12\varrho^{2\epsilon}\mathring L\mu+\mu (\epsilon+\f12)\varrho^{2\epsilon-1})|\slashed d\Psi|^2+\big(\epsilon(\mu-2)\varrho^{2\epsilon-1}
-\f12\varrho^{2\epsilon}\mathring L\mu\big)(\mathring L\Psi)^2\\
&+\varrho^{2\epsilon}(\slashed d^A\mu+2\mu\zeta^A)(\mathring L\Psi)(\slashed d_A\Psi)
-\mu\varrho^{2\epsilon}\check\chi_{AB}(\slashed d^A\Psi)(\slashed d^B\Psi)+\f12\mu\varrho^{2\epsilon}\textrm{tr}\check\chi|\slashed d\Psi|^2\\
&-\f12\varrho^{2\epsilon}\textrm{tr}\chi(\mathring L\Psi)(\mathring{\underline L}\Psi)
\end{split}
\end{equation}
and
\begin{equation}\label{Y-37}
\begin{split}
&-\f12 \mu Q^{\al\beta}[\Psi]\leftidx{^{(V_2)}}\pi_{\al\beta}\\
=&\f12(\mathring{\underline L}\mu+\mu\mathring L\mu)|\slashed d\Psi|^2-(2\mu\zeta^A+\slashed d^A\mu)(\slashed d_A\Psi)(\mathring{\underline L}\Psi)-\mu\slashed d^A\mu(\mathring L\Psi)(\slashed d_A\Psi)\\
&-\f12\big(\textrm{tr}\leftidx{^{(T)}}{\slashed\pi}+\mu\textrm{tr}\chi\big)(\mathring L\Psi)(\mathring{\underline L}\Psi)-\mu\big(\mu{\chi}^{AB}+\leftidx{^{(T)}}{\slashed\pi}^{AB}\big)(\slashed d_A\Psi)(\slashed d_B\Psi)\\
&+\f12\mu\big(\textrm{tr}\leftidx{^{(T)}}{\slashed\pi}+\mu\textrm{tr}\chi\big)|\slashed d\Psi|^2.
\end{split}
\end{equation}

Next, applying the results in Sect. \ref{ho} to estimate all the coefficients in \eqref{Y-36} and \eqref{Y-37}, one has
 \begin{equation}\label{QV1}
\begin{split}
&\int_{D^{s, u}}-\f12 \mu Q^{\al\beta}[\Psi]\leftidx{^{(V_1)}}\pi_{\al\beta}\\
\lesssim&\int_0^u\int_{C_{u'}^s}|\slashed d\Psi|^2+\delta^{-1}\int_0^u\int_{C_{u'}^s}\varrho^{2\epsilon}|\mathring L\Psi|^2+\delta\int_{t_0}^s\tau^{2\epsilon-2}\int_{\Sigma_\tau^u}|\mathring{\underline L}\Psi|^2\\
\overset{0<\epsilon<\f12}\lesssim&\int_0^u F_2[\Psi](s,u')du'+\delta^{-1}\int_0^uF_1[\Psi](s,u')du'+\delta\int_{t_0}^s\tau^{2\epsilon-2}E_2[\Psi](\tau,u)d\tau
\end{split}
\end{equation}
and
 \begin{equation}\label{QV2}
 \begin{split}
 &\int_{D^{s, u}}-\f12 \mu Q^{\al\beta}[\Psi]\leftidx{^{(V_2)}}\pi_{\al\beta}\\
 \lesssim&\delta^{-2\varepsilon_0}\int_0^u\int_{C_{u'}^s}|\slashed d\Psi|^2
 +\delta^{-2\varepsilon_0}\int_0^u\int_{C_{u'}^s}\varrho^{2\epsilon}|\mathring L\Psi|^2
 +\int_{t_0}^s\tau^{-2}\int_{\Sigma_\tau^u}|\underline{\mathring L}\Psi|^2\\
 \lesssim&\delta^{-2\varepsilon_0}\int_0^uF_2[\Psi](s,u')du'+\delta^{-2\varepsilon_0}\int_0^uF_1[\Psi](s,u')du'
 +\int_{t_0}^s\tau^{-2}E_2[\Psi](\tau,u)d\tau.
  \end{split}
 \end{equation}
Substituting \eqref{QV1} and \eqref{QV2} into \eqref{EI} and using the Gronwall's inequality yield
 \begin{equation}\label{EF1}
\begin{split}
&E_1[\Psi](s, u)+F_1[\Psi](s, u)\\
\lesssim &E_1[\Psi](t_0, u)+\ds\int_{D^{s, u}}|\varrho^{2\epsilon}\Phi(\mathring L\Psi)|+\int_0^uF_2[\Psi](s,u')du'+\delta\int_{t_0}^s\tau^{2\epsilon-2}E_2[\Psi](\tau,u)d\tau
\end{split}
\end{equation}
and
 \begin{equation}\label{EF2}
 \begin{split}
 &E_2[\Psi](s, u)+F_2[\Psi](s, u)\\
 \lesssim &E_2[\Psi](t_0, u)+\ds\int_{D^{s, u}}|\Phi(\mathring{\underline L}\Psi)|
 +\delta^{-2\varepsilon_0}\int_0^uF_1[\Psi](s,u')du'.
 \end{split}
 \end{equation}

Taking the linear summation of \eqref{EF1} and \eqref{EF2}, and then utilizing Gronwall's inequality again, one
has that for $\epsilon\in(0,\f12)$,
\begin{equation}\label{e}
\begin{split}
&\delta E_2[\Psi](s, u)+\delta F_2[\Psi](s, u)+E_1[\Psi](s, u)+F_1[\Psi](s, u)\\
\lesssim& \delta E_2[\Psi](t_0, u)+E_1[\Psi](t_0, u)+\delta\int_{D^{s, u}}|\Phi\cdot \mathring{\underline L}\Psi|+\int_{D^{s, u}}\varrho^{2\epsilon}|\Phi\cdot \mathring L\Psi|.
\end{split}
\end{equation}

To obtain the energy estimate for $\vp_\g$ and its derivatives, we will apply \eqref{e} to $\Psi=\Psi_\gamma^{k+1}=Z^{k+1}\varphi_\gamma$ and
$\Phi=\Phi_\gamma^{k+1}=\mu\Box_g\Psi_\gamma^{k+1}$ ({$k\leq2N-6$}). Note that
\begin{equation}\label{Psi}
\begin{split}
\Phi_\gamma^{k+1}&=\mu[\Box_g,Z]\Psi_\gamma^{k}+Z\big(\mu\Box_g\Psi_\gamma^{k}\big)-(Z\mu)\Box_g\Psi_\gamma^{k}\\
&=\mu\mathscr D^\al({\leftidx{^{(Z)}}C_{\gamma}^{k}}_{,\al})+(Z+\leftidx{^{(Z)}}\lambda)\Phi_\gamma^{k},
\end{split}
\end{equation}
where
\begin{equation}\label{ClP}
\begin{split} &{\leftidx{^{(Z)}}C_{\gamma}^{k}}_{,\al}=\big(\leftidx{^{(Z)}}\pi_{\nu\al}
-\f12g_{\nu\al}(g_{\kappa\la}\leftidx{^{(Z)}}\pi^{\kappa\la})\big)g^{\nu\beta}\p_\beta\Psi_\gamma^{k},\\
&\leftidx{^{(Z)}}\lambda=\f12g_{\kappa\nu}\leftidx{^{(Z)}}\pi^{\kappa\nu}-\mu^{-1}Z\mu,\\
&\Psi_\gamma^0=\varphi_\gamma,\quad\Phi_\g^0=\mu\Box_g\varphi_\g,
\end{split}
\end{equation}
 and $\Phi_\g^0$ is given by
the right hand side of \eqref{ge}.
Consequently, for $\Psi_\g^{k+1}=Z_{k+1}Z_{k}\cdots Z_{1}\varphi_\gamma$ with $Z_m\in\{\varrho\mathring L, T, R_{ij}\}$,
then it follows from \eqref{Psi} and an induction argument that
\begin{equation}\label{Phik}
\begin{split}
\Phi_\gamma^{k+1}=&\sum_{m=1}^{k}\big(Z_{k+1}+\leftidx{^{(Z_{k+1})}}\lambda\big)\dots\big(Z_{k+2-m}
+\leftidx{^{(Z_{k+2-m})}}\lambda\big)\big(\mu\mathscr D{\leftidx{^{(Z_{k+1-m})}}C_{\gamma}^{k-m}}_{,\al}\big)\\
&+\mu\mathscr D^\al{\leftidx{^{(Z_{k+1})}}C_{\gamma}^{k}}_{,\al}+\big(Z_{k+1}+\leftidx{^{(Z_{k+1})}}\lambda\big)\dots\big(Z_{1}
+\leftidx{^{(Z_1)}}\lambda\big)\Phi_\gamma^0,\qquad k\geq 1,\\
\Phi_\gamma^{1}=&\big(Z_{1}+\leftidx{^{(Z_1)}}\lambda\big)\Phi_\gamma^0+\mu\mathscr D^\al{\leftidx{^{(Z_{1})}}C_{\gamma}^{0}}_{,\al}.
\end{split}
\end{equation}
In order to estimate $\leftidx{^{(Z)}}\lambda$ and $\mu\mathscr D^\al\leftidx{^{(Z)}}C_{\gamma,\al}^{k}$ more conveniently, one can rewrite
\begin{equation*}
g_{\kappa\nu}\leftidx{^{(Z)}}{\pi}^{\kappa\nu}=-2\mu^{-1}\leftidx{^{(Z)}}\pi_{\mathring LT}+\textrm{tr}\leftidx{^{(Z)}}{\slashed{\pi}}.
\end{equation*}
By \eqref{Lpi}-\eqref{Rpi}, one has
\begin{equation}\label{lamda}
\begin{split}
&\leftidx{^{(T)}}\lambda=\f12\textrm{tr}\leftidx{^{(T)}}{\slashed\pi},\qquad\leftidx{^{(\varrho\mathring L)}}\lambda=\varrho\textrm{tr}_{\slashed g}\check\chi+4,\qquad\leftidx{^{(R_{ij})}}\lambda=\f12\text{tr}\leftidx{^{(R_{ij})}}{\slashed\pi}.
\end{split}
\end{equation}
In addition, in the null frame $\{\mathring{\underline L},\mathring L,X_1,X_2,X_3\}$, the
term $\mu\mathscr D^\al{\leftidx{^{(Z)}}C_{\gamma}^{k}}_{,\al}$ can be written as
\begin{equation}\label{muC}
\begin{split}
\mu\mathscr D^\al{\leftidx{^{(Z)}}C_{\gamma}^{k}}_{,\al}=&-\f12\mathring L\big({\leftidx{^{(Z)}}C_\gamma^{k}}_{,\mathring{\underline L}}\big)-\f12\mathring{\underline L}\big({\leftidx{^{(Z)}}C_\gamma^{k}}_{,\mathring L}\big)
+\slashed\nabla^A\big(\mu {\leftidx{^{(Z)}}{\slashed C}_{\g}^{k}}_{,A}\big)\\
&-\f12\big(\mathring L\mu+\mu \textrm{tr}{\chi}+\textrm{tr}\leftidx{^{(T)}}{\slashed\pi}\big){\leftidx{^{(Z)}}C_\g^{k}}_{,\mathring L}-\f12\textrm{tr}\chi{\leftidx{^{(Z)}}C_\g^{k}}_{,\mathring{\underline L}},
\end{split}
\end{equation}
where
\begin{equation}\label{C}
\begin{split}
&{\leftidx{^{(Z)}}C_\g^{k}}_{,\mathring L}=\leftidx{^{(Z)}}{\slashed\pi}_{\mathring LA}(\slashed d^A\Psi_\g^{k})-\f12(\textrm{tr}\leftidx{^{(Z)}}{\slashed\pi})\mathring L\Psi_\g^{k},\\
&{\leftidx{^{(Z)}}C_\gamma^{k}}_{,\mathring{\underline L}}=-2(\leftidx{^{(Z)}}\pi_{LT}+\mu^{-1}\leftidx{^{(Z)}}\pi_{TT})(\mathring L\Psi_\gamma^{k})+\leftidx{^{(Z)}}{\slashed\pi}_{\mathring{\underline L}A}(\slashed d^A\Psi_\gamma^{k})-\f12(\textrm{tr}\leftidx{^{(Z)}}{\slashed\pi})\mathring{\underline L}\Psi_\gamma^{k},\\
&\mu{\leftidx{^{(Z)}}{\slashed C}_\gamma^{k}}_{,A}=-\f12\leftidx{^{(Z)}}{\slashed\pi}_{\mathring{\underline L}A}(\mathring L\Psi_\gamma^{k})-\f12\leftidx{^{(Z)}}{\slashed\pi}_{\mathring LA}(\mathring{\underline L}\Psi_\gamma^{k})
+\leftidx{^{(Z)}}{{\pi}}_{\mathring LT}(\slashed d_A\Psi_\gamma^{k})\\
&\qquad\qquad\quad+\mu(\leftidx{^{(Z)}}{\slashed\pi}_{AB}
-\f12\textrm{tr}\leftidx{^{(Z)}}{\slashed\pi}\slashed g_{AB})(\slashed d^B\Psi_\gamma^{k}).
\end{split}
\end{equation}
A direct substitution of \eqref{C} into \eqref{muC} would lead to a lengthy and complicated equation for $\mu\mathscr D^\al{\leftidx{^{(Z)}}C_{\gamma}^{k}}_{,\al}$
which seems to be difficult to analyze. To overcome this difficulty,
one can decompose $\mu\mathscr D^\al{\leftidx{^{(Z)}}C_{\gamma}^{k}}_{,\al}$ into the following three parts as in \cite{MY}:
\begin{equation}\label{muZC}
\mu\mathscr D^\al{\leftidx{^{(Z)}}C_{\gamma}^{k}}_{,\al}=\leftidx{^{(Z)}}D_{\gamma,1}^k+\leftidx{^{(Z)}}D_{\gamma,2}^k+\leftidx{^{(Z)}}D_{\gamma,3}^k,
\end{equation}
where
\begin{equation}\label{D1}
\begin{split}
\leftidx{^{(Z)}}D_{\gamma,1}^k=&\mu( \leftidx{^{(Z)}}{{\slashed\pi}}^{AB}-\f12\textrm{tr}\leftidx{^{(Z)}}{{\slashed\pi}}\slashed g^{AB})\slashed\nabla^2_{AB}\Psi_\gamma^k-\leftidx{^{(Z)}}{\slashed\pi}_{\mathring{\underline L}A}(\slashed d^A\mathring L\Psi_\gamma^{k})-\leftidx{^{(Z)}}{\slashed\pi}_{\mathring LA}(\slashed d^A \mathring{\underline L}\Psi_\gamma^{k})\\
&+(\leftidx{^{(Z)}}\pi_{\mathring LT}+\leftidx{^{(Z)}}\pi_{\tilde TT})(\mathring L^2\Psi_\gamma^k)+\leftidx{^{(Z)}}\pi_{\mathring LT}\slashed\triangle\Psi_\gamma^k+\f12\textrm{tr}\leftidx{^{(Z)}}{\slashed{\pi}}\big(\mathring L\mathring{\underline L}\Psi_\gamma^k+\f12\textrm{tr}{\chi}\mathring{\underline L}\Psi_\gamma^k\big),
\end{split}
\end{equation}
\begin{equation}\label{D2}
\begin{split}
\leftidx{^{(Z)}}D_{\gamma,2}^k=&\mathring L(\leftidx{^{(Z)}}\pi_{\mathring LT}+\leftidx{^{(Z)}}\pi_{\tilde TT})\mathring L\Psi_\gamma^k-\big(\f12\slashed\nabla^A\leftidx{^{(Z)}}{\slashed\pi}_{\mathring{\underline L}A}-\f14\mathring{\underline L}(\textrm{tr}\leftidx{^{(Z)}}{\slashed\pi})\big)\mathring L\Psi_\g^k\\
&-(\f12\slashed{\mathcal L}_{\mathring{\underline L}}\leftidx{^{(Z)}}{\slashed\pi}_{\mathring LA}-\slashed d_A\leftidx{^{(Z)}}{\pi}_{\mathring LT})\slashed d^A\Psi_\gamma^k+\slashed\nabla^B\big(\mu\leftidx{^{(Z)}}{{\slashed\pi}}_{AB}-\f{\mu}{2}\textrm{tr}\leftidx{^{(Z)}}{\slashed\pi}\slashed g_{AB}\big)\slashed d^A\Psi_\gamma^k\\
&-\f12(\slashed\nabla^A\leftidx{^{(Z)}}{\slashed\pi}_{\mathring LA})\mathring{\underline L}\Psi_\gamma^k-\f12(\slashed{\mathcal L}_{\mathring L}\leftidx{^{(Z)}}{\slashed\pi}_{\mathring{\underline L}A})\slashed d^A\Psi_\gamma^k+\f14\mathring L(\textrm{tr}\leftidx{^{(Z)}}{\slashed{\pi}})\mathring{\underline L}\Psi_\gamma^k,
\end{split}
\end{equation}
\begin{equation}\label{D3}
\begin{split}
\leftidx{^{(Z)}}D_{\gamma,3}^k=&\Big\{\textrm{tr}{\chi}(\leftidx{^{(Z)}}\pi_{\mathring LT}+\leftidx{^{(Z)}}\pi_{\tilde TT})+\f14(\mu\textrm{tr}{\chi}+\textrm{tr}\leftidx{^{(T)}}{\slashed{\pi}})\textrm{tr}\leftidx{^{(Z)}}{\slashed{\pi}}+\f12\slashed d^A\mu\leftidx{^{(Z)}}{\slashed\pi}_{\mathring LA}\Big\}\mathring L\Psi_\gamma^k\\
&+\f12\Big\{(\textrm{tr}\leftidx{^{(Z)}}{\slashed{\pi}})({\eta}_A+\mu\zeta_A)-(\mathring L\mu)\leftidx{^{(Z)}}{\slashed\pi}_{\mathring LA}-\textrm{tr}{\chi}\leftidx{^{(Z)}}{\slashed\pi}_{\mathring{\underline L}A}+2\leftidx{^{(Z)}}{\slashed\pi}_{\mathring{\underline L}}^B\chi_{AB}\\
&\qquad\quad+2\leftidx{^{(Z)}}{\slashed\pi}_{\mathring{ L}}^B(\mu\chi_{AB}+\leftidx{^{(T)}}{\slashed{\pi}}_{AB})-(\mu\textrm{tr}\chi
+\textrm{tr}\leftidx{^{(T)}}{\slashed{\pi}})\leftidx{^{(Z)}}{\slashed\pi}_{\mathring LA}\Big\}\slashed d^A\Psi_\gamma^k.
\end{split}
\end{equation}
Note that all the terms in $\leftidx{^{(Z)}}D_{\gamma,1}^k$ are the products of the deformation tensor and the
second order derivatives of $\Psi_\g^k$ except the last term. The reason  putting $\mathring L\mathring{\underline L}\Psi_\gamma^k+\f12\textrm{tr}{\chi}\mathring{\underline L}\Psi_\gamma^k$ together in the last term of \eqref{D1}
is that since $\Psi_\g^k$ is the
derivative of $\varphi_\g$, then by equation \eqref{fequation}, $\mathring L\mathring{\underline L}\varphi_\gamma+\f12\textrm{tr}{\chi}\mathring{\underline L}\varphi_\gamma=H_\g+\mu\slashed\triangle\varphi_\gamma+\f12\textrm{tr}\check\chi\mathring{\underline L}\varphi_\g$
admits the better smallness and faster time-decay rate
 than those of $\mathring L\mathring{\underline L}\varphi_\gamma$ and $\f12\textrm{tr}{\chi}\mathring{\underline L}\varphi_\gamma$ separately.
 In addition, $\leftidx{^{(Z)}}D_{\gamma,2}^k$ consists of all the products of the first order derivatives of the deformation
 tensor and the first order derivatives of $\Psi_\g^k$, and $\leftidx{^{(Z)}}D_{\gamma,3}^k$ consists of the other remaining terms.

 With the explicit expressions \eqref{Phik}-\eqref{lamda} and \eqref{muZC}-\eqref{D3} at hand, one can now estimate the last two integrals of \eqref{e}.
To this end, one can define the corresponding weighted energy and flux as in \cite{MY}:
\begin{align}
E_{i,m+1}(s , u)&=\sum_{\gamma=0}^4\sum_{Z\in\{\mathring L,T,R_{ab}\}}\delta^{2l}E_i[Z^{m}\varphi_\gamma](s, u),\quad i=1,2,\label{ei}\\
F_{i,m+1}(s, u)&=\sum_{\gamma=0}^4\sum_{Z\in\{\mathring L,T,R_{ab}\}}\delta^{2l}F_i[Z^{m}\varphi_\gamma](s, u),\quad i=1,2,\label{fi}\\
E_{i,\leq m+1}(s, u)&=\sum_{0\leq n\leq m}E_{i,n+1}(s, u),\quad i=1,2,\label{eil}\\
F_{i,\leq m+1}(s, u)&=\sum_{0\leq n\leq m}F_{i,n+1}(s, u),\quad i=1,2,\label{fil}
\end{align}
where $l$ is the number of $T$ in $Z^m$. These weighted energies will be estimated in the subsequent sections.

\section{Higher order $L^2$ estimates}\label{hoe}
In this section, we shall establish  the higher order $L^2$ estimates for some related quantities
so that the last two terms on the right hand side of \eqref{e} can be absorbed by the left hand side, and hence the higher order energy estimates on \eqref{ge} can
be derived.

Note that the energies and fluxes in Section \ref{EE} are all defined for $\varphi_\gamma$ (see \eqref{ei}-\eqref{fil}).
In order to use these  energies to control $L^2$ norms of $\phi$ and its derivatives, one requires the following results,
whose analogous statements
can be found in Lemma 7.3 of \cite{MY} for 3-D case.

\begin{lemma}\label{L2T}
 For any function $\psi\in C^1(D^{s, u})$ which vanishes on $C_0$, it holds that  for small $\delta>0$,
 \begin{align}
  \int_{S_{s, u}} \psi^2&\lesssim\delta\int_{\Sigma_s^u}|T\psi|^2\lesssim\delta\int_{\Sigma_{s}^{u}}\big(|\mathring{\underline L}\psi|^2+\mu^2|\mathring L\psi|^2\big)\label{SSi},\\
  \int_{\Sigma_{s}^{u}} \psi^2&\lesssim\delta^2\int_{\Sigma_s^u}|T\psi|^2\lesssim\delta^2\int_{\Sigma_{s}^{u}}\big(|\mathring{\underline L}\psi|^2+\mu^2|\mathring L\psi|^2\big).\label{SiSi}
 \end{align}
Therefore,
 \begin{align}
 \int_{S_{s, u}} \psi^2&\lesssim\delta\big(E_2[\psi](s, u)+\varrho^{-2\epsilon}E_1[\psi](s, u)\big),\label{SE}\\
 \int_{\Sigma_{s}^{u}} \psi^2&\lesssim\delta^2\big(E_2[\psi](s, u)+\varrho^{-2\epsilon}E_1[\psi](s, u)\big).\label{SiE}
 \end{align}
\end{lemma}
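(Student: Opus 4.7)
The key observation is that $\psi$ vanishes on $C_0=\{u=0\}$ and $Tu=1$ by Lemma \ref{nullframe}, so integral curves of $T$ can be parametrized by the coordinate $u$. The plan is to use the flow of $T$ to transfer the vanishing of $\psi$ on $C_0$ into an $L^2$ bound for $\psi$ on $S_{s,u}$ in terms of $T\psi$ only. One cannot substitute $\partial/\partial u$ for $T$ here because $\partial/\partial u=T+\Xi^AX_A$ with $|\Xi|\lesssim\delta^{-\varepsilon_0}s$ by Lemma \ref{eXi}, and the resulting angular derivative term $\Xi\psi$ could not be absorbed into $|T\psi|^2$.

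Let $\Phi_u:S_{s,0}\to S_{s,u}$ denote the flow of $T$ from time $0$ to time $u\in[0,4\delta]$; since $Ts=0$, this is a well-defined family of diffeomorphisms. Along each integral curve $u'\mapsto\Phi_{u'}(\vartheta_0)$, one has $\tfrac{d}{du'}(\psi\circ\Phi_{u'})=(T\psi)\circ\Phi_{u'}$, and the starting point $\Phi_0(\vartheta_0)\in C_0$ belongs to the zero set of $\psi$. The fundamental theorem of calculus combined with Cauchy--Schwarz and $u\leq 4\delta$ then gives
\begin{equation*}
(\psi\circ\Phi_u)^2(\vartheta_0)\leq 4\delta\int_0^u\big|(T\psi)\circ\Phi_{u'}(\vartheta_0)\big|^2du'.
\end{equation*}
Integrating this over $\vartheta_0\in S_{s,0}$ with measure $d\nu_{\slashed g(s,0)}$ and converting both sides into integrals over $S_{s,u}$ and $\Sigma_s^u$ via the change-of-variables formula for $\Phi_u$ would complete the argument. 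The Jacobian $J_u$ of $\Phi_u$ satisfies $\tfrac{d}{du}\ln J_u=\Phi_u^*(\mu\,\textrm{tr}\sigma)$, coming from $(\slashed{\mathcal L}_T\slashed g)_{AB}=2\mu\sigma_{AB}$ in Lemma \ref{cd}. Using \eqref{theta}, Proposition \ref{chi'}, and $\mu\sim 1$ from \eqref{phimu}, one obtains $|\mu\,\textrm{tr}\sigma|\lesssim s^{-1}$; hence $J_u$ remains between two positive constants on $u\in[0,4\delta]$. This yields the first inequality of \eqref{SSi}.

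The second inequality of \eqref{SSi} is immediate from $T=\tfrac12(\mathring{\underline L}-\mu\mathring L)$ (a consequence of $\mathring{\underline L}=\mu\mathring L+2T$ in \eqref{FC-1}), which implies the pointwise bound $|T\psi|^2\leq\tfrac12(|\mathring{\underline L}\psi|^2+\mu^2|\mathring L\psi|^2)$. Estimate \eqref{SiSi} then follows by integrating \eqref{SSi} over the outer $u$-interval of length $\leq 4\delta$, producing the extra factor of $\delta$. Finally, \eqref{SE} and \eqref{SiE} come from comparing the right-hand side of \eqref{SSi}/\eqref{SiSi} with the definitions \eqref{E1}--\eqref{E2}: the $\mathring{\underline L}$-term is controlled directly by $E_2[\psi]$, while for the $\mathring L$-term, $\mu\sim 1$ and $\varrho\sim s$ on $\Sigma_s^u$ give $\int\mu^2(\mathring L\psi)^2\lesssim\varrho^{-2\epsilon}\int\mu\varrho^{2\epsilon}(\mathring L\psi)^2\lesssim\varrho^{-2\epsilon}E_1[\psi]$.

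The main technical point, and the only step beyond bookkeeping, is the uniform bound on $J_u$. This reduces entirely to facts already proved in the earlier sections: \eqref{theta}, Proposition \ref{chi'}, and $\mu\sim 1$ from Section \ref{BA}. Since $\int_0^{4\delta}|\mu\,\textrm{tr}\sigma|du'\lesssim\delta/s$ is vanishingly small for small $\delta$ and $s\geq t_0$, no loss of sharpness is incurred, and the entire lemma reduces to a short geometric calculation.
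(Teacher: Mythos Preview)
Your proof is correct and takes essentially the same approach as the paper: both transport $\psi$ along the $T$-flow from $C_0$ and control the change in the $S_{s,u'}$-volume element via $\mathcal L_T\slashed g=2\mu\sigma$. The only cosmetic difference is that the paper differentiates the integral $\int_{S_{s,u'}}\psi^2\,d\nu_{\slashed g}$ in $u'$ (obtaining the variation term $\tfrac12\textrm{tr}\leftidx{^{(T)}}{\slashed\pi}\cdot\psi^2$), applies Cauchy--Schwarz to $|\psi||T\psi|$, and closes with Gronwall, whereas you apply Cauchy--Schwarz pointwise along integral curves and then absorb the volume change through the Jacobian bound $J_{u'}\sim 1$; these are two packagings of the same estimate.
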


\begin{proof}
 For any function $f\in C^1(D^{s, u})$ vanishing on $C_0$, by \eqref{Lpi} and \eqref{theta}, one has
 \begin{equation*}
  \begin{split}
   \f{\p}{\p u}\int_{S_{s, u}}fd\nu_{\slashed g}&
   =\int_{S_{s, u}}\big(Tf+\f12\textrm{tr}\leftidx{^{(T)}}{\slashed\pi}\cdot f\big)d\nu_{\slashed g}.
  \end{split}
 \end{equation*}
 This leads to
 \begin{equation}\label{Y-38}
 |\f{\p}{\p u}\int_{S_{s, u}}fd\nu_{\slashed g}|\lesssim\int_{S_{s, u}}(|Tf|+\delta^{-\varepsilon_0}|f|)
 \end{equation}
 by \eqref{piL}.
 Setting $f=\psi^2$ in \eqref{Y-38} and then integrating  from $0$ to $u$ yield
 \begin{equation*}
  \begin{split}
   \int_{S_{s, u}} \psi^2d\nu_{\slashed g}\lesssim\int_{\Sigma_{s}^{ u}}(|T\psi|\cdot|\psi|+\delta^{-\varepsilon_0}\psi^2)\lesssim\int_0^{u}\int_{S_{s, u'}}\delta^{-1}\psi^2d\nu_{\slashed g}d u'+\int_0^{u}\int_{S_{s, u'}}\delta|T\psi|^2d\nu_{\slashed g}d u'.
  \end{split}
 \end{equation*}
Thus it follows from Gronwall's inequality that
\begin{align}\label{Y3}
 \int_{S_{s, u}} \psi^2d\nu_{\slashed g}\lesssim\int_0^{u}\int_{S_{s, u'}}\delta|T\psi|^2d\nu_{\slashed g}du'.
\end{align}
Taking $T=\f12\mathring{\underline L}-\f12\mu\mathring L$ in \eqref{Y3} leads to \eqref{SSi}, and hence \eqref{SiSi} holds.
\end{proof}

For any integer $k$, with the help of \eqref{SiSi}, one has
\begin{equation}\label{Zkp}
\begin{split}
&\delta^l\|Z^{k+1}\phi\|_{L^2(\Sigma_s^u)}\lesssim\delta^{1+l}\|[T,Z^{k+1}]\phi\|_{L^2(\Sigma_s^u)}+\delta^{1+l}\|Z^{k+1}(\mu\tilde T^i\varphi_i)\|_{L^2(\Sigma_s^u)}\\
\lesssim&\delta^{1-\varepsilon_0}s^{-1}\delta^{l_0}\|Z^{k_0}\phi\|_{L^2(\Sigma_s^u)}
+\delta^{3-\varepsilon_0}s^{-3/2}\delta^{l_1}\|\slashed{\mathcal L}_Z^{k_1}\leftidx{^{(T)}}{\slashed\pi}_{\mathring L}\|_{L^2(\Sigma_s^u)}+\delta^{3-\varepsilon_0}s^{-5/2}\delta^{l_1}\|\slashed{\mathcal L}_Z^{k_1}\leftidx{^{(R)}}{\slashed\pi}_T\|_{L^2(\Sigma_s^u)}\\
&+\delta^{3-2\varepsilon_0}s^{-5/2}\delta^{l_1}\|\slashed{\mathcal L}_Z^{k_1}\slashed g\|_{L^2(\Sigma_s^u)}+\delta^{2-\varepsilon_0}s^{-3/2}\delta^{l_0}\|Z^{k_0}\mu\|_{L^2(\Sigma_s^u)}
+\delta^{2-\varepsilon_0}s^{-3/2}\delta^{l_0}\|Z^{k_0}\check L^i\|_{L^2(\Sigma_s^u)}\\
&+\delta^{2-\varepsilon_0}s^{-5/2}\delta^{l_0}\|Z^{k_0}x^i\|_{L^2(\Sigma_s^u)}
+\delta\delta^{l_0}\|Z^{k_0}\varphi_\gamma\|_{L^2(\Sigma_s^u)},
\end{split}
\end{equation}
where $l$ and $l_m$ ($m=0,1$) are the numbers of $T$ in $Z^{k+1}$ and $Z^{k_m}$ respectively, and $1\leq k_m\leq k+1-m$.
Utilizing \eqref{SiE} to estimate the last term of \eqref{Zkp}, one then gets from \eqref{Zkp} that
\begin{equation}\label{zkp}
\begin{split}
&\delta^l\|Z^{k+1}\phi\|_{L^2(\Sigma_s^u)}\\
\lesssim&\delta^{3-\varepsilon_0}s^{-3/2}\delta^{l_1}\|\slashed{\mathcal L}_Z^{k_1}\leftidx{^{(T)}}{\slashed\pi}_{\mathring L}\|_{L^2(\Sigma_s^u)}+\delta^{3-\varepsilon_0}s^{-5/2}\delta^{l_1}\|\slashed{\mathcal L}_Z^{k_1}\leftidx{^{(R)}}{\slashed\pi}_T\|_{L^2(\Sigma_s^u)}\\
&+\delta^{3-2\varepsilon_0}s^{-5/2}\delta^{l_1}\|\slashed{\mathcal L}_Z^{k_1}\slashed g\|_{L^2(\Sigma_s^u)}+\delta^{2-\varepsilon_0}s^{-3/2}\delta^{l_0}\|Z^{k_0}\mu\|_{L^2(\Sigma_s^u)}\\
&+\delta^{2-\varepsilon_0}s^{-3/2}\delta^{l_0}\|Z^{k_0}\check L^i\|_{L^2(\Sigma_s^u)}+\delta^{2-\varepsilon_0}s^{-5/2}\delta^{l_0}\|Z^{k_0}x^i\|_{L^2(\Sigma_s^u)}\\
&+\delta^2s^{-\epsilon}\sqrt{E_{1,\leq k+2}(s,u)}+\delta^2\sqrt{E_{2,\leq k+2}(s,u)}.
\end{split}
\end{equation}
Based on  \eqref{zkp}, due to $T\phi=-\mu(g^{0i}+\check L^i+\f{x^i}{\varrho})\varphi_i$ and $\mathring L\phi=\varphi_0+\check L^i\varphi_i+\f{x^i}\varrho\varphi_i$, it holds that
\begin{equation}\label{ZTp}
\begin{split}
&\delta^l\|Z^{k+1}T\phi\|_{L^2(\Sigma_s^u)}\\
\lesssim&\delta^{1-\varepsilon_0}s^{-3/2}\delta^{l_0}\big(\|Z^{k_0}\mu\|_{L^2(\Sigma_s^u)}+\|Z^{k_0}\check L^i\|_{L^2(\Sigma_s^u)}+\|Z^{k_0}\phi\|_{L^2(\Sigma_s^u)}\big)+\delta^{l_0}\|Z^{k_0}\varphi\|_{L^2(\Sigma_s^u)}\\
&+\delta^{1-\varepsilon_0}s^{-5/2}\delta^{l_0}\|Z^{k_0}x^i\|_{L^2(\Sigma_s^u)}\\
\lesssim&\delta^{1-\varepsilon_0}s^{-3/2}\delta^{l_0}\big(\|Z^{k_0}\mu\|_{L^2(\Sigma_s^u)}+\|Z^{k_0}\check L^i\|_{L^2(\Sigma_s^u)}\big)+\delta^{4-3\varepsilon_0}s^{-4}\delta^{l_1}\|\slashed{\mathcal L}_Z^{k_1}\slashed g\|_{L^2(\Sigma_s^u)}\\
&+\delta^{4-2\varepsilon_0}s^{-4}\delta^{l_1}\big(\|\slashed{\mathcal L}_Z^{k_1}\leftidx{^{(R)}}{\slashed\pi}_T\|_{L^2(\Sigma_s^u)}+s\|\slashed{\mathcal L}_Z^{k_1}\leftidx{^{(T)}}{\slashed\pi}_{\mathring L}\|_{L^2(\Sigma_s^u)}\big)+\delta^{1-\varepsilon_0}s^{-5/2}\delta^{l_0}\|Z^{k_0}x^i\|_{L^2(\Sigma_s^u)}\\
&+\delta s^{-\epsilon}\sqrt{E_{1,\leq k+2}(s,u)}+\delta\sqrt{E_{2,\leq k+2}(s,u)}
\end{split}
\end{equation}
and
\begin{equation}\label{ZLp}
\begin{split}
\delta^l\|Z^{k+1}\mathring L\phi\|_{L^2(\Sigma_s^u)}\lesssim&\delta^{1-\varepsilon_0}s^{-3/2}\delta^{l_0}\big(\|Z^{k_0}\check L^i\|_{L^2(\Sigma_s^u)}+s^{-1}\|Z^{k_0}x^i\|_{L^2(\Sigma_s^u)}\big)\\
&+\delta s^{-\epsilon}\sqrt{E_{1,\leq k+2}(s,u)}+\delta\sqrt{E_{2,\leq k+2}(s,u)}.
\end{split}
\end{equation}
Similarly as for \eqref{zkp} and \eqref{ZTp}, one derives
	\begin{equation}\label{Rkp}
	\begin{split}
	&\|R^{k+1}\phi\|_{L^2(\Sigma_s^u)}\\
	\lesssim&\delta^{4-3\varepsilon_0}s^{-7/2}\|\slashed{\mathcal L}_R^{\leq k}\slashed g\|_{L^2(\Sigma_s^u)}+\delta^{3-\varepsilon_0}s^{-5/2}\|\slashed{\mathcal L}_R^{\leq k}\leftidx{^{(R)}}{\slashed\pi}_T\|_{L^2(\Sigma_s^u)}\\
	&+\delta^{2-\varepsilon_0}s^{-3/2}\|R^{ [1,k+1]}\mu\|_{L^2(\Sigma_s^u)}+\delta^{2-\varepsilon_0}s^{-3/2}\|R^{\leq k+1}\check L^i\|_{L^2(\Sigma_s^u)}\\
	&+\delta^{2-\varepsilon_0}s^{-5/2}\|R^{\leq k+1}x^i\|_{L^2(\Sigma_s^u)}+\delta^2s^{-\epsilon}\sqrt{E_{1,\leq k+2}(s,u)}
+\delta^2\sqrt{E_{2,\leq k+2}(s,u)}
	\end{split}
	\end{equation}
and
\begin{equation}\label{RTp}
\begin{split}
\|R^{k+1}T\phi\|_{L^2(\Sigma_s^u)}\lesssim&\delta^{1-\varepsilon_0}s^{-3/2}\big(\|R^{[1,k+1]}\mu\|_{L^2(\Sigma_s^u)}+\|R^{\leq k+1}\check L^i\|_{L^2(\Sigma_s^u)}+\|\slashed{\mathcal L}_R^{\leq k}\slashed dx^i\|_{L^2(\Sigma_s^u)}\big)\\
&+\delta^{4-2\varepsilon_0}s^{-4}\|\slashed{\mathcal L}_R^{\leq k}\leftidx{^{(R)}}{\slashed\pi}_T\|_{L^2(\Sigma_s^u)}+\delta^{5-4\varepsilon_0}s^{-5}\|\slashed{\mathcal L}_R^{\leq k}\slashed g\|_{L^2(\Sigma_s^u)}\\
&+\delta s^{-\epsilon}\sqrt{E_{1,\leq k+2}(s,u)}+\delta\sqrt{E_{2,\leq k+2}(s,u)},
\end{split}
\end{equation}
where $\|R^{[1,k+1]}\mu\|_{L^2(\Sigma_s^u)}=\ds\sum_{i=1}^{k+1}\|R^i\mu\|_{L^2(\Sigma_s^u)}$.
In addition,
\begin{equation*}
\begin{split}
\|R^k\mathring L\phi\|_{L^2(\Sigma_s^u)}\leq&\delta\|[T,R^k]\mathring L\phi\|_{L^2(\Sigma_s^u)}+\delta\|R^k[T,\mathring L]\phi\|_{L^2(\Sigma_s^u)}+\delta\|R^k\mathring L(\mu\tilde T^i\varphi_i)\|_{L^2(\Sigma_s^u)}\\
\lesssim&\delta^{3-\varepsilon_0}s^{-7/2}\|\slashed{\mathcal L}_R^{\leq k-1}\leftidx{^{(R)}}{\slashed\pi}_T\|_{L^2(\Sigma_s^u)}+\delta s^{-1}\|R^{\leq k}\mathring L\phi\|_{L^2(\Sigma_s^u)}\\
&+\delta^{3-\varepsilon_0}s^{-5/2}\|\slashed{\mathcal L}_R^{\leq k}\leftidx{^{(T)}}{\slashed\pi}_{\mathring L}\|_{L^2(\Sigma_s^u)}+\delta^{1-\varepsilon_0}s^{-2}\|R^{\leq k+1}\phi\|_{L^2(\Sigma_s^u)}\\
&+\delta^{3-2\varepsilon_0}s^{-7/2}\|\slashed{\mathcal L}_R^{\leq k}\slashed g\|_{L^2(\Sigma_s^u)}+\delta^{2-\varepsilon_0}s^{-3/2}\|R^{\leq k}(\mathring L\mu)\|_{L^2(\Sigma_s^u)}\\
&+\delta^{2-\varepsilon_0}s^{-5/2}\|R^{\leq k}\check L^i\|_{L^2(\Sigma_s^u)}+\delta s^{-1}\|R^{\leq k}\varphi\|_{L^2(\Sigma_s^u)}\\
&+\delta^{2-\varepsilon_0}s^{-5/2}\|R^{\leq k}\mu\|_{L^2(\Sigma_s^u)}+\delta\|R^{\leq k}\mathring L\varphi\|_{L^2(\Sigma_s^u)}\\
&+\delta^{2-\varepsilon_0}s^{-3/2}\|R^{\leq k}(\mathring L\mathring L^i)\|_{L^2(\Sigma_s^u)}+\delta^{2-\varepsilon_0}s^{-7/2}\|R^{\leq k}x^i\|_{L^2(\Sigma_s^u)}.
\end{split}
\end{equation*}
Note that $\mathring L\mu$ and $\mathring L\mathring L^i$ satisfy \eqref{lmu} and \eqref{LL} respectively. Then combining \eqref{Rkp} and \eqref{RTp} yields
\begin{equation}\label{RLp}
\begin{split}
\|R^k\mathring L\phi\|_{L^2(\Sigma_s^u)}\lesssim&\delta^{3-\varepsilon_0}s^{-7/2}\|\slashed{\mathcal L}_R^{\leq k}\leftidx{^{(R)}}{\slashed\pi}_T\|_{L^2(\Sigma_s^u)}+\delta^{3-\varepsilon_0}s^{-5/2}\|\slashed{\mathcal L}_R^{\leq k}\leftidx{^{(T)}}{\slashed\pi}_{\mathring L}\|_{L^2(\Sigma_s^u)}\\
&+\delta^{3-2\varepsilon_0}s^{-7/2}\|\slashed{\mathcal L}_R^{\leq k}\slashed g\|_{L^2(\Sigma_s^u)}+\delta^{2-2\varepsilon_0}s^{-5/2}\|R^{\leq k+1}\check L^i\|_{L^2(\Sigma_s^u)}\\
&+\delta^{2-\varepsilon_0}s^{-5/2}\|R^{\leq k+1}\mu\|_{L^2(\Sigma_s^u)}+\delta^{2-\varepsilon_0}s^{-7/2}\|R^{\leq k+1}x^i\|_{L^2(\Sigma_s^u)}\\
&+\delta^2 s^{-1-\epsilon}\sqrt{E_{1,\leq k+2}(s,u)}+\delta^2t^{-1}\sqrt{E_{2,\leq k+2}(s,u)}.
\end{split}
\end{equation}

Note that to estimate $\|\slashed{\mathcal L}_{Z}^k\check\chi\|_{L^2(\Sigma_s^u)}$
in Proposition \ref{L2chi} below, one needs to deal with $\|\slashed{\mathcal L}_{R}^k\check\chi\|_{L^2(\Sigma_s^u)}$ first, for which the following elementary estimate will be used.
\begin{lemma}\label{L2L}
	For any function $f\in C(D^{s, u})$, set
	\[
	F(s,u,\vartheta):=\int_{t_0}^sf(\tau,u,\vartheta)d\tau.
	\]
	Under the assumptions $(\star)$ with suitably small $\delta>0$, it holds that
	\begin{equation}\label{Ff}
	\|F\|_{L^2(\Sigma_s^u)}\lesssim\varrho(s,u)^{3/2}\int_{t_0}^s\f{1}{\varrho(\tau,u)^{3/2}}\|f\|_{L^2(\Sigma_{\tau}^u)}d\tau.
	\end{equation}
\end{lemma}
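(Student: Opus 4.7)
The plan is to unfold the $L^2$ norm on $\Sigma_s^u$ in the $(u',\vartheta)$-coordinate system, apply Minkowski's integral inequality in $\tau$, and then convert the resulting norm (which carries the area element at time $s$) back to $\|f\|_{L^2(\Sigma_\tau^u)}$ by comparing $\sqrt{\det\slashed g(s,u',\vartheta)}$ with $\sqrt{\det\slashed g(\tau,u',\vartheta)}$.

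First I would write
\[
\|F\|_{L^2(\Sigma_s^u)}^2=\int_0^u\int_{\mathbb S^3}\Bigl|\int_{t_0}^sf(\tau,u',\vartheta)\,d\tau\Bigr|^2\sqrt{\det\slashed g(s,u',\vartheta)}\,d\vartheta\,du'.
\]
Since the measure $\sqrt{\det\slashed g(s,\cdot)}\,d\vartheta\,du'$ is independent of $\tau$, Minkowski's integral inequality yields
\[
\|F\|_{L^2(\Sigma_s^u)}\le\int_{t_0}^s\Bigl(\int_0^u\int_{\mathbb S^3}|f(\tau,u',\vartheta)|^2\sqrt{\det\slashed g(s,u',\vartheta)}\,d\vartheta\,du'\Bigr)^{1/2}d\tau.
\]

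Next I would compare the two area elements. From $\mathring L\slashed g_{AB}=2\chi_{AB}$ one gets $\mathring L\ln\sqrt{\det\slashed g}=\mathrm{tr}\chi$, and by $\chi_{AB}=\varrho^{-1}\slashed g_{AB}+\check\chi_{AB}$ together with the estimate $|\check\chi|\lesssim M\delta^{1-\varepsilon_0}s^{-2}$ from Proposition \ref{chi'}, this gives
\[
\mathring L\ln\sqrt{\det\slashed g}=\frac{3}{\varrho}+O(\delta^{1-\varepsilon_0}\varrho^{-2}).
\]
Integrating from $\tau$ to $s$ along an integral curve of $\mathring L$ (on which $u'$ and $\vartheta$ are constant and $\varrho=s'-u'$) yields, for small $\delta$,
\[
\sqrt{\det\slashed g(s,u',\vartheta)}=\Bigl(\tfrac{s-u'}{\tau-u'}\Bigr)^{3}\sqrt{\det\slashed g(\tau,u',\vartheta)}\bigl(1+O(\delta^{1-\varepsilon_0})\bigr).
\]
For $u'\in[0,u]$ the ratio $\tfrac{s-u'}{\tau-u'}=1+\tfrac{s-\tau}{\tau-u'}$ is monotone increasing in $u'$, so it is bounded above by $\tfrac{s-u}{\tau-u}=\tfrac{\varrho(s,u)}{\varrho(\tau,u)}$ uniformly in $u'$.

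Combining these two pieces gives
\[
\Bigl(\int_0^u\int_{\mathbb S^3}|f(\tau,u',\vartheta)|^2\sqrt{\det\slashed g(s,u',\vartheta)}\,d\vartheta\,du'\Bigr)^{1/2}\lesssim\Bigl(\frac{\varrho(s,u)}{\varrho(\tau,u)}\Bigr)^{3/2}\|f\|_{L^2(\Sigma_\tau^u)},
\]
which inserted into the Minkowski bound produces \eqref{Ff}. The only subtle point is the uniform control of the ratio $(s-u')/(\tau-u')$ in $u'$; fortunately the monotonicity makes this immediate once one notices that the maximum is attained at $u'=u$. No further input is needed beyond Proposition \ref{chi'} and the Jacobian formula \eqref{Jacobian}.
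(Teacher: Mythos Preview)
Your argument is correct and complete. The paper itself omits the proof, referring the reader to Lemma~12.57 in \cite{J}; your proof via Minkowski's inequality followed by the volume-form comparison $\mathring L\ln\sqrt{\det\slashed g}=\mathrm{tr}\chi=3\varrho^{-1}+O(\delta^{1-\varepsilon_0}\varrho^{-2})$ and the monotonicity of $(s-u')/(\tau-u')$ in $u'$ is precisely the standard route and is what the cited reference does (in the three-dimensional version).
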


\begin{proof}
	The proof is exactly similar to that of Lemma 12.57 in \cite{J} and thus omitted here.
\end{proof}

Based on the  preparations above, we are ready to derive the $L^2$ estimates for the quantities including
${\check\chi}$, $\leftidx{^{(R)}}{\slashed\pi}_{\mathring L}$, $\leftidx{^{(R)}}{\slashed\pi}$, $\check{L^i}$, $\upsilon_{ij}$, $x^j$ and so on.

\begin{proposition}\label{L2chi}
 Under the assumptions $(\star)$ with suitably small $\delta>0$, it holds that for ${k\leq 2N-6}$,
 \begin{align*}
 &\delta^l\|\slashed{\mathcal L}_{Z}^k\check\chi\|_{L^2(\Sigma_s^u)}\lesssim\delta^{3/2-\varepsilon_0}s^{-1/2}
 +s^{-\epsilon}\sqrt{\tilde E_{1,\leq k+2}(s,u)}+\delta \sqrt{\tilde E_{2,\leq k+2}(s,u)},\\
 &\delta^l\|Z^{k+1}\mu\|_{L^2(\Sigma_s^u)}
 \lesssim\delta^{3/2-2\varepsilon_0}s^{3/2}+\delta^{-\varepsilon_0} s^{3/2}\sqrt{\tilde E_{1,\leq k+2}(s,u)}+\delta^{1-\varepsilon_0}s^{3/2}\sqrt{\tilde E_{2,\leq k+2}(s,u)},\\
 &\delta^l\|Z^{k+1}\check L^j\|_{L^2(\Sigma_s^u)}\lesssim\delta^{3/2-\varepsilon_0}s^{1/2}
 +s^{1-\epsilon}\sqrt{\tilde E_{1,\leq k+2}(s,u)}+\delta s\sqrt{\tilde E_{2,\leq k+2}(s,u)},\\
 &\delta^l\|Z^{k+1}\check\varrho\|_{L^2(\Sigma_s^u)}\lesssim\delta^{3/2-\varepsilon_0}s^{1/2}
 +s^{1-\epsilon}\sqrt{\tilde E_{1,\leq k+2}(s,u)}+\delta s\sqrt{\tilde E_{2,\leq k+2}(s,u)},\\
 &\delta^l\|\slashed{\mathcal L}_Z^{k+1}\slashed g\|_{L^2(\Sigma_s^u)}\lesssim\delta^{1/2}s^{3/2}
 +s^{1-\epsilon}\sqrt{\tilde E_{1,\leq k+2}(s,u)}+\delta s\sqrt{\tilde E_{2,\leq k+2}(s,u)},\\
  &\delta^l\|{Z}^{k+2}x^i\|_{L^2(\Sigma_s^u)}\lesssim\delta^{1/2}s^{5/2}
  +s^{2-\epsilon}\sqrt{\tilde E_{1,\leq k+2}(s,u)}+\delta s^2\sqrt{\tilde E_{2,\leq k+2}(s,u)},\\
  &\delta^l\|\slashed{\mathcal L}_Z^{k+1}R\|_{L^2(\Sigma_s^u)}\lesssim\delta^{1/2}s^{5/2}
  +s^{2-\epsilon}\sqrt{\tilde E_{1,\leq k+2}(s,u)}+\delta s^2\sqrt{\tilde E_{2,\leq k+2}(s,u)},\\
  &\delta^l\|Z^{k+1}\upsilon_{ij}\|_{L^2(\Sigma_s^u)}\lesssim\delta^{3/2-\varepsilon_0}s^{3/2}
  +s^{2-\epsilon}\sqrt{\tilde E_{1,\leq k+2}(s,u)}+\delta s^2\sqrt{\tilde E_{2,\leq k+2}(s,u)},\\
 &\delta^l\|\slashed{\mathcal L}_{Z}^k\leftidx{^{(R)}}{\slashed\pi}\|_{L^2(\Sigma_s^u)}\lesssim\delta^{3/2-\varepsilon_0}s^{1/2}
 +s^{1-\epsilon}\sqrt{\tilde E_{1,\leq k+2}(s,u)}+\delta s\sqrt{\tilde E_{2,\leq k+2}(s,u)},\\
 &\delta^l\|\slashed{\mathcal L}_{Z}^k\leftidx{^{(R)}}{\slashed\pi}_{\mathring L}\|_{L^2(\Sigma_s^u)}\lesssim\delta^{3/2-\varepsilon_0}s^{1/2}+s^{1-\epsilon}\sqrt{\tilde E_{1,\leq k+2}(s,u)}
 +\delta s\sqrt{\tilde E_{2,\leq k+2}(s,u)},\\
  &\delta^l\|\slashed{\mathcal L}_{Z}^k\leftidx{^{(T)}}{\slashed\pi}\|_{L^2(\Sigma_s^u)}\lesssim\delta^{1/2-\varepsilon_0}s^{1/2}
  +\delta^{-\varepsilon_0}s^{1/2}\sqrt{\tilde E_{1,\leq k+2}(s,u)}+s^{1/2}\sqrt{\tilde E_{2,\leq k+2}(s,u)},\\
  &\delta^l\|\slashed{\mathcal L}_{Z}^k\leftidx{^{(T)}}{\slashed\pi}_{\mathring L}\|_{L^2(\Sigma_s^u)}\lesssim\delta^{1/2-\varepsilon_0}s^{1/2}
  +\delta^{-\varepsilon_0}s^{1/2}\sqrt{\tilde E_{1,\leq k+2}(s,u)}+s^{1/2}\sqrt{\tilde E_{2,\leq k+2}(s,u)},\\
   &\delta^l\|\slashed{\mathcal L}_{Z}^k\leftidx{^{(R)}}{\slashed\pi}_T\|_{L^2(\Sigma_s^u)}\lesssim\delta^{3/2-2\varepsilon_0}s^{1/2}
   +\delta^{-\varepsilon_0}s^{1-\epsilon}\sqrt{\tilde E_{1,\leq k+2}(s,u)}+\delta^{1-\varepsilon_0} s\sqrt{\tilde E_{2,\leq k+2}(s,u)},
 \end{align*}
 where $\tilde E_{i,\leq k+2}(s,u)=\sup_{t_0\leq\tau\leq s}E_{i,\leq k+2}(\tau,u)$ $(i=1,2)$.
\end{proposition}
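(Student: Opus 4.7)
\medskip

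\noindent\textbf{Proof proposal.} The plan is to establish all twelve bounds simultaneously by a finite induction on $k$, exploiting that each quantity either (a) obeys a transport equation along $\mathring L$, or (b) is algebraically expressible in terms of quantities already handled. The central tools are Lemma \ref{LTchi} (transport equations for $\check\chi$), equation \eqref{lmu} (transport for $\mu$), Lemma \ref{4.2} (equations for $\mathring L^i$, $\check L^i$), Lemma \ref{com} (commutators), Corollary \ref{12form} (elliptic comparison $|\slashed\nabla^2\xi|\lesssim\varrho^{-2}|\slashed{\mathcal L}_R^{\leq 2}\xi|$), Lemma \ref{L2L} (integration along $\mathring L$ in $L^2$), and Lemma \ref{L2T} (conversion of $T$-derivatives into energies $E_2$). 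The $L^\infty$ bounds from Section \ref{ho} provide the lower-order multipliers so that Gronwall arguments close.

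First I would handle the all-rotational case $\slashed{\mathcal L}_R^k\check\chi$. Starting from \eqref{Lchi'}, commute with $\slashed{\mathcal L}_R^k$ using Lemma \ref{commute} to obtain
\[
\mathring L\,\slashed{\mathcal L}_R^k\check\chi_{AB}=\mathcal{O}(s^{-2})\slashed{\mathcal L}_R^{\leq k}\check\chi+\mathcal{S}_{AB}^{(k)},
\]
where $\mathcal{S}^{(k)}$ collects products of $L^\infty$-bounded quantities with $\slashed\nabla^2\slashed{\mathcal L}_R^{\leq k-1}\varphi_\gamma$, $\slashed\nabla^2\slashed{\mathcal L}_R^{\leq k-1}\phi$, $\slashed d\,\slashed{\mathcal L}_R^{\leq k}\mathring L\varphi_\gamma$, plus lower-order commutator remainders involving $\slashed{\mathcal L}_R^{\leq k}\leftidx{^{(R)}}{\slashed\pi}_{\mathring L}$ (which by \eqref{Y-35} reduces to $\slashed{\mathcal L}_R^{\leq k}\check\chi$ modulo already-controlled terms). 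The top-order $\slashed\nabla^2\varphi$-contributions are re-expressed via $\slashed\nabla^2\slashed{\mathcal L}_R^{\leq k-1}\varphi\lesssim\varrho^{-2}\slashed{\mathcal L}_R^{\leq k+1}\varphi$ using \eqref{2f}, whose $L^2(\Sigma_s^u)$ norm is bounded by $\varrho^{-\epsilon}\sqrt{\tilde E_{1,\leq k+2}}+\sqrt{\tilde E_{2,\leq k+2}}$ after accounting for the $\varrho^{2\epsilon}$ weight in \eqref{E1}; the analogous $\phi$-contributions are handled through \eqref{Rkp}--\eqref{RLp}. Multiplying by $\varrho^2$, taking $\|\cdot\|_{L^2(\Sigma_s^u)}$, and invoking Lemma \ref{L2L} together with Gronwall yields the asserted bound for $\|\slashed{\mathcal L}_R^k\check\chi\|_{L^2(\Sigma_s^u)}$. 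Once $\slashed{\mathcal L}_R^{\leq k}\check\chi$ is controlled, the cascade $R^{k+1}\mu$ (via \eqref{lmu}), $R^{k+1}\check L^j$ (via \eqref{LeL}--\eqref{LL} and \eqref{RiL}), $R^{k+1}\check\varrho$ (via \eqref{rrho}), $\slashed{\mathcal L}_R^{k+1}\slashed g$ (via $\mathring L\slashed g=2\chi$), $R^{k+2}x^i$ (via $\mathring L x^i=\mathring L^i$), $\slashed{\mathcal L}_R^{k+1}R$ (via \eqref{RiRj}), and $R^{k+1}\upsilon_{ij}$ (via \eqref{omega}) follow by successive applications of Lemma \ref{L2L}, with the null condition \eqref{null} being used precisely as in \eqref{GTe} to ensure $G_{\mathring L\mathring L}^\gamma T\varphi_\gamma$ contributes an extra factor $s^{-1}$, so that the $\mu$-estimate closes with coefficient $\delta^{3/2-2\varepsilon_0}s^{3/2}$ rather than the naive $\delta^{1-2\varepsilon_0}s^{3/2}$.

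Next I would insert the $T$-derivatives. For a string $Z^k$ with $l$ copies of $T$, repeatedly commute each $T$ past $R$ and $\mathring L$ using Lemma \ref{com}; the commutators produce tensor multipliers $\leftidx{^{(T)}}{\slashed\pi}$, $\leftidx{^{(R)}}{\slashed\pi}_T$ whose $L^\infty$ bounds from Proposition \ref{TRh} contain the factor $\delta^{-\varepsilon_0}s^{-1}$, precisely balancing the weight $\delta^l$ in the statement. Top-order contributions of the form $T^l\slashed\nabla^2\varphi_\gamma$ and $T^l\slashed\nabla^2\phi$ are converted by $T=\tfrac12(\mathring{\underline L}-\mu\mathring L)$ into products of $\mathring{\underline L}$- and $\mathring L$-derivatives, whose $L^2(\Sigma_s^u)$ norms are exactly the quantities appearing in $E_{2,\leq k+2}$ and $E_{1,\leq k+2}$; for non-top-order $T$-outputs on $\phi$, Lemma \ref{L2T} directly converts $\|T\cdots\|_{L^2(\Sigma_s^u)}$ into a gain of $\delta$ times an energy, giving the desired $\delta$-scaling. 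The deformation-tensor bounds for $\leftidx{^{(R)}}{\slashed\pi}$, $\leftidx{^{(R)}}{\slashed\pi}_{\mathring L}$, $\leftidx{^{(R)}}{\slashed\pi}_T$, $\leftidx{^{(T)}}{\slashed\pi}$, $\leftidx{^{(T)}}{\slashed\pi}_{\mathring L}$ then follow by inserting the above $L^2$ estimates into the component formulas \eqref{Lpi}--\eqref{Rpi}, with the worst terms being $\slashed d\mu$, $\check\chi$ and $\upsilon_{ij}\chi$ — each already under control.

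The principal obstacle is the top-order term $\slashed\nabla^2\slashed{\mathcal L}_R^{k}\varphi_\gamma$ appearing in $\mathring L\,\slashed{\mathcal L}_R^k\check\chi$: it is one derivative higher than what the energies $\sqrt{E_{i,\leq k+2}}$ directly control. Resolving this requires the pairing, first observed in \cite{J,MY}, between the Codazzi-type identity \eqref{dchi} and the elliptic estimate \eqref{2f}: rather than bounding $\slashed\nabla^2 \slashed{\mathcal L}_R^{k-1}\varphi$ pointwise, one symmetrizes the indices so that the corresponding piece of the right-hand side can be rewritten as $\slashed\nabla\slashed{\mathcal L}_R^{k}\check\chi$ plus lower-order terms, absorbed by the left-hand side after $\mathring L$-integration. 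The $\mathring L$-integration itself gains the needed factor $s^{-1}$ from Lemma \ref{L2L} (owing to the $\varrho^{3/2}$ weight), which — together with the positivity $\mu\gtrsim 1$ from \eqref{phimu} (itself a consequence of the null condition) — lets the Gronwall scheme close, in sharp contrast to the shock-formation regime where $\mu\to 0^+$.
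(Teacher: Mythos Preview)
Your first three paragraphs capture the paper's strategy correctly: transport equations along $\mathring L$ for $\check\chi$ (via \eqref{Lchi'}) and $\mu$ (via \eqref{lmu}), integration through Lemma \ref{L2L}, and the null-condition gain \eqref{GTe} for the $G_{\mathring L\mathring L}^\gamma T\varphi_\gamma$ term. Two points, however, need correction.

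First, the relationship between $\check\chi$ and $\mu$ is not the one-way cascade you describe. The estimate \eqref{LRchi} for $\|\slashed{\mathcal L}_{\mathring L}\slashed{\mathcal L}_R^k\check\chi\|_{L^2}$ carries a term $\delta^{2-\varepsilon_0}s^{-7/2}\|R^{[1,k+1]}\mu\|_{L^2}$ (because the $\phi$-terms in \eqref{Lchi'}, bounded through \eqref{R2p}--\eqref{RL2p}, feed back via $T\phi=\mu\tilde T^i\varphi_i$), while the estimate \eqref{LRkmu} for $\|\mathring L R^{k+1}\mu\|_{L^2}$ carries $\delta^{-\varepsilon_0}\|\slashed{\mathcal L}_R^{\leq k}\check\chi\|_{L^2}$. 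After $\mathring L$-integration this yields the \emph{coupled} integral inequalities \eqref{tchi}--\eqref{tRmu}; the paper closes the loop by introducing auxiliary functions $G_1,G_2$, forming the combination $s^{-1/4}G_1+\delta G_2$, and applying Gronwall to that. A purely sequential ``$\check\chi\to\mu\to\ldots$'' scheme would be circular at this step.

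Second --- and more seriously --- your final paragraph misidentifies the obstacle. In this proposition there is \emph{no} derivative loss: after commuting $\slashed{\mathcal L}_R^k$ through \eqref{Lchi'}, the top-order source is $\slashed{\mathcal L}_R^k\slashed\nabla^2_{AB}\varphi_\gamma$, which carries $k+2$ derivatives of $\varphi$ and is bounded directly by $s^{-1}|\slashed d R^{\leq k+1}\varphi|$ (cf.\ \eqref{Fvp}), hence by $s^{-1-\epsilon}\sqrt{E_{1,\leq k+2}}$. That is exactly at the energy level, not one above it. The Codazzi identity \eqref{dchi} is \emph{not} invoked anywhere in the proof of Proposition \ref{L2chi}; it enters only in Section \ref{L2chimu}, where one must control $\slashed\nabla\slashed{\mathcal L}_Z^k\check\chi$ and $\slashed{\mathcal L}_Z^k\slashed\nabla^2\mu$ --- one derivative more than what is needed here --- and the genuine top-order difficulty arises. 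You have conflated the two arguments.
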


\begin{proof}
	It follows from Lemma \ref{4.2} and \eqref{TL} that explicit expressions of $\varrho\mathring L\check L^i$, $T\check L^i$ and $R_{jl}\check L^i$
can be given since $\varrho\mathring L\check L^i=\varrho\mathring L\mathring L^i-\check L^i$, $T\check L^i=T\mathring L^i+\f{\mu}{\varrho}g^{0i}+\f{\mu}{\varrho}\check L^i+\f{\mu-1}{\varrho^2}x^i$ and $R_{jl}\check L^i={R_{jl}^A}\slashed d_A\check L^i$. Therefore,
	\begin{align*}
	\delta^l\|Z^k(\varrho\mathring L\check L^i)\|_{L^2(\Sigma_s^u)}\lesssim&\delta^{l_1}\|Z^{k_1}\check L^i\|_{L^2(\Sigma_s^u)}+\delta^{1-\varepsilon_0}s^{-5/2}\delta^{l_{0}}\|Z^{k_{{0}}}x^i\|_{L^2(\Sigma_s^u)}
+\delta^{l_{0}}\|Z^{k_{{0}}}\phi\|_{L^2(\Sigma_s^u)}\\
	&+\delta^{l_{0}}\|Z^{k_{{0}}}\varphi_\gamma\|_{L^2(\Sigma_s^u)}+\delta^{1-\varepsilon_0}s^{-3/2}\delta^{l_1}\|\slashed{\mathcal L}_Z^{k_1}\slashed g\|_{L^2(\Sigma_s^u)},\\
	\delta^{l+1}\|Z^k(T\check L^i)\|_{L^2(\Sigma_s^u)}\lesssim&\delta s^{-1}\delta^{l_{0}}\|Z^{k_{0}}\mu\|_{L^2(\Sigma_s^u)}+\delta^{1-\varepsilon_0}s^{-1}\delta^{l_1}\|Z^{k_1}\check L^i\|_{L^2(\Sigma_s^u)}+\delta^{l_{0}}\|Z^{k_{{0}}}\phi\|_{L^2(\Sigma_s^u)}\\
	&+\delta^{l_{0}}\|Z^{k_{{0}}}\varphi_\gamma\|_{L^2(\Sigma_s^u)}
+\delta^{1-\varepsilon_0}s^{-2}\delta^{l_{0}}\|Z^{k_{{0}}}x^i\|_{L^2(\Sigma_s^u)}
+\delta^{1-\varepsilon_0}s^{-1}\delta^{l_1}\|\slashed{\mathcal L}_Z^{k_1}\slashed g\|_{L^2(\Sigma_s^u)},\\
	\delta^l\|Z^kR\check L^i\|_{L^2(\Sigma_s^u)}\lesssim&\delta^{1-\varepsilon_0}s^{-2}\delta^{l_1}\|\slashed{\mathcal L}_Z^{k_1}R\|_{L^2(\Sigma_s^u)}+s\delta^{l_1}\|\slashed{\mathcal L}_Z^{k_1}\check\chi\|_{L^2(\Sigma_s^u)}+\delta^{1-\varepsilon_0}s^{-2}\delta^{l_0}\|Z^{k_0}x^i\|_{L^2(\Sigma_s^u)}\\
	&+\delta^{1-\varepsilon_0}s^{-1}\delta^{l_1}\|\slashed{\mathcal L}_Z^{k_1}\slashed g\|_{L^2(\Sigma_s^u)}+\delta^{1-\varepsilon_0}s^{-3/2}\delta^{l_1}\|Z^{k_1}\check L^i\|_{L^2(\Sigma_s^u)}\\
	&+\delta^{l_0}\|Z^{k_0}\phi\|_{L^2(\Sigma_s^u)}+\delta^{l_0}\|Z^{k_0}\varphi_\gamma\|_{L^2(\Sigma_s^u)},		
	\end{align*}
where $l$ is the number of $T$ in $Z^k$, $l_m$ and $k_m$ are the same as in \eqref{Zkp}.
In addition, by utilizing \eqref{zkp} to estimate $\|Z^{k_0}\phi\|_{L^2(\Sigma_s^u)}$, we arrive at
\begin{equation}\label{ZkL}
\begin{split}
	\delta^l\|Z^{k+1}\check L^i\|_{L^2(\Sigma_s^u)}\lesssim&\delta^{l_1}\|Z^{k_1}\check L^i\|_{L^2(\Sigma_s^u)}+\delta^{1-\varepsilon_0}s^{-2}\delta^{l_1}\|\slashed{\mathcal L}_Z^{k_1}R\|_{L^2(\Sigma_s^u)}+s\delta^{l_1}\|\slashed{\mathcal L}_Z^{k_1}\check\chi\|_{L^2(\Sigma_s^u)}\\
	&+\delta^{1-\varepsilon_0}s^{-2}\delta^{l_0}\|Z^{k_0}x^i\|_{L^2(\Sigma_s^u)}+\delta^{1-\varepsilon_0}s^{-1}\delta^{l_1}\|\slashed{\mathcal L}_Z^{k_1}\slashed g\|_{L^2(\Sigma_s^u)}\\
&+\delta s^{-1}\delta^{l_{0}}\|Z^{k_{0}}\mu\|_{L^2(\Sigma_s^u)}+\delta^{3-\varepsilon_0}s^{-3/2}\delta^{l_1}\|\slashed{\mathcal L}_Z^{k_1}\leftidx{^{(T)}}{\slashed\pi}_{\mathring L}\|_{L^2(\Sigma_s^u)}\\
&+\delta^{3-\varepsilon_0}s^{-5/2}\delta^{l_1}\|\slashed{\mathcal L}_Z^{k_1}\leftidx{^{(R)}}{\slashed\pi}_T\|_{L^2(\Sigma_s^u)}+\delta s^{-\epsilon}\sqrt{E_{1,\leq k+2}(s,u)}\\
&+\delta\sqrt{E_{2,\leq k+2}(s,u)}.
\end{split}
\end{equation}
Note that $|\check L^i|\lesssim\delta^{1-\varepsilon_0}s^{-1}$ holds by Lemma \ref{Rh}.
This and $\|1\|_{L^2(\Sigma_s^u)}\lesssim\delta^{1/2}\varrho(s,u)^{3/2}$ yield
$\|\check L^i\|_{L^2(\Sigma_s^u)}\lesssim\delta^{3/2-\varepsilon_0}s^{1/2}$. Thus,
 	\begin{equation}\label{ZkcL}
 \begin{split}
 \delta^l\|Z^{k+1}\check L^i\|_{L^2(\Sigma_s^u)}\lesssim&\delta^{3/2-\varepsilon_0}s^{1/2}+\delta^{1-\varepsilon_0}s^{-2}\delta^{l_1}\|\slashed{\mathcal L}_Z^{k_1}R\|_{L^2(\Sigma_s^u)}+s\delta^{l_1}\|\slashed{\mathcal L}_Z^{k_1}\check\chi\|_{L^2(\Sigma_s^u)}\\
 &+\delta^{1-\varepsilon_0}s^{-2}\delta^{l_0}\|Z^{k_0}x^i\|_{L^2(\Sigma_s^u)}+\delta^{1-\varepsilon_0}s^{-1}\delta^{l_1}\|\slashed{\mathcal L}_Z^{k_1}\slashed g\|_{L^2(\Sigma_s^u)}\\
 &+\delta s^{-1}\delta^{l_{0}}\|Z^{k_{0}}\mu\|_{L^2(\Sigma_s^u)}+\delta^{3-\varepsilon_0}s^{-3/2}\delta^{l_1}\|\slashed{\mathcal L}_Z^{k_1}\leftidx{^{(T)}}{\slashed\pi}_{\mathring L}\|_{L^2(\Sigma_s^u)}\\
 &+\delta^{3-\varepsilon_0}s^{-5/2}\delta^{l_1}\|\slashed{\mathcal L}_Z^{k_1}\leftidx{^{(R)}}{\slashed\pi}_T\|_{L^2(\Sigma_s^u)}+\delta s^{-\epsilon}\sqrt{E_{1,\leq k+2}(s,u)}\\
 &+\delta\sqrt{E_{2,\leq k+2}(s,u)}.
 \end{split}
 \end{equation}
Analogously, due to $\varrho Lx^i=\varrho\check L^i+x^i$, $Tx^i=\mu(-g^{0i}-\check L^i-\f{x^i}\varrho)$ and $R_{ij}x^a=\O_{ij}^a+\upsilon_{ij}(g^{0a}+\check L^a+\f{x^a}\varrho)$, one can get
 \begin{equation}\label{Zkx}
 \begin{split}
 \delta^{l}\|Z^{k+2}x^i\|_{L^2(\Sigma_s^u)}\lesssim&\delta^{1/2}s^{5/2}+\delta^{l_0}\|Z^{k_0}\upsilon_{ab}\|_{L^2(\Sigma_s^u)}
 +s\delta^{l_0}\|Z^{k_0}\check L^j\|_{L^2(\Sigma_s^u)}+\delta\delta^{l_{0}}\|Z^{k_{0}}\mu\|_{L^2(\Sigma_s^u)}\\
 &+\delta^{4-3\varepsilon_0}s^{-5/2}\delta^{l_1}\|\slashed{\mathcal L}_Z^{k_1}\slashed g\|_{L^2(\Sigma_s^u)}+\delta^{4-2\varepsilon_0}s^{-3/2}\delta^{l_1}\|\slashed{\mathcal L}_Z^{k_1}\leftidx{^{(T)}}{\slashed\pi}_{\mathring L}\|_{L^2(\Sigma_s^u)}\\
 &+\delta^{4-2\varepsilon_0}s^{-5/2}\delta^{l_1}\|\slashed{\mathcal L}_Z^{k_1}\leftidx{^{(R)}}{\slashed\pi}_T\|_{L^2(\Sigma_s^u)}+\delta^{2-\varepsilon_0} s^{-\epsilon}\sqrt{E_{1,\leq k+2}(s,u)}\\
 &+\delta^{2-\varepsilon_0}\sqrt{E_{2,\leq k+2}(s,u)}.
 \end{split}
 \end{equation}
Similarly, it follows from $\slashed{\mathcal L}_{\varrho\mathring L}R=\varrho\leftidx{^{(R)}}{\slashed\pi}_{\mathring L}$, $\slashed{\mathcal L}_TR=\leftidx{^{(R)}}{\slashed\pi}_T$ and \eqref{RiRj} that
 \begin{equation}\label{ZR}
 \begin{split}
 \delta^l\|\slashed{\mathcal L}_Z^{k+1}R\|_{L^2(\Sigma_s^u)}\lesssim&\delta^{1/2}s^{5/2}+\delta^{1-\varepsilon_0}s^{-1}\delta^{l_0}\|Z^{k_0}x^i\|_{L^2(\Sigma_s^u)}
 +\delta^{1-\varepsilon_0}\delta^{l_0}\|Z^{k_0}\check L^j\|_{L^2(\Sigma_s^u)}\\
 &+\delta^{l_1}\|Z^{k_1}\upsilon_{ij}\|_{L^2(\Sigma_s^u)}+s\delta^{l_1}\|\slashed{\mathcal L}_Z^{k_1}\leftidx{^{(R)}}{\slashed\pi}_{\mathring L}\|_{L^2(\Sigma_s^u)}\\
 &+\delta^{4-2\varepsilon_0}s^{-3/2}\delta^{l_1}\|\slashed{\mathcal L}_Z^{k_1}\leftidx{^{(T)}}{\slashed\pi}_{\mathring L}\|_{L^2(\Sigma_s^u)}+\delta\delta^{l_1}\|\slashed{\mathcal L}_Z^{k_1}\leftidx{^{(R)}}{\slashed\pi}_T\|_{L^2(\Sigma_s^u)}\\
 &+\delta^{1-\varepsilon_0}\delta^{l_1}\|\slashed{\mathcal L}_Z^{k_1}\slashed g\|_{L^2(\Sigma_s^u)}+\delta^{3-2\varepsilon_0}s^{-3/2}\delta^{l_{0}}\|Z^{k_{0}}\mu\|_{L^2(\Sigma_s^u)}\\
 &+\delta^{2-\varepsilon_0} s^{-\epsilon}\sqrt{E_{1,\leq k+2}(s,u)}+\delta^{2-\varepsilon_0}\sqrt{E_{2,\leq k+2}(s,u)}.
 \end{split}
 \end{equation}
 In addition, by \eqref{omega} and \eqref{Lpi}-\eqref{Rpi}, we have
 \begin{equation}\label{Zku}
 \begin{split}
 \delta^{l}\|Z^{k+1}\upsilon_{ij}\|_{L^2(\Sigma_s^u)}\lesssim& \delta^{1-\varepsilon_0}s^{-1}\delta^{l_0}\|Z^{k_0}x^i\|_{L^2(\Sigma_s^u)}+\delta^{2-\varepsilon_0} s^{-1/2}\delta^{l_{0}}\|Z^{k_{0}}\mu\|_{L^2(\Sigma_s^u)}\\
 &+s\delta^{l_0}\|Z^{k_0}\check L^i\|_{L^2(\Sigma_s^u)}+\delta^{3-2\varepsilon_0}s^{-3/2}\delta^{l_1}\|\slashed{\mathcal L}_Z^{k_1}\slashed g\|_{L^2(\Sigma_s^u)}\\
 &+\delta^{3-\varepsilon_0}s^{-1/2}\delta^{l_1}\|\slashed{\mathcal L}_Z^{k_1}\leftidx{^{(T)}}{\slashed\pi}_{\mathring L}\|_{L^2(\Sigma_s^u)}+\delta^{3-\varepsilon_0}s^{-3/2}\delta^{l_1}\|\slashed{\mathcal L}_Z^{k_1}\leftidx{^{(R)}}{\slashed\pi}_T\|_{L^2(\Sigma_s^u)}\\
 &+\delta s^{1-\epsilon}\sqrt{E_{1,\leq k+2}(s,u)}+\delta s\sqrt{E_{2,\leq k+2}(s,u)},
 \end{split}
 \end{equation}
 \begin{equation}\label{ZkT}
 \begin{split}
 \delta^l\|\slashed{\mathcal L}_Z^k\leftidx{^{(T)}}{\slashed\pi}\|_{L^2(\Sigma_s^u)}\lesssim& s^{-1}\delta^{l_{0}}\|Z^{k_{0}}\mu\|_{L^2(\Sigma_s^u)}+\delta^{-\varepsilon_0}s^{-5/2}\delta^{l_0}\|Z^{k_0}x^i\|_{L^2(\Sigma_s^u)}
 +\delta^{l_1}\|\slashed{\mathcal L}_Z^{k_1}\check\chi\|_{L^2(\Sigma_s^u)}\\
 &+s^{-1}\delta^{l_1}\|\slashed{\mathcal L}_Z^{k_1}\slashed g\|_{L^2(\Sigma_s^u)}
 +\delta^{1-\varepsilon_0}s^{-3/2}\delta^{l_0}\|Z^{k_0}\check L^i\|_{L^2(\Sigma_s^u)}\\
  &+\delta^{2-\varepsilon_0}s^{-3/2}\delta^{l_1}\|\slashed{\mathcal L}_Z^{k_1}\leftidx{^{(T)}}{\slashed\pi}_{\mathring L}\|_{L^2(\Sigma_s^u)}+\delta^{2-\varepsilon_0}s^{-5/2}\delta^{l_1}\|\slashed{\mathcal L}_Z^{k_1}\leftidx{^{(R)}}{\slashed\pi}_T\|_{L^2(\Sigma_s^u)}\\
 &+s^{-\epsilon}\sqrt{E_{1,\leq k+2}(s,u)}+\sqrt{E_{2,\leq k+2}(s,u)},
 \end{split}
 \end{equation}

 \begin{equation}\label{ZkTL}
 \begin{split}
 \delta^l\|\slashed{\mathcal L}_Z^k\leftidx{^{(T)}}{\slashed\pi}_{\mathring L}\|_{L^2(\Sigma_s^u)}\lesssim&\delta^{5/2-2\varepsilon_0}s^{-1}+s^{-1}\delta^{l_{0}}\|Z^{k_{0}}\mu\|_{L^2(\Sigma_s^u)}
 +\delta^{-\varepsilon_0}s^{-3/2}\delta^{l_0}\|Z^{k_0}\check L^i\|_{L^2(\Sigma_s^u)}\\
 &+\delta^{-\varepsilon_0}s^{-5/2}\delta^{l_0}\|Z^{k_0}x^i\|_{L^2(\Sigma_s^u)}
 +\delta^{2-\varepsilon_0}s^{-5/2}\delta^{l_1}\|\slashed{\mathcal L}_Z^{k_1}\leftidx{^{(R)}}{\slashed\pi}_T\|_{L^2(\Sigma_s^u)}\\
 &+\delta^{2-2\varepsilon_0}s^{-5/2}\delta^{l_1}\|\slashed{\mathcal L}_Z^{k_1}\slashed g\|_{L^2(\Sigma_s^u)}
 +s^{-\epsilon}\sqrt{E_{1,\leq k+2}(s,u)}\\
 &+\sqrt{E_{2,\leq k+2}(s,u)},
 \end{split}
 \end{equation}
 \begin{equation}\label{ZkRT}
 \begin{split}
 \delta^{l}\|\slashed{\mathcal L}_Z^{k}\leftidx{^{(R)}}{\slashed\pi}_T\|_{L^2(\Sigma_s^u)}\lesssim&\delta^{9/2-4\varepsilon_0}s^{-2}
 +\delta^{1-\varepsilon_0}s^{-1}\Big\{\delta^{l_{0}}\|Z^{k_{0}}\mu\|_{L^2(\Sigma_s^u)}
 +s^{-1}\delta^{l_1}\|\slashed{\mathcal L}_Z^{k_1}R\|_{L^2(\Sigma_s^u)}\Big\}\\
 &+s\delta^{l_1}\|\slashed{\mathcal L}_Z^{k_1}\check\chi\|_{L^2(\Sigma_s^u)}+\delta^{-\varepsilon_0}s^{-3/2}\delta^{l_1}\|Z^{k_1}\upsilon_{ij}\|_{L^2(\Sigma_s^u)}
\\
 &+\delta^{1-2\varepsilon_0}s^{-2}\delta^{l_0}\|Z^{k_0}x^i\|_{L^2(\Sigma_s^u)}
 +\delta^{3-3\varepsilon_0}s^{-5/2}\delta^{l_1}\|\slashed{\mathcal L}_Z^{k_1}\slashed g\|_{L^2(\Sigma_s^u)}\\
 &+\delta^{3-2\varepsilon_0}s^{-3/2}\delta^{l_1}\|\slashed{\mathcal L}_Z^{k_1}\leftidx{^{(T)}}{\slashed\pi}_{\mathring L}\|_{L^2(\Sigma_s^u)} +\delta^{l_0}\|Z^{k_0}\check L^i\|_{L^2(\Sigma_s^u)}\\
 &+\delta^{1-\varepsilon_0}s^{-\epsilon}\sqrt{E_{1,\leq k+2}(s,u)}+\delta^{1-\varepsilon_0}\sqrt{E_{2,\leq k+2}(s,u)},
 \end{split}
 \end{equation}
 \begin{equation}\label{ZkRL}
 \begin{split}
 \delta^{l}\|\slashed{\mathcal L}_Z^{k}\leftidx{^{(R)}}{\slashed\pi}_{\mathring L}\|_{L^2(\Sigma_s^u)}\lesssim&\delta^{l_0}\|Z^{k_0}\check L^i\|_{L^2(\Sigma_s^u)}+\delta^{1-\varepsilon_0}s^{-2}\delta^{l_0}\|Z^{k_0}x^i\|_{L^2(\Sigma_s^u)}+s\delta^{l_1}\|\slashed{\mathcal L}_Z^{k_1}\check\chi\|_{L^2(\Sigma_s^u)}\\
 &+\delta^{1-\varepsilon_0}s^{-5/2}\delta^{l_1}\|Z^{k_1}\upsilon_{ij}\|_{L^2(\Sigma_s^u)}
 +\delta^{1-\varepsilon_0}s^{-2}\delta^{l_1}\|\slashed{\mathcal L}_Z^{k_1}R\|_{L^2(\Sigma_s^u)}\\
 &+\delta^{3-\varepsilon_0}s^{-3/2}\delta^{l_1}\|\slashed{\mathcal L}_Z^{k_1}\leftidx{^{(T)}}{\slashed\pi}_{\mathring L}\|_{L^2(\Sigma_s^u)}+\delta^{3-\varepsilon_0}s^{-5/2}\delta^{l_1}\|\slashed{\mathcal L}_Z^{k_1}\leftidx{^{(R)}}{\slashed\pi}_T\|_{L^2(\Sigma_s^u)}\\
 &+\delta^{3-2\varepsilon_0}s^{-5/2}\delta^{l_1}\|\slashed{\mathcal L}_Z^{k_1}\slashed g\|_{L^2(\Sigma_s^u)}+\delta^{2-\varepsilon_0}s^{-3/2}\delta^{l_{0}}\|Z^{k_{0}}\mu\|_{L^2(\Sigma_s^u)}\\
 &+\delta s^{-\epsilon}\sqrt{E_{1,\leq k+2}(s,u)}+\delta\sqrt{E_{2,\leq k+2}(s,u)},
 \end{split}
 \end{equation}
 \begin{equation}\label{ZkR}
 \begin{split}
 \delta^l\|\slashed{\mathcal L}_Z^{k}\leftidx{^{(R)}}{\slashed\pi}\|_{L^2(\Sigma_s^u)}\lesssim&\delta^{2-2\varepsilon_0}s^{-3/2}\delta^{l_0}\|Z^{k_0}\check L^i\|_{L^2(\Sigma_s^u)}+\delta^{1-\varepsilon_0}s^{-5/2}\delta^{l_0}\|Z^{k_0}x^i\|_{L^2(\Sigma_s^u)}\\
 &+\delta^{1-\varepsilon_0}\delta^{l_1}\|\slashed{\mathcal L}_Z^{k_1}\check\chi\|_{L^2(\Sigma_s^u)}+s^{-1}\delta^{l_1}\|Z^{k_1}\upsilon_{ij}\|_{L^2(\Sigma_s^u)}\\
 &+\delta^{3-\varepsilon_0}s^{-3/2}\delta^{l_1}\|\slashed{\mathcal L}_Z^{k_1}\leftidx{^{(T)}}{\slashed\pi}_{\mathring L}\|_{L^2(\Sigma_s^u)}+\delta^{3-\varepsilon_0}s^{-5/2}\delta^{l_1}\|\slashed{\mathcal L}_Z^{k_1}\leftidx{^{(R)}}{\slashed\pi}_T\|_{L^2(\Sigma_s^u)}\\
 &+\delta^{1-\varepsilon_0}s^{-1}\delta^{l_1}\|\slashed{\mathcal L}_Z^{k_1}\slashed g\|_{L^2(\Sigma_s^u)}+\delta^{2-\varepsilon_0}s^{-3/2}\delta^{l_{0}}\|Z^{k_{0}}\mu\|_{L^2(\Sigma_s^u)}\\
 &+\delta s^{-\epsilon}\sqrt{E_{1,\leq k+2}(s,u)}+\delta\sqrt{E_{2,\leq k+2}(s,u)}.
 \end{split}
 \end{equation}
Using $\slashed{\mathcal L}_{\varrho\mathring L}\slashed g=\varrho\check\chi+\slashed g$, $\slashed{\mathcal L}_T\slashed g=\leftidx{^{(T)}}{\slashed\pi}$ and $\slashed{\mathcal L}_R\slashed g=\leftidx{^{(R)}}{\slashed\pi}$, together with
\eqref{ZkT} and \eqref{ZkR}, one has
\begin{equation}\label{Zkg}
 \begin{split}
 \delta^l\|\slashed{\mathcal L}_Z^{k+1}\slashed g\|_{L^2(\Sigma_s^u)}\lesssim&\delta^{1/2}s^{3/2}+\delta^{2-2\varepsilon_0}s^{-3/2}\delta^{l_0}\|Z^{k_0}\check L^i\|_{L^2(\Sigma_s^u)}+\delta^{1-\varepsilon_0}s^{-5/2}\delta^{l_0}\|Z^{k_0}x^i\|_{L^2(\Sigma_s^u)}\\
 &+s\delta^{l_1}\|\slashed{\mathcal L}_Z^{k_1}\check\chi\|_{L^2(\Sigma_s^u)}+s^{-1}\delta^{l_1}\|Z^{k_1}\upsilon_{ij}\|_{L^2(\Sigma_s^u)}+\delta s^{-1}\delta^{l_{0}}\|Z^{k_{0}}\mu\|_{L^2(\Sigma_s^u)}\\
 &+\delta^{3-\varepsilon_0}s^{-3/2}\delta^{l_1}\|\slashed{\mathcal L}_Z^{k_1}\leftidx{^{(T)}}{\slashed\pi}_{\mathring L}\|_{L^2(\Sigma_s^u)}+\delta^{3-\varepsilon_0}s^{-5/2}\delta^{l_1}\|\slashed{\mathcal L}_Z^{k_1}\leftidx{^{(R)}}{\slashed\pi}_T\|_{L^2(\Sigma_s^u)}\\
 &+\delta s^{-\epsilon}\sqrt{E_{1,\leq k+2}(s,u)}+\delta\sqrt{E_{2,\leq k+2}(s,u)}.
 \end{split}
 \end{equation}

 Collecting \eqref{ZkcL}-\eqref{Zkg} yields
 \begin{align}
 \delta^l\|Z^{k+1}\check L^i\|_{L^2(\Sigma_s^u)}\lesssim&\delta^{3/2-\varepsilon_0}s^{1/2}+s\delta^{l_1}\|\slashed{\mathcal L}_Z^{k_1}\check\chi\|_{L^2(\Sigma_s^u)}+\delta s^{-1}\delta^{l_{0}}\|Z^{k_{0}}\mu\|_{L^2(\Sigma_s^u)}\no\\
 &+\delta s^{-\epsilon}\sqrt{E_{1,\leq k+2}(s,u)}+\delta\sqrt{E_{2,\leq k+2}(s,u)},\label{Zk1L}\\
  \delta^l\|Z^{k+2}x^i\|_{L^2(\Sigma_s^u)}\lesssim&\delta^{1/2}s^{5/2}+s^2\delta^{l_1}\|\slashed{\mathcal L}_Z^{k_1}\check\chi\|_{L^2(\Sigma_s^u)}+\delta\delta^{l_{0}}\|Z^{k_{0}}\mu\|_{L^2(\Sigma_s^u)}\no\\
  &+\delta s^{1-\epsilon}\sqrt{E_{1,\leq k+2}(s,u)}+\delta s\sqrt{E_{2,\leq k+2}(s,u)},\label{Zk1x}\\
\delta^l\|\slashed{\mathcal L}_Z^{k+1}\slashed g\|_{L^2(\Sigma_s^u)}\lesssim&\delta^{1/2}s^{3/2}+s\delta^{l_1}\|\slashed{\mathcal L}_Z^{k_1}\check\chi\|_{L^2(\Sigma_s^u)}+\delta s^{-1}\delta^{l_{0}}\|Z^{k_{0}}\mu\|_{L^2(\Sigma_s^u)}\no\\
&+\delta s^{-\epsilon}\sqrt{E_{1,\leq k+2}(s,u)}+\delta\sqrt{E_{2,\leq k+2}(s,u)},\label{Zk1g}\\
 \delta^{l}\|Z^{k+1}\upsilon_{ij}\|_{L^2(\Sigma_s^u)}\lesssim&\delta^{3/2-\varepsilon_0}s^{3/2}+s^2\delta^{l_1}\|\slashed{\mathcal L}_Z^{k_1}\check\chi\|_{L^2(\Sigma_s^u)}+\delta\delta^{l_{0}}\|Z^{k_{0}}\mu\|_{L^2(\Sigma_s^u)}\no\\
 &+\delta s^{1-\epsilon}\sqrt{E_{1,\leq k+2}(s,u)}+\delta s\sqrt{E_{2,\leq k+2}(s,u)},\label{Zk1u}\\
 \delta^l\|\slashed{\mathcal L}_Z^{k+1}R\|_{L^2(\Sigma_s^u)}\lesssim& \delta^{1/2}s^{5/2}+s^2\delta^{l_1}\|\slashed{\mathcal L}_Z^{k_1}\check\chi\|_{L^2(\Sigma_s^u)}+\delta\delta^{l_{0}}\|Z^{k_{0}}\mu\|_{L^2(\Sigma_s^u)}\no\\
 &+\delta s^{1-\epsilon}\sqrt{E_{1,\leq k+2}(s,u)}+\delta s\sqrt{E_{2,\leq k+2}(s,u)},\label{Zk1R}\\
 \delta^l\|\slashed{\mathcal L}_Z^k\leftidx{^{(T)}}{\slashed\pi}\|_{L^2(\Sigma_s^u)}\lesssim&\delta^{1/2-\varepsilon_0}s^{1/2}
 +\delta^{-\varepsilon_0}\delta^{l_1}\|\slashed{\mathcal L}_Z^{k_1}\check\chi\|_{L^2(\Sigma_s^u)}+s^{-1}\delta^{l_{0}}\|Z^{k_{0}}\mu\|_{L^2(\Sigma_s^u)}\no\\
 &+s^{-\epsilon}\sqrt{E_{1,\leq k+2}(s,u)}+\sqrt{E_{2,\leq k+2}(s,u)},\label{ZkTpi}\\
 \delta^l\|\slashed{\mathcal L}_Z^k\leftidx{^{(T)}}{\slashed\pi}_{\mathring L}\|_{L^2(\Sigma_s^u)}\lesssim&\delta^{1/2-\varepsilon_0}+\delta^{-\varepsilon_0}s^{-1/2}\delta^{l_1}\|\slashed{\mathcal L}_Z^{k_1}\check\chi\|_{L^2(\Sigma_s^u)}+ s^{-1}\delta^{l_{0}}\|Z^{k_{0}}\mu\|_{L^2(\Sigma_s^u)}\no\\
 &+s^{-\epsilon}\sqrt{E_{1,\leq k+2}(s,u)}+\sqrt{E_{2,\leq k+2}(s,u)},\label{ZkTLpi}\\
 \delta^{l}\|\slashed{\mathcal L}_Z^{k}\leftidx{^{(R)}}{\slashed\pi}_T\|_{L^2(\Sigma_s^u)}\lesssim&\delta^{3/2-2\varepsilon_0}s^{1/2}
 +\delta^{-\varepsilon_0}s\delta^{l_1}\|\slashed{\mathcal L}_Z^{k_1}\check\chi\|_{L^2(\Sigma_s^u)}+\delta^{1-\varepsilon_0} s^{-1}\delta^{l_{0}}\|Z^{k_{0}}\mu\|_{L^2(\Sigma_s^u)}\no\\
 &+\delta^{1-\varepsilon_0}s^{-\epsilon}\sqrt{E_{1,\leq k+2}(s,u)}+\delta^{1-\varepsilon_0}\sqrt{E_{2,\leq k+2}(s,u)},\label{ZkRLpi}\\
 \delta^{l}\|\slashed{\mathcal L}_Z^{k}\leftidx{^{(R)}}{\slashed\pi}_{\mathring L}\|_{L^2(\Sigma_s^u)}\lesssim&\delta^{3/2-\varepsilon_0}s^{1/2}+s\delta^{l_1}\|\slashed{\mathcal L}_Z^{k_1}\check\chi\|_{L^2(\Sigma_s^u)}
 +\delta s^{-1}\delta^{l_{0}}\|Z^{k_{0}}\mu\|_{L^2(\Sigma_s^u)}\no\\
 &+\delta s^{-\epsilon}\sqrt{E_{1,\leq k+2}(s,u)}+\delta\sqrt{E_{2,\leq k+2}(s,u)},\label{ZkRLp}\\
 \delta^l\|\slashed{\mathcal L}_Z^{k}\leftidx{^{(R)}}{\slashed\pi}\|_{L^2(\Sigma_s^u)}\lesssim&\delta^{3/2-\varepsilon_0}s^{1/2}+s\delta^{l_1}\|\slashed{\mathcal L}_Z^{k_1}\check\chi\|_{L^2(\Sigma_s^u)}+\delta s^{-1}\delta^{l_{0}}\|Z^{k_{0}}\mu\|_{L^2(\Sigma_s^u)}\no\\
 &+\delta s^{-\epsilon}\sqrt{E_{1,\leq k+2}(s,u)}+\delta\sqrt{E_{2,\leq k+2}(s,u)}.\label{ZkRp}
 \end{align}
 And hence, with the help of \eqref{zkp}-\eqref{ZLp} and \eqref{rrho}, we have obtained that
 \begin{align}
 &\delta^l\|Z^{k+1}\phi\|_{L^2(\Sigma_s^u)}+\delta\delta^l\|Z^{k+1}T\phi\|_{L^2(\Sigma_s^u)}\no\\
 \lesssim&\delta^{5/2-\varepsilon_0}+\delta^{2-\varepsilon_0}s^{-1/2}\delta^{l_1}\|\slashed{\mathcal L}_Z^{k_1}\check\chi\|_{L^2(\Sigma_s^u)}+\delta^{2-\varepsilon_0} s^{-3/2}\delta^{l_{0}}\|Z^{k_{0}}\mu\|_{L^2(\Sigma_s^u)}\label{zp}\\
 &+\delta^2 s^{-\epsilon}\sqrt{E_{1,\leq k+2}(s,u)}+\delta^2\sqrt{E_{2,\leq k+2}(s,u)},\no
 \end{align}
 \begin{equation}\label{zL}
 \begin{split}
 &\delta^l\|Z^{k+1}\mathring L\phi\|_{L^2(\Sigma_s^u)}\\
 \lesssim&\delta^{3/2-\varepsilon_0}+\delta^{1-\varepsilon_0}s^{-1/2}\delta^{l_1}\|\slashed{\mathcal L}_Z^{k_1}\check\chi\|_{L^2(\Sigma_s^u)}+\delta^{2-\varepsilon_0} s^{-5/2}\delta^{l_{0}}\|Z^{k_{0}}\mu\|_{L^2(\Sigma_s^u)}\\
 &+\delta s^{-\epsilon}\sqrt{E_{1,\leq k+2}(s,u)}+\delta\sqrt{E_{2,\leq k+2}(s,u)}
 \end{split}
 \end{equation}
 and
 \begin{equation}\label{Zkr}
 \begin{split}
 \delta^l\|Z^{k+1}\check\varrho\|_{L^2(\Sigma_s^u)}\lesssim&\delta^{3/2-\varepsilon_0}s^{1/2}+s\delta^{l_1}\|\slashed{\mathcal L}_Z^{k_1}\check\chi\|_{L^2(\Sigma_s^u)}+\delta s^{-1}\delta^{l_{0}}\|Z^{k_{0}}\mu\|_{L^2(\Sigma_s^u)}\\
 &+\delta s^{-\epsilon}\sqrt{E_{1,\leq k+2}(s,u)}+\delta\sqrt{E_{2,\leq k+2}(s,u)}.
 \end{split}
 \end{equation}
As in \eqref{Rkp}, all the vector fields
$Z$ in \eqref{Zk1L}-\eqref{ZkRp} can be replaced by the rotation vector fields $R_{ij}$ to obtain the following analogous estimates

 \begin{equation*}
 \begin{split}
\|R^{k+1}\check L^i\|_{L^2(\Sigma_s^u)}\lesssim&\delta^{3/2-\varepsilon_0}s^{1/2}+s\|\slashed{\mathcal L}_R^{\leq k}\check\chi\|_{L^2(\Sigma_s^u)}+\delta s^{-1}\|R^{[1, k+1]}\mu\|_{L^2(\Sigma_s^u)}\\
&+\delta s^{-\epsilon}\sqrt{E_{1,\leq k+2}(s,u)}+\delta\sqrt{E_{2,\leq k+2}(s,u)},\\
\end{split}
\end{equation*}

\begin{align}
\|R^{k+2}x^i\|_{L^2(\Sigma_s^u)}\lesssim&\delta^{1/2}s^{5/2}+s^2\|\slashed{\mathcal L}_R^{\leq k}\check\chi\|_{L^2(\Sigma_s^u)}+\delta\|R^{[1, k+1]}\mu\|_{L^2(\Sigma_s^u)}\no\\
&+\delta s^{1-\epsilon}\sqrt{E_{1,\leq k+2}(s,u)}+\delta s\sqrt{E_{2,\leq k+2}(s,u)},\no\\
\|\slashed{\mathcal L}_R^{k+1}\slashed g\|_{L^2(\Sigma_s^u)}\lesssim&\delta^{1/2}s^{3/2}+s\|\slashed{\mathcal L}_R^{\leq k}\check\chi\|_{L^2(\Sigma_s^u)}+\delta s^{-1}\|R^{[1,k+1]}\mu\|_{L^2(\Sigma_s^u)}\no\\
&+\delta s^{-\epsilon}\sqrt{E_{1,\leq k+2}(s,u)}+\delta\sqrt{E_{2,\leq k+2}(s,u)},\no\\
\|R^{k+1}\upsilon_{ij}\|_{L^2(\Sigma_s^u)}\lesssim&\delta^{3/2-\varepsilon_0}s^{3/2}+s^2\|\slashed{\mathcal L}_R^{\leq k}\check\chi\|_{L^2(\Sigma_s^u)}+\delta\|R^{[1,k+1]}\mu\|_{L^2(\Sigma_s^u)}\no\\
&+\delta s^{1-\epsilon}\sqrt{E_{1,\leq k+2}(s,u)}+\delta s\sqrt{E_{2,\leq k+2}(s,u)},\no\\
\|\slashed{\mathcal L}_R^{k+1}R\|_{L^2(\Sigma_s^u)}\lesssim& \delta^{1/2}s^{5/2}+s^2\|\slashed{\mathcal L}_R^{\leq k}\check\chi\|_{L^2(\Sigma_s^u)}+\delta\|R^{[1,k+1]}\mu\|_{L^2(\Sigma_s^u)}\no\\
&+\delta s^{1-\epsilon}\sqrt{E_{1,\leq k+2}(s,u)}+\delta s\sqrt{E_{2,\leq k+2}(s,u)},\no\\
\delta^l\|\slashed{\mathcal L}_R^k\leftidx{^{(T)}}{\slashed\pi}\|_{L^2(\Sigma_s^u)}\lesssim&\delta^{1/2-\varepsilon_0}s^{1/2}
+\delta^{-\varepsilon_0}\|\slashed{\mathcal L}_R^{\leq k}\check\chi\|_{L^2(\Sigma_s^u)}+s^{-1}\|R^{[1, k+1]}\mu\|_{L^2(\Sigma_s^u)}\label{RL2}\\
&+s^{-\epsilon}\sqrt{E_{1,\leq k+2}(s,u)}+\sqrt{E_{2,\leq k+2}(s,u)},\no\\
\|\slashed{\mathcal L}_R^k\leftidx{^{(T)}}{\slashed\pi}_{\mathring L}\|_{L^2(\Sigma_s^u)}\lesssim&\delta^{1/2-\varepsilon_0}+\delta^{-\varepsilon_0}s^{-1/2}\|\slashed{\mathcal L}_R^{\leq k}\check\chi\|_{L^2(\Sigma_s^u)}+ s^{-1}\|R^{[1, k+1]}\mu\|_{L^2(\Sigma_s^u)}\no\\
&+s^{-\epsilon}\sqrt{E_{1,\leq k+2}(s,u)}+\sqrt{E_{2,\leq k+2}(s,u)},\no\\
\|\slashed{\mathcal L}_R^{k}\leftidx{^{(R)}}{\slashed\pi}_T\|_{L^2(\Sigma_s^u)}\lesssim&\delta^{3/2-2\varepsilon_0}s^{1/2}
+\delta^{-\varepsilon_0}s\|\slashed{\mathcal L}_R^{\leq k}\check\chi\|_{L^2(\Sigma_s^u)}+\delta^{1-\varepsilon_0} s^{-1}\|R^{[1, k+1]}\mu\|_{L^2(\Sigma_s^u)}\no\\
&+\delta^{1-\varepsilon_0}s^{-\epsilon}\sqrt{E_{1,\leq k+2}(s,u)}+\delta^{1-\varepsilon_0}\sqrt{E_{2,\leq k+2}(s,u)},\no\\
\|\slashed{\mathcal L}_R^{k}\leftidx{^{(R)}}{\slashed\pi}_{\mathring L}\|_{L^2(\Sigma_s^u)}\lesssim&\delta^{3/2-\varepsilon_0}s^{1/2}+s\|\slashed{\mathcal L}_R^{\leq k}\check\chi\|_{L^2(\Sigma_s^u)}
+\delta s^{-1}\|R^{[1, k+1]}\mu\|_{L^2(\Sigma_s^u)}\no\\
&+\delta s^{-\epsilon}\sqrt{E_{1,\leq k+2}(s,u)}+\delta\sqrt{E_{2,\leq k+2}(s,u)},\no\\
\|\slashed{\mathcal L}_R^{k}\leftidx{^{(R)}}{\slashed\pi}\|_{L^2(\Sigma_s^u)}\lesssim&\delta^{3/2-\varepsilon_0}s^{1/2}+s\|\slashed{\mathcal L}_R^{\leq k}\check\chi\|_{L^2(\Sigma_s^u)}+\delta s^{-1}\|R^{[1, k+1]}\mu\|_{L^2(\Sigma_s^u)}\no\\
&+\delta s^{-\epsilon}\sqrt{E_{1,\leq k+2}(s,u)}+\delta\sqrt{E_{2,\leq k+2}(s,u)}.\no
\end{align}
This together with \eqref{Rkp}-\eqref{RLp} yields
\begin{equation}\label{RRT}
\begin{split}
&\|R^{k+1}\phi\|_{L^2(\Sigma_s^u)}+\delta\|R^{k+1}T\phi\|_{L^2(\Sigma_s^u)}\\
\lesssim&\delta^{5/2-\varepsilon_0}+\delta^{2-\varepsilon_0}s^{-1/2}\|\slashed{\mathcal L}_R^{\leq k}\check\chi\|_{L^2(\Sigma_s^u)}+\delta^{2-\varepsilon_0} s^{-3/2}\|R^{[1, k+1]}\mu\|_{L^2(\Sigma_s^u)}\\
&+\delta^2 s^{-\epsilon}\sqrt{E_{1,\leq k+2}(s,u)}+\delta^2\sqrt{E_{2,\leq k+2}(s,u)},
\end{split}
\end{equation}
\begin{equation}\label{RL}
\begin{split}
&\|R^{k}\mathring L\phi\|_{L^2(\Sigma_s^u)}\\
\lesssim&\delta^{5/2-\varepsilon_0}s^{-1}+\delta^{2-2\varepsilon_0}s^{-3/2}\|\slashed{\mathcal L}_R^{\leq k}\check\chi\|_{L^2(\Sigma_s^u)}+\delta^{2-\varepsilon_0} s^{-5/2}\|R^{[1, k+1]}\mu\|_{L^2(\Sigma_s^u)}\\
&+\delta^2 s^{-1-\epsilon}\sqrt{E_{1,\leq k+2}(s,u)}+\delta^2s^{-1}\sqrt{E_{2,\leq k+2}(s,u)},
\end{split}
\end{equation}
\begin{equation}\label{Rkr}
\begin{split}
\|R^{k+1}\check\varrho\|_{L^2(\Sigma_s^u)}\lesssim&\delta^{3/2-\varepsilon_0}s^{1/2}+s\|\slashed{\mathcal L}_R^{\leq k}\check\chi\|_{L^2(\Sigma_s^u)}+\delta s^{-1}\|R^{[1, k+1]}\mu\|_{L^2(\Sigma_s^u)}\\
&+\delta s^{-\epsilon}\sqrt{E_{1,\leq k+2}(s,u)}+\delta\sqrt{E_{2,\leq k+2}(s,u)}.
\end{split}
\end{equation}
 Note that all the terms in the left hand side of \eqref{Zk1L}-\eqref{ZkRp} are controlled by the $L^2-$norms of
 the derivatives of $\check\chi$ and $\mu$, which will be treated as follows.

\begin{itemize}
	\item
When $Z\in\{R_{ij}:1\leq i<j\leq 4\}$, as in the proof of Lemma \ref{Rh}, one needs to deal with each term in
$\slashed{\mathcal L}_{\mathring L}\slashed{\mathcal L}_{R}^k\check\chi$ (see \eqref{LRichi} and \eqref{Lchi'}).
Due to $\slashed{\mathcal L}_{R_{ij}}\slashed dx^a=\slashed d\O_{ij}^a+\slashed d\big(\upsilon_{ij}(g^{0a}+\check L^a
+\f{x^a}{\varrho})\big)$, then
\begin{equation}\label{Lx}
|\slashed{\mathcal L}_{R}^{k}\slashed dx^a|\lesssim 1+t^{-1}|R^{\leq k}\upsilon_{ij}|
+\delta^{1-\varepsilon_0} t^{-1}\big(|R^{\leq k}\phi|+|R^{\leq k}\varphi|+|R^{\leq k}\check L^a|\big).
\end{equation}
This implies
\begin{equation}\label{FdL}
\begin{split}
&|\slashed{\mathcal L}_{R}^k(F_{B\mathring L}\slashed d_A\mathring L\phi)|
+|\slashed{\mathcal L}_{R}^k(F_{A\mathring L}\slashed d_B\mathring L\phi)|\\
\lesssim&\sum_{k_1+k_2+k_3=k}|R^{k_1}(F_{i\beta}\mathring L^\beta)|\cdot|\slashed{\mathcal L}_R^{k_2}\slashed dx^i|\cdot|\slashed{\mathcal L}_R^{k_3}\slashed d\mathring L\phi|\\
\lesssim&\sum_{k_1+k_2\leq k}\big(|R^{k_1}\phi|+|R^{k_1}\varphi|+|R^{k_1}\check L^i|+|\slashed{\mathcal L}_R^{k_2}\slashed dx^i|\big)|\slashed{\mathcal L}_R^{k_3}\slashed d\mathring L\phi|\\
\lesssim&|\slashed dR^{\leq k}\mathring L\phi|+\delta^{2-\varepsilon_0}s^{-7/2}\big(s^{-1}|R^{\leq k}\upsilon_{ab}|+|R^{\leq k}\phi|+|R^{\leq k}\varphi|+|R^{\leq k}\check L^i|\big).
\end{split}
\end{equation}
Similarly, one has
\begin{equation}\label{GdL}
\begin{split}
&|\slashed{\mathcal L}_{R}^k(G_{B\mathring L}^\gamma\slashed d_A\mathring L\varphi_\gamma)|+|\slashed{\mathcal L}_{R}^k(G_{A\mathring L}^\gamma\slashed d_B\mathring L\varphi_\gamma)|+|\slashed{\mathcal L}_R^k\big(F_{AB}\mathring L^2\phi\big)|+|\slashed{\mathcal L}_R^k\big(G_{AB}^\gamma\mathring L^2\varphi_\gamma\big)|\\
\lesssim&|\slashed dR^{\leq k}\mathring L\varphi|+|R^{\leq k}\mathring L^2\phi|+|R^{\leq k}\mathring L^2\varphi|\\
& +\delta^{1-\varepsilon_0}s^{-7/2}\big(s^{-1}|R^{\leq k}\upsilon_{ab}|+|R^{\leq k}\phi|+|R^{\leq k}\varphi|
+|R^{\leq k}\check L^i|\big)
\end{split}
\end{equation}
and
\begin{equation}\label{RFL}
\begin{split}
&|R^k(F_{\mathring L\mathring L}\mathring L\phi)|+|R^k(G_{\mathring L\mathring L}^\gamma\mathring L\varphi_\gamma)|+|R^k(F_{\tilde T\mathring L}\mathring L\phi)|\\
&+|R^k(G_{\tilde T\mathring L}^\gamma\mathring L\varphi_\gamma)|+|\slashed{\mathcal L}_R^k(F_{B\mathring L}\slashed d_A\phi)|+|\slashed{\mathcal L}_R^k(G_{B\mathring L}^\gamma\slashed d_A\varphi_\gamma)|\\
\lesssim&|R^{\leq k}\mathring L\phi|+|R^{\leq k}\mathring L\varphi|+|\slashed dR^{\leq k}\phi|+|\slashed dR^{\leq k}\varphi|\\
&+\delta^{1-\varepsilon_0}s^{-5/2}\big(s^{-1}|R^{\leq k}\upsilon_{ab}|+|R^{\leq k}\phi|+|R^{\leq k}\varphi|+|R^{\leq k}\check L^i|\big).
\end{split}
\end{equation}

In addition, for the term $F_{\mathring L\mathring L}\slashed\nabla_{AB}^2\phi$, one can apply the part (a) of Lemma \ref{commute} and Corollary \ref{12form} to get
\begin{equation}\label{Fp}
\begin{split}
&|\slashed{\mathcal L}_R^k\big(F_{\mathring L\mathring L}\slashed\nabla_{AB}^2\phi\big)|\\
\lesssim&\sum_{k_1+k_2=k}|R^{k_1}F_{\mathring L\mathring L}|\cdot|[\slashed{\mathcal L}_R^{k_2},\slashed\nabla_{AB}^2]\phi+\slashed\nabla_{AB}^2R^{k_2}\phi|\\
\lesssim&s^{-2}|R^{\leq k+2}\phi|+\delta^{2-\varepsilon_0}s^{-7/2}|\slashed{\mathcal L}_R^{\leq k}\leftidx{^{(R)}}{\slashed\pi}|+\delta^{2-\varepsilon_0}s^{-9/2}|R^{\leq k}\upsilon_{ab}|\\
&+\delta^{2-\varepsilon_0}s^{-7/2}\big(|R^{\leq k}\phi|+|R^{\leq k}\varphi|+|R^{\leq k}\check L^i|\big)
\end{split}
\end{equation}
and
\begin{equation}\label{Fvp}
\begin{split}
&|\slashed{\mathcal L}_R^k\big(G_{\mathring L\mathring L}^\gamma\slashed\nabla_{AB}^2\varphi_\gamma\big)|\\
\lesssim&s^{-1}|\slashed dR^{\leq k+1}\varphi|+\delta^{1-\varepsilon_0}s^{-7/2}|\slashed{\mathcal L}_R^{\leq k}\leftidx{^{(R)}}{\slashed\pi}|+\delta^{1-\varepsilon_0}s^{-9/2}|R^{\leq k}\upsilon_{ab}|\\
&+\delta^{1-\varepsilon_0}s^{-7/2}\big(|R^{\leq k}\phi|+|R^{\leq k}\varphi|+|R^{\leq k}\check L^i|\big).
\end{split}
\end{equation}
Substituting \eqref{FdL}-\eqref{Fvp} into \eqref{Lchi'} yields
\begin{equation}\label{RLc}
\begin{split}
|\slashed{\mathcal L}_R^k\slashed{\mathcal L}_{\mathring L}\check\chi|\lesssim& s^{-1}|R^{\leq k}\mathring L\phi|+s^{-1}|R^{\leq k}\mathring L\varphi|+s^{-2}|R^{\leq k+2}\phi|+s^{-1}|\slashed dR^{\leq k+1}\varphi|\\
&+|\slashed dR^{\leq k}\mathring L\phi|+|\slashed dR^{\leq k}\mathring L\varphi|+|R^{\leq k}\mathring L^2\phi|+|R^{\leq k}\mathring L^2\varphi|\\
&+\delta^{1-\varepsilon_0}s^{-7/2}\big(s^{-1}|R^{\leq k}\upsilon_{ab}|+|R^{\leq k}\phi|+|R^{\leq k}\varphi|+|R^{\leq k}\check L^i|\big)\\
&+\delta^{1-\varepsilon_0}s^{-2}|\slashed{\mathcal L}_R^{\leq k}\check\chi|+\delta^{1-\varepsilon_0}s^{-5/2}|\slashed{\mathcal L}_R^{\leq k}\leftidx{^{(R)}}{\slashed\pi}|+\delta^{2-2\varepsilon_0}s^{-5}|\slashed{\mathcal L}_R^{\leq k}\slashed dx^i|.
\end{split}
\end{equation}

Next we come to estimate the $L^2$ norms of $R^{k+2}\phi$, $\slashed dR^k\mathring L\phi$ and $R^k\mathring L^2\phi$.
\begin{enumerate}
	\item As $R_{ij}=\O_{ij}-\upsilon_{ij}\tilde T$, then \eqref{Lx}, together
with \eqref{RL2} and \eqref{RRT}, implies that
	\begin{equation}\label{R2p}
	\begin{split}
	&\|R^{k+1}R_{ij}\phi\|_{L^2(\Sigma_s^u)}=\|R^{k+1}\big((\O_{ij}^l-\upsilon_{ij}\tilde T^l)\varphi_l\big)\|_{L^2(\Sigma_s^u)}\\
	\lesssim&\sum_{k_1+k_2=k}\|\big(s|\slashed{\mathcal L}_R^{k_1}\slashed dx|+|R^{k_1+1}(\upsilon_{ij}\tilde T^l)|\big)R^{k_2}\varphi\|_{L^2(\Sigma_s^u)}+s\|R^{k+1}\varphi\|_{L^2(\Sigma_s^u)}\\
	\lesssim&s\|R^{k+1}\varphi\|_{L^2(\Sigma_s^u)}+\delta^{1-\varepsilon_0}s^{-3/2}\|R^{\leq k+1}\upsilon_{ij}\|_{L^2(\Sigma_s^u)}+\delta^{2-2\varepsilon_0}s^{-5/2}\|R^{\leq k+1}x^a\|_{L^2(\Sigma_s^u)}\\
	&+\delta^{2-2\varepsilon_0}s^{-3/2}\big(\|R^{\leq k+1}\phi\|_{L^2(\Sigma_s^u)}+\|R^{\leq k+1}\varphi\|_{L^2(\Sigma_s^u)}
+\|R^{\leq k+1}\check L^l\|_{L^2(\Sigma_s^u)}\big)\\
	\lesssim&\delta^{5/2-2\varepsilon_0}+\delta^{1-\varepsilon_0}s^{1/2}\|\slashed{\mathcal L}_R^{\leq k}\check{\chi}\|_{L^2(\Sigma_s^u)}+\delta^{2-\varepsilon_0}s^{-3/2}\|R^{\leq k+1}\mu\|_{L^2(\Sigma_s^u)}\\
	&+\delta s^{1-\epsilon}\sqrt{E_{1,\leq k+2}(s,u)}+\delta s\sqrt{E_{2,\leq k+2}(s,u)}.
	\end{split}
	\end{equation}

	\item Note that
	\begin{equation}\label{dRLp}
    \slashed dR^k\mathring L\phi=\slashed{\mathcal L}_R^k\slashed d(\mathring L^\al\varphi_\al)
    =\slashed{\mathcal L}_R^k\big((\slashed d\mathring L^i)\varphi_i+(\slashed dx^i)\mathring L\varphi_i\big).
	\end{equation}
	One gets from \eqref{dL} that
	\begin{equation*}
	(\slashed d_A\mathring L^i)\varphi_i=\chi_{AB}{\slashed d}^B\phi
-\big\{(\mathcal{FG})_{\mathring L\tilde T}(X_A)+\f12(\mathcal{FG})_{\tilde T\tilde T}(X_A)\big\}\tilde T^i\varphi_i
+\Lambda_{AB}\slashed d^B\phi.
	\end{equation*}
	It follows from \eqref{RL2}-\eqref{RL} that
	\begin{equation}\label{Rd}
	\begin{split}
	&\|\slashed{\mathcal L}_R^k\big((\slashed d\mathring L^i)\varphi_i\big)\|_{L^2(\Sigma_s^u)}\\
	\lesssim&\delta^{2-2\varepsilon_0}s^{-4}\big(\|R^{\leq k}\check L^i\|_{L^2(\Sigma_s^u)}+\|\slashed{\mathcal L}_R^{\leq k}\slashed dx^i\|_{L^2(\Sigma_s^u)}\big)+s^{-2}\|R^{\leq k+1}\phi\|_{L^2(\Sigma_s^u)}\\
	&+\delta^{1-\varepsilon_0}s^{-5/2}\big(\|R^{\leq k+1}\varphi\|_{L^2(\Sigma_s^u)}+\delta\|R^{\leq k}\mathring L\phi\|_{L^2(\Sigma_s^u)}+\delta\|R^{\leq k}\mathring L\varphi\|_{L^2(\Sigma_s^u)}\big)\\
	&+\delta^{2-\varepsilon_0}s^{-5/2}\|\slashed{\mathcal L}_R^{\leq k}\check\chi\|_{L^2(\Sigma_s^u)}+\delta^{3-2\varepsilon_0} s^{-9/2}\|\slashed{\mathcal L}_R^{\leq k-1}\leftidx{^{(R)}}{\slashed\pi}\|_{L^2(\Sigma_s^u)}\\
	\lesssim&\delta^{5/2-2\varepsilon_0}s^{-5/2}+\delta^{2-2\varepsilon_0}s^{-5/2}\|\slashed{\mathcal L}_R^{\leq k}\check{\chi}\|_{L^2(\Sigma_s^u)}+\delta^{2-\varepsilon_0}s^{-7/2}\|R^{\leq k+1}\mu\|_{L^2(\Sigma_s^u)}\\
	&+\delta^{2-\varepsilon_0} s^{-2-\epsilon}\sqrt{E_{1,\leq k+2}(s,u)}+\delta^{2-\varepsilon_0} s^{-2}\sqrt{E_{2,\leq k+2}(s,u)}.
	\end{split}
	\end{equation}
	In addition, by \eqref{Lx},
	\begin{equation}\label{RdL}
	\begin{split}
	&\|\slashed{\mathcal L}_R^k\big((\slashed dx^i)\mathring L\varphi_i\big)\|_{L^2(\Sigma_s^u)}\\
	\lesssim&\|R^{\leq k}\mathring L\varphi\|_{L^2(\Sigma_s^u)}
+\delta^{1-\varepsilon_0}s^{-7/2}\|R^{\leq k}\upsilon_{jl}\|_{L^2(\Sigma_s^u)}\\
	&+\delta^{2-2\varepsilon_0}s^{-7/2}\big(\|R^{\leq k}\phi\|_{L^2(\Sigma_s^u)}
+\|R^{\leq k}\varphi\|_{L^2(\Sigma_s^u)}+\|R^{\leq k}\check L^i\|_{L^2(\Sigma_s^u)}\big)\\
    \lesssim&\delta^{5/2-2\varepsilon_0}s^{-2}+\delta^{1-\varepsilon_0}s^{-3/2}\|\slashed{\mathcal L}_R^{\leq k}\check{\chi}\|_{L^2(\Sigma_s^u)}+\delta^{2-\varepsilon_0}s^{-7/2}\|R^{\leq k+1}\mu\|_{L^2(\Sigma_s^u)}\\
    &+\delta s^{-1-\epsilon}\sqrt{E_{1,\leq k+2}(s,u)}+\delta s^{-1}\sqrt{E_{2,\leq k+2}(s,u)}.
	\end{split}
	\end{equation}
	Combining \eqref{Rd}-\eqref{RdL} and \eqref{dRLp} yields
	\begin{equation}\label{Rk1L}
	\begin{split}
	\|\slashed dR^k\mathring L\phi\|_{L^2(\Sigma_s^u)}    \lesssim&\delta^{5/2-2\varepsilon_0}s^{-2}+\delta^{1-\varepsilon_0}s^{-3/2}\|\slashed{\mathcal L}_R^{\leq k}\check{\chi}\|_{L^2(\Sigma_s^u)}+\delta^{2-\varepsilon_0}s^{-7/2}\|R^{\leq k+1}\mu\|_{L^2(\Sigma_s^u)}\\
	&+\delta s^{-1-\epsilon}\sqrt{E_{1,\leq k+2}(s,u)}+\delta s^{-1}\sqrt{E_{2,\leq k+2}(s,u)}.
	\end{split}
	\end{equation}
	
	\item Note that
	\begin{equation*}
	\begin{split}
	&|R^k\mathring L^2\phi|=|R^k\big((\mathring L\mathring L^i)\varphi_i+\mathring L^\al\mathring L\varphi_\al\big)|\\
	\lesssim&\sum_{k_1+k_2=k}\Big(|R^{k_1}(\mathring L\mathring L^i)R^{k_2}\varphi_i|
+(|R^{k_1}\check L^i|+s^{-1}|R^{k_1}x^i|)|R^{k_2}\mathring L\varphi_i|\Big).
	\end{split}
	\end{equation*}
	Since $\mathring L\mathring L^i$ satisfies \eqref{LL}, then
	\begin{equation*}
	\begin{split}
	|R^{k_1}(\mathring L\mathring L^i)|\lesssim &|R^{\leq k_1}\mathring L\phi|+|R^{\leq k_1}\mathring L\varphi|+s^{-1}|R^{\leq k_1+1}\phi|+s^{-1}|R^{\leq k_1+1}\varphi|\\
	&+\delta^{1-\varepsilon_0}s^{-5/2}(|R^{\leq k_1}\check L^i|+s^{-1}|R^{\leq k_1+1}x^i|)
	\end{split}
	\end{equation*}
	and it follows from \eqref{Lx} that
	\begin{equation*}
	\begin{split}
	s^{-1}|R^{k_1}x^i|\cdot|R^{k_2}\mathring L\varphi_i|\lesssim&|R^{\leq k}\mathring L\varphi|
+\delta^{1-\varepsilon_0}s^{-7/2}|R^{\leq k}\upsilon_{jl}|\\
	&+\delta^{2-2\varepsilon_0}s^{-7/2}(|R^{\leq k}\phi|+|R^{\leq k}\varphi|+|R^{\leq k}\check L^i|).
	\end{split}
	\end{equation*}
	Thus,
	\begin{equation}\label{RL2p}
	\begin{split}
	&\|R^k\mathring L^2\phi\|_{L^2(\Sigma_s^u)}\\
\lesssim&\|R^{\leq k}\mathring L\varphi\|_{L^2(\Sigma_s^u)}+\delta^{1-\varepsilon_0}s^{-7/2}\|R^{\leq k}\upsilon_{jl}\|_{L^2(\Sigma_s^u)}+\delta^{1-\varepsilon_0}s^{-5/2}\|R^{\leq k}\check L^i\|_{L^2(\Sigma_s^u)}\\
	&+\delta^{1-\varepsilon_0}s^{-5/2}\|R^{\leq k+1}\phi\|_{L^2(\Sigma_s^u)}+\delta^{1-\varepsilon_0}s^{-5/2}\|R^{\leq k+1}\varphi\|_{L^2(\Sigma_s^u)}\\
	&+\delta^{1-\varepsilon_0}s^{-3/2}|R^{\leq k}\mathring L\phi\|_{L^2(\Sigma_s^u)}+\delta^{2-2\varepsilon_0}s^{-5}\|R^{\leq k+1}x^i\|_{L^2(\Sigma_s^u)}\\
	\lesssim&\delta^{5/2-2\varepsilon_0}s^{-2}+\delta^{1-\varepsilon_0}s^{-3/2}\|\slashed{\mathcal L}_R^{\leq k}\check{\chi}\|_{L^2(\Sigma_s^u)}+\delta^{2-\varepsilon_0}s^{-7/2}\|R^{\leq k+1}\mu\|_{L^2(\Sigma_s^u)}\\
	&+\delta s^{-1-\epsilon}\sqrt{E_{1,\leq k+2}(s,u)}+\delta s^{-1}\sqrt{E_{2,\leq k+2}(s,u)}.
	\end{split}
	\end{equation}
	\end{enumerate}

Substituting \eqref{R2p}, \eqref{Rk1L} and \eqref{RL2p} into \eqref{RLc}, and using \eqref{RL2} and \eqref{LRichi}, we have
\begin{equation}\label{LRchi}
\begin{split}
&\|\slashed{\mathcal L}_{\mathring L}\slashed{\mathcal L}_R^k\check\chi\|_{L^2(\Sigma_s^u)}\\
\lesssim&
\|\slashed{\mathcal L}_R^k\slashed{\mathcal L}_{\mathring L}\check\chi\|_{L^2(\Sigma_s^u)}+\delta^{1-\varepsilon_0}s^{-2}(\|\slashed{\mathcal L}_R^{\leq k}\check{\chi}\|_{L^2(\Sigma_s^u)}+t^{-1}\|\slashed{\mathcal L}_R^{\leq k}\leftidx{^{(R)}}{\slashed\pi}_{\mathring L}\|_{L^2(\Sigma_s^u)})\\
&+\delta^{2-2\varepsilon_0}s^{-4}\|\slashed{\mathcal L}_R^{\leq k-1}\slashed g\|_{L^2(\Sigma_s^u)}\\	\lesssim&\delta^{5/2-2\varepsilon_0}s^{-2}+\delta^{1-\varepsilon_0}s^{-3/2}\|\slashed{\mathcal L}_R^{\leq k}\check{\chi}\|_{L^2(\Sigma_s^u)}+\delta^{2-\varepsilon_0}s^{-7/2}\|R^{\leq k+1}\mu\|_{L^2(\Sigma_s^u)}\\
&+s^{-1-\epsilon}\sqrt{E_{1,\leq k+2}(s,u)}+\delta s^{-1}\sqrt{E_{2,\leq k+2}(s,u)}.
\end{split}
\end{equation}

Next we estimate the $L^2-$norm of $\mathring LR^{k+1}\mu$. For any $\bar Z\in\{T,R_{ij}\}$, by virtue
of \eqref{LRimu}, \eqref{lmu} and \eqref{GT}, one can get
\begin{equation}\label{LbZmu}
\begin{split}
&\delta^l\|\mathring L\bar Z^{k+1}\mu\|_{L^2(\Sigma_s^u)}\\
\lesssim&\delta^{1-2\varepsilon_0}s^{-5/2}\big(\delta^{3/2}\delta^{l_1}\|\slashed{\mathcal L}_{\bar Z}^{k_1}\leftidx{^{(R)}}{\slashed\pi}_{\mathring L}\|_{L^2(\Sigma_s^u)}+\delta^{l_0}\|\slashed{\mathcal L}_{\bar Z}^{k_0}\slashed dx^i\|_{L^2(\Sigma_s^u)}+\delta^{\varepsilon_0}\delta^{l_1}\|\bar Z^{k_1}\upsilon_{ij}\|_{L^2(\Sigma_s^u)}\big)\\
&+\delta^{-\varepsilon_0}s^{-3/2}\delta^{l_0}\big(\|\bar Z^{k_0}\check L^i\|_{L^2(\Sigma_s^u)}+\|\bar Z^{ k_0}\varphi\|_{L^2(\Sigma_s^u)}+\|\bar Z^{ k_0}\phi\|_{L^2(\Sigma_s^u)}+\|\bar Z^{l_0}\check\varrho\|_{L^2(\Sigma_s^u)}\big)\\
&+\delta^{1-\varepsilon_0}s^{-2}\delta^{l_0}\big(s\|T\bar Z^{k_0}\varphi\|_{L^2(\Sigma_s^u)}+\|\bar Z^{k_0}\mu\|_{L^2(\Sigma_s^u)}\big)+\delta^{2-3\varepsilon_0}s^{-2}\delta^{l_1}\|\slashed{\mathcal L}_{\bar Z}^{k_1}\slashed g\|_{L^2(\Sigma_s^u)}\\
&+\delta^{l_0}\big(\|\slashed d\bar Z^{k_0}\varphi\|_{L^2(\Sigma_s^u)}+\|\mathring L\bar Z^{k_0}\varphi\|_{L^2(\Sigma_s^u)}+\|\bar Z^{k_0}\mathring L\phi\|_{L^2(\Sigma_s^u)}+\|\bar Z^{k_0}T\phi\|_{L^2(\Sigma_s^u)}\big)\\
&+\delta^{2-2\varepsilon_0}s^{-5/2}\delta^{l_1}\big(\|\slashed{\mathcal L}_{\bar Z}^{k_1}\leftidx{^{(R)}}{\slashed\pi}_T\|_{L^2(\Sigma_s^u)}+s^{3/2}\|\slashed{\mathcal L}_{\bar Z}^{k_1}\leftidx{^{(T)}}{\slashed\pi}_{\mathring L}\|_{L^2(\Sigma_s^u)}\big),
\end{split}
\end{equation}
where $l$ is the number of $T$ in $\bar Z^{k+1}$.
If all $\bar Z$ in \eqref{LbZmu} are chosen from $\{R_{ij}\}$, then it follows from \eqref{RL2}-\eqref{Rkr} and \eqref{Rk1L} that
\begin{equation}\label{LRkmu}
\begin{split}
\|\mathring LR^{k+1}\mu\|_{L^2(\Sigma_s^u)}
\lesssim&\delta^{3/2-2\varepsilon_0}+\delta^{-\varepsilon_0}\|\slashed{\mathcal L}_R^{\leq k}\check{\chi}\|_{L^2(\Sigma_s^u)}+\delta^{1-\varepsilon_0}s^{-3/2}\|R^{\leq k+1}\mu\|_{L^2(\Sigma_s^u)}\\
&+s^{-\epsilon}\sqrt{E_{1,\leq k+2}(s,u)}+\delta^{1-\varepsilon_0}\sqrt{E_{2,\leq k+2}(s,u)}.
\end{split}
\end{equation}

Let $F(s,u,\vartheta)=\varrho(s,u)^2|\slashed{\mathcal L}_{R}^{\le k}\check\chi(s,u,\vartheta)|-\varrho(t_0,u)^2|\slashed{\mathcal L}_{R}^{\leq k}\check\chi(t_0,u,\vartheta)|$ in \eqref{Ff}. Since for any 2-form $\xi$ on $S_{s,u}$, $|\mathring L(\varrho^2|\xi|)|\lesssim\varrho^2|\slashed{\mathcal L}_{\mathring L}\xi|+\delta^{1-\varepsilon_0}s^{-2}(\varrho^2|\xi|)$ holds, then
\begin{equation}\label{Y-15}
\begin{split}
\|\varrho^2\slashed{\mathcal L}_{R}^{\leq k}\check\chi\|_{L^2(\Sigma_s^u)}\lesssim\delta^{3/2-\varepsilon_0}\varrho^{3/2}+\varrho^{3/2}\int_{t_0}^s\tau^{-3/2}\Big(&\|\varrho^2(\tau, u)\slashed{\mathcal L}_{\mathring L}\slashed{\mathcal L}_{R}^{\leq k}\check\chi\|_{L^2(\Sigma_\tau^u)}\\
&+\delta^{1-\varepsilon_0}\tau^{-2}\|\varrho^2\slashed{\mathcal L}_{R}^{\leq k}\check\chi\|_{L^2(\Sigma_\tau^u)}\Big)d\tau.
\end{split}
\end{equation}

Applying \eqref{LRchi} and Grownwall's equality for \eqref{Y-15} yields
\begin{equation}\label{tchi}
\begin{split}
s^{1/2}\|\slashed{\mathcal L}_{R}^k\check\chi\|_{L^2(\Sigma_s^u)}\lesssim&\delta^{3/2-\varepsilon_0}
+\delta^{2-\varepsilon_0}\int_{t_0}^s\tau^{-3}\|R^{[1,k+1]}\mu\|_{L^2(\Sigma_\tau^u)}d\tau\\
&+\int_{t_0}^s\tau^{-1/2-\epsilon}\sqrt{E_{1,\leq k+2}(\tau,u)}d\tau+\delta\int_{t_0}^s \tau^{-1/2}\sqrt{E_{2,\leq k+2}(\tau,u)}d\tau.
\end{split}
\end{equation}

Analogously, \eqref{LRkmu} gives that
\begin{equation}\label{tRmu}
\begin{split}
s^{-3/2}\|R^{[1,k+1]}\mu&\|_{L^2(\Sigma_s^u)}\lesssim\delta^{3/2-2\varepsilon_0}
+\delta^{-\varepsilon_0}\int_{t_0}^s\tau^{-3/2}\|\slashed{\mathcal L}_R^{\leq k}\check\chi\|_{L^2(\Sigma_\tau^u)}d\tau\\
&+\int_{t_0}^s\tau^{-3/2-\epsilon}\sqrt{E_{1,\leq k+2}(\tau,u)}d\tau+\delta^{1-\varepsilon_0}\int_{t_0}^s \tau^{-3/2}\sqrt{E_{2,\leq k+2}(\tau,u)}d\tau.
\end{split}
\end{equation}
Let
\begin{equation*}
\begin{split}
G_1(s,u)=&\delta^{3/2-\varepsilon_0}+\delta^{2-\varepsilon_0}\int_{t_0}^s\tau^{-3}\|R^{[1,k+1]}\mu\|_{L^2(\Sigma_\tau^u)}d\tau\\
&+\int_{t_0}^s\tau^{-1/2-\epsilon}\sqrt{E_{1,\leq k+2}(\tau,u)}d\tau+\delta\int_{t_0}^s \tau^{-1/2}\sqrt{E_{2,\leq k+2}(\tau,u)}d\tau
\end{split}
\end{equation*}
and
\begin{equation*}
\begin{split}
G_2(s,u)=&\delta^{3/2-2\varepsilon_0}+\delta^{-\varepsilon_0}\int_{t_0}^s\tau^{-3/2}\|\slashed{\mathcal L}_R^{\leq k}\check\chi\|_{L^2(\Sigma_\tau^u)}d\tau\\
&+\int_{t_0}^s\tau^{-3/2-\epsilon}\sqrt{E_{1,\leq k+2}(\tau,u)}d\tau+\delta^{1-\varepsilon_0}\int_{t_0}^s \tau^{-3/2}\sqrt{E_{2,\leq k+2}(\tau,u)}d\tau.
\end{split}
\end{equation*}
Then
\begin{align*}
&\p_sG_1\lesssim\delta^{2-\varepsilon_0}s^{-3/2}G_2+s^{-1/2-\epsilon}\sqrt{E_{1,\leq k+2}}+\delta s^{-1/2}\sqrt{E_{2,\leq k+2}},\\
&\p_sG_2\lesssim\delta^{-\varepsilon_0}s^{-2}G_1+s^{-3/2-\epsilon}\sqrt{E_{1,\leq k+2}}
+\delta^{1-\varepsilon_0} s^{-3/2}\sqrt{E_{2,\leq k+2}}.
\end{align*}
This, together with $s^{1/2}\|\slashed{\mathcal L}_{R}^k\check\chi\|_{L^2(\Sigma_s^u)}\lesssim G_1$
and $s^{-3/2}\|R^{[1,k+1]}\mu\|_{L^2(\Sigma_s^u)}\lesssim G_2$ by \eqref{tchi} and \eqref{tRmu}, yields
\begin{equation*}
\p_s(s^{-1/4}G_1+\delta G_2)\lesssim\delta^{1-\varepsilon_0}s^{-7/4}(s^{-1/4}G_1+\delta G_2)
+s^{-3/4-\epsilon}\sqrt{E_{1,\leq k+2}}+\delta s^{-3/4}\sqrt{E_{2,\leq k+2}},
\end{equation*}
which gives
\begin{equation*}
s^{-1/4}G_1+\delta G_2\lesssim\delta^{3/2-\varepsilon_0}+s^{1/4}\sqrt{\tilde E_{1,\leq k+2}}
+\delta s^{1/4}\sqrt{\tilde E_{2,\leq k+2}}.
\end{equation*}
Hence,
\begin{align}
&\|\slashed{\mathcal L}_{R}^k\check\chi\|_{L^2(\Sigma_s^u)}\lesssim s^{-1/2}G_1\lesssim \delta^{3/2-\varepsilon_0}s^{-1/4}
+\sqrt{\tilde E_{1,\leq k+2}}+\delta\sqrt{\tilde E_{2,\leq k+2}},\label{LRc}\\
&\|R^{[1,k+1]}\mu\|_{L^2(\Sigma_s^u)}\lesssim s^{3/2}G_2\lesssim \delta^{1/2-\varepsilon_0}s^{3/2}
+\delta^{-1}s^{7/4}\sqrt{\tilde E_{1,\leq k+2}}+s^{7/4}\sqrt{\tilde E_{2,\leq k+2}}.\label{Rkmu}
\end{align}
Substituting \eqref{Rkmu} and \eqref{LRc} into \eqref{tchi} and \eqref{tRmu}, respectively, one has
\begin{align}
&\|\slashed{\mathcal L}_{R}^k\check\chi\|_{L^2(\Sigma_s^u)}\lesssim \delta^{3/2-\varepsilon_0}s^{-1/2}
+s^{-\epsilon}\sqrt{\tilde E_{1,\leq k+2}(s,u)}+\delta\sqrt{\tilde E_{2,\leq k+2}(s,u)},\label{LRch}\\
&\|R^{[1,k+1]}\mu\|_{L^2(\Sigma_s^u)}\lesssim \delta^{3/2-2\varepsilon_0}s^{3/2}
+\delta^{-\varepsilon_0}s^{3/2}\sqrt{\tilde E_{1,\leq k+2}(s,u)}
+\delta^{1-\varepsilon_0}s^{3/2}\sqrt{\tilde E_{2,\leq k+2}(s,u)}.\label{R1kmu}
\end{align}

\item If $\slashed{\mathcal L}_Z^k\check\chi=\slashed{\mathcal L}_Z^{p_1}\slashed{\mathcal L}_T\slashed{\mathcal L}_R^{p_2}\check\chi$ with $p_1+p_2=k-1$, then $\slashed{\mathcal L}_Z^k\check\chi=\slashed{\mathcal L}_Z^{p_1}[\slashed{\mathcal L}_T,\slashed{\mathcal L}_R^{p_2}]\check\chi+\slashed{\mathcal L}_Z^{p_1}\slashed{\mathcal L}_R^{p_2}\slashed{\mathcal L}_T\check\chi$, and hence
    it follows from Lemma \ref{commute}, \eqref{Tchi'} and \eqref{Zk1L}-\eqref{zp} that
\begin{equation}\label{TRchi}
\begin{split}
&\delta^l\|\slashed{\mathcal L}_Z^k\check\chi\|_{L^2(\Sigma_s^u)}\\
\lesssim&\delta s^{-2}\delta^{l_0}\|Z^{k_0}\mu\|_{L^2(\Sigma_s^u)}+\delta^{2-2\varepsilon_0}s^{-2}\big(\delta^{l_2}\|\slashed{\mathcal L}_Z^{k_2}\leftidx{^{(R)}}{\slashed\pi}\|_{L^2(\Sigma_s^u)}+\delta\delta^{l_2}\|\slashed{\mathcal L}_Z^{k_2}\leftidx{^{(T)}}{\slashed\pi}\|_{L^2(\Sigma_s^u)}\big)\\
&+\delta^{1-\varepsilon_0}s^{-5/2}\big(\delta^{l_1}\|Z^{k_1}\check L^i\|_{L^2(\Sigma_s^u)}+s^{-1}\delta^{l_0}\|Z^{k_0}x^i\|_{L^2(\Sigma_s^u)}\big)+s^{-1}\delta^{l_0}\|Z^{k_0}\phi\|_{L^2(\Sigma_s^u)}\\
&+s^{-1}\delta^{l_0}\|Z^{k_0}\varphi\|_{L^2(\Sigma_s^u)}+\delta^{1-\varepsilon_0}s^{-1}\delta^{l_2}\|\slashed{\mathcal L}_Z^{k_2}\check\chi\|_{L^2(\Sigma_s^u)}+\delta^{1-\varepsilon_0}s^{-2}\delta^{l_2}\|\slashed{\mathcal L}_Z^{k_2}\slashed g\|_{L^2(\Sigma_s^u)}\\
&+\delta^{2-\varepsilon_0}s^{-3}\delta^{l_2}\|\slashed{\mathcal L}_Z^{k_2}\leftidx{^{(R)}}{\slashed\pi}_T\|_{L^2(\Sigma_s^u)}\\
\lesssim&\delta^{3/2-\varepsilon_0}s^{-1/2}+\delta^{1-\varepsilon_0}s^{-1}\delta^{l_1}\|\slashed{\mathcal L}_Z^{k_1}\check\chi\|_{L^2(\Sigma_s^u)}+\delta s^{-2}\delta^{l_0}\|Z^{k_0}\mu\|_{L^2(\Sigma_s^u)}\\
&+\delta s^{-1-\epsilon}\sqrt{E_{1,\leq k+2}(s,u)}+\delta t^{-1}\sqrt{E_{2,\leq k+2}(s,u)}.
\end{split}
\end{equation}

If $\slashed{\mathcal L}_Z^k\check\chi=\slashed{\mathcal L}_Z^{p_1}\slashed{\mathcal L}_{\varrho\mathring L}\slashed{\mathcal L}_R^{p_2}\check\chi$ with $p_1+p_2=k-1$, then $\slashed{\mathcal L}_Z^k\check\chi=\slashed{\mathcal L}_Z^{p_1}[\slashed{\mathcal L}_{\varrho\mathring L},\slashed{\mathcal L}_R^{p_2}]\check\chi+\slashed{\mathcal L}_Z^{p_1}\slashed{\mathcal L}_R^{p_2}(\varrho\slashed{\mathcal L}_{\mathring L}\check\chi)$, and it follows from \eqref{Lchi'} that
\begin{equation}\label{rLRchi}
\begin{split}
&\delta^l\|\slashed{\mathcal L}_Z^k\check\chi\|_{L^2(\Sigma_s^u)}\\
&\lesssim\delta^{1-\varepsilon_0}s^{-2}\big(\delta^{l_2}\|\slashed{\mathcal L}_Z^{k_2}\leftidx{^{(R)}}{\slashed\pi}_{\mathring L}\|_{L^2(\Sigma_s^u)}+s\delta^{l_2}\|\slashed{\mathcal L}_Z^{k_2}\check\chi\|_{L^2(\Sigma_s^u)}+s^{-1/2}\delta^{l_1}\|\slashed{\mathcal L}_Z^{k_1}\slashed g\|_{L^2(\Sigma_s^u)}\big)\\
&+\delta^{1-\varepsilon_0}s^{-7/2}\big(s\delta^{l_2}\|Z^{k_2}\check L^i\|_{L^2(\Sigma_s^u)}+\delta^{l_1}\|Z^{k_1}x^i\|_{L^2(\Sigma_s^u)}\big)+s^{-1}\delta^{l_0}\|Z^{k_0}\phi\|_{L^2(\Sigma_s^u)}\\
&+s^{-1}\delta^{l_0}\|Z^{k_0}\varphi\|_{L^2(\Sigma_s^u)}\\
\lesssim&\delta^{3/2-\varepsilon_0}s^{-1}+\delta^{1-\varepsilon_0}s^{-1}\delta^{l_1}\|\slashed{\mathcal L}_Z^{k_1}\check\chi\|_{L^2(\Sigma_s^u)}+\delta^{2-\varepsilon_0} s^{-5/2}\delta^{l_0}\|Z^{k_0}\mu\|_{L^2(\Sigma_s^u)}\\
&+\delta s^{-1-\epsilon}\sqrt{E_{1,\leq k+2}(s,u)}+\delta s^{-1}\sqrt{E_{2,\leq k+2}(s,u)}.
\end{split}
\end{equation}

Therefore, for any $Z\in\{\varrho\mathring L,T,R_{ij}\}$, by \eqref{LRch}, \eqref{TRchi} and \eqref{rLRchi}, we obtain
\begin{equation}\label{zchi}
\begin{split}
\delta^l\|\slashed{\mathcal L}_Z^k\check\chi\|_{L^2(\Sigma_s^u)}\lesssim& \delta^{3/2-\varepsilon_0}s^{-1/2}+\delta s^{-2}\delta^{l_0}\|Z^{k_0}\mu\|_{L^2(\Sigma_s^u)}\\
&+s^{-\epsilon}\sqrt{\tilde E_{1,\leq k+2}(s,u)}+\delta\sqrt{\tilde E_{2,\leq k+2}(s,u)}.
\end{split}
\end{equation}

\item If $Z^{k+1}\mu=\bar Z^{k+1}\mu$ with $\bar Z\in\{T, R_{ij}\}$,
then substituting the corresponding estimates in \eqref{Zk1L}-\eqref{Zkr} into \eqref{LbZmu} and applying \eqref{zchi} show that
\begin{equation*}
\begin{split}
\delta^l\|\mathring L\bar Z^{k+1}\mu\|_{L^2(\Sigma_s^u)}\lesssim&\delta^{3/2-2\varepsilon_0}
+\delta^{1-\varepsilon_0} s^{-3/2}\delta^{l_0}\|Z^{k_0}\mu\|_{L^2(\Sigma_s^u)}\\
&+\delta^{-\varepsilon_0}s^{-\epsilon}\sqrt{E_{1,\leq k+2}(s,u)}+\delta^{1-\varepsilon_0}\sqrt{E_{2,\leq k+2}(s,u)}.
\end{split}
\end{equation*}
Taking $F(s,u,\vartheta)=\delta^l\bar Z^{k+1}\mu(s,u,\vartheta)-\delta^l\bar Z^{k+1}\mu(t_0,u,\vartheta)$ in \eqref{Ff} yields
\begin{equation}\label{bZmu}
\begin{split}
\delta^l\|\bar Z^{k+1}\mu\|_{L^2(\Sigma_s^u)}\lesssim&\delta^{3/2-2\varepsilon_0}s^{3/2}
+\delta^{1-\varepsilon_0}s^{3/2}\int_{t_0}^s\tau^{-3}\delta^{l_0}\|Z^{k_0}\mu\|_{L^2(\Sigma_\tau^u)}d\tau\\
&+\delta^{-\varepsilon_0}s^{3/2}\sqrt{\tilde E_{1,\leq k+2}(s,u)}
+\delta^{1-\varepsilon_0}s^{3/2}\sqrt{\tilde E_{2,\leq k+2}(s,u)}.
\end{split}
\end{equation}

If $Z^{k+1}\mu=Z^{p_1}(\varrho\mathring L)\bar Z^{p_2}\mu$ with $p_1+p_2=k$, it follows from \eqref{lmu}, \eqref{GT}, \eqref{Zk1L}-\eqref{zp} and \eqref{zchi} that
\begin{equation}\label{ZLmu}
\begin{split}
&\delta^l\|Z^{k+1}\mu\|_{L^2(\Sigma_s^u)}=\delta^l\|Z^{p_1}[\varrho\mathring L,\bar Z^{p_2}]\mu+Z^{p_1}\bar Z^{p_2}(\varrho\mathring L)\mu\|_{L^2(\Sigma_s^u)}\\
\lesssim&\delta^{1-\varepsilon_0}s^{-1}\delta^{l_1}\|Z^{k_1}\mu\|_{L^2(\Sigma_s^u)}+\delta^{-\varepsilon_0}s^{-1/2}\delta^{l_1}\|Z^{k_1}\check L^i\|_{L^2(\Sigma_s^u)}+\delta^{-\varepsilon_0}\delta^{l_0}\|Z^{k_0}\phi\|_{L^2(\Sigma_s^u)}\\
&+\delta^{1-2\varepsilon_0}s^{-3/2}\delta^{l_0}\|Z^{k_0}x^i\|_{L^2(\Sigma_s^u)}
+\delta^{-\varepsilon_0}\delta^{l_0}\|Z^{k_0}\varphi\|_{L^2(\Sigma_s^u)}+s\delta^{l_1}\|Z^{k_1}T\phi\|_{L^2(\Sigma_s^u)}\\
&+\delta^{1-2\varepsilon_0}\|\slashed{\mathcal L}_Z^{k_2}\leftidx{^{(R)}}{\slashed\pi}_{\mathring L}\|_{L^2(\Sigma_s^u)}+\delta^{2-2\varepsilon_0}\|\slashed{\mathcal L}_Z^{k_2}\leftidx{^{(T)}}{\slashed\pi}_{\mathring L}\|_{L^2(\Sigma_s^u)}+\delta^{2-3\varepsilon_0}s^{-1}\delta^{l_2}\|\slashed{\mathcal L}_Z^{k_2}\slashed g\|_{L^2(\Sigma_s^u)}\\
\lesssim&\delta^{3/2-2\varepsilon_0}s+\delta^{1-\varepsilon_0} s^{-1/2}\delta^{l_0}\|Z^{k_0}\mu\|_{L^2(\Sigma_s^u)}+\delta^{-\varepsilon_0} s^{1-\epsilon}\sqrt{\tilde E_{1,\leq k+2}(s,u)}\\
&+\delta^{1-\varepsilon_0}s\sqrt{\tilde E_{2,\leq k+2}(s,u)}.
\end{split}
\end{equation}
\end{itemize}
Thus, for any $Z\in\{\varrho\mathring L,T,R_{ij}\}$, \eqref{bZmu} and \eqref{ZLmu} yield
\begin{equation}\label{YH-1}
\begin{split}
&\delta^l\|Z^{k+1}\mu\|_{L^2(\Sigma_s^u)}\\
\lesssim&\delta^{3/2-2\varepsilon_0}s^{3/2}+\delta^{1-\varepsilon_0} s^{-1/2}\delta^{l_0}\|Z^{k_0}\mu\|_{L^2(\Sigma_s^u)}
+\delta^{1-\varepsilon_0}s^{3/2}\int_{t_0}^s\tau^{-3}\delta^{l_0}\|\bar Z^{k_0}\mu\|_{L^2(\Sigma_\tau^u)}d\tau\\
&+\delta^{-\varepsilon_0} s^{3/2}\sqrt{\tilde E_{1,\leq k+2}(s,u)}
+\delta^{1-\varepsilon_0}s^{3/2}\sqrt{\tilde E_{2,\leq k+2}(s,u)}.
\end{split}
\end{equation}
Applying Gronwall's inequality for $s^{-3/2}\delta^l\|Z^{k+1}\mu\|_{L^2(\Sigma_s^u)}$, one can get
\begin{equation}\label{Fmu}
\begin{split}
&\delta^l\|Z^{k+1}\mu\|_{L^2(\Sigma_s^u)}\\
\lesssim&\delta^{3/2-2\varepsilon_0}s^{3/2}+\delta^{-\varepsilon_0} s^{3/2}\sqrt{\tilde E_{1,\leq k+2}(s,u)}+\delta^{1-\varepsilon_0}s^{3/2}\sqrt{\tilde E_{2,\leq k+2}(s,u)}.
\end{split}
\end{equation}
This, together with \eqref{zchi}, gives
\begin{equation}\label{Fchi}
\begin{split}
\delta^l\|\slashed{\mathcal L}_Z^k\check\chi\|_{L^2(\Sigma_s^u)}\lesssim&
\delta^{3/2-\varepsilon_0}s^{-1/2}+s^{-\epsilon}\sqrt{\tilde E_{1,\leq k+2}(s,u)}+\delta\sqrt{\tilde E_{2,\leq k+2}(s,u)}.
\end{split}
\end{equation}

Substituting \eqref{Fmu}-\eqref{Fchi} into \eqref{Zk1L}-\eqref{ZkRp} and \eqref{Zkr} eventually
completes the proof of Proposition 9.1.
\end{proof}

On the other hand, the proof of Proposition 9.1 also gives the following energy estimates for $\phi$.
\begin{corollary}\label{Zphi}
	Under the assumption $(\star)$ with suitably small $\delta>0$, it holds that for $k\leq 2N-6$,
	\begin{align*}
	&\delta^l\|Z^{k+1}\phi\|_{L^2(\Sigma_s^u)}\lesssim\delta^{5/2-\varepsilon_0}+\delta^{2-2\varepsilon_0}\sqrt{\tilde E_{1,\leq k+2}(s,u)}+\delta^2\sqrt{\tilde E_{2,\leq k+2}(s,u)},\\
	&\delta^l\|Z^{k+1}T\phi\|_{L^2(\Sigma_s^u)}\lesssim\delta^{3/2-\varepsilon_0}+\delta^{1-2\varepsilon_0}\sqrt{\tilde E_{1,\leq k+2}(s,u)}+\delta\sqrt{\tilde E_{2,\leq k+2}(s,u)},\\
	&\delta^l\|Z^{k+1}\mathring L\phi\|_{L^2(\Sigma_s^u)}\lesssim\delta^{3/2-\varepsilon_0}
+\delta^{1-\varepsilon_0}s^{-\epsilon}\sqrt{\tilde E_{1,\leq k+2}(s,u)}+\delta\sqrt{\tilde E_{2,\leq k+2}(s,u)},\\
	&\delta^l\|Z^{k+1}R\phi\|_{L^2(\Sigma_s^u)}\lesssim\delta^{3/2-\varepsilon_0}s
+\delta^{1-\varepsilon_0}s^{1-\epsilon}\sqrt{\tilde E_{1,\leq k+2}(s,u)}+\delta s\sqrt{\tilde E_{2,\leq k+2}(s,u)}.
	\end{align*}
\end{corollary}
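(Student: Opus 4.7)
The plan is to feed the sharp $L^2$ bounds on $\slashed{\mathcal L}_Z^{\leq k}\check\chi$ and $Z^{\leq k+1}\mu$ obtained in Proposition \ref{L2chi} back into the intermediate inequalities \eqref{zp}--\eqref{zL} already established in the course of that proof, and then handle the $Z^{k+1}R\phi$ case by commuting $R$ to the outside. All four estimates share the same template: \eqref{zp}, \eqref{zL}, and their $R$--version are linear in the unknowns $\|\slashed{\mathcal L}_Z^{\leq k_1}\check\chi\|_{L^2(\Sigma_s^u)}$ and $\|Z^{\leq k_0}\mu\|_{L^2(\Sigma_s^u)}$ (with $k_0,k_1\leq k$), so substituting Proposition \ref{L2chi} directly closes everything.

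First I would treat $\|Z^{k+1}\phi\|_{L^2}$ and $\|Z^{k+1}T\phi\|_{L^2}$ jointly. Inequality \eqref{zp} reads
\begin{equation*}
\delta^l\|Z^{k+1}\phi\|_{L^2(\Sigma_s^u)}+\delta\cdot\delta^l\|Z^{k+1}T\phi\|_{L^2(\Sigma_s^u)}
\lesssim \delta^{5/2-\varepsilon_0}+\delta^{2-\varepsilon_0}s^{-1/2}\delta^{l_1}\|\slashed{\mathcal L}_Z^{k_1}\check\chi\|_{L^2(\Sigma_s^u)}
+\delta^{2-\varepsilon_0}s^{-3/2}\delta^{l_0}\|Z^{k_0}\mu\|_{L^2(\Sigma_s^u)}
+\delta^2 s^{-\epsilon}\sqrt{E_{1,\leq k+2}}+\delta^2\sqrt{E_{2,\leq k+2}}.
\end{equation*}
Using the bound $\delta^{l_1}\|\slashed{\mathcal L}_Z^{k_1}\check\chi\|_{L^2}\lesssim \delta^{3/2-\varepsilon_0}s^{-1/2}+s^{-\epsilon}\sqrt{\tilde E_{1,\leq k+2}}+\delta\sqrt{\tilde E_{2,\leq k+2}}$ absorbs the second term into $\delta^{7/2-2\varepsilon_0}s^{-1}+\delta^{2-\varepsilon_0}s^{-1/2-\epsilon}\sqrt{\tilde E_{1,\leq k+2}}+\delta^{3-\varepsilon_0}s^{-1/2}\sqrt{\tilde E_{2,\leq k+2}}$; similarly $\delta^{l_0}\|Z^{k_0}\mu\|_{L^2}\lesssim \delta^{3/2-2\varepsilon_0}s^{3/2}+\delta^{-\varepsilon_0}s^{3/2}\sqrt{\tilde E_{1,\leq k+2}}+\delta^{1-\varepsilon_0}s^{3/2}\sqrt{\tilde E_{2,\leq k+2}}$ absorbs the third. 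After collecting terms the first two claimed bounds follow, with the worst $\delta$--power in front of the energy being $\delta^{2-2\varepsilon_0}$ (arising from the $\mu$--contribution in the $\phi$--line and from the $\check\chi$--contribution in the $T\phi$--line).

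Next I would derive the $\mathring L\phi$ estimate from \eqref{zL} by the same substitution; here the critical point is the extra $s^{-\epsilon}$ factor in front of $\sqrt{\tilde E_{1,\leq k+2}}$, which reflects the $\varrho^{2\epsilon}$ weight in the definition \eqref{E1} of $E_1$ and is preserved through the substitution.

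Finally, for $\|Z^{k+1}R\phi\|_{L^2}$ I would write $Z^{k+1}R=R\,Z^{k+1}+[Z^{k+1},R]$, and expand the commutator as a sum $\sum \slashed{\mathcal L}_Z^{k_2}\leftidx{^{(Z)}}{\slashed\pi}\cdot Z^{k_3+1}$ using Lemma \ref{com}. The leading contribution $R\,Z^{k+1}\phi$ is bounded using $|R|\lesssim s$ and the first inequality already established, giving $\delta^{3/2-\varepsilon_0}s+\delta^{1-\varepsilon_0}s^{1-\epsilon}\sqrt{\tilde E_{1,\leq k+2}}+\delta s\sqrt{\tilde E_{2,\leq k+2}}$. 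The commutator terms are then controlled using the $\leftidx{^{(Z)}}{\slashed\pi}$--estimates of Proposition \ref{L2chi} together with the lower-order $\phi$--bounds just obtained; here the structural fact (already exploited in the proof of Proposition \ref{L2chi}) that every $\slashed{\mathcal L}_Z^{\leq k}\leftidx{^{(R)}}{\slashed\pi}$ carries a factor $\delta^{3/2-\varepsilon_0}s^{1/2}$ (plus energy) ensures that the commutator is lower order.

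The only genuinely delicate point, and the main obstacle in carrying this out, is the accounting of the power of $\delta$ for the $R\phi$ estimate: one must verify that the commutator $[Z^{k+1},R]\phi$ does not produce a term of size $\delta^{1-3\varepsilon_0}s$ or worse, and this relies on the sharper $\delta^{1-\varepsilon_0}$ prefactor of $\slashed{\mathcal L}_Z^{\leq k}\leftidx{^{(R)}}{\slashed\pi}$ (rather than the $\delta^{-\varepsilon_0}$ which appears for $\leftidx{^{(T)}}{\slashed\pi}_{\mathring L}$) and on the cancellation of one power of $\delta$ when $Z$ contains a $T$. A careful book-keeping of $l_0,l_1,l_2$ (the number of $T$'s in each factor) and a term-by-term matching against the hierarchy in Proposition \ref{L2chi} then produces the stated bound.
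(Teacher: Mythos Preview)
Your treatment of the first three estimates is correct and matches the paper's: one simply substitutes the bounds on $\slashed{\mathcal L}_Z^{\leq k}\check\chi$ and $Z^{\leq k+1}\mu$ from Proposition~\ref{L2chi} into \eqref{zp} and \eqref{zL}.

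For the fourth estimate, however, your commutator route has a gap. You write $Z^{k+1}R\phi=RZ^{k+1}\phi+[Z^{k+1},R]\phi$ and claim that the main term $RZ^{k+1}\phi$ is bounded ``using $|R|\lesssim s$ and the first inequality''. But $|R|\lesssim s$ only gives the pointwise bound $|Rf|\lesssim s|\slashed d f|$, so one obtains $\delta^l\|RZ^{k+1}\phi\|_{L^2}\lesssim s\,\delta^l\|\slashed d Z^{k+1}\phi\|_{L^2}$, and $\slashed d Z^{k+1}\phi$ is \emph{not} controlled by the first inequality (which bounds $\|Z^{k+1}\phi\|_{L^2}$, not its angular gradient). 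If instead you try to view $RZ^{k+1}\phi$ as a $Z^{k+2}\phi$ and invoke the first inequality at level $k+1$, you pick up $\tilde E_{i,\leq k+3}$ on the right, which overshoots the stated order and fails at the top order $k=2N-6$.

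The paper avoids this entirely by a simple algebraic identity: since $R=R_{ij}$ is purely spatial, $R\phi=(Rx^i)\varphi_i$. Applying $Z^{k+1}$ and Leibniz gives
\[
\delta^l\|Z^{k+1}R\phi\|_{L^2(\Sigma_s^u)}\lesssim \delta^{1-\varepsilon_0}s^{-3/2}\,\delta^{l_0}\|Z^{k_0}Rx^i\|_{L^2(\Sigma_s^u)}+s\,\delta^{l_0}\|Z^{k_0}\varphi\|_{L^2(\Sigma_s^u)},
\]
where the first term uses $|\varphi|\lesssim\delta^{1-\varepsilon_0}s^{-3/2}$ and the second uses $|Z^{k_0}Rx^i|\lesssim s$. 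The $x^i$-norm is bounded by Proposition~\ref{L2chi}, and $\|Z^{k_0}\varphi\|_{L^2}$ is controlled directly by the $\varphi$-energies via \eqref{SiE}, yielding the claim without any commutation or loss of derivatives.
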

\begin{proof}
	The estimates for $Z^{k+1}\phi$, $Z^{k+1}T\phi$ and $Z^{k+1}\mathring L\phi$ can be directly
got by \eqref{zp}, \eqref{YH-1}, \eqref{Fmu} and \eqref{Fchi}. In addition, due to $R\phi=Rx^i\cdot\varphi_i$, then
	\begin{equation*}
	\delta^l\|Z^{k+1}R\phi\|_{L^2(\Sigma_s^u)}\lesssim\delta^{1-\varepsilon_0}s^{-3/2}\delta^{l_0}\|Z^{k_0}Rx^i\|_{L^2(\Sigma_s^u)}
+s\delta^{l_0}\|Z^{k_0}\varphi\|_{L^2(\Sigma_s^u)},
	\end{equation*}
	which gives the $L^2$ control of $Z^{k+1}R\phi$ with the help of Proposition \ref{L2chi}.
\end{proof}

\section{$L^2$ estimates for  the highest order  derivatives of $\slashed\nabla\chi$ and $\slashed\nabla^2\mu$}\label{L2chimu}

It is clear from the energy estimate \eqref{e} and the expression \eqref{Psi} that the highest order derivatives of $\varphi$
in the energy estimate for \eqref{Psi} are of $2N-4$. On the other hand, by the expression of $\leftidx{^{(Z)}}D_{\gamma,2}^k$
 in \eqref{D2}, the highest order derivatives of $\chi$ and $\mu$ in \eqref{e} are of $2N-5$ and $2N-4$ respectively. However,
 it follows from  Proposition \ref{L2chi} that the $L^2$ estimates on the $2N-5$ order derivatives of ${\chi}$
and the $2N-4$ order derivatives of
$\mu$ should be controlled by the energies of $\vp$ with the orders $2N-3$,
which goes beyond the range of energies containing up to $2N-4$ order derivatives of $\vp$.
To overcome this difficulty, as in \cite{C2} and \cite{J}, one needs to
treat $\textrm{tr}{\chi}$ and $\slashed\triangle\mu$ with the corresponding highest order derivatives.

Note that once the estimate of $\slashed\triangle\mu$ is established, one can use the elliptic estimate to treat $\slashed\nabla^2\mu$.
We start with the estimate on the Riemann curvature $\slashed{\mathcal R}$ of $\slashed g$.

\begin{lemma}\label{Gc}
	Under  the assumptions $(\star)$ with suitably small $\dl>0$, the Riemann curvature $\slashed{\mathcal R}$ of $\slashed g$ in $D^{s,u}$
	satisfies
	\begin{equation}\label{Ga}
	|\slashed{\mathcal R}|\lesssim s^{-2}.
	\end{equation}
\end{lemma}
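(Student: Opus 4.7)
My plan is to bound $|\slashed{\mathcal R}|$ by reducing it to quantities already controlled in Sections 7--9 via the Gauss equation for the codimension-two submanifold $S_{s,u}$ of the spacetime $(\mathbb R^{1+4},g)$. The null normal pair $\{\mathring L,\mathring{\underline L}\}$ with $g(\mathring L,\mathring{\underline L}) = -2\mu$ produces the standard identity
\begin{equation*}
\slashed{\mathcal R}_{ABCD} = \mathscr R_{ABCD} + \frac{1}{2\mu}\bigl(\underline\chi_{AC}\chi_{BD} + \chi_{AC}\underline\chi_{BD} - \underline\chi_{AD}\chi_{BC} - \chi_{AD}\underline\chi_{BC}\bigr),
\end{equation*}
where $\underline\chi_{AB}:=g(\mathscr D_A\mathring{\underline L},X_B)$. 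Using $\mathring{\underline L} = \mu\mathring L + 2T$, the orthogonality $g(\tilde T,X_B)=0$, and \eqref{theta}, one obtains $\underline\chi_{AB} = -\mu\chi_{AB} + \mathcal E_{AB}$, where $\mathcal E_{AB}$ depends linearly on the ``good'' quantities $(\mathring L\phi,\slashed d\phi,\mathring L\varphi,\slashed d\varphi)$ and is therefore $O(\delta^{1-\varepsilon_0}s^{-3/2})$ under $(\star)$.

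With this substitution the Gauss correction reduces at leading order to $-(\chi_{AC}\chi_{BD} - \chi_{AD}\chi_{BC})$. Decomposing $\chi_{AB} = \varrho^{-1}\slashed g_{AB} + \check\chi_{AB}$, expanding, and invoking \eqref{chi}, Proposition \ref{chi'}, and $\mu\sim 1$ from \eqref{phimu}, the squared norm of this antisymmetrized tensor is controlled by a constant multiple of $\varrho^{-4}$, yielding an $O(s^{-2})$ contribution. All remaining cross terms involving $\mathcal E_{AB}$ or $\check\chi_{AB}$ carry an extra factor of $\delta^{1-\varepsilon_0}$ and are therefore strictly smaller.

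The principal remaining task is to bound the purely tangential components $\mathscr R_{ABCD} := \mathscr R_{\kappa\lambda\al\beta} X_A^\kappa X_B^\lambda X_C^\al X_D^\beta$ of the ambient Riemann tensor. I would adapt the derivation of Lemma \ref{curvature} by contracting the general formula for $\mathscr R_{\kappa\lambda\al\beta}$ with four tangential vectors; because $X_A^0 = 0$, only spatial contractions survive. Using \eqref{AB} to rewrite the second-order term $\mu\mathscr D^2_{AB}\phi$ as $\mu\slashed\nabla^2_{AB}\phi - \chi_{AB}(T\phi)$ plus controlled lower-order pieces (and analogously for $\mathscr D^2_{AB}\varphi_\gamma$), every resulting contribution is bounded by $O(s^{-2})$ -- the generic term being better by at least a factor of $\delta^{1-\varepsilon_0}$ -- once $(\star)$, Lemma \ref{Rh}, and Proposition \ref{chi'} are invoked. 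Summing this with the Gauss-correction bound yields $|\slashed{\mathcal R}|\lesssim s^{-2}$, establishing Lemma \ref{Gc} via the Christoffel-symbol identification $\slashed{\mathcal R}^D_{ABC} = X_B\slashed\Gamma^D_{AC} - X_A\slashed\Gamma^D_{BC} + \slashed\Gamma^D_{BE}\slashed\Gamma^E_{AC} - \slashed\Gamma^D_{AE}\slashed\Gamma^E_{BC}$ referenced in the paper's outline.

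The main technical obstacle is the last step: a factor $T\varphi_\gamma$ appearing through the decomposition \eqref{pal} of $\p_\al$ is individually only of size $\delta^{-\varepsilon_0}s^{-3/2}$ by $(\star)$, which would by itself exceed the $s^{-2}$ budget. The cancellation that saves the argument is precisely the one already exploited in \eqref{GTe}: whenever $T\varphi_\gamma$ appears, it is contracted against a quantity of the shape $G^\gamma_{\mathring L\mathring L}$, and the first null condition \eqref{null} forces the dangerous combination $(\p_{\varphi_\gamma}g^{\al\beta})\mathring L_\al\mathring L_\beta\mathring L_\gamma$ to carry an extra factor of $\delta^{1-\varepsilon_0}s^{-1}$. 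Careful bookkeeping of these null-condition pairings, term by term in the tangential contraction of $\mathscr R_{\kappa\lambda\al\beta}$, is what ultimately closes the estimate.
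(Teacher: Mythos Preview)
Your approach via the Gauss equation is genuinely different from the paper's, and while it is in principle a valid route, there is a real gap in your execution.

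The paper never invokes the Gauss equation. Instead it works intrinsically: it derives the transport equation
\[
\mathring L\slashed\Gamma_{AB}^C=\slashed g^{CD}\bigl(\slashed\nabla_A\check\chi_{BD}+\slashed\nabla_B\check\chi_{AD}-\slashed\nabla_D\check\chi_{AB}\bigr),
\]
integrates along $\mathring L$ using $|\slashed\nabla\check\chi|\lesssim\delta^{1-\varepsilon_0}s^{-3}$ to obtain $|\slashed\Gamma|\lesssim s^{-1}$, repeats the argument one level up to get $|\slashed\nabla\slashed\Gamma|\lesssim s^{-2}$, and then reads off the bound from the coordinate formula $\slashed{\mathcal R}_{ABCD}=\slashed\nabla_B\slashed\Gamma_{ADC}-\slashed\nabla_A\slashed\Gamma_{BDC}+\slashed\Gamma_{BC}^E\slashed\Gamma_{ADE}-\slashed\Gamma_{AC}^E\slashed\Gamma_{BDE}$. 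No bad $T\varphi_\gamma$ factors ever enter, and the null condition is not used.

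Your argument breaks at two specific places. First, the claim that $\mathcal E_{AB}=\underline\chi_{AB}+\mu\chi_{AB}$ ``depends linearly on the good quantities $(\mathring L\phi,\slashed d\phi,\mathring L\varphi,\slashed d\varphi)$'' is false: by \eqref{theta}, $\sigma_{AB}+\chi_{AB}$ contains the term $\tfrac12(\mathcal{FG})_{AB}(\tilde T)$, whose piece $\tfrac12 G^\gamma_{AB}\tilde T\varphi_\gamma$ is of size $\delta^{-\varepsilon_0}s^{-3/2}$, not $\delta^{1-\varepsilon_0}s^{-3/2}$. Consequently the Gauss correction itself carries a contribution of order $\delta^{-\varepsilon_0}s^{-5/2}$ coming from $\chi_{AC}\,G^\gamma_{BD}\,T\varphi_\gamma$. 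Second, the null-condition mechanism you invoke for $\mathscr R_{ABCD}$ is misidentified: the cancellation in \eqref{GTe} relies on the contraction $G^\gamma_{\mathring L\mathring L}T\varphi_\gamma$, where the null vector $\mathring L$ activates \eqref{null}. In the purely tangential contraction $\mathscr R_{ABCD}$, the $T\varphi_\gamma$ arising from \eqref{AB} is multiplied by $G^\gamma_{CD}$ with \emph{spacelike} indices, and the null condition gives no improvement there. The resulting term is again $\delta^{-\varepsilon_0}s^{-5/2}$, which does not beat $s^{-2}$ uniformly in $\delta$ near $s=t_0$.

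Since the Gauss equation is an identity and the paper's proof shows $|\slashed{\mathcal R}|\lesssim s^{-2}$, these two $\delta^{-\varepsilon_0}s^{-5/2}$ contributions must cancel against each other. To salvage your route you would have to exhibit that cancellation explicitly between the $\chi_{AB}T\varphi_\gamma$ piece hidden in $\mathscr D^2_{AB}\varphi_\gamma$ and the $G^\gamma_{AB}T\varphi_\gamma$ piece inside $\underline\chi$; this is considerably more delicate bookkeeping than what you have sketched. The paper's transport-equation method sidesteps the issue entirely.
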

\begin{proof}
	Let $\slashed\Gamma_{AB}^C$ be the Christoffel symbols of $\slashed g$, that is,
	\[
	\slashed\Gamma_{AB}^C=\f12\slashed g^{CD}\big\{X_A(\slashed g_{DB})+X_B(\slashed g_{AD})-X_D(\slashed g_{AB})\big\}.
	\]
	Take the derivative of $\slashed\Gamma_{AB}^C$ with respect to $s$ to get
	\begin{equation}\label{LGamma}
	\mathring L\slashed\Gamma_{AB}^C=\slashed g^{CD}(\slashed\nabla_A\check\chi_{BD}+\slashed\nabla_B\check\chi_{AD}-\slashed\nabla_D\check\chi_{AB}).
	\end{equation}
	As $|\slashed \Gamma|^2=\slashed g^{AA'}\slashed g^{BB'}\slashed g_{CC'}\slashed\Gamma_{AB}^C\slashed\Gamma_{A'B'}^{C'}$, thus
	\begin{equation*}
	\begin{split}
	\mathring L(\varrho^2|\slashed\Gamma|^2)=&\varrho^2\big(\mathring L(|\slashed\Gamma|^2)+\f2\varrho|\slashed\Gamma|^2\big)\\
	=&\varrho^2\big\{-4\check\chi^{AA'}\slashed g^{BB'}\slashed g_{CC'}\slashed\Gamma_{AB}^C\slashed\Gamma_{A'B'}^{C'}+2\slashed g^{AA'}\slashed g^{BB'}\check\chi_{CC'}\slashed\Gamma_{AB}^C\slashed\Gamma_{A'B'}^{C'}\\
	&\qquad+2\slashed g^{AA'}\slashed g^{BB'}\slashed g_{CC'}\slashed g^{CD}(\slashed\nabla_A\check\chi_{BD}+\slashed\nabla_B\check\chi_{AD}-\slashed\nabla_D\check\chi_{AB})\slashed\Gamma_{A'B'}^{C'}\big\},
	\end{split}
	\end{equation*}
	which implies
	\begin{equation*}
	|\mathring L(\varrho|\slashed\Gamma|)|\lesssim\delta^{1-\varepsilon_0}s^{-2}(\varrho|\slashed\Gamma|)+\delta^{1-\varepsilon_0}s^{-2}.
	\end{equation*}
	Hence, it holds that
	\begin{equation}\label{Gamma}
	|\slashed\Gamma|\lesssim s^{-1}.
	\end{equation}
	Similarly, one has
	\begin{equation*}
	\begin{split}
	\mathring L(\slashed\nabla_B\slashed\Gamma_{ADC})=&[\mathring L,\slashed\nabla_B]\slashed\Gamma_{ADC}+\slashed\nabla_B(\mathring L\slashed\Gamma_{ADC})\\
	=&-2(\check{\slashed\nabla}_B\check\chi_A^E)\slashed\Gamma_{EDC}-2(\check{\slashed\nabla}_B\check\chi_D^E)\slashed\Gamma_{AEC}
-2(\check{\slashed\nabla}_B\check\chi_C^E)\slashed\Gamma_{ADE}+2(\slashed\nabla_B\check\chi_{DE})\slashed\Gamma_{AC}^E\\
	&+2\check\chi_{DE}(\slashed\nabla_B\Gamma_{AC}^E)+2\varrho^{-1}\slashed\nabla_B\slashed\Gamma_{ADC}
+2\slashed\nabla_B\check{\slashed\nabla}_A\check\chi_{CD}
	\end{split}
	\end{equation*}
	due to Lemma \ref{commute} and \eqref{LGamma}, where $\slashed\Gamma_{ADC}:=\slashed g_{DE}\slashed\Gamma_{AC}^E$. Thus, \eqref{Gamma} implies that
	\begin{equation*}
	|\mathring L(\varrho^2|\slashed\nabla\Gamma|)|\lesssim\delta^{1-\varepsilon_0}s^{-2}(\varrho^2|\slashed\nabla\Gamma|)+\delta^{1-\varepsilon_0}s^{-2},
	\end{equation*}
	which gives
	\begin{equation}\label{dGamma}
	|\slashed\nabla\Gamma|\lesssim s^{-2}.
	\end{equation}
	In addition, according to the definition of Riemann curvature, one has
	\begin{equation}\label{Rie}
	\slashed{\mathcal R}_{ABCD}=X_B\slashed\Gamma_{CDA}-X_A\slashed\Gamma_{BDC}-\slashed\Gamma_{CA}^E\slashed\Gamma_{BED}+\slashed\Gamma_{BC}^E\slashed\Gamma_{AED}.
	\end{equation}
	Due to $X_A\slashed\Gamma_{BDC}=\slashed\nabla_A\slashed\Gamma_{BDC}+\slashed\Gamma_{AB}^E\slashed\Gamma_{EDC}
+\slashed\Gamma_{AD}^E\slashed\Gamma_{BEC}+\slashed\Gamma_{AC}^E\slashed\Gamma_{BDE}$, then by \eqref{Rie}, one has
	\begin{equation}\label{Riem}
	\slashed{\mathcal R}_{ABCD}=\slashed\nabla_B\slashed\Gamma_{ADC}-\slashed\nabla_A\Gamma_{BDC}+\slashed\Gamma_{BC}^E\slashed\Gamma_{ADE}
-\slashed\Gamma_{AC}^E\slashed\Gamma_{BDE}.
	\end{equation}
Substituting the estimates \eqref{Gamma} and \eqref{dGamma} into \eqref{Riem} yields the desired result \eqref{Ga}.
\end{proof}

Now, the following elliptic estimate will be derived.

\begin{proposition}\label{elliptic}
	For any function $f\in C^3(D^{s, u})$, it holds that
	\begin{equation}\label{ee}
	\int_{S_{s, u}}|\slashed\nabla^2f|^2d\nu_{\slashed g}
	\lesssim\int_{S_{s, u}}(\slashed\triangle f)^2d\nu_{\slashed g}+s^{-2}\int_{S_{s, u}}|\slashed d f|^2d\nu_{\slashed g}.
	\end{equation}
\end{proposition}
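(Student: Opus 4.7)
The plan is to derive \eqref{ee} from a standard Bochner--Weitzenb\"ock identity on the closed $3$-dimensional Riemannian manifold $(S_{s,u},\slashed g)$, together with the curvature bound $|\slashed{\mathcal R}|\lesssim s^{-2}$ already established in Lemma \ref{Gc}.

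First I will write down the Bochner identity for a scalar function $f$ on $(S_{s,u},\slashed g)$:
\[
\tfrac12\slashed\triangle|\slashed d f|^2=|\slashed\nabla^2 f|^2+\slashed g(\slashed d f,\slashed d(\slashed\triangle f))+\slashed{\textrm{Ric}}(\slashed\nabla f,\slashed\nabla f),
\]
where $\slashed{\textrm{Ric}}$ is the Ricci tensor of $\slashed g$. Integrating over the closed surface $S_{s,u}$ (which is diffeomorphic to $\mathbb S^3$, hence has no boundary), the left-hand side contributes $0$ and an integration by parts on the middle term on the right gives
\[
\int_{S_{s,u}}|\slashed\nabla^2 f|^2\,d\nu_{\slashed g}=\int_{S_{s,u}}(\slashed\triangle f)^2\,d\nu_{\slashed g}-\int_{S_{s,u}}\slashed{\textrm{Ric}}(\slashed\nabla f,\slashed\nabla f)\,d\nu_{\slashed g}.
\]

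Second I would control the Ricci term. Since $S_{s,u}$ is $3$-dimensional, $\slashed{\textrm{Ric}}$ is obtained by a single contraction of $\slashed{\mathcal R}$ against $\slashed g$, so by Lemma \ref{Gc} (and the uniform comparability of $\slashed g$ with a round metric of radius $\sim s$ guaranteed by the bootstrap assumptions $(\star)$ through \eqref{chi} and \eqref{echi'}) one has $|\slashed{\textrm{Ric}}|\lesssim s^{-2}$ pointwise on $S_{s,u}$. Therefore
\[
\Bigl|\int_{S_{s,u}}\slashed{\textrm{Ric}}(\slashed\nabla f,\slashed\nabla f)\,d\nu_{\slashed g}\Bigr|\lesssim s^{-2}\int_{S_{s,u}}|\slashed d f|^2\,d\nu_{\slashed g},
\]
which, combined with the identity above, yields exactly \eqref{ee}.

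The routine pieces are the Bochner identity and the integration by parts; the only substantive input is the curvature estimate $|\slashed{\mathcal R}|\lesssim s^{-2}$, which has already been obtained in Lemma \ref{Gc} via a transport argument for $\slashed\Gamma$ and $\slashed\nabla\slashed\Gamma$ along $\mathring L$. Thus I do not anticipate any genuine obstacle; the only delicate point is to make sure that the comparison constants in $|\slashed{\textrm{Ric}}|\lesssim s^{-2}$ are uniform in $(s,u,\vartheta)\in D^{s,u}$, which follows because $\mu\sim 1$ (see \eqref{phimu}), $|\check\chi|\lesssim \delta^{1-\varepsilon_0}s^{-2}$, and hence the volume form $d\nu_{\slashed g}$ and metric $\slashed g$ are comparable to those of a Euclidean sphere of radius $\varrho\sim s$ uniformly throughout $D^{s,u}$.
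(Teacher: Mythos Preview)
Your proposal is correct and is essentially the same as the paper's proof: the paper derives the pointwise identity $|\slashed\nabla^2f|^2=\slashed\nabla^A(\slashed\nabla^Bf\slashed\nabla_{AB}^2f)-\slashed\nabla_B(\slashed d^Bf\slashed\triangle f)+(\slashed\triangle f)^2-\mathcal R_{AB}\slashed d^Af\slashed d^Bf$ directly from the Ricci identity $\slashed\nabla_A\slashed\triangle f=\slashed\nabla^B\slashed\nabla_{AB}^2f-\mathcal R_{AB}\slashed d^Bf$, integrates it over the closed surface $S_{s,u}$, and then applies Lemma~\ref{Gc}, which after integration is exactly your Bochner--Weitzenb\"ock argument.
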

\begin{proof}
		
	For any function $f\in C^3(D^{s, u})$, the Ricci identity gives $\slashed\nabla_A\slashed\triangle f=\slashed\nabla^B\slashed\nabla_{AB}^2f-\mathcal R_{AB}\slashed d^Bf$, where $\mathcal R_{AB}=\slashed g^{CD}\slashed{\mathcal R}_{ACBD}$ is the Ricci curvature tensor. Therefore,
	\begin{equation}\label{n2f}
	|\slashed\nabla^2f|^2=\slashed\nabla^A(\slashed\nabla^Bf\slashed\nabla_{AB}^2f)-\slashed\nabla_B(\slashed d^Bf\slashed\triangle f)+(\slashed\triangle f)^2-\mathcal R_{AB}\slashed d^Af\slashed d^Bf.
	\end{equation}
	Integrating \eqref{n2f} over $S_{s,u}$ yields
	\begin{equation*}
	\int_{S_{s, u}}|\slashed\nabla^2f|^2d\nu_{\slashed g}=\int_{S_{s, u}}|(\slashed\triangle f)^2d\nu_{\slashed g}-\int_{S_{s, u}}\mathcal R_{AB}\slashed d^Af\slashed d^Bfd\nu_{\slashed g},
	\end{equation*}
	which implies \eqref{ee} due to \eqref{Ga}.
\end{proof}

\subsection{Estimates on derivatives of $\slashed d\textrm{tr}\chi$ and $\slashed\nabla\chi$}\label{trchi}
If there is at least one vector field $\varrho\mathring L$ in $Z^k$, that is, $\slashed{\mathcal L}_Z^k=\slashed{\mathcal L}_Z^{p_1}\slashed{\mathcal L}_{\varrho\mathring L}\slashed{\mathcal L}_{\bar Z}^{p_2}$ with $p_1+p_2=k-1$ and $\bar Z\in\{T, R_{ij}\}$, then
it follows from Proposition \ref{L2chi} and Corollary \ref{Zphi} that
\begin{equation}\label{nz}
\begin{split}
&\delta^l\|\slashed\nabla\slashed{\mathcal L}_{Z}^k\check\chi\|_{L^2(\Sigma_s^{u})}\\
=&\delta^l\|\slashed\nabla\slashed{\mathcal L}_Z^{p_1}[\slashed{\mathcal L}_{\varrho\mathring L},\slashed{\mathcal L}_{\bar Z}^{p_2}]\check\chi+\slashed\nabla\slashed{\mathcal L}_Z^{p_1}\slashed{\mathcal L}_{\bar Z}^{p_2}\slashed{\mathcal L}_{\varrho\mathring L}\check\chi\|_{L^2(\Sigma_s^{u})}\\
\lesssim&s^{-2}\delta^{l_1}\|\slashed{\mathcal L}_Z^{k_1}\check\chi\|_{L^2(\Sigma_s^{u})}+\delta^{1-\varepsilon_0}s^{-3}\delta^{l_1}(\|\slashed{\mathcal L}_Z^{k_1}\leftidx^{{(R)}}\slashed\pi_{\mathring L}\|_{L^2(\Sigma_s^{u})}+\delta\|\slashed{\mathcal L}_Z^{k_1}\leftidx^{{(T)}}\slashed\pi_{\mathring L}\|_{L^2(\Sigma_s^{u})})\\
&+\delta^{1-\varepsilon_0}s^{-7/2}\delta^{l_1}(\|\slashed{\mathcal L}_Z^{k_1}\leftidx^{{(R)}}\slashed\pi\|_{L^2(\Sigma_s^{u})}+\delta\|\slashed{\mathcal L}_Z^{k_1}\leftidx^{{(T)}}\slashed\pi\|_{L^2(\Sigma_s^{u})}+\|Z^{k_1}\check L^i\|_{L^2(\Sigma_s^{u})})\\
&+\delta^{1-\varepsilon_0}s^{-9/2}\delta^{l_0}\|Z^{k_0}x^i\|_{L^2(\Sigma_s^{u})}
+s^{-2}\delta^{l_0}(\|Z^{k_0}\phi\|_{L^2(\Sigma_s^{u})}+\|Z^{k_0}\varphi\|_{L^2(\Sigma_s^{u})})\\
&+\delta^{1-\varepsilon_0}s^{-7/2}\delta^{l_1}\|\slashed{\mathcal L}_Z^{k_1}\slashed g\|_{L^2(\Sigma_s^{u})}
+s^{-1}\delta^{l_0}(\|Z^{k_0}\mathring L\phi\|_{L^2(\Sigma_s^{u})}+s^{-1}\|Z^{k_0}R\phi\|_{L^2(\Sigma_s^{u})})\\
&+\delta s^{-2}\|Z^{k_0}T\phi\|_{L^2(\Sigma_s^{u})}+s^{-1}\delta^{l_0}\|\slashed dZ^{k_0}\varphi\|_{L^2(\Sigma_s^{u})}+s^{-1}\delta^{l_0}\|\mathring LZ^{k_0}\varphi\|_{L^2(\Sigma_s^{u})}\\
\lesssim&\delta^{3/2-\varepsilon_0}s^{-1}+s^{-1-\epsilon}\sqrt{\tilde E_{1,\leq k+2}(s,u)}+\delta s^{-1}\sqrt{\tilde E_{2,\leq k+2}(s,u)}.
\end{split}
\end{equation}
Thus,
\begin{equation}\label{dch}
\begin{split}
&\delta^l\|\slashed dZ^k\text{tr}\chi\|_{L^2(\Sigma_s^{u})}\\
\lesssim&\delta^l\|\slashed\nabla\slashed{\mathcal L}_{Z}^k\check\chi\|_{L^2(\Sigma_s^{u})}+s^{-1}\delta^{l_1}\|\slashed{\mathcal L}_Z^{k_1}\check\chi\|_{L^2(\Sigma_s^{u})}+\delta^{1-\varepsilon_0}s^{-3}\delta^{l_0}\|\slashed {\mathcal L}_Z^{k_0}\slashed g\|_{L^2(\Sigma_s^{u})}\\
\lesssim&\delta^{3/2-\varepsilon_0}s^{-1}+s^{-1-\epsilon}\sqrt{\tilde E_{1,\leq k+2}(s,u)}+\delta s^{-1}\sqrt{\tilde E_{2,\leq k+2}(s,u)}.
\end{split}
\end{equation}

When all vectorfields $Z$ are in $\{T, R_{ij}\}$, by the expression \eqref{Lchi}, one can get the transport equation for $tr\chi$ as follows
\begin{equation}\label{Ltrchi}
\begin{split}
\mathring L(\textrm{tr}{\chi})=&-|{\check\chi}|^2-\f2\varrho\text{tr}\chi+\f{3}{\varrho^2}-F_{A\mathring L}\slashed d^A\mathring L\phi-G_{A\mathring L}^\gamma\slashed d^A\mathring L\varphi_\gamma
+\f12F_{\mathring L\mathring L}\slashed\triangle\phi\\
&+\f12G_{\mathring L\mathring L}^\gamma\slashed\triangle\varphi_\gamma+\f12\slashed g^{AB}F_{AB}\mathring L^2\phi+\f12\slashed g^{AB}G_{AB}^\gamma\mathring L^2\varphi_{\gamma}\\
&-\f12(\mathcal{FG})_{\mathring L\mathring L}(\mathring L)\text{tr}\chi-(\mathcal{FG})_{\tilde T\mathring L}(\mathring L)\text{tr}\chi+(\mathcal{FG})_{B\mathring L}(X_A)\chi^{AB}\\
&+f(\phi, \varphi, \slashed dx^i,\mathring L^i)\left(
\begin{array}{ccc}
\mathring L\phi\\
\slashed d\phi\\
\mathring L\varphi\\
\slashed d\varphi\\
\end{array}
\right)
\left(
\begin{array}{ccc}
\mathring L\phi\\
\slashed d\phi\\
\mathring L\varphi\\
\slashed d\varphi\\
\end{array}
\right).
\end{split}
\end{equation}
In addition, by \eqref{fe} and \eqref{fequation}, one can rewrite $\slashed\triangle\phi$ and $\slashed\triangle\varphi_\g$ as
\begin{equation}\label{triphi}
\begin{split}
	&\slashed\triangle\phi=\mu^{-1}\big(\mathring L\mathring{\underline L}\phi+\f{3}{2\varrho}\mathring {\underline L}\phi-\mathring {H}\big),\\
&\slashed\triangle\varphi_\g=\mu^{-1}\big(\mathring L\mathring{\underline L}\varphi_\g+\f{3}{2\varrho}\mathring {\underline L}\varphi_\g-{H}_\g\big).
\end{split}
\end{equation}
Thus, \eqref{Ltrchi} can be written as a new form which contains only the first order
or zero-th order derivatives of $\varphi_\g$ on the right hand side of the following  equality
\begin{equation}\label{Ltr}
\begin{split}
\mathring L\big(\textrm{tr}\chi-E)=\big(-\f{2}{\varrho}+\mathcal E\big)\textrm{tr}\chi+\f{3}{\varrho^2}-|\check\chi|^2+e,
\end{split}
\end{equation}
where
\begin{equation*}
\begin{split}
E=&-\slashed g^{AB}(\mathcal{FG})_{A\mathring L}(X_B)+\f12\mu^{-1}(\mathcal{FG})_{\mathring L\mathring L}(\mathring{\underline L})+\f12\slashed g^{AB}(\mathcal{FG})_{AB}(\mathring L),\\
\mathcal E=&-\f12(\mathcal{FG})_{\mathring L\mathring L}(\mathring L)-(\mathcal{FG})_{\tilde T\mathring L}(\mathring L)+\f12\mu^{-1}(\mathcal{FG})_{\mathring L\mathring L}(T),\\
e=&\f12\mu^{-2}(G_{\mathring L\mathring L}^\gamma T\varphi_\gamma)(\mathcal{FG})_{\mathring L\mathring L}(T)
+f(\phi, \varphi, \slashed dx^i,\mathring L^i)\left(
\begin{array}{ccc}
\mathring L\phi\\
\slashed d\phi\\
\mathring L\varphi\\
\slashed d\varphi\\
\tilde T\phi\\
\tilde T\varphi
\end{array}
\right)
\left(
\begin{array}{ccc}
\mathring L\phi\\
\slashed d\phi\\
\mathring L\varphi\\
\slashed d\varphi\\
\end{array}
\right).\\
\end{split}
\end{equation*}
Let $F^k=\slashed d\bar Z^k\textrm{tr}\chi-\slashed d\bar Z^k E$ with $\bar Z\in\{T,R_{ij}\}$. Then by an induction
argument on \eqref{Ltr}, one has
\begin{equation}\label{LFal}
\begin{split}
\slashed{\mathcal L}_{\mathring L}F^k=&(-\f{2}{\varrho}+\mathcal E)F^k+(-\f{2}{\varrho}+\mathcal E)\slashed d\bar Z^k E-\slashed d\bar Z^{k}(|\check\chi|^2)+e^k,
\end{split}
\end{equation}
where
\begin{equation}\label{eal}
\begin{split}
e^k=&\slashed{\mathcal L}_{\bar Z}^k e^0+\sum_{p_1+p_2=k-1}{\tiny {\tiny {\tiny {\tiny }}}}\slashed{\mathcal L}_{Z}^{p_1}\slashed{\mathcal L}_{[\mathring L,\bar Z]}F^{p_2}+\sum_{\tiny\begin{array}{c}p_1+p_2=k\\p_1\geq 1\end{array}}\bar Z^{p_1}\big(-\f{2}{\varrho}+\mathcal E)\slashed d\bar Z^{p_2}\textrm{tr}\chi
\end{split}
\end{equation}
and
\begin{equation}\label{e0}
\begin{split}
e^0=&\slashed de+(\slashed d\mathcal E)\textrm{tr}\chi.
\end{split}
\end{equation}
Note that for any one-form $\xi$ on $S_{s,u}$, it holds that
\begin{equation}\label{Lxi}
\mathring L(\varrho^2|\xi|^2)=-2\varrho^2\check{\chi}^{AB}\xi_A\xi_B+2\varrho^2\slashed g^{AB}(\slashed{\mathcal L}_{\mathring L}\xi_A)\xi_B.
\end{equation}
Taking $\xi=\varrho^2F^k$ in \eqref{Lxi} and using \eqref{LFal} show
\begin{equation*}
\begin{split}
&\mathring L\big(\varrho^6|F^k|^2\big)\\
=&2\varrho^6\big\{-\check\chi^{AB}F^k_AF^k_B+\mathcal E|F^k|^2+\slashed g^{AB}e^k_AF^k_B+(-\f{2}{\varrho}+\mathcal E)\slashed d^A\bar Z^k E\cdot F^k_A-\slashed d^A\bar Z^k(|\check\chi|^2)\cdot F^k_A\big\}.
\end{split}
\end{equation*}
Then
\begin{equation}\label{LrhoF}
\begin{split}
|\mathring L(\varrho^3|F^k|)|\lesssim&\delta^{1-2\varepsilon_0}s^{-3/2}\varrho^3|F^k|+\varrho^2|\slashed d\bar Z^k E|+\varrho^3|\slashed d\bar Z^k(|\check\chi|^2)|+\varrho^3|e^k|.
\end{split}
\end{equation}
It follows from \eqref{Ff} and \eqref{LrhoF} that
\begin{equation*}
\begin{split}
\delta^l\varrho^3\|F^k\|_{L^2(\Sigma_s^{u})}\lesssim&\delta^l\|F^k(t_0,\cdot,\cdot)\|_{L^2(\Sigma_{s}^{ u})}+\delta^l\varrho^{3/2}\int_{t_0}^s\Big\{\delta^{1-2\varepsilon_0}\tau^{-3/2}\varrho^{3/2}\|F^k\|_{L^2(\Sigma_{\tau}^{ u})}\\
&+\tau^{1/2}\|\slashed d\bar Z^k E\|_{L^2(\Sigma_{\tau}^{u})}+\tau^{3/2}\|\slashed d\bar Z^k(|\check\chi|^2)\|_{L^2(\Sigma_{\tau}^u)}+\tau^{3/2}\|e^k\|_{L^2(\Sigma_{\tau}^u)}\Big\}d\tau.
\end{split}
\end{equation*}
This, together with Gronwall's inequality, yields
\begin{equation}\label{Eal}
\begin{split}
&\varrho^{3/2}\delta^l\|F
^k\|_{L^2(\Sigma_s^{u})}\\
\lesssim&\delta^{3/2-2\varepsilon_0}+\delta^l\int_{t_0}^s\Big\{\tau^{1/2}\|\slashed d\bar Z^k E\|_{L^2(\Sigma_{\tau}^{u})}
+\tau^{3/2}\|\slashed d\bar Z^k(|\check\chi|^2)\|_{L^2(\Sigma_{\tau}^u)}+\tau^{3/2}\|e^k\|_{L^2(\Sigma_{\tau}^u)}\Big\}{d}\tau.
\end{split}
\end{equation}

Next, we come to estimate each term in the integrand in \eqref{Eal}.
\begin{enumerate}
	
	\item {\bfseries The estimate on $\|\slashed d\bar Z^k E\|_{L^2(\Sigma_{s}^{u})}$}
	
Due to $E=-\slashed g^{AB}(\mathcal{FG})_{A\mathring L}(X_B)+\f12\mu^{-1}(\mathcal{FG})_{\mathring L\mathring L}(\mathring{\underline L})+\f12\slashed g^{AB}(\mathcal{FG})_{AB}(\mathring L)$, by virtue of \eqref{GT}, it is direct to get
	\begin{equation}\label{sdE}
	\begin{split}
	&\delta^l\|\slashed d\bar Z^k E\|_{L^2(\Sigma_s^{u})}\\
	\lesssim&\delta^{1-2\varepsilon_0}s^{-7/2}\big(\delta^{l_{-1}}\|Z^{k_{-1}}x^i\|_{L^2(\Sigma_s^{u})}
+s\delta^{l_0}\|Z^{k_0}\mu\|_{L^2(\Sigma_s^{u})}\big)
+\delta^{-\varepsilon_0}s^{-5/2}\delta^{l_0}\|Z^{k_0}\check L^i\|_{L^2(\Sigma_s^{u})}\\
	&+s^{-1}\delta^{l_0}\big(\|Z^{k_0}\mathring L\phi\|_{L^2(\Sigma_s^{u})}+\|Z^{k_0} T\phi\|_{L^2(\Sigma_s^{u})}+\|\slashed dZ^{k_0}\varphi\|_{L^2(\Sigma_s^{u})}+\|\mathring LZ^{k_0}\varphi\|_{L^2(\Sigma_s^{u})}\big)\\
	&+\delta^{1-\varepsilon_0}s^{-2}\delta^{l_0}\|\mathring {\underline L}Z^{k_0}\varphi\|_{L^2(\Sigma_s^{u})}+\delta^{-\varepsilon_0}s^{-5/2}\delta^{l_0}(\|Z^{k_0}\phi\|_{L^2(\Sigma_s^{u})}
+\|Z^{k_0}\varphi\|_{L^2(\Sigma_s^{u})})\\
	&+s^{-2}\delta^{l_0}\|Z^{k_0}R\phi\|_{L^2(\Sigma_s^{u})}
+\delta^{1-\varepsilon_0}s^{-7/2}(\delta^{l_0}\|\slashed{\mathcal L}_Z^{k_0}\slashed g\|_{L^2(\Sigma_s^{u})}
+\delta^{l_1}\|\slashed{\mathcal L}_Z^{k_1}\leftidx{^{(R)}}{\slashed\pi}_{\mathring L}\|_{L^2(\Sigma_s^{u})})\\
	&+\delta^{2-2\varepsilon_0}s^{-9/2}\delta^{l_1}(\|\slashed{\mathcal L}_Z^{k_1}\leftidx{^{(R)}}{\slashed\pi}_{T}\|_{L^2(\Sigma_s^{u})}
+s\|\slashed{\mathcal L}_Z^{k_1}\leftidx{^{(T)}}{\slashed\pi}_{\mathring L}\|_{L^2(\Sigma_s^{u})})\\
	\lesssim&\delta^{3/2-2\varepsilon_0}s^{-1}+\delta^{-\varepsilon_0}s^{-1}\sqrt{\tilde E_{1,\leq k+2}}
+\delta^{1-\varepsilon_0}s^{-1}\sqrt{\tilde E_{2,\leq k+2}},
	\end{split}
	\end{equation}
where one has used the related $L^\infty$ estimates in Section \ref{ho} and the $L^2$ estimates in
Proposition \ref{L2chi}, Corollary \ref{Zphi} and Lemma \ref{L2T},  $l_m$ is the number of $T$
in $Z^{k_m}$ ($m=-1,0,1$) and $1\leq k_m\leq k+1-m$.

 \item {\bfseries The estimate on $\|\slashed d\bar Z^k(|\check\chi|^2)\|_{L^2(\Sigma_{s}^u)}$}

It follows from the structure equation \eqref{dchi} that
\begin{equation}\label{divchi'}
\begin{split}
&\slashed\nabla_C\check\chi_{AB}=\slashed\nabla_A\check\chi_{CB}+\mathcal I_{CAB},\\
&\slashed\nabla^B\check{\chi}_{AB}=\slashed d_A(\textrm{tr}\chi)+\slashed g^{BC}\mathcal I_{CAB},
\end{split}
\end{equation}
where $\mathcal I$ is a 3-form on $S_{s,u}$, whose components are
\begin{equation}\label{IA}
\begin{split}
\mathcal I_{CAB}=&F_{ij}(\slashed d_C\phi)\slashed d_A\mathring L^i(\slashed d_Bx^j)
+G_{ij}^\gamma(\slashed d_C\varphi_\gamma)\slashed d_A\mathring L^i(\slashed d_Bx^j)
-F_{ij}(\slashed d_A\phi)\slashed d_C\mathring L^i(\slashed d_Bx^j)\\
&-G_{ij}^\gamma(\slashed d_A\varphi_\gamma)\slashed d_C\mathring L^i(\slashed d_Bx^j)-g_{ij}(\slashed d_C\check L^i)\slashed\nabla_{AB}^2x^j+g_{ij}(\slashed d_A\check L^i)\slashed\nabla_{CB}^2x^j\\
&+\varrho^{-1}(\mathcal{FG})_{BC}(X_A)-\varrho^{-1}(\mathcal{FG})_{AB}(X_C)
+\slashed\nabla_A\Lambda_{CB}-\slashed\nabla_C\Lambda_{AB}.
\end{split}
\end{equation}
By commuting ${\slashed{\mathcal L}}_{\bar Z}^k$ with $\slashed\nabla_C$ and $\slashed\nabla^B$
and utilizing Lemma \ref{commute}, one has
\begin{equation}\label{divchi}
\begin{split}
&\slashed\nabla_C\slashed{\mathcal L}_{\bar Z}^k\check\chi_{AB}=\slashed\nabla_A\slashed{\mathcal L}_{\bar Z}^k\check\chi_{CB}+\mathcal{I}^k_{CAB},\\
&\slashed\nabla^B{\slashed{\mathcal L}}_{\bar Z}^k\check{\chi}_{AB}=\slashed d_A(\bar Z^k\textrm{tr}\chi)+\tilde{\mathcal{I}}_A^k
\end{split}
\end{equation}
with
\begin{equation}\label{I}
\begin{split}
\mathcal I_{CAB}^k=\slashed{\mathcal L}_{\bar Z}^k \mathcal I_{CAB}+&\sum_{p_1+p_2=k-1}\slashed{\mathcal L}_{\bar Z}^{p_1}\Big\{(\check{\slashed\nabla}_C\leftidx^{{(\bar Z)}}\slashed\pi_A^D)\slashed{\mathcal L}_{\bar Z}^{p_2}\check\chi_{BD}+(\check{\slashed\nabla}_C\leftidx^{{(\bar Z)}}\slashed\pi_B^D)\slashed{\mathcal L}_{\bar Z}^{p_2}\check\chi_{AD}\\
&\qquad\qquad\qquad-(\check{\slashed\nabla}_A\leftidx^{{(\bar  Z)}}\slashed\pi_C^D)\slashed{\mathcal L}_{\bar Z}^{p_2}\check\chi_{BD}-(\check{\slashed\nabla}_A\leftidx^{{(\bar Z)}}\slashed\pi_B^D)\slashed{\mathcal L}_{\bar Z}^{p_2}\check\chi_{CD}\Big\},\\
\tilde{\mathcal I}_A^k=\slashed{\mathcal L}_{\bar Z}^k (\slashed g^{BC}\mathcal I_{CAB})+&\sum_{p_1+p_2=k-1}\slashed{\mathcal L}_{\bar Z}^{p_1}\Big\{\slashed g^{BC}(\check{\slashed\nabla}_B\leftidx^{{(\bar Z)}}\slashed\pi_A^D)\slashed{\mathcal L}_{ Z}^{p_2}\check\chi_{CD}\\
&\qquad\qquad+\slashed g^{BC}(\check{\slashed\nabla}_B\leftidx^{{(\bar  Z)}}\slashed\pi_C^D)\slashed{\mathcal L}_{\bar Z}^{p_2}\check\chi_{AD}
+\leftidx^{{(\bar Z)}}\slashed\pi^{BC}\slashed\nabla_B\slashed{\mathcal L}_{\bar Z}^{p_2}\check\chi_{AC}\Big\},
\end{split}
\end{equation}
here the constant coefficients are neglected in the summation above.
Therefore,
\begin{equation}\label{nzchi}
\begin{split}
|\slashed\nabla\slashed{\mathcal L}_{\bar Z}^k\check\chi|^2=&(\slashed\nabla_A\slashed{\mathcal L}_{\bar Z}^k\check\chi_{CB}+\mathcal{I}^k_{CAB})(\slashed\nabla^C\slashed{\mathcal L}_{\bar Z}^k\check\chi^{AB})\\
=&\slashed\nabla_A\big((\slashed{\mathcal L}_{\bar Z}^k\check\chi_{CB})(\slashed\nabla^C\slashed{\mathcal L}_{\bar Z}^k\check\chi^{AB})\big)-\slashed\nabla_C\big((\slashed{\mathcal L}_{\bar Z}^k\check\chi^{CB})(\slashed\nabla_A\slashed{\mathcal L}_{\bar Z}^k\check\chi_{B}^A)\big)\\
&-(\slashed{\mathcal L}_{\bar Z}^k\check\chi^{CB})\slashed g^{AD}(\slashed{\mathcal L}_{\bar Z}^k\check\chi_{ED}{\slashed{\mathcal R}_{ACB}^E}+\slashed{\mathcal L}_{ \bar Z}^k\check\chi_{BE}{\slashed{\mathcal R}_{ACD}^E})\\
&+(\slashed\nabla_C{\slashed{\mathcal L}}_{\bar Z}^k\check{\chi}^{CB})(\slashed\nabla^A{\slashed{\mathcal L}}_{\bar Z}^k\check{\chi}_{AB})+\mathcal{I}^k_{CAB}(\slashed\nabla^C\slashed{\mathcal L}_{\bar Z}^k\check\chi^{AB}),
\end{split}
\end{equation}
where the Ricci identity $\slashed\nabla_A\slashed\nabla_C\slashed{\mathcal L}_{\bar Z}^k\check\chi_{BD}-\slashed\nabla_C\slashed\nabla_A\slashed{\mathcal L}_{\bar Z}^k\check\chi_{BD}=\slashed{\mathcal L}_{\bar Z}^k\check\chi_{ED}{\slashed{\mathcal R}_{ACB}^E}+\slashed{\mathcal L}_{\bar Z}^k\check\chi_{BE}{\slashed{\mathcal R}_{ACD}^E}$ with $\slashed{\mathcal R}_{ACB}^E=\slashed g^{EF}\slashed{\mathcal R}_{ACBF}$ is used. According to the
second equation of \eqref{divchi} and the expressions \eqref{I}, one can integrate \eqref{nzchi} on $\Sigma_s^u$ to get
\begin{equation}\label{nbzchi}
\begin{split}
&\delta^{2l}\|\slashed\nabla\slashed{\mathcal L}_{\bar Z}^k\check\chi\|_{L^2(\Sigma_s^{u})}^2\\
\lesssim&\delta^{2l}\|\slashed d \bar Z^k\text{tr}\chi\|_{L^2(\Sigma_s^{u})}^2+s^{-2}\delta^{2l}\|\slashed{\mathcal L}_{ \bar Z}^k\check\chi\|_{L^2(\Sigma_s^{u})}^2+\delta^{2l}(\|\tilde{\mathcal I}^k\|_{L^2(\Sigma_s^{u})}^2+\|{\mathcal I}^k\|_{L^2(\Sigma_s^{u})}^2)\\
\lesssim&\delta^{2l}\|\slashed d \bar Z^k\text{tr}\chi\|_{L^2(\Sigma_s^{u})}^2+s^{-2}\delta^{2l_1}\|\slashed{\mathcal L}_{ \bar Z}^{k_1}\check\chi\|_{L^2(\Sigma_s^{u})}^2+\delta^{2l_1}\|\slashed{\mathcal L}_{ \bar Z}^{k_1}\mathcal I\|_{L^2(\Sigma_s^{u})}^2\\
&+\delta^{2-2\varepsilon_0}s^{-6}\delta^{2l_1}\|\slashed{\mathcal L}_{\bar Z}^{k_1}\slashed g\|_{L^2(\Sigma_s^{u})}^2+\delta^{2-2\varepsilon_0} s^{-6}\delta^{2l_1}\|\slashed{\mathcal L}_{\bar Z}^{k_1}\leftidx^{{(R)}}\slashed\pi\|_{L^2(\Sigma_s^{u})}^2\\
&+\delta^{4-2\varepsilon_0}s^{-6}\delta^{2l_1}\|\slashed{\mathcal L}_{ \bar Z}^{k_1}\leftidx^{{(T)}}\slashed\pi\|_{L^2(\Sigma_s^{u})}^2.
\end{split}
\end{equation}
In addition, substituting \eqref{IA} into \eqref{nbzchi}, due to $\slashed d\bar Z^k\textrm{tr}\chi=F^k+\slashed d\bar Z^k E$,
one can then obtain by Proposition \ref{L2chi}, Corollary \ref{Zphi} and \eqref{sdE} that
\begin{equation}\label{nbz}
\begin{split}
&\delta^l\|\slashed\nabla\slashed{\mathcal L}_{\bar Z}^k\check\chi\|_{L^2(\Sigma_s^{u})}\\
\lesssim&\delta^{l}\|\slashed d \bar Z^k\text{tr}\chi\|_{L^2(\Sigma_s^{u})}+s^{-1}\delta^{l_1}\|\slashed{\mathcal L}_{ Z}^{k_1}\check\chi\|_{L^2(\Sigma_s^{u})}+s^{-2}\delta^{l_0}(\| Z^{k_0}\phi\|_{L^2(\Sigma_s^{u})}+\|Z^{k_0}\check L^i\|_{L^2(\Sigma_s^{u})})\\
&+\delta^{1-\varepsilon_0}s^{-4}\delta^{l_{-1}}\| Z^{k_{-1}}x^i\|_{L^2(\Sigma_s^{u})}+s^{-2}\delta^{l_0}\| Z^{k_0}\varphi\|_{L^2(\Sigma_s^{u})}+s^{-2}\delta^{l_0}\| Z^{k_0}R\phi\|_{L^2(\Sigma_s^{u})}\\
&+s^{-1}\delta^{l_0}\|\slashed d Z^{k_0}\varphi\|_{L^2(\Sigma_s^{u})}+\delta^{1-\varepsilon_0}s^{-3}\delta^{l_1}\|\slashed{\mathcal L}_{Z}^{k_1}\slashed g\|_{L^2(\Sigma_s^{u})}+\delta^{1-\varepsilon_0}s^{-3}\delta^{l_1}\|\slashed{\mathcal L}_{ Z}^{k_1}\leftidx^{{(R)}}\slashed\pi\|_{L^2(\Sigma_s^{u})}\\
&+\delta^{3-\varepsilon_0}s^{-9/2}\delta^{l_1}\|\slashed{\mathcal L}_{ Z}^{k_1}\leftidx^{{(R)}}\slashed\pi_T\|_{L^2(\Sigma_s^{u})}+\delta^{2-\varepsilon_0}s^{-3}\delta^{l_1}\|\slashed{\mathcal L}_{Z}^{k_1}\leftidx^{{(T)}}\slashed\pi\|_{L^2(\Sigma_s^{u})}\\
\lesssim&\delta^{l}\|\slashed d \bar Z^k\text{tr}\chi\|_{L^2(\Sigma_s^{u})}+\delta^{3/2-\varepsilon_0}s^{-1}+s^{-1-\epsilon}\sqrt{\tilde E_{1,\leq k+2}}+\delta s^{-1}\sqrt{\tilde E_{2,\leq k+2}}\\
\lesssim&\delta^{l}\|F^k\|_{L^2(\Sigma_s^{u})}+\delta^{3/2-2\varepsilon_0}s^{-1}+\delta^{-\varepsilon_0}s^{-1}\sqrt{\tilde E_{1,\leq k+2}}
+\delta^{1-\varepsilon_0}s^{-1}\sqrt{\tilde E_{2,\leq k+2}}.
\end{split}
\end{equation}

It follows from \eqref{nbz} and Proposition \ref{L2chi} that
\begin{equation}\label{Y-19}
\begin{split}
&\delta^l\|\slashed d\bar Z^k(|\check\chi|^2)\|_{L^2(\Sigma_s^u)}\\
\lesssim&\delta^l\|\slashed d(\slashed g^{AB}\slashed g^{CD}\slashed{\mathcal L}_{\bar Z}^k\check\chi_{AC}\cdot\check\chi_{BD})\|_{L^2(\Sigma_s^u)}\\
&+\delta^l\sum_{\mbox{\tiny$\begin{array}{c}p_1+p_2+p_3+p_4=k\\
		p_3\leq k-1,p_4\leq k-1\end{array}$}}\|\slashed d(\slashed{\mathcal L}_{\bar Z}^{p_1}\slashed g^{AB}\cdot\slashed{\mathcal L}_{\bar Z}^{p_2}\slashed g^{CD}\cdot\slashed{\mathcal L}_{\bar Z}^{p_3}\check\chi_{AC}\cdot\slashed{\mathcal L}_{\bar Z}^{p_4}\check\chi_{BD})\|_{L^2(\Sigma_s^u)}\\
	\lesssim&\delta^{1-\varepsilon_0} s^{-2}(\delta^l\|\slashed\nabla\slashed{\mathcal L}_{\bar Z}^k\check\chi\|_{L^2(\Sigma_s^u)}+s^{-1}\delta^{l_1}\|\slashed{\mathcal L}_{\bar Z}^{k_1}\check\chi\|_{L^2(\Sigma_s^u)})+\delta^{2-2\varepsilon_0}s^{-5}\delta^{l_0}\|\slashed{\mathcal L}_{\bar Z}^{k_0}\slashed g\|_{L^2(\Sigma_s^u)}\\
	\lesssim&\delta^{1-\varepsilon_0}s^{-2}\delta^{l}\|F^k\|_{L^2(\Sigma_s^{u})}+\delta^{5/2-3\varepsilon_0}s^{-3}
+\delta^{1-2\varepsilon_0}s^{-3}\sqrt{\tilde E_{1,\leq k+2}}+\delta^{2-2\varepsilon_0}s^{-3}\sqrt{\tilde E_{2,\leq k+2}}.
\end{split}
\end{equation}

\item {\bfseries The estimate on $\|e^k\|_{L^2(\Sigma_{s}^u)}$}

We first deal with the term $\slashed{\mathcal L}_{\bar Z}^{p_1}\slashed{\mathcal L}_{[\mathring L,\bar Z]}F^{p_2}$
in \eqref{eal} when $p_1+p_2=k-1$. Due to
$[\mathring L, \bar Z]=\leftidx^{{(\bar Z)}}\slashed\pi_{\mathring L}^AX_A$, then
$\slashed{\mathcal L}_{[\mathring L,\bar Z]}F^{p_2}=\leftidx^{{(\bar Z)}}{\slashed\pi_{\mathring L}}^C\slashed\nabla_CF^{p_2}+F^{p_2}_C\slashed\nabla\leftidx^{{(\bar Z)}}{\slashed\pi_{\mathring L}}^C$.
This implies
\begin{equation}\label{LF}
\begin{split}
&\delta^l\|\slashed{\mathcal L}_{\bar Z}^{p_1}\slashed{\mathcal L}_{[\mathring L, \bar Z]}F^{p_2}\|_{L^2(\Sigma_s^u)}\\
\lesssim&\delta^{1-\varepsilon_0} s^{-2}\delta^l\|F^k\|_{L^2(\Sigma_s^u)}+\delta^{1-\varepsilon_0}s^{-3}\delta^{l_1}\|\slashed{\mathcal L}_{ \bar Z}^{k_1}\check\chi\|_{L^2(\Sigma_s^u)}+\delta^{1-\varepsilon_0}s^{-2}\delta^{l_2}\|\slashed d\bar Z^{k_2}E\|_{L^2(\Sigma_s^u)}\\
&+\delta^{1-2\varepsilon_0}s^{-7/2}\delta^{l_1}(\|\slashed{\mathcal L}_{ \bar Z}^{k_1}\leftidx^{{(R)}}\slashed\pi_{\mathring L}\|_{L^2(\Sigma_s^{u})}+\delta\|\slashed{\mathcal L}_{ \bar Z}^{k_1}\leftidx^{{(T)}}\slashed\pi_{\mathring L}\|_{L^2(\Sigma_s^{u})})+\delta^{2-3\varepsilon_0}s^{-9/2}\delta^{l_1}\|\slashed{\mathcal L}_{\bar Z}^{k_1}\slashed g\|_{L^2(\Sigma_s^{u})}\\
\lesssim&\delta^{1-\varepsilon_0} s^{-2}\delta^l\|F^k\|_{L^2(\Sigma_s^u)}+\delta^{5/2-3\varepsilon_0}s^{-3}+\delta^{1-2\varepsilon_0}s^{-5/2-\epsilon}\sqrt{\tilde{E}_{1,\leq k+2}}+\delta^{2-2\varepsilon_0}s^{-5/2}\sqrt{\tilde{E}_{2,\leq k+2}}.
\end{split}
\end{equation}

For the term $\bar Z^{p_1}(-\f2\varrho+\mathcal E)\slashed d\bar Z^{p_2}\textrm{tr}\chi$
in \eqref{eal} with $p_1+p_2\leq k$ and $p_1\geq 1$, using the expression of $\mathcal E$ and \eqref{GT}, one can get by Proposition \ref{L2chi} and Corollary \ref{Zphi} that
\begin{equation}\label{Zrho}
\begin{split}
&\delta^l\|\bar Z^{p_1}(-\f2\varrho+\mathcal E)\slashed d\bar Z^{p_2}\textrm{tr}\chi\|_{L^2(\Sigma_s^u)}\\
\lesssim&s^{-5/2}\delta^{l_1}\|\slashed{\mathcal L}_{\bar Z}^{k_1}\check\chi\|_{L^2(\Sigma_s^{u})}+\delta^{-\varepsilon_0}s^{-3}\delta^{l_0}\|Z^{k_0}\phi\|_{L^2(\Sigma_s^{u})}
+\delta^{1-2\varepsilon_0}s^{-4}\delta^{l_0}\|Z^{k_0}\varphi\|_{L^2(\Sigma_s^{u})}\\
&+\delta^{1-\varepsilon_0}s^{-9/2}\delta^{l_1}(\|\slashed{\mathcal L}_{\bar Z}^{k_1}\slashed g\|_{L^2(\Sigma_s^{u})}+\delta^{-\varepsilon_0}\|\bar Z^{k_1}\check L^i\|_{L^2(\Sigma_s^{u})}+\delta^{-\varepsilon_0}\|\bar Z^{k_1}\check\varrho\|_{L^2(\Sigma_s^{u})})\\
&+\delta^{2-3\varepsilon_0}s^{-9/2}\delta^{l_1}\|Z^{k_1}\mu\|_{L^2(\Sigma_s^{u})}
+\delta^{2-3\varepsilon_0}s^{-11/2}\delta^{l_0}\|Z^{k_0}x^i\|_{L^2(\Sigma_s^{u})}\\
\lesssim&\delta^{3/2-\varepsilon_0}s^{-3}+s^{-5/2-\epsilon}\sqrt{\tilde{E}_{1,\leq k+2}(s,u)}
+\delta s^{-5/2}\sqrt{\tilde E_{2,\leq k+2}(s,u)}.
\end{split}
\end{equation}

The remaining term to be estimated in \eqref{eal} is $\slashed{\mathcal L}_Z^k e^0$.
By \eqref{e0}, a direct computation yields
\begin{equation}\label{Le0}
\begin{split}
\delta^l\|\slashed{\mathcal L}_{\bar Z}^k e^0\|_{L^2(\Sigma_s^u)}\lesssim&\delta^{3/2-2\varepsilon_0}s^{-2}
+\delta^{-\varepsilon_0}s^{-2}\sqrt{\tilde E_{1,\leq k+2}}+\delta^{1-\varepsilon_0}s^{-2}\sqrt{\tilde E_{2,\leq k+2}}.
\end{split}
\end{equation}

Combining the estimates \eqref{LF}-\eqref{Le0} with \eqref{eal} leads to
\begin{equation}\label{Y-21}
\begin{split}
\delta^l\|e^k\|_{L^2(\Sigma_s^u)}\lesssim&\delta^{1-\varepsilon_0}s^{-2}\delta^l\|F^k\|_{L^2(\Sigma_s^u)}
+\delta^{3/2-2\varepsilon_0}s^{-2}\\
&+\delta^{-\varepsilon_0}s^{-2}\sqrt{\tilde E_{1,\leq k+2}(s,u)}+\delta^{1-\varepsilon_0} s^{-2}\sqrt{\tilde E_{2,\leq k+2}(s,u)}.
\end{split}
\end{equation}
\end{enumerate}

By inserting \eqref{sdE}, \eqref{Y-19} and \eqref{Y-21} into \eqref{Eal},  one arrives at
\begin{equation}\label{Fal}
\begin{split}
\delta^l\|F^k\|_{L^2(\Sigma_s^u)}\lesssim&\delta^{3/2-2\varepsilon_0}s^{-1}
+\delta^{-\varepsilon_0}s^{-1}\sqrt{\tilde E_{1,\leq k+2}(s,u)}+\delta^{1-\varepsilon_0} s^{-1}\sqrt{\tilde E_{2,\leq k+2}(s,u)}.
\end{split}\end{equation}
In addition, one has from the definition of $F^k$ that $\slashed d\bar Z^k\textrm{tr}\chi=F^k+\slashed d \bar Z^k E$. Hence,
by \eqref{Fal}, \eqref{sdE} and \eqref{nbz}, it holds that
\begin{equation}\label{d}
\begin{split}
&\delta^l\|\slashed d{\bar Z}^k\textrm{tr}\chi\|_{L^2(\Sigma_s^u)}
+\delta^l\|\slashed\nabla{\slashed{\mathcal L}}_{\bar Z}^k\check{\chi}\|_{L^2(\Sigma_s^u)}\\
\lesssim&\delta^{3/2-2\varepsilon_0}s^{-1}
+\delta^{-\varepsilon_0}s^{-1}\sqrt{\tilde E_{1,\leq k+2}(s,u)}
+\delta^{1-\varepsilon_0} s^{-1}\sqrt{\tilde E_{2,\leq k+2}(s,u)}.
\end{split}
\end{equation}

In summary, for any $Z\in\{\mathring L, T, R_{ij}\}$, it follows from  \eqref{nz}, \eqref{dch} and \eqref{d} that
\begin{equation}\label{dnchi}
\begin{split}
&\delta^l\|\slashed dZ^k\textrm{tr}\chi\|_{L^2(\Sigma_s^u)}
+\delta^l\|\slashed\nabla{\slashed{\mathcal L}}_{Z}^k\check{\chi}\|_{L^2(\Sigma_s^u)}\\
\lesssim&\delta^{3/2-2\varepsilon_0}s^{-1}
+\delta^{-\varepsilon_0}s^{-1}\sqrt{\tilde E_{1,\leq k+2}(s,u)}+\delta^{1-\varepsilon_0} s^{-1}\sqrt{\tilde E_{2,\leq k+2}(s,u)}.
\end{split}
\end{equation}

\begin{remark}
	It should be noted here that due to $\slashed dZ^k\textrm{tr}\check\chi=\slashed dZ^k\textrm{tr}\chi$
by \eqref{errorv}, then \eqref{dnchi} also gives the $L^2$ estimate of $\slashed dZ^k\textrm{tr}\check\chi$.
\end{remark}

\subsection{Estimates on the derivatives of $\slashed\triangle\mu$ and $\slashed\nabla^2\mu$}\label{trimu}

Analogously to Section \ref{trchi}, one can utilize the transport equation \eqref{lmu} for $\mu$ to estimate $\slashed\triangle\mu$.
It follows from $[Z,\slashed\triangle]=
-\leftidx^{{(Z)}}{\slashed\pi}^{AB}\slashed\nabla_{AB}^2-(\check{\slashed\nabla}_A\leftidx^{{(Z)}}{\slashed\pi}^{AB})\slashed d_B$
 of Lemma \ref{commute} with $Z\in\{T,\mathring L\}$ that
\begin{equation}\label{Ltrimu}
\begin{split}
&\mathring L\slashed\triangle\mu=[\mathring L,\slashed\triangle]\mu+\slashed\triangle\mathring L\mu=\mathcal J_1
+\mathcal J_2+\mathcal J_3+\mathcal J_4,
\end{split}
\end{equation}
where $\mathcal J_1$-$\mathcal J_3$ consist of all the terms whose $k$-order derivatives can not be controlled by
the estimates in Proposition \ref{L2chi}, and the remaining terms are put in $\mathcal J_4$.
\begin{itemize}
	\item $\mathcal J_1$ is comprised of all the terms containing the second order derivatives of $\mu$:
	$$
	\mathcal J_1=-2\check\chi^{AB}\slashed\nabla_{AB}^2\mu-2\varrho^{-1}\slashed\triangle\mu-\f12(\slashed\triangle\mu)\big\{(\mathcal {FG})_{\mathring L\mathring L}(\mathring L)+2(\mathcal{FG})_{\tilde T\mathring L}(\mathring L)\big\}.
	$$
	\item $\mathcal J_2$ is given as follows
	$$
	\mathcal J_2=-\f12\mu (F_{\mathring L\mathring L}+2F_{\tilde T\mathring L})\underline{\mathring L\slashed\triangle\phi}
-\f12\mu (G_{\mathring L\mathring L}^\gamma+2G_{\tilde T\mathring L}^\gamma)\underline{\mathring L\slashed\triangle\varphi_\gamma}
+\f12F_{\mathring L\mathring L}\underline{T\slashed\triangle\phi}+\f12G_{\mathring L\mathring L}^\gamma\underline{T\slashed\triangle\varphi_\gamma},
	$$
	where the terms with underlines in $\mathcal J_2$ are the ones including the third order derivatives of $\phi$ or $\varphi$.
	The following careful decomposition of $\mathcal J_2$ will be used later.

It follows from \eqref{lmu} and \eqref{LL} that
\begin{equation}\label{FLLLp}
\begin{split}
-\f12\mu F_{\mathring L\mathring L}\mathring L\slashed\triangle\phi=&\mathring L(-\f12\mu F_{\mathring L\mathring L}\slashed\triangle\phi)+\f12\mathring L(\mu F_{\mathring L\mathring L})\slashed\triangle\phi\\
&=\mathring L(-\f12\mu F_{\mathring L\mathring L}\slashed\triangle\phi)+f(\phi,\varphi,L^i)\left(
\begin{array}{ccc}
\mu\mathring L\phi\\
\mu\mathring L\varphi\\
\mu\slashed d_A\phi\cdot\slashed d^Ax^j\\
\mu\slashed d_A\varphi\cdot\slashed d^Ax^j\\
T\phi\\
T\varphi
\end{array}
\right)
\slashed\triangle\phi.
\end{split}
\end{equation}
Similar decomposition for $-\mu F_{\tilde T\mathring L}\mathring L\slashed\triangle\phi$ and $-\f12\mu (G_{\mathring L\mathring L}^\gamma+2G_{\tilde T\mathring L}^\gamma)\mathring L\slashed\triangle\varphi_\gamma$ can be derived.

In addition, due to the equation \eqref{fe}, one has
\begin{equation}\label{FLLTp}
\begin{split}
&\f12F_{\mathring L\mathring L}T\slashed\triangle\phi\\
=&\f12\mu^{-1}F_{\mathring L\mathring L}\Big(-(T\mu)\slashed\triangle\phi+\mathring LT\underline{\mathring L}\phi-\leftidx^{{(T)}}{\slashed\pi}_{\mathring L}^A\slashed d_A\underline{\mathring L}\phi+\f{3}{2\varrho^2}\underline{\mathring L}\phi+\f{3}{2\varrho}T\underline{\mathring L}\phi-T\mathring H\Big)\\
=&\mathring L(-\f12\mu^{-1}F_{\mathring L\mathring L}T\underline{\mathring L}\phi)+\f12\mathring L(\mu^{-1}F_{\mathring L\mathring L})T\underline{\mathring L}\phi-\f12\mu^{-1}F_{\mathring L\mathring L}\Big((T\mu)\slashed\triangle\phi
\\
&\qquad\qquad\qquad\qquad\qquad\qquad+\leftidx^{{(T)}}{\slashed\pi}_{\mathring L}^A\slashed d_A\underline{\mathring L}\phi-\f{3}{2\varrho^2}\underline{\mathring L}\phi
-\f{3}{2\varrho}T\underline{\mathring L}\phi+T\mathring H\Big).
\end{split}
\end{equation}
Analogously, $\f12G_{\mathring L\mathring L}^\gamma T\slashed\triangle\varphi_\gamma$ can be decomposed.

It should be emphasized that the terms like $\mathring L(-\f12\mu F_{\mathring L\mathring L}\slashed\triangle\phi)$ in \eqref{FLLLp}
and $\mathring L(-\f12\mu^{-1}F_{\mathring L\mathring L}T\underline{\mathring L}\phi)$
in \eqref{FLLTp} will be moved to the left hand side of \eqref{Ltrimu}.
\item $\mathcal J_3$ can be rewritten as
\begin{equation*}
\begin{split}
\mathcal J_3=&-\f12\mu\big\{\uwave{\slashed\triangle(F_{\mathring L\mathring L})}\mathring L\phi+\uwave{\slashed\triangle(G_{\mathring L\mathring L}^\gamma)}\mathring L\varphi_\gamma+\uwave{2\slashed\triangle(F_{\tilde T\mathring L})}\mathring L\phi+\uwave{2\slashed\triangle(G_{\tilde T\mathring L}^\gamma)}\mathring L\varphi_\gamma
\big\}\\
&+\f12\uwave{\slashed\triangle(F_{\mathring L\mathring L})}T\phi+\f12\uwave{\slashed\triangle(G_{\mathring L\mathring L}^\gamma)}T\varphi_\gamma.
\end{split}
\end{equation*}
In $\mathcal J_3$, the factors with wavy line can be decomposed as follows. First, \eqref{divchi'} implies
\begin{equation}\label{Y-22}in
\begin{split}
\slashed\triangle(F_{\mathring L\mathring L})=&(\slashed\triangle F_{\al\beta})\mathring L^\al\mathring L^\beta+2(\slashed d_AF_{\al\beta})\slashed d^A(\mathring L^\al\mathring L^\beta)+2F_{ij}(\slashed d_A\mathring L^i)\slashed d^A\mathring L^j\\
&+2F_{i\beta}(\slashed\triangle\check L^i+\f{\slashed\triangle x^i}{\varrho})\mathring L^\beta,
\end{split}
\end{equation}
where
\begin{equation*}
\begin{split}
\slashed\triangle\check L^i=&(\slashed{\nabla}^B\check\chi_{AB})\slashed d^Ax^i+\check\chi^{AB}\slashed\nabla_{AB}^2x^i\\
&+\slashed\nabla^A\big\{-(\mathcal{FG})_{\mathring L\tilde T}(X_A)\tilde T^i
-\f12(\mathcal{FG})_{\tilde T\tilde T}(X_A)\tilde T^i+\Lambda_{AB}\slashed d^Bx^i\big\}
\end{split}
\end{equation*}
holds with the help of \eqref{deL}.
Next, $\slashed\triangle(G_{\mathring L\mathring L}^\gamma)$, $\slashed\triangle(F_{\tilde T\mathring L})$ and $\slashed\triangle(G_{\tilde T\mathring L}^\gamma)$ have the similar expressions as that of $\slashed\triangle(F_{\mathring L\mathring L})$.
Here it is pointed out that $\|Z^k(\slashed\triangle(F_{\mathring L\mathring L}))\|_{L^2(\Sigma_s^u)}$
can be estimated with the help of \eqref{Y-22}, \eqref{dnchi} and Proposition \ref{L2chi}.
\item $\mathcal J_4$ is decomposed as
\begin{equation*}
\begin{split}
\mathcal J_4=&-2({\check{\slashed\nabla}_A\check\chi^{AB}})\slashed d_B\mu-\slashed d\mu\cdot\slashed d\big((\mathcal {FG})_{\mathring L\mathring L}(\mathring L)+2(\mathcal{FG})_{\tilde T\mathring L}(\mathring L)\big)\\
&-\mu (F_{\mathring L\mathring L}+2F_{\tilde T\mathring L})\big(\chi^{AB}\slashed\nabla_{AB}^2\phi+(\check{\slashed\nabla}_A\check\chi^{AB})\slashed d_B\phi\big)\\
&-\mu (G_{\mathring L\mathring L}^\gamma+2G_{\tilde T\mathring L}^\gamma)\big(\chi^{AB}\slashed\nabla_{AB}^2\varphi_\gamma+(\check{\slashed\nabla}_A\check\chi^{AB})\slashed d_B\varphi_\gamma\big)\\
&+\slashed d^A(F_{\mathring L\mathring L})\slashed d_AT\phi+\f12F_{\mathring L\mathring L}\big(\leftidx^{{(T)}}{\slashed\pi}^{AB}\slashed\nabla_{AB}^2\phi+(\check{\slashed\nabla}_A\leftidx^{{(T)}}{\slashed\pi}^{AB})\slashed d_B\phi\big)\\
& +\slashed d^A(G_{\mathring L\mathring L}^\gamma)\slashed d_AT\varphi_\gamma+\f12G_{\mathring L\mathring L}^\gamma\big(\leftidx^{{(T)}}{\slashed\pi}^{AB}\slashed\nabla_{AB}^2\varphi_\gamma
+(\check{\slashed\nabla}_A\leftidx^{{(T)}}{\slashed\pi}^{AB})\slashed d_B\varphi_\gamma\big)\\
&-\mu\big\{\slashed d^A(F_{\mathring L\mathring L})\slashed d_A\mathring L\phi
+\slashed d^A(G_{\mathring L\mathring L}^\gamma)\slashed d_A\mathring L\varphi_\gamma
+2\slashed d^A(F_{\tilde T\mathring L})\slashed d_A\mathring L\phi
+2\slashed d^A(G_{\tilde T\mathring L}^\gamma)\slashed d_A\mathring L\varphi_\gamma\big\}.
\end{split}
\end{equation*}
\end{itemize}

Finally, set
\begin{equation*}
\begin{split}
\tilde E=&-\f12\mu (F_{\mathring L\mathring L}+2F_{\tilde T\mathring L})\slashed\triangle\phi
-\f12\mu (G_{\mathring L\mathring L}^\gamma+2G_{\tilde T\mathring L}^\gamma)\slashed\triangle\varphi_\gamma
+\f12\mu^{-1}F_{\mathring L\mathring L}T\underline{\mathring L}\phi+\f12\mu^{-1}G_{\mathring L\mathring L}^\gamma T\underline{\mathring L}\varphi_\gamma,\\
\tilde{\mathcal E}=&-\f12(\mathcal{FG})_{\mathring L\mathring L}(\mathring L)-(\mathcal{FG})_{\tilde T\mathring L}(\mathring L),\\
\tilde F=&\slashed\triangle\mu-\tilde E.
\end{split}
\end{equation*}
Then by the expressions of $\mathcal J_1$-$\mathcal J_4$, \eqref{Ltrimu} can be rewritten as
\begin{equation*}
\begin{split}
\mathring L\tilde F=&-2\check\chi^{AB}\slashed\nabla_{AB}^2\mu+(-\f2\varrho+\tilde{\mathcal E})\tilde F-(\slashed\nabla_B\check\chi^{AB})\mathcal Q_A+\slashed d^A(\text{tr}\check\chi)\mathcal Y_A+\tilde e
\end{split}
\end{equation*}
with
\begin{align*}
\mathcal Q_A=&2\slashed d_A\mu+\mu(\mathcal {FG})_{A\tilde T}(\mathring L)-(\mathcal{FG})_{A\mathring L}(T)+2\mu(\mathcal{FG})_{\tilde T\mathring L}(X_A)+2\mu(\mathcal{FG})_{\mathring L\mathring L}(X_A),\\
\mathcal Y_A=&\slashed d_A\mu+\mu(\mathcal {FG})_{\tilde T\mathring L}(X_A)+\mu(\mathcal{FG})_{\mathring L\mathring L}(X_A)
\end{align*}
and
\begin{equation}\label{te}
\begin{split}
\tilde e=&2(\slashed d^A\mu)\slashed d_A\tilde{\mathcal E}+(-\f2\varrho+\tilde{\mathcal E})\tilde E-\f12\mathring L(\mu^{-1}F_{\mathring L\mathring L})T\underline{\mathring L}\phi-\f12\mathring L(\mu^{-1}G_{\mathring L\mathring L}^\gamma)T\underline{\mathring L}\varphi_\gamma\\
&-\f12\mu^{-1}F_{\mathring L\mathring L}\big\{T\mu(\slashed\triangle\phi)+\leftidx^{{(T)}}{\slashed\pi}_{\mathring L}^A\slashed d_A\underline{\mathring L}\phi-\f{3}{2\varrho^2}\underline{\mathring L}\phi-\f{3}{2\varrho}T\underline{\mathring L}\phi+T\mathring H\big\}\\
&-\f12\mu^{-1}G_{\mathring L\mathring L}^\gamma\big\{T\mu(\slashed\triangle\varphi_\gamma)+\leftidx^{{(T)}}{\slashed\pi}_{\mathring L}^A\slashed d_A\underline{\mathring L}\varphi_\gamma-\f{3}{2\varrho^2}\underline{\mathring L}\varphi_\gamma-\f{3}{2\varrho}T\underline{\mathring L}\varphi_\gamma+T H_\gamma\big\}\\
&+f_1(\phi,\varphi,L^i,\slashed dx^j)\left(
\begin{array}{ccc}
\mu\mathring L\phi\\
\mu\mathring L\varphi\\
\mu\slashed d\phi\\
\mu\slashed d\varphi\\
T\phi\\
T\varphi\\
\mu\chi
\end{array}
\right)
\left(\begin{array}{ccc}
\slashed\nabla^2\phi\\
\slashed\nabla^2\varphi\\
\varrho^{-1}\slashed\triangle x^k\\
\slashed d\mathring L\phi\\
\slashed d\mathring L\varphi
\end{array}
\right)
+f_2(\phi,\varphi,L^i,\slashed dx^j)\left(
\begin{array}{ccc}
\slashed d\phi\\
\slashed d\varphi\\
\slashed d\mathring L^k\\
\slashed\nabla^2x^k
\end{array}
\right)
\left(
\begin{array}{ccc}
\slashed d\phi\\
\slashed d\varphi\\
\mathring L\phi\\
\mathring L\varphi\\
\slashed d\mathring L^l\\
\check\chi
\end{array}
\right)
\left(
\begin{array}{ccc}
\mu\mathring L\phi\\
\mu\mathring L\varphi\\
\mu\slashed d\phi\\
\mu\slashed d\varphi\\
T\phi\\
T\varphi
\end{array}
\right)\\
&+f_3(\phi,\varphi,L^i,\slashed dx^j)\slashed d\mu\left(
\begin{array}{ccc}
\slashed d\phi\\
\slashed d\varphi\\
\chi
\end{array}
\right)
\left(
\begin{array}{ccc}
\slashed d\phi\\
\slashed d\varphi\\
\mathring L\phi\\
\mathring L\varphi
\end{array}
\right)
+f_4(\phi,\varphi,L^i,\slashed dx^j)\left(
\begin{array}{ccc}
\slashed dT\phi\\
\slashed dT\varphi
\end{array}
\right)
\left(
\begin{array}{ccc}
\slashed d\phi\\
\slashed d\varphi\\
\slashed d\mathring L^k
\end{array}
\right).
\end{split}
\end{equation}

As $F^k$ in Sect. \ref{trchi}, one can set $\bar F^k=\bar Z^k\slashed\triangle\mu-\bar Z^k\tilde E$
with $\bar Z$ being any vector field in $\{T,R_{ij}\}$.  Then by an induction argument, one has that for $k\geq 1$,
\begin{equation}\label{LbarF}
\begin{split}
\mathring L\bar F^k=&-2\check\chi^{AB}\slashed{\mathcal L}_{\bar Z}^k\slashed\nabla_{AB}^2\mu+(-\f2\varrho+\tilde{\mathcal E})\bar F^k-(\slashed\nabla_B\slashed{\mathcal L}_{\bar Z}^k\check\chi^{AB})\mathcal Q_A\\
&+(\slashed d^A\bar Z^k\textrm{tr}\chi)\mathcal Y_A+\leftidx^{{(\bar Z)}}{\slashed\pi}_{\mathring L}^A(\slashed d_A\bar F^{k-1})+\bar e^k,
\end{split}
\end{equation}
where
\begin{equation}\label{bare}
\begin{split}
\bar e^k=&\underbrace{\sum_{{\mbox{\tiny$\begin{array}{c}k_1+k_2=k-1\\k_1\geq 1\end{array}$}}}(\slashed{\mathcal L}_{\bar Z}^{k_1}\leftidx^{{(\bar Z)}}{\slashed\pi}_{\mathring L}^A)(\slashed d_A\bar F^{k_2})}_{\textrm{vanish when}\ k=1}+\bar Z^{k}\t e+\sum_{{\mbox{\tiny$\begin{array}{c}k_1+k_2=k\\k_1\geq 1\end{array}$}}}\Big\{-2(\slashed{\mathcal L}_{\bar Z}^{k_1}\check\chi^{AB})(\slashed{\mathcal L}_{\bar Z}^{k_2}\slashed\nabla_{AB}^2\mu)\\
&+\bar Z^{k_1}(-\f2\varrho+\tilde{\mathcal E})\bar F^{k_2}-(\slashed\nabla_B\slashed{\mathcal L}_{\bar Z}^{k_2}\check\chi^{AB})\slashed{\mathcal L}_{\bar Z}^{k_1}\mathcal Q_A+(\slashed d^A\bar Z^{k_2}\textrm{tr}\chi)\slashed{\mathcal L}_{\bar Z}^{k_1}\mathcal Y_A\Big\}\\
&+\sum_{k_1+k_2+k_3=k-1}\bar Z^{k_1}\Big\{\leftidx^{{(\bar Z)}}{\slashed\pi}^{BC}(\slashed\nabla_C\slashed{\mathcal L}_{\bar Z}^{k_2}\check\chi_{AB})\slashed{\mathcal L}_{\bar Z}^{k_3}\mathcal Q^A+\slashed g^{BC}\big(\check{\slashed\nabla}_C\leftidx^{{(\bar Z)}}{\slashed\pi}_A^D(\slashed{\mathcal L}_{\bar Z}^{k_2}\check\chi_{BD})\\
&\qquad\qquad\qquad\qquad\qquad+(\check{\slashed\nabla}_C\leftidx^{{(\bar Z)}}{\slashed\pi}_B^D)\slashed{\mathcal L}_{\bar Z}^{k_2}\check\chi_{AD}\big)\slashed{\mathcal L}_{\bar Z}^{k_3}\mathcal Q^A\Big\}
\end{split}
\end{equation}
when $k\geq 1$.

Due to
\begin{equation}\label{GTL}
\begin{split}
G_{\mathring L\mathring L}^\gamma T\underline{\mathring L}\varphi_\gamma=G_{\mathring L\mathring L}^\gamma T(\mu\mathring L\varphi_\gamma)+2T\big(G_{\mathring L\mathring L}^\gamma T\varphi_\gamma\big)-2(TG_{\mathring L\mathring L}^\gamma)T\varphi_\gamma,
\end{split}
\end{equation}
then $\delta^l|\bar F^k(t_0,u,\vartheta)|\lesssim\delta^{-2\varepsilon_0}$ holds with the help of \eqref{GT}.
Setting $F(s,u,\vartheta)=\varrho^2\bar F^k(s,u,\vartheta)-\varrho_0^2\bar F^k(t_0,u,\vartheta)$ in \eqref{Ff},
and then one can use \eqref{LbarF} to get
\begin{equation}\label{tildeF}
\begin{split}
&\delta^l\varrho^{1/2}\|\bar F^k\|_{L^2(\Sigma_s^u)}\\
\lesssim&\delta^{1/2-2\varepsilon_0}+\int_{t_0}^s\delta^{1-\varepsilon_0}\tau^{-3/2}\delta^l\|\slashed{\mathcal L}_{\bar Z}^k\slashed\nabla^2\mu\|_{L^2(\Sigma_\tau^u)}d\tau+\int_{t_0}^s\tau^{1/2}\delta^l\|\bar e^k\|_{L^2(\Sigma_\tau^u)}d\tau\\
&+\int_{t_0}^s\delta^{1-2\varepsilon_0+l}\tau^{-1/2}\|\slashed d\bar Z^k\textrm{tr}\chi\|_{L^2(\Sigma_\tau^u)}d\tau+\int_{t_0}^s\delta^{-\varepsilon_0+l}\tau^{-1/2}\|\slashed\nabla\slashed{\mathcal L}_{\bar Z}^k\check\chi\|_{L^2(\Sigma_\tau^u)}d\tau.
\end{split}
\end{equation}
In the right hand side of \eqref{tildeF}, the last two integrals can be estimated by \eqref{d}
as
\begin{equation}\label{Y-23}
\begin{split}
&\int_{t_0}^s\delta^{1-2\varepsilon_0+l}\tau^{-1/2}\|\slashed d\bar Z^k\textrm{tr}\chi\|_{L^2(\Sigma_\tau^u)}d\tau+\int_{t_0}^s\delta^{-\varepsilon_0+l}\tau^{-1/2}\|\slashed\nabla\slashed{\mathcal L}_{\bar Z}^k\check\chi\|_{L^2(\Sigma_\tau^u)}d\tau\\
\lesssim&\delta^{3/2-3\varepsilon_0}+\delta^{-2\varepsilon_0}\sqrt{\tilde E_{1,\leq k+2}(s,u)}+\delta^{1-2\varepsilon_0}\sqrt{\tilde E_{2,\leq k+2}(s,u)}.
\end{split}
\end{equation}
In addition, it follows from \eqref{ee}, Lemma \ref{commute} and Proposition \ref{L2chi} that
\begin{equation*}\label{Y-24}
\begin{split}
&\int_{t_0}^s\delta^{1-\varepsilon_0}\tau^{-3/2}\delta^l\|\slashed{\mathcal L}_{\bar Z}^k\slashed\nabla^2\mu\|_{L^2(\Sigma_\tau^u)}d\tau\\
\lesssim&\delta^{1+l-\varepsilon_0}\int_{t_0}^s\tau^{-3/2}\|[\slashed{\mathcal L}_{\bar Z}^k,\slashed\nabla^2]\mu\|_{L^2(\Sigma_\tau^u)}d\tau+\delta^{1+l-\varepsilon_0}\int_{t_0}^s\tau^{-3/2}\|[\bar Z^k,\slashed\triangle]\mu\|_{L^2(\Sigma_\tau^u)}d\tau\\
&+\delta^{1+l-\varepsilon_0}\int_{t_0}^s\tau^{-3/2}\|\bar Z^k\slashed\triangle\mu\|_{L^2(\Sigma_\tau^u)}d\tau+\delta^{1+l-\varepsilon_0}\int_{t_0}^s\tau^{-5/2}\|\slashed d\bar Z^k\mu\|_{L^2(\Sigma_\tau^u)}d\tau\\
\end{split}
\end{equation*}
\begin{equation}\label{Y-24}
\begin{split}
\lesssim&\delta^{2-3\varepsilon_0}\int_{t_0}^s\tau^{-7/2}\delta^{l_1}\|\slashed{\mathcal L}_{\bar Z}^{k_1}\leftidx^{{(R)}}{\slashed\pi}\|_{L^2(\Sigma_\tau^u)}d\tau+\delta^{3-3\varepsilon_0}\int_{t_0}^s\tau^{-7/2}\delta^{l_1}\|\slashed{\mathcal L}_{\bar Z}^{k_1}\leftidx^{{(T)}}{\slashed\pi}\|_{L^2(\Sigma_\tau^u)}d\tau\\
&+\delta^{1-\varepsilon_0+l}\int_{t_0}^s\tau^{-3/2}\|\bar Z^k\tilde E\|_{L^2(\Sigma_\tau^u)}d\tau+\delta^{1+l-\varepsilon_0}\int_{t_0}^s\tau^{-5/2}\|\slashed d\bar Z^k\mu\|_{L^2(\Sigma_\tau^u)}d\tau\\
&+\delta^{1-\varepsilon_0+l}\int_{t_0}^s\tau^{-3/2}\|\bar F^k\|_{L^2(\Sigma_\tau^u)}d\tau\\
\lesssim&\delta^{1-\varepsilon_0+l}\int_{t_0}^s\tau^{-3/2}\|\bar F^k\|_{L^2(\Sigma_\tau^u)}d\tau+\delta^{3/2-3\varepsilon_0}+\delta^{-2\varepsilon_0}\sqrt{\tilde E_{1,\leq k+2}}+\delta^{1-2\varepsilon_0}\sqrt{\tilde E_{2,\leq k+2}},
\end{split}
\end{equation}
where one has used the following fact that
\begin{equation}\label{ZkE}
\begin{split}
&\delta^l\|\bar Z^k\tilde E\|_{L^2(\Sigma_s^u)}\\
\lesssim&\delta^{-2\varepsilon_0}s^{-3/2}\delta^{l_0}\|Z^{k_0}\mu\|_{L^2(\Sigma_s^u)}+\delta^{-\varepsilon_0-1}s^{-3/2}\delta^{l_0}\|Z^{k_0}\check L^i\|_{L^2(\Sigma_s^u)}+\delta^{-2\varepsilon_0}s^{-5/2}\delta^{l_{-1}}\|Z^{k_{-1}}x^i\|_{L^2(\Sigma_s^u)}\\
&+\delta^{1-\varepsilon_0}s^{-7/2}\delta^{l_1}\|\slashed{\mathcal L}_{ Z}^{k_1}\leftidx^{{(R)}}{\slashed\pi}\|_{L^2(\Sigma_s^u)}+\delta^{2-\varepsilon_0}s^{-7/2}\delta^{l_1}\|\slashed{\mathcal L}_{Z}^{k_1}\leftidx^{{(T)}}{\slashed\pi}\|_{L^2(\Sigma_s^u)}+s^{-1}\delta^{l_0}\|\slashed dZ^{k_0}\varphi\|_{L^2(\Sigma_s^u)}\\
&+\delta^{-\varepsilon_0}s^{-1}\delta^{l_0}\|TZ^{k_0}\varphi\|_{L^2(\Sigma_s^u)}+\delta^{-1}\delta^{l_0}\|Z^{k_0}T\phi\|_{L^2(\Sigma_s^u)}+\delta^{-1}\delta^{l_0}\|Z^{k_0}\mathring L\phi\|_{L^2(\Sigma_s^u)}\\
&+s^{-2}\delta^{l_0}\|Z^{k_0}R\phi\|_{L^2(\Sigma_s^u)}+\delta^{-1-\varepsilon_0}s^{-1}\delta^{l_0}\|Z^{k_0}\varphi\|_{L^2(\Sigma_s^u)}+\delta^{-1-\varepsilon_0}s^{-3/2}\delta^{l_0}\|Z^{k_0}\phi\|_{L^2(\Sigma_s^u)}\\
&+\delta^{1-2\varepsilon_0}s^{-7/2}\delta^{l_1}\|\slashed{\mathcal L}_{ Z}^{k_1}\leftidx^{{(R)}}{\slashed\pi}_T\|_{L^2(\Sigma_s^u)}\\
\lesssim&\delta^{1/2-2\varepsilon_0}+\delta^{-\varepsilon_0-1}\sqrt{\tilde E_{1,\leq k+2}(s,u)}+\delta^{-\varepsilon_0}\sqrt{\tilde E_{2,\leq k+2}(s,u)},
\end{split}
\end{equation}
which is derived by \eqref{GTL}, \eqref{GT}, Proposition \ref{L2chi} and Corollary \ref{Zphi}.
By \eqref{bare} and direct computations, one can estimate the $L^2$ norm of $\bar e^k$ on $\Sigma_s^u$ as
\begin{equation}\label{barek}
\begin{split}
\delta^l\|\bar e^k\|_{L^2(\Sigma_s^u)}\lesssim&\delta^{1/2-2\varepsilon_0}s^{-1}+\delta^{-\varepsilon_0-1}s^{-1}\sqrt{\tilde E_{1,\leq k+2}}+\delta^{-2\varepsilon_0}s^{-1}\sqrt{\tilde E_{2,\leq k+2}}\\
&+\delta^{1-2\varepsilon_0}s^{-3/2}\delta^l\|\bar F^k\|_{L^2(\Sigma_s^u)}.
\end{split}
\end{equation}
Note that the term $\delta^{1-2\varepsilon_0}s^{-3/2}\delta^l\|\bar F^k\|_{L^2(\Sigma_t^u)}$ appeared
in \eqref{barek} is derived from the term $\f12\mu^{-1}(G_{\mathring L\mathring L}^\gamma T\varphi_\gamma)\bar Z^k T\text{tr}\check\chi$ in $\bar e^k$, which originates from $\bar Z^k(-\f12\mu^{-1}G_{\mathring L\mathring L}^\gamma TH_\gamma)$ in $\bar Z^k\tilde e$.
By the identity \eqref{Tchi'}, $\f12\mu^{-1}(G_{\mathring L\mathring L}^\gamma T\varphi_\gamma)\bar Z^k T\text{tr}\check\chi$ contains
the term $\f12\mu^{-1}(G_{\mathring L\mathring L}^\gamma T\varphi_\gamma)\bar Z^k\slashed\triangle\mu$ which cannot be estimated
directly by Proposition \ref{L2chi} since the resulting orders of derivatives of $\mu$ exceed $k+1$. In order to overcome this
difficulty, one can treat  the term $\bar F^k+\bar Z^k\tilde E$ instead of $Z^k\slashed\triangle\mu$.

Substituting \eqref{Y-23}-\eqref{barek}  into \eqref{tildeF} and applying Gronwall's inequality yield
\begin{equation*}
\begin{split}
\delta^l\varrho^{1/2}\|\bar F^k\|_{L^2(\Sigma_s^u)}\lesssim&\delta^{1/2-2\varepsilon_0}s^{1/2}+\delta^{-\varepsilon_0-1}s^{1/2}\sqrt{\tilde E_{1,\leq k+2}}+\delta^{-2\varepsilon_0}s^{1/2}\sqrt{\tilde E_{2,\leq k+2}},
\end{split}
\end{equation*}
and hence,
\begin{equation*}
\begin{split}
\delta^l\|\bar Z^k\slashed\triangle\mu\|_{L^2(\Sigma_s^u)}\lesssim&\delta^{1/2-2\varepsilon_0}+\delta^{-\varepsilon_0-1}\sqrt{\tilde E_{1,\leq k+2}(s,u)}+\delta^{-2\varepsilon_0}\sqrt{\tilde E_{2,\leq k+2}(s,u)}
\end{split}
\end{equation*}
with the help of \eqref{ZkE}.

For other cases  containing at least one $\varrho\mathring L$ in $Z^k$, for examples, $Z^k\slashed\triangle\mu=Z^{p_1}(\varrho\mathring L)Z^{p_2}\slashed\triangle\mu$ with $p_1+p_2=k-1$, one can make use of the
commutators $[\mathring L,\bar Z]$ and $[\mathring L,\slashed\triangle]$ and subsequently
utilize the transport equation \eqref{lmu}
to obtain  the related $L^2$ norm estimates.
Therefore, by \eqref{ee} and Proposition \ref{L2chi}, we eventually obtain that
\begin{equation}\label{Zmu}
\begin{split}
&\delta^l\|Z^k\slashed\triangle\mu\|_{L^2(\Sigma_s^u)}+\delta^l\|\slashed{\mathcal L}_Z^k\slashed\nabla^2\mu\|_{L^2(\Sigma_s^u)}\\
\lesssim&\delta^{1/2-2\varepsilon_0}+\delta^{-\varepsilon_0-1}\sqrt{\tilde E_{1,\leq k+2}(s,u)}
+\delta^{-2\varepsilon_0}\sqrt{\tilde E_{2,\leq k+2}(s,u)},
\end{split}
\end{equation}
where $Z\in\{\varrho\mathring L,R_{ij},T\}$.

\section{Estimates for the error terms}\label{ert}
After the preparations for the $L^2$ estimates of the related quantities in Section \ref{hoe} and \ref{L2chimu}, we are ready to
handle the error terms $\delta\int_{D^{s, u}}|\Phi\cdot\mathring{\underline L}\Psi|$ and
$\int_{D^{s, u}}\varrho^{2\epsilon}|\Phi\cdot \mathring L\Psi|$ in \eqref{e}, and then get the final energy estimates for $\vp$.

Recall $\Psi=Z^{k+1}\vp_\g$ and $\Phi=\Phi_\g^{k+1}$, which has been explicitly given in \eqref{Phik}. We need to
treat each term in \eqref{Phik}.

\subsection{Treatments on the first line of \eqref{Phik}}\label{tot}

First, rewrite $Z^{k+1}$ as $Z_{k+1}Z_{k}\cdots Z_2Z_1$ with $Z_i\in\{\varrho\mathring L,T,R_{ij}\}$ and set $\varphi_\gamma^n=\begin{cases}Z_n\cdots Z_1\varphi_\gamma,\ &n\geq 1,\\\varphi_\gamma,\ &n=0.\end{cases}$
 \begin{enumerate}[(1)]
  \item By \eqref{Phik}, one needs to treat the derivatives of $\mu\mathscr D^{\al}{\leftidx{^{(Z)}}C_\gamma^{n}}_{,\al}$ $(0\leq n\leq k)$. To this end, one focuses first on $\leftidx{^{(Z)}}D_{\gamma,1}^{n}$ and $\leftidx{^{(Z)}}D_{\gamma,3}^{n}$ in \eqref{muZC}, which do not contain the top order derivatives of $\varphi_\gamma$.  Substituting \eqref{Lpi}-\eqref{Rpi} into \eqref{D1} and \eqref{D3} directly yields
\begin{equation}\label{T1}
\begin{split}
\leftidx{^{(T)}}D_{\gamma,1}^{n}=&(T\mu)\mathring L^2\varphi_\gamma^{n}+\mu(\slashed d_A\mu+2\mu\zeta_A)\slashed d^A\mathring L\varphi_\gamma^{n}+\f12\textrm{tr}\leftidx{^{(T)}}{\slashed\pi}(\mathring L\mathring{\underline L}\varphi_\gamma^{n}+\f12\textrm{tr}\chi\mathring{\underline L}\varphi_\gamma^{n})\\
&+(\slashed d_A\mu+2\mu\zeta_A)\slashed d^A\mathring{\underline L}\varphi_\gamma^{n}-(T\mu)\slashed\triangle\varphi_\gamma^{n}+\mu\big(\leftidx{^{(T)}}{\slashed\pi}^{AB}
-\f12\text{tr}\leftidx{^{(T)}}{\slashed\pi}\slashed g^{AB}\big)\slashed\nabla_{AB}^2\varphi_\gamma^{n},
\end{split}
\end{equation}
\begin{equation}\label{T3}
\begin{split}
\leftidx{^{(T)}}D_{\gamma,3}^{n}=&\big\{\textrm{tr}\chi T\mu+\f14(\mu\textrm{tr}\chi+\textrm{tr}\leftidx{^{(T)}}{\slashed \pi})\textrm{tr}\leftidx{^{(T)}}{\slashed \pi}-\f12|\slashed d\mu|^2-\mu\zeta_A(\slashed d^A\mu)\big\}\mathring L\varphi_\gamma^{n}
+\big\{(\f12\mathring L\mu\\
&+\textrm{tr}\leftidx{^{(T)}}{\slashed \pi}+\mu\textrm{tr}\chi)(\slashed d_A\mu+2\mu\zeta_A)-(\slashed d^B\mu+2\mu\zeta^B)(2\mu\chi_{AB}+\leftidx{^{(T)}}{\slashed \pi}_{AB})\big\}\slashed d^A\varphi_\gamma^{n},
\end{split}
\end{equation}
\begin{equation}\label{rL1}
\begin{split}
\leftidx{^{(\varrho\mathring L)}}D_{\gamma,1}^{n}=&(2-\mu+\varrho\mathring L\mu)\mathring L^2\varphi_\gamma^{n}-2\varrho(\slashed d_A\mu+2\mu\zeta_A)\slashed d^A\mathring L\varphi_\gamma^{n}+\varrho\textrm{tr}\chi(\mathring L\mathring{\underline L}\varphi_\gamma^{n}+\f12\textrm{tr}\chi\mathring{\underline L}\varphi_\gamma^{n})\\
&-(2\mu+\varrho\mathring L\mu)\slashed\triangle\varphi_\gamma^{n}+\mu\varrho(2\check\chi^{AB}-\textrm{tr}\check\chi\slashed g^{AB})\slashed\nabla_{AB}^2\varphi_\gamma^{n},
\end{split}
\end{equation}
\begin{equation}\label{rL3}
\begin{split}
\leftidx{^{(\varrho\mathring L)}}D_{\gamma,3}^{n}=&\textrm{tr}\chi\big\{2-\mu+\varrho\mathring L\mu+\f12\varrho\text{tr}\leftidx{^{(T)}}{\slashed\pi}+\f12\varrho\mu\textrm{tr}\chi\big\}\mathring L\varphi_\gamma^{n}+2\varrho\chi_{AB}(2\mu\zeta^B+\slashed d^B\mu)\slashed d^A\varphi_\gamma^n,
\end{split}
\end{equation}
\begin{equation}\label{R1}
\begin{split}
\leftidx{^{(R_{ij})}}D_{\gamma,1}^{n}=&(R_{ij}\mu)\mathring L^2\varphi_\gamma^{n}-\leftidx{^{(R_{ij})}}{\slashed\pi}_{\mathring{\underline L}A}\slashed d^A\mathring L\varphi_\gamma^{n}+\f12\textrm{tr}\leftidx{^{(R_{ij})}}{\slashed\pi}(\mathring L\mathring{\underline L}\varphi_\gamma^{n}+\f12\textrm{tr}\chi\mathring{\underline L}\varphi_\gamma^{n})\qquad\quad\qquad\\
&-\leftidx{^{(R_{ij})}}{\slashed\pi}_{\mathring LA}\slashed d^A\mathring{\underline L}\varphi_\gamma^{n}-(R_{ij}\mu)\slashed\triangle\varphi_\gamma^{n}
+\mu(\leftidx{^{(R_{ij})}}{\slashed\pi}^{AB}-\f12\big(\text{tr}\leftidx{^{(R_{ij})}}{\slashed\pi}\big)\slashed g^{AB})\slashed\nabla^2_{AB}\varphi_\gamma^{n},
\end{split}
\end{equation}
\begin{equation}\label{R3}
\begin{split}
\leftidx{^{(R_{ij})}}D_{\gamma,3}^{n}=&\big\{\textrm{tr}\chi R_{ij}\mu+\f14(\textrm{tr}\leftidx{^{(T)}}{\slashed\pi}+\mu\textrm{tr}\chi)\textrm{tr}\leftidx{^{(R_{ij})}}{\slashed\pi}+\f12\slashed d^A\mu\leftidx{^{(R_{ij})}}{\slashed\pi}_{\mathring LA}\big\}\mathring L\varphi_\gamma^{n}\\
&+\big\{2\chi_{AB}\leftidx{^{(R_{ij})}}{\slashed\pi}_{T}^B+\textrm{tr}\leftidx{^{(R_{ij})}}{\slashed\pi}(\mu\zeta_A+\f12\slashed d_A\mu)+(2\mu\chi_{AB}+\leftidx{^{(T)}}{\slashed\pi}_{AB })\leftidx{^{(R_{ij})}}{\slashed\pi}_{\mathring L}^B\quad\\
&-\textrm{tr}\chi\leftidx{^{(R_{ij})}}{\slashed\pi}_{TA}-(\f12\mathring L\mu+\mu\text{tr}\chi+\f12\text{tr}\leftidx{^{(T)}}{\slashed\pi})\leftidx{^{(R_{ij})}}{\slashed\pi}_{\mathring LA}\big\}\slashed d^A\varphi_\gamma^{n}.
\end{split}
\end{equation}

It can be checked directly that each term in $\leftidx{^{(Z)}}D_{\gamma,1}^{n}$
contains the term $\mathring L\mathring{\underline L}\varphi_\gamma^{n}+\f12\textrm{tr}\chi\mathring{\underline L}\varphi_\gamma^{n}$
which can be treated with the help of \eqref{fequation} as explained after \eqref{D3}.
For the first line of \eqref{Phik}, it is clear that at most
 the $(k-n)^{th}$ derivatives of $\leftidx{^{(Z)}}D_{\gamma,i}^{n}$ $(i=1,3)$
 are taken. Thus, the $L^2$ norms of all terms in the first line of $\eqref{Phik}$ can be estimated by
the corresponding $L^\infty$ estimates in Section \ref{ho}
and the related $L^2$ estimates in Proposition \ref{L2chi}, which do not need the  results on the highest order derivatives
in Section \ref{L2chimu}.
Therefore,

\begin{equation}\label{D11}
\begin{split}
&\delta^{2l+1}|\int_{D^{s, u}}\sum_{m=1}^{k}\big(Z_{k+1}+\leftidx{^{(Z_{k+1})}}\lambda\big)\dots\big(Z_{k+2-m}
+\leftidx{^{(Z_{k+2-m})}}\lambda\big)\leftidx{^{(Z_{k+1-m})}}D_{\gamma,1}^{k-m}\cdot\mathring{\underline L}\varphi_\gamma^{k+1}|\\
\lesssim&\delta^{2l+1}\int_{t_0}^s\sum_{m=1}^{k}\|\big(Z_{k+1}+\leftidx{^{(Z_{k+1})}}\lambda\big)\dots\big(Z_{k+2-m}
+\leftidx{^{(Z_{k+2-m})}}\lambda\big)\leftidx{^{(Z_{k+1-m})}}D_{\gamma,1}^{k-m}\|_{L^2(\Sigma_{\tau}^{u})}\\
&\qquad\qquad\cdot\|\mathring{\underline L}Z^{k+1}\varphi_\gamma\|_{L^2(\Sigma_{\tau}^{u})}d\tau\\
\lesssim&\delta^{3-2\varepsilon_0}+\int_{t_0}^s\tau^{-1-\epsilon}\tilde E_{1,\leq k+2}(\tau,u)d\tau+\delta^{2-2\varepsilon_0}\int_{t_0}^s\tau^{-1-\epsilon}\tilde E_{2,\leq k+2}(\tau,u)d\tau.
\end{split}
\end{equation}
Similarly,
\begin{equation}\label{D33}
\begin{split}
&\delta^{2l+1}|\int_{D^{s, u}}\sum_{m=1}^{k}\big(Z_{k+1}+\leftidx{^{(Z_{k+1})}}\lambda\big)\dots\big(Z_{k+2-m}
+\leftidx{^{(Z_{k+2-m})}}\lambda\big)\leftidx{^{(Z_{k+1-m})}}D_{\gamma,3}^{k-m}\cdot\mathring{\underline L}\varphi_\gamma^{k+1}|\\
\lesssim&\delta^{3-2\varepsilon_0}+\delta^{2-2\varepsilon_0}\int_{t_0}^s\tau^{-2}\tilde E_{1,\leq k+2}(\tau,u)d\tau+\delta^{2-2\varepsilon_0}\int_{t_0}^s\tau^{-2}\tilde E_{2,\leq k+2}(\tau,u)d\tau.
\end{split}
\end{equation}

The corresponding terms in $\delta^{2l}\int_{D^{s, u}}\varrho^{2\epsilon}|\Phi\cdot \mathring L\Psi|$ can also be estimated as follows
\begin{equation*}
\begin{split}
&\delta^{2l}\int_{D^{s, u}}|\sum_{m=1}^{k}\varrho^{2\epsilon}\big(Z_{k+1}+\leftidx{^{(Z_{k+1})}}\lambda\big)\dots\big(Z_{k+2-m}
+\leftidx{^{(Z_{k+2-m})}}\lambda\big)\big(\leftidx{^{(Z_{k+1-m})}}D_{\gamma,1}^{k-m}\\
&\qquad\qquad\qquad+\leftidx{^{(Z_{k+1-m})}}D_{\gamma,3}^{k-m}\big)\mathring L\varphi_\gamma^{k+1}|\\
\lesssim&\delta^{2l+1}\int_{D^{s, u}}\varrho^{2\epsilon}\Big\{\sum_{m=1}^{k}\big(Z_{k+1}+\leftidx{^{(Z_{k+1})}}\lambda\big)\dots\big(Z_{k+2-m}
+\leftidx{^{(Z_{k+2-m})}}\lambda\big)\big(\leftidx{^{(Z_{k+1-m})}}D_{\gamma,1}^{k-m}\\
&\qquad+\leftidx{^{(Z_{k+1-m})}}D_{\gamma,3}^{k-m}\big)\Big\}^2+\delta^{2l-1}\int_{D^{s, u}}\varrho^{2\epsilon}|\mathring LZ^{k+1}\varphi_\gamma|^2\\
\end{split}
\end{equation*}

\begin{equation}\label{D13}
\begin{split}
\lesssim&\delta^{4-4\varepsilon_0}+\delta^{1-2\varepsilon_0}\int_{t_0}^s\tau^{-2}\tilde E_{1,\leq k+2}(\tau,u)d\tau+\delta^{3-2\varepsilon_0}\int_{t_0}^s\tau^{-3+2\epsilon}\tilde E_{2,\leq k+2}(\tau,u)d\tau\\
&+\delta^{-1}\int_0^uF_{1,k+2}(s,u')du'.
\end{split}
\end{equation}

\item Next, we treat the terms $\leftidx{^{(Z)}}D_{\gamma,2}^n$ $(0\leq n\leq k)$. The most special case is $n=0$,
which corresponds to $m=k$ in the summation of \eqref{Phik}. In this case, the highest order derivatives
in $\leftidx{^{(Z)}}D_{\gamma,2}^0$ are of order $k$. Thus there appear some terms containing the $(k+1)^{th}$
order derivatives of the deformation tensors. But it can be checked by \eqref{Lpi}-\eqref{Rpi} that $\leftidx{^{(Z)}}D_{\gamma,2}^0$ contains terms such as $\slashed\triangle\mu$ and $\slashed\nabla\check\chi$,
which makes it difficult to apply Proposition \ref{L2chi} to estimate the $L^2$ norm of $Z^k(\leftidx{^{(Z)}}D_{\gamma,2}^0)$
directly. Otherwise in the energy inequalities \eqref{e} the factors $\tilde E_{1,\leq k+3}(s,u)$ and $\tilde E_{2,\leq k+3}(s,u)$ will appear in the right
hand side, which can not be absorbed by the lower order energies on the left hand side. Thus, it is necessary to  examine the expression of $\leftidx{^{(Z)}}D_{\gamma,2}^k$ carefully and apply  the estimates in Section \ref{L2chimu} to deal with the higher order
derivatives of $\chi$ and $\mu$.
 Indeed, it follows from direct computations that
\begin{equation}\label{T2}
\begin{split}
\leftidx{^{(T)}}D_{\gamma,2}^{n}=&\big\{\mathring L T\mu+\f14\uwave{\mathring{\underline L}(\text{tr}\leftidx{^{(T)}}{\slashed\pi})}+\boxed{\slashed\nabla_A\big(\f12\mu\slashed d^A\mu}+\mu^2\zeta^A\big)\big\}\mathring L\varphi_\gamma^n+\big\{\f14\mathring L(\textrm{tr}\leftidx{^{(T)}}{\slashed\pi})\\
&+\f12\boxed{\slashed\nabla_A(\slashed d^A\mu}+2\mu\zeta^A)\big\}\mathring{\underline L}\varphi_\gamma^n+\big\{\f12\slashed{\mathcal L}_{\mathring L}(2\mu^2\zeta_A+\mu\slashed d_A\mu)-\uline{\slashed d_AT\mu}\\
&+\underline{\f12\slashed{\mathcal L}_{\mathring{\underline L}}(\slashed d_A\mu}+2\mu\zeta_A)+\underbrace{\slashed\nabla^B(\mu\leftidx{^{(T)}}{\slashed\pi}_{AB}
-\f12\mu(\text{tr}\leftidx{^{(T)}}{\slashed\pi})\slashed g_{AB})}\big\}\slashed d^A\varphi_\gamma^n,
\end{split}
\end{equation}
\begin{equation}\label{rL2}
\begin{split}
\leftidx{^{(\varrho\mathring L)}}D_{\gamma,2}^{n}=&\big\{\mathring L\big(\varrho\mathring L\mu-\mu\big)+\uwave{\f12\mathring{\underline L}\big(\varrho\textrm{tr}\check\chi\big)}-\varrho\boxed{\slashed\nabla_A\big(\slashed d^A\mu}+2\mu\zeta^A\big)\big\}\mathring L\varphi_\gamma^n+\f12\mathring L\big(\varrho\textrm{tr}\check\chi\big)\mathring{\underline L}\varphi_\gamma^n\\
-\big\{&\slashed{\mathcal L}_{\mathring L}\big(\varrho\slashed d_A\mu+2\varrho\mu\zeta_A\big)+\slashed d_A\big(\mu+\varrho\mathring L\mu\big)-\varrho\underbrace{\slashed\nabla^B\big(2\mu\chi_{AB}-\mu\textrm{tr}\chi\slashed g_{AB}\big)}\big\}\slashed d^A\varphi_\gamma^n,
\end{split}
\end{equation}
\begin{equation}\label{R2}
\begin{split}
\leftidx{^{(R_{ij})}}D_{\gamma,2}^n=&\big\{\mathring LR\mu
-\f12\underbrace{\slashed\nabla^A\leftidx{^{(R_{ij})}}{\slashed\pi}_{\mathring{\underline L}A}}+\f14\uwave{\mathring{\underline L}(\textrm{tr}\leftidx{^{(R_{ij})}}{\slashed\pi})}\big\}\mathring L\Psi_\g^n+\big\{\f14\mathring L(\textrm{tr}\leftidx{^{(R_{ij})}}{\slashed{\pi}})\\
&-\f12\underbrace{\slashed\nabla^A\leftidx{^{(R_{ij})}}{\slashed\pi}_{\mathring LA}}\big\}\mathring{\underline L}\Psi_\gamma^n+\big\{-\f12\slashed{\mathcal L}_{\mathring L}\leftidx{^{(R_{ij})}}{\slashed\pi}_{\mathring{\underline L}A}-\f12\uwave{\slashed{\mathcal L}_{\mathring{\underline L}}\leftidx{^{(R_{ij})}}{\slashed\pi}_{\mathring LA}}
-\boxed{\slashed d_AR_{ij}\mu}\\
&+\underbrace{\slashed\nabla^B\big(\mu\leftidx{^{(R_{ij})}}{{\slashed\pi}}_{AB}
-\f{\mu}{2}\textrm{tr}\leftidx{^{(R_{ij})}}{\slashed\pi}\slashed g_{AB}\big)}\big\}\slashed d^A\Psi_\gamma^n.
\end{split}
\end{equation}

Some attentions are needed to deal with the terms with underlines, wavy lines, boxes, or braces
in the above three equalities \eqref{T2}-\eqref{R2}. In $\leftidx{^{(T)}}D_{\gamma,2}^{n}$,
due to $\f12\slashed{\mathcal L}_{\mathring{\underline L}}\slashed d_A\mu=\slashed d_AT\mu+\f12\mu\slashed d_A\mathring L\mu$ by $\mathring{\underline L}=\mu\mathring L+2T$, then the
 corresponding underline part is
\begin{equation}\label{Y-25}
-\slashed d_AT\mu+\f12\slashed{\mathcal L}_{\mathring{\underline L}}\slashed d_A\mu=\f12\mu\slashed d_A\mathring L\mu,
\end{equation}
which can be estimated by using \eqref{lmu}.
Note that $\textrm{tr}\leftidx{^{(T)}}{\slashed\pi}=-2\mu\slashed g^{AB}\check\chi_{AB}+\dots$, $\textrm{tr}\leftidx{^{(R_{ij})}}{\slashed\pi}=2\upsilon_{ij}\slashed g^{AB}\check\chi_{AB}+\cdots$ and $\leftidx{^{(R_{ij})}}{\slashed\pi}_{\mathring LA}=-R_{ij}^B\check\chi_{AB}+\cdots$ hold true by \eqref{Lpi}, \eqref{zeta} and \eqref{Rpi},
respectively.
Then all terms with wavy lines in \eqref{T2}-\eqref{R2} contain the same factor $\slashed{\mathcal L}_{\mathring{\underline L}}\check\chi_{AB}$. Thanks to \eqref{Tchi'} and \eqref{Lchi'}, $\slashed{\mathcal L}_{\mathring{\underline L}}\check\chi$ can be replaced by $\slashed\nabla^2\mu+\cdots$. This leads to that in the process of estimating $\|Z^k(\leftidx{^{(T)}}D_{\gamma,2}^{0})\|_{L^2(\Sigma_s^u)}$, $\|Z^k(\leftidx{^{(\varrho\mathring L)}}D_{\gamma,2}^{0})\|_{L^2(\Sigma_s^u)}$ and $\|Z^k(\leftidx{^{(R_{ij})}}D_{\gamma,2}^{0})\|_{L^2(\Sigma_s^u)}$, the most troublesome factor related to the wavy lines in
\eqref{T2}-\eqref{R2}  is $\|\slashed{\mathcal L}_Z^k\slashed{\nabla}^2\mu\|_{L^2(\Sigma_s^u)}$ but which can be treated well
by \eqref{Zmu}.
In addition, the terms with boxes and braces
can be estimated by utilizing \eqref{Zmu} and \eqref{d}.

On the other hand, it is noticed that in \eqref{T2}-\eqref{R2}, there are some terms whose factors are the derivatives of $\leftidx{^{(Z)}}{\pi}$
with respect to $\mathring L$, for example, the term $\f14{\mathring L}(\text{tr}\leftidx{^{(T)}}{\slashed\pi})\underline{\mathring L}\varphi_\gamma^n$ appears in \eqref{T2}. In fact, these terms are not the ``bad" ones since the derivatives of $\mathring L$
for the involved deformation tensors are
 actually equipped with the ``good" quantities due to \eqref{lmu} and \eqref{Lchi'} after examining
 each term in \eqref{Lpi} and \eqref{Rpi}.

In summary, we eventually arrive at
\begin{equation}\label{D22}
\begin{split}
&\delta^{2l+1}|\int_{D^{s, u}}\sum_{m=1}^{k}\big(Z_{k+1}+\leftidx{^{(Z_{k+1})}}\lambda\big)\dots\big(Z_{k+2-m}
+\leftidx{^{(Z_{k+2-m})}}\lambda\big)\leftidx{^{(Z_{k+1-m})}}D_{\gamma,2}^{k-m}\cdot\mathring{\underline L}\varphi_\gamma^{k+1}|\\
\lesssim&\delta^{4-6\varepsilon_0}+\int_{t_0}^s\tau^{-3/2}\tilde E_{1,\leq k+2}(\tau,u)d\tau
+\delta\int_{t_0}^s\tau^{-3/2}\tilde E_{2,\leq k+2}(\tau,u)d\tau
\end{split}
\end{equation}
and
\begin{equation}\label{D22L}
\begin{split}
&\delta^{2l}\int_{D^{s, u}}\sum_{m=1}^{k}|\varrho^{2\epsilon}\big(Z_{k+1}+\leftidx{^{(Z_{k+1})}}\lambda\big)\dots\big(Z_{k+2-m}
+\leftidx{^{(Z_{k+2-m})}}\lambda\big)\leftidx{^{(Z_{k+1-m})}}D_{\gamma,2}^{k-m}\cdot\mathring L\varphi_\gamma^{k+1}|\\
\lesssim&\delta^{4-6\varepsilon_0}+\int_{t_0}^s\tau^{-3+2\epsilon}\tilde E_{1,\leq k+2}(\tau,u)d\tau
+\delta\int_{t_0}^s\tau^{-3+2\epsilon}\tilde E_{2,\leq k+2}(\tau,u)d\tau\\
&\qquad\qquad\qquad+\delta^{-1}\int_0^uF_{1,k+2}(s,u')du'.
\end{split}
\end{equation}

\end{enumerate}

\subsection{Treatments on the other terms in \eqref{Phik}}\label{l}
It remains to treat the terms in the second and third lines in \eqref{Phik}, which do not contain the highest order derivatives of $\textrm{tr}{\chi}$ and $\slashed\triangle\mu$. Therefore, according to Proposition \ref{L2chi} and the expressions
of $\leftidx{^{(Z)}}D_{\g, j}^k$ in \eqref{T1}-\eqref{R3} and \eqref{T2}-\eqref{R2}, we have
\begin{equation}\label{ZL}
\begin{split}
&\delta^{2l+1}\int_{D^{s,u}}|\sum_{j=1}^3\leftidx{^{(Z_{k+1})}}D_{\gamma,j}^k\cdot \mathring{\underline L}{\varphi}_\gamma^{k+1}|\\
\lesssim&\delta^{4-4\varepsilon_0}+\int_{t_0}^s\tau^{-1-\epsilon}\tilde E_{1,\leq k+2}(\tau,u)d\tau
+\delta\int_{t_0}^s\tau^{-1-\epsilon}\tilde E_{2,\leq k+2}(\tau,u)d\tau
\end{split}
\end{equation}
and
\begin{equation}\label{ZrL}
\begin{split}
&\delta^{2l}\int_{D^{s,u}}|\sum_{j=1}^3\leftidx{^{(Z_{k+1})}}D_{\gamma,j}^k\cdot \varrho^{2\epsilon}\mathring L{\varphi}_\gamma^{k+1}|\\
\lesssim&\delta^{4-4\varepsilon_0}+\int_{t_0}^s\tau^{-2}\tilde E_{1,\leq k+2}(\tau,u)d\tau
+\delta\int_{t_0}^s\tau^{-4+2\epsilon}\tilde E_{2,\leq k+2}(\tau,u)d\tau\\
&+\delta^{-1}\int_0^uF_{1,k+2}(s,u')du'.
\end{split}
\end{equation}

In addition, $\Phi_\g^0$ is equal to $\mu\Box_g\varphi_\g$ which is given explicitly in \eqref{ge}.
Then, Proposition \ref{L2chi} and \eqref{lamda} lead to
\begin{equation}\label{Phi0}
\begin{split}
&\delta^{2l+1}\int_{D^{s,u}}|(Z_{k+1}+\leftidx{^{(Z_{k+1})}}\lambda)\cdots(Z_{1}
+\leftidx{^{(Z_{1})}}\lambda)\Phi_\gamma^0\cdot\mathring{\underline L}\varphi_\gamma^{k+1}|\\
\lesssim&\delta^{4-4\varepsilon_0}+\int_{t_0}^s\tau^{-3/2}\tilde E_{1,\leq k+2}(\tau,u)d\tau
+\delta\int_{t_0}^s\tau^{-3/2}\tilde E_{2,\leq k+2}(\tau,u)d\tau
\end{split}
\end{equation}
and
\begin{equation}\label{rPhi}
\begin{split}
&\delta^{2l}\int_{D^{s,u}}\varrho^{2\epsilon}|(Z_{k+1}+\leftidx{^{(Z_{k+1})}}\lambda)\cdots(Z_{1}
+\leftidx{^{(Z_{1})}}\lambda)\Phi_\gamma^0\cdot\mathring L\varphi_\gamma^{k+1}|\\
\lesssim&\delta^{4-4\varepsilon_0}+\int_{t_0}^s\tau^{-3+2\epsilon}\tilde E_{1,\leq k+2}(\tau,u)d\tau
+\delta\int_{t_0}^s\tau^{-3+2\epsilon}\tilde E_{2,\leq k+2}(\tau,u)d\tau\\
&+\delta^{-1}\int_0^uF_{1,k+2}(s,u')du'.
\end{split}
\end{equation}

\section{Global existence of the solution $\phi$ in $A_{2\dl}$}\label{YY}

By substituting \eqref{D11}-\eqref{D13} and \eqref{D22}-\eqref{rPhi} into \eqref{e}, then one gets from
Gronwall's inequality that
\begin{equation}\label{E}
\delta\tilde E_{2,\leq 2N-4}(s,u)+\delta F_{2,\leq 2N-4}(s,u)+\tilde E_{1,\leq 2N-4}(s,u)+F_{1,\leq 2N-4}(s,u)
\lesssim\delta^{2-2\varepsilon_0}.
\end{equation}
Hence the $L^2$ norm of $\phi$ on $\Sigma_s^u$ can be estimated by \eqref{E} and the first inequality of Corollary \ref{Zphi}.
Namely, for $k\leq 2N-5$, it holds that
\begin{equation}\label{Ep}
\delta^l\|Z^{k}\phi\|_{L^2(\Sigma_s^u)}\lesssim\delta^{5/2-\varepsilon_0}.
\end{equation}
Based on \eqref{E} and \eqref{Ep}, one can adopt the following Sobolev-type  embedding formula to close the bootstrap
assumptions $(\star)$ in Section \ref{BA}.
\begin{lemma}\label{HC-F1}
 For any function $f\in H^2(S_{s,u})$, under the assumptions $(\star)$ with suitably small $\delta>0$, then
 \begin{equation}\label{et}
 \|f\|_{L^\infty(S_{s,u})}\lesssim\f{1}{s^{3/2}}\sum_{|\beta|\leq 2}\|R^\beta f\|_{L^2(S_{s, u})},
 \end{equation}
 where $R\in\{R_{ij}:1\leq i<j\leq 4\}$.
\end{lemma}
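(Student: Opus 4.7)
The plan is to reduce \eqref{et} to the standard Sobolev embedding $H^2(\mathbb{S}^3)\hookrightarrow L^\infty(\mathbb{S}^3)$ on the round three-sphere via the coordinate diffeomorphism $\vartheta\mapsto(s,u,\vartheta)$, using that $\slashed g$ is a small perturbation of $\varrho^2$ times a round metric.

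First I would introduce the conformally rescaled metric $\tilde{\slashed g}:=\varrho^{-2}\slashed g$ on $S_{s,u}$; because $\varrho$ depends only on $(s,u)$, the Christoffel symbols of $\slashed g$ and $\tilde{\slashed g}$ agree. From $\mathring L\slashed g_{AB}=2\chi_{AB}$ and $\chi_{AB}=\varrho^{-1}\slashed g_{AB}+\check\chi_{AB}$, one obtains $\mathring L\tilde{\slashed g}_{AB}=2\varrho^{-2}\check\chi_{AB}$. The bound $|\check\chi|\lesssim\delta^{1-\varepsilon_0}s^{-2}$ of Proposition \ref{chi'} is integrable in $s$, so a Gronwall argument along integral curves of $\mathring L$, combined with the near-roundness of $t_0^{-2}\slashed g(t_0,u,\cdot)$ on the initial sphere, shows that $\tilde{\slashed g}$ is uniformly equivalent, in $(s,u,\delta)$, to the standard round metric $\mathring g$ on $\mathbb{S}^3$.

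This uniform ellipticity on the compact manifold $(S_{s,u},\tilde{\slashed g})\simeq\mathbb{S}^3$ yields the classical three-dimensional Sobolev embedding with a constant independent of $(s,u)$,
\[
\|f\|_{L^\infty(S_{s,u})}^2\lesssim\|f\|_{L^2(\tilde{\slashed g})}^2+\|\tilde\nabla f\|_{L^2(\tilde{\slashed g})}^2+\|\tilde\nabla^2 f\|_{L^2(\tilde{\slashed g})}^2.
\]
Converting back to $\slashed g$ via $d\nu_{\tilde{\slashed g}}=\varrho^{-3}d\nu_{\slashed g}$, $|\tilde\nabla f|^2_{\tilde{\slashed g}}=\varrho^2|\slashed d f|^2_{\slashed g}$, and $|\tilde\nabla^2 f|^2_{\tilde{\slashed g}}=\varrho^4|\slashed\nabla^2 f|^2_{\slashed g}$ then produces the scale-respecting inequality
\[
\|f\|_{L^\infty(S_{s,u})}^2\lesssim s^{-3}\|f\|_{L^2(S_{s,u})}^2+s^{-1}\|\slashed d f\|_{L^2(S_{s,u})}^2+s\,\|\slashed\nabla^2 f\|_{L^2(S_{s,u})}^2.
\]

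Finally I would trade intrinsic derivatives on $S_{s,u}$ for rotational ones via Corollary \ref{12form}: applying \eqref{1-f} to $\xi=\slashed d f$ gives $|\slashed d f|^2\lesssim s^{-2}\sum_{ij}|R_{ij}f|^2$, and iterating using $\slashed{\mathcal L}_R\slashed d f=\slashed d(Rf)$ together with \eqref{1f} yields $|\slashed\nabla^2 f|^2\lesssim s^{-4}\sum_{|\beta|\leq 2}|R^\beta f|^2$. Substituting these bounds collapses the three contributions in the previous display into $s^{-3}\sum_{|\beta|\leq 2}\|R^\beta f\|_{L^2(S_{s,u})}^2$, which is \eqref{et}.

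The main obstacle is the first step: quantifying the uniform comparability of $\tilde{\slashed g}$ and the round metric with constants that do not deteriorate as $s\to\infty$, since otherwise the Sobolev constant on $(S_{s,u},\tilde{\slashed g})$ would not be uniform in $s$. This hinges precisely on the integrability in $s$ of the decay $|\check\chi|\lesssim\delta^{1-\varepsilon_0}s^{-2}$ from Proposition \ref{chi'}, and is the point at which the global smallness of $\check\chi$ established in Section \ref{BA} feeds into the Sobolev estimate.
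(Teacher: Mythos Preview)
Your proposal is correct and follows the standard route to such sphere Sobolev estimates: rescale $\slashed g$ by $\varrho^{-2}$, propagate near-roundness via the integrable bound on $\check\chi$ from Proposition~\ref{chi'}, invoke the classical $H^2(\mathbb{S}^3)\hookrightarrow L^\infty$ embedding, then convert covariant derivatives back into rotations through Corollary~\ref{12form}. The paper itself gives no argument here at all---it simply cites Proposition~18.10 of \cite{J} and omits the details---so there is nothing to compare at the level of technique; your outline is precisely the kind of argument that underlies the cited result, and all the scaling bookkeeping (the $\varrho^{-3}$ volume factor, the $\varrho^2$ and $\varrho^4$ factors on first and second derivatives, the $r^{-2}$ and $r^{-4}$ conversion via \eqref{1-f}--\eqref{1f}) checks out and collapses to the claimed $s^{-3/2}$.
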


\begin{proof}
	Its proof follows from Proposition 18.10 in \cite{J}, details are omitted here.
\end{proof}

It follows \eqref{et}, \eqref{E} and \eqref{Ep} that for $k\leq 2N-7$,
\begin{equation}\label{im}
\delta^l|Z^k\varphi_\gamma|\lesssim\f{\delta^l}{s^{3/2}} \sum_{|\beta|\leq 2}\|R^\beta Z^k\varphi_\gamma\|_{L^2(S_{s, u})}\overset{\eqref{SSi}}{\lesssim}\f{\delta^{1/2}}{s^{3/2}}\big(\sqrt{E_{1,\leq 2N-4}}+\sqrt{E_{2,\leq 2N-4}}\big)
\lesssim\delta^{1-\varepsilon_0} s^{-3/2},
\end{equation}
and for $k\leq 2N-8$,
\begin{equation}\label{imp}
\delta^l|Z^k\phi|\lesssim\delta^{1/2}s^{-3/2}\delta^l\sum_{|\beta|\leq 2}(\|\underline{\mathring L}R^\beta Z^k\phi\|_{L^2(\Sigma_s^u)}
+\|{\mathring L}R^\beta Z^k\phi\|_{L^2(\Sigma_s^u)})\lesssim\delta^{2-\varepsilon_0}s^{-3/2}.
\end{equation}

Note that \eqref{im} and \eqref{imp} are independent of $M$ and hence improve the bootstrap assumptions $(\star)$. Thus we have proved the existence of the solution
$\phi$ to equation \eqref{quasi} with \eqref{Y1-1} in the domain $D^{s,4\delta}$ (see Figure 6 in Section \ref{p}).

Moreover, one can refine the estimates on $\check{\varrho}$ (see \eqref{rrho}), $\upsilon_{ij}$ (see \eqref{omega}) and $g_{ij}\check L^i\o^j$,
which will be used to obtain some more precise $L^\infty$-estimates on $\phi$ under the actions of $\Gamma\in\{L,\underline L,\O_{ij}\}$.
This is one of key points to solve the global Goursat problem for
\eqref{quasi} in $B_{2\dl}$.
\begin{lemma}\label{orL}
	In $D^{s,4\dl}$, the quantities $\check L^i$ and $\check{\varrho}$ admit the following estimates:
	\begin{equation}\label{rhoL}
	\delta^l|Z(g_{\al\beta}\o^\al\o^\beta)|\lesssim \delta^{2-2\varepsilon_0}s^{-3/2},\qquad\delta^l|Z^k\check{\varrho}|
+\delta^l|Z^k(g_{ij}\check L^i\o^j)|\lesssim\delta^{2-2\varepsilon_0}s^{-1},
	\end{equation}
	where $l$ is the number of $T$ in $Z^k$, and $k\leq 2N-9$.
\end{lemma}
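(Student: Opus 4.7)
The proof combines an algebraic identity coming from the null equation $g_{\al\beta}\mathring L^\al\mathring L^\beta=0$ with a transport analysis along $\mathring L$, with the crucial improvement from $\delta^{1-\varepsilon_0}$ to $\delta^{2-2\varepsilon_0}$ powered by the null condition \eqref{null}. First I would exploit the null identity: writing $\mathring L^0=1=\o^0$ and $\mathring L^i=(1+\check\varrho)\o^i+\check L^i$, the equation $g_{\al\beta}\mathring L^\al\mathring L^\beta=0$ rearranges to
\begin{equation*}
g_{\al\beta}\o^\al\o^\beta=-2g_{\al i}\o^\al(\check\varrho\o^i+\check L^i)-g_{ij}(\check\varrho\o^i+\check L^i)(\check\varrho\o^j+\check L^j).
\end{equation*}
Applying $Z$ and using the already-known $L^\infty$ bounds \eqref{chL}, \eqref{cr} for the undifferentiated factors, one sees that the first estimate in \eqref{rhoL} is a direct consequence of the second estimate with $k=1$ together with the bounds on $Z g_{\al\beta}$ coming from Proposition \ref{LTRh}. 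Thus the whole statement reduces to proving the second estimate.

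For the base case $k=0$ of the second estimate, I revisit the transport equation $\mathring L(\varrho\check L^i)=\varrho\mathring L\mathring L^i$ from \eqref{LeL} and re-examine the source \eqref{LL} in light of the null condition. Every factor $(\mathcal{FG})_{\mathring L\mathring L}(\mathring L)$ and $(\mathcal{FG})_{A\mathring L}(\mathring L)$ appearing on the right of \eqref{LL} carries an extra factor of order $\delta^{1-\varepsilon_0}$: either via $|\mathring L\phi|\lesssim\delta^{2-\varepsilon_0}s^{-5/2}$ from the bootstrap $(\star)$ (one order smaller than the corresponding bound on $\vp$), or, for terms of the form $G^\gamma_{\mathring L\mathring L}\mathring L\vp_\gamma$, through the null-condition identity \eqref{gLLL} after decomposing $\p_\gamma=\delta^0_\gamma\mathring L-\mu^{-1}\mathring L_\gamma T+g_{\gamma i}\slashed d^A x^i X_A$ via \eqref{pal} and writing $\mathring L\vp_\gamma=\p_\gamma(\mathring L\phi)-(\p_\gamma\mathring L^\kappa)\vp_\kappa$. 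This yields $|\mathring L(\varrho\check L^i)|\lesssim \delta^{2-2\varepsilon_0}s^{-3/2}$. Integration along the integral curves of $\mathring L$ from $\Sigma_{t_0}^{4\delta}$, together with the initial bound $|\check L^i(t_0,\cdot)|\lesssim \delta^{2-\varepsilon_0}$ supplied by Theorem \ref{Th2.1} (iii) in $r\in[1-2\delta,1+\delta]$ (and the fact that $\check L^i\equiv 0$ in the vacuum region near $C_0$ where the initial data vanish), then gives $|\check L^i|\lesssim \delta^{2-2\varepsilon_0}s^{-1}$. The direct computation $\mathring L r=\mathring L^i\o^i$ and $\mathring L\varrho=1$ yields $\mathring L\check\varrho=(g_{ij}\check L^i\o^j)/\varrho$ up to terms already controlled at the improved order; combined with the evolution of $g_{ij}\check L^i\o^j$ obtained from $\mathring L\check L^i=\mathring L\mathring L^i-\check L^i/\varrho$ and $\mathring L\o^j=[\check L^j-(\check L\cdot\o)\o^j]/r$, one gets a coupled transport system whose integration produces the analogous bounds on $\check\varrho$ and $g_{ij}\check L^i\o^j$.

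The inductive step $1\le k\le 2N-9$ proceeds by commuting $Z^k$ past $\mathring L$ using Lemma \ref{com}: each commutator $[\mathring L,Z]$ contributes a multiple of $\leftidx{^{(Z)}}{\slashed\pi}_{\mathring L}^A X_A$ whose $L^\infty$ bounds from Proposition \ref{LTRh} are of order $\delta^{1-\varepsilon_0}s^{-1}$ and therefore do not degrade the improved smallness. The differentiated source $Z^k(\varrho\mathring L\mathring L^i)$ remains a Leibniz product in which at least one factor retains either the $\delta^{2-\varepsilon_0}$ or the null-condition $\delta^{2-2\varepsilon_0}$ bonus identified in the base case; together with the $L^\infty$ Sobolev control on $S_{s,u}$ of Lemma \ref{HC-F1} and the estimates in Sections \ref{ho}--\ref{hoe}, this closes the induction.

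The main obstacle is the base-case null-condition bookkeeping: one must verify that every appearance of $\mathring L\vp_\gamma$ inside $(\mathcal{FG})_{\mathring L\mathring L}(\mathring L)$ is ultimately rewritten so as to produce the extra $\delta^{1-\varepsilon_0}s^{-1}$ from \eqref{gLLL}, rather than only the a priori $\delta^{1-\varepsilon_0}s^{-5/2}$ coming from $(\star)$; without this gain, the improvement from $\delta^{1-\varepsilon_0}$ to $\delta^{2-2\varepsilon_0}$ in \eqref{rhoL} cannot be achieved, and in particular the failure of the null condition would invalidate the lemma.
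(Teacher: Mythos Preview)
Your proposal contains a genuine gap: you attempt to prove $|\check L^i|\lesssim\delta^{2-2\varepsilon_0}s^{-1}$ for each component $i$, but this is \emph{strictly stronger} than what the lemma asserts and is in fact false. The lemma claims the improved bound only for the specific contraction $g_{ij}\check L^i\o^j$, not for $\check L^i$ itself. Indeed, at $t_0$ the formula \eqref{iL} gives $\check L^i|_{t_0}=\tilde g^{\al i,\gamma}\o_\al\p_\gamma\phi+\cdots$; the bad part of $\p_\gamma\phi$ (the $\underline L\phi$ piece) produces $\tilde g^{\al i,\gamma}\o_\al\o_\gamma$ with only two $\o$'s, so the null condition does not apply and $|\check L^i|_{t_0}\sim\delta^{1-\varepsilon_0}$ generically on $\Sigma_{t_0}^{4\delta}$ (Theorem \ref{Th2.1}(iii) covers only $r\in[1-3\delta,1+\delta]$, not the full region). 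Likewise, your claimed source bound $|\mathring L(\varrho\check L^i)|\lesssim\delta^{2-2\varepsilon_0}s^{-3/2}$ fails: the term $(\mathcal{FG})_{A\mathring L}(\mathring L)\slashed d^Ax^i$ in \eqref{LL} contains $G^\gamma_{A\mathring L}\mathring L\vp_\gamma$, which has only one $\mathring L$ in the subscript and hence receives no null-condition gain, remaining at order $\delta^{1-\varepsilon_0}s^{-5/2}$.

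The paper's proof instead derives a transport equation for the contracted quantity $\varrho g_{ij}\check L^i\o^j$ directly (see \eqref{LrLo}). The point is that contracting $g_{ij}\mathring L(\varrho\check L^i)\mathring L^j$ picks out only the $\tilde T^i$ component of \eqref{LL}, leaving $-\tfrac12\varrho(\mathcal{FG})_{\mathring L\mathring L}(\mathring L)$ as the main source; here all three indices are $\mathring L$, so the null condition \eqref{null} via \eqref{gLLL} yields the extra $\delta^{1-\varepsilon_0}s^{-1}$ factor needed. Separately, the paper verifies the initial value $g_{ij}\check L^i\o^j|_{t_0}=O(\delta^{2-2\varepsilon_0})$ on all of $\Sigma_{t_0}^{4\delta}$ by an algebraic null-condition computation \eqref{gLo} (not by appealing to smallness in an inner region). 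The first estimate in \eqref{rhoL} is obtained by writing $g_{\al\beta}\o^\al\o^\beta$ via the Taylor expansion of $g$ and the null condition directly, and $\check\varrho$ is then handled through \eqref{rrho} and \eqref{1-go} once $g_{ij}\check L^i\o^j$ is controlled.
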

\begin{proof}
	Similar to \eqref{g}, one can write $g_{\al\beta}=m_{\al\beta}+\tilde g_{\al\beta}^\gamma\p_\gamma\phi+O(|\phi|+|\vp|^2)$ with $\tilde g_{\al\beta}^\gamma=-\tilde g^{\lambda\nu,\gamma}m_{\al\lambda}m_{\nu\beta}$. By virtue of \eqref{pal}, it holds that
	\begin{equation*}
	\begin{split}
	g_{\al\beta}\o^\al\o^\beta&=-\tilde g^{\al\beta,\gamma}\p_\gamma\phi\o_\al\o_\beta+f(\phi,\varphi,\o)\left(
	\begin{array}{ccc}
	\phi\\
	\vp^2
	\end{array}
	\right)\\
	&=\mu^{-1}\tilde g^{\al\beta,\gamma}\mathring L_\gamma\o_\al\o_\beta(T\phi)+f(\phi,\varphi,\slashed dx^i,\o)\left(
	\begin{array}{ccc}
	\phi\\
	\vp^2\\
	\mathring L\phi\\
	\slashed d\phi
	\end{array}
	\right).
	\end{split}
	\end{equation*}
As $\mathring L_\gamma=g_{\al\gamma}\mathring L^\al=\o_\gamma+f(\phi,\varphi,\o,\mathring L^j)\left(
	\begin{array}{ccc}
	\phi\\
	\vp\\
	\check L^a\\
	\check\varrho
	\end{array}
	\right)$, the null condition \eqref{null} gives
	\begin{equation*}
	g_{\al\beta}\o^\al\o^\beta=f(\phi,\varphi,\slashed dx^i,\mathring L^j,\o)\left(
	\begin{array}{ccc}
	\phi\\
	\vp^2\\
	\mathring L\phi\\
	\slashed d\phi\\
	\check L^a\vp\\
	\check\varrho\vp
	\end{array}
	\right).
	\end{equation*}
	This, together with \eqref{im}, \eqref{imp} and $L^\infty$ estimates in Section \ref{ho}, yields
	\[
	\delta^l|Z^k(g_{\al\beta}\o^\al\o^\beta)|\lesssim\delta^{2-2\varepsilon_0}s^{-3/2}.
	\]

In order to improve the estimate \eqref{cr} for $\check{\varrho}$, we start with a part of the numerator in \eqref{rrho}.
Due to \eqref{pal}, it holds that
	\begin{equation}\label{1-go}
	\begin{split}
	&1-g_{ij}\o^i\o^j+2g_{ij}\check T^i\o^j\\
	=&\tilde g^{ij,\gamma}\o_i\o_j(\p_\gamma\phi)-2\tilde g^{0i,\gamma}\o_i(\p_\gamma\phi)-2g_{ij}\check L^i\o^j+f(\phi,\varphi,\o)\left(
	\begin{array}{ccc}
	\phi\\
	\vp^2
	\end{array}
	\right)\\
	=&-\mu^{-1}\tilde g^{\al\beta,\gamma}\o_\al\o_\beta\mathring L_\gamma(T\phi)-2g_{ij}\check L^i\o^j+f(\phi,\varphi,\slashed dx^i,\o)\left(
	\begin{array}{ccc}
	\phi\\
	\vp^2\\
	\mathring L\phi\\
	\slashed d\phi
	\end{array}
	\right)\\
	=&-2g_{ij}\check L^i\o^j+f(\phi,\varphi,\slashed dx^i,\mathring L^j,\o)\left(
	\begin{array}{ccc}
	\phi\\
	\vp^2\\
	\mathring L\phi\\
	\slashed d\phi\\
	\check L^a\vp\\
	\check\varrho\vp
	\end{array}
	\right).
	\end{split}
	\end{equation}
	
	Under the action of $\mathring L$, $\varrho g_{ij}\check L^i\o^j$ satisfies
	\begin{equation}\label{LrLo}
	\begin{split}
	\mathring L(\varrho g_{ij}\check L^i\o^j)=\big(F_{ij}\mathring L\phi+G_{ij}^\gamma \mathring L\varphi_\gamma\big)\varrho\check L^i\o^j
+\f \varrho rg_{ij}\mathring L(\varrho\check L^i)(\mathring L^j-\check L^j)+\f \varrho r g_{ij}\check L^i(\check L^j-\o^j\o_a\check L^a),
	\end{split}
	\end{equation}
	where $
	g_{ij}\mathring L(\varrho\check L^i)\mathring L^j=-\f12\varrho F_{\mathring L\mathring L}\mathring L\phi
-\f12\varrho G_{\mathring L\mathring L}^\gamma\mathring L\varphi_\gamma+f(\phi,\varphi,\slashed dx^i,\mathring L^j)\left(
\begin{array}{ccc}
\varrho\mathring L\phi\\
\varrho\slashed d\phi
\end{array}
\right)\left(
\begin{array}{ccc}
\phi\\
\varphi
\end{array}
\right)$ holds by \eqref{LeL}.
Similar to \eqref{GT}, $G_{\mathring L\mathring L}^\gamma\mathring L\varphi_\gamma$ admits the better smallness
than $\mathring L\varphi_\gamma$ due to the null condition \eqref{null}. Indeed,
	\begin{equation*}
	\begin{split}
	G_{\mathring L\mathring L}^\gamma\mathring L\varphi_\gamma=&
-(\p_{\varphi_\gamma}g^{\kappa\lambda})\mathring L_\kappa\mathring L_\lambda\mathring L^{\nu}\p_\gamma\varphi_\nu\\
	=&-(\p_{\varphi_0}g^{\kappa\lambda})\mathring L_\kappa\mathring L_\lambda\big(\mathring L^2\phi-(\mathring L\mathring L^i)\varphi_i\big)+\big((\p_{\varphi_\gamma}g^{\kappa\lambda})\mathring L_\kappa\mathring L_\lambda\mathring L_{\gamma}\big)\tilde T^i\mathring L\varphi_i\\
	&-(\p_{\varphi_\gamma}g^{\kappa\lambda})\mathring L_\kappa\mathring L_\lambda g_{\gamma j}(\slashed d^Ax^j)\big(\slashed d_A\mathring L\phi-(\slashed d_A\mathring L^i)\varphi_i\big),
	\end{split}
	\end{equation*}
	where $\mathring L\mathring L^i=O(|\mathring L\phi|+|\mathring L\varphi|+|\slashed d\phi|+|\slashed d\varphi|)$ and $(\slashed d_A\mathring L^i)\varphi_i=\chi_{AB}\slashed d^B\phi+O(|\mathring L\phi|+|\mathring L\varphi|+|\slashed d\phi|+|\slashed d\varphi|)$
due to \eqref{LL} and \eqref{dL} respectively, and  $(\p_{\varphi_\gamma}g^{\kappa\lambda})\mathring L_\kappa\mathring L_\lambda\mathring L_{\gamma}$ satisfies \eqref{gLLL}.

	At the initial time $t_0$,
	\begin{align}
	&\mu=\f{1}{\sqrt{(g^{0i}\o_i)^2+g^{ij}\o_i\o_j}},\label{m}\\
	&\tilde L^i=-g^{0i}\big(-g^{0j}\o_j+\sqrt{(g^{0j}\o_j)^2+g^{ab}\o_a\o_b}\big)+g^{ij}\o_j.\label{tL}
	\end{align}
Note that $-1-2g^{0j}\o_j+g^{ab}\o_a\o_b=g^{\al\beta}\o_\al\o_\beta
=\tilde g^{\al\beta,\gamma}\o_\al\o_\beta\p_\gamma\phi+O(|\phi|+|\vp|^2)$.
	Then it follows from \eqref{errorv} that
	\begin{equation}\label{iL}
	\begin{split}
	\check L^i|_{t_0}
	=&\Big\{\tilde g^{\al i,\gamma}\o_\al\p_\gamma\phi+\f{1-g^{ab}\o_a\o_b}{2}\o^i+f(\phi,\varphi,\o)\left(
	\begin{array}{ccc}
	\phi\\
	\vp^2
	\end{array}
	\right)\Big\}\Big|_{t_0}\\
	=&\Big\{\tilde g^{\al i,\gamma}\o_\al\p_\gamma\phi-(\tilde g^{0j,\gamma}\o_j\p_\gamma\phi)\o^i-\f12(\tilde g^{\al\beta,\gamma}\o_\al\o_\beta\p_\gamma\phi)\o^i+f(\phi,\varphi,\o)\left(
	\begin{array}{ccc}
	\phi\\
	\vp^2
	\end{array}
	\right)\Big\}\Big|_{t_0}.
	\end{split}
	\end{equation}
	
In addition, due to $\p_t=-\f12\o_0\underline L+\f12L$, $\p_i=-\f12\o_i\underline L+\f12\o_i L-\f1r\o^j\O_{ij}$
and the null condition \eqref{null}, then it holds that
	\begin{equation}\label{gLo}
	\begin{split}
	g_{ij}\check L^i\o^j|_{t_0}=\Big\{\f12\tilde g^{\al\beta,\gamma}\o_\al\o_\beta\p_\gamma\phi+f(\phi,\varphi,\o)\left(
	\begin{array}{ccc}
	\phi\\
	\vp^2\\
	\phi\vp
	\end{array}
	\right)\Big\}\Big|_{t_0}=f(\phi,\varphi,\o)\left(
	\begin{array}{ccc}
	\phi\\
	\vp^2\\
	\phi\vp\\
	L\phi\\
	\O\phi
	\end{array}
	\right)\Bigg|_{t_0}.
	\end{split}
	\end{equation}
	Furthermore, the definition of $\upsilon_{ij}$ and \eqref{iL} imply that
	\begin{equation}\label{iu}
	\begin{split}
	\upsilon_{ij}|_{t_0}=&\{-g_{0j}x^i-g_{aj}\check L^ax^i+g_{0i}x^j+g_{ai}\check L^ax^j-(g_{aj}-m_{aj})\o^ax^i+(g_{ai}-m_{ai})\o^ax^j\}|_{t_0}\\
	=&r\Bigg\{\tilde g^{\al j,\gamma}\o_\al\o_i\p_\gamma\phi-\check L^j\o_i-\tilde g^{\al i,\gamma}\o_\al\o_j\p_\gamma\phi+\check L^i\o_j+rf(\phi,\varphi,\o)\left(
	\begin{array}{ccc}
	\phi\\
	\vp^2\\
	\phi\vp
	\end{array}
	\right)\Bigg\}\Bigg|_{t_0}\\
	=&f(\phi,\varphi,\o)\left(
	\begin{array}{ccc}
	\phi\\
	\vp^2\\
	\phi\vp
	\end{array}
	\right)\Bigg|_{t_0}.
	\end{split}
	\end{equation}

For $Z^k=\bar Z^k$ with $\bar Z\in\{T=-\mu(g^{0i}+\mathring L^i)\p_i, R_{ij}=\O_{ij}-\upsilon_{ij}\tilde T\}$,
it follows from \eqref{m} and \eqref{iL}-\eqref{iu} that
	\begin{equation}\label{bZ}
	\delta^l|\bar Z^k(g_{ij}\check L^i\o^j)|_{t_0}\lesssim\delta^{2-2\varepsilon_0}.
	\end{equation}
	
On the other hand, for $k\leq 2N-9$, one can get from \eqref{LrLo} that
	\begin{equation*}
	\begin{split}
	\delta^l|\mathring L\bar Z^k(\varrho g_{ij}\check L^i\o^j)|\lesssim&\underbrace{\delta^l\sum_{p_1+p_2=k-1}|\big(\slashed{\mathcal L}_{\bar Z}^{p_1}\leftidx{^{(\bar Z)}}{\slashed\pi}_{\mathring L}^A\big)\slashed d_A\bar Z^{p_2}(\varrho g_{ij}\check L^i\o^j)|}_{\text{vanish when}\ k=0}+\delta^l|\bar Z^k\mathring L(\varrho g_{ij}\check L^i\o^j)|\\
	\lesssim&\delta^{2-2\varepsilon_0}s^{-3/2}.
	\end{split}
	\end{equation*}
		This, together with \eqref{bZ}, yields
	\begin{equation}\label{LbZ}
	\delta^l|\bar Z^k(g_{ij}\check L^i\o^j)|\lesssim\delta^{2-2\varepsilon_0}s^{-1}.
	\end{equation}
	
	Note that if there is at least one $\varrho\mathring L$ in $Z^k$ $(k\leq 2N-8)$, i.e., $Z^k=Z^{p_1}(\varrho\mathring L)\bar Z^{p_2}$
with $p_1+p_2=k-1$, then one has
\begin{equation*}
(\varrho\mathring L)\bar Z^{p_2}(g_{ij}\check L^i\o^j)=\underbrace{\sum_{q_1+q_2=p_2-1}\varrho\bar Z^{q_1}\leftidx{^{(\bar Z)}}{\slashed\pi}_{\mathring L}^A\cdot\slashed d_A\bar Z^{q_2}(g_{ij}\check L^i\o^j)}_{\text{vanish when}\ p_2=0}+\varrho\bar Z^{p_2}\big(\varrho^{-1}\mathring L(\varrho g_{ij}\check L^i\o^j)-\varrho^{-1}g_{ij}\check L^i\o^j\big).
\end{equation*}
Hence by an induction argument on the number of $\varrho\mathring L$ in $Z^{p_1}$, one can get from \eqref{LrLo} and \eqref{LbZ} that
	\begin{equation}\label{rLg}
	|Z^{p_1}(\varrho\mathring L)\bar Z^{p_2}(g_{ij}\check L^i\o^j)|\\
	\lesssim\delta^{2-2\varepsilon_0-l}s^{-1}.
	\end{equation}
	
Therefore, for any vector $Z\in\{\varrho\mathring L,T,R_{ij}\}$, when $k\leq 2N-9$, it follows from \eqref{LbZ} and \eqref{rLg} that
	\begin{equation}\label{ZgLo}
	\begin{split}
	\delta^l|Z^k(g_{ij}\check L^i\o^j)|\lesssim\delta^{2-2\varepsilon_0}s^{-1}.
	\end{split}
	\end{equation}

	Substituting \eqref{1-go} into \eqref{rrho} and applying \eqref{ZgLo} show that for $k\leq 2N-9$,
	\begin{equation*}
	\delta^l|Z^k\check\varrho|\lesssim\delta^{2-2\varepsilon_0}s^{-1}.
	\end{equation*}
	
\end{proof}

\begin{lemma}\label{n}
	For $k\leq 2N-10$, it holds that
	\begin{equation}\label{Zu}
	\delta^l|Z^k\upsilon_{ij}|\lesssim\delta^{2-2\varepsilon_0},
	\end{equation}
	where $l$ is the number of $T$ in $Z^k$.
\end{lemma}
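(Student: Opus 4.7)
My plan follows the strategy already used in Lemma \ref{orL} for $g_{ij}\check L^i\omega^j$: establish the estimate at $t=t_0$ via the null condition and Theorem \ref{Th2.1}, then propagate along integral curves of $\mathring L$. The starting point is the cleaner representation
\[
\upsilon_{ij}=\mathring L_i x^j-\mathring L_j x^i \qquad (\mathring L^0:=1),
\]
which follows from $\tilde T^a=-g^{0a}-\mathring L^a$ and the identity $g_{aj}\tilde T^a=-\mathring L_j$. This form makes manifest the antisymmetric structure in $(i,j)$ that produces $\Omega_{ij}^\alpha$-combinations upon differentiation.

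For the initial data, formula \eqref{iu} reads $\upsilon_{ij}|_{t_0}=f(\phi,\varphi,\omega)(\phi,\varphi^2,\phi\varphi)|_{t_0}$, and Theorem \ref{Th2.1} yields $|\upsilon_{ij}|_{t_0}|\lesssim\delta^{2-2\varepsilon_0}$ (the dominant contribution being $\varphi^2$). For $\bar Z^{k;l}\upsilon_{ij}|_{t_0}$ with $\bar Z\in\{T,R_{ij}\}$ tangential on $\Sigma_{t_0}$, differentiating \eqref{iu} and invoking Theorem \ref{Th2.1} (each $T$-derivative costing a factor $\delta^{-1}$, each $R_{ij}$ being cost-free) gives $\delta^l|\bar Z^{k;l}\upsilon_{ij}|_{t_0}|\lesssim\delta^{2-2\varepsilon_0}$. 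For the evolution, applying $\mathring L$ and using \eqref{LL} together with $F_{\mathring L i}x^j-F_{\mathring L j}x^i=-F_{\mathring L\alpha}\Omega_{ij}^\alpha$ (and its $G$-analogue) yields
\[
\mathring L\upsilon_{ij}=-\tfrac{1}{2}(\mathcal{FG})_{\mathring L\mathring L}(\mathring L)\upsilon_{ij}-F_{\mathring L\alpha}\Omega_{ij}^\alpha(\mathring L\phi)-G^\gamma_{\mathring L\alpha}\Omega_{ij}^\alpha(\mathring L\varphi_\gamma)+\mathcal{R}_{ij},
\]
where $\mathcal R_{ij}$ collects remainders already controlled by Sections \ref{ho}--\ref{YY}. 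Decomposing $\Omega_{ij}=R_{ij}+\upsilon_{ij}\tilde T$, the $\upsilon_{ij}\tilde T$ pieces reproduce $\upsilon_{ij}$ (absorbable via Gronwall), while the $R_{ij}$ piece in the $F$-term is controlled by $|R_{ij}|\lesssim s$ and $|\mathring L\phi|\lesssim\delta^{2-\varepsilon_0}s^{-5/2}$ to give $O(\delta^{2-\varepsilon_0}s^{-3/2})$, integrable to $O(\delta^{2-\varepsilon_0})$.

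The higher-order bounds then follow by integration of $\mathring L$-transport: for $\bar Z^k\upsilon_{ij}$ with $\bar Z\in\{T,R_{ij}\}$, commute with $\mathring L$ via Lemma \ref{com}, estimate commutators through Propositions \ref{TRh} and \ref{LTRh}, and integrate along $\mathring L$; for $Z^k$ involving $\varrho\mathring L$, decompose $Z^k\upsilon_{ij}=Z^{p_1}(\varrho\mathring L)\bar Z^{p_2}\upsilon_{ij}$ and induct on the number of $\varrho\mathring L$ factors, exactly as in the derivation of \eqref{rLg}. The main obstacle is the $G^\gamma_{\mathring L\alpha}\Omega_{ij}^\alpha(\mathring L\varphi_\gamma)$ term: a direct bound from $|\mathring L\varphi_\gamma|\lesssim\delta^{1-\varepsilon_0}s^{-5/2}$ and $|\Omega_{ij}|\lesssim s$ integrates to only $\delta^{1-\varepsilon_0}$, which is insufficient. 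The resolution is to rewrite $\mathring L\varphi_\gamma=\partial_\gamma(\mathring L\phi)-(\partial_\gamma\mathring L^\alpha)\varphi_\alpha$, decompose $\partial_\gamma$ via \eqref{pal}, and apply the null condition \eqref{null} to the symbol $\tilde g^{\al\beta,\gamma}\mathring L_\beta\mathring L_\gamma$ after antisymmetrization in $(i,j)$; this eliminates the otherwise-dangerous $T\mathring L\phi$ contribution and upgrades the effective bound to $\delta^{2-2\varepsilon_0}s^{-3/2}$, exactly as in the key computation \eqref{gLLL} underlying the estimate \eqref{GTe}. This delicate use of the null structure is the technical heart of the argument.
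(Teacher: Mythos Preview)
Your transport strategy along $\mathring L$ cannot close to the target $\delta^{2-2\varepsilon_0}$. The gap is in your remainder $\mathcal R_{ij}$ and in the claimed null-condition cancellation. From $\upsilon_{ij}=\mathring L_ix^j-\mathring L_jx^i$ one has
\[
\mathring L\upsilon_{ij}=(\mathring L\mathring L_i)x^j-(\mathring L\mathring L_j)x^i+\bigl(\mathring L_i\mathring L^j-\mathring L_j\mathring L^i\bigr),
\]
and the last bracket equals $\check g_{i\alpha}\mathring L^\alpha\mathring L^j-\check g_{j\alpha}\mathring L^\alpha\mathring L^i=O(\delta^{1-\varepsilon_0}s^{-3/2})$, which integrates along $\mathring L$ only to $O(\delta^{1-\varepsilon_0})$. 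Your null-condition argument does not fix this: the symbol you invoke, $\tilde g^{\alpha\beta,\gamma}\mathring L_\beta\mathring L_\gamma$, carries a free index $\alpha$ that is contracted with the \emph{tangential} vector $\Omega_{ij}^\alpha$, and \eqref{null} controls only the fully null contraction $\tilde g^{\alpha\beta,\gamma}\omega_\alpha\omega_\beta\omega_\gamma$, not the two-null/one-tangential combination arising here (contrast with $g_{ij}\check L^i\omega^j$ in Lemma~\ref{orL}, where the transport along $\mathring L$ genuinely produces $G^\gamma_{\mathring L\mathring L}\mathring L\varphi_\gamma$, the triple-$\mathring L$ symbol). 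Even after the algebraic combination $G^\gamma_{i\mathring L}-G^\gamma_{A\mathring L}g_{ia}\slashed d^Ax^a=-\mathring L_iG^\gamma_{\tilde T\mathring L}$ (which you did not isolate but which does turn your displayed $G$-term into a Gronwall-absorbable $\upsilon_{ij}$-multiple), the residual pieces $\tfrac12(\mathcal{FG})_{\mathring L\mathring L}(X_A)[g_{ia}\slashed d^Ax^a\,x^j-\cdots]$ and $\mathring L_i\mathring L^j-\mathring L_j\mathring L^i$ remain at best $O(\delta^{1-2\varepsilon_0}s^{-3/2})$, which integrates to $O(\delta^{1-2\varepsilon_0})\not\lesssim\delta^{2-2\varepsilon_0}$.

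The paper proceeds differently: it writes $\upsilon_{ij}=rA_{ij}+(\text{terms controlled by Lemma~\ref{orL}})$ with $A_{ij}$ as in \eqref{A}, then computes $TA_{ij}$. The leading $T\varphi_\gamma$ contributions in \eqref{TA} cancel \emph{algebraically} against the leading part of $T\mathring L^i$ from \eqref{TLi} (via the antisymmetry in $(i,j)$ and the symmetry $\tilde g^{\lambda j,\gamma}=\tilde g^{j\lambda,\gamma}$), the null condition entering only inside \eqref{TLi} to bound $G^\gamma_{\mathring L\mathring L}T\varphi_\gamma$. This yields $\delta^l|Z^kTA_{ij}|\lesssim\delta^{1-2\varepsilon_0}s^{-1}$, and now the crucial extra $\delta$ comes from integrating in $u$ over the thin strip $[0,4\delta]$ (using $\p_u=T+\Xi^AX_A$ and the vanishing on $C_0$), giving $\delta^l|Z^kA_{ij}|\lesssim\delta^{2-2\varepsilon_0}s^{-1}$ and hence \eqref{Zu}. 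The $u$-integration is what your $\mathring L$-transport cannot replace.
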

\begin{proof}
	\eqref{omega} yields
	\begin{equation}\label{upsilon}
	\begin{split}
	\upsilon_{ij}=&-g_{0j}x^i-g_{aj}\check L^ax^i+g_{0i}x^j+g_{ai}\check L^ax^j-g_{aj}\f{x^ax^i}{\varrho}+g_{ai}\f{x^ax^j}{\varrho}\\
	=&r\big(-g_{\al j}\o^\al\o^i+g_{\al i}\o^\al\o^j-g_{aj}\check L^a\o^i+g_{ai}\check L^a\o^j-\check\varrho g_{aj}\o^a\o^i+\check\varrho g_{ai}\o^a\o^j\big)\\
	=&r\big(-\tilde g^{0j,\gamma}(\p_\gamma\phi)\f{x^i}{\varrho}+\tilde g^{aj,\gamma}(\p_\gamma\phi)\f{x^a}{\varrho}\f{x^i}{\varrho}+\tilde g^{0i,\gamma}(\p_\gamma\phi)\f{x^j}{\varrho}-\tilde g^{ai,\gamma}(\p_\gamma\phi)\f{x^a}{\varrho}\f{x^j}{\varrho}\\
	&-\check L^j\f{x^i}{\varrho}+\check L^i\f{x^j}{\varrho}\big)+rf(\phi,\varphi,\o,\mathring L^i,\check\varrho)\left(
	\begin{array}{ccc}
	\phi\\
	\vp^2\\
	\check L^a\vp\\
	\check\varrho
	\end{array}
	\right).
	\end{split}
	\end{equation}
	Let
	\begin{equation}\label{A}
	A_{ij}=-\tilde g^{0j,\gamma}(\p_\gamma\phi)\f{x^i}{\varrho}+\tilde g^{aj,\gamma}(\p_\gamma\phi)\f{x^a}{\varrho}\f{x^i}{\varrho}+\tilde g^{0i,\gamma}(\p_\gamma\phi)\f{x^j}{\varrho}-\tilde g^{ai,\gamma}(\p_\gamma\phi)\f{x^a}{\varrho}\f{x^j}{\varrho}-\check L^j\f{x^i}{\varrho}
+\check L^i\f{x^j}{\varrho}.
	\end{equation}
Then
\begin{equation}\label{TA}
\begin{split}
TA_{ij}=&\tilde g^{\lambda j,\gamma}\o_\lambda(T\varphi_\gamma)\f{x^i}{\varrho}-\tilde g^{\lambda i,\gamma}\o_\lambda(T\varphi_\gamma)\f{x^j}{\varrho}-(T\mathring L^j)\f{x^i}{\varrho}+(T\mathring L^i)\f{x^j}{\varrho}\\
&+\check\varrho\tilde g^{aj,\gamma}\o_a(T\varphi_\gamma)\f{x^i}\varrho-\check\varrho\tilde g^{ai,\gamma}\o_a(T\varphi_\gamma)\f{x^j}\varrho
+f(\phi,\varphi,\o,\mathring L^a)T(\f{x^b}{\varrho}).
\end{split}
\end{equation}
Note that \eqref{TA} contains a factor $T\mathring L^i$ and $T\mathring L^i$ satisfies \eqref{TL}.
To analyze $T\mathring L^i$, one first gets
from the definition of $G_{A\mathring L}^\gamma$ in \eqref{FG} and \eqref{gab} that
	\begin{equation}\label{GTd}
	\begin{split}
	&-G_{A\mathring L}^\gamma(T\varphi_\gamma)\slashed d^Ax^i=-G_{a\beta}^\gamma(\slashed d_Ax^a\cdot\slashed d^Ax^i)\mathring L^\beta(T\varphi_\gamma)\\
	=&-G_{i\beta}^\gamma\mathring L^\beta(T\varphi_\gamma)-G^\g_{\mathring L\mathring L}(T\varphi_\gamma)\tilde T^i
+G_{0\beta}^\gamma\mathring L^\beta\tilde T^i(T\varphi_\gamma)+f(\phi,\varphi,\mathring L^i)\left(
	\begin{array}{ccc}
	\phi\\
	\vp
	\end{array}
	\right)T\varphi_\gamma\\
	=&-G_{i\beta}^\gamma\o^\beta(T\varphi_\gamma)-G_{0\beta}^\gamma\o^\beta\o^i(T\varphi_\gamma)+f(\phi,\varphi,\o,\mathring L^i,\f{x^j}{\varrho})\left(\begin{array}{ccc}\left(
	\begin{array}{ccc}
	\phi\\
	\vp\\
	\check L^a\\
	\check\varrho
	\end{array}
	\right)T\varphi\\
	\mathring L\varphi\\
	\slashed dx^a\cdot\slashed d\varphi
	\end{array}
	\right),
	\end{split}
	\end{equation}
	where  \eqref{GT} is used in the last equality. Then substituting \eqref{GTd}
into \eqref{TL} gives
	\begin{equation}\label{TLi}
	T\mathring L^i=\tilde g^{i\lambda,\gamma}\o_\lambda(T\varphi_\gamma)-\tilde g^{0\lambda,\gamma}\o_\lambda\o^i(T\varphi_\gamma)
+f(\phi,\varphi,\o,\mathring L^i,\f{x^j}{\varrho},\mu)\left(\begin{array}{ccc}\left(
	\begin{array}{ccc}
	\phi\\
	\vp\\
	\check L^a\\
	\check\varrho
	\end{array}
	\right)T\varphi\\
	T\phi\\
	\mathring L\phi\\
	\mathring L\varphi\\
	\slashed dx^a\cdot\slashed d\phi\\
	\slashed dx^a\cdot\slashed d\varphi\\
	\slashed dx^a\cdot\slashed d\mu
	\end{array}
	\right).
	\end{equation}

For $k\leq 2N-10$, one can substitute \eqref{TLi} into \eqref{TA} and use $T(\f{x^a}{\varrho})=\f{\mu\check T^a}{\varrho}+\f{(1-\mu)x^a}{\varrho^2}$ to derive that
	\begin{equation*}
	\delta^l|Z^kTA_{ij}|\lesssim\delta^{1-2\varepsilon_0}s^{-1},
	\end{equation*}
	where the corresponding estimates in Section \ref{ho} have been used.
Thus,
	\begin{equation*}
	\delta^l|TZ^kA_{ij}|\leq\delta^l|[T,Z^k]A_{ij}|+\delta^l|Z^kTA_{ij}|\lesssim\delta^{1-2\varepsilon_0}s^{-1},
	\end{equation*}
	which gives
	\begin{equation}\label{uZA}
	\delta^l|\p_uZ^kA_{ij}|\leq\delta^l|TZ^kA_{ij}|+\delta^l|\Xi^A\slashed d_AZ^kA_{ij}|\lesssim\delta^{1-2\varepsilon_0}s^{-1}.
	\end{equation}
	Integrating \eqref{uZA} with respect to $u$ yields
	\begin{equation}\label{ZkA}
	\delta^l|Z^kA_{ij}|\lesssim\delta^{2-2\varepsilon_0}s^{-1},\qquad k\leq 2N-10.
	\end{equation}
	Thanks to \eqref{upsilon} and \eqref{A}, it follows from \eqref{ZkA} and \eqref{orL} that for $k\leq 2N-10$,
	\begin{equation*}
	\delta^l|Z^k\upsilon_{ij}|\lesssim\delta^{2-2\varepsilon_0}.
	\end{equation*}
	\end{proof}

Note that \eqref{rhoL} implies $\check\varrho=\f{r}{\varrho}-1=O(\delta^{2-2\varepsilon_0}s^{-1})$.
Then the distance between $C_0$ and $C_{4\delta}$ on the hypersurface $\Sigma_t$ is $4\delta
+O(\delta^{2-2\varepsilon_0})$ and the characteristic
surface $C_u$ $(0\leq u\leq 4\delta)$ is almost straight with the error $O(\delta^{2-2\varepsilon_0})$ away from
the corresponding  light
conic surface.

On the other hand, for the standard vectors $\{L,\underline L,\O_{ij}\}$ defined in the end of Section \ref{in},
by \eqref{pal} and \eqref{R},
it holds that
\begin{equation}\label{sL}
\begin{split}
&L=\mathring L-\mu^{-1}\o^\al\mathring L_\al T+\o^\al g_{\al j}(\slashed d^Ax^j)X_A,\\
&\underline L=\mathring L+\mu^{-1}(\o^i\mathring L_i-\mathring L_0)T+(g_{0j}\slashed d^Ax^j-\o^i g_{ij}\slashed d^Ax^j)X_A,\\
&\Omega_{ij}=R_{ij}+\mu^{-1}\upsilon_{ij}T.
\end{split}
\end{equation}

By the identity
\[
\o^\al\mathring L_\al=g_{\al\beta}\o^\al\o^\beta+g_{0i}\check L^i+g_{ij}\o^i\check L^j+\check\varrho(g_{0i}\o^i+g_{ij}\o^i\o^j)
\]
and Lemma \ref{orL}, when $k\leq 2N-9$, one can show that
\begin{equation}\label{oL}
\delta^l|Z^k(\o^\al\mathring L_\al)|\lesssim\delta^{2-2\varepsilon_0}s^{-1}.
\end{equation}

Therefore, combining \eqref{oL}, \eqref{Zu}, \eqref{sL} with \eqref{im} and \eqref{imp} yields that in $D^{s,u}$,
\begin{equation}\label{gv}
|\Gamma^k\varphi_\gamma|\lesssim\delta^{1-\varepsilon_0-l}s^{-3/2},
\qquad|\Gamma^k\phi|\lesssim\delta^{2-\varepsilon_0-l}s^{-3/2},\quad k\leq 2N-9,
\end{equation}
where $\Gamma\in\{(s+r)L,\underline L,\Omega\}$ and $l$ is the number of $\underline L$ in $\Gamma^k$.

For any point $P\in\tilde C_{2\delta}=\{(t,x):t\geq t_0,t-|x|=2\delta\}$, there is an integral line of
$L$ across this point, and the corresponding initial point is denoted by $P_0(t_0,x_0)$ on $\Sigma_{t_0}$ with $|x_0|=1$.
It then follows from \eqref{local3-3} that $|\underline L^kL^l\O^\kappa\phi(P_0)|\lesssim\delta^{2-\varepsilon_0}$ holds.
Next we derive the desired smallness and decay estimate on $\underline L^kL^l\O^\kappa\phi$ at the point $P$.

\begin{proposition}\label{YHC-2}
	For any $(t,x)\in\tilde C_{2\delta}$ and $3k+l+|\kappa|\leq 2N-9$, it holds that
	\begin{equation}\label{LLOp}
	|\underline L^kL^l\O^\kappa\phi(t,x)|\lesssim\delta^{2-\varepsilon_0}t^{-3/2-l}.
	\end{equation}
Moreover, when $k+l\leq 2$,
		\begin{equation}\label{modifiedboundary}
		|\underline L^kL^l\O^\kappa\phi|_{\tilde C_{2\delta}}\lesssim\delta^{3-3\varepsilon_0}t^{-3/2-l}.
		\end{equation}
\end{proposition}
\begin{proof}
According to \eqref{me}, it holds that
\begin{equation}\label{Y-28}
| L(r^{3/2}{\underline L}\phi)|\lesssim\delta^{2-2\varepsilon_0}t^{-3/2}+\delta^{1-2\varepsilon_0} t^{-3}| r^{3/2}{\underline L}\phi|.
\end{equation}
Integrating \eqref{Y-28} along integral curves of $L$ and applying the initial data estimate \eqref{local3-3} on $t_0$
yield that on $\tilde C_{2\delta}$,
\begin{equation}\label{PuL}
|{\underline L}\phi|\lesssim\delta^{2-2\varepsilon_0}t^{-3/2}.
\end{equation}
This, together with \eqref{me} again, shows $|L(r^{3/2}{\p_t\underline L}\phi)|\lesssim\delta^{1-2\varepsilon_0}t^{-3/2}+\delta^{1-2\varepsilon_0} t^{-3}|r^{3/2}{\p_t\underline L}\phi|$.
This means that on $\tilde C_{2\delta}$,
\begin{equation}\label{ptL}
|\p_t\underline L\phi|\lesssim\delta^{1-2\varepsilon_0}t^{-3/2}.
\end{equation}
Similarly, on $\tilde C_{2\delta}$,
\begin{equation}\label{prL}
|\p_r\underline L\phi|\lesssim\delta^{1-2\varepsilon_0}t^{-3/2}.
\end{equation}
Thus it holds that for $(t,x)\in\tilde C_{2\delta}$, $|\bar\p\underline L\phi(t,x)|\lesssim\delta^{1-2\varepsilon_0}t^{-3/2}$
with $\bar\p\in\{\p_t,\p_r\}$. Acting $\O^\kappa$ on \eqref{me} and arguing as for \eqref{Y-28} -\eqref{prL},
one can get that for $|\kappa|+3\leq 2N-9$,
\begin{equation*}
|\bar\p\O^\kappa\underline L\phi(t,x)|\lesssim\delta^{1-2\varepsilon_0}t^{-3/2},\quad(t,x)\in\tilde C_{2\delta}.
\end{equation*}

Now we claim that when $c+2+|\kappa|\leq 2N-9$, there holds on $\tilde C_{2\delta}$,
\begin{equation}\label{poL}
|\bar\p^c\O^\kappa\underline L\phi|\lesssim\delta^{2-2\varepsilon_0-c}t^{-3/2}
\end{equation}
(both here and below, one concerns only with the numbers of $\bar\p$). In fact, if one assumes that
for some positive integer $c_0$ and $c\leq c_0$ and $c+|\kappa|+2\leq 2N-9$, it holds that on $\tilde C_{2\delta}$,
\[
|\bar\p^c\O^\kappa\underline L\phi|\lesssim\delta^{2-2\varepsilon_0-c}t^{-3/2}.
\]
It then follows from \eqref{me} that for $c_0+3+|\kappa|\leq 2N-9$,
\begin{equation}\label{Lrbp}
\begin{split}
|L(r^{3/2}\bar\p^{c_0+1}\O^\kappa\underline L\phi)|=&|r^{3/2}\big\{\f32\O^\kappa[r^{-1},\bar\p^{c_0+1}]\underline L\phi+\O^\kappa\bar\p^{c_0+1}(L\underline L\phi+\f32r^{-1}\underline L\phi)\big\}|\\
\lesssim&\delta^{2-2\varepsilon_0-(c_0+1)}t^{-2}.
\end{split}
\end{equation}
Integrating \eqref{Lrbp} along integral curves of $L$ yields
\[
|\bar\p^{c_0+1}\O^\kappa\underline L\phi|_{\tilde C_{2\delta}}\lesssim\delta^{2-2\varepsilon_0-(c_0+1)}t^{-3/2},\quad c_0+3+|\kappa|\leq 2N-9,
\]
which proves \eqref{poL} by induction.

Due to \eqref{poL}, by checking all the terms in $L(r^{3/2}\bar\p^c\O^\kappa\underline L\phi)$ one can
obtain $| L(r^{3/2}\bar\p^c\O^\kappa\underline L\phi)|\lesssim\delta^{2-\varepsilon_0-c}t^{-2}$
for $c+|\kappa|+3\leq 2N-9$. Therefore, on $\tilde C_{2\delta}$,
\begin{equation*}
|\bar\p^c\O^\kappa\underline L\phi|_{\tilde C_{2\delta}}\lesssim\delta^{2-\varepsilon_0-c}t^{-3/2},\quad c+|\kappa|+3\leq 2N-9.
\end{equation*}
In addition, it follows from \eqref{me} that on $\tilde C_{2\delta}$,
\begin{equation*}
|\bar\p^c\O^\kappa L\underline L\phi|\lesssim\delta^{2-\varepsilon_0-c}t^{-5/2},\quad c+|\kappa|+3\leq 2N-9.
\end{equation*}
This, together with an induction argument, derives that for $c+3k+|\kappa|\leq 2N-9$ and on $\tilde C_{2\delta}$,
\begin{align}
&|\bar\p^c\O^\kappa\underline L^k\phi|\lesssim\delta^{2-\varepsilon_0-c}t^{-3/2},\label{pOL}\\
&|L\bar\p^c\O^\kappa\underline L^k\phi|\lesssim\delta^{2-\varepsilon_0-c}t^{-5/2}.\label{LpOL}
\end{align}

Based on \eqref{LpOL}, one can assume that for some positive integer $l_0$ and any $l$ such that $1\leq l\leq l_0$ and $l+c+3k-1+|\kappa|\leq 2N-9$,
there holds on $\tilde C_{2\delta}$,
\begin{equation}\label{ass}
|L^l\bar\p^c\O^\kappa\underline L^k\phi|\lesssim\delta^{2-\varepsilon_0-c}t^{-3/2-l}.
\end{equation}
For $l_0+c+3k+|\kappa|\leq 2N-9$ and $k\geq 1$, it follows from \eqref{gv}, \eqref{pOL}, \eqref{ass} and \eqref{me} that on $\tilde C_{2\delta}$,
\begin{equation*}
\begin{split}
&| L^{l_0+1}\bar\p^c\O^\kappa\underline L^{k}\phi|=| L^{l_0}\bar\p^c\O^\kappa\underline L^{k-1}(L\underline L\phi)|\\
\lesssim&\delta^{2-\varepsilon_0-c}t^{-5/2-l_0}+\sum_{c_1+c_2=c}\delta^{-c_2}t^{-1}|L^{l_0+1}\bar\p^{c_1}\O^{\leq|\kappa|}\underline L^{\leq k-1}\phi|\\
&+\sum_{c_1+c_2=c}\delta^{2-\varepsilon_0-c_2}t^{-3/2}| L^{l_0+1}\bar\p^{c_1}
\O^{\leq |\kappa|}\underline L^{\leq k-1}\p\phi|.
\end{split}
\end{equation*}
Due to $\p_i=\o_i\p_r-\f1r\o^j\O_{ij}$, then
\begin{align*}
&|L^{l_0+1}\bar\p^{c_1}\O^{\leq |\kappa|}\underline L^{\leq k-1}\p\phi|\\
&\lesssim|L^{l_0+1}\bar\p^{c_1+1}\O^{\leq|\kappa|}\underline L^{\leq k-1}\phi|+t^{-1}| L^{l_0+1}\bar\p^{\leq c_1}\O^{\leq|\kappa|+1}\underline L^{\leq k-1}\phi|+\delta^{2-\varepsilon_0-c_1}t^{-7/2-l_0}.
\end{align*}
By reducing the number of $\underline L$ gradually, one eventually obtains that on $\tilde C_{2\delta}$,
\begin{equation*}
\begin{split}
|L^{l_0+1}\bar\p^c\O^\kappa\underline L^{k}\phi|\lesssim&\delta^{2-\varepsilon_0-c}t^{-5/2-l_0}+\sum_{|\nu|+n\leq c+|\kappa|+k}\delta^{n-c}| L^{l_0+1}\bar\p^n\O^{\nu}\phi|\\
\lesssim&\delta^{2-\varepsilon_0-c}t^{-5/2-l_0}.
\end{split}
\end{equation*}
This means that \eqref{ass} holds for any positive $l$ by an induction argument.
Then \eqref{LLOp} follows from the second estimates in \eqref{gv} together with \eqref{pOL} and \eqref{ass} (here $c=0$ is chosen).

In addition, it follow from
	\begin{equation*}
	\begin{split}
	&\underline L(\f 3r\phi+2L\phi)\\
	=&\f 3{r^2}\phi+\f3rL\phi+\f2{r^2}\sum_{i}\o^j\o^k\O_{ij}\O_{ik}\phi+2(g^{\al\beta,0}\phi+\tilde g^{\al\beta,\gamma}\p_\gamma\phi+h^{\al\beta}(\phi,\p\phi))\p_{\al\beta}^2\phi
	\end{split}
	\end{equation*}
	and \eqref{gv} that
	\begin{equation}\label{LLa}
	|\underline LL^a(\f 3r
	\O^\kappa\phi+2L\O^\kappa\phi)|_{\tilde D_{2\dl}}\lesssim\delta^{2-3\ve_0}t^{-3-a},\ 0\leq a\leq 1\ \text{and}\ a+|\kappa|\leq 2N-11,
	\end{equation}
where $\tilde D_{2\dl}=\{(t,x)|0\leq t-|x|\leq 2\dl, t+|x|\geq 2+2\dl\}$.

Integrating \eqref{LLa} along integral curves of $\underline L$
yields that in $\tilde D_{2\dl}$
	\begin{equation}\label{La}
	|L^a(\f 3r
	\O^\kappa\phi+2L\O^\kappa\phi)|_{\tilde C_{2\dl}}\lesssim\delta^{3-3\ve_0}t^{-3-a},\ 0\leq a\leq 1\ \text{and}\ a+|\kappa|\leq 2N-11,
	\end{equation}
	which means $|L(r^{3/2}\O^\kappa\phi)|_{\tilde C_{2\dl}}\lesssim\delta^{3-3\ve_0}t^{-3/2}$.
Then we have
	\begin{equation}\label{Ophi}
	|\O^\kappa\phi|_{\tilde C_{2\dl}}\lesssim\delta^{3-3\ve_0}t^{-3/2}\ \text{for}\ |\kappa|\leq 2N-11
	\end{equation}
	with the help of \eqref{local3-2}.
On the other hand, collecting \eqref{Ophi} and \eqref{La} deduces
	\begin{equation}\label{LaL}
	|L^aL\O^\kappa\phi|_{\tilde C_{2\dl}}\lesssim\delta^{3-3\ve_0}t^{-5/2-a},\ 0\leq a\leq 1\ \text{and}\ a+|\kappa|\leq 2N-11.
	\end{equation}
	Utilizing \eqref{me} again, it follows from \eqref{Ophi}-\eqref{LaL} and \eqref{LLOp} that
	\begin{equation*}
	|L(r^{3/2}\underline L\O^{\kappa}\phi)|_{\tilde C_{2\dl}}\lesssim\delta^{3-3\ve_0}t^{-3/2},\ |\kappa|\leq 2N-15.
	\end{equation*}	
This implies that by \eqref{local2-2} and when $|\kappa|\leq 2N-15$,
	\begin{equation}\label{uL}
	|\underline L\O^{\kappa}\phi|_{\tilde C_{2\dl}}\lesssim\delta^{3-3\ve_0}t^{-3/2}.
	\end{equation}
Hence, by virtue of \eqref{me},
	\begin{equation}\label{LuLp}
	|L\underline L\O^{\kappa}\phi|_{\tilde C_{2\dl}}\lesssim\delta^{3-3\ve_0}t^{-5/2},\ |\kappa|\leq 2N-15.
	\end{equation}
With the same argument as above, it holds that by the fact of $|L\underline L\O^\kappa(r^{3/2}\underline L\phi)|_{\tilde C_{2\dl}}\lesssim\delta^{3-3\ve_0}t^{-3/2}$ for $|\kappa|\leq 2N-18$ and \eqref{local2-2},
	\begin{equation}\label{uL2}
	|\underline L^2\O^\kappa\phi|_{\tilde C_{2\dl}}\lesssim\delta^{3-3\ve_0}t^{-3/2},\ |\kappa|\leq 2N-18.
	\end{equation}
	Therefore, \eqref{modifiedboundary}  is shown.
\end{proof}

\section{Global existence of $\phi$ in $B_{2\dl}$ and the proof of Theorem \ref{main}}\label{inside}

In this section, we will show the global existence of the solution $\phi$ to \eqref{quasi} in $B_{2\dl}$. Set
$$
D_{t}:=\{(\bar t,x): \bar t-|x|\geq 2\delta, t_0\leq \bar t\leq t\},
$$
see Figure 7 below. Different from the problem of 3D small value solutions inside the cone in \cite{Godin07} and \cite{MPY}, here the solution
$\phi$ to \eqref{quasi} in $D_{t}$ remains large due to the structure of the initial data on time $t_0$ (see Theorem \ref{Th2.1}).

\vskip 0.1 true cm
\centerline{\includegraphics[scale=0.29]{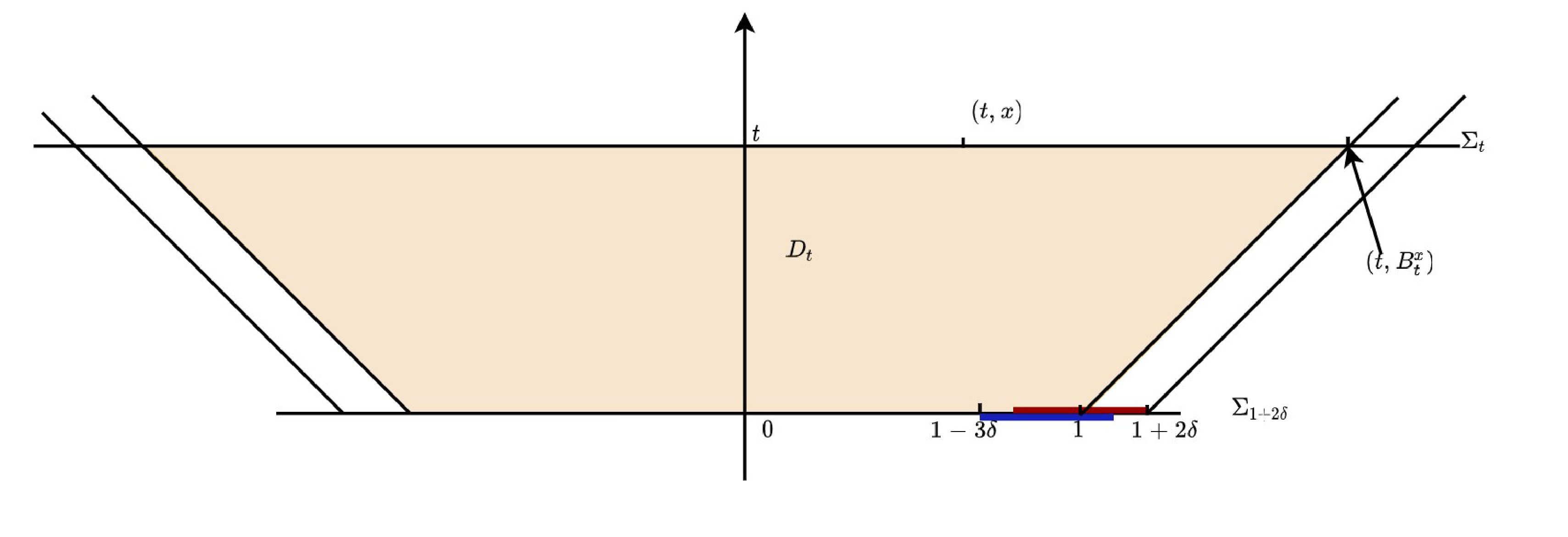}}

\centerline{\bf Figure 7. The domain $D_{t}$}
\vskip 0.3 true cm

We will adopt the energy method to get the global existence of $\phi$ in $B_{2\dl}$. To this end, getting suitable rate of
time-decay of $\phi$ by the Klainerman-Sobolev type $L^\infty-L^2$ inequality is crucial.
However, since $D_t$ has finite lateral boundary, then the classical Klainerman-Sobolev type  $L^\infty-L^2$  inequality
cannot be applied directly in $D_t$. Inspired by the works in \cite{Godin97}, \cite{LWY} and \cite{MPY},
we need the following modified Klainerman-Sobolev inequalities.

\begin{lemma}\label{KS}
	For any function $f(t,x)\in C^\infty(\mathbb R^{1+4})$, $t_0\leq t\leq T$, the following inequalities hold:
	\begin{equation}\label{leq}
	| f(t,x)|\lesssim\sum_{i=0}^3 t^{-2}\delta^{(i-2)\nu}\|\bar{\Gamma}^{i}f(t,\cdot)\|_{L^2(r\leq t/2)},\ |x|\leq\f14t,
	\end{equation}
	\begin{equation}\label{geq}
	| f(t,x)|\lesssim | f(t,B_t^x)|+\sum_{|\al|\leq 2,
		|\beta|\leq 1}t^{-3/2}\|\Omega^\al\p^\beta f(t,\cdot)\|_{L^2(t/4\leq r\leq t-2\delta)},\ |x|\geq\f14t,
	\end{equation}
	for $(t,x)\in D_T$, where $\bar{\Gamma}\in\{S,H_i,\Omega_{ij}\}$, $\O\in\{\O_{ij}:i,j=1,2,3,4\}$, $(t,B_t^x)$ is the intersection point of
	the boundary $\tilde C_{2\delta}$ and the ray crossing $(t,x)$ which emanates from $(t,0)$,
	and $\nu$ is the any nonnegative constant in \eqref{leq}.
\end{lemma}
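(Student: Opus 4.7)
The plan is to treat the two sub-regions $\{|x|\le t/4\}$ and $\{|x|\ge t/4\}$ separately, since in the former the lateral boundary $\tilde C_{2\delta}$ is at distance comparable to $t$ and therefore invisible to the estimate, whereas in the latter one must integrate from $\tilde C_{2\delta}$ inward. For the interior estimate \eqref{leq}, the first step is to convert the Lorentzian vector fields into ordinary spatial derivatives on $\Sigma_t$. From $t\partial_t=S-r\partial_r$ and $t\partial_i=H_i-x^i\partial_t$ one solves algebraically for $\partial_r f$ to obtain
\begin{equation*}
\partial_r f=\frac{1}{t-r}\Bigl(\omega^iH_if-\frac{r}{t}Sf\Bigr),
\end{equation*}
so that on $\{r\le t/2\}$, where $t-r\ge t/2$, one has $|\partial_x f|\lesssim t^{-1}|\bar\Gamma f|$ and, iterating, $|\partial_x^If|\lesssim t^{-|I|}|\bar\Gamma^{\le|I|}f|$. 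For any $\nu\ge 0$ and any point $(t,x)$ with $|x|\le t/4$, I would then take the Euclidean ball $B_\rho(x)$ of radius $\rho=\delta^\nu t$, which for small $\delta$ is contained in $\{r\le t/2\}$, and apply the scaled 4D Sobolev embedding
\begin{equation*}
\|f\|_{L^\infty(B_\rho(x))}^2\lesssim\sum_{|I|\le 3}\rho^{2|I|-4}\|\partial_x^If\|_{L^2(B_\rho(x))}^2.
\end{equation*}
Substituting $\rho=\delta^\nu t$, inserting $|\partial_x^If|\lesssim t^{-|I|}|\bar\Gamma^{\le|I|}f|$, and extending the $L^2$ integration from $B_\rho(x)$ to the full set $\{r\le t/2\}$ yields precisely \eqref{leq}, with the factor $t^{-2}$ arising from $\rho^{-4}\cdot\rho^{2|I|}\cdot t^{-2|I|}=\delta^{(2|I|-4)\nu}t^{-4}$.

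For the near-boundary estimate \eqref{geq}, the plan is to exploit the boundary value at $\tilde C_{2\delta}$ together with radial integration. Writing $r=|x|$, $\omega=x/|x|$, and noting that the ray from the origin through $x$ meets $\tilde C_{2\delta}$ at $B_t^x=(t-2\delta)\omega$, the fundamental theorem of calculus gives
\begin{equation*}
f(t,r\omega)=f(t,B_t^x)-\int_r^{t-2\delta}\partial_\rho f(t,\rho\omega)\,d\rho.
\end{equation*}
I would then apply the classical 3-dimensional spherical Sobolev embedding $\|g\|_{L^\infty(\mathbb S^3)}\lesssim\sum_{|\alpha|\le 2}\|\Omega^\alpha g\|_{L^2(\mathbb S^3)}$ to $\omega\mapsto \rho^{3/2}\partial_\rho f(t,\rho\omega)$ and $\omega\mapsto\rho^{3/2}f(t,\rho\omega)$, followed by Cauchy--Schwarz in $\rho$ with the volume element $\rho^3\,d\rho\,d\omega$ on the annulus $\{t/4\le \rho\le t-2\delta\}$; since $\rho\sim t$ throughout this range, the weight $\rho^{3/2}$ produces exactly the $t^{-3/2}$ prefactor in \eqref{geq}, and the integrand becomes $\|\Omega^{\le 2}\partial^{\le 1}f\|_{L^2}$ on the annular region as asserted.

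The main technical obstacle is the careful bookkeeping of commutators and coefficients when inverting between $\{\bar\Gamma^i\}$ and $\{\partial_x^i\}$ in the interior argument: the inversion formula above is bounded only when $|r/t|$ is bounded away from $1$, and iterating it introduces products of the smooth coefficients $\omega^i$, $r/t$, $(t-r)^{-1}$ which must be controlled uniformly in $t$ on $\{r\le t/2\}$. These coefficients are smooth and bounded, so all commutator contributions absorb into the implicit constants, but one must verify that the combinatorics of differentiating them does not inflate the number of vector fields beyond three. No new difficulty arises at the free boundary $\tilde C_{2\delta}$ beyond the explicit term $|f(t,B_t^x)|$ recorded in \eqref{geq}, and the flexibility parameter $\nu$ is a free choice of the Sobolev scale, justifying the quantifier ``for any nonnegative constant $\nu$'' in the statement.
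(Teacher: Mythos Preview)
Your treatment of the interior estimate \eqref{leq} is essentially the paper's: rescale by $x=t\delta^{\nu}y$, apply $H^{3}(\mathbb{R}^{4})\hookrightarrow L^{\infty}$, then convert $\partial_{x}$ back to $\bar{\Gamma}$ using $t\sim t-r$ on $\{r\le t/2\}$. The only slip is that your ball $B_{\delta^{\nu}t}(x)$ is not contained in $\{r\le t/2\}$ when $\nu=0$; the paper avoids this by carrying a fixed cutoff $\chi(|x|/t)$ through the rescaling rather than localizing to a $\delta$-dependent ball.

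For the outer estimate \eqref{geq} there is a genuine gap. Applying the fundamental theorem of calculus to $f$ itself, then spherical Sobolev on $\partial_{\rho}f$, and finally Cauchy--Schwarz in $\rho$ produces only
\[
\int_{r}^{t-2\delta}|\partial_{\rho}f|\,d\rho
\;\lesssim\;\int_{r}^{t-2\delta}\rho^{-3/2}\|\Omega^{\le 2}\partial f\|_{L^{2}(S_{t}^{\rho})}\,d\rho
\;\le\;\Bigl(\int_{r}^{t-2\delta}\rho^{-3}\,d\rho\Bigr)^{1/2}\|\Omega^{\le 2}\partial f\|_{L^{2}(\mathrm{ann})}
\;\lesssim\; t^{-1}\|\Omega^{\le 2}\partial f\|_{L^{2}},
\]
a full factor $t^{1/2}$ short of \eqref{geq}; this matters, since the subsequent bootstrap relies on the integrability of $\tau^{-3/2}$. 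The paper's device is to apply the fundamental theorem to $f^{2}$ instead of $f$, so that the integrand becomes the product $f\cdot\partial_{\rho}f$. Spherical Sobolev applied to \emph{each} factor then contributes $\rho^{-3/2}\cdot\rho^{-3/2}=\rho^{-3}\lesssim t^{-3}$, and a single Cauchy--Schwarz in $\rho$ turns the remaining product $\|\Omega^{\le 2}f\|_{L^{2}(S_{t}^{\rho})}\|\Omega^{\le 2}\partial f\|_{L^{2}(S_{t}^{\rho})}$ directly into $\|\cdot\|_{L^{2}(\mathrm{ann})}^{2}$, yielding $f^{2}\lesssim f^{2}(t,B_{t}^{x})+t^{-3}\|\cdot\|^{2}$. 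This is also why both $|\beta|=0$ and $|\beta|=1$ terms must appear on the right-hand side of \eqref{geq}, whereas your route would involve only $\partial f$.
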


It should be remarked that though this Lemma and its proof are similar to those of Proposition  3.1 in \cite{MPY},
 yet the refined inner estimate, \eqref{leq} is new (the appearance of factor $\delta^{(i-2)\nu}$) and is crucial for
 the treatment of the short pulse initial data here that is not needed for the small data case in \cite{MPY,Godin97,LWY}
 (note that in \cite{MPY}, the smallness of the corresponding solution holds true in the interior domain $D(\dl)=\{(t,x): t-|x|\geq\dl\}$
 of the light cone due to the largeness of $N_0$ in \eqref{HC-03}).

\begin{proof}
	The basic approach here is similar to those for Proposition 3.1 in \cite{MPY} by separating the inner estimate from the outer ones.
	However, to get the refined inner estimate \eqref{leq}, we will use a $\delta$-dependent scaling.
	Let $\chi\in C_c^\infty(\mathbb R_+)$ be a nonnegative cut-off function such that $\chi(r)\equiv1$ fro $r\in[0,\f14]$ and $\chi(r)\equiv0$ for $r\geq\f12$.
	
	Define $f_1(t,x)=\chi(\f{|x|}t)f(t,x)$ and $f_2=f-f_1$. Then $\text{supp}\ f_1\subset\{(t,x):|x|\leq\f{t}2\}$ and $\text{supp}\ f_2\subset\{(t,x):|x|\geq\f{t}4\}$. One can obtain the inner estimate \eqref{leq} and the outer estimate \eqref{geq} separately as follows.
	
	First, for any point $(t,x)$ satisfying $|x|\leq\f14t$, $f_1(t,x)=f(t,x)$. One can rescale the variable as $x=t\delta^\nu y$,
	and then use the Sobolev embedding theorem for $y$ to get
		\begin{equation}\label{ins}
	\begin{split}
	|f(t,x)|&=|f_1(t,t\delta^\nu y)|\lesssim\sum_{|\al|\leq 3}\Big(\int_{\mathbb R^4}|\p_y^\al\big(f(t,t\delta^\nu y)\chi(\delta^\nu y)\big)|^2dy\Big)^{1/2}\\
	&\lesssim\sum_{|\al|\leq 3}\Big(\int_{|\delta^\nu y|\leq 1/2}(t\delta^\nu)^{2|\al|}|(\p_x^\al f)(t,t\delta^\nu y)|^2dy\Big)^{\f12}\\
	&\lesssim\sum_{|\al|\leq 3}\Big(\int_{|z|\leq\f12t}(t\delta^\nu)^{2|\al|-4}|\p_z^\al f(t,z)|^2dz\Big)^{1/2}.
	\end{split}
	\end{equation}
	Note that
	\begin{equation}\label{11.4}
	\p_{i}=-\f{1}{t-r}\big(\f{x^i}{t+r}S-\f{t}{t+r}H_i-\f{x^j}{t+r}\O_{ij}\big),\quad\p_t=\f{1}{t-r}\big(\f{t}{t+r}S-\f{x^i}{t+r}H_i\big)
	\end{equation}
	and $t\sim t-|z|$ in the domain $\{(t,z): |z|\leq\f12t\}$. Then one can get
	\begin{equation}\label{tf}
	\begin{split}
	&|t\p_zf(t,z)|\lesssim|(t-|z|)\p_zf(t,z)|\lesssim|\bar{\Gamma} f(t,z)|,\\
	&|t^2\p_z^2f(t,z)|\lesssim\sum_{1\leq|\al|\leq 2}|\bar\Gamma^\al f(t,z)|,\\
	&|t^3\p_z^3f(t,z)|\lesssim\sum_{1\leq|\al|\leq 3}|\bar\Gamma^\al f(t,z)|.
	\end{split}
	\end{equation}
	Therefore, substituting \eqref{tf} into \eqref{ins} yields \eqref{leq}.

	Next, for $(t,x)$ satisfying $|x|\geq\f14t$, by the Newton-Leibnitz formula and the Sobolev embedding
	theorem on the circle $S_t^\rho$
	with  radius $\rho$ and center at the origin on $\Sigma_t$, one has
	\begin{equation*}
	\begin{split}
	f^2(t,x)&=f^2(t,B_t^x)-\int_{|x|}^{t-2\delta}\p_\rho\big(f^2(t,\rho\omega)\big)d\rho\\
	&\lesssim f^2(t,B_t^x)+\int_{|x|}^{t-2\delta}\f{1}{\rho^3}\sum_{|\al|,|\beta|\leq 2}\|\Omega^\al f\|_{L^2(S_t^\rho)}\|\Omega^{{\beta}} \p f\|_{L^2(S_t^\rho)}d\rho\\
	&\lesssim f^2(t,B_t^x)+\sum_{|\al|\leq 2,|\beta|\leq 1}t^{-3}\|\Omega^\al\p^\beta f(t,\cdot)\|^2_{L^2(t/4\leq r\leq t-2\delta)},
	\end{split}
	\end{equation*}
	which implies \eqref{geq}. Thus Lemma \ref{KS} is verified.
\end{proof}

On the hypersurface in $D_{T}$, one can have the following estimate which is similar to Lemma 2.3 for 3D case in \cite{DY}.

\begin{lemma}\label{L2}
	Let $f$ is the function as in Lemma \ref{KS}. Then for any $1\leq\bar t\leq t-2\delta$, it holds that
	\begin{equation}\label{L2L2}
	\|\f{f(t,\cdot)}{1+t-|\cdot|}\|_{L^2(\bar t\leq |x|\leq t-2\delta)}
\lesssim t^{3/2}\|f(t,B_t^{\cdot})\|_{L^\infty(\bar t\leq |x|\leq t-2\delta)}+\|\p f(t,\cdot)\|_{L^2(\bar t\leq |x|\leq t-2\delta)}.
	\end{equation}
\end{lemma}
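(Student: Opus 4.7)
\medskip

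\noindent\textbf{Proof plan for Lemma 12.2.} The statement is a Hardy-type inequality on the spatial annulus $\{\bar t \leq |x| \leq t-2\delta\}\subset\mathbb{R}^4$ with weight $(1+t-|x|)^{-1}$, which degenerates (to a bounded quantity) at the outer boundary $|x|=t-2\delta$ (the trace $\tilde C_{2\delta}$) and is harmless at the inner boundary. The plan is to integrate by parts in the radial direction using a primitive of the singular weight, pay for the outer boundary term by $\|f(t,B_t^{\cdot})\|_{L^\infty}$, drop the inner boundary term by sign, and close with Cauchy--Schwarz.

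Concretely, write $x=r\omega$ with $\omega\in\mathbb{S}^3$ and note that
\[
\partial_r\!\left(\frac{r^3}{1+t-r}\right)=\frac{3r^2}{1+t-r}+\frac{r^3}{(1+t-r)^2},
\]
so
\[
\frac{r^3}{(1+t-r)^2}=\partial_r\!\left(\frac{r^3}{1+t-r}\right)-\frac{3r^2}{1+t-r}.
\]
Multiplying by $f^2$ and integrating $r\in[\bar t,t-2\delta]$, the integration by parts produces a boundary term $\bigl[r^3 f^2/(1+t-r)\bigr]_{\bar t}^{\,t-2\delta}$, a cross term $-2\int f(\partial_r f)\,r^3/(1+t-r)\,dr$, and a nonpositive contribution $-3\int f^2 r^2/(1+t-r)\,dr$ (which I discard). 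At $r=\bar t$ the boundary term enters with a minus sign and is nonpositive, hence dropped; at $r=t-2\delta$ it equals $\frac{(t-2\delta)^3}{1+2\delta}f^2(t,(t-2\delta)\omega)$, which after integration over $\omega\in\mathbb{S}^3$ is bounded by $C t^3 \|f(t,B_t^{\cdot})\|_{L^\infty}^2$, since $(t,(t-2\delta)\omega)=(t,B_t^{r\omega})$ is precisely the foot of the radial ray on $\tilde C_{2\delta}$.

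For the cross term, apply Cauchy--Schwarz:
\[
\Bigl|\int_{\bar t}^{t-2\delta}\!\!\!f(\partial_r f)\frac{r^3}{1+t-r}dr\Bigr|
\leq \Bigl(\int_{\bar t}^{t-2\delta}\!\!\!\frac{r^3 f^2}{(1+t-r)^2}dr\Bigr)^{\!1/2}
\Bigl(\int_{\bar t}^{t-2\delta}\!\!\!r^3(\partial_r f)^2\,dr\Bigr)^{\!1/2},
\]
and absorb the first factor into the left-hand side (using $2ab\leq \tfrac12 a^2+2b^2$). Integrating in $\omega$ converts the $r^3\,dr\,d\omega$ measure into $dx$ on $\mathbb{R}^4$, and since $|\partial_r f|\leq |\partial f|$, this yields
\[
\int_{\bar t\leq |x|\leq t-2\delta}\!\!\!\frac{f^2(t,x)}{(1+t-|x|)^2}\,dx
\;\lesssim\; t^3\|f(t,B_t^{\cdot})\|_{L^\infty}^2 + \|\partial f(t,\cdot)\|_{L^2(\bar t\leq|x|\leq t-2\delta)}^2.
\]
Taking square roots and using $\sqrt{a+b}\leq\sqrt{a}+\sqrt{b}$ gives \eqref{L2L2}.

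There is no serious obstacle: the choice of primitive $r^3/(1+t-r)$ is natural because the singularity is at $r=t$ (outside the annulus) and the Jacobian in 4D furnishes exactly the $r^3$ factor. The only point requiring attention is verifying the sign at the inner boundary $r=\bar t$ (where $f$ is not assumed to vanish, hence one cannot integrate by parts in the reverse direction), and verifying that the outer boundary contribution is correctly identified with the value of $f$ on $\tilde C_{2\delta}$; both are immediate from the geometry.
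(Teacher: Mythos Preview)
Your argument is correct and in fact slightly cleaner than the paper's, but it proceeds by a genuinely different route. The paper does not use your integration-by-parts primitive $r^3/(1+t-r)$; instead it writes $f(t,r\omega)=f(t,B_t^x)-\int_r^{t-2\delta}\omega\!\cdot\!\nabla f(t,s\omega)\,ds$, squares, and applies Cauchy--Schwarz with the weighted splitting $1=(1+t-s)^{-1/4}(1+t-s)^{1/4}$ to obtain
\[
|f(t,x)|^2\lesssim |f(t,B_t^x)|^2+(1+t-r)^{1/2}\!\int_r^{t-2\delta}\!|\p f(t,s\omega)|^2(1+t-s)^{1/2}\,ds.
\]
After multiplying by $r^3(1+t-r)^{-2}$ and integrating in $r$, a Fubini swap collapses the double integral and the remaining weight $(1+t-s)^{1/2}\!\int_{\bar t}^s (1+t-r)^{-3/2}\,dr\lesssim 1$. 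Your approach avoids the Fubini step entirely by recognizing the left-hand side directly as $\int f^2\,\p_r\bigl(r^3/(1+t-r)\bigr)\,dr$ modulo a favorable-sign term, which is the standard Hardy mechanism; the price is the absorption step, which the paper does not need since its Cauchy--Schwarz produces $\|\p f\|_{L^2}^2$ outright. Both arguments are elementary and yield the same estimate; yours is closer in spirit to the classical Hardy inequality, while the paper's mirrors the boundary-value representation already used in Lemma~\ref{KS}.
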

\begin{proof}
	For any point $(t,x)$ satisfying $|x|\geq\bar t$ in $D_T$, one has
	\[
	f(t,x)=f(t,B_t^x)-\int_r^{t-2\delta}\nabla_xf(t,s\omega)\cdot\omega ds,
	\]
where $(t,B_t^x)$ is given in Lemma \ref{KS}.
	Then it follows from  Schwartz inequality that
\begin{equation}\label{Y-29}
	|f(t,x)|^2\lesssim|f(t,B_t^x)|^2+(1+t-r)^{1/2}\int_r^{t-2\delta}|\p f(t,s\omega)|^2(1+t-s)^{1/2}ds.
\end{equation}
	Multiplying the factor $(1+t-r)^{-2}r^3$ on both sides of \eqref{Y-29} and integrating them from $\bar t$ to $t-2\delta$ yield
	\begin{equation}\label{SoFn-1}
	\begin{split}
	\int_{\bar t}^{t-2\delta}|\f{f(t,r\omega)}{1+t-r}|^2r^3dr\lesssim t^3\|f(t,B_t^{\cdot})\|^2_{L^\infty(\bar t\leq |x|\leq t-2\delta)}
+\int_{\bar t}^{t-2\delta}|\p f(t,s\omega)|^2s^3ds,
	\end{split}
	\end{equation}
	which gives \eqref{L2L2} directly by integrating \eqref{SoFn-1} on $\mathbb S^3$.
\end{proof}

With these preparations and the crucial estimates in Proposition \ref{YHC-2}, one can use the standard energy method to prove the global estimate of the solution $\phi$ to \eqref{quasi} in $B_{2\dl}$. Set
\begin{equation}\label{Linfty}
E_{k,l}(t)=\|\p\tilde\Gamma^k\Omega^l\phi(t,\cdot)\|_{L^2(\Sigma_t\cap D_{t})}^2,
\end{equation}
where $\tilde\Gamma\in\{\p,S,H_i\}$ and $k+l\leq 6$. By
the estimates \eqref{local3-2} on $\Sigma_{t_0}$, one can make the following bootstrap assumptions:

For $t\ge t_0$ and $0<\ve_0\leq\f{7}{144}$, there exists a uniform constant $M_0>0$ such that
\begin{equation}\label{EA}
\begin{split}
&E_{0,l}(t)\leq {M_0}^2\delta^{6-6\varepsilon_0},\qquad E_{1,l}(t)\leq {M_0}^2\delta^{44/9-10\varepsilon_0},\qquad E_{2,l}(t)\leq {M_0}^2\delta^{8/3-2\varepsilon_0},\\
&E_{3,l}(t)\leq {M_0}^2\delta^{1-2\varepsilon_0},\qquad E_{4,l}(t)\leq {M_0}^2\delta^{-1-2\varepsilon_0},\qquad\quad E_{5,l}(t)\leq {M_0}^2\delta^{-3-2\varepsilon_0},\\
&E_{6,l}(t)\leq {M_0}^2\delta^{-5-2\varepsilon_0}.
\end{split}
\end{equation}

Lemma \ref{KS}, \ref{L2},  and the assumptions \eqref{EA} imply
the following $L^\infty$ estimates.

\begin{proposition}\label{P4.1}
	Under the assumptions \eqref{EA} with suitably small $\delta>0$, it holds that
\begin{equation}\label{pphi}
	\begin{split}
	&|\p\Omega^{\leq 3}\phi|\lesssim M_0\delta^{(4-5\varepsilon_0)/3}t^{-3/2},\ \ \quad |\p\tilde\Gamma\Omega^{\leq 2}\phi|\lesssim M_0\delta^{5/12-\varepsilon_0}t^{-3/2},\\
	&|\p\tilde\Gamma^2\Omega^{\leq 1}\phi|\lesssim M_0\delta^{-5/9-\varepsilon_0}t^{-3/2},\quad|\p\tilde{\Gamma}^3\phi|
\lesssim M_0\delta^{-3/2-\varepsilon_0}t^{-3/2}.
	\end{split}
	\end{equation}
\end{proposition}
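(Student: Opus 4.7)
The plan is to prove all four estimates by applying the inner–outer split from Lemma \ref{KS} and optimizing the free parameter $\nu$ in the inner inequality \eqref{leq} separately for each case. Fix $f=\p\tilde\Gamma^k\Omega^l\phi$ with $k+l\leq 3$. In the interior region $|x|\leq t/4$, \eqref{leq} yields
\begin{equation*}
|f(t,x)|\lesssim\sum_{i=0}^{3}t^{-2}\delta^{(i-2)\nu}\|\bar\Gamma^{i}f(t,\cdot)\|_{L^2(r\leq t/2)}.
\end{equation*}
Since $\bar\Gamma\in\{S,H_i,\Omega_{ij}\}$, each $\bar\Gamma^i f$ can, after routine commutation with $\p$ and with the other $\tilde\Gamma$/$\Omega$ factors, be written as a finite linear combination of $\p\tilde\Gamma^{k'}\Omega^{l'}\phi$ with $k'\leq k+i$ and $k'+l'\leq k+l+i\leq 6$. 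The worst $L^2$ bound from \eqref{EA} corresponds to the largest admissible $k'$ (since $\alpha_k$ is decreasing in $k$, where $\sqrt{E_{k,l}}\lesssim M_0\delta^{\alpha_k-\ve_0}$ with $\alpha_\bullet=(2,7/4,7/6,1,1/4,-5/8,-3/2)$), so $\|\bar\Gamma^i f\|_{L^2}\lesssim M_0\delta^{\alpha_{k+i}-\ve_0}$. The total exponent on $\delta$ in the $i$-th term is $(i-2)\nu+\alpha_{k+i}-\ve_0$, and the best uniform interior bound is obtained by maximizing $\min_{i=0,1,2,3}\{(i-2)\nu+\alpha_{k+i}\}$. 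A direct computation gives the optima $\nu=1/3$ for $k=0$ (min value $7/6$, attained at $i=2$), $\nu=11/24$ for $k=1$ (min $17/24$, attained at $i=1,3$), $\nu=43/72$ for $k=2$ (min $-1/36$, attained at $i=0,3$), and $\nu=5/6$ for $k=3$ (min $-2/3$, attained at $i=0,3$). These values match exactly the stated exponents in \eqref{pphi}, and $t^{-2}\leq t^{-3/2}$ for $t\geq t_0$ gives the required $t^{-3/2}$ decay.

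For the exterior region $t/4\leq |x|\leq t-2\delta$, \eqref{geq} gives
\begin{equation*}
|f(t,x)|\lesssim |f(t,B_t^x)|+\sum_{|\alpha|\leq 2,\,|\beta|\leq 1}t^{-3/2}\|\Omega^\alpha\p^\beta f(t,\cdot)\|_{L^2}.
\end{equation*}
The $L^2$ term is $\lesssim t^{-3/2}\sqrt{E_{k+\beta,l+|\alpha|}}\lesssim M_0 t^{-3/2}\delta^{\alpha_{k+1}-\ve_0}$ after commutation, which is stronger than each target exponent (e.g.\ $\alpha_1=7/4>7/6$, $\alpha_2=7/6>17/24$, $\alpha_3=1>-1/36$, $\alpha_4=1/4>-2/3$), and is therefore absorbed.

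The boundary term $|f(t,B_t^x)|$ is handled using Proposition \ref{YHC-2}. On $\tilde C_{2\delta}$ the identities $S=\delta\underline L+\tfrac{t+r}{2}L$, $H_i=\omega_i(-\delta\underline L+\tfrac{t+r}{2}L)-\tfrac{t}{r}\omega^j\Omega_{ij}$, $\p_t=\tfrac12(L+\underline L)$ and $\p_i=\tfrac{\omega_i}{2}(L-\underline L)-\tfrac{1}{r}\omega^j\Omega_{ij}$ let us expand $f=\p\tilde\Gamma^k\Omega^l\phi$ into a finite sum of monomials $\delta^a t^b r^{-c}\underline L^p L^q\Omega^m\phi$ in which, whenever one factor $L$ appears with weight $t$, Proposition \ref{YHC-2} supplies an extra $t^{-1}$ decay, and each $\underline L$ always comes with the weight $\delta$. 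Thus every monomial is bounded by $\delta^{2-\ve_0}t^{-3/2}$, giving $|f(t,B_t^x)|\lesssim\delta^{2-\ve_0}t^{-3/2}$, which is negligible compared with the inner bound in each of the four cases. Combining the interior and exterior estimates yields \eqref{pphi}.

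The main technical difficulty will be the bookkeeping in the commutator expansions: one has to verify carefully that every resulting term from $\bar\Gamma^i f$ (and from $\Omega^\alpha\p^\beta f$ in the exterior) can indeed be bounded by $\sqrt{E_{k',l'}}$ with $k'\leq k+i$ and $k'+l'\leq 6$, so that the bootstrap assumption \eqref{EA} applies. A secondary subtlety is the boundary expansion on $\tilde C_{2\delta}$, where one must ensure that the worst coefficient growth in $t$ is always compensated by a corresponding $t^{-1}$ loss per $L$ from Proposition \ref{YHC-2}, which relies crucially on the fact that the $\delta^{2-\ve_0}$ smallness in Proposition \ref{YHC-2} is uniform in the number of $L$'s. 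Once these two points are checked, the four estimates follow directly by choosing the four values of $\nu$ above.
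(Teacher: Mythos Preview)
Your proof is correct and follows essentially the same route as the paper: the inner--outer split from Lemma \ref{KS}, with the very same choices $\nu=1/3,\ 11/24,\ 43/72,\ 5/6$ for the four cases (the paper merely states these values and omits the optimization you carried out), and the exterior handled by \eqref{geq} together with Proposition \ref{YHC-2} for the boundary term. One small imprecision: in your boundary expansion on $\tilde C_{2\delta}$, the claim ``each $\underline L$ always comes with the weight $\delta$'' is only true for the $\underline L$'s produced by $S$ and $H_i$; the ones coming from $\p$ carry an $O(1)$ coefficient. This does not matter, since Proposition \ref{YHC-2} already supplies the full factor $\delta^{2-\varepsilon_0}$ regardless of the number of $\underline L$'s, so your conclusion $|f(t,B_t^x)|\lesssim\delta^{2-\varepsilon_0}t^{-3/2}$ stands.
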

\begin{proof}
		First, for $|x|\leq\f t4$, one gets from \eqref{leq} that
		\begin{equation}\label{Y-30}
		\begin{split}
		|\p\Omega^{\leq 3}\phi|\lesssim t^{-2}\Big\{&\delta^{-2\nu}\|\p\Omega^{\leq 3}\phi\|_{L^2(\Sigma_t\cap D_t)}+\delta^{-\nu}\|\bar{\Gamma}\p\Omega^{\leq 3}\phi\|_{L^2(\Sigma_t\cap D_t)}\\
		&+\|\bar{\Gamma}^2\p\Omega^{\leq 3}\phi\|_{L^2(\Sigma_t\cap D_t)}
+\delta^{\nu}\|\bar{\Gamma}^3\p\Omega^{\leq 3}\phi\|_{L^2(\Sigma_t\cap D_t)}\big\}.
		\end{split}
		\end{equation}
Choosing $\nu=\f56-\f23\ve_0$ in \eqref{Y-30} and utilizing the assumptions \eqref{EA} yield
		\begin{equation}\label{1s}
	|\p\Omega^{\leq 3}\phi|\lesssim M_0\delta^{(4-5\varepsilon_0)/3} t^{-2}.
		\end{equation}
		Next, for $\f t4\leq |x|\leq t-2\delta$, choosing $f(t,x)$ as $\p\O^{\leq 3}\phi(t,x)$ in \eqref{geq}, one can get,
		\begin{equation}\label{1lp}
		\begin{split}
		|\p\Omega^{\leq 3}\phi(t,x)|\lesssim &|\p\Omega^{\leq 3}\phi(t,B_t^x)|+t^{-3/2}\|\O^{\leq2}\p^{\leq1}\p\O^{\leq 3}\phi(t,\cdot)\|_{L^2(\f{t}4\leq r\leq t)}\\
		\lesssim&
		 M_0\delta^{44/9-10\varepsilon_0}t^{-3/2}
		 \end{split}
		\end{equation}
		with the help of \eqref{modifiedboundary}.
		Combining \eqref{1s} and \eqref{1lp} yields
		\begin{align*}
		|\p\Omega^{\leq 3}\phi|\lesssim M_0\delta^{(4-5\varepsilon_0)/3}t^{-3/2}.
		\end{align*}
		
		Finally, the other cases can be treated similarly as in \eqref{Y-30} by choosing different $\nu$ such as $\nu=\f{11}{12}$
		for $\p\tilde{\Gamma}\Omega^{\leq 2}\phi$, $\nu=\f{17}{18}$ for $\p\tilde{\Gamma}^2\Omega^{\leq 1}\phi$, and $\nu=1$ for $\p\tilde{\Gamma}^3\phi$. Due to the direct but tedious computations, the resulting details are omitted.
\end{proof}

\begin{remark}
	It is pointed out that the assumption of $\ve_0\leq\f{7}{144}$ is used to derive $|\p\tilde\Gamma\Omega^{\leq 2}\phi|\lesssim M_0\delta^{5/12-\varepsilon_0}t^{-3/2}$ in \eqref{pphi} when $\nu=\f{11}{12}$ is chosen according to the analogous inequality \eqref{Y-30}.
\end{remark}

\begin{corollary}\label{C4.1}
	Under the same conditions in Proposition \ref{P4.1}, it holds that
\begin{equation}\label{p2}
	\begin{split}
	&|\p^2\Omega^{\leq 2}\phi|\lesssim M_0\delta^{5/12-\varepsilon_0}t^{-3/2}(1+t-r)^{-1},\\
	&|\p^2\tilde\Gamma\Omega^{\leq 1}\phi|\lesssim M_0\delta^{-5/9-\varepsilon_0}t^{-3/2}(1+t-r)^{-1},\\
	&|\p^2\t\Gamma^2\phi|\lesssim M_0\delta^{-3/2-\varepsilon_0}t^{-3/2}(1+t-r)^{-1}
	\end{split}
	\end{equation}
	and
	\begin{equation}\label{O}
	\begin{split}
	&|\O^{\leq3}\phi|\lesssim M_0\delta^{(4-5\varepsilon_0)/3}t^{-3/2}(1+t-r),\\
	&|\tilde\Gamma\O^{\leq 2}\phi|\lesssim M_0\delta^{5/12-\varepsilon_0}t^{-3/2}(1+t-r),\\
	&|\tilde\Gamma^2\O^{\leq 1}\phi|\lesssim M_0\delta^{-5/9-\varepsilon_0}t^{-3/2}(1+t-r),\\
	&|\tilde\Gamma^3\phi|\lesssim M_0\delta^{-3/2-\varepsilon_0}t^{-3/2}(1+t-r).
	\end{split}
	\end{equation}
\end{corollary}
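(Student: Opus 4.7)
The two bounds \eqref{p2} and \eqref{O} should follow from Proposition \ref{P4.1} by combining two ingredients: radial integration of $\p u$ inward from the lateral boundary $\tilde C_{2\delta}$, where Proposition \ref{YHC-2} supplies sharp decay of $\phi$ and its derivatives, and the pointwise identities \eqref{11.4} which express Cartesian derivatives as $(t-r)^{-1}$ times bounded combinations of $S,H_i,\Omega_{ij}$. The two sets of estimates use these ingredients in opposite orders: \eqref{O} is essentially a boundary-to-interior fundamental-theorem-of-calculus argument, while \eqref{p2} is a purely pointwise algebraic identity augmented by a short-range boundary layer correction.

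For \eqref{O}, given $(t,x)\in D_t$ with $r=|x|$, the radial segment from $(t,x)$ to $(t,B_t^x)$ on $\tilde C_{2\delta}$ has length $t-r-2\delta\leq 1+t-r$. Writing
\begin{equation*}
\tilde\Gamma^a\Omega^\kappa\phi(t,x)=\tilde\Gamma^a\Omega^\kappa\phi(t,B_t^x)-\int_r^{t-2\delta}\omega^j\p_j\bigl(\tilde\Gamma^a\Omega^\kappa\phi\bigr)(t,r'\omega)\,dr',
\end{equation*}
I would bound the boundary value via Proposition \ref{YHC-2} (after expressing $\tilde\Gamma\in\{S,H_i\}$ in terms of $L,\underline L,\Omega$ with coefficients $O(t)$, which is legitimate since $r\sim t$ on $\tilde C_{2\delta}$) and the integrand via Proposition \ref{P4.1}. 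Multiplying by the length of the interval, and absorbing the boundary term $\delta^{2-\varepsilon_0}t^{-3/2}$ into the larger interior bound (e.g.\ $M_0\delta^{7/6-\varepsilon_0}t^{-3/2}\cdot(1+t-r)$ for the first inequality), delivers each line of \eqref{O}.

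For \eqref{p2}, the identities \eqref{11.4} yield $(t-r)|\p u|\lesssim |Su|+|Hu|+|\Omega u|$ with uniformly bounded coefficients. Differentiating once more and using the elementary commutators $[\p,S]=\p$, $[\p_k,H_j]=\delta_{kj}\p_t$, and $[\p,\Omega]=\p$ produces
\begin{equation*}
(t-r)|\p^2u|\lesssim |\p\tilde\Gamma u|+|\p\Omega u|+|\p u|.
\end{equation*}
In the subregion $\{t-r\geq 1\}$, $(1+t-r)\leq 2(t-r)$, so applying Proposition \ref{P4.1} to $u=\Omega^{\leq 2}\phi,\,\tilde\Gamma\Omega^{\leq 1}\phi,\,\tilde\Gamma^2\phi$ closes each inequality of \eqref{p2}. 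In the boundary layer $\{2\delta\leq t-r\leq 1\}$ where $(1+t-r)\sim 1$, the claim reduces to an unweighted $L^\infty$ bound on $\p^2u$, which I would establish by radial integration of $\p^3\Omega^{\leq 2}\phi$ (and analogous higher-order quantities) from $\tilde C_{2\delta}$ exactly as in the proof of \eqref{O}, now using the higher-order boundary data supplied by Proposition \ref{YHC-2}. The principal technical point will be to verify, case by case, that the $\delta$-exponents on the boundary (uniformly $\delta^{2-\varepsilon_0}$ by Proposition \ref{YHC-2}) are always dominated by the interior exponents ($\delta^{17/24-\varepsilon_0}$, $\delta^{-1/36-\varepsilon_0}$, and $\delta^{-2/3-\varepsilon_0}$ in the three inequalities), so that the matching between boundary and interior estimates never loses a power of $\delta$; this is automatic for small $\delta$ but must be tracked through the commutator expansions of $\tilde\Gamma$ in the null frame on $\tilde C_{2\delta}$.
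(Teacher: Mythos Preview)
Your treatment of \eqref{O} matches the paper exactly: radial integration from $\tilde C_{2\delta}$ with the boundary value controlled by Proposition \ref{YHC-2} and the integrand by Proposition \ref{P4.1}.

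For \eqref{p2}, your use of \eqref{11.4} in the region $\{t-r\ge 1\}$ is correct and is what the paper does. However, your proposed boundary-layer argument is both unnecessary and problematic. It is unnecessary because you have overlooked that $\p$ itself belongs to $\tilde\Gamma=\{\p,S,H_i\}$. Hence, for example, $\p^2\Omega^{\le 2}\phi$ is literally of the form $\p\tilde\Gamma\Omega^{\le 2}\phi$, and Proposition \ref{P4.1} already gives the \emph{unweighted} bound $|\p^2\Omega^{\le 2}\phi|\lesssim M_0\delta^{17/24-\varepsilon_0}t^{-3/2}$ throughout $D_t$. Combining this with the $(t-r)^{-1}$-weighted bound from \eqref{11.4} via the elementary inequality $\min\{1,(t-r)^{-1}\}\lesssim (1+t-r)^{-1}$ yields \eqref{p2} on all of $D_t$ in one step. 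The same reasoning handles the second and third lines, since $\p^2\tilde\Gamma\Omega^{\le 1}\phi=\p\tilde\Gamma^2\Omega^{\le 1}\phi$ and $\p^2\tilde\Gamma^2\phi=\p\tilde\Gamma^3\phi$. This is precisely why the paper says \eqref{p2} ``follows from \eqref{11.4} and \eqref{pphi} directly.''

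Your boundary-layer argument is also problematic as stated: radially integrating $\p^3\Omega^{\le 2}\phi$ requires a pointwise bound on that quantity throughout the layer, not merely on $\tilde C_{2\delta}$. Viewing $\p^3=\p\tilde\Gamma^2$, this would need $|\p\tilde\Gamma^2\Omega^{\le 2}\phi|$, but Proposition \ref{P4.1} only supplies $|\p\tilde\Gamma^2\Omega^{\le 1}\phi|$, so the derivative count does not close.
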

\begin{proof}
	\eqref{p2} follow from \eqref{11.4} and \eqref{pphi} directly. \eqref{O} can be shown by using
		\[
		f(t,x)=f(t,B_t^x)-\int_r^{t-2\delta}\nabla_xf(t,s\omega)\cdot\omega ds
		\]
		and \eqref{pphi}.
\end{proof}

We now carry out the energy estimates on $\phi$ in $B_{2\dl}$.
 Direct computations show
 \begin{equation}\label{Wvv}
 \begin{split}
 (\p_tv)g^{\al\beta}\p_{\al\beta}^2v=&\f12\p_t\big((g^{00}(\p_t v)^2-g^{ij}\p_iv\p_jv)\big)+\p_i\big((g^{0i}(\p_tv)^2
 +g^{ij}\p_tv\p_jv)\big)\\
 &+\big(-(\p_ig^{0i})(\p_tv)^2-(\p_ig^{ij})\p_tv\p_jv+\f12(\p_tg^{ij})\p_iv\p_jv\big),
 \end{split}
 \end{equation}
 where
 \begin{equation}\label{Dt2}
 \begin{split}
 -(\p_ig^{0i})(\p_tv)^2-(\p_ig^{ij})\p_tv\p_jv+\f12(\p_tg^{ij})\p_iv\p_jv
 =O\big((|\p\phi|+|\p^2\phi|)|\p v|^2\big).
 \end{split}
 \end{equation}
In addition, integrating \eqref{Wvv} over $D_t$ and using integration by parts give rise to a term on the lateral boundary $\tilde C_{2\delta}$ as
 \begin{equation}\label{lb}
 \begin{split}
 &-\f{\sqrt 2}4(g^{00}(\p_t v)^2-g^{ij}\p_iv\p_jv)+\f{\sqrt 2}2(g^{0i}(\p_tv)^2+g^{ij}\p_tv\p_jv)\o_i\\
 =&\f{\sqrt 2}4(Lv)^2+\f{\sqrt 2}{4r^2}\sum_i(\o^j\O_{ij}v)^2+O\big((|\phi|+|\p\phi|)|\p v|^2\big).
 \end{split}
 \end{equation}
With the help of \eqref{modifiedboundary}, it can be checked that the term in \eqref{lb} can be controlled on $\tilde C_{2\delta}$ by
\begin{equation}\label{C2}
(Lv)^2+\sum_{i=1}^4(\f{\omega^j}{r}\O_{ij}v)^2+\delta^{3-3\varepsilon_0}t^{-3/2}(\p v)^2,
\end{equation}
where  the unimportant constant coefficients in \eqref{C2} are neglected.

Due to \eqref{Dt2} and \eqref{C2}, integrating \eqref{Wvv} on $D_t$ and
utilizing Proposition \ref{P4.1} and Corollary \ref{C4.1},
 one then gets from  Gronwall's inequality that for small $\delta>0$,
\begin{equation}\label{W}
\begin{split}
&\int_{\Sigma_t\cap D_t}\big((\p_tv)^2+|\nabla v|^2\big)\\
\lesssim&\int_{\Sigma_{t_0}\cap D_t}\big((\p_tv)^2+|\nabla v|^2\big)+\iint_{D_t}| \p_tvg^{\al\beta}\p_{\al\beta}^2v|\\
&+\int_{\tilde C_{2\delta}\cap D_t}\big\{(Lv)^2+\sum_{i=1}^4(\f{\omega^j}{r}\O_{ij}v)^2
+\delta^{3-3\varepsilon_0}tau^{-3/2}|\p v|^2\big\}.
\end{split}
\end{equation}

To close the bootstrap assumptions \eqref{EA}, one can set $v=\tilde\Gamma^k\Omega^l\phi$ $(k+l\leq 6)$ in \eqref{W}.
Note that \eqref{LLOp} and \eqref{modifiedboundary} implies that $(L\tilde\Gamma^k\Omega^l\phi)^2+\sum_{i=1}^4(\f{\omega^j}{r}\O_{ij}\tilde\Gamma^k\Omega^l\phi)^2
 +\delta^{3-3\varepsilon_0}t^{-3/2}|\p \tilde\Gamma^k\Omega^l\phi|^2\lesssim\delta^{6-6\varepsilon_0}t^{-9/2}$ for $k\leq 1$ and $(L\tilde\Gamma^k\Omega^l\phi)^2+\sum_{i=1}^4(\f{\omega^j}{r}\O_{ij}\tilde\Gamma^k\Omega^l\phi)^2
 +\delta^{3-3\varepsilon_0}t^{-3/2}|\p \tilde\Gamma^k\Omega^l\phi|^2\lesssim\delta^{4-2\varepsilon_0}t^{-9/2}$ for $2\leq k\leq 6$ hold
 on $\tilde C_{2\delta}$. Therefore,
\begin{equation*}
\int_{\tilde C_{2\delta}\cap D_t}\big\{(L\tilde\Gamma^k\Omega^l\phi)^2+\sum_{i=1}^4(\f{\omega^j}{r}\O_{ij}\tilde\Gamma^k\Omega^l\phi)^2+\delta^{3-3\varepsilon_0}\tau^{-3/2}|\p \tilde\Gamma^k\Omega^l\phi|^2\big\}\lesssim
\left\{
\begin{aligned}
&\delta^{6-6\ve_0},\ k\leq 1,\\
&\delta^{4-2\varepsilon_0},\ 2\leq k\leq 6.
\end{aligned}
\right.
\end{equation*}
In addition, on the initial hypersurface $\Sigma_{t_0}\cap D_t$,
it holds that $|\p\tilde\Gamma^k\Omega^l\phi|\lesssim\delta^{3-k-\varepsilon_0}$
for $0\leq k\leq 6-l$ by \eqref{local3-2}. Hence, \eqref{W} gives that
\begin{align}
&E_{0,l}(t)\lesssim\delta^{6-6\varepsilon_0}+\iint_{D_t}| (\p_t\Omega^l\phi)(g^{\al\beta}\p_{\al\beta}^2\Omega^l\phi)|,\quad l\leq 6,\label{E01}\\
&E_{k,l}(t)\lesssim\delta^{7-2k-2\varepsilon_0}+\iint_{D_t}| (\p_t\tilde\Gamma^k\Omega^l\phi)(g^{\al\beta}\p_{\al\beta}^2\tilde\Gamma^k\Omega^l\phi)|,\quad 1\leq k\leq 6-l.\label{E2-6}
\end{align}

It remains to estimate $\iint_{D_t}|(\p_t\tilde\Gamma^k\Omega^l\phi)(g^{\al\beta}\p_{\al\beta}^2\tilde\Gamma^k\Omega^l\phi)|$
in the right hand sides of \eqref{E01}-\eqref{E2-6} and further obtain the global existence of $\phi$ in $B_{2\dl}$.

\begin{theorem}\label{T4.1}
	When $\delta>0$ is small, there exists a smooth solution $\phi$ to \eqref{quasi} in $B_{2\dl}$.
\end{theorem}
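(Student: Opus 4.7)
The plan is to close the bootstrap assumptions \eqref{EA} by carefully estimating the spacetime integrals $\iint_{D_t}|\p_t\tilde\Gamma^k\Omega^l\phi\cdot g^{\al\beta}\p_{\al\beta}^2\tilde\Gamma^k\Omega^l\phi|$ on the right hand sides of \eqref{E01}--\eqref{E2-6}, and then invoking Gronwall together with local well-posedness to extend the solution indefinitely. Applying $\tilde\Gamma^k\Omega^l$ to \eqref{quasi} and writing $g^{\al\beta}-m^{\al\beta}=g^{\al\beta,0}\phi+\tilde g^{\al\beta,\gamma}\p_\gamma\phi+h^{\al\beta}(\phi,\p\phi)$, I would expand
\begin{equation*}
g^{\al\beta}\p_{\al\beta}^2(\tilde\Gamma^k\Omega^l\phi)=\mathcal N_{k,l}[\phi],
\end{equation*}
where $\mathcal N_{k,l}$ is a sum of three kinds of commutator terms: (i) those arising from $[S,\Box_m]=2\Box_m$, $[H_i,\Box_m]=0$ and $[\Omega_{ij},\Box_m]=0$, which are either zero or absorbable into the same equation; (ii) quadratic terms of the schematic forms $\tilde g^{\al\beta,\gamma}\p_\gamma(\tilde\Gamma^{k_1}\Omega^{l_1}\phi)\cdot\p^2_{\al\beta}(\tilde\Gamma^{k_2}\Omega^{l_2}\phi)$ and $g^{\al\beta,0}(\tilde\Gamma^{k_1}\Omega^{l_1}\phi)\cdot\p^2_{\al\beta}(\tilde\Gamma^{k_2}\Omega^{l_2}\phi)$ with $k_1+k_2\le k$, $l_1+l_2\le l$; and (iii) cubic and higher-order remainders from $h^{\al\beta}$, which are strictly smaller in $\delta$.

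For the quadratic contributions in (ii), the first null condition \eqref{null} plays the decisive role. After the decomposition $\p_t=\f12(L+\underline L)$, $\p_i=\f{\omega^i}{2}(L-\underline L)-\f{1}{r}\omega^j\Omega_{ij}$, the worst $(\underline L)^3$-combination in $\tilde g^{\al\beta,\gamma}\p_\gamma u\cdot\p^2_{\al\beta}v$ cancels by \eqref{null}, so that at least one factor involves a \emph{good derivative} ($Lu$, $\f{\Omega}{r}u$, or an analogous second-order good derivative of $v$) which carries the extra weight $(1+t-r)^{-1}$. I would then place the factor with the fewest derivatives in $L^\infty$ via Proposition \ref{P4.1}, obtaining the decay $t^{-3/2}$ together with a gain $\delta^{\sigma_k-\varepsilon_0}$ whose size depends on the differential order; the remaining factor stays in $L^2$ and is absorbed into $E_{k',l'}(t)$ for some $k'+l'\le k+l$. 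For terms containing $\phi$ itself (without a derivative), Corollary \ref{C4.1} combined with the Hardy-type Lemma \ref{L2} converts $\phi/(1+t-r)$ into an $L^2$-controllable quantity on $D_t$, using the boundary decay \eqref{LLOp} from Proposition \ref{YHC-2}. The boundary integrals along $\tilde C_{2\delta}$ have already been absorbed into the $\delta^{4-2\varepsilon_0}$ terms of \eqref{E01}--\eqref{E2-6} thanks again to \eqref{LLOp}.

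Assembling these bounds and combining them with a descending induction on $k+l$ yields
\begin{equation*}
\iint_{D_t}|\p_t\tilde\Gamma^k\Omega^l\phi\cdot\mathcal N_{k,l}|\lesssim M_0^2\delta^{\alpha_k-2\varepsilon_0}+M_0\delta^{\sigma_k-\varepsilon_0}\int_{t_0}^t\tau^{-3/2}E_{k,l}(\tau)\,d\tau+(\text{lower order}),
\end{equation*}
with $\alpha_k\in\{4,7/2,7/3,2,1/2,-5/4,-3\}$ matching \eqref{EA}. Since $\int_{t_0}^\infty\tau^{-3/2}d\tau<\infty$, Gronwall's inequality then gives $E_{k,l}(t)\le\tfrac12 M_0^2\delta^{\alpha_k-2\varepsilon_0}$ for $\delta$ small and $M_0$ fixed large, which strictly improves \eqref{EA}; the standard continuation argument extends $\phi$ to all $t\ge t_0$ inside $B_{2\delta}$.

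The main obstacle is the bookkeeping of $\delta$-exponents through the seven distinct bootstrap levels, especially at top orders $E_{5,l}$ and $E_{6,l}$ where the exponents are negative. At those orders the ``lower-order" factor in a quadratic term may still carry four or five derivatives, so the $L^\infty$ bounds in Proposition \ref{P4.1}---obtained with the $\delta$-dependent rescaling $\nu=\f13,\f{11}{24},\f{43}{72},\f56$---offer only modest smallness, and each quadratic term must genuinely benefit from the null structure to produce the extra good derivative whose factor $(1+t-r)^{-1}$ matches the Hardy-type estimate in Lemma \ref{L2} with the sharp boundary decay $\delta^{2-\varepsilon_0}t^{-3/2-k}$ of Proposition \ref{YHC-2}. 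Without the first null condition \eqref{null} the integrand would only be $O(t^{-1})$, forcing logarithmic-in-$t$ growth of the energies and destroying the bootstrap---precisely the mechanism behind the blow-up described in Remark \ref{HC-04}.
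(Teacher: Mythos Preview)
Your overall architecture---bootstrap, commutator expansion of $g^{\al\beta}\p^2_{\al\beta}\tilde\Gamma^k\Omega^l\phi$, pointwise control from Proposition~\ref{P4.1} on the low-derivative factor, Hardy-type control via Lemma~\ref{L2} for the undifferentiated $\phi$, and Gronwall---matches the paper. But you misidentify the mechanism that actually closes the estimate. The paper's proof does \emph{not} invoke the null condition \eqref{null} anywhere in the $B_{2\dl}$ energy argument: the commutator is bounded crudely as
\[
|g^{\al\beta}\p^2_{\al\beta}\tilde\Gamma^k\Omega^l\phi|\lesssim\sum_{\substack{k_1+k_2\le k,\ l_1+l_2\le l\\ k_2+l_2<k+l}}|\tilde\Gamma^{k_1}\Omega^{l_1}(g^{\al\beta}-m^{\al\beta})|\cdot|\p^2\tilde\Gamma^{k_2}\Omega^{l_2}\phi|,
\]
with no good/bad derivative splitting at all. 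The work is done instead by the weight-pairing of Corollary~\ref{C4.1}: either the coefficient factor is placed in $L^\infty$ and carries the weight $(1+t-r)$ from \eqref{O}, after which $(1+t-r)\p^2\tilde\Gamma^{k_2}\Omega^{l_2}\phi$ is converted to $\p\tilde\Gamma^{\le k_2+1}\Omega^{l_2}\phi$ in $L^2$ via the identity \eqref{11.4}; or $\p^2$ goes into $L^\infty$ with the weight $(1+t-r)^{-1}$ from \eqref{p2}, and the coefficient is handled in $L^2$ through Lemma~\ref{L2}. The time integrability then comes directly from the 4D pointwise rate $\tau^{-3/2}$, which is integrable without any additional gain; your assertion that without \eqref{null} the integrand would only be $O(\tau^{-1})$ and force logarithmic growth is a 3D phenomenon and is false here. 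In particular the term $g^{\al\beta,0}\phi\,\p^2_{\al\beta}\phi$ has no null structure whatsoever, yet closes by exactly the weight-pairing above.

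You also locate the pinch at the wrong place. The genuinely tight step is not at the top orders $E_{5,l},E_{6,l}$ but at $k=3$: the cross-term $|\p\tilde\Gamma^3\Omega^l\phi|\cdot|\tilde\Gamma^3\phi|\cdot|\p^2\Omega^{l_2}\phi|$ produces, after the paper's bookkeeping, the bound $M_0^3\delta^{25/12-3\varepsilon_0}$ (see \eqref{HC-1}), and this is $\le\delta^{2-2\varepsilon_0}$ precisely when $\varepsilon_0<\tfrac{1}{12}$. That is the origin of the restriction on $\varepsilon_0$ in Theorem~\ref{main}, and it has nothing to do with null structure.
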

\begin{proof}
	Acting $\tilde\Gamma^k\Omega^l$ on \eqref{quasi} and commuting it with $g^{\al\beta}\p_{\al\beta}^2$ yield
	\begin{equation*}
	\begin{split}
	|g^{\al\beta}\p_{\al\beta}^2\tilde\Gamma^k\Omega^l\phi|\lesssim\sum_{\mbox{\tiny$\begin{array}{cc}k_1+k_2\leq k,l_1+l_2\leq l\\k_2+l_2<k+l\end{array}$}}|\tilde\Gamma^{k_1}\O^{l_1}(g^{\al\beta}-m^{\al\beta})|\cdot|\p^2\tilde\Gamma^{k_2}\O^{l_2}\phi|.
	\end{split}
	\end{equation*}
	The right hand side can be estimated separately as follows.
\begin{enumerate}
	\item {\bf $k=0$ and $l\leq 6$.}
	\begin{enumerate}
		\item When $l_1\leq l_2$, then $l_1\leq 3$ holds. It follows from Proposition \ref{P4.1} and Corollary \ref{C4.1} that
		\[
		|\Omega^{l_1}(g^{\al\beta}-m^{\al\beta})|\lesssim|\O^{\leq l_1}\phi|+|\O^{\leq l_1}\p\phi|\lesssim M_0\delta^{(4-5\varepsilon_0)/3}t^{-3/2}(1+t-r).
		\]
		Thus, since $l_2\leq l-1$, one has by \eqref{EA} that
\begin{equation}\label{l1}
\begin{split}
&\iint_{D_t}|\p\Omega^l\phi|\cdot|\tilde\O^{l_1}(g^{\al\beta}-m^{\al\beta})|\cdot|\p^2\O^{l_2}\phi|\\
\lesssim&\int_{t_0}^t M_0\delta^{(4-5\varepsilon_0)/3}\tau^{-3/2}\|\p\Omega^l\phi\|_{L^2(\Sigma_{\tau}\cap D_t)}\|(1+\tau-r)\p^2\Omega^{l_2}\phi\|_{L^2(\Sigma_{\tau}\cap D_t)}d\tau\\
\lesssim&M_0\delta^{(4-5\varepsilon_0)/3}\int_{t_0}^t\tau^{-3/2}\sqrt{E_{0,\leq 6}(\tau)}\big(\sqrt{E_{0,\leq 6}(\tau)}
+\sqrt{E_{1,\leq 5}(\tau)}\big)d\tau\\
\lesssim&\delta^{6-6\varepsilon_0}.
\end{split}
\end{equation}

		\item When $l_1>l_2$, then $l_2\leq 2$ holds. It follows from Corollary \ref{C4.1} that
		\[
		|\p^2\Omega^{l_2}\phi|\lesssim M_0\delta^{5/12-\varepsilon_0}t^{-3/2}(1+t-r)^{-1}.
		\]
		This, together with \eqref{EA} and Lemma \ref{L2}, yields
		\begin{equation}\label{l2}
		\begin{split}
		&\iint_{D_t}|\p\Omega^l\phi|\cdot|\tilde\O^{l_1}(g^{\al\beta}-m^{\al\beta})|\cdot|\p^2\O^{l_2}\phi|\\
		\lesssim&M_0\delta^{5/12-\varepsilon_0}\iint_{D_t}\tau^{-3/2}(1+\tau-r)^{-1}|\p\Omega^l\phi|\big(|\O^{\leq l_1}\phi|+|\O^{\leq l_1}\p\phi|\big)\\
		\lesssim&M_0\delta^{5/12-\varepsilon_0}\int_{t_0}^t\tau^{-3/2}\|\p\O^l\phi(\tau,\cdot)\|_{L^2(\Sigma_{\tau}\cap D_t)}\Big(\|\f{\O^{\leq l_1}\phi}{1+\tau-|\cdot|}\|_{L^2(\Sigma_{\tau}\cap D_t)}\\
		&\qquad\qquad\qquad+\|\O^{\leq l_1}\p\phi\|_{L^2(\Sigma_{\tau}\cap D_t)}\Big)d\tau\\
		\lesssim& M_0\delta^{5/12-\varepsilon_0}\int_{t_0}^t\tau^{-3/2}\|\p\O^l\phi(\tau,\cdot)\|_{L^2(\Sigma_{\tau}\cap D_t)}(\delta^{3-3\varepsilon_0}+\|\p\O^{\leq l_1}\phi\|_{L^2(\Sigma_{\tau}\cap D_t)})d\tau\\
		\lesssim&\delta^{6-6\varepsilon_0}.
		\end{split}
		\end{equation}
		
	\end{enumerate}
Inserting \eqref{l1} and \eqref{l2} into \eqref{E01} leads to
\begin{equation}\label{E0}
E_{0,l}(t)\lesssim\delta^{6-6\varepsilon_0},\quad l\leq 6.
\end{equation}

\item{\bf $k=1$ and $l\leq 5$.}
\begin{enumerate}
	\item When $k_1+l_1\leq k_2+l_2$, then $k_1+l_1\leq 3$ holds. Due to $k_1\leq1$, it follows from
Proposition \ref{P4.1} and Corollary \ref{C4.1} that
	\[
	|\tilde\Gamma\Omega^{l_1}(g^{\al\beta}-m^{\al\beta})|\lesssim|\tilde\Gamma\O^{\leq l_1}\phi|
+|\tilde\Gamma\O^{\leq l_1}\p\phi|\lesssim M_0\delta^{5/12-\varepsilon_0}t^{-3/2}(1+t-r).
	\]
	Then, since $k_2+l_2\leq 5$ and $\delta>0$ is small, one has
	\begin{equation}\label{k11}
	\begin{split}
	&\iint_{D_t}|\p\tilde\Gamma\Omega^l\phi|\cdot|\tilde\Gamma^{k_1}\O^{l_1}(g^{\al\beta}-m^{\al\beta})|\cdot|\p^2\tilde\Gamma^{k_2}\O^{l_2}\phi|\\
	\lesssim&\iint_{D_t}|\p\tilde\Gamma\Omega^l\phi|\big\{|\tilde\Gamma\O^{l_1}(g^{\al\beta}-m^{\al\beta})|\cdot|\p^2\O^{l_2}\phi|\\
	&\qquad\qquad\qquad\qquad+|\O^{l_1}(g^{\al\beta}-m^{\al\beta})|\cdot|\p^2\tilde\Gamma^{\leq 1}\O^{l_2}\phi|\big\}\\
	\lesssim& M_0\delta^{5/12-\varepsilon_0}\int_{t_0}^t\tau^{-3/2}\|\p\tilde\G\O^l\phi\|_{L^2(\Sigma_{\tau}\cap D_t)}\|\p\tilde\G^{\leq 1}\O^{l_2}\phi\|_{L^2(\Sigma_{\tau}\cap D_t)}d\tau\\
	&+M_0\delta^{(4-5\varepsilon_0)/3}\int_{t_0}^t\tau^{-3/2}\|\p\tilde\G\O^l\phi\|_{L^2(\Sigma_{\tau}\cap D_t)}\|\p\tilde\G^{\leq 2}\O^{l_2}\phi\|_{L^2(\Sigma_{\tau}\cap D_t)}d\tau\\
	\lesssim&\delta^{9/44-10\varepsilon_0}.
	\end{split}
	\end{equation}
	\item When $k_1+l_1>k_2+l_2$, then $k_2+l_2\leq 2$ holds. As $k_2\leq 1$, then Corollary \ref{C4.1} implies
	\[
	|\p^2\tilde\G\O^{l_2}\phi|\lesssim M_0\delta^{-5/9-\varepsilon_0}t^{-3/2}(1+t-r)^{-1}.
	\]
	This, together with \eqref{EA}, yields
	\begin{equation}\label{k12}
	\begin{split}
	&\iint_{D_t}|\p\tilde\Gamma\Omega^l\phi|\cdot|\tilde\Gamma^{k_1}\O^{l_1}(g^{\al\beta}-m^{\al\beta})|\cdot|\p^2\tilde\Gamma^{k_2}\O^{l_2}\phi|\\
	\lesssim& M_0\delta^{5/12-\varepsilon_0}\iint_{D_t}\tau^{-3/2}(1+\tau-r)^{-1}|\p\tilde\Gamma\O^l\phi|\big(|\tilde\G\O^{\leq l_1}\phi|+|\tilde\G\O^{\leq l_1}\p\phi|\big)\\
	&+M_0\delta^{-5/9-\varepsilon_0}\iint_{D_t}\tau^{-3/2}(1+\tau-r)^{-1}|\p\tilde\Gamma\O^l\phi|\big(|\O^{\leq l_1}\phi|+|\O^{\leq l_1}\p\phi|\big)\\
	\lesssim&\delta^{9/44-10\varepsilon_0}.
	\end{split}
	\end{equation}
\end{enumerate}
 Similarly to Case (1), substituting \eqref{k11} and \eqref{k12} into \eqref{E2-6} gives
 \begin{equation}\label{E1l}
 E_{1,l}(t)\lesssim\delta^{9/44-10\varepsilon_0},\quad l\leq 5.
 \end{equation}

\item {\bf $k\geq 2$ and $k+l\leq 6$.}

When $k+l\leq 6$ $(k\geq 2)$, as in the case (1) and (2), one can make use of Proposition \ref{P4.1} and Corollary \ref{C4.1} to estimate the
related integrals in  \eqref{E01}-\eqref{E2-6} for $k_1+l_1\leq k_2+l_2$ ($k_1+l_1\leq3$ and $k_2+l_2\leq5$), meanwhile utilize Corollary \ref{C4.1}
to deal with the remaining terms ($k_2+l_2\leq2$).

\end{enumerate}

Consequently, one has obtained the following desired estimates:
\begin{equation}\label{Y-34}
\begin{split}
&E_{0,l}(t)\leq \delta^{6-6\varepsilon_0},\qquad E_{1,l}(t)\leq\delta^{44/9-10\varepsilon_0},\qquad E_{2,l}(t)\leq \delta^{8/3-2\varepsilon_0},\\
&E_{3,l}(t)\leq \delta^{1-2\varepsilon_0},\qquad E_{4,l}(t)\leq \delta^{-1-2\varepsilon_0},\qquad\quad E_{5,l}(t)\leq\delta^{-3-2\varepsilon_0},\\
&E_{6,l}(t)\leq \delta^{-5-2\varepsilon_0}.
\end{split}
\end{equation}
Since the constants in \eqref{Y-34} are all independent of $M_0$, the bootstrap
assumptions  \eqref{EA} can be proved as long as \eqref{EA} holds for the time $t=t_0$. By combining  the
local existence of the solution $\phi$
to the problem \eqref{quasi} by a continuous induction argument,  the global existence
of $\phi$ in $B_{2\dl}$ is established.
\end{proof}

We now prove Theorem \ref{main}.
\begin{proof}
By Theorem \ref{Th2.1}, one has obtained the local existence of smooth solution $\phi$ to equation \eqref{quasi} with \eqref{Y1-1}.
On the other hand,  the global existence of $\phi$ in $A_{2\dl}$
and $B_{2\dl}$  are established in Section \ref{YY} and Section \ref{inside} respectively.
Then it follows from the uniqueness of smooth solutions to \eqref{quasi} with \eqref{Y1-1}
that the proof of $\phi\in C^\infty([1,+\infty)\times\mathbb R^4)$ is finished.
In addition, $|\p\phi|\lesssim\delta^{1-\varepsilon_0}t^{-3/2}$ comes from \eqref{Lle}, \eqref{imp} and the first inequality in \eqref{pphi};
$|\phi|\lesssim\delta^{\f76-\varepsilon_0}t^{-1/2}$ is derived from \eqref{Lle}, \eqref{imp} and the first inequality in \eqref{O}. Thus Theorem \ref{main} is proved.
\end{proof}

\end{document}